\documentclass[a4paper,12pt,leqno]{amsart}
\usepackage[a4paper, top=2.4cm, bottom=2cm, left=3cm, right=3cm]{geometry}
\usepackage{amsmath,amsfonts,amsthm,amssymb,dsfont}
\usepackage[alphabetic]{amsrefs}
\usepackage[OT4]{fontenc}
\usepackage{enumerate}
\usepackage{mathrsfs}
\usepackage{enumerate, xspace}
\usepackage[pdftex]{graphicx}
\usepackage{color}
\usepackage{hyperref}
\usepackage{xcolor}

\long\def\symbolfootnote[#1]#2{\begingroup
	\def\thefootnote{\fnsymbol{footnote}}\footnote[#1]{#2}\endgroup}

\newtheorem{theorem}{Theorem}[section]
\newtheorem{lem}[theorem]{Lemma}
\newtheorem{thm}[theorem]{Theorem}

\newtheorem{prop}[theorem]{Proposition}
\newtheorem{cor}[theorem]{Corollary}

\theoremstyle{definition}
\newtheorem{rem}[theorem]{Remark}
\newtheorem{defin}[theorem]{Definition}

\newtheorem{constr}[theorem]{Construction}

\newcommand{\type}{\operatorname{Type}}
\newcommand{\cq}{\mathcal{Q}}
\newcommand{\ch}{\mathcal{H}}
\newcommand{\cj}{\mathcal{J}}
\newcommand{\od}{\widehat{\Sigma}}
\newcommand{\Si}{\Sigma}

\newcommand{\bU}{\mathbb U} 
\newcommand{\bV}{\mathbb V} 
\newcommand{\bC}{\mathfrak C}

\newcommand{\wX}{\widehat X}

\newcommand{\ca}{\mathcal {A}}

\newcommand{\fan}{\operatorname{Fan}}
 
\newcommand{\lk}{\operatorname{lk}} 
 
\newcommand{\cH}{\mathcal {H}} 
\newcommand{\cK}{\mathcal {K}} 
\newcommand{\cL}{\mathcal L}

\newcommand{\whC}{\widehat C}

\newcommand{\whX}{\widehat X}
\newcommand{\sX}{\mathsf X}
\newcommand{\sY}{\mathsf Y}
\newcommand{\wsX}{\widehat {\mathsf X}}
\newcommand{\wsY}{\widehat {\mathsf Y}}
\newcommand{\act}{\curvearrowright}
\newcommand{\si}{\sigma}
\newcommand{\ka}{\kappa}
\newcommand{\hk}{\widehat\kappa}

\newcommand{\st}{\operatorname{St}}

\title{353-combinatorial curvature and the $3$-dimensional $K(\pi,1)$ conjecture} 
\author[J.~Huang]{Jingyin Huang$^{\ast}$}

	\address{
		Department of Mathematics,
		The Ohio State University, 
		231 W. 18th Ave,
		Columbus, OH 43210, U.S.              
	}
	
	\email{huang.929@osu.edu}
\author[P.~Przytycki]{Piotr Przytycki$^{\dag}$}
	
	\address{
		Department of Mathematics and Statistics,
		McGill University,
		Burnside Hall,
		805 Sherbrooke Street West,
		Montreal, QC,
		H3A 0B9, Canada}
	
	\email{piotr.przytycki@mcgill.ca}

	\thanks{$\ast$ Partially supported by a Sloan fellowship and NSF grant DMS-2305411}
		
		\thanks{$\dag$ Partially supported by NSERC}

\begin{document}
	\maketitle
	
\begin{abstract}
		\noindent
		We prove the $K(\pi,1)$ conjecture for Artin groups of dimension $3$. As an ingredient, we introduce a new form of combinatorial non-positive curvature.
	\end{abstract}

\section{Introduction}

The $K(\pi,1)$ conjecture for Artin groups, due to Arnold, Brieskorn, Pham, and Thom, predicts that each Artin group has a $K(\pi,1)$ space that is a complex manifold described in the terms of the canonical linear representation of the associated Coxeter group. 
See \cite{paris2014k,godelle2012basic,deligne,charney1995k}, for background and a summary of progress on this conjecture before the 2010s, and \cite{mccammond2017artin,juhasz2018relatively,paolini2021proof,paolini2021dual,delucchi2022dual,juhasz2023class,goldman2022k,haettel2021lattices,haettel2022conjecture,haettel2023new,huang2023labeled,huang2024,goldman2025deligne} for more recent developments. In this article we prove the following.

\begin{thm}
	\label{thm:main intro}
Let $A$ be an Artin group of dimension $\le 3$. Then $A$ satisfies the $K(\pi,1)$ conjecture. In particular, $A$ is torsion free.
\end{thm}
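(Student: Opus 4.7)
\emph{Approach.} The plan is to exhibit an aspherical model for $A$: a quotient of a Salvetti-type complex $X$ whose universal cover $\wtX$ is shown to be contractible using the new combinatorial non-positive curvature condition — ``$353$-combinatorial curvature'' — announced in the title. Because $\dim A \le 3$, the complex $\wtX$ is a $3$-dimensional cell complex whose top-dimensional cells correspond to the spherical rank-$3$ parabolic subgroups of $A$ (Coxeter types $A_3$, $B_3$, $H_3$, $I_2(m) \times A_1$, and $A_1^3$). Once contractibility of $\wtX$ is established, $X$ is a finite-dimensional $K(A,1)$, which settles the $K(\pi,1)$ conjecture; torsion-freeness of $A$ follows automatically, since any group admitting a finite-dimensional $K(\pi,1)$ is torsion free.

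\emph{Main steps.} I would first write down the cell structure of $\wtX$ and the local combinatorics at each cell, which are dictated entirely by the finite list of spherical rank-$\le 3$ parabolic types above. Next, formulate the $353$-combinatorial non-positive curvature condition as a local requirement on the links of vertices, edges, and $2$-cells, calibrated so that the Coxeter edge labels $3$ and $5$ arising from dimension-$3$ Artin systems (in particular the pentagonal label of $H_3$) are accommodated. Then verify the local condition at every cell of $\wtX$ via a finite case analysis across the rank-$3$ types. Finally, prove a combinatorial Cartan-Hadamard-type theorem: simply connected complexes that are locally $353$-non-positively curved are contractible. Combining these, $\wtX$ is contractible and the theorem follows.

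\emph{Main obstacle.} The decisive difficulty is the Cartan-Hadamard step. Established non-positive-curvature frameworks (CAT$(0)$, systolic, weakly systolic, small cancellation) do not seem to apply directly, because the pentagonal Coxeter label $5$ enforces local relations that none of them captures cleanly; this is presumably the very reason a new curvature notion is needed. I expect the proof to proceed by disk-diagram arguments together with a combinatorial Gauss-Bonnet-type curvature inequality, possibly combined with an inductive filling/retraction construction killing spherical classes in $\pi_n(\wtX)$, or alternatively by a CAT$(0)$-like metric realization to which the classical Cartan-Hadamard theorem can be applied. In either approach, the local verifications at the $H_3$ and $B_3$ cells — where the link geometry is the most constrained by the pentagonal and square labels — are likely to require the most delicate combinatorial work.
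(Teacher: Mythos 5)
Your proposed route is not the one the paper takes, and it contains a genuine gap. Structurally, the paper never verifies a curvature condition on the universal cover of a Salvetti-type complex. Instead it invokes the Godelle--Paris criterion (Theorem~\ref{thm:Kpi1}) to reduce the $K(\pi,1)$ conjecture, by induction on the number of generators, to contractibility of \emph{Artin complexes} $\Delta_\Lambda$ (coset complexes of the maximal standard parabolics), together with a purely graph-theoretic reduction (Lemma~\ref{lem:graphs}) to a short list of triangle-free diagrams. The genuinely new case is the linear diagram $abcd$ with labels $3,5,3$, and there the $353$-curvature machinery is applied not to a $3$-dimensional Salvetti model but to an auxiliary square complex built from $\Delta_\Lambda$ (vertices of types $\hat a,\hat d$, squares the $4$-cycles admitting an apex); its thickening is shown contractible (Theorem~\ref{thm:contractible intro}) and homotopy equivalent to $\Delta_\Lambda$. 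The other diagrams on the list are handled by systolic, $\mathrm{CAT}(0)$, $\widetilde A_n$-like and Bestvina-convexity arguments in Sections~\ref{sec:relative}--\ref{sec:3dim}.

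The decisive gap is your step ``verify the local condition at every cell of $\widetilde X$ via a finite case analysis across the rank-$3$ types.'' No such finite local check is available, and this is exactly the known obstruction to the problem: any local condition on the finite links of the universal cover of the Salvetti complex strong enough to imply asphericity would essentially be the conjecture itself, and the paper explicitly does not go this way. The conditions actually needed (Definition~\ref{def:353simplicial1}(3),(4)) are statements about embedded $8$- and $10$-cycles in vertex links of $\Delta_\Lambda$, and those links are \emph{infinite} complexes — Artin complexes of type $H_3$ and $B_3$ — so verifying them amounts to controlling solutions of equations over the spherical Artin group $A_{H_3}$. That verification is the analytic core of the paper and occupies Sections~\ref{sec:prelim1}--\ref{sec:typeII} (sub-arrangements of the $H_3$ reflection arrangement, Salvetti complexes and their developments, splitting systems, the admissibility of critical cycles). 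By contrast, the Cartan--Hadamard-type step you single out as the main obstacle (contractibility of thickenings of $353$-square complexes, Sections~\ref{sec:353square}--\ref{sec:353simplicial}) is the comparatively formal part. Your proposal offers no method for the hard part, and also omits the inductive framework (Godelle--Paris plus relative Artin complexes) without which contractibility of a single complex would not yield the theorem for all $3$-dimensional Artin groups.
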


The \emph{dimension} of an Artin group $A$ is the maximal cardinality of a subset $S'$ of the standard generating set of $A$ such that the subgroup of $A$ generated by $S'$ is spherical. It is conjectured that this quantity is equal to the cohomological dimension of $A$, and by Theorem~\ref{thm:main intro} this is true if one of these two quantities is $\le 3$. The dimension $\le 2$ case of Theorem~\ref{thm:main intro} was established in 1995 \cite{CharneyDavis}.

Using \cite{jankiewicz2023k}, we also deduce the centre conjecture.

\begin{thm}
Let $A$ be an Artin group of dimension $\le 3$. If $A$ has no nontrivial spherical factor, then it has trivial centre.
\end{thm}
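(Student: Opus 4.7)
The plan is to combine Theorem~\ref{thm:main intro} with the reduction of \cite{jankiewicz2023k} from the centre conjecture to the $K(\pi,1)$ conjecture. The argument should be essentially formal, with all the substantive work already present in the proof of Theorem~\ref{thm:main intro}.

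First I would decompose $A$ as a direct product $A = A_1 \times \cdots \times A_k$ of irreducible Artin subgroups, corresponding to the connected components of the Coxeter diagram of $A$ after deleting the edges labelled $2$. Then $Z(A) = \prod_i Z(A_i)$, each factor $A_i$ has dimension at most that of $A$ and hence $\le 3$, and the hypothesis on $A$ translates into saying that each $A_i$ is either trivial or irreducible and non-spherical.

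Next I would apply Theorem~\ref{thm:main intro} to each nontrivial $A_i$ to conclude that $A_i$ satisfies the $K(\pi,1)$ conjecture, and then invoke \cite{jankiewicz2023k}, which shows that an irreducible non-spherical Artin group satisfying the $K(\pi,1)$ conjecture has trivial centre. Hence $Z(A_i) = \{1\}$ for each $i$, and $Z(A) = \{1\}$. The only point to verify carefully is that the hypothesis of \cite{jankiewicz2023k} matches the formulation used here; there is no genuine obstacle in this deduction beyond Theorem~\ref{thm:main intro} itself.
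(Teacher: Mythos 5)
Your proposal is correct and is precisely the deduction the paper intends: the paper states this theorem as an immediate consequence of Theorem~\ref{thm:main intro} together with the main result of \cite{jankiewicz2023k} (that an irreducible non-spherical Artin group satisfying the $K(\pi,1)$ conjecture has trivial centre), applied factorwise after splitting $A$ along the connected components of its Coxeter diagram. Your routine verifications (each irreducible factor has dimension $\le 3$, the centre of a direct product is the product of the centres, and the hypothesis forces each factor to be non-spherical) are exactly what makes the citation apply.
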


Theorem~\ref{thm:main intro} is a special case of Theorem~\ref{thm:general}, where we establish 
the $K(\pi,1)$ conjecture for many new Artin groups in each dimension, since the class of Artin groups that we treat contains arbitrarily large  
irreducible spherical parabolic subgroups.

\subsection{Combinatorial non-positive curvature}
\label{subsec:353}
A key ingredient of the proof is a contractibility criterion for a class of complexes satisfying a new form of combinatorial non-positive curvature, called $353$-square complexes.

\begin{thm}
	\label{thm:contractible intro}
	The thickening of a wide stable $353$-square complex is contractible.
\end{thm}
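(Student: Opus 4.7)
The plan is to pass to the universal cover of the thickening and prove that it is aspherical, hence contractible once simple connectivity is established (the latter coming from the universal-cover construction together with the way the thickening attaches its higher-dimensional cells). I would work throughout with cellular disc and sphere diagrams in the universal cover, equipped with a suitable combinatorial notion of area or complexity. The goal is to show that for each $k \ge 1$, every minimal-area cellular extension of a map $S^k \to \widetilde X$ can be reduced, so that by a minimal counterexample argument no essential sphere can exist.

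The core step is a local-to-global Cartan--Hadamard-style result. I would translate the $353$-combinatorial curvature hypothesis into a link-type condition in the spirit of Gromov's condition for CAT(0) cube complexes or Januszkiewicz--\'Swi\k{a}tkowski systolicity, to the effect that certain short cycles or configurations in the links of vertices of the thickening must bound in a controlled way. Given a minimal disc, a combinatorial Gauss--Bonnet-type curvature count on its boundary-interior structure should then locate an interior vertex at which the link condition produces a smaller-area filling, contradicting minimality. In this picture, the stability hypothesis presumably guarantees that local reductions are actually available (by ruling out rigid or degenerate local configurations), while wideness likely provides the ambient space needed to execute such reductions without creating new defects in neighbouring cells.

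The main obstacle I anticipate is exactly the translation of $353$-curvature into a workable link condition, together with the bookkeeping needed to keep a minimal diagram minimal after each local move. Because $353$-curvature is by design weaker than CAT(0) or systolic curvature, the correct invariant to minimize is unlikely to be simply the number of $2$-cells, and the correct notion of reducible diagram will have to be tailored to the specific combinatorics of $\{3,5\}$-type configurations in links; the standard Gauss--Bonnet accounting must then be refined to give strictly negative total curvature in the minimal case. Getting stability and wideness to interact cleanly so as to compensate for the weakened curvature hypothesis, and verifying that iterated reductions terminate, is where I expect the hardest technical work to lie.
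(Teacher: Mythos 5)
Your proposal is a plan rather than a proof, and the plan as stated would not go through. The main issue is that you aim to prove asphericity of the thickening by a minimal sphere/disc diagram argument with Gauss--Bonnet curvature counting, but the thickening $X^\boxtimes$ is a flag simplicial complex of unbounded dimension: Gauss--Bonnet and the reduced-diagram machinery are intrinsically $2$-dimensional tools, and they control fillings of loops, not of higher spheres. You acknowledge that the ``correct invariant to minimize'' is unclear, but that is precisely the missing idea --- no candidate is supplied, and there is no known Cartan--Hadamard statement for $353$-curvature in higher dimensions that your sketch could invoke. (Also, passing to a universal cover is unnecessary: a $353$-square complex is simply connected by definition, and the paper shows directly that $X^\boxtimes$ inherits this; your appeal to the universal cover to get simple connectivity is circular.) Finally, your guesses for the roles of the two hypotheses are off: stability is a finiteness condition used to pass from finite to infinite families of pairwise close vertices (e.g.\ infinite cliques in links), and wideness is used to guarantee that every simplex of $X^\boxtimes$ extends to one meeting both $\mathcal A$ and $\mathcal D$ in at least two vertices, which is what rules out ``peelable'' simplices --- neither plays the role of ``making local reductions available''.

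For contrast, the paper's proof is not an asphericity-by-curvature argument at all. It fixes a base vertex $w$, filters $X^\boxtimes$ by the full subcomplexes $\mathrm{Span}\,B^k$ spanned on combinatorial balls, collapses away peelable simplices (this is where wideness enters), and then shows that the resulting complex deformation retracts onto $\mathrm{Span}\,B^{k-1}$ by contracting, one at a time, the links of the vertices at distance $k$. Contractibility of each such link is obtained from a purely combinatorial criterion (Lemma~\ref{lem:contract}: a link $L$ together with a suitable simplex $M$ meeting every maximal simplex), where $M$ is built from common neighbours in $S^{k-2}$ of the maximal cliques of the downward link $p(d)$. The needed structure of $p(d)$ --- diameter $2$, no induced $4$-cycles, $5$-cycles have common neighbours, maximal cliques pairwise intersect --- is what is extracted from minimal disc diagrams in the square complex $X$ via Lemma~\ref{lem:diagram}, i.e.\ the $2$-dimensional diagram techniques you propose are used only to establish local link properties in $X$, not to fill spheres in $X^\boxtimes$. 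The induction on $k$ (with Whitehead's theorem handling the infinite case) then gives contractibility. Your sketch contains none of these ingredients: no ball filtration, no peeling, no analysis of downward links, and no substitute contractibility criterion, so as it stands there is a genuine gap.
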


Let us define 353-square complexes and their thickenings (for the notions of \emph{wide} and \emph{stable}, see Section~\ref{sec:353square}).
A \emph{square complex} is a $2$-dimensional combinatorial complex $X$, where $X^1$ is a bipartite simplicial graph, with vertex set partitioned into sets $\mathcal A$ and $\mathcal D$, and with attaching maps of the $2$-cells distinct embedded cycles of length $4$ (a \emph{cycle} in a graph is a closed edge-path, or, shortly, an edge-loop). We often identify a $2$-cell with its attaching map, and we call it a \emph{square}. Not all embedded cycles of length $4$ are assumed to be squares. We refer to Section~\ref{subsec:discdiagram} for the background on (minimal) disc diagrams in $X$.
	
Vertices $a,a'\in \mathcal A$ (or $d,d'\in \mathcal D$) are \emph{close} if they belong to a common square. The \emph{thickening} $X^\boxtimes$ of a square complex $X$ is the flag simplicial complex whose $1$-skeleton is obtained from $X^1$ by adding edges between close vertices. We adopt the convention that if we label a vertex of $X$ by $a,a',a_1$ etc, then it belongs to $\mathcal A$.

\begin{defin}
	\label{def:cubecorner}
	 A \emph{cube corner} $C$ is a square complex isomorphic to the subcomplex of the boundary of a 3-dimensional cube formed of three squares containing a common vertex, called the \emph{centre} of $C$. A \emph{cube corner} in $X$ is a disc diagram $C\to X$. A cube corner in $X$ is \emph{minimal} if its boundary $6$-cycle does not bound a disc diagram in $X$ with $<3$ squares.
     \end{defin}

\begin{defin}
\label{def:353square}
A \emph{$353$-square complex} is a simply connected square complex satisfying the following properties.
	\label{def:axioms}
	\begin{enumerate}[(1)]
		\item \label{def:axioms1}
		If $dad_1a'$ and $dad_2a'$ are squares, then $d_1ad_2a'$ is a square (see Figure~\ref{fig:353def}(1)).
		\item \label{def:axioms5}
		Let $d$ be a vertex of a minimal cube corner $C$ lying in exactly two squares $ada_1d_1,ada_2d_2$ of $C$. Then there is no square $d'a_1da_2$ (see Figure~\ref{fig:353def}(2)).
		\item \label{def:axioms3}
		Let $d$ be a vertex of a minimal cube corner $C$ lying in exactly two squares $ada_1d_1,ada_2d_2$ of $C$. Suppose that there is $a'\neq a$ with $a_1da',a_2da'$ also lying in squares (see Figure~\ref{fig:353def}(3)). Then $a'$ is a neighbour of $d_1$ and $d_2$ and $ada'd_1,ada'd_2$ are squares.
		\item \label{def:axioms4} Let $E,E'$ be as in Figure~\ref{fig:353def}(4a), (4b).  For any disc diagram $f\colon E\to X$ whose restriction to each cube corner of $E$ is minimal, there is a diagram $f'\colon E'\to X$ with the same boundary as $f$, such that $f'(a')$ is a neighbour of~$f(d)$.
	 \item \label{def:axioms0} Previous properties hold if we interchange $\mathcal A$ and $\mathcal D$.
	\end{enumerate}
\end{defin}

\begin{figure}[h]
	\centering
	\includegraphics[scale=0.9]{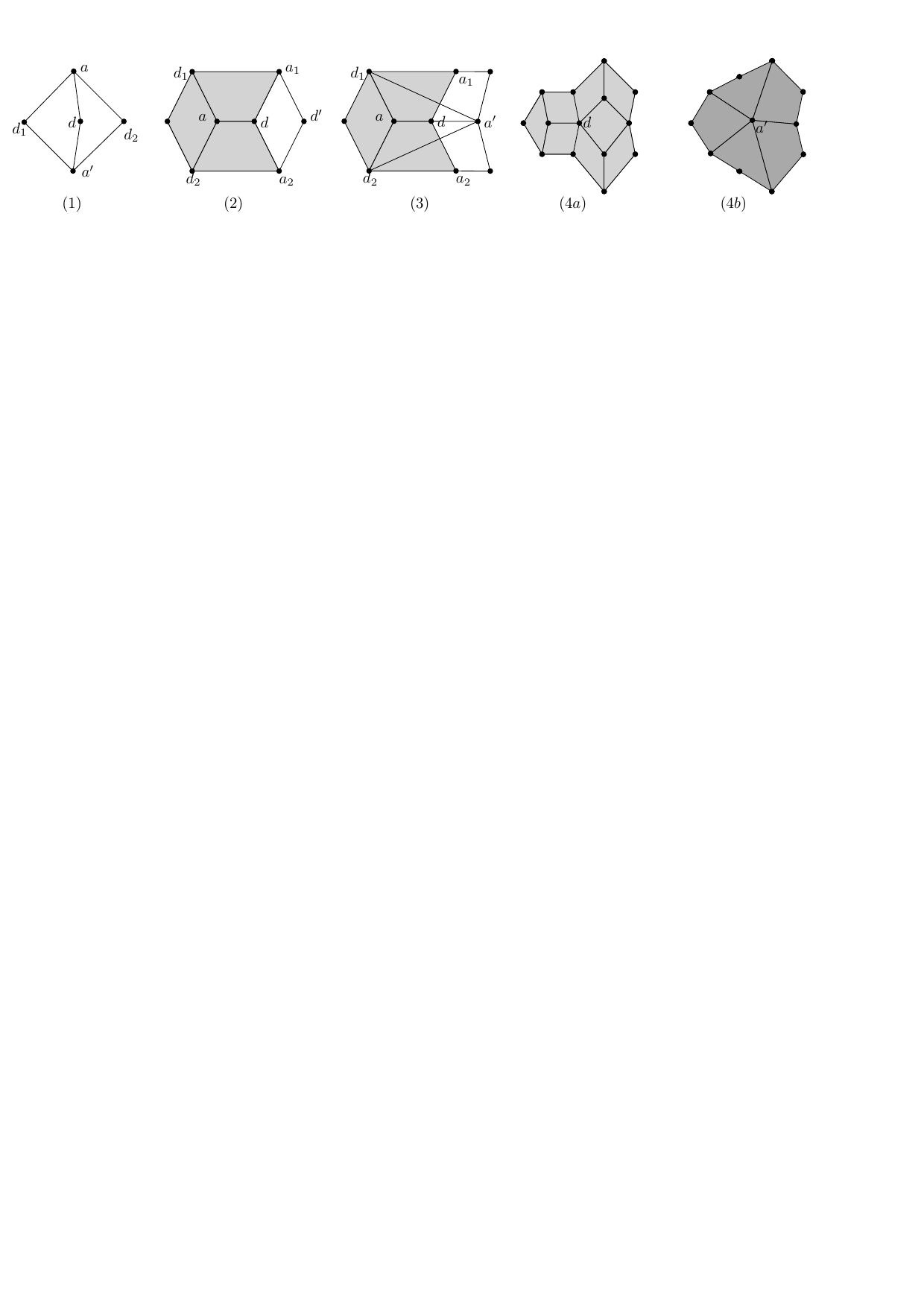}
	\caption{}
	\label{fig:353def}
\end{figure}

Definition~\ref{def:axioms} is motivated by the structure of the \emph{icosahedral honeycomb} of the hyperbolic $3$-space $\mathbb H^3$, with Schläfli symbol $\{3,5,3\}$. It is one of the four compact, regular, space-filling  honeycombs in $\mathbb H^3$, and it was the least understood one from the perspective of combinatorial non-positive curvature (while the other three honeycombs, viewed as cell complexes, are cell-Helly \cite[Def 3.5]{huang2021helly}).

Given the icosahedral honeycomb of $\mathbb H^3$, viewed as a combinatorial complex $Z$, we define the associated square complex $X$. Vertices in $\mathcal A$ correspond to the icosahedra of $Z$, and vertices in $\mathcal D$ correspond to the vertices of $Z$. Vertices $x\in\mathcal A$ and $y\in \mathcal D$ are neighbours if the icosahedron corresponding to $x$ contains the vertex corresponding to $y$. We span squares on all embedded 4-cycles of $X^1$. Definition~\ref{def:axioms} is conceived by listing local combinatorial features of $X$ of non-positive curvature flavour. The list of local properties in Definition~\ref{def:353square} leads to a collection of global properties in Lemma~\ref{lem:diagram}, which is an analogue of the Cartan--Hadamard theorem. These complexes have quadratic Dehn function (Lemma~\ref{lem:diagram}(ii)), and their balls satisfy a weak form of convexity \cite{cannon1987almost} (Lemma~\ref{lem:diagram}(iv))). Finally, we have contractibility as in Theorem~\ref{thm:contractible intro}.

Let $\Lambda$ be the Coxeter diagram that is the linear graph formed of three edges with consecutive labels $3,5,3$, and let $\Delta$ be the  Artin complex (see Definition~\ref{def:Artin0}) of $A_\Lambda$, which is a 3-dimensional simplicial complex. A major step towards Theorem~\ref{thm:main intro} is showing that $\Delta$ is contractible. This is done via Theorem~\ref{thm:contractible intro}, see Definition~\ref{def:353simplicial1} and Corollary~\ref{cor:contr}.

In \cite{CharneyDavis}, Charney and Davis proposed to equip $\Delta$ with a piecewise Euclidean metric (the Moussong metric) and to show that $\Delta$ is $\mathrm{CAT}(0)$, hence contractible. Proving $\mathrm{CAT}(0)$ amounts to studying loops of length $<2\pi$ in the links of the vertices of~$\Delta$. Each such loop gives an equation of the form $w_1w_2\cdots w_n=1$ in the Artin group~$A_{H_3}$ of type $H_3$, subject to the constraint that each $w_i$ lies in an appropriate parabolic subgroup of $A_{H_3}$. Thus proving that $\Delta$ is $\mathrm{CAT}(0)$ relies on understanding the `varieties' of solutions to a finite (but large) set of such equations over $A_{H_3}$. There are no established theories in algebraic geometry to understand the solution varieties of such equations, and the ambient group $A_{H_3}$ being exceptional further obscures the picture. This is the main difficulty of the $\mathrm{CAT}(0)$ approach.

The $\mathrm{CAT}(0)$ approach inspired us to look for a `softer' notion of non-positive curvature, leading to Definition~\ref{def:axioms} and its simplicial companion, Definition~\ref{def:353simplicial1}. Like in the $\mathrm{CAT}(0)$ approach, proving the contractility of $\Delta$ reduces to studying a collection of short loops in the link of each vertex. However, it is a much smaller collection of loops, hence the number of the associated equations over $G$ that we need to solve is significantly reduced. Miraculously, we avoid solving some of the most sophisticated equations needed in the $\mathrm{CAT}(0)$ approach. However, we do not completely avoid the task of analysing the solution varieties of some of these equations, which takes a substantial portion of the article.

\subsection{Reading guide} 
Section~\ref{sec:prelim} consists of preliminaries.
The article is divided into Part I ranging from Section~\ref{sec:prelim1}
to Section~\ref{sec:353simplicial}, and Part II ranging from Section~\ref{sec:relative} to Section~\ref{sec:3dim}.  Part I takes up most of the article, and it concerns the Artin complex of a \textbf{single} Artin group.

\begin{defin}
\label{def:Artin0}
Let $A_\Lambda$ be an Artin group with Coxeter graph $\Lambda$ and generating set $S$. Its \emph{Artin complex} $\Delta_\Lambda$ \cite{CharneyDavis,godelle2012k,cumplido2020parabolic} is a simplicial complex defined as follows. For each $s\in S$, let $A_{\hat s}$ be the standard parabolic subgroup generated by $\hat s=S\setminus\{s\}$. The vertices of $\Delta_\Lambda$ correspond to the left cosets of $\{A_{\hat s}\}_{s\in S}$. Moreover, vertices span a simplex if the corresponding cosets have non-empty common intersection. A vertex of $\Delta_\Lambda$ corresponding to a left coset of $A_{\hat s}$ has \emph{type $\hat s$}.
\end{defin}

Let $\Lambda$ be the 353 Coxeter diagram from the previous subsection. The main goal of Part I is to establish two properties of $\Delta_\Lambda$. First, $\Delta_\Lambda$ is contractible, which implies the $K(\pi,1)$ conjecture for $A_\Lambda$. Second, each induced embedded 4-cycle in $\Delta_\Lambda$ of type $\hat s\hat t\hat s\hat t$  
has a common neighbour in $\Delta_\Lambda$ of type $\hat r$, where $r$ is a vertex of $\Lambda$ separating $s$ and $t$. This second property is useful for proving the $K(\pi,1)$ conjecture for other Artin groups.

Part I is performed in two steps. In Step 1, we show that the link of each vertex of~$\Delta_{\Lambda}$ satisfies a list of properties (Sections~\ref{sec:prelim1}-\ref{sec:typeII}).  In Step 2, we introduce a more general family of complexes, called \emph{$353$-simplicial complexes},  that are simply connected and whose vertex links satisfy the same list of properties (Definition~\ref{def:353simplicial1}). We prove, under two minor assumptions, that such a complex 
is contractible and has the desired 4-cycle property mentioned in the previous paragraph (Sections~\ref{sec:353square} and~\ref{sec:353simplicial}).

Let us discuss Step 2 in more detail.
In Section~\ref{sec:353square}, we study 353-square complexes, establishing properties of minimal disc diagrams bounded by
certain cycles in these complexes and proving the contractility  of their thickenings (Theorem~\ref{thm:contractible intro}). In Section~\ref{sec:353simplicial}, we introduce the notion of a 353-simplicial complex. For each 353-simplicial complex $\Delta$, we can construct an associated 353-square complex $X$ whose thickening is homotopy equivalent to $\Delta$ (under mild assumptions), implying the contractibility of $\Delta$ and the desired 4-cycle property.

Coming back to Step 1, it remains to show that $\Delta_\Lambda$ is a 353-simplicial complex. 
By 
definition, this reduces to  proving that two kinds of \emph{critical cycles} in $\Delta_{H_3}$ are \emph{admissible}. Here $\Delta_{H_3}$ denotes the Artin complex of the spherical Artin group $A_{H_3}$, which is isomorphic to the vertex link of $\Delta_\Lambda$. Critical cycles in $\Delta_{H_3}$ and the notion of their \emph{admissibility} are introduced at the beginning of Sections~\ref{sec:typeI} and~\ref{sec:typeII}, and most of Sections~\ref{sec:prelim1}-\ref{sec:typeII} is the proof of the admissibility of critical cycles.

In Section~\ref{sec:prelim1}, we give the background on hyperplane arrangements and associated complexes needed later.
For each 
collection $\ca$ of affine hyperplanes in $\mathbb R^n$ passing through the origin, we consider the complex manifold $M(\ca\otimes \mathbb C^n)=\mathbb C^n - \bigcup_{H\in\ca} (H\otimes \mathbb C)$.  Let $\ca$ be the collection of reflection hyperplanes for the canonical linear representation of the Coxeter group of type $H_3$ acting on $\mathbb R^3$. Then $\pi_1M(\ca\otimes \mathbb C^n)$ is the pure Artin group $PA_{H_3}$ of $A_{H_3}$. 
It is difficult to analyse a cycle $\omega$ in $\Delta_{H_3}$ directly. Instead, we consider a subset $\ca'\subset \ca$, which gives an inclusion $M(\ca\otimes \mathbb C^n)\to M(\ca'\otimes \mathbb C^n)$. This induces a quotient map between groups $PA_{H_3}\to \pi_1 M(\ca'\otimes \mathbb C^n)$, and a surjective simplicial map from $\Delta_{H_3}$ to another complex $\Delta_{\ca'}$ (we use this notation only in the Introduction). The cycle $\omega\subset \Delta_{H_3}$ is sent to 
a cycle $\omega'\subset \Delta_{\ca'}$. It turns out that for suitable choices of $\ca'$, the complex $\Delta_{\ca'}$ contains large subcomplexes that are `non-positively curved'. If the subcomplex is large enough to contain~$\omega'$, then we can use the non-positive curvature to analyse $\omega'$, and then lift the information back to $\omega$. This last step is nontrivial, since we are losing information in the quotient map $PA_{H_3}\to \pi_1 M(\ca'\otimes \mathbb C^n)$.

In Section~\ref{sec:subarrangement}, we discuss 
the possible subset $\ca'$. Actually, we choose two subsets $\ca_1$ and $\ca_2$, so certain information that is lost 
as a consequence of
one choice survives for the other choice. For each $\ca_i$, we indicate what is the non-positively curved subcomplex of $\Delta_{\ca_i}$ that we have found. This section is mostly a review of \cite{huang2024}.

The material in Section~\ref{sec:splitting system} is new. The non-positively curved subcomplex of $\Delta_{\ca_2}$ found in \cite{huang2024} is not large enough for our purpose. We show in Section~\ref{sec:splitting system} that there is a larger subcomplex of $\Delta_{\ca_2}$ that satisfies a new form of non-positive curvature, governed by what we have called a \emph{splitting system} (Definition~\ref{def:splitting}). We use it to understand  
minimal disc diagrams in the subcomplex.
Given these ingredients, we treat  
critical $8$-cycles in Section~\ref{sec:typeI} and 
critical $10$-cycles
in Section~\ref{sec:typeII}.

We prove Theorem~\ref{thm:main intro} in Part II of the paper (Sections~\ref{sec:relative}-\ref{sec:3dim}). Our point of departure is the following criterion by Godelle and Paris.

\begin{thm}[{\cite[Thm 2.2]{huang2023labeled}, which is a reformulation of \cite[Thm 3.1]{godelle2012k}}]
 \label{thm:Kpi1} 
 Let $\Lambda$ be non-spherical with $\Delta_\Lambda$ contractible. If $A_{\Lambda'}$ satisfies the $K(\pi,1)$ conjecture for all subdiagrams $\Lambda'$ induced on all but one vertex of $\Lambda$, then $A_\Lambda$ satisfies the $K(\pi,1)$ conjecture.
\end{thm}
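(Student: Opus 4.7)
The plan is to realize the Salvetti complex $\mathrm{Sal}(A_\Lambda)$ as a union of Salvetti subcomplexes of codimension-$1$ parabolic subgroups and then apply a nerve-theorem argument whose nerve is precisely $\Delta_\Lambda$. The opening observation is that the cells of $\mathrm{Sal}(A_\Lambda)$ are indexed by spherical subsets $T\subseteq S$; because $\Lambda$ is non-spherical, every such $T$ is proper and therefore contained in some $\hat s=S\setminus\{s\}$, giving
\[
\mathrm{Sal}(A_\Lambda)\;=\;\bigcup_{s\in S}\mathrm{Sal}(A_{\hat s}).
\]
Passing to the universal cover $\widetilde{\mathrm{Sal}}(A_\Lambda)$ and invoking Van der Lek's theorem that $A_{\hat s}\hookrightarrow A_\Lambda$, this becomes a cover of $\widetilde{\mathrm{Sal}}(A_\Lambda)$ by lifts $g\cdot\widetilde{\mathrm{Sal}}(A_{\hat s})$, one for each coset $gA_{\hat s}$.

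Next I would identify the nerve of this cover with $\Delta_\Lambda$. The intersection $\mathrm{Sal}(A_{\hat s_1})\cap\mathrm{Sal}(A_{\hat s_2})=\mathrm{Sal}(A_{\hat s_1\cap \hat s_2})$ lifts, via the coset intersection $g_1 A_{\hat s_1}\cap g_2 A_{\hat s_2}$, either to the empty set or to a coset of $A_{\hat s_1\cap\hat s_2}$, and the same pattern persists for iterated intersections. Hence a collection of lifts has non-empty common intersection precisely when the corresponding cosets do, which by Definition~\ref{def:Artin0} is exactly the simplex structure of $\Delta_\Lambda$. To apply the nerve theorem I need contractibility of every lift and every iterated intersection. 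Top-dimensional pieces are contractible by the $K(\pi,1)$ hypothesis on $A_{\hat s}$; deeper intersections, realizing universal covers of $\mathrm{Sal}(A_T)$ with $T\subsetneq\hat s$, would be handled by induction on $|T|$, using the descent of the $K(\pi,1)$ property from $A_{\hat s}$ to its parabolic subgroups. The nerve theorem then yields a homotopy equivalence $\widetilde{\mathrm{Sal}}(A_\Lambda)\simeq \Delta_\Lambda$, and since $\Delta_\Lambda$ is contractible by hypothesis, so is $\widetilde{\mathrm{Sal}}(A_\Lambda)$, establishing the $K(\pi,1)$ conjecture for $A_\Lambda$.

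The main obstacle is the descent step: one must verify that the $K(\pi,1)$ property passes from each codimension-$1$ parabolic $A_{\hat s}$ to all of its proper sub-parabolics, in order to feed the inductive argument for contractibility of deep intersections. I would argue this inside the complexified hyperplane-arrangement picture, where restriction to a sub-arrangement produces a locally trivial fibration of complements whose base and total space are already aspherical, forcing asphericity of the fiber (and hence a $K(A_T,1)$). A secondary technical point is that the cover $\{g\cdot\widetilde{\mathrm{Sal}}(A_{\hat s})\}$ must be thickened to a bona fide good open cover before the nerve theorem applies, which is handled by replacing each Salvetti subcomplex by a small regular neighbourhood inside $\mathrm{Sal}(A_\Lambda)$ and then taking preimages upstairs; the homotopy type of each neighbourhood and of all iterated intersections is unchanged, so the nerve computation goes through as above.
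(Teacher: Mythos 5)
The paper does not actually prove this statement: it is imported verbatim from \cite{huang2023labeled}*{Thm 2.2}, itself a reformulation of \cite{godelle2012k}*{Thm 3.1}, so the only meaningful comparison is with the argument in those references. Your strategy is essentially that argument: since $\Lambda$ is non-spherical every spherical $T\subseteq S$ is proper, so $\mathrm{Sal}(A_\Lambda)=\bigcup_{s}\mathrm{Sal}(A_{\hat s})$; upstairs the components of the preimages are copies of the universal covers of the parabolic Salvetti complexes indexed by cosets $gA_{\hat s}$ ($\pi_1$-injectivity is Van der Lek), iterated intersections are \emph{single} such components indexed by cosets of $A_{\bigcap_i\hat s_i}$ (here you are implicitly using Van der Lek's theorem that a nonempty intersection of cosets of standard parabolics is a single coset of the parabolic of the intersection of types, cf.\ Remark~\ref{rmk:alternative} — worth saying explicitly, since otherwise an intersection could a priori be a disjoint, hence non-contractible, union of components), and the nerve of the cover is exactly $\Delta_\Lambda$ as in Definition~\ref{def:Artin0}. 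Björner's nerve theorem applies to covers by subcomplexes, so your thickening to an open cover is unnecessary, though harmless.

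The genuine gap is precisely the step you flag as the main obstacle. Contractibility of the deeper intersections requires the $K(\pi,1)$ property for every $A_T$ with $T\subsetneq\hat s$, while the hypothesis only provides it for the co-rank-one parabolics, and your proposed justification of the descent fails: restriction to a sub-arrangement does not produce a locally trivial fibration in general (Fadell--Neuwirth-type fibrations exist only for fiber-type/supersolvable arrangements, which most reflection arrangements are not), and when $\hat s$ is itself non-spherical there is no finite arrangement complement at all, only the Tits-cone model, where no such fibration is available. The standard repair is the retraction argument: $\mathrm{Sal}(A_T)$ embeds $\pi_1$-injectively into $\mathrm{Sal}(A_{\hat s})$ and admits a cellular retraction (the analogue of $\Pi_{\widehat F}$ in Definition~\ref{def:retraction}, due to Godelle--Paris \cite{godelle2012k}), so its universal cover is a retract of the contractible universal cover of $\mathrm{Sal}(A_{\hat s})$ and hence contractible; equivalently, one can simply cite the theorem that the $K(\pi,1)$ property passes to standard parabolic subgroups. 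With that substitution your proof closes and coincides with the proof behind the cited theorem.
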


To show the contractibility of $\Delta_\Lambda$, we adopt the strategy from \cite{huang2023labeled}.  Roughly speaking, we first show that $\Delta_\Lambda$ deformation retracts to a suitable subcomplex, which is the \emph{relative Artin complex} (Definition~\ref{def:rel}). We then show that this subcomplex is non-positively curved in an appropriate sense, and so it is contractible. 
Section~\ref{sec:relative} summarises the properties of relative Artin complexes from \cite{huang2023labeled} and some additional contracitibility criteria from \cite{bessis2006garside,haettel2021lattices}.

In Section~\ref{sec:convex}, we introduce a geometric tool needed for executing our strategy, the notion of convexity for a class of simplicial complexes that are closely related to Garside categories \cite{Bestvina1999,charney2004bestvina,bessis2006garside,haettel2022link}. A convex subcomplex, as defined here, can be detected locally—specifically, by examining the links of vertices—using a purely combinatorial criterion (see Definition~\ref{def:Bconvex}). This notion is inspired by the Bestvina normal form \cite{Bestvina1999}. Intriguingly, even for the tessellation of $\mathbb{E}^2$ by equilateral triangles, our notion differs from the more classical ones.

In Section~\ref{sec:3dim}, we prove Theorem~\ref{thm:main intro} by induction on the number of generators of the Artin group. In the process, we obtain the following byproducts or enhancements of Theorem~\ref{thm:main intro}, 
some of which (items 3 and 4) might have applications outside the $K(\pi,1)$ conjecture.
\begin{enumerate}
    \item We show that the $K(\pi,1)$ conjecture holds not only for all the $3$-dimensional Artin groups,  but also for many higher dimensional ones (Theorem~\ref{thm:general}).
    \item We derive a general result that reduces the $K(\pi,1)$ conjecture for arbitrary Artin groups to properties of Artin groups whose Coxeter diagrams do not contain triangles (Corollary~\ref{cor:reduction}).
    \item We show that the `girth condition' holds for each $3$-dimensional Artin group (Theorem~\ref{thm:3d}) 
    \item For each 3-dimensional Artin group that is not spherical, we construct a `non-positively curved' relative Artin complex on which the group acts.
\end{enumerate}

\section{Preliminaries}
\label{sec:prelim}
\subsection{Artin complex}
\label{subsec:Artin complex}
 A \emph{Coxeter diagram} $\Lambda$ is a finite simplicial graph with vertex set $S=\{s_i\}_i$ and labels $m_{ij}=3,4,\ldots ,\infty$ for each edge $s_is_j$. If $s_is_j$ is not an edge, we define $m_{ij}=2$. The Artin group $A_\Lambda$ is the group with generator set $S$ and relations $s_is_js_i\cdots=s_js_is_j\cdots$ with both sides alternating words of length $m_{ij}$, whenever $m_{ij}<\infty$. The Coxeter group $W_\Lambda$ is obtained from $A_\Lambda$ by adding relations $s_i^2=1$. 

The \emph{pure Artin group} $PA_\Lambda$ is the kernel of the obvious homomorphism $A_\Lambda\to W_\Lambda$.
We say that $A_\Lambda$ (or $\Lambda$) is \emph{spherical}, if $W_\Lambda$ is finite. Recall that any $S'\subset S$ generates a subgroup of $A_\Lambda$ isomorphic to $A_{\Lambda'}$, where $\Lambda'$ is the subdiagram of $\Lambda$ induced on~$S'$. Such a subgroup is called a \emph{standard parabolic subgroup}.

We refer to Definition~\ref{def:Artin0} for the notion of the Artin complex $\Delta_\Lambda$ of the Artin group $A_\Lambda$.
It follows from \cite[Prop~4.5]{godelle2012k} that $\Delta_\Lambda$ is a flag complex. Note that given $g\in A_\Lambda$, the vertices corresponding to the collection of the left cosets $\{gA_{\hat s}\}_{s\in S}$ span a top-dimensional simplex of $\Delta_\Lambda$. This gives a bijective correspondence between the elements of $A_\Lambda$ and the top-dimensional simplices of $\Delta_\Lambda$.
The \emph{Coxeter complex}~$\bC_\Lambda$ is defined analogously, where we replace~$A_{\hat s}$ by $W_{\hat s}<W_\Lambda$
generated by~$\hat s$. A vertex of $\bC_\Lambda$ corresponding to a left coset of $W_{\hat s}$ has \emph{type} $\hat s$. 
We have that $\bC_\Lambda$ is the quotient of $\Delta_\Lambda$ under the action of~$PA_\Lambda$. 

\begin{rem}[{\cite[Cor 6.5]{huang2023labeled}}]
	\label{rem:adj}
	For $i=1,2,3$, let $x_i\in \Delta^0_\Lambda$ be of type $\hat s_i$. 
	Suppose that $s_1$ and $s_3$ belong to distinct components of $\Lambda\setminus\{s_2\}$.
	If $x_2$ is a neighbour of both $x_1$ and $x_3$, then $x_1$ is a neighbour of $x_3$.
\end{rem}

We need the following generalisation of Remark~\ref{rem:adj}. The \emph{type} of a face of $\Delta_\Lambda$ is 
the intersection of the types of its vertices. Again, faces of type $\widehat T=S\setminus T$ are in bijective correspondence with the left cosets of $A_{\Lambda \setminus T}$, where $\Lambda \setminus T\subset \Lambda$ is the subdiagram induced on $\widehat T$.
The \emph{type} of a vertex $v$ of the barycentric subdivision $\Delta'_\Lambda$ of $\Delta_\Lambda$ is 
the type of the face of $\Delta_\Lambda$  with barycentre $v$.
Given two vertices $x,y$ of~$\Delta'_\Lambda,$
we write $x\sim y$ if they are contained a common simplex of $\Delta_\Lambda$. Then $x\sim y$ if and only if the corresponding two left cosets intersect.

\begin{lem}
	\label{lem:transitive}
 Let  $x_1,x_2,x_3$ be vertices of $\Delta'_{\Lambda}$ of type $\widehat S_1,\widehat S_2,\widehat S_3$, respectively. Suppose that any  $s_1\in S_1\setminus S_2$ and $s_3\in S_3\setminus S_2$ belong to distinct components of $\Lambda\setminus S_2$. If $x_1\sim x_2$ and $x_2\sim x_3$, then $x_1\sim x_3$. 
\end{lem}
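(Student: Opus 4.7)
The plan is to translate the intersection statement into an algebraic one about cosets in $A_\Lambda$, and then exploit the direct-product decomposition of $A_{\widehat S_2}$ coming from the components of $\Lambda\setminus S_2$. Using the transitive action of $A_\Lambda$ on cosets of a given type, I first reduce to the case where the coset corresponding to $x_2$ is $A_{\widehat S_2}$ itself. The condition $x_1\sim x_2$ then says that the coset for $x_1$ meets $A_{\widehat S_2}$, so I may pick a representative $g_1\in A_{\widehat S_2}$ with $x_1$ corresponding to $g_1 A_{\widehat S_1}$; similarly $x_3$ corresponds to $g_3 A_{\widehat S_3}$ with $g_3\in A_{\widehat S_2}$. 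The goal becomes showing that $g:=g_3^{-1}g_1\in A_{\widehat S_2}$ lies in $A_{\widehat S_3}\cdot A_{\widehat S_1}$; indeed, a decomposition $g=uv$ with $u\in A_{\widehat S_3}$ and $v\in A_{\widehat S_1}$ yields $g_3u=g_1v^{-1}\in g_3A_{\widehat S_3}\cap g_1A_{\widehat S_1}$, which is exactly $x_1\sim x_3$.

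Next, I use that $A_{\widehat S_2}$ is the direct product of the Artin groups of the connected components of $\Lambda\setminus S_2$: generators in different components commute, since the defining relation of length~$2$ reads $s_is_j=s_js_i$. Let $T\subseteq\widehat S_2$ be the union of those components of $\Lambda\setminus S_2$ that meet $S_1\setminus S_2$. By the separation hypothesis, $T$ is disjoint from $S_3\setminus S_2$, which gives the two inclusions $T\subseteq\widehat S_3$ and $\widehat S_2\setminus T\subseteq\widehat S_1$, and hence $A_T\subseteq A_{\widehat S_3}$ and $A_{\widehat S_2\setminus T}\subseteq A_{\widehat S_1}$. Writing $g=u\cdot v$ in accordance with the splitting $A_{\widehat S_2}=A_T\times A_{\widehat S_2\setminus T}$ lands each factor in the desired parabolic, and the previous paragraph concludes.

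The only genuinely combinatorial step is extracting from the hypothesis the partition $\widehat S_2=T\sqcup(\widehat S_2\setminus T)$ which makes $\widehat S_1$ and $\widehat S_3$ absorb the two factors on the nose; once this is set up, the rest is a formal direct-product bookkeeping. No input from later sections is required, which is consistent with the lemma's role as a preliminary adjacency statement in the barycentric subdivision $\Delta'_\Lambda$ generalising Remark~\ref{rem:adj}.
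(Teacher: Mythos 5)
Your proof is correct and follows essentially the same route as the paper: translate $x_2$ to the identity coset, use the direct-product decomposition of $A_{\widehat S_2}$ over the components of $\Lambda\setminus S_2$, and let the separation hypothesis place the two factors inside $A_{\widehat S_3}$ and $A_{\widehat S_1}$. The only difference is cosmetic — you phrase the conclusion as $g_3^{-1}g_1\in A_{\widehat S_3}\cdot A_{\widehat S_1}$, while the paper phrases it as a nonempty triple intersection of cosets via cosets of $A_{\Lambda_1},A_{\Lambda_3}$ — and your partition $\widehat S_2=T\sqcup(\widehat S_2\setminus T)$ even absorbs the paper's preliminary reduction to $S_1\not\subseteq S_2$ automatically.
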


\begin{proof}
The proof is identical to that of \cite[Lem 10.4]{huang2023labeled}. We include it for the convenience of the reader. We can assume that $S_2$ does not contain $S_1$ (or $S_3$), since otherwise the left coset corresponding to $x_2$ would be contained in the left coset corresponding to $x_1$, and so $x_2\sim x_3$ would imply $x_1\sim x_3$.

Up to the left translation, we can assume that $x_2$ corresponds to the identity coset $A_{\Lambda\setminus S_2}$. For $i=1,3$, let $\Lambda_i$ be the union of the components of $\Lambda\setminus S_2$ that are disjoint from $S_i$. By our hypotheses, we have $\Lambda_i\neq\emptyset$ and $\Lambda_1\cup\Lambda_3$ contains all the vertices of $\Lambda\setminus S_2$. Since $A_{\Lambda\setminus S_2}$ is the direct sum of the Artin groups with Coxeter diagrams the components of $\Lambda\setminus S_2$, any left coset of $A_{\Lambda_1}$ in $A_{\Lambda\setminus S_2}$ and any left coset of $A_{\Lambda_3}$ in~$A_{\Lambda\setminus S_2}$ intersect. For $i=1,3$, let $H_i$ be the left coset of $A_{\Lambda\setminus S_i}$ in~$A_\Lambda$ corresponding to $x_i$. For $i=1,3$, since $\Lambda_i\subset \Lambda\setminus S_i$, we have that $A_{\Lambda\setminus S_2}\cap H_i$ contains a left coset of $A_{\Lambda_i}$ in $A_{\Lambda\setminus S_2}$. Thus we have $A_{\Lambda\setminus S_2}\cap H_1\cap H_3\neq\emptyset$ and so $x_1\sim x_3$.
\end{proof}

\subsection{Posets}
\label{subsec:chamber complex}
Let $S$ be a set of size $n$.
A simplicial complex $X$ is of \emph{type $S$} if all the maximal simplices of $X$ have dimension $n-1$ and there is a function $\type \colon X^0\to S$ such that $\type(x)\neq\type(y)$ if $x$ and $y$ are neighbours. 
Note that the restriction of $\type$ to the vertex set of each maximal simplex is a bijection.

As an example, if $A_\Lambda$ is an Artin group, and $S$ is the vertex set of $\Lambda$, then the Artin complex $\Delta_{\Lambda}$ is a simplicial complex of type $S$ (or, rather, $\widehat S$).

\begin{defin}
	\label{def:order}
	Let $X$ be a simplicial complex of type $S$. Any total order $<$ on $S$ induces the following relation $<$ on $X^0$. We declare $x<y$ if $x$ and $y$ are neighbours, and $\type(x)<\type(y)$.
\end{defin}

Let $P$ be a poset, i.e.\ a partially ordered set.
Let $Q\subset P$. An \emph{upper bound} (resp.\ \emph{lower bound}) for $Q$ is an element $x\in P$ such that $q\le x$ (resp.\ $x\le q$) for any $q\in Q$. An upper bound $x$ of $Q$ is the  \emph{join} of $Q$ if $x\le y$ for any upper bound $y$ of $Q$. A lower bound $x$ of $Q$ is the \emph{meet} of $Q$ if $y\le x$ for any lower bound $y$ of $Q$. We write $x\vee y$ and $x\wedge y$ for the join and meet of $\{x,y\}$ (if they exist). We say that $P$ is a \emph{lattice} if $P$ is a poset and any two elements of $P$ have the join and the meet. 
For $a,b\in P$ with $a\le b$, the \emph{interval $[a,b]$ between $a$ and $b$} is the collection of all the elements $x$ of $P$ satisfying $a\le x$ and $x\le b$. A poset $P$ is \emph{weakly graded} if there is a \emph{poset map} $r\colon P\to \mathbb Z$, i.e.\ for every $x<y$ in $P$, we have $r(x)<r(y)$. Such $r$ is called a \emph{rank} function.
A \emph{bowtie} $x_1y_1x_2y_2$ consists of distinct elements of $P$ satisfying $x_i<y_j$ for all $i,j=1,2$. The name comes from the fact that if we draw $y_1,y_2$ above $x_1,x_2$ in the Hasse diagram, then we obtain a bowtie shaped configuration.
\begin{defin}
	A poset $P$ is \emph{bowtie free} if for any bowtie $x_1y_1x_2y_2$ there exists $z\in P$ satisfying $x_i\le z\le y_j$ for all $i,j=1,2$.
\end{defin}

\begin{lem}[{\cite[Prop 1.5]{brady2010braids} and \cite[Prop 2.4]{haettel2023new}}]
	\label{lem:posets}
	If $P$ is a bowtie free weakly graded poset, then any subset $Q\subset P$ with an upper bound has the join, and any $Q\subset P$ with a lower bound has the meet.
\end{lem}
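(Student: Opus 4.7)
The statement is self-dual: bowtie-freeness is preserved under reversing the order, and $-r$ is a rank function on $P^{\mathrm{op}}$. It therefore suffices to prove the join statement; the meet statement follows by applying the result to $P^{\mathrm{op}}$.

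I would first handle the case of a pair $a,b\in P$ with a common upper bound $c$. Working inside the set $U_c$ of upper bounds of $\{a,b\}$ that are $\le c$, the rank function confines every chain to the finite integer interval $[\max(r(a),r(b)),r(c)]$, so $U_c$ satisfies the descending chain condition and hence has minimal elements. Uniqueness of such a minimal element comes from bowtie-freeness: two distinct minimal elements $m_1,m_2$ yield a bowtie $a\,m_1\,b\,m_2$ (after disposing of the trivial cases $a=b$ or $\{a,b\}\cap\{m_1,m_2\}\ne\emptyset$), and the resulting $z$ with $a,b\le z\le m_1,m_2$ contradicts minimality. A short variant of the same bowtie argument, comparing the minimum of $U_c$ with the minimum of $U_u$ for an arbitrary upper bound $u$, shows that this unique minimum is in fact the minimum of the entire set of upper bounds, and is therefore $a\vee b$. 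Applying this pair statement to $P^{\mathrm{op}}$ yields the companion fact that any two elements with a common lower bound admit a meet.

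For a general $Q\subset P$ with upper bound $y$, I would again pass to the set $U_y$ of upper bounds of $Q$ that are $\le y$, which has the descending chain condition by the same rank argument and thus has minimal elements. The difficulty here is that bowtie-freeness is intrinsically a four-element condition, so one cannot directly argue from two elements of $Q$ when $Q$ is large: the $z$ produced by bowtie-freeness need only dominate the chosen pair, not all of $Q$. To bridge this gap I would invoke the pair case for meets already in hand: if $m_1,m_2\in U_y$ are minimal, every $q\in Q$ is a common lower bound of $m_1,m_2$, so $m_1\wedge m_2$ exists, and the defining property of the meet forces $q\le m_1\wedge m_2$ for every $q\in Q$, placing $m_1\wedge m_2$ back inside $U_y$; minimality of $m_1,m_2$ then gives $m_1=m_2$. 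The same meet trick with an arbitrary upper bound $u$ replacing $y$ identifies the unique minimum of $U_y$ with the minimum of all upper bounds of $Q$, which is $\bigvee Q$.

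The main obstacle I expect is precisely this transition from pairs to arbitrary $Q$, since bowtie-freeness only sees four elements at once. The device that closes the gap is the meet of two minimal upper bounds, which is only available because the pair case has been set up in both the join and meet directions at the outset; this is why the duality observation is essential from the start rather than at the end.
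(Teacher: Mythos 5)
Your argument is correct, but it follows a genuinely different route from the paper's. The paper does not prove the two-element case at all: it imports it from the cited reference of Haettel, deduces the case of finite $Q$ by an easy induction, and then handles infinite $Q$ by choosing a finite subset $T\subset Q$ whose join $u_T$ has maximal rank among the joins of all finite subsets (the rank being bounded above by $r(u)$ for the given upper bound $u$), and checking that $u_T$ already dominates every $q\in Q$ because the join of $T\cup\{q\}$ has rank at most $r(u_T)$. You instead make the whole statement self-contained: you reprove the pair case via the descending chain condition that the rank function imposes on the set $U_c$ of upper bounds below a fixed bound, with bowtie-freeness forcing uniqueness of the minimal element and a second bowtie comparison showing it is below every upper bound; then, for arbitrary $Q$, you replace the paper's maximal-rank device by the meet of two minimal elements of $U_y$ (available by the dual of your pair case), which lies back in $U_y$ and collapses them. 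Both proofs use the weak grading in an essential way, just differently (the paper to extract a finite subset of maximal join rank, you to get DCC), and both implicitly assume $Q\neq\emptyset$. What your version buys is independence from the external citation and no detour through finite subsets; what the paper's buys is brevity, since the pair case is taken off the shelf. One cosmetic remark: after showing any two minimal elements of $U_y$ coincide, you call the result ``the minimum'' of $U_y$ --- strictly you only need it to be a minimal element, since your final comparison with an arbitrary upper bound $u$ already shows it lies below $u$ directly; if you do want it to be the minimum, say explicitly that DCC puts some minimal element below every element of $U_y$.
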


\begin{proof}
	The case of $|Q|=2$ is \cite[Prop 2.4]{haettel2023new}. This easily implies the case of finite~$Q$. Thus for infinite~$Q$ with an upper bound $u$, we have that each finite subset of $Q$ has the join, which is $\le u$. Let $T$ be a finite subset of $Q$ such that the join $u_T$ of~$T$ has largest rank among all the joins of the finite subsets of $Q$. We claim that $u_T$ is the join of $Q$. To justify the claim, it is enough to show that for any $q\in Q$, we have $q\le u_T$. Let $u$ be the join of $T\cup \{q\}$. Then we have $u_T\le u$. On the other hand, we have $r(u)\le r(u_T)$ by our choice of $T$. Thus $u_T=u$, and so $q\le u_T$, as desired. The assertion on the meet is proved analogously.
\end{proof}

\begin{defin}
	\label{def:flag}
	A poset is \emph{upward flag} if any three pairwise upper bounded elements have an upper bound. A poset is \emph{downward flag} if any three pairwise lower bounded elements have a lower bound. A poset is \emph{flag} if it is upward flag and downward flag.
\end{defin}

\begin{defin}
	\label{def:weak flag}
A poset is \emph{weakly upward flag} if any three elements pairwise upper bounded by non-maximal elements have an upper bound.
	Analogously, we define \emph{weakly downward flag} and \emph{weakly flag} posets.
\end{defin}

We will be often discussing Coxeter diagrams $\Lambda$ that are linear graphs with consecutive vertices $s_1,\dots, s_n$. In that case, we write shortly $\Lambda=s_1\cdots s_n$. 

\begin{thm}[{\cite[Prop 6.6]{haettel2021lattices}}]
	\label{thm:tripleBn}
	Let $\Lambda=s_1\cdots s_n$ be the Coxeter diagram of type~$B_n$ with $m_{s_{n-1}s_n}=4$, and total order $\hat s_1<\cdots< \hat s_n$.  
	Then the induced relation~$<$ on $\Delta_\Lambda^0$ from Definition~\ref{def:order} is a partial order that is
	weakly graded, bowtie free and upward flag
\end{thm}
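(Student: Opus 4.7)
The plan is to exploit the linear structure of $\Lambda = s_1\cdots s_n$: for each $i$, the diagram $\Lambda\setminus\{s_i\}$ is the disjoint union $\Lambda_{<i}\sqcup\Lambda_{>i}$ of $s_1\cdots s_{i-1}$ and $s_{i+1}\cdots s_n$, so the parabolic $A_{\hat s_i}$ decomposes as the direct product $A_{\Lambda_{<i}}\times A_{\Lambda_{>i}}$. This commutation, combined with the spherical Garside structure on $A_\Lambda$ of type $B_n$, is the main tool.

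First I would establish the partial order and weak grading. Antisymmetry is immediate from $x<y\Rightarrow\type(x)<\type(y)$. For transitivity, given $x_1<x_2<x_3$ of types $\hat s_{i_1}<\hat s_{i_2}<\hat s_{i_3}$, translate so that $x_2=A_{\hat s_{i_2}}$ and pick representatives $g_1,g_3\in A_{\hat s_{i_2}}$ for $x_1,x_3$; decomposing $g_k=(g_k^L,g_k^R)$ along $A_{\Lambda_{<i_2}}\times A_{\Lambda_{>i_2}}$, the element $h=(g_1^L,g_3^R)$ satisfies $g_1^{-1}h\in A_{\Lambda_{>i_2}}\subset A_{\hat s_{i_1}}$ and $g_3^{-1}h\in A_{\Lambda_{<i_2}}\subset A_{\hat s_{i_3}}$, so $h\in g_1 A_{\hat s_{i_1}}\cap g_3 A_{\hat s_{i_3}}$, yielding $x_1\sim x_3$. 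The map $r(x)=i$ for $\type(x)=\hat s_i$ is then a rank function.

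For bowtie-freeness and upward flag I would distinguish cases according to whether the type of the intermediate (or common upper bound) vertex $z$ is strictly between the types of the given elements or forced to coincide with one of them. In the generic case, the defining coset of $z$ is assembled as the intersection of translated cosets using the same product-decomposition trick as in transitivity, with the Garside lattice on $A_\Lambda$ ensuring that the relevant intersections of parabolic cosets are nonempty and yield a genuine vertex. For upward flag, three pairwise upper-bounded elements are merged by first producing pairwise upper bounds and then combining them through the product decomposition at the parabolic of largest relevant type.

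The hard part will be the bowtie case in which $\type(x_1)<\type(x_2)$: here the candidate $z$ is forced to equal $x_2$, which requires showing that $x_1$ and $x_2$ are themselves neighbours (i.e.\ $x_1<x_2$), a nontrivial ``common upper bounds force comparability'' statement. Its proof genuinely uses the spherical structure of type $B_n$ (rather than just the linear combinatorics of $\Lambda$) to produce an element lying simultaneously in the appropriate cosets, via the meets and joins in the Garside lattice. Once bowtie-freeness, weak grading and upward flag are established, Lemma~\ref{lem:posets} extends the join property from pairs to arbitrary bounded subsets.
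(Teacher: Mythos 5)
There is a genuine gap. First, note that the paper does not prove this statement at all: it is imported verbatim as \cite[Prop 6.6]{haettel2021lattices}, so any self-contained argument must reproduce the content of that proposition. Your treatment of the partial order and the weak grading is correct, but it is also the easy part: the direct-product decomposition $A_{\hat s_{i_2}}=A_{\Lambda_{<i_2}}\times A_{\Lambda_{>i_2}}$ and the element $h=(g_1^L,g_3^R)$ is exactly the mechanism behind Remark~\ref{rem:adj} and Lemma~\ref{lem:transitive}, and it uses only the linearity of $\Lambda$, not the type $B_n$ structure. The actual content of the theorem is bowtie-freeness and the upward flag property, and for these your proposal gives no argument, only an appeal to ``the Garside lattice on $A_\Lambda$ ensuring that the relevant intersections of parabolic cosets are nonempty.'' That phrase hides the entire difficulty: non-emptiness of an intersection of cosets of distinct maximal standard parabolics is precisely adjacency in $\Delta_\Lambda$, which is a rare, not an automatic, phenomenon, so the lattice of left-divisibility (gcd/lcm) on the Garside group cannot simply be invoked; one must explain how meets and joins of carefully chosen coset representatives land in the prescribed cosets, which requires the compatibility of standard parabolic subgroups with the Garside operations (closure under gcd/lcm, convexity in the prefix order) and a concrete construction of the witness $z$, respectively the common upper bound.

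Two further points show the sketch cannot be completed as stated. In the tight bowtie case you identify (where the candidate $z$ is forced to coincide with $x_2$, so that one must prove $x_1\sim x_2$ outright), you explicitly defer the proof to ``meets and joins in the Garside lattice'' without indicating how comparability is extracted; but this ``common upper bounds force comparability'' statement is exactly the nontrivial core of Haettel's Prop 6.6, and its analogue for $H_3$ (Theorem~\ref{thm:flag}) is the main theorem of a separate paper, so it cannot be dispatched in a sentence. Likewise, for the upward flag property, ``first producing pairwise upper bounds and then combining them through the product decomposition'' does not work: combining pairwise upper bounds into a common one is the whole problem, and the product decomposition at a single parabolic only merges two cosets whose types straddle that parabolic, not three mutually constrained ones. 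Finally, your sketch never uses where the label $4$ sits or the specific total order $\hat s_1<\cdots<\hat s_n$, yet the conclusion (in particular upward, rather than downward, flagness) is sensitive to these choices; a correct proof must engage with them. As written, the proposal establishes only the partial order and weak grading and leaves the two substantive properties unproved.
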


\begin{theorem}[{\cite[Thm 7.1]{huang2024}}]
	\label{thm:flag}
	Let $\Lambda=s_1s_2s_3$ be the Coxeter diagram of type~$H_3$ with $m_{s_2s_3}=5$, and $\hat s_1<\hat s_2< \hat s_3$. Then the induced relation~$<$ on $\Delta_\Lambda^0$ from Definition~\ref{def:order} is a partial order that is
	weakly graded, bowtie free and upward flag.
\end{theorem}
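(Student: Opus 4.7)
The plan is to verify the four asserted properties---partial order, weakly graded, bowtie free, upward flag---in that order. Being a partial order is nearly formal: antisymmetry is immediate since $x<y$ forces $\type(x)<\type(y)$, so $x>y$ and $x<y$ cannot simultaneously hold. For transitivity, the only non-trivial chain on the three available types is $x_1<x_2<x_3$ with $\type(x_i)=\hat s_i$; here the neighbour relation $x_1\sim x_3$ follows from Remark~\ref{rem:adj} applied to $\Lambda=s_1s_2s_3$, since removing $s_2$ disconnects $s_1$ from $s_3$. Weak grading is equally formal: define $r(x)=i$ when $\type(x)=\hat s_i$, so that $x<y$ automatically implies $r(x)<r(y)$.

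The two substantive claims are bowtie-freeness and upward-flagness, and I would reduce both to a coset-intersection question inside $A_{H_3}$. For bowtie-freeness, consider a bowtie $x_1y_1x_2y_2$; since the three types are totally ordered, the interesting case is $\type(x_i)=\hat s_1$ and $\type(y_j)=\hat s_3$, the mixed cases involving $\hat s_2$ being simpler because one of the parabolics is then $\mathbb Z^2$ or of dihedral type. After left-translating so that $y_1$ corresponds to the identity coset $A_{\hat s_3}$, the bowtie data becomes a list of coset incidences, and I would seek $z$ corresponding to a coset of $A_{\hat s_2}\cong\mathbb Z^2$ sandwiched between all four. Upward-flagness admits a parallel case analysis by types, reducing the problem to producing a triple intersection of cosets of the $A_{\hat s_i}$.

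The main tool I would deploy is the lattice structure on $A_{H_3}$ coming from its Garside structure, together with the theorem that the intersection of two parabolic subgroups of a spherical Artin group is again parabolic. These guarantee that the relevant pairwise intersections exist and have a canonical description, which one then has to identify with the desired common bound. A complementary tool is the Coxeter-level picture: the Coxeter complex $\bC_\Lambda$ can be realised on the icosahedron, and many incidence statements can be tested there before being lifted back to $\Delta_\Lambda$ via the free action of $PA_{H_3}$.

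The principal obstacle is the exceptional nature of $H_3$: unlike the classical series $A_n$, $B_n$, $D_n$ treated uniformly in the literature (see for instance Theorem~\ref{thm:tripleBn}), type $H_3$ admits no description via signed permutations, so no standard combinatorial model of Garside normal forms applies directly. I expect the hardest step to be upward-flagness for three vertices of type $\hat s_1$ whose pairwise common $\hat s_3$-upper bounds are known to exist: this is where one must genuinely use relations in $A_{H_3}$ that are not visible in the Coxeter quotient, and where an intricate case analysis on Garside normal words inside the parabolic $A_{\hat s_3}$ (the $3$-strand braid group) seems unavoidable.
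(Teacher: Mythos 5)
The first thing to note is that the paper does not prove this statement at all: it is imported verbatim from \cite{huang2024}*{Thm 7.1}, so the relevant comparison is with the proof given there, and reproving it from scratch means redoing a substantial part of that paper.

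Your treatment of the formal half is correct: antisymmetry and weak grading are immediate from the type function, and transitivity reduces to the single chain of types $\hat s_1<\hat s_2<\hat s_3$, which is Remark~\ref{rem:adj}. The genuine gap is that bowtie-freeness and upward flagness are only \emph{reduced} to a coset-intersection problem, not solved, and the tools you propose do not close it. The Garside (prefix-order) lattice structure on $A_{H_3}$ governs divisibility of group elements, not the incidence pattern of left cosets of the three standard parabolics; and the theorem that an intersection of parabolic subgroups is parabolic constrains what a pairwise intersection can be, but gives no mechanism for producing one coset of $A_{\hat s_2}$ meeting all four cosets of a bowtie, nor a common upper bound for three pairwise upper-bounded vertices of type $\hat s_1$ --- this Helly-type conclusion is exactly the non-formal content and does not follow from pairwise data. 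Testing incidences in the Coxeter complex (the icosahedral picture) and ``lifting back'' is also not available: the quotient $\Delta_\Lambda\to\bC_\Lambda$ by $PA_{H_3}$ destroys precisely the information at stake, as the introduction of this paper itself emphasises. In \cite{huang2024} these two properties are obtained by a quite different route: passing to sub-arrangements of the $H_3$ reflection arrangement, using $\pi_1$-injectivity of the corresponding subcomplexes of the Salvetti complex, $\mathrm{CAT}(0)$ developments of complexes of groups, and disc-diagram arguments --- the same machinery this paper deploys in Sections~\ref{sec:prelim1}--\ref{sec:typeII} for critical cycles. As written, your proposal establishes only the routine part of the theorem; the ``intricate case analysis'' you defer to Garside normal forms in the $3$-strand braid group is the actual content, and there is no evidence it can be carried out in the form you sketch.
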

\subsection{Disc diagrams}
\label{subsec:discdiagram}
A map from a CW complex $Y$ to a CW complex $X$ is \emph{combinatorial} if its restriction to each open cell of $Y$ is a homeomorphism onto an open cell of $X$. A CW complex $X$ is \emph{combinatorial}, if the attaching map of each open cell of $X$ is combinatorial for some subdivision of the sphere.

A \emph{disc diagram} $D$ is a finite contractible combinatorial complex with a fixed embedding in the plane $\mathbb R^2$. We can then view $\mathbb R^2\cup 
\{\infty\}$ as the combinatorial complex that is a union of $D$ and a $2$-cell \emph{at infinity}. The \emph{boundary cycle} of $D$ is the edge-loop in $D$ that is the attaching map of the cell at infinity. A \emph{disc diagram in a combinatorial complex $X$} is a combinatorial map $f\colon D\to X$, where $D$ is a disc diagram. The \emph{boundary cycle} of $f$ is the composition of the boundary cycle of $D$ with $f$. A disc diagram $f\colon D\to X$ is \emph{minimal} if it has minimal area (i.e.\ number of $2$-cells in $D$) among all diagrams in $X$ with the same boundary cycle. We say that $f$ is \emph{reduced} if it is locally injective at $D\setminus D^{0}$. The following is a well-known variation of a result by Van Kampen. 

\begin{lem}[{\cite[Lem~2.16 and 2.17]{mccammond2002fans}}] 
\label{lem:VK}
Any homotopically trivial cycle $\omega$ in~$X$ is the boundary cycle of a disc diagram $f\colon D\to X$. Any minimal disc diagram is reduced.
\end{lem}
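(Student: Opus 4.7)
The plan is to handle the two assertions separately, following the classical van Kampen template.

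For the existence of the disc diagram, I would argue as follows. Since $\omega$ is null-homotopic in $X$, lift $\omega$ to a loop $\widetilde\omega$ in the universal cover $\widetilde X$, which is likewise null-homotopic but now in a simply connected combinatorial complex. By induction on a singular combinatorial filling, one can produce a planar disc diagram $\widetilde D \to \widetilde X$ with boundary $\widetilde\omega$; composition with the covering projection gives $f\colon D\to X$ with boundary $\omega$. Equivalently, one can induct directly on the length of a sequence of elementary homotopies reducing $\omega$ to the constant loop in $X$: each elementary homotopy either inserts/removes a backtrack (contributing a degenerate $2$-cell) or pushes $\omega$ across the boundary of a $2$-cell of $X$ (contributing a genuine $2$-cell), and stacking the corresponding planar pieces assembles the desired disc diagram.

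For the second assertion, suppose for contradiction that $f\colon D\to X$ is minimal but fails to be locally injective at some point $p\in D\setminus D^0$. Since $f$ is combinatorial, it is automatically a local homeomorphism on the interior of every $2$-cell, so $p$ must lie in the interior of some edge $e$ of $D$. The two $2$-cells $c_1,c_2$ of $D$ incident to $e$ must then be mapped to the same $2$-cell of $X$, with attaching maps that agree on $e$ but run around $f(c_i)$ in opposite directions (a \emph{cancellable pair}). I would excise the open cells $c_1,c_2,e$ and identify the two remaining boundary arcs of $c_1$ and $c_2$ via the combinatorial isomorphism making the maps to $X$ agree. The result is a new combinatorial complex $D'$ with the same boundary cycle as $D$ but strictly fewer $2$-cells, and $f$ descends to a combinatorial $f'\colon D'\to X$.

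The main point to check carefully is that $D'$ remains a disc diagram, i.e.\ a finite contractible planar complex. Since $c_1$ and $c_2$ share the edge $e$ and are adjacent in the planar embedding of $D$, the excision of $c_1\cup e\cup c_2$ together with the identification of the matching boundary arcs is precisely a planar collapse of a cancellable pair, preserving both planarity and contractibility. This contradicts the minimality of $f$ and completes the proof. The verification that the identification is well-defined and planar is the only delicate step; the rest is bookkeeping.
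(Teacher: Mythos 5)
The paper does not prove this lemma at all: it is quoted from McCammond--Wise (\emph{Fans and ladders}, Lem 2.16 and 2.17), so the only meaningful comparison is with the classical van Kampen argument that the citation encapsulates, and your outline does follow that classical route. The existence half is acceptable as a sketch: the induction on a sequence of elementary homotopies (insertion/deletion of backtracks and pushes across $2$-cells) is the standard construction, and note that the detour through the universal cover buys you nothing, since producing a combinatorial filling in $\widetilde X$ is exactly the same problem you started with. One should also remark that the assembled planar complex need not be homeomorphic to a disc when $\omega$ is not embedded, but the paper's definition of a disc diagram (finite contractible planar complex) already accommodates this, as the sentence after the lemma points out.

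The genuine gap is in the second half, precisely at the step you dismiss as bookkeeping. Your excision argument tacitly assumes that the cancellable pair consists of two \emph{distinct} $2$-cells $c_1\neq c_2$ whose closures meet only along the single edge $e$. Neither assumption is automatic for a general combinatorial complex $X$: the fold may occur across an edge lying on the boundary of a single $2$-cell traversed twice, and even when $c_1\neq c_2$ their boundary cycles may share further edges and vertices. In the latter case, deleting the open cells $c_1,e,c_2$ and identifying the two remaining boundary arcs can identify points that are already joined in $D$, producing spherical pieces, disconnecting the complex, destroying planarity, or altering the boundary cycle --- so the resulting object need not be a disc diagram with the same boundary cycle, and minimality is not yet contradicted. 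The classical proofs (Lyndon--Schupp III.2, and McCammond--Wise Lem 2.17, which is exactly what the paper cites) handle these degenerate configurations by a more careful surgery: one removes the union of the closed cells, discards any sphere components created, and re-sews so that the boundary cycle is preserved while the number of $2$-cells strictly decreases. Treating these cases is the actual content of the lemma; without it your argument only covers the generic configuration. (In the paper's later applications $X$ is a square complex whose attaching maps are embedded $4$-cycles, which rules out the $c_1=c_2$ case, but the lemma as stated is for arbitrary combinatorial complexes, so the general argument is needed.)
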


Note that if $\omega$ is not embedded, then $D$ might not be homeomorphic to a disc. 

Suppose that the corners of each $p$-gon of a disc diagram $D$ are assigned real numbers, called \emph{angles}, with sum $(p-2)\pi$. Let $v$ be a vertex of $D$ whose link in $D$ has $n_v$ components. We define the \emph{curvature at $v$ of $D$} to be $(2-n_v)\pi$ minus the sum of all the angles at $v$. We will use the following `Gauss--Bonnet theorem'.

\begin{thm}[{\cite[Thm~4.6]{mccammond2002fans}}]
\label{thm:GB}
The sum of the curvatures at all the vertices~$v$ of $D$ equals $2\pi$.
\end{thm}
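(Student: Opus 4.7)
The plan is to follow the standard combinatorial Gauss--Bonnet argument: combine the topological identity $\chi(D) = V - E + F = 1$, coming from contractibility of $D$, with the face-wise angle-sum constraint $\sum_{\alpha \in f}\alpha = (p_f - 2)\pi$, so that all the geometric data cancels against the topological data.

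First I would expand the total curvature and regroup the angle sum face-wise. By definition,
\[
\sum_v \kappa(v) \;=\; 2\pi V - \pi\sum_v n_v - \sum_v \theta_v,
\]
where $\theta_v$ is the total angle at $v$. Reindexing the angle sum face by face yields $\sum_v \theta_v = \sum_f (p_f-2)\pi = \pi\sum_f p_f - 2\pi F$, and hence
\[
\sum_v \kappa(v) \;=\; 2\pi V + 2\pi F - \pi\Bigl(\sum_v n_v + \sum_f p_f\Bigr).
\]
The theorem therefore reduces to establishing the link-theoretic identity $\sum_v n_v + \sum_f p_f = 2E$, since then the right-hand side equals $2\pi(V - E + F) = 2\pi\chi(D) = 2\pi$.

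For this identity I would double-count germs at the vertex links. At each $v$, the link $L_v$ is a graph with vertex set the edge-germs at $v$ and edge set the corners of $2$-cells at $v$; summing across all $v$ gives $\sum_v V(L_v) = 2E$ and $\sum_v E(L_v) = \sum_f p_f$, so $\sum_v \chi(L_v) = 2E - \sum_f p_f$. Since $D$ is planarly embedded in $\mathbb{R}^2$, each component of every $L_v$ is either an arc (contributing $1$ to $\chi$) or a circle (contributing $0$), and under the convention governing $n_v$ in the curvature formula this gives $n_v = \chi(L_v)$, yielding the identity.

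The main delicate point is precisely this identification $n_v = \chi(L_v)$: one must verify that a link component at an interior vertex, necessarily a full circle, is accounted for consistently with the convention used to define $n_v$ in the curvature formula. The planar embedding of $D$ is essential here, since it forces the arc/circle dichotomy for link components; without the embedding one would pick up additional $\pi \sum_v b_1(L_v)$ terms. Once this bookkeeping is pinned down, the theorem follows from Euler's formula by direct substitution.
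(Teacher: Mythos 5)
The paper does not actually prove this statement---it is imported wholesale from the cited reference---so there is no internal argument to compare against; your Euler-characteristic double count is the standard proof and is, in substance, the argument of that reference. The computation itself is sound: with $\theta_v$ the angle sum at $v$ you get $\sum_v \kappa(v) = 2\pi V + 2\pi F - \pi\bigl(\sum_v n_v + \sum_f p_f\bigr)$, and counting edge-germs and corners of $2$-cells gives $\sum_v \chi(L_v) = 2E - \sum_f p_f$ for the links $L_v=\lk(v,D)$, so everything reduces to the identification $n_v=\chi(L_v)$ together with $\chi(D)=V-E+F=1$.

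The point you flag as ``delicate'' is, however, a genuine issue rather than bookkeeping to be pinned down: with the paper's literal definition ($n_v$ = number of components of the link) the identity $n_v=\chi(L_v)$ fails precisely at interior vertices, whose links are circles, where $n_v=1$ but $\chi(L_v)=0$. Indeed, under that literal reading the statement itself is false: for a wheel of $k$ triangles around a single interior vertex every vertex has $n_v=1$, the angles sum to $k\pi$, and the curvatures sum to $(k+1)\pi-k\pi=\pi\neq 2\pi$. The convention in the cited source is $\kappa(v)=2\pi-\pi\chi(\lk(v,D))-\theta_v$, so that interior vertices contribute $2\pi-\theta_v$; this is also the convention the paper actually uses downstream (e.g.\ in the proof of Lemma~\ref{lem:CAT(0)}, where interior vertices of a locally $\mathrm{CAT}(0)$ diagram are treated as having curvature $2\pi-\theta_v\le 0$). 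Once you replace your ``by convention'' step with this definition of $\kappa(v)$, your proof closes---and the planar arc-or-circle analysis of link components becomes unnecessary, since $\sum_v \chi(L_v)=2E-\sum_f p_f$ holds for any finite combinatorial $2$-complex and the hypothesis on $D$ enters only through $\chi(D)=1$.
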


Here is an example of an application of Theorem~\ref{thm:GB}. However, we will be using without reference various similar results on $2$-dimensional $\mathrm{CAT}(0)$ simplicial complexes, especially in Sections~\ref{sec:typeI}--\ref{sec:typeII}.

\begin{lem} 
\label{lem:CAT(0)}
Let $Y$ be a $2$-dimensional $\mathrm{CAT}(0)$ simplicial complex of type $\{\hat s,\hat t, \hat p\}$, all of whose triangles have type $\hat s\hat t\hat p$ with angles $\frac{\pi}{4},\frac{\pi}{2},\frac{\pi}{4}$ or $\frac{\pi}{6},\frac{\pi}{2},\frac{\pi}{3}$. 
\begin{enumerate}[(i)]
\item
Then any induced $4$-cycle $\omega$ in $Y^1$ has type $\hat s\hat p\hat s\hat p$ and has a common neighbour of type $\hat t$.
\item
In the $\frac{\pi}{6},\frac{\pi}{2},\frac{\pi}{3}$ case, for $n\leq 6$, any embedded $2n$-cycle $\omega$ in $Y^1$ of type $(\hat t\hat p)^n$ has a common neighbour of type $\hat s$ and satisfies $n=6$. 
\item
In the $\frac{\pi}{4},\frac{\pi}{2},\frac{\pi}{4}$ case, for $n\leq 4$, any embedded $2n$-cycle $\omega$ in $Y^1$ of type $(\hat t\hat p)^n$ has a common neighbour of type $\hat s$ and satisfies $n=4$. 
\end{enumerate}
\end{lem}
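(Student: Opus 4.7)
The proof applies Gauss--Bonnet (Theorem~\ref{thm:GB}) to a minimal disc diagram $f\colon D \to Y$ with boundary~$\omega$, provided by Lemma~\ref{lem:VK}. Let $T$ and $V_{\mathrm{int}}$ denote the numbers of triangles and interior vertices of~$D$, and $2n$ the length of~$\omega$. Since each triangle has total angle $\pi$, Gauss--Bonnet gives $T = 2n + 2V_{\mathrm{int}} - 2$; since $Y$ is $\mathrm{CAT}(0)$, each interior vertex of~$D$ has angle sum $\ge 2\pi$, so
\[
\sum_{v \in \partial D}(\pi - \theta_v) \;\ge\; 2\pi.
\]
These two consequences drive all three parts.

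For part~(i), so $n=2$: the case $V_{\mathrm{int}} = 0$ ($T = 2$) produces two triangles sharing an interior edge that is a chord of $\omega$, contradicting inducedness. The case $V_{\mathrm{int}} = 1$ ($T = 4$) is a fan around a single interior vertex~$v$; the $\mathrm{CAT}(0)$ angle bound at~$v$ (requiring $\ge 4$ triangles if $v$ has type $\hat t$, but $\ge 8$ or $\ge 12$ if type $\hat s$, and $\ge 8$ or $\ge 6$ if type $\hat p$) forces $v$ to be of type $\hat t$. Hence the link of $v$ in $D$, which equals $\omega$, has type $\hat s\hat p\hat s\hat p$ and $v$ is the desired common neighbour. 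The cases $V_{\mathrm{int}} \ge 2$ are ruled out by a corner-counting argument: the $T$ corners of each type do not suffice to cover the $\mathrm{CAT}(0)$-mandated multiplicities at the interior vertices.

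For parts~(ii) and~(iii): every triangle of $D$ contains exactly one $\hat s$-corner, while $\omega$ has no $\hat s$-vertex, so $D$ has at least one interior vertex $v$ of type~$\hat s$, with $\ge 12$ (resp.\ $\ge 8$) triangles incident to it by the $\mathrm{CAT}(0)$ angle bound. Substituting into $T = 2n + 2V_{\mathrm{int}} - 2$ and using the hypothesis $n \le 6$ (resp.\ $n \le 4$), a corner count on the $\hat s$-, $\hat t$-, $\hat p$-corners weighed against the $\mathrm{CAT}(0)$ lower bounds at each interior vertex forces $V_{\mathrm{int}} = 1$; so $D$ is a fan of $2n$ triangles around~$v$, and $v$ is the desired common neighbour of type $\hat s$. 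Then $\omega$ is an embedded cycle in the link of~$v$, a metric graph whose edges have length $\pi/6$ (resp.\ $\pi/4$) and whose systole is $\ge 2\pi$ by $\mathrm{CAT}(0)$ of~$Y$. Hence $2n\cdot\pi/6 \ge 2\pi$ (resp.\ $2n\cdot\pi/4 \ge 2\pi$), giving $n \ge 6$ (resp.\ $n \ge 4$); combined with the hypothesis this yields equality.

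The main difficulty lies in the corner-counting step for~(ii) and~(iii): systematically excluding configurations with $V_{\mathrm{int}} \ge 2$ or with extra interior vertices of types other than~$\hat s$ appearing alongside the mandatory $\hat s$-vertex. This uses the tight budget of $T$-corners of each type and the absence of $\hat s$-vertices from~$\omega$ to force the contradictions.
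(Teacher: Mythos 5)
Your overall frame (minimal disc diagram via Lemma~\ref{lem:VK} plus Gauss--Bonnet, Theorem~\ref{thm:GB}) is the right one, but the step you yourself flag as the main difficulty --- that a corner count by types forces $V_{\mathrm{int}}=1$, and in part (i) excludes $V_{\mathrm{int}}\ge 2$ --- is not carried out, and it cannot be carried out by counting alone. With $T=2n+2V_{\mathrm{int}}-2$, all per-type corner budgets and even the full angle bookkeeping can be satisfied with many interior vertices. For instance, in part (ii) with $n=6$ take $V_{\mathrm{int}}=7$: two interior vertices of type $\hat s$ with $12$ corners each, three of type $\hat t$ with $4$ each, two of type $\hat p$ with $6$ each, and $T=24$. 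Then each type supplies exactly $24$ corners, the $12$ boundary vertices use exactly two corners each, every interior vertex is exactly flat, the boundary angle sum is exactly $(2n-2)\pi$, and Gauss--Bonnet holds with equality; nothing numerical distinguishes this phantom configuration from the genuine wheel with $V_{\mathrm{int}}=1$. A similarly balanced configuration exists for part (i) in the $\frac{\pi}{4},\frac{\pi}{2},\frac{\pi}{4}$ case (boundary of type $\hat s\hat p\hat s\hat p$, five interior vertices, $T=12$). So the assertion that ``the $T$ corners of each type do not suffice'' is a genuine gap, not a routine omission.

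What actually eliminates these configurations is a rigidity argument in the equality case, and this is how the paper proceeds: the $4$ (resp.\ $2n$) triangles containing the boundary edges are distinct (by inducedness in (i); in (ii),(iii) a chord would join two vertices of the same type, which is impossible), and each contributes at least $\frac{\pi}{2}$ (resp.\ $\frac{\pi}{2}+\frac{\pi}{3}$, $\frac{\pi}{2}+\frac{\pi}{4}$) to the boundary angle sum, while Gauss--Bonnet bounds that sum by $(2n-2)\pi$. This alone already yields $n\ge 6$ (resp.\ $n\ge 4$) in (ii),(iii), with no information about the interior needed. In the forced equality case no other triangle meets $\partial D$, so each boundary vertex lies in exactly the two triangles of its two boundary edges; connectedness of its link in $D$ then makes consecutive boundary triangles share an edge, hence all of them share a single apex, and the resulting wheel is a subdiagram of $D$ with boundary $\partial D$, hence equals $D$. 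That combinatorial closing-up step is what your counting was standing in for; once you have it, your link-systole derivation of $n\ge 6$ (resp.\ $n\ge 4$) is correct but redundant, since the same inequality already fell out of the boundary estimate.
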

\begin{proof} For part (i), let $D\to Y$ be a minimal disc diagram bounded by the $4$-cycle~$\omega$. Let $T_1,\ldots, T_4\subset D$ be the triangles containing the boundary edges. Note that since $\omega$ is induced, the~$T_i$ are distinct. Furthermore, the sum of the angles of~$T_i$ incident to $\partial D$ is at least~$4\cdot \frac{\pi}{2}$. Since $D$ is minimal, it is locally $\mathrm{CAT}(0)$, and by Theorem~\ref{thm:GB} 
the sum of the angles at $\partial D$ is at most~$2\pi$. Consequently, we have equality, and there are no other triangles in $D$ incident to~$\partial D$. As a result, there are no other triangles in $D$, as desired.

For part (ii), we consider $2n$ triangles $T_i\subset D$ containing the boundary edges. The sum of the angles of $T_i$ incident to $\partial D$ is at least $2n\big(\frac{\pi}{2}+\frac{\pi}{3}\big)=\frac{5n\pi}{3}$. By Theorem~\ref{thm:GB}, 
the sum of the angles at $\partial D$ is at most $2n\pi-2\pi$. Consequently $\frac{5n\pi}{3}\leq 2n\pi-2\pi$, and so $n\geq 6$  and we conclude as before. This part also follows from \cite[Lem~9.8]{huang2023labeled}.
The proof of part (iii) is analogous to that of part (ii).
\end{proof}

\section{Complexes for hyperplane arrangements}
\label{sec:prelim1}
\subsection{Hyperplane arrangements and their dual polyhedra}\label{subsec:zonotope}
A \emph{hyperplane arrangement} in the vector space $\mathbb R^n$ is a locally finite family $\mathcal A$ of affine hyperplanes.
 Let $\cq(\ca)$ be the set of nonempty affine subspaces that are intersections of subfamilies of $\ca$ (here $\mathbb R^n\in\cq(\ca)$ as the intersection of an empty family). Each point $x\in\mathbb R^n$ belongs to a unique element of $\cq(\ca)$ that is minimal with respect to inclusion, called the \emph{support} of~$x$. A \emph{fan} of $\ca$ is a maximal connected subset of $\mathbb R^n$ consisting of points with the same support. 
Denote the collection of all fans of $\ca$ by $\fan(\ca)$.
Note that $\mathbb R^n$ is the (disjoint) union of $\fan(\ca)$. 
We define a partial order on $\fan(\ca)$ so that $U_1<U_2$ if $U_1$ is contained in the closure of $U_2$.
Let $b\Si_{\ca}$ be the simplicial complex that is the geometric realisation of this poset.
For each $U\in \fan(\ca)$, we choose a point $x_U\in U$. 
This gives a piecewise linear embedding $b\Si_{\ca}\subset\mathbb R^n$ sending the vertex of $b\Si_\ca$ corresponding to $U$ to $x_U$.

By  
\cite[pp. 606-607]{s87}, the simplicial complex $b\Si_{\ca}$ is the barycentric subdivision of a combinatorial complex $\Si_{\ca}$ whose vertices correspond to the top-dimensional fans.
Namely, for each vertex of $b\Si_{\ca}$ corresponding to $U\in \fan(\ca)$, the union of all the simplices of $b\Si_{\ca}$ corresponding to chains with smallest element $U$ is homeomorphic to a closed disc \cite[Lem 6]{s87}, which becomes the face of $\Si_{\ca}$ corresponding to~$U$. We will  sometimes view $b\Si_\ca$ and $\Si_\ca$ as subspaces of $\mathbb R^n$.
For $B\in \cq(\ca)$, a face $F$ of $\Si_{\ca}$ is \emph{dual} to $B$, if $B$ contains the fan $U$ corresponding to $F$ and $\dim(B)=\dim(U)$.
We equip the 1-skeleton of $\Si_{\ca}$ with the path metric $d$ such that each edge has length $1$. Given vertices $x,y\in \Si^0_{\ca}$, it turns out that $d(x,y)$  is the number of hyperplanes separating $x$ and~$y$ \cite[Lem~1.3]{deligne}.  

\begin{lem}[{\cite[Lem 3]{s87}}]
	\label{lem:gate}
	Let $x\in \Si^0_{\ca}$ and let $F$ be a face of $\Si_{\ca}$. Then there exists unique $\Pi_F(x)\in F^0$ such that $d(x,\Pi_F(x))\le d(x,y)$ for any $y\in F^0$. \end{lem}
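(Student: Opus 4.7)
The plan is to reduce the problem to the language of sign vectors, using the classical distance formula $d(x,y) = |\{H \in \ca : H \text{ separates } x \text{ from } y\}|$ from \cite[Lem~1.3]{deligne}. First, I would identify the face $F$ with the fan $U \in \fan(\ca)$ it arises from, and partition $\ca$ into $\ca_F := \{H \in \ca : U \subset H\}$ and its complement $\ca \setminus \ca_F$. A short check shows that for each $H \in \ca \setminus \ca_F$, the connected set $U$ lies strictly on one side of $H$, call it the $F$-side, and that every vertex $y \in F^0$ (being a top-dimensional fan with $U$ in its closure) must also lie on the $F$-side. Hence each $y \in F^0$ is determined by its signs on $\ca_F$ alone, and the correspondence between $F^0$ and realisable sign vectors on $\ca_F$ is a bijection.

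The key computation is then to split the distance formula along this partition:
\[
d(x,y) \;=\; k(x,F) \;+\; \bigl|\{H \in \ca_F : x,\, y \text{ on opposite sides of } H\}\bigr|,
\]
where $k(x,F) := |\{H \in \ca \setminus \ca_F : x \text{ on the non-}F\text{-side of } H\}|$ depends only on $x$ and~$F$. This immediately gives $d(x,y) \ge k(x,F)$ with equality if and only if $y$ has the same sign as $x$ on every hyperplane of $\ca_F$. Since distinct top-dimensional fans have distinct sign vectors, such a $y$ is unique if it exists, settling the uniqueness half.

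For existence I plan to construct $\Pi_F(x)$ geometrically. Pick $p \in U$ and let $q \in \mathbb R^n$ be the regular point representing $x$. For all sufficiently small $\epsilon > 0$, the point $z_\epsilon := (1-\epsilon)p + \epsilon q$ avoids every hyperplane of $\ca$: on $H \in \ca_F$ because $p \in H$ while $q \notin H$, and on $H \notin \ca_F$ because $p \notin H$ and $z_\epsilon$ stays close to $p$. So $z_\epsilon$ lies in a unique top-dimensional fan $C$. Being near $p$, the point $z_\epsilon$ lies on the $F$-side of every $H \notin \ca_F$, so $C \in F^0$; and near $p \in H$ for $H \in \ca_F$, the perturbation towards $q$ lies on the same side of $H$ as $q$, so $C$ agrees with $x$ on $\ca_F$. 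Setting $\Pi_F(x) := C$ then finishes the plan.

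The only mild obstacle is the translation between the geometric description of $F^0$ (chambers with $U$ in their closure) and the sign-vector description (chambers on the $F$-side of every $H \notin \ca_F$); this follows from the definition of the support and the connectedness of fans. Everything else is a routine sign-vector calculation, and the formula above makes both the existence and uniqueness statements essentially automatic.
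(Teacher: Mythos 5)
Your argument is correct. Note that the paper gives no proof of this lemma at all --- it is imported directly from Salvetti's paper --- so the only comparison available is with the classical argument, and yours is exactly that: the standard ``gate property'' of the chamber graph of an arrangement, obtained by splitting the separating-hyperplane count of \cite[Lem~1.3]{deligne} over $\ca_F=\{H\in\ca : U\subset H\}$ and its complement. Two details should be written out to make it airtight. First, in the existence step the uniform choice of $\epsilon$ uses the local finiteness of $\ca$: only finitely many hyperplanes meet the compact segment $[p,q]$ (and for $H\notin\ca_F$ one indeed has $p\notin H$, since $U\subset H$ or $U\cap H=\emptyset$), so a single small $\epsilon$ works for all $H$ simultaneously. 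Second, the ``mild obstacle'' you flag --- that a chamber lying on the $F$-side of every $H\notin\ca_F$ has $U$ in its closure --- is genuinely needed (it is the converse of the inclusion you verify for $y\in F^0$), but it is a one-line convexity argument: for $w$ in such a chamber $C$ and $p\in U$, every $H\in\ca$ has constant nonzero sign on the points $(1-t)p+tw$ with $t\in(0,1]$, so these points stay in $C$, whence $p\in\overline{C}$; since the inequalities cutting out $\overline{C}$ depend only on the sign vector, which is constant on the fan $U$, this gives $U\subset\overline{C}$. (In your construction you can even bypass this equivalence: the sign vector of $z_\epsilon$ is constant for all small $\epsilon$, so $p=\lim_{\epsilon\to 0}z_\epsilon$ already lies in the closure of the chamber $C$ you built, and hence so does $U$.) With these details supplied, existence and uniqueness follow exactly as you say.
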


The vertex $\Pi_F(x)$ is called the \emph{projection} of $x$ to $F$.  A hyperplane $H\in \ca$ \emph{crosses} a face $F$ of $\Si_{\ca}$ if $H$ is dual to an edge of $F$. For an edge $xy$ of $\Si_\ca$, if the hyperplane dual to $xy$ crosses $F$, then $\Pi_F(x)\Pi_F(y)$ is an edge dual to the same hyperplane, otherwise we have $\Pi_F(x)=\Pi_F(y)$. Thus $\Pi_F$ extends naturally to a map $\Si^1_\ca\to F^1$.

\begin{lem}
\label{lem:unlabelled}
	Let $E$ and $F$ be faces of $\Si_\ca$. Then $\Pi_F(E^0)=F'^0$ for some face $F'\subset F$. 
\end{lem}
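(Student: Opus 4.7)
The plan is to describe the projection $\Pi_F$ in terms of sign patterns on hyperplanes, to identify $\Pi_F(E^0)$ with the chambers of $F$ cut out by explicit sign constraints, and to recognise this set as the vertex set of a subface of $F$ obtained by perturbing $U_F$ toward $U_E$.

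Let $U_F, U_E\in\fan(\ca)$ be the fans corresponding to $F$ and $E$, and set $\ca_F=\{H\in\ca:U_F\subset H\}$ and $\ca_E=\{H\in\ca:U_E\subset H\}$. A short check identifies $\ca_F$ with the set of hyperplanes crossing $F$, and shows that $F^0$ consists exactly of those chambers lying on the same side of every $H\notin\ca_F$ as $U_F$. Combining this with the description $d(x,y)=\#\{H\in\ca:H\text{ separates }x,y\}$ and Lemma~\ref{lem:gate}, I will characterise $\Pi_F(v)$, for any chamber $v$, as the unique vertex of $F$ on the same side of every $H\in\ca_F$ as $v$. Existence comes from a perturbation: for $x\in U_F$ and $y$ in the chamber $v$, the point $x+\epsilon(y-x)$ lies, for small $\epsilon>0$, in a chamber adjacent to $U_F$ whose sign on each $H\in\ca_F$ agrees with that of $v$, since $x\in H$ forces $\operatorname{sign}\langle n_H,x+\epsilon(y-x)\rangle=\operatorname{sign}\langle n_H,y-x\rangle$.

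Applying this to $v\in E^0$ pins down the image. Because $v$ matches $U_E$ on every $H\notin\ca_E$, the sign of $\Pi_F(v)$ on each $H\in\ca_F\setminus\ca_E$ is forced to equal that of $U_E$, while signs on $\ca_F\cap\ca_E$ remain free. Let $U_{F'}$ be the fan containing the perturbation $x+\epsilon(y-x)$ for $x\in U_F$, $y\in U_E$ and small $\epsilon>0$; the same sign computation shows $U_{F'}\subset H$ precisely when $H\in\ca_F\cap\ca_E$, that $U_{F'}$ matches $U_E$ on $\ca_F\setminus\ca_E$ and matches $U_F$ on $\ca\setminus\ca_F$, and that $U_F\subset\overline{U_{F'}}$. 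Hence $U_{F'}$ corresponds to a subface $F'\subset F$ whose vertex set $F'^0$ consists of those chambers of $F^0$ agreeing with $U_E$ on $\ca_F\setminus\ca_E$, and the preceding paragraph gives $\Pi_F(E^0)\subset F'^0$.

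For the reverse inclusion, I would reverse the roles of $E$ and $F$: given $w\in F'^0$, let $v$ be the chamber containing $x'+\epsilon(y'-x')$ for $x'\in U_E$, $y'\in w$, small $\epsilon>0$. The perturbation argument gives $v\in E^0$, and $v$ and $w$ agree on each $H\in\ca_F\cap\ca_E$ (by the perturbation) and on each $H\in\ca_F\setminus\ca_E$ (where both equal $U_E$'s sign), so $\Pi_F(v)=w$. The main care required is keeping straight the four possible positions of a hyperplane relative to $\ca_F$ and $\ca_E$; once the perturbation formula $\operatorname{sign}\langle n_H,y-x\rangle$ is set up, each case is mechanical.
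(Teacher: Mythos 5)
Your argument is correct, and it reaches the same destination as the paper — the image is the vertex set of the subface $F'$ of $F$ dual to $B'=\bigcap(\ca_F\cap\ca_E)$ — but by a more hands-on route. The paper does not re-derive a description of the gate: it uses the fact, recorded just before the lemma, that $\Pi_F$ sends an edge of $E$ to an edge (dual to the same hyperplane) when that hyperplane crosses $F$ and collapses it otherwise, so that $\Pi_F(E)$ can only move in directions dual to $\ca'=\ca_E\cap\ca_F$; it then takes a face $E'\subset E$ dual to $B=\bigcap\ca'$, projects one vertex, and identifies the face $F'\subset F$ dual to $B$ containing the image. You instead prove from scratch the sign-vector characterisation of the gate ($\Pi_F(v)$ is the unique chamber of $F^0$ agreeing with $v$ on $\ca_F$ and with $U_F$ elsewhere) and realise $F'$ concretely as the fan of the perturbed points $x+\epsilon(y-x)$, $x\in U_F$, $y\in U_E$; the reverse inclusion by swapping the roles of $E$ and $F$ is clean and is the part the paper leaves most implicit. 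What your version buys is self-containedness and an explicit description of both $\Pi_F$ and $F'$; what it costs is that you must verify a few routine facts you currently gloss: that chambers are exactly the nonempty full sign-vector classes (convexity), that $U_F\subset\overline{U_{F'}}$ (not just one point $x$ — use that the set of points in $B'$ with the fixed sign vector is convex, so all perturbed points over all $x\in U_F$ lie in one fan), and that local finiteness plus compactness of the segment lets you choose $\epsilon$ uniformly over the (finitely many) hyperplanes meeting it; with the affine form $\langle n_H,\cdot\rangle-c_H$ in place of the linear one, each of these is mechanical.
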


\begin{proof}
 Let $\ca'$ be the collection of all the hyperplanes that cross both $E$ and $F$. Note that for any edge-path $P$ in $E$, any edge of $\Pi_F(P)$ is dual to an element of $\ca'$.

Let $B\in\cq(\ca)$ be the intersection of all the elements of $\ca'$. If $\ca'=\emptyset$, then we set $B=\mathbb R^n$. Let $E'$ be any face of $E$ dual to $B$, which is a vertex for $B=\mathbb R^n$, and let $w$ be any vertex of $E'$. By the above discussion, $\Pi_F(w)$ is contained in a face $F'$ of $\Si$ dual to $B$. Moreover, we have $F'\subset F$. Furthermore, $\Pi_F({E'}^0)={F'}^0$ and $\Pi_F(E^0)\subset {F'}^0$, as desired. 
\end{proof}

In the situation of Lemma~\ref{lem:unlabelled}, we write $F'=\Pi_F(E)$. The assignment $E\to \Pi_F(E)$ gives rise to a piecewise linear map $\Pi_F\colon\Si_\ca \cong b\Si_\ca\to bF\cong F$.

\subsection{Salvetti complex}
\label{sec:Salvetti}
Let $V=\Si_\ca^0$. 
Consider the set of pairs $(F,v)$, where $F$ is a face of $\Si_\ca$ and $v\in V$. We define an equivalence relation $\sim$ on this set by $(F,v)\sim (F',v')$ whenever  $F=F'$ and $\Pi_F(v') = \Pi_F(v).$
Note that each equivalence class $[F,v']$ contains a unique representative of form $(F,v)$ with $v\in F^0$.  The \emph{Salvetti complex} $\widehat\Si_\ca$ is obtained from  $\Si_\ca\times V$ (a disjoint union of copies of $\Si_\ca$) by identifying faces $F\times v$ and $F\times v'$ whenever $[F,v]=[F,v']$ \cite[pp.\ 608]{s87}.
For example, for each edge $F=v_0v_1$ of $\Si_\ca$, we obtain two edges $F\times v_0$ and $F\times v_1$ of $\widehat\Si_\ca$, glued along their endpoints $v_0\times v_0$ and $v_1\times v_1$. We orient the edge $F\times v_0$ from $v_0\times v_0$ to $v_1\times v_0=v_1\times v_1$. Then $\widehat\Si_\ca^0=V$, while $\widehat\Si_\ca^1$ is obtained from $\Si_\ca^1$ by doubling each edge.  
Thus each edge of the form $F\times v$ is oriented so that its endpoint is farther from $v$ in~$F^1$ than its starting point.

There is a natural map $p\colon\widehat\Si_\ca\to\Si_\ca$ forgetting the second coordinate. 
For each subcomplex $Y$ of $\Si_\ca$, we write $\widehat Y=p^{-1}(Y)$. 
If $F$ is a face of $\Si_\ca$, then $\widehat F$ is a \emph{standard subcomplex} of $\widehat\Si_\ca$. 

\begin{lem}[{\cite[Lem 4.5]{huang2024}}]
	\label{lem:compactible}
	Let $E$ and $F$ be faces of $\Si_\ca$.
	If $[E,v_1]=[E,v_2]$, then $[\Pi_F(E),v_1]=[\Pi_F(E),v_2]$.
\end{lem}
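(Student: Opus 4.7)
The plan is to translate the equivalence relation into a side-of-hyperplane condition and then compare the hyperplanes crossing $E$ with those crossing $E':=\Pi_F(E)$.

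First, I would rewrite the statement: by the definition of $\sim$, the conclusion $[\Pi_F(E), v_1]=[\Pi_F(E), v_2]$ is equivalent to $\Pi_{E'}(v_1)=\Pi_{E'}(v_2)$, and similarly for the hypothesis at $E$. Using Lemma~\ref{lem:gate} together with the fact that $d(x,y)$ equals the number of hyperplanes of $\ca$ separating $x$ from $y$, I would establish the characterisation that $\Pi_F(v)$ is the unique vertex of $F^0$ lying on the same side as $v$ of every hyperplane crossing $F$. Indeed, hyperplanes of $\ca$ not crossing $F$ contribute a fixed count to $d(v,\cdot)$ on $F^0$, so minimising $d(v,\cdot)$ forces $\Pi_F(v)$ to agree with $v$'s side on every crossing hyperplane, and uniqueness comes from Lemma~\ref{lem:gate} (equivalently, distinct vertices of $F$ are separated by some hyperplane crossing $F$). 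Consequently $[F,v_1]=[F,v_2]$ if and only if no hyperplane crossing $F$ separates $v_1$ from $v_2$.

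With this reformulation, the hypothesis says that $v_1,v_2$ lie on the same side of every hyperplane crossing $E$, while the conclusion to be proved is that they lie on the same side of every hyperplane crossing $E'$. Hence the lemma reduces to the inclusion
\[
\{H\in\ca : H\text{ crosses }E'\}\ \subset\ \{H\in\ca : H\text{ crosses }E\}.
\]

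To prove this inclusion, I would use the construction in the proof of Lemma~\ref{lem:unlabelled}: $E'$ is dual to $B=\bigcap_{H\in\ca'}H$, where $\ca'$ is the set of hyperplanes crossing both $E$ and $F$. The hyperplanes of $\ca$ crossing a face are precisely those containing its dual affine subspace in $\cq(\ca)$, a standard feature of the zonotope structure on $\Si_\ca$ that is visible from the observation that a hyperplane not containing the support of $F$'s fan has all of $F$ on one side. Let $B_E$ be the dual subspace of $E$; since every member of $\ca'$ crosses $E$, each $H\in\ca'$ contains $B_E$, whence $B=\bigcap_{H\in\ca'}H\supset B_E$. Consequently any hyperplane containing $B$ also contains $B_E$, so every hyperplane crossing $E'$ crosses $E$, which closes the argument.

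The main obstacle is the identification ``hyperplanes crossing a face = hyperplanes containing its dual affine subspace''; this is standard from the zonotope combinatorics of hyperplane arrangements, and I would either cite it from \cite{s87} or record it briefly via the side-of-hyperplane argument above.
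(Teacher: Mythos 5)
Your argument is correct, and note that the paper does not actually prove this lemma — it is quoted from \cite[Lem 4.5]{huang2024} — so there is no in-paper proof to match; your proposal is a legitimate self-contained route. The key reduction is sound: by the proof of Lemma~\ref{lem:unlabelled}, $E'=\Pi_F(E)$ is dual to $B=\bigcap_{H\in\ca'}H$, the support of its fan is exactly $B$, and since each $H\in\ca'$ crosses $E$ it contains the support $B_E$ of $E$'s fan, so $B\supset B_E$; hence every hyperplane crossing $E'$ (equivalently, containing $B$) contains $B_E$ and therefore crosses $E$. Combined with the reformulation ``$\Pi_F(v_1)=\Pi_F(v_2)$ iff no hyperplane crossing $F$ separates $v_1$ and $v_2$'', this gives the statement, also in the degenerate case $\ca'=\emptyset$ where $E'$ is a vertex.

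Two auxiliary facts carry the weight and deserve more than the one-line justifications you give. First, the identification ``hyperplanes crossing a face $=$ hyperplanes containing the support of its dual fan'', which you correctly flag as the standard zonotope/arrangement combinatorics to be cited or proved by a local perturbation argument near a point of the fan. Second, and this is the point your sketch slightly underplays: the equivalence you use at $E$ (the ``only if'' direction) needs that no hyperplane crossing $F$ separates $v$ from $\Pi_F(v)$, i.e.\ that the sign pattern of $v$ on the crossing hyperplanes is realised by a vertex of $F$. Your remark that non-crossing hyperplanes contribute a constant to $d(v,\cdot)$ on $F^0$ reduces $d(v,\cdot)$ to the count of separating crossing hyperplanes, but minimality alone does not force this count to be zero unless you know such a realising vertex exists. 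This is again standard (vertices of $F$ correspond bijectively to the chambers of the arrangement localised at the support $B_F$, so every local sign pattern is realised; equivalently, the face is gated and the separators of $v$ from its gate separate $v$ from all of $F^0$), and it is implicit in \cite{s87}, but it should be recorded alongside the other fact rather than treated as automatic. With those two facts stated, your proof is complete.
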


\begin{defin}
	\label{def:retraction}
	Let $F$ be a face of $\Si_\ca$. Consider the disjoint union of $V$ copies of the map $\Pi_F$, where $\Pi_F\times v\colon\Si_\ca\times v\to  F\times v$. It follows from Lemma~\ref{lem:compactible} that this map factors to a map $\Pi_{\widehat F}\colon \widehat\Si_\ca\to \widehat F$, which is a retraction (see \cite[Thm 2.2]{godelle2012k}).
\end{defin}

The following key property of $\Pi_{\widehat F}$ follows directly from Definition \ref{def:retraction}.

\begin{lem}	\label{lem:retraction property}
	Let $E$ and $F$ be faces of $\Si_\ca$. Then $\Pi_{\widehat F}(\widehat E)=\widehat{\Pi_F(E)}$.
\end{lem}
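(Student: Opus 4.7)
My plan is to read off the equality from the construction of $\widehat\Si_\ca$ as a quotient of $\Si_\ca\times V$. Recall that cells of $\widehat\Si_\ca$ are classes $[G,v]$ for $G$ a face of $\Si_\ca$ and $v\in V$, subject to $[G,v]=[G,v']$ iff $\Pi_G(v)=\Pi_G(v')$; by Definition~\ref{def:retraction}, $\Pi_{\widehat F}$ is the cellular map sending $[G,v]$ to $[\Pi_F(G),v]$. The forward inclusion is then immediate: each cell of $\widehat E$ has the form $[E',v]$ with $E'\le E$, hence $\Pi_F(E')\le \Pi_F(E)$, so $[\Pi_F(E'),v]$ is a cell of $\widehat{\Pi_F(E)}$.

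For the reverse inclusion, set $F':=\Pi_F(E)$. I would first observe that every cell $[F'',w']$ with $F''\le F'$ and $w'\in F''^0\subset F'^0$ lies in the closure of the top-dimensional cell $[F',w']$, since $\Pi_{F''}(w')=w'$ when $w'\in F''^0$. So it suffices to find a preimage of $[F',w]$ for each $w\in F'^0$. By Lemma~\ref{lem:unlabelled}, which gives $\Pi_F(E^0)=F'^0$, I would pick $v\in E^0$ with $\Pi_F(v)=w$; then $[E,v]$ is a cell of $\widehat E$ and $\Pi_{\widehat F}([E,v])=[F',v]=[F',w]$, as required.

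The whole argument is essentially combinatorial bookkeeping built into the construction of $\widehat\Si_\ca$ and Definition~\ref{def:retraction}, so there is no real obstacle. The only subtle point is the nearest-vertex identity $\Pi_{F'}(v)=\Pi_F(v)$ when $\Pi_F(v)\in F'^0$, used to pass from $[F',v]$ to $[F',w]$ in the last step; this follows at once from Lemma~\ref{lem:gate} together with the inclusion $F'^0\subset F^0$.
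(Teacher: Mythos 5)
Your proof is correct and takes essentially the same route as the paper, which offers no argument beyond asserting that the statement follows directly from Definition~\ref{def:retraction}: you simply make the definitional bookkeeping explicit, with Lemma~\ref{lem:unlabelled} supplying the vertex surjectivity $\Pi_F(E^0)=F'^0$ and Lemma~\ref{lem:gate} the identity $\Pi_{F'}(v)=\Pi_F(v)$. The one point you (like the paper) leave implicit is that $\Pi_F$ maps the closed face of $E$ \emph{onto} the closed face of $F'=\Pi_F(E)$ --- equivalently, every face of $F'$ arises as $\Pi_F(G)$ for some face $G\le E$ --- which is exactly what makes $\Pi_{\widehat F}([E,v])=[F',v]$ an equality of closed cells and lets the faces of the top cells $[F',w]$ be absorbed into the image in your final step.
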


Let $\mathcal A\otimes \mathbb C$ be the complexification of $\mathcal A$, which is a collection of affine complex hyperplanes in $\mathbb C^n$. Define
$$M(\mathcal A\otimes \mathbb C)=\mathbb C^n - \bigcup_{H\in\mathcal A} (H\otimes \mathbb C).$$
It follows from \cite[Thm 1]{s87} that $\widehat\Si_\ca$ is homotopy equivalent to $M(\mathcal A\otimes \mathbb C)$, and so they have isomorphic fundamental groups. 

In the remaining part of this subsection, we assume that $W_\Lambda$ is a finite Coxeter group with its canonical representation $\rho
\colon W_\Lambda\to \mathbf{GL}(n,\mathbb R)$ \cite[Chap~6.12]{davis2012geometry}. A \emph{reflection} of $W_\Lambda$ is a conjugate of $s\in S$. Each reflection fixes a hyperplane in~$\mathbb R^n$, which we call a \emph{reflection hyperplane}. Let $\mathcal A$ be the family of all reflection hyperplanes. The hyperplane arrangement $\ca$ is the \emph{reflection arrangement} associated with $W_\Lambda$. We denote $\Si_\Lambda=\Si_\ca$ and $\od_\Lambda=\od_\ca$. Since $W_\Lambda$ permutes the elements of~$\mathcal A$, there is an induced action $W_\Lambda\act M(\ca\otimes \mathbb C)$ and an induced action $W_\Lambda\act \widehat\Si_\ca$, which are free.
The union of $\ca$ cuts the unit sphere of $\mathbb R^n$ into a simplicial complex, which is isomorphic to the Coxeter complex $\bC_\Lambda$ and dual to $\Si_\Lambda$.  The following are standard \cite[\S 3.2 and 3.3]{paris2014k}. 
\begin{itemize}
	\item $\pi_1M(\ca\otimes \mathbb C)=PA_\Lambda$ \cite{lek},
	\item $\pi_1(M(\ca\otimes \mathbb C)/ W_\Lambda)=\pi_1(\widehat\Si_\Lambda/W_\Lambda)=A_\Lambda$,
	\item $\widehat\Si^2_\Lambda/W_\Lambda$ is isomorphic to the presentation complex of $A_\Lambda$.
\end{itemize}

\begin{defin}
	\label{def:support} 
    Note that 
	$\od^1_\Lambda$ is isomorphic to the Cayley graph of $W_\Lambda$ (with edges appropriately oriented), and  $\Si^1_\Lambda$ is isomorphic to the unoriented Cayley graph of~$W_\Lambda$ (obtained by collapsing each double edge of the usual Cayley graph to a single edge). Thus the edges of $\od_\Lambda$ and $\Si_\Lambda$ are labelled by the elements of $S$. The \emph{type} of a face of $\Si_\Lambda$ or a standard subcomplex of $\od_\Lambda$ is defined to be the collection of the labels of edges of this subcomplex. 
\end{defin}

\begin{rem}[Alternative description of the Artin complex $\Delta_\Lambda$]
	\label{rmk:alternative}
	Let $\widetilde \Sigma_\Lambda$ be the universal cover of $\od_\Lambda$. Then by the last bullet point above, $\widetilde \Sigma_\Lambda^1$ can be identified with the Cayley graph of $A_\Lambda$. An  \emph{elevation} of a subcomplex of 
$\od_\Lambda$ to $\widetilde \Sigma_\Lambda$ is a connected component of the preimage of this subcomplex under the covering map. 
	Vertices of~$\Delta_\Lambda$ are in bijective correspondence with the elevations of the standard subcomplexes of type $\hat s$ for $s\in S$, since the vertex set of such an elevation is a left coset $gA_{\hat s}\subset A_\Lambda=\widetilde \Sigma_\Lambda^0$.
	Vertices of $\Delta_\Lambda$ span a simplex if their corresponding elevations have non-empty common intersection.  We will call these elevations \emph{standard subcomplexes of $\widetilde \Sigma_\Lambda.$} By \cite{lek}, the intersection of a collection of standard subcomplexes of $\widetilde \Sigma_\Lambda$ of types $S_i$ is empty or is a standard subcomplex of type $\bigcap_i S_i$.
\end{rem}

\subsection{Collapsing hyperplanes}
\label{ss:col}
Let $\mathcal A$ be a hyperplane arrangement and let $\mathcal A'\subset \mathcal A$. Note that each fan of~$\ca$ is contained in a unique fan of $\ca'$. Since the vertices of $b\Si_{\ca}$ (the barycentric subdivision of $\Si_{\ca}$) correspond to the fans of $\ca$, this gives a map from the vertex set of $b\Si_\ca$ to the vertex set of $b\Si_{\ca'}$. This map extends to a simplicial map $\kappa =\kappa_{\mathcal A'} \colon b\Si_\ca\to b\Si_{\ca'}$, which can also be viewed as a piecewise linear map from $\Si_\ca$ to $\Si_{\ca'}$.

By the description of the faces of $\Si_\ca$ in the terms of the simplices of $b\Si_\ca$ at the beginning of Section~\ref{subsec:zonotope}, 
$\kappa$ maps each face $F$ of~$\Si_\ca$ onto a face of $\Si_{\ca'}$ that we denote $\kappa(F)$. Note that if an edge $e$ of $\Si_\ca$ is dual to a hyperplane outside $\ca'$, then $\ka_{\ca'}(e)$ is a vertex, otherwise $\ka_{\ca'}(e)$ is an edge.

Furthermore, for $v,v'\in\Si^0_\ca$ satisfying $\Pi_F(v')=\Pi_F(v)$, we have $\Pi_{\kappa(F)}(\kappa(v'))=\Pi_{\kappa(F)}(\kappa(v))$.
Thus $\kappa$ 
induces a piecewise linear map $\hk\colon\od_\ca\to \od_{\ca'}$.

\subsection{Central arrangements}
\label{subsec:central}
Let $\ca$ be a hyperplane arrangement in $\mathbb R^n$ that is \emph{central}, that is, all its hyperplanes pass through the origin. 
Let $H\in \ca$ and let $\mathbb R^{n-1}\subset \mathbb R^{n}$ be parallel to and distinct from $H$. The \emph{deconing} $\ca_H$ of $\ca$ with respect to $H$ is the hyperplane arrangement in $\mathbb R^{n-1}$ consisting of the intersections of the elements of $\ca$ with $\mathbb R^{n-1}$. Note that $\ca_H$ is well-defined, since choosing a different parallel hyperplane $\mathbb R^{n-1}$ gives rise to a hyperplane arrangement differing from the first one by an affine isomorphism. It is well-known that $M(\mathcal A\otimes \mathbb C)$ is homeomorphic to $M(\ca_H\otimes \mathbb C)\times \mathbb C^*$, where $\mathbb C^*=\mathbb C\setminus\{0\}$, see e.g.\ \cite[Prop 5.1]{orlik2013arrangements}. Thus $\pi_1M(\mathcal A\otimes \mathbb C)\cong\pi_1M(\ca_H\otimes \mathbb C)\oplus\mathbb Z$. It is also possible to see this isomorphism on the level of the Salvetti complex, where we identify $\Si_{\mathcal A_H}$ with the subcomplex of $\Si_{\mathcal A}$ on one side of~$H$.

\begin{lem}
	\label{lem:injective}
	Let $\mathcal A$ be a central arrangement and $H\in\mathcal A$. Then the inclusion $i\colon\widehat \Si_{\mathcal A_H}\to \widehat \Si_{\mathcal A}$ is $\pi_1$-injective. Moreover, $\pi_1 \widehat\Si_{\mathcal A}= i_*(\pi_1\od_{\mathcal A_H})\oplus \mathbb Z$.
\end{lem}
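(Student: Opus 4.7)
The plan is to realize $\widehat\Si_{\ca_H}$ as the standard subcomplex $\widehat Y\subset \widehat\Si_\ca$, where $Y\subset \Si_\ca$ is the subcomplex on one side of $H$ identified with $\Si_{\ca_H}$ just before the statement, and then transport the product decomposition of the hyperplane complement back to the Salvetti complex.

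First, I would check that the identification $\Si_{\ca_H}\cong Y$ canonically lifts to a combinatorial embedding $i\colon \widehat\Si_{\ca_H}\to \widehat\Si_\ca$ with image $\widehat Y = p^{-1}(Y)$. The key point is that for any face $F\subset Y$ and any vertex $v\in V_H$, the hyperplane $H$ separates neither $v$ from any vertex of $F$, so using that graph distance in $\Si^1_\ca$ equals the number of separating hyperplanes, the projection $\Pi_F(v)$ computed inside $\Si_\ca$ coincides with the one computed inside $\Si_{\ca_H}$. Consequently, the equivalence relation $[F,v_1]=[F,v_2]$ defining $\widehat\Si_{\ca_H}$ is the restriction of the corresponding relation in $\widehat\Si_\ca$, making $i$ a well-defined combinatorial embedding. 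The image is exactly $\widehat Y$, since any cell of $\widehat Y$ has the form $[F,v]$ with $F\subset Y$ and we can replace $v$ by $\Pi_F(v)\in F^0\subset V_H$ to land in the image of~$i$.

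Next, I would combine Salvetti's homotopy equivalences $\widehat\Si_\ca\simeq M(\ca\otimes \mathbb C)$ and $\widehat\Si_{\ca_H}\simeq M(\ca_H\otimes \mathbb C)$ with the product decomposition $M(\ca\otimes \mathbb C)\cong M(\ca_H\otimes \mathbb C)\times \mathbb C^*$ recalled above. The slice inclusion $M(\ca_H\otimes \mathbb C)\hookrightarrow M(\ca_H\otimes \mathbb C)\times \mathbb C^*$ admits the first projection as a retraction, so it is $\pi_1$-injective and on $\pi_1$ it is the injection into the first summand of $\pi_1 M(\ca_H\otimes \mathbb C)\oplus \mathbb Z$. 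Both conclusions of the lemma then follow at once, provided the slice inclusion corresponds to $i$ via Salvetti's equivalences, up to homotopy.

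The main obstacle is establishing this compatibility. I would handle it by invoking that Salvetti's embedding from \cite{s87} is built cell-by-cell from the combinatorial data of fans of $\Si_\ca$ and vertices of $V$, and this construction is natural with respect to hyperplane deletion: restricting the Salvetti embedding $\widehat\Si_\ca\hookrightarrow M(\ca\otimes \mathbb C)$ to $\widehat Y$ factors as the Salvetti embedding of $\widehat\Si_{\ca_H}$ into $M(\ca_H\otimes \mathbb C)$ followed by the standard slice inclusion. If a purely Salvetti-complex argument is preferred, one can instead try to construct a homotopy retraction $r\colon\widehat\Si_\ca\to \widehat\Si_{\ca_H}$ directly from the face projections $\Pi_{\widehat F}$ of Definition~\ref{def:retraction}, collapsing the direction transverse to $H$; this is more involved combinatorially.
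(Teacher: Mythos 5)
Your proposal is correct and follows essentially the same route as the paper: both reduce the lemma to the product decomposition $M(\ca\otimes \mathbb C)\cong M(\ca_H\otimes \mathbb C)\times \mathbb C^*$ via Salvetti's homotopy equivalence $b\od_\ca\to M(\ca\otimes\mathbb C)$, with the compatibility of $i$ with the slice inclusion being the only point needing care. The paper handles that point by homotoping Salvetti's embedding so that $\od_{\ca_H}$ lands in $M(\ca_H\otimes\mathbb C)$, which is the same as your ``naturality with compatible choices of the points $x_U$'' claim (choose $x_U\in U\cap\mathbb R^{n-1}$ for the fans on the relevant side of $H$); your extra verification that $\widehat\Si_{\ca_H}$ embeds as $\widehat Y$ is a detail the paper leaves implicit and is argued correctly.
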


\begin{proof}
Let $b\od_\ca$ denote the barycentric subdivision of $\od_\ca$.
Recall that, in \cite[pp.\ ~608]{s87}, Salvetti constructed a piecewise linear embedding $\phi: b\od_\ca\to M(\ca\otimes \mathbb C)$. He proved in \cite[pp.\ 611]{s87} that $\phi$ is a homotopy equivalence. Let $\mathbb R^{n-1}\subset \mathbb R^n$ be as above. 
Then $M(\ca_H\otimes \mathbb C)$ is a subspace of $\mathbb R^{n-1}\otimes \mathbb C$. We can homotopy $\phi$ so that $\phi(\od_{\ca_H})\subset M(\ca_H\otimes \mathbb C)$ and $\phi|_{\od_{\ca_H}}: \od_{\ca_H}\to M(\ca_H\otimes \mathbb C)$ is a homotopy equivalence. 
Thus the lemma follows from the paragraph preceding its statement. 
\end{proof}

\subsection{Line arrangements}
\label{subsec:line arrangement}
Let $\ca$ be a central arrangement of lines in $\mathbb R^2$. Let $\omega$ be a locally embedded edge-path in $\od_\ca$, and $\ell\in \ca$. An \emph{$\ell$-segment} of $\omega$ is a maximal subpath mapped to an edge dual to a fan in $\ell$ under $\od_\ca\to\Si_\ca$.

\begin{lem}[{\cite[Lem 3.6]{falk1995k}}]
	\label{lem:falk}
	Suppose that $P$ is a locally embedded homotopically trivial edge-loop in $\od_\ca$, and $\ell\in \ca$. Then $P$ contains at least two $\ell$-segments.
\end{lem}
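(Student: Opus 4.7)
The plan is to analyse $P$ via the combinatorial collapse $\hk=\hk_{\{\ell\}}\colon\od_\ca\to\od_{\{\ell\}}$ from Section~\ref{ss:col}, which sends each edge $[F,v]$ of $\od_\ca$ to a vertex of $\od_{\{\ell\}}$ if $F$ is dual to a hyperplane other than $\ell$, and otherwise sends it to the doubled edge $[\ka(F),\ka(v)]$ of $\od_{\{\ell\}}$, where $\ka(v)$ is the half-plane of $\mathbb R^2\setminus\ell$ containing the sector $v$. Direct inspection of Salvetti's construction in the one-line case yields a complex with two vertices $+$ and $-$, two doubled pairs of edges, and two bigon $2$-cells; its fundamental group is $\mathbb Z$, with a generator represented by the length-$2$ zigzag $[\ka(F),+]\cdot[\ka(F),-]$ on any edge $\ka(F)$ of $\Si_{\{\ell\}}$, and a closed zigzag of length $2k$ representing $\pm k$.

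First I would handle the case that $P$ contains exactly one $\ell$-segment $\sigma$, of length $m\ge 1$. By definition $p(\sigma)$ is contained in a single edge $F$ of $\Si_\ca$ dual to $\ell$, so each edge of $\sigma$ is one of the two doubled edges $[F,u_+]$ or $[F,u_-]$ of $F$ in $\od_\ca$, where $u_\pm$ are the endpoints of $F$ in opposite half-planes of $\ell$. Local embeddedness of $P$ rules out immediate backtracking on a single doubled edge, forcing the edges of $\sigma$ to alternate between $[F,u_+]$ and $[F,u_-]$. Consequently $\hk(\sigma)$ is an alternating zigzag of length $m$ on the edge $\ka(F)$ of $\Si_{\{\ell\}}$ using both its doubled edges. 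Every other edge of $P$ is collapsed by $\hk$, so $\hk(P)$ reduces to the zigzag $\hk(\sigma)$ based at the image of the start of $\sigma$. For $\hk(P)$ to be a closed loop, $m$ must be even, $m=2k$ with $k\ge 1$, and then the preliminary computation gives $[\hk(P)]=\pm k\ne 0$ in $\pi_1\od_{\{\ell\}}\cong\mathbb Z$, contradicting the null-homotopy of $P$ (and therefore of $\hk(P)$).

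It remains to rule out the case that $P$ has no $\ell$-segment. Then $P$ lies in the subcomplex $Y\subset\od_\ca$ of edges not dual to $\ell$. The only $2$-cells of $\od_\ca$ are copies of the central $2|\ca|$-gon of $\Si_\ca$, whose boundary includes the two edges of $\Si_\ca$ dual to $\ell$, so $Y$ is purely $1$-dimensional. Moreover, $\ell$ separates $Y$ into two subgraphs, each identified with a copy of $\od_{\ca_\ell}$ on one side of $\ell$ in the sense of the paragraph following Lemma~\ref{lem:injective}. By Lemma~\ref{lem:injective}, each copy is $\pi_1$-injective in $\od_\ca$, so any non-constant locally embedded loop in $Y$ is non-trivial in $\pi_1\od_\ca$, contradicting the null-homotopy of the non-constant $P$.

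The chief subtle point will be the bookkeeping in the first case: pinning down the correspondence between the two doubled edges of $F$ in $\od_\ca$ and those of $\ka(F)$ in $\od_{\{\ell\}}$ via the sector-to-half-plane map, and extracting from local embeddedness that $\sigma$ alternates between them. The computation of $\pi_1\od_{\{\ell\}}\cong\mathbb Z$ is then a straightforward application of the Salvetti presentation.
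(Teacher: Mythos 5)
The paper gives no proof of this lemma at all (it is imported from Falk), so there is nothing internal to compare against; judged on its own merits, your two-case scheme is sound: if $P$ has no $\ell$-segment it lies in the $1$-dimensional subcomplex lying over the edges of $\Si_\ca$ not dual to $\ell$, whose two components are copies of $\od_{\ca_\ell}$ and are $\pi_1$-injective by Lemma~\ref{lem:injective}, so a non-constant locally embedded loop there is essential; if $P$ has exactly one $\ell$-segment, pushing forward by $\hk_{\{\ell\}}$ collapses everything else to a single vertex and exhibits a nonzero class in $\pi_1\od_{\{\ell\}}\cong\mathbb Z$, contradicting null-homotopy.

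Two points need repair, though neither is fatal. First, your description of $\od_{\{\ell\}}$ is incorrect: for the one-line arrangement in $\mathbb R^2$ the fans are just $\ell$ and the two open half-planes, so $\Si_{\{\ell\}}$ is a single edge and $\od_{\{\ell\}}$ is that edge doubled --- two vertices, two edges, and \emph{no} $2$-cells (there is no codimension-two fan to create one); it is a circle. Fortunately the only facts you actually use --- that $\pi_1\od_{\{\ell\}}\cong\mathbb Z$ and that a locally embedded closed zigzag of length $2k$ on the doubled edge $\widehat{\ka(F)}$ represents $\pm k$ --- are precisely the correct statements for this circle, so once the spurious ``two doubled pairs of edges and two bigon $2$-cells'' is deleted the argument runs verbatim; as written, with two bigons attached, the degree claim would not even be justified. (Alternatively, you can avoid computing $\od_{\{\ell\}}$ altogether by using the retraction $\Pi_{\widehat F}$ of Definition~\ref{def:retraction} onto the circle $\widehat F$, as the paper does in similar situations, e.g.\ Lemma~\ref{lem:representative}.) Second, the statement is vacuous for a constant loop, so you should record explicitly that $P$ is non-constant (as it is in every application in the paper); with that stipulation, local embeddedness makes $P$ cyclically reduced, which is exactly what your no-segment case needs to conclude essentiality in a graph.
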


A \emph{minimal positive path} in $\od_\ca$ is a minimal length path between its endpoints that is positively oriented. (The orientation of the edges was introduced at the beginning of Section~\ref{sec:Salvetti} and discussed in Definition~\ref{def:support}.)
Note that the boundary of each $2$-cell of $\od_\ca$ is a union of two minimal positive paths from some vertex $x$ to its antipodal vertex $y$. We call $x$ the \emph{source} of this $2$-cell, and $y$ the \emph{sink}. Let $\Delta_x$ be the concatenation of a minimal positive path from $x$ to $y$ and a minimal positive path from $y$ to $x$. The element represented by $\Delta_x$ in $\pi_1(\od_\ca,x)$ is independent of the choice of the paths. 

Below, we denote the edges of the two minimal length paths from $x$ to $y$ in $\Si_\ca^1$ by $e_1\cdots e_n$ and $d_n\cdots d_1$. For an edge $e_i$ of $\Si_\ca^1$, we label both edges of $\widehat e_i$ by $e_i$.  Note that they are oriented in opposite directions.
Let $z$ be the common vertex of~$e_1$ and~$e_2$.

\begin{lem}
	\label{lem:relations}
	\begin{enumerate}
		\item Let $P$ be an edge-path in $\od_\ca$ from $x_1$ to $x_2$.  
		Then $\Delta_{x_1}P$ is homotopic, relative to the endpoints, to $P\Delta_{x_2}$. In particular, $\Delta_{x_1}$ is central in $\pi_1(\od_\ca,x_1)$.
		\item Paths $\Delta_ze^{-1}_2e^{-1}_3\cdots e^{-1}_ne^{-1}_n\cdots e^{-1}_3e^{-1}_2$ and $e_1e_1$ represent the same element of $\pi_1(\od_\ca,z)$.
	\end{enumerate}
\end{lem}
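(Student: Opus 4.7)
The plan is to verify both parts by direct edge-path computations in $\od_\ca$. Since $\ca$ is a central arrangement of $n$ lines in $\mathbb R^2$, the polyhedron $\Si_\ca$ is a $2n$-gon with a unique top face $F$, so the $2$-cells of $\od_\ca$ are the $|V|$ copies $F\times v$, each a $2n$-gon whose edges are oriented by distance from $v$.

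For part~(1), it suffices by induction on the length of $P$ to treat a single edge, and by inverting we may take $P$ positively oriented from $x_1=v_0$ to $x_2=v_1$, where the polygon vertices are cyclically labelled $v_0,\dots,v_{2n-1}$. I would choose the same ``forward'' representatives of $\Delta_{v_0}$ and $\Delta_{v_1}$, each a concatenation of minimal positive paths traversing the polygon in the same counterclockwise sense. The unique hyperplane separating $v_0$ from $v_1$ is dual to both $e_1$ and $d_1$, so the chosen representatives use identical edges of $\od_\ca$ except for their copies of $\widehat e_1$ and $\widehat d_1$, and the remaining edges match up so as to yield the literal equality $\Delta_{v_0}\cdot e=e\cdot \Delta_{v_1}$ of edge-paths---no homotopy is required. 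Centrality in $\pi_1(\od_\ca,x_1)$ is the special case $x_2=x_1$.

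For part~(2), I would write $\Delta_z$ as an explicit edge-loop at $z$ using the chosen representatives and rearrange using the boundary relations of the $2$-cells $F\times v$. Each such boundary identifies the two minimal positive paths from $v$ to its antipode; applying these relations together with the cancellation of the $e_i^{\pm1}$ correction terms transforms $\Delta_z$ into the short loop $e_1\cdot e_1$ at $z$, which uses both edges of $\widehat e_1$.

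The main obstacle in both parts is the combinatorial bookkeeping: each edge of $\Si_\ca^1$ lifts to two oppositely oriented edges of $\od_\ca^1$ sharing the same label, and one must consistently track which of these is invoked at each step. This is manageable because the assignment ``which edge of $\widehat e_i$ appears in $F\times v$'' depends rigidly on the side of the hyperplane dual to $e_i$ on which~$v$ lies.
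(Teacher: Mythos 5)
Your part~(1) is correct, and it is a genuinely different route from the paper, which simply cites Deligne (Lem 1.26 and Prop 1.27): because $\Si_\ca$ is a single $2n$-gon, you can take both $\Delta_{v_0}$ and $\Delta_{v_1}$ to be the full positively oriented loop around the polygon in the rotational sense of the edge $e$, and then $\Delta_{v_0}e$ and $e\Delta_{v_1}$ are literally the same edge-path of $\od_\ca$; together with the (stated) independence of $\Delta_x$ on the choice of representative, the reduction to a single positively traversed edge and the inversion trick are fine. One inaccuracy: with that choice the two representatives do not ``differ in their copies of $\widehat e_1$ and $\widehat d_1$'' --- they use exactly the same $2n$ oriented edges, just cyclically shifted --- but this does not affect the argument.

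Part~(2), however, is only a promise that the bookkeeping works, and the bookkeeping is where the content of the lemma lies. As literally written, ``transforms $\Delta_z$ into the short loop $e_1\cdot e_1$'' is false: $\Delta_z$ is not homotopic to $e_1e_1$ (apply the retraction $\Pi_{\widehat e_2}$, under which $\Delta_z$ maps to a generator of $\pi_1(\widehat e_2)$ while $e_1e_1$ maps to a point). What is true, and what the lemma asserts after cancelling the tail, is $\Delta_z\sim e_1e_1e_2\cdots e_ne_n\cdots e_2$, and getting the factor $e_1e_1$ on the correct side is exactly the step you leave unspecified: you must choose the representative $\Delta_z\sim e_2\cdots e_nd_1d_1e_n\cdots e_2$ and apply the boundary relations of the two particular $2$-cells whose sources are the endpoints of $e_1$ (the paper packages these as the cylinder relation $e_1^2\sim e_2\cdots e_nd_1^2e_n^{-1}\cdots e_2^{-1}$). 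If instead one takes the seemingly natural ``full loop'' representative and a single $2$-cell relation, one obtains $\Delta_z\sim e_2\cdots e_ne_n\cdots e_2\,e_1e_1$, i.e.\ the factors in the wrong order, and one then needs to invoke centrality from part~(1) to finish. So the approach for (2) is the same as the paper's, but to count as a proof you need to name the representative and the two $2$-cells (or the centrality argument) and check that the lifts match so the cancellations against the $e_i^{-1}$ tail are genuine backtracks.
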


\begin{proof}
	Assertion 1 is a consequence of \cite[Lem 1.26 and Prop 1.27]{deligne}. For Assertion 2, note that $\Delta_z\sim e_2\cdots e_nd_1d_1e_n\cdots e_2$, where $\sim$ stands for a homotopy relative to the endpoints. The union of the two 2-cells of $\od_{\ca}$ with sources the two endpoints of $e_1$ form a cylinder in~$\od_\ca$ with boundary paths~$e^2_1$ and~$d^2_1$. More precisely, $e^2_1\sim e_2e_3\cdots e_nd^2_1e^{-1}_n\cdots e^{-1}_3e^{-1}_2$. Thus
	$$
	\Delta_z\sim e_2e_3\cdots e_nd^2_1e^{-1}_n\cdots e^{-1}_3e^{-1}_2e_2e_3\cdots e_ne_n\cdots e_2\sim  e^2_1e_2e_3\cdots e_ne_n\cdots e_2.
	$$
\end{proof}

Let $e_1,d_1$ be dual to the fans in $\ell\in \mathcal A$. 
Let $i\colon \widehat\Sigma_{\ca_l}\to \widehat \Si_\ca$ be as in Lemma~\ref{lem:injective}.
\begin{lem}
	\label{lem:representative}
	We have a short exact sequence
	$$
	\pi_1(\widehat\Sigma_{\ca_l},z)\stackrel{i_*}{\to}\pi_1(\od_\ca,z)\stackrel{p_*}{\to}\pi_1(\widehat e_1,z),
	$$
	where 
	$p=\Pi_{\widehat e_1}$.
	In particular, if for a representative $P$ of  $\alpha\in \pi_1(z,\od_\ca)$ the path $\Pi_{\widehat e_1}(P)$ is homotopically trivial, then $\alpha$ can be represented by a loop in $\widehat\Sigma_{\ca_l}$.
\end{lem}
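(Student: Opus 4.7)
The plan is to verify the three conditions defining a short exact sequence: injectivity of $i_*$, surjectivity of $p_*$, and exactness at the middle term. Injectivity of $i_*$ is given by Lemma~\ref{lem:injective}, and surjectivity of $p_*$ follows from $p=\Pi_{\widehat e_1}$ being a retraction (Definition~\ref{def:retraction}): the inclusion $\widehat e_1\hookrightarrow \widehat\Sigma_\ca$ induces a right inverse of $p_*$.

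To prove $\mathrm{Im}(i_*)\subset \ker(p_*)$, I plan to show that $p\circ i$ is a constant map. Choose the identification $\Sigma_{\ca_\ell}\subset \Sigma_\ca$ so that the subcomplex lies on the side of $\ell$ containing $z$; then $i$ preserves the basepoint. For any face $F$ of $\Sigma_{\ca_\ell}$ and any vertex $v$ of $F$, strictly fewer hyperplanes separate $v$ from $z$ than from the other endpoint of $e_1$, so $\Pi_{e_1}(v)=z$ by Lemma~\ref{lem:gate} and the description of $d$ as counting separating hyperplanes. Hence $\Pi_{e_1}(F)=\{z\}$, and by Lemma~\ref{lem:retraction property}, $p(\widehat F)=\{z\}$. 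Since $\widehat\Sigma_{\ca_\ell}$, viewed as a subcomplex of $\widehat\Sigma_\ca$, equals the union of $\widehat F$ over faces $F$ of $\Sigma_{\ca_\ell}$, this yields $p(\widehat\Sigma_{\ca_\ell})=\{z\}$ and in particular $p_*\circ i_*=0$. For the reverse inclusion, Lemma~\ref{lem:injective} gives $\pi_1(\widehat\Sigma_\ca)/i_*(\pi_1\widehat\Sigma_{\ca_\ell})\cong \mathbb{Z}$, so $p_*$ factors through a homomorphism $\overline{p_*}\colon \mathbb{Z}\to \pi_1(\widehat e_1)=\mathbb{Z}$ whose image agrees with that of $p_*$, hence is all of $\mathbb{Z}$. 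A surjective endomorphism of $\mathbb{Z}$ is an isomorphism, so $\overline{p_*}$ is injective and $\ker(p_*)=\mathrm{Im}(i_*)$. This completes the short exact sequence, and the \emph{in particular} clause is then immediate: if $\Pi_{\widehat e_1}(P)$ is nullhomotopic in $\widehat e_1$, then $[P]\in \ker(p_*)=\mathrm{Im}(i_*)$, so $[P]$ is represented by a loop in $\widehat\Sigma_{\ca_\ell}$.

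The main obstacle is verifying that $p(\widehat\Sigma_{\ca_\ell})$ is a single vertex; this requires combining the geometric description of the projection to $e_1$ from Lemma~\ref{lem:gate} with the face-level retraction formula of Lemma~\ref{lem:retraction property}, together with the observation that the identifications in the Salvetti construction collapse exactly those copies of faces that are already parameterised by vertices on one side of $\ell$, so that $\widehat\Sigma_{\ca_\ell}$ inside $\widehat\Sigma_\ca$ truly decomposes as the union of the standard subcomplexes $\widehat F$ for $F\subset \Sigma_{\ca_\ell}$.
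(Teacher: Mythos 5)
Your proof is correct and follows essentially the same route as the paper: the paper's one-line argument invokes exactly the ingredients you assemble, namely Lemma~\ref{lem:injective} (injectivity of $i_*$ and the splitting $\pi_1\od_\ca=i_*(\pi_1\widehat\Sigma_{\ca_l})\oplus\mathbb Z$), the containment $\mathrm{im}\,i_*<\ker p_*$, the surjectivity of $p_*$ coming from $\Pi_{\widehat e_1}$ being a retraction, and $\pi_1(\widehat e_1,z)\cong\mathbb Z$. The only difference is that you supply the details the paper leaves implicit, in particular the gate/projection verification that $p\circ i$ is constant at $z$, which is a correct use of Lemmas~\ref{lem:gate} and~\ref{lem:retraction property}.
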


\begin{proof}
Since $\pi_1(\widehat e_1,z)$ is isomorphic to $\mathbb Z$, this follows from Lemma~\ref{lem:injective}, from $\mathrm{im}\, i_*<\ker p_*$, and from the surjectivity of $p_*$.
\end{proof}

\begin{lem}
\label{lem:collapse}
Let $\ca'=\ca \setminus \{l\}$, where $l$ is dual to $e_1$. Let $P$ be an edge-path in $\widehat e_3\cup \widehat e_4\cup \cdots \cup \widehat e_n$. If $\hk_{\ca'}(P)$ is homotopic, relative to the endpoints, into~$\hk_{\ca'}(\widehat e_j)$ in~$\od_{\ca'}$, for some $j\neq 1$, then $P$ is homotopic, relative to the  endpoints, into~$\widehat e_j$ in~$\od_{\ca}$.
\end{lem}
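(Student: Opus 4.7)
The plan is to reduce the conclusion to a $\pi_1$-injectivity statement which is then obtained from Lemma~\ref{lem:injective} applied to $\ca'$.

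I would first pin down the endpoints. They lie in $\{v_2,\dots,v_n\}$, all on the side of $l$ opposite to $x$; since $\ka_{\ca'}$ identifies vertices of $\Si_\ca$ only across $l$, it is injective on this side. The homotopy-rel-endpoints hypothesis forces $\hk_{\ca'}(p_i)$ to be a vertex of $\hk_{\ca'}(\widehat e_j)$, also on this side, so $p_1,p_2\in\widehat e_j^0$.

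Next, set $G':=\widehat e_3\cup\cdots\cup\widehat e_n$, a connected $1$-dimensional subcomplex of $\od_\ca$, and let $H\in\ca'$ be the line dual to $e_2$. The edges of $G'$ are dual to lines distinct from $l$ and $H$, so $\hk_{\ca'}|_{G'}$ is a combinatorial homeomorphism onto its image; moreover the vertices of $G'$ all lie on one side of $H$. Hence $\hk_{\ca'}(G')$ sits on one side of $H$ in $\od_{\ca'}$, inside the subcomplex identified with $\widehat\Sigma_{\ca'_H}$. Applying Lemma~\ref{lem:injective} to $\ca'$ and $H$ yields
\[
\pi_1\od_{\ca'}=i_*(\pi_1\widehat\Sigma_{\ca'_H})\oplus\mathbb Z,
\]
where the $\mathbb Z$-summand corresponds to the meridian of $H$, realized combinatorially as the loop in the doubled edge $\hk_{\ca'}(\widehat e_2)$. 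Since the inclusion of a connected subgraph into a graph is $\pi_1$-injective, $\hk_{\ca'}$ induces an injection $\pi_1 G'\hookrightarrow\pi_1\od_{\ca'}$ whose image lies inside $i_*(\pi_1\widehat\Sigma_{\ca'_H})$.

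Finally, I finish in two cases. If $j\ge 3$, then $\widehat e_j\subset G'$, so I lift the target path in $\hk_{\ca'}(\widehat e_j)$ provided by the hypothesis to a path $Q\subset\widehat e_j$ from $p_1$ to $p_2$, using that $\hk_{\ca'}|_{\widehat e_j}$ is a homeomorphism. The loop $PQ^{-1}\subset G'$ maps to a null-homotopic loop in $\od_{\ca'}$, so by the injection above it is null-homotopic in $G'$, hence in $\od_\ca$, giving $P\simeq Q$ rel endpoints. If $j=2$, the first step forces $p_1=p_2=v_2$, so the target is a loop in $\hk_{\ca'}(\widehat e_2)$ with class in the $\mathbb Z$-summand, while $[\hk_{\ca'}(P)]$ lies in $i_*(\pi_1\widehat\Sigma_{\ca'_H})$; the directness of the sum forces both classes to vanish, and the injection then gives $[P]=1$, so $P$ is null-homotopic and hence homotopic rel endpoints to the constant path at $v_2\in\widehat e_2$. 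The main obstacle is the injectivity step in the second paragraph: one must observe that $\hk_{\ca'}(G')$ is one-sided with respect to $H$ (since $H$ is dual to $e_2$, which does not appear in $G'$) in order to apply Lemma~\ref{lem:injective} and reduce to the standard subcomplex $\widehat\Sigma_{\ca'_H}$, where the direct-sum decomposition converts null-homotopy downstairs into null-homotopy upstairs.
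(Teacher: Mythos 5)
Your handling of the case $j\ge 3$ is correct and is essentially the paper's argument: you view $\hk_{\ca'}(\widehat e_3\cup\cdots\cup\widehat e_n)$ as a subgraph of the copy of $\widehat\Si_{\ca'_H}$ lying on one side of the line $H$ dual to $e_2$, get $\pi_1$-injectivity in $\od_{\ca'}$ from Lemma~\ref{lem:injective}, and transfer the homotopy back through the isomorphism of $\hk_{\ca'}$ restricted to $\widehat e_3\cup\cdots\cup\widehat e_n$; the endpoint bookkeeping in your first step is also fine.

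The case $j=2$, however, contains a genuine gap. You assert that the complementary $\mathbb Z$-summand in Lemma~\ref{lem:injective} is generated by the meridian of $H$, realized by the loop $\widehat f_2:=\hk_{\ca'}(\widehat e_2)$, and then conclude by ``directness of the sum''. That identification is false: the $\mathbb Z$-factor comes from the $\mathbb C^*$-factor of $M(\ca'\otimes \mathbb C)\cong M(\ca'_H\otimes\mathbb C)\times\mathbb C^*$ and is generated by the central full twist. A direct $\mathbb Z$-summand is necessarily central, whereas for $|\ca'|\ge 3$ the group $\pi_1\od_{\ca'}\cong F_{|\ca'|-1}\times\mathbb Z$ has non-central meridians, so the class of a loop in $\widehat f_2$ need not lie in the $\mathbb Z$-summand; in general it has a nontrivial component in $i_*(\pi_1\widehat\Si_{\ca'_H})$ as well. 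Consequently the statement you actually need --- that a loop in $\widehat f_2$ whose class lies in $i_*(\pi_1\widehat\Si_{\ca'_H})$ must be nullhomotopic --- is not justified by the decomposition alone. This is exactly where the paper brings in an extra tool: the retraction $\Pi_{\widehat f_2}\colon \od_{\ca'}\to\widehat f_2$ of Definition~\ref{def:retraction}. It is the identity on $\widehat f_2$, hence injective on $\pi_1\widehat f_2$, and it sends every edge of $\widehat f_3\cup\cdots\cup\widehat f_n$ to a vertex, hence kills $[\hk_{\ca'}(P)]$; so the loop in $\widehat f_2$ homotopic to $\hk_{\ca'}(P)$ is nullhomotopic, and your $\pi_1$-injectivity step then gives $[P]=1$. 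With that replacement (use $\Pi_{\widehat f_2}$, or equivalently the winding number around $H$, instead of membership in the $\mathbb Z$-summand) your proof coincides with the paper's.
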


 For $j=2$ this means that $P$ is a homotopically trivial edge-loop by Lemma~\ref{lem:injective}.

\begin{proof}
Let $f_i=\ka_{\ca'}(e_i)$. 
By Lemma~\ref{lem:injective}, $\widehat f_3\cup \widehat f_4\cup \cdots \cup \widehat f_n\subset \od_{\ca'}$ is $\pi_1$-injective. Thus for $j\neq 2$ the edge-path $\hk_{\ca'}(P)$ is homotopic in $\widehat f_3\cup \widehat f_4\cup \cdots \cup \widehat f_n\subset \od_{\ca'}$ into $\widehat f_j$. Consequently, $P$ is homotopic in $\widehat e_3\cup \widehat e_4\cup \cdots \cup \widehat e_n$ into $\widehat e_j$. 

For $j=2$, since $P$ is contained in $\widehat e_3\cup \widehat e_4\cup \cdots \cup \widehat e_n$ and $\hk_{\ca'}(P)$ is homotopic, relative to the endpoints, into~$\widehat f_2$, we have that $P$ is an edge-loop. If $P$ is homotopically nontrivial in $\widehat e_3\cup \widehat e_4\cup \cdots \cup \widehat e_n$, then by Lemma~\ref{lem:injective} $\hk_{\ca'}(P)$ is homotopically nontrivial in~$\od_{\ca'}$. However, by considering the retraction $\Pi_{\widehat f_2}\colon\od_{\ca'}\to \widehat f_2$, we obtain that a loop in $\widehat f_2$ cannot be homotopic in $\od_{\ca'}$ to a homotopically nontrivial loop in $\widehat f_3\cup \widehat f_4\cup\cdots \cup\widehat f_n$, which is a contradiction.
\end{proof}

\section{Some sub-arrangements of the $H_3$-arrangement}
\label{sec:subarrangement}
Let $\Lambda$ be the Coxeter diagram of type~$H_3$, which is the linear graph with consecutive vertices $abc$ and $m_{ab}=3,m_{bc}=5$. 
Let $\ca$ be the reflection arrangement in $\mathbb R^3$ associated with~$W_\Lambda$.
Denoting the quotient map from the Artin complex $\Delta=\Delta_\Lambda$ to the Coxeter complex $\bC=\bC_\Lambda$ by $\pi$, we say that a vertex $x$ of $\Delta$ has \emph{face type} $C$, where $C$ is the face of $\Si$ dual to $\pi(x)$.

We start with describing a procedure of converting an $n$-cycle in $\Delta$ to a concatenation of $n$ words in $A_\Lambda$ (cf.\ \cite[Def~6.14]{huang2023labeled}). These $n$ words are well-defined up to an appropriate notion of equivalence.

\begin{constr}
	\label{def:ncycle}	
	Let $\omega=x_1\cdots x_n$ be a cycle in $\Delta$ of type $\hat s_1\cdots \hat s_n$. 
	For each $i\in \mathbb Z/n\mathbb Z$, consider a triangle containing $x_ix_{i+1}$ and corresponding $g_i\in A_\Lambda$. Then $g_i=g_{i-1}w_{i}$ for $w_i\in A_{\hat s_i}$ and $w_1\cdots w_n=1$.
	A different choice of such triangles would lead to a word $u_1\cdots u_n$ with $u_i=q^{-1}_{i-1}w_i q_i$ for some $q_i\in A_{S\setminus\{s_i,s_{i+1}\}}$. 
	In this case we say that the words $u_1\cdots u_n$ and $w_1\cdots w_n$ are \emph{equivalent}.

	Given $\omega$, we construct a homotopically trivial edge-loop $P=P_1\cdots P_n$ in $\od$ as follows.
	Let $\widetilde \Sigma$ be the universal cover of $\od=\od_\Lambda$, with standard subcomplexes~$\mathcal T_i$ corresponding to $x_i$. Let $\widetilde P_i$ be edge-paths in $\mathcal T_i$ from $g_{i-1}$ to $g_i$ representing~$w_i$. We define $P_i$ to be the image of $\widetilde P_i$ in $\od$. We have $P_i\subset \whC_i$, where $C_i$ is the face type of~$x_i$.  
	
	Conversely, consider
	a homotopically trivial edge-loop $P=P_1\cdots P_n$ in $\od$, with the $P_i$ contained in \emph{hosts} $\widehat C_i$. Then we can construct a cycle in $\Delta$ as follows.
	For any lift $\widetilde P$ of $P$ to $\widetilde \Sigma$, each $\widetilde P_i$ is contained in a standard subcomplex that is an elevation of $\widehat C_i$ corresponding to a vertex $x_i$ of~$\Delta$. Then $\omega=x_1\cdots x_n$ is a cycle of~$\Delta$.
\end{constr}

\begin{defin}
\label{def:dent}
We refer to Figure~\ref{fig:denting} for the following discussion. Let $\omega$ and $P$ be as in Construction~\ref{def:ncycle}. Suppose that $x_1\neq x_2\neq x_3$ are of types $\hat a,\hat c,\hat a$ or $\hat c,\hat a,\hat c$, and $P_2$ is also contained in 
$\widehat B$ with $B\subset\Si$ a square. 
Then there is a vertex $y\in \Delta^0$ of face type $B$ that is a neighbour of $x_1,x_2,x_3$. Let~$C$ be the face of $\Sigma$ intersecting $B$ along the edge opposite to $B\cap C_2$.  
Then there is a vertex $x\in \Delta^0$ of face type~$C$ that is a neighbour of $y$ and consequently of $x_1,x_3$ by Remark~\ref{rem:adj}. Replacing $x_2$ by~$x$ in $\omega$ is called \emph{denting $x_2$ to $C$}.
\end{defin}
\begin{figure}
    \centering
    \includegraphics[width=0.6\linewidth]{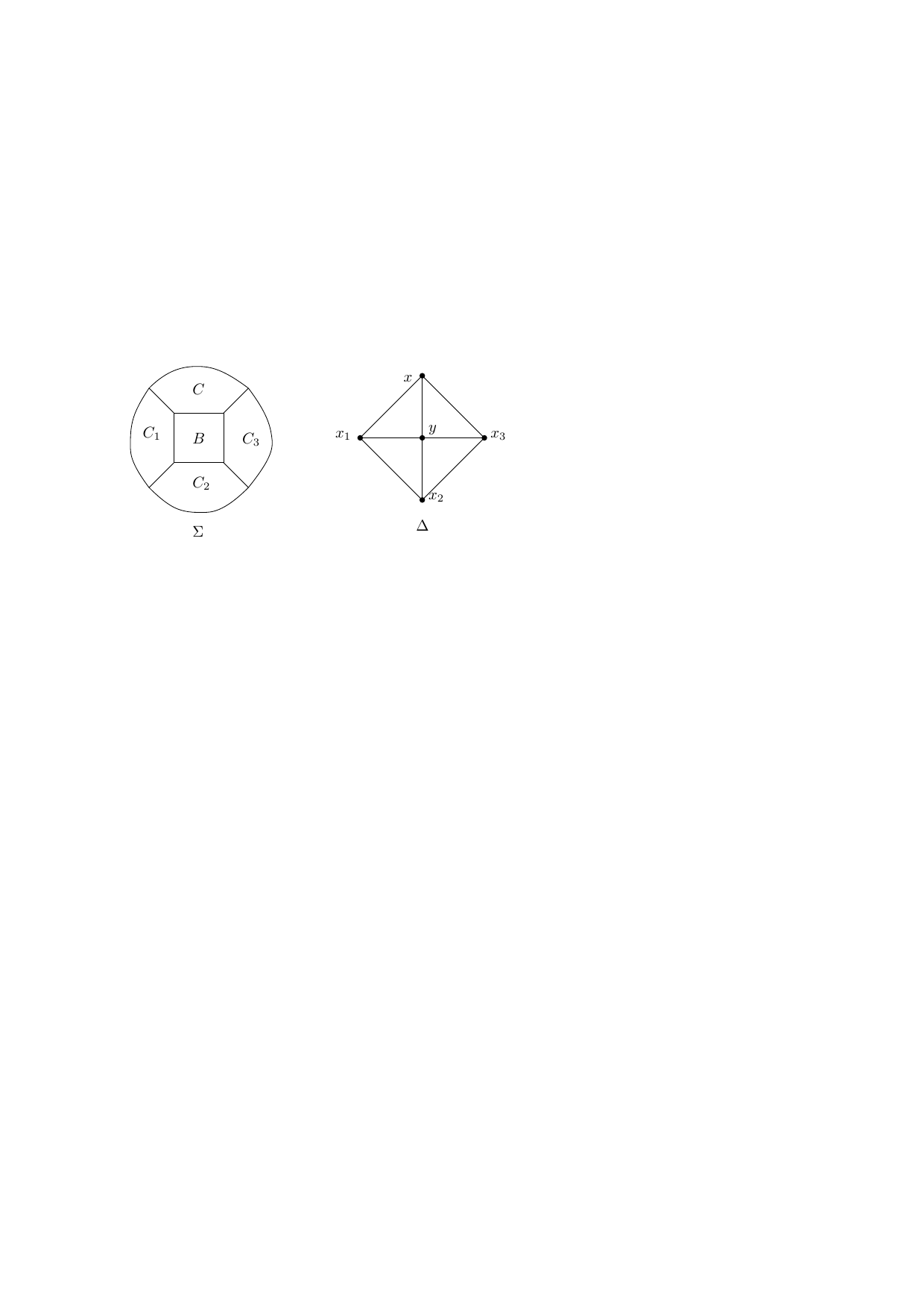}
    \caption{Denting}
    \label{fig:denting}
\end{figure}

\subsection{Sub-arrangement of type I}
\label{subsec:aug1}
\begin{defin}
	\label{def:arrangment}
Consider consecutive vertices $\theta_1, \theta_2,
\theta_3$ of $\bC$ of types $\hat a,\hat b,\hat a$ in a hyperplane of $\ca$. Let $\mathcal H\subset \ca$ be the collection of hyperplanes  
passing through at least one of the $\theta_i$, see Figure~\ref{fig:1}, left. The central arrangement $\mathcal H$ in $\mathbb R^3$ is called the \emph{sub-arrangement of type I}. Let $H\in\mathcal H$ be the hyperplane passing through $\theta_1$  represented as the boundary circle in Figure~\ref{fig:1}, left. Consider the deconing $\mathcal H'=\mathcal H_H$, which is a hyperplane arrangement in $\mathbb R^2$, see Figure~\ref{fig:1}, right.
\end{defin}

\begin{figure}[h]
	\centering
	\includegraphics[scale=0.7]{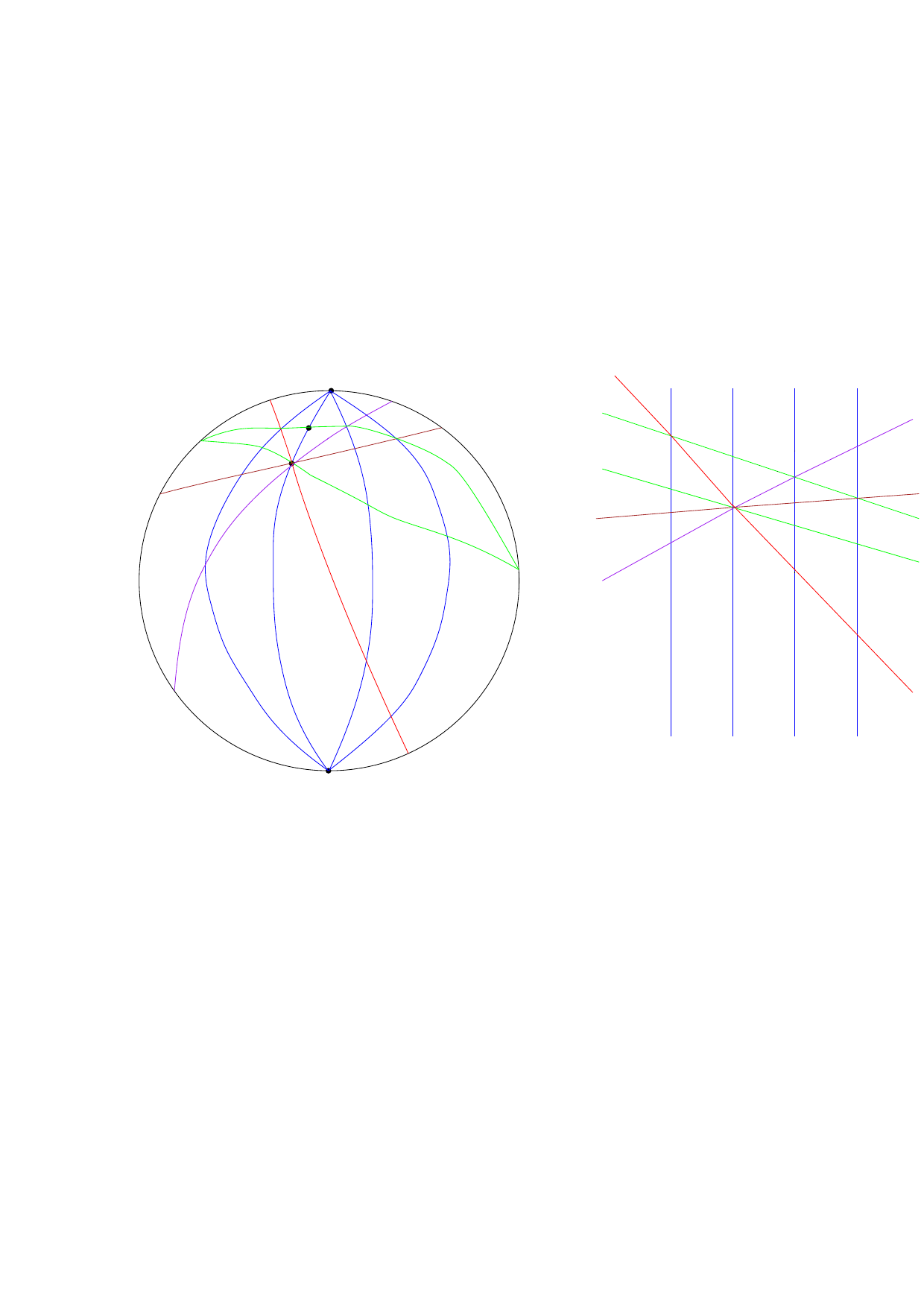}
	\caption{Sub-arrangement of type I}
	\label{fig:1}
\end{figure}

Let $X=\Sigma_{\mathcal H'}$ and $\wX=\widehat \Sigma_{\mathcal H'}$.  
Denote the four vertical hyperplanes of $\mathcal H'$ by $h_1,h_2,h_3,h_4$, from left to right. Let $X_i$ be the union of all the closed faces of $X$ that intersect~$h_i$. 
For $i=1,2,3$, let $\widehat Y_i=\widehat X_i\cap \widehat X_{i+1}$. We define subcomplexes $X_{ij}$ of $X_i$ for $j=1,2$ as follows. For $i=1,3$, let $X_{i1}$ be the subcomplex of $X_i$ coloured white in Figure~\ref{fig:2} (the hexagon), and let $X_{i2}$ be the subcomplex of $X_i$ coloured gray (the union of three squares). For $i=2,4$, let $X_{i1}$ be the subcomplex of $X_i$ coloured gray (the square on the top), and let $X_{i2}$ be the subcomplex of $X_i$ coloured white.

\begin{figure}[h]
	\centering
	\includegraphics[scale=0.85]{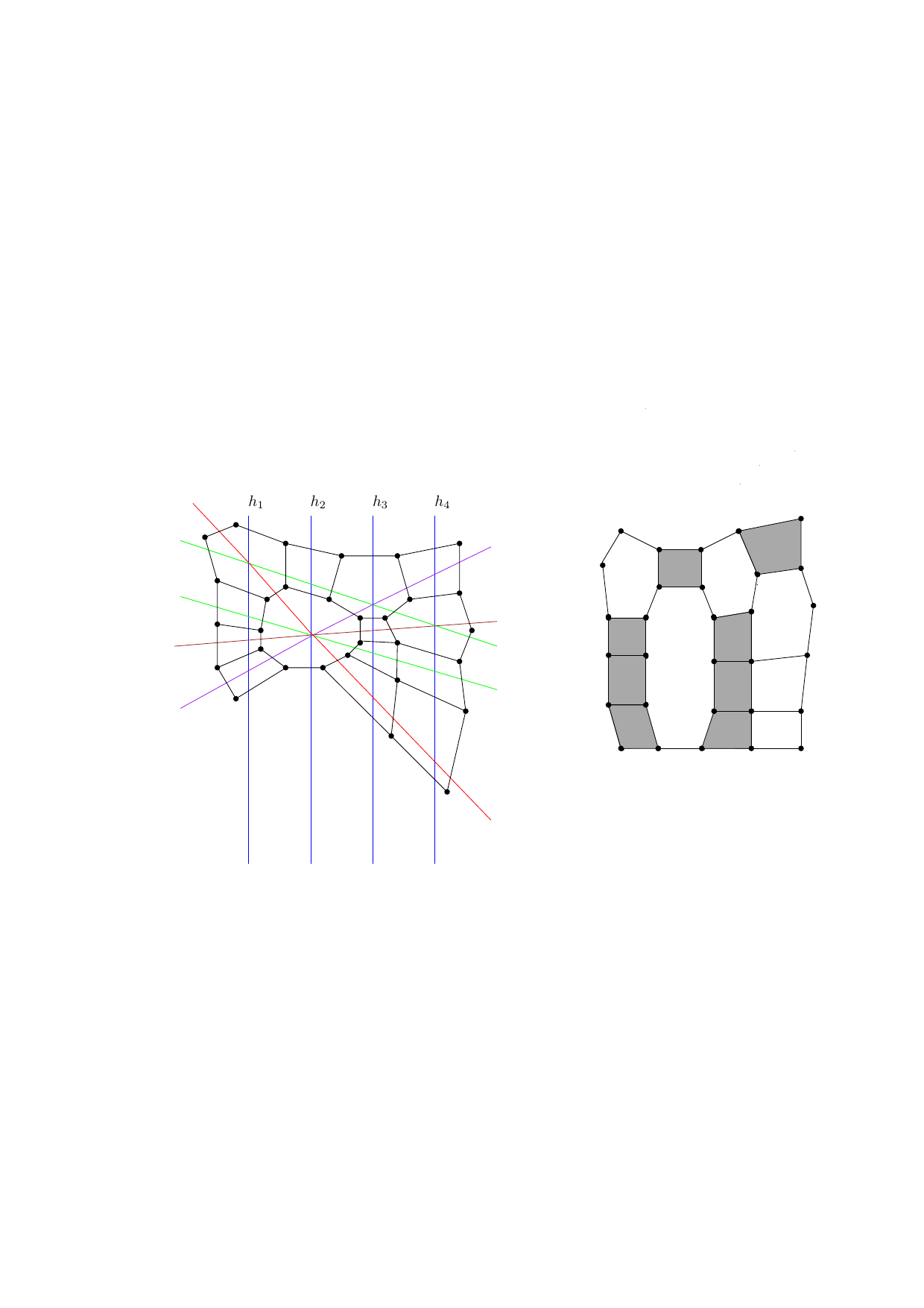}
	\caption{Dual complex}
	\label{fig:2}
\end{figure}	

We now define a simple complex of groups $\mathcal U_{12}$ (see \cite[II.12]{BridsonHaefliger1999}) with fundamental group $\pi_1 (\wX_1\cup \wX_2)$ as in Figure~\ref{fig:3}, whose underlying complex $U_{12}$ is the union of two triangles. The vertex groups and the edge groups are the fundamental groups of the subcomplexes of $\wX$ as labelled in Figure~\ref{fig:3}, and the remaining local groups are trivial. The morphisms between the local groups are induced by the inclusions of the associated subcomplexes, which are injective by Lemma~\ref{lem:injective}.
By \cite[\S6.1]{huang2024}, $\mathcal U_{12}$ is developable with $\pi_1 \mathcal U_{12}=\pi_1 (\wX_1\cup \wX_2)$.

\begin{figure}[h]
		\centering
	\includegraphics[scale=0.7]{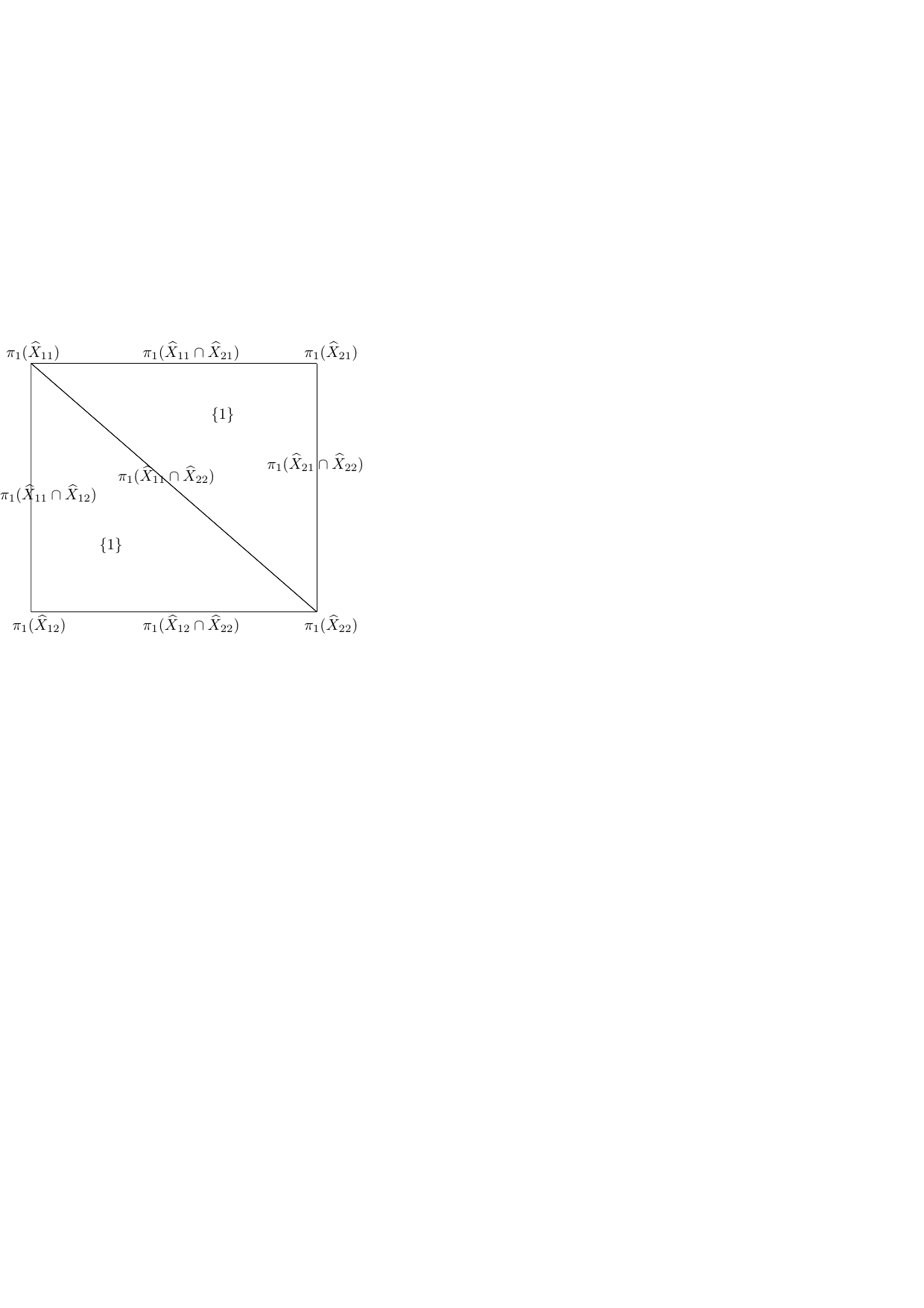}
	\caption{}
	\label{fig:3}
\end{figure}

\begin{defin}
	\label{def:bu}
	Let $\bU_{12}$ be the development of $\mathcal U_{12}$ (cf.\ \cite[II.12]{BridsonHaefliger1999}). Equivalently, the vertices of $\bU_{12}$ (of \emph{face type $\whX_{ij}$}) correspond to the elevations of $\whX_{ij}$ to the universal cover of $\whX_1\cup \whX_2$, which we also call \emph{standard subcomplexes of face type~$\whX_{ij}$}. By \cite[Lem~6.3]{huang2024}, the intersection of a pair of standard subcomplexes and in fact of any collection of standard subcomplexes) is either empty or connected. Vertices of $\bU_{12}$ are neighbours if their corresponding subcomplexes intersect. Vertices of~$\bU_{12}$ form a triangle, if their corresponding subcomplexes have non-empty common intersection (which is then a single vertex).
\end{defin}
Below, we consider the inclusion $\whX=\od_{\mathcal H'}\subset \od_{\mathcal H}$ introduced in Section~\ref{subsec:central}.

\begin{lem}[{\cite[Lem~6.6]{huang2024}}]
	\label{lem:transfer1}
	The inclusion $\whX_1\cup\whX_2\subset \od_{\ch}$ is $\pi_1$-injective.
\end{lem}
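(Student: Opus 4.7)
The plan is to factor the inclusion as
\[ \widehat X_1 \cup \widehat X_2 \stackrel{i_1}{\hookrightarrow} \widehat X = \widehat\Sigma_{\mathcal H'} \stackrel{i_2}{\hookrightarrow} \widehat\Sigma_{\mathcal H}, \]
and verify $\pi_1$-injectivity of each factor separately. The second inclusion $i_2$ is $\pi_1$-injective by Lemma~\ref{lem:injective}, since $\mathcal H$ is central and $\mathcal H' = \mathcal H_H$ is its deconing with respect to~$H$. The main task is therefore to show that $i_1$ is $\pi_1$-injective.

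For this, I would exploit the developable complex of groups structure $\mathcal U_{12}$ with $\pi_1 \mathcal U_{12} = \pi_1(\widehat X_1 \cup \widehat X_2)$, whose vertex and edge groups are the fundamental groups of the $\widehat X_{ij}$ and $\widehat Y_i$. Given a loop $\gamma$ in $\widehat X_1 \cup \widehat X_2$ that is null-homotopic in $\widehat X$, take a minimal disc diagram $f \colon D \to \widehat X$ bounded by $\gamma$. The goal is to modify $f$ until its image lands in $\widehat X_1 \cup \widehat X_2$. The natural tool is to decompose $\gamma = \gamma_1 \cdots \gamma_k$, where each $\gamma_j$ sits in a single $\widehat X_{ij}$, and consecutive arcs share endpoints in the corresponding intersections $\widehat Y_i$. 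Using the retractions $\Pi_{\widehat F}$ from $\widehat X$ onto standard subcomplexes (Definition~\ref{def:retraction}), together with the compatibility of these projections with inclusions (Lemma~\ref{lem:retraction property}), one can progressively push $f(D)$ into the target union.

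Concretely, the argument would proceed by induction on the area of $D$. If $D$ contains a 2-cell mapping outside $\widehat X_1 \cup \widehat X_2$, one locates such a 2-cell adjacent to the boundary (after possibly modifying $f$) and applies a retraction onto an appropriate standard subcomplex contained in $\widehat X_1 \cup \widehat X_2$ to eliminate it, producing a smaller diagram still bounded by $\gamma$. An alternative formulation is to slice $D$ along the preimages of the $\widehat Y_i$, so that each piece maps into a single $\widehat X_{ij}$; the resulting combinatorial data produces a word in the local groups of $\mathcal U_{12}$ representing $\gamma$ in $\pi_1 \widehat X$, which by developability must already be trivial in $\pi_1 \mathcal U_{12} = \pi_1(\widehat X_1 \cup \widehat X_2)$.

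The main obstacle is controlling how $D$ interacts with the partition of $\widehat X$ into the strata $\widehat X_i$: \emph{a priori}, faces of $D$ can map into $\widehat X_3 \cup \widehat X_4$, or oscillate across the various $\widehat Y_i$ in a way that prevents a clean slicing. The crux is to show that the retractions and slicings behave well enough for the induction or amalgamation step to go through, which should ultimately follow from the specific two-dimensional geometry of the line arrangement $\mathcal H'$ together with the concrete combinatorics of the subcomplexes $X_{ij}$ depicted in Figure~\ref{fig:2}.
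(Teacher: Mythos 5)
The paper gives no internal proof of this lemma---it is simply quoted from \cite[Lem~6.6]{huang2024}---so the question is whether your argument stands on its own, and at present it does not. The factorization through $\whX=\od_{\ch'}$ is fine: Lemma~\ref{lem:injective} handles $\whX\subset\od_{\ch}$, and the composite is $\pi_1$-injective if and only if $\pi_1(\whX_1\cup\whX_2)\to\pi_1\whX$ is. But that remaining injectivity is exactly where the proposal stops. Neither of the two mechanisms you offer closes it. The appeal to developability is a non sequitur: developability of $\mathcal U_{12}$ means that the \emph{local} groups $\pi_1\whX_{ij}$, $\pi_1\widehat Y_i$ inject into $\pi_1\mathcal U_{12}=\pi_1(\whX_1\cup\whX_2)$; it does not say that a word in the local groups which becomes trivial in $\pi_1\whX$ is already trivial in $\pi_1\mathcal U_{12}$ --- that statement \emph{is} the injectivity of $\pi_1(\whX_1\cup\whX_2)\to\pi_1\whX$ you are trying to prove, so this step is circular.

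The disc-diagram/retraction scheme is likewise unjustified as written. The only retractions available in this framework are $\Pi_{\widehat F}$ for a single face $F$ of $\Si_{\ch'}$ (Definition~\ref{def:retraction}, Lemma~\ref{lem:retraction property}); $\whX_1\cup\whX_2$ is a union of several standard subcomplexes (and $\whX_{12}$, $\whX_{22}$ are themselves unions of faces), and no retraction of $\od_{\ch'}$ onto this union is constructed or known a priori. Applying $\Pi_{\widehat F}$ does not ``eliminate one offending $2$-cell adjacent to the boundary'': it moves the whole diagram into $\widehat F$, and there is no argument that any such modification decreases area while keeping the boundary loop fixed inside $\whX_1\cup\whX_2$; similarly, the ``slicing along preimages of the $\widehat Y_i$'' is exactly the point where faces mapping to $\whX_3\cup\whX_4$ or oscillating across the $\widehat Y_i$ must be controlled. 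You acknowledge this yourself in the final paragraph, deferring the crux to ``the specific two-dimensional geometry'' of $\ch'$; but that crux is the entire content of the lemma. As it stands, the proposal is a reduction plus a plan, with the key step missing: one still needs an actual proof that $\whX_1\cup\whX_2$ is $\pi_1$-injective in $\od_{\ch'}$ (as supplied externally by \cite[Lem~6.6]{huang2024}).
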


There is a natural action $\pi_1(\whX_1\cup \whX_2)\act \bU_{12}$, with quotient $U_{12}$. We equip $U_{12}$ with the piecewise Euclidean metric of the unit square.
It pulls back to a piecewise Euclidean metric on $\bU_{12}$.  

\begin{lem}[{\cite[Lem~6.4]{huang2024}}]
	\label{lem:CAT0}
	$\bU_{12}$ 
    is $\mathrm{CAT}(0)$.
\end{lem}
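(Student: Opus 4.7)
The plan is to verify the standard local-to-global criterion for piecewise Euclidean $2$-complexes: a simply connected such complex is $\mathrm{CAT}(0)$ if and only if every vertex link has no simple closed geodesic of length less than $2\pi$. First, $\bU_{12}$ is simply connected by construction, since it is the development of the developable complex of groups $\mathcal U_{12}$.

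Second, since $U_{12}$ is a unit square subdivided by a diagonal into two right isoceles triangles, every corner angle appearing in $\bU_{12}$ is $\pi/4$ or $\pi/2$. Hence the link of any vertex $v\in\bU_{12}$ is a metric graph with edges of lengths $\pi/4$ and $\pi/2$ only, and an elementary bound shows that any simple cycle of combinatorial length $n$ has metric length at least $n\cdot\pi/4$. Consequently, cycles of combinatorial length at least $8$ automatically satisfy the link inequality, and the problem reduces to excluding short cycles of combinatorial length at most $7$ whose metric length falls below $2\pi$: in particular, triangles with three $\pi/2$-edges (metric length $3\pi/2$), quadrilaterals with four $\pi/4$-edges (metric length $\pi$), and various mixed-angle configurations.

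The main step is a case analysis by face type of $v$. Using the description of $\bU_{12}$ from Definition~\ref{def:bu}, vertices correspond to standard subcomplexes in the universal cover of $\whX_1\cup\whX_2$, edges to pairwise intersections, and triangles to triples with non-empty common intersection. A short simple cycle in $\lk(v,\bU_{12})$ would correspond to a cyclic sequence of standard subcomplexes, each meeting the one associated to $v$, with consecutive pairs meeting each other in a manner compatible with $v$. The key input is the connectedness of intersections of standard subcomplexes (\cite[Lem~6.3]{huang2024}, recalled in Definition~\ref{def:bu}), which rules out `non-flag' configurations in the link. The principal obstacle I anticipate is the need for explicit combinatorial information about how the subcomplexes $\whX_{ij}$ --- the hexagons and square-triples of Figure~\ref{fig:2} --- sit inside the Salvetti complex $\whX$ of the deconed sub-arrangement $\mathcal H'$. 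One must use the specific geometry of Figures~\ref{fig:1}--\ref{fig:2} to enumerate the possible short cyclic intersection patterns and verify that each either has metric length at least $2\pi$ or fails the compatibility conditions and hence does not embed into the link.
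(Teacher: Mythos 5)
Note first that the paper does not prove this lemma at all: it is imported verbatim from \cite[Lem~6.4]{huang2024}. Your overall strategy (simple connectedness from developability, then the link condition for a piecewise Euclidean $2$-complex with angles $\frac{\pi}{4}$ and $\frac{\pi}{2}$, reducing to excluding combinatorially short cycles in vertex links) is the standard one, and it is exactly the strategy this paper uses for the analogous statement about $\bV_{23}$ and $\bV_{34}$ (Corollary~\ref{lem:bv23 cat0}), so in outline you are on the right track.

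However, there is a genuine gap: the entire content of the lemma is the verification that links of the $\frac{\pi}{4}$-angle vertices have combinatorial girth at least $8$ (and the $\frac{\pi}{2}$-angle vertices at least $4$), and your proposal leaves this step unexecuted while designating an insufficient tool for it. Connectedness of intersections of standard subcomplexes (\cite[Lem~6.3]{huang2024}) only lets you convert a locally embedded cycle in $\lk(v,\bU_{12})$ into a locally embedded edge-loop inside the standard subcomplex corresponding to $v$ (this is the content of Lemma~\ref{lem:link path}); it does not by itself exclude any short cycle. The actual length bounds come from counting arguments applied to that edge-loop: Falk's Lemma~\ref{lem:falk} (every line of the arrangement contributes at least two segments to a locally embedded null-homotopic loop in the Salvetti complex), the $\pi_1$-injectivity of standard subcomplexes (Lemma~\ref{lem:injective}) via the retractions $\Pi_{\widehat F}$, and, for the borderline face types, explicit facts about loops in covers of presentation complexes of dihedral Artin groups (as in the proof of Lemma~\ref{lem:n=8}, which invokes \cite[Lem~39]{crisp2005automorphisms}). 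Without carrying out this face-type-by-face-type analysis of the subcomplexes $\whX_{11},\whX_{12},\whX_{21},\whX_{22}$ of Figure~\ref{fig:2}, the "enumeration of short cyclic intersection patterns" you defer to is precisely the missing proof, so the proposal as written establishes only the easy reductions and not the lemma itself.
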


\subsection{Converting paths in $\whX_1\cup\whX_2$ to paths in $\bU_{12}$}
\begin{defin}
	\label{def:loop}
Let $\mathcal T=\{\whX_{11},\whX_{12},\whX_{21},\whX_{22}\}$. Let $P$ be a homotopically trivial edge-loop in $\whX_1\cup\whX_2$ that decomposes as a concatenation of edge-paths $P_i$ contained in \emph{hosts} $\mathcal T_i\in \mathcal T$. Let $\widetilde P$ be a lift of $P$ to the universal cover of $\whX_1\cup\whX_2$. Then each $\widetilde P_i$ is contained in a standard subcomplex of face type $\mathcal T_i$ corresponding to a vertex~$x_i$ of $\bU_{12}$. For each $i$, we have that $x_{i+1}$ is equal to, or a neighbour of~$x_i$. Thus $\omega=x_1x_2\cdots$ is a cycle in $\bU_{12}$ \emph{corresponding} to $P$ (or $\widetilde P$). Note that $\omega$ depends on the decomposition of $P$, the choice of the hosts and (least importantly) the choice of the lift of $P$. In practice, we will be looking for a minimal decomposition.
\end{defin}

\begin{defin}
	\label{def:triangle}
	Let $\widetilde P$ be an edge-path in the universal cover of $\whX_1\cup\whX_2$. We say that $\widetilde P$ \emph{starts} (resp.\ \emph{ends}) with a triangle $\sigma\subset \bU_{12}$ if $\widetilde P$ starts (resp.\ ends) with the vertex corresponding to $\sigma$.
\end{defin}

Under the same notation as in Definition~\ref{def:loop}, since non-empty intersections of standard subcomplexes were connected,  we have the following.
\begin{lem}
	\label{lem:link path}
	Suppose that $y_1\cdots y_k$ is a locally embedded edge-path of face type $\mathcal T_{i1}\cdots \mathcal T_{ik}$ in $\lk(x_i,\bU_{12})$ from $x_{i-1}$ to $x_{i+1}$. Then $\widetilde P_i$ is homotopic in its elevation of $\mathcal T_i$, relative to its endpoints, to a concatenation of locally embedded edge-paths $\widetilde P_{i1}\cdots \widetilde P_{ik}$, with $\widetilde P_{ij}$ projecting into $\mathcal T_i\cap\mathcal T_{ij}$. Moreover,
	\begin{enumerate}
		\item if $2\le j\le k-1$, then $\widetilde P_{ij}$ is nontrivial in the sense that its endpoints are distinct,
		\item if $\widetilde P_i$ starts with the triangle $x_ix_{i-1}y_2$ and ends with the triangle $x_iy_{k-1}x_{i+1}$, then $\widetilde P_{i1}$ and $\widetilde P_{ik}$ are trivial.
	\end{enumerate}	
\end{lem}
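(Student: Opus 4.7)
The plan is to exploit two ingredients: (a) every non-empty intersection of standard subcomplexes in the universal cover of $\whX_1\cup\whX_2$ is connected (as recalled in Definition~\ref{def:bu} from \cite[Lem~6.3]{huang2024}); and (b) the elevation of $\mathcal T_i$ containing $\widetilde P_i$ is simply connected, because $\mathcal T_i$ is $\pi_1$-injective in $\whX_1\cup\whX_2$ (the vertex groups of the developable complex of groups $\mathcal U_{12}$ inject into its fundamental group). Denote by $\widetilde{\mathcal T_i}$ the elevation corresponding to $x_i$, and for $1\le j\le k$ by $\widetilde{\mathcal T_{ij}}$ the elevation of $\mathcal T_{ij}$ corresponding to $y_j$. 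Set $A_j=\widetilde{\mathcal T_i}\cap\widetilde{\mathcal T_{ij}}$. Since $\widetilde P_{i-1}$ lies in the elevation corresponding to $x_{i-1}=y_1$, the starting vertex of $\widetilde P_i$ lies in $A_1$; similarly its ending vertex lies in $A_k$. The edge $y_jy_{j+1}$ in $\lk(x_i,\bU_{12})$ yields a triangle spanned by $x_i,y_j,y_{j+1}$ in $\bU_{12}$, so by Definition~\ref{def:bu} the three corresponding standard subcomplexes share a unique common vertex $q_j\in A_j\cap A_{j+1}$.

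With $q_0$ and $q_k$ the start and end of $\widetilde P_i$, I would choose for each $j$ a locally embedded edge-path $\widetilde P_{ij}$ in $A_j$ from $q_{j-1}$ to $q_j$; this is available by~(a). The concatenation $\widetilde P_{i1}\cdots\widetilde P_{ik}$ is then an edge-path in $\widetilde{\mathcal T_i}$ with the same endpoints as $\widetilde P_i$, and since $\widetilde{\mathcal T_i}$ is simply connected by~(b), the two paths are homotopic in $\widetilde{\mathcal T_i}$ relative to endpoints. This would establish the main statement of the lemma.

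For part~(1), suppose $q_{j-1}=q_j$ for some $2\le j\le k-1$; then the four pairwise distinct standard subcomplexes corresponding to $x_i,y_{j-1},y_j,y_{j+1}$ share a common vertex. By the construction of $\bU_{12}$ as the development of $\mathcal U_{12}$, such a four-fold common intersection would yield a $3$-simplex of $\bU_{12}$, contradicting the fact that $\bU_{12}$ is $2$-dimensional (its underlying complex $U_{12}$ being the union of two triangles). For part~(2), if $\widetilde P_i$ starts at the vertex corresponding to the triangle $x_iy_1y_2$, then by Definition~\ref{def:triangle} this starting vertex is precisely the common vertex of the three standard subcomplexes for $x_i,y_1,y_2$, namely $q_1$; one may then take $\widetilde P_{i1}$ trivial, and symmetrically for $\widetilde P_{ik}$. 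The main subtlety I expect is part~(1): one must carefully justify why a four-fold intersection would force a $3$-simplex in $\bU_{12}$, which should follow by unwinding the complex-of-groups construction of $\mathcal U_{12}$ but warrants care.
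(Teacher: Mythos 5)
Your construction is essentially the argument the paper has in mind: the paper gives no separate proof of this lemma, simply invoking the connectedness of (arbitrary) intersections of standard subcomplexes, and your proof fleshes that out correctly. The transition points of the decomposition are forced to be the triple-intersection vertices $q_j$ from Definition~\ref{def:bu}, your use of the simple connectedness of the elevation of $\mathcal T_i$ (via the $\pi_1$-injectivity of the local groups of $\mathcal U_{12}$) is legitimate, and your treatment of part (2) via Definition~\ref{def:triangle} is exactly right.

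The step that does not work as written is your justification of part (1). A common vertex of four standard subcomplexes would \emph{not} ``yield a $3$-simplex of $\bU_{12}$'': $\bU_{12}$ is $2$-dimensional by construction, and Definition~\ref{def:bu} only declares vertices, edges and triangles, so no higher simplex is created and no contradiction with the dimension arises. The correct contradiction runs as follows. If $q_{j-1}=q_j$, this point lies in the four standard subcomplexes corresponding to the pairwise distinct vertices $x_i,y_{j-1},y_j,y_{j+1}$. Two distinct elevations of the same $\whX_{ij}$ are disjoint (they are different components of the preimage), so these four vertices carry the four distinct face types $\whX_{11},\whX_{12},\whX_{21},\whX_{22}$, and they are pairwise adjacent in $\bU_{12}$, with every triple among them spanning a triangle. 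But the development comes with a type-preserving simplicial projection $\bU_{12}\to U_{12}$ that is injective on each simplex, so every such edge (resp.\ triangle) must project to an edge (resp.\ triangle) of $U_{12}$ with the same face types; since $U_{12}$ is a union of two triangles sharing an edge, it has only five edges and two triangles on its four vertices, so some pair of the four face types is non-adjacent in $U_{12}$ (equivalently, some of the forced triangles has face types not spanning a triangle of $U_{12}$). This is the contradiction that closes the gap you flagged; with it, the rest of your argument goes through.
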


 Note that an analogous result holds for $\bU_{12}, \widehat X_1\cup \widehat X_2$ replaced by $\Delta, \widehat \Sigma$.

\begin{defin}
	\label{def:noncollapsed}
   Let $\widehat \kappa \colon \od \to \od_{\mathcal H}$ be as in Section~\ref{ss:col}. Let $\widetilde\kappa$ be the induced map between the universal covers. We view $\widehat X$ as a subcomplex $\od_{\mathcal H}$ as in Section~\ref{subsec:central}. Let~$E$ be an elevation of $\whX_1\cup \whX_2$ to the universal cover of $\od_{\mathcal H}$, which is the universal cover of $\whX_1\cup \whX_2$ by Lemma~\ref{lem:transfer1}.
		Consider the subcomplex $\Si^*\subset \Si$ (depending on~$\cH$) that is the union of $2$-cells $C$ with $\ka(C)$ a $2$-cell of $X_1\cup X_2$.    
		Let $\Delta^*$ be the subcomplex of $\Delta$ spanned by the vertices of face type in $\Si^*$ whose corresponding standard subcomplexes map under $\widetilde\kappa$ into $E$. 
		Then $\widetilde\kappa$ induces a simplicial map $\ka^* \colon \Delta^*\to \bU_{12}$. 
	For a vertex $x$ of~$\Delta^*$, we denote $x^{\mathcal H}=\ka^*(x)$. Whenever the dependence on $\cH$ is relevant, we write $\Delta^*_\ch, \kappa_\ch$ instead of $\Delta^*,\kappa$.
	
	We say a that a $2$-cell $C$ of~$\Si^*$ and its image $\ka(C)$ in $X_1\cup X_2$ are \emph{non-collapsed} if $\ka(C)=X_{11},X_{12},$ or $X_{22}$. In particular, $\hk_{|\whC}$ is a homeomorphism.
	A vertex of~$\Delta^*$ (resp.\ $\bU_{12}$) is \emph{non-collapsed} if its face type is non-collapsed. Let $\Delta^\mathrm{nc}$ (resp.\ $\bU^\mathrm{nc}_{12}$) be the subcomplex of $\Delta^*$ (resp.\ $\bU_{12}$) spanned on non-collapsed vertices.
\end{defin}

\begin{lem}
	\label{lem:lift}
 Let $x\in \Delta^*$ be non-collapsed of face type $C$. 
	If $\ka(C)= X_{11}$ or $X_{21}$, then the map $\lk(x,\Delta^*)\to \lk(x^\cH, \bU_{12})$ induced by $\kappa^*$ is an isomorphism. 
	Furthermore, if $\ka(C)= X_{22}$, then the map $\lk(x,\Delta^\mathrm{nc})\to \lk(x^\cH, \bU_{12}^{\mathrm{nc}})$ induced by $\kappa^*$ is an isomorphism. 
\end{lem}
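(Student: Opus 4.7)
The plan is to use that for $x$ non-collapsed of face type $C$, the restriction $\ka|_{\widehat C}$ is a homeomorphism, so the lift $\widetilde\kappa$ restricts to a homeomorphism $\widetilde\kappa|_{\mathcal T_x}\colon \mathcal T_x\to \mathcal T_{x^\cH}$ between the standard subcomplexes of $\widetilde\Sigma$ and of $E$ corresponding to $x$ and to $x^\cH$. Neighbours of $x$ in $\Delta^*$ correspond bijectively to standard subcomplexes of $\widetilde\Sigma$ of face type in $\Sigma^*$ that intersect $\mathcal T_x$ and whose image under $\widetilde\kappa$ lies in $E$; neighbours of $x^\cH$ in $\bU_{12}$ correspond similarly to standard subcomplexes of $E$ meeting $\mathcal T_{x^\cH}$. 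Under these identifications, the link map induced by $\kappa^*$ becomes the map sending a standard subcomplex to its $\widetilde\kappa$-image.

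For injectivity, suppose $y_1,y_2\in\lk(x,\Delta^*)$ satisfy $y_1^\cH=y_2^\cH=y'$. Each intersection $\mathcal T_{y_i}\cap\mathcal T_x$ is a standard subcomplex of $\mathcal T_x$, connected by \cite{lek}, and the homeomorphism $\widetilde\kappa|_{\mathcal T_x}$ carries it into $\mathcal T_{y'}\cap\mathcal T_{x^\cH}$, which is connected by \cite[Lem~6.3]{huang2024}. A dimension and type count shows that these images in fact fill $\mathcal T_{y'}\cap\mathcal T_{x^\cH}$, so the two intersections coincide as subcomplexes of $\mathcal T_x$. Since a standard subcomplex of $\widetilde\Sigma$ of a prescribed face type is determined by its trace on $\mathcal T_x$, we obtain $\mathcal T_{y_1}=\mathcal T_{y_2}$, hence $y_1=y_2$.

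For surjectivity, given $y'\in\lk(x^\cH,\bU_{12})$, I would set $F'=\mathcal T_{y'}\cap\mathcal T_{x^\cH}$, pull it back along $\widetilde\kappa|_{\mathcal T_x}$ to a standard subcomplex $F\subset\mathcal T_x$, and take $\mathcal T_y$ to be the standard subcomplex of $\widetilde\Sigma$ of face type $D$ containing $F$, where $D$ is the face of $\Sigma$ adjacent to $C$ with $\ka(D)$ equal to the face type of $y'$. Existence and uniqueness of such $D$ follow from inspection of Figure~\ref{fig:2} via Lemma~\ref{lem:unlabelled}. Then $\widetilde\kappa(\mathcal T_y)\subset\mathcal T_{y'}\subset E$, so $y\in\Delta^*$ and $y^\cH=y'$.

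The main obstacle is the identification of $D$ in the surjectivity step. When $\ka(C)\in\{X_{11},X_{21}\}$, inspection of Figure~\ref{fig:2} shows that every face of $X_1\cup X_2$ neighbouring $\ka(C)$ admits a unique adjacent preimage face $D$ of $C$ in $\Sigma$, and that $\ka(D)$ automatically lies in $\{X_{11},X_{12},X_{22}\}$, so the argument goes through as above. When $\ka(C)=X_{22}$, however, some candidate faces $D$ adjacent to $C$ satisfy $\ka(D)\notin\{X_{11},X_{12},X_{22}\}$, producing collapsed neighbours $y$ that obstruct bijectivity of the full link map; this is precisely what forces the restriction to $\Delta^\mathrm{nc}$ and $\bU_{12}^\mathrm{nc}$ in the second assertion. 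Once these pathological neighbours are discarded, the argument of the first case applies verbatim. The remaining content is therefore a finite combinatorial case-check on Figure~\ref{fig:2}, tracking which faces of $\Sigma$ around $C$ map to which faces of $X_1\cup X_2$ around $\ka(C)$.
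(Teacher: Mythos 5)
Your overall strategy coincides with the paper's: since $x$ is non-collapsed, $\widehat\ka$ restricts to an isomorphism between the standard subcomplexes corresponding to $x$ and to $x^\cH$, and the link data should be transported across this isomorphism. The paper's proof ends essentially there, because by Remark~\ref{rmk:alternative} and Definition~\ref{def:bu} both links admit the same intrinsic description: the relevant neighbours of $x$ and of $x^\cH$ correspond to the lines (one-dimensional standard subcomplexes) inside the two isomorphic standard subcomplexes, and two neighbours span an edge of the link exactly when the corresponding lines intersect; both of these data are visibly preserved by the isomorphism.

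The gap in your write-up is that the decisive verifications are deferred rather than carried out, and at least one of the deferred claims is problematic. Injectivity hinges on an unproven ``dimension and type count'' asserting that the image of $\mathcal T_{y_i}\cap\mathcal T_x$ fills $\mathcal T_{y'}\cap\mathcal T_{x^\cH}$; but face types of vertices of $\bU_{12}$ are the subcomplexes $\whX_{ij}$, and $X_{12}$ (a union of three squares) and $X_{22}$ are not single $2$-cells of $X_1\cup X_2$, so the trace $\mathcal T_{y'}\cap\mathcal T_{x^\cH}$ need not be the elevation of a single edge and the ``filling'' claim does not follow from a count. For the same reason the surjectivity step is not well-posed: ``the face $D$ of $\Sigma$ adjacent to $C$ with $\ka(D)$ equal to the face type of $y'$'' does not typecheck when the face type of $y'$ is $\whX_{12}$ or $\whX_{22}$, and its ``existence and uniqueness ... by inspection of Figure~\ref{fig:2}'' is exactly the content you would need to prove, not a remark. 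Your parenthetical claim that $\ka(D)$ ``automatically lies in $\{X_{11},X_{12},X_{22}\}$'' also conflicts with the statement being proved: the first assertion concerns the \emph{full} links, which include neighbours of collapsed face type, so a correct argument must account for them rather than rule them out. Finally, your explanation of why the $\ka(C)=X_{22}$ case forces the restriction to $\Delta^{\mathrm{nc}}$ and $\bU_{12}^{\mathrm{nc}}$ is asserted, not derived. To close the gap you should do what the paper does: identify $\lk(x,\Delta^*)$ and $\lk(x^\cH,\bU_{12})$ (respectively their non-collapsed versions) as the incidence graphs of lines inside the two standard subcomplexes, check that this description of vertices and edges is intrinsic to the subcomplexes, and then conclude by the isomorphism induced by $\widehat\ka$; the explicit enumeration of adjacent faces of $\Sigma$ around $C$ is then unnecessary.
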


\begin{proof}  By the the description of edges and triangles in $\Delta$ and $\bU_{12}$ in Remark~\ref{rmk:alternative} and Definition~\ref{def:bu}, all the relevant neighbours of $x,x^\cH$ correspond to lines in the isomorphic standard subcomplexes corresponding to $x,x^\cH$. Two such neighbours span an edge of the link exactly when these lines intersect, which is invariant under the isomorphism.
\end{proof}

\subsection{Sub-arrangement of type II}
\label{subsec:AuII}

\begin{defin}
	\label{def:arrangment1}
Let $\ca$ be as at the beginning of the Section~\ref{sec:subarrangement}.
	Consider consecutive vertices $\theta_1,\ldots, \theta_4$ of $\bC$ of types $\hat a,\hat{c},\hat b,\hat c$ in a hyperplane of $\ca$. Let $\mathcal K\subset \ca$ be the collection of hyperplanes passing through at least one of the $\theta_i$. See Figure~\ref{fig:5}, left. The central arrangement $\mathcal K$ in $\mathbb R^3$ is called the \emph{sub-arrangement of type II}. Let $H\in\mathcal K$ be the hyperplane passing through~$\theta_1$ represented as the boundary circle in Figure~\ref{fig:5}, left. We consider the deconing $\mathcal K'=\mathcal K_H$, which is a hyperplane arrangement in $\mathbb R^2$, see Figure~\ref{fig:5}, right.
\end{defin}

\begin{figure}[h]
	\centering
	\includegraphics[scale=0.7]{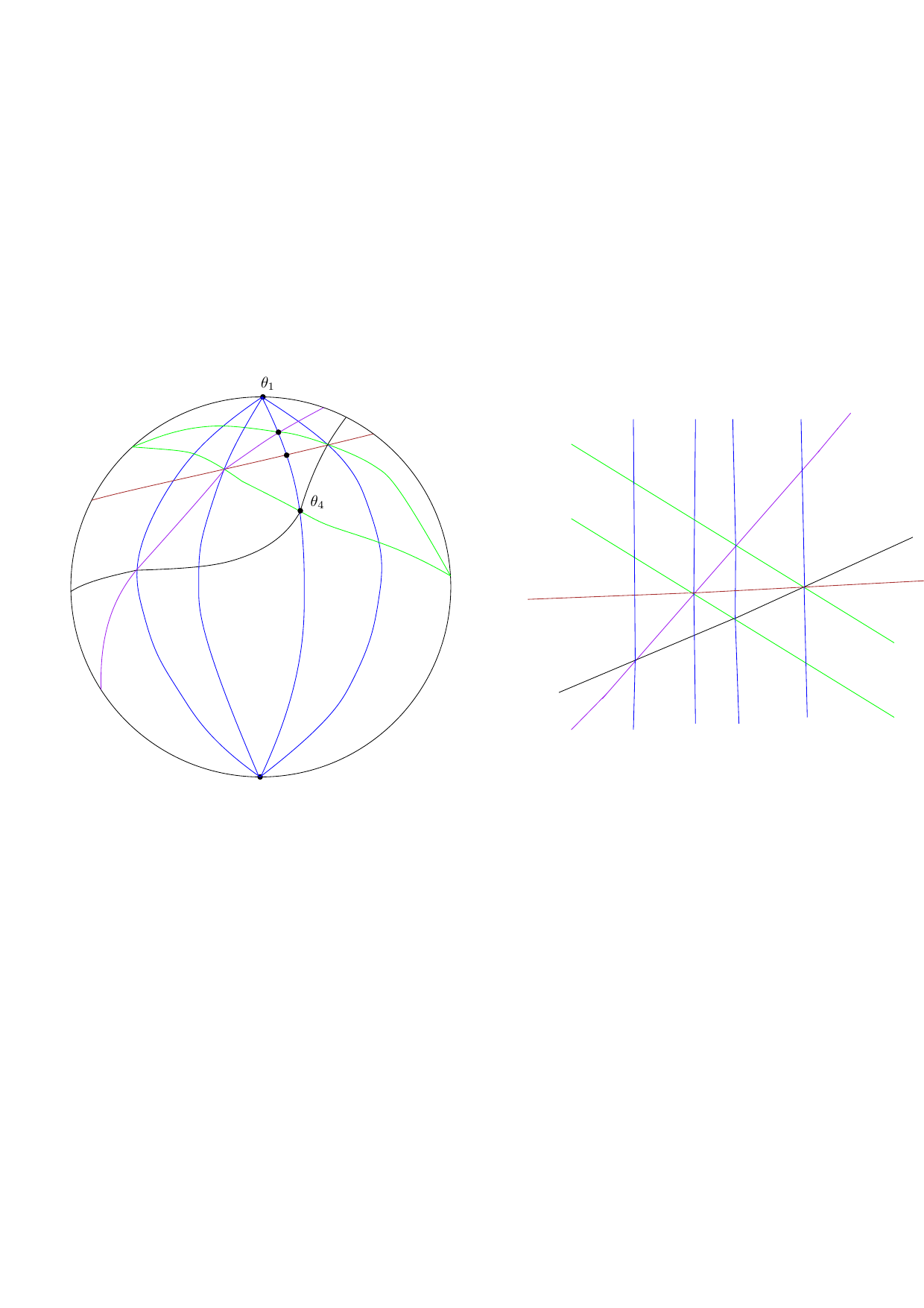}
	\caption{Sub-arrangement of type II}
	\label{fig:5}
\end{figure}

Let $\sX=\Sigma_{\mathcal K'}$ and $\wsX=\widehat \Sigma_{\mathcal K'}$. We view $\od_{\mathcal K'}$ as a subcomplex of $\od_{\mathcal K}$, as in Section~\ref{subsec:central}.  
Denote the four vertical hyperplanes of $\mathcal K'$ by $h_1,h_2,h_3,h_4$ (from left to right in Figure~\ref{fig:6}(I)). Let $\sX_i$ be the union of all the closed faces of $\sX$ that intersect $h_i$. For $2\le i\le 4$, $\sX_i$ is formed of three $2$-cells, denoted from top to bottom by $\sX_{i1},\sX_{i2}$ and~$\sX_{i3}$. The following lemma follows from Lemma~\ref{lem:injective} and \cite[Lem~6.8]{huang2024}.

\begin{lem}
	\label{lem:embed1}
	Inclusions $\wsX_2\cup\wsX_3\subset \od_{\cK}$, $\wsX_3\cup\wsX_4\subset \od_{\cK}$, and $\wsX_2\cup\wsX_3\cup\wsX_4\subset \od_{\cK}$ are $\pi_1$-injective.
\end{lem}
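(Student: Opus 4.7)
The plan is to factor each of the three inclusions into $\od_\cK$ as a composition of two $\pi_1$-injective inclusions, and then invoke the fact that a composition of $\pi_1$-injective maps is $\pi_1$-injective.

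First, I would use the deconing. Since $\cK$ is a central arrangement in $\mathbb{R}^3$ and $H\in\cK$ is the hyperplane giving rise to the deconing $\cK'=\cK_H$, Lemma~\ref{lem:injective} yields that the inclusion $\wsX=\od_{\cK'}\hookrightarrow \od_\cK$ is $\pi_1$-injective (in fact, it identifies $\pi_1\od_\cK$ with $\pi_1\wsX\oplus\mathbb{Z}$, though only the injectivity is needed here). Thus the problem reduces to proving that each of $\wsX_2\cup\wsX_3$, $\wsX_3\cup\wsX_4$, and $\wsX_2\cup\wsX_3\cup\wsX_4$ includes $\pi_1$-injectively into the ambient complex $\wsX$ itself.

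For the latter, I would appeal to \cite[Lem 6.8]{huang2024}, which is exactly the $\pi_1$-injectivity statement for these three subcomplexes inside $\wsX$, proved there via a developability argument for an appropriate complex of groups modelled on the subcomplex structure (analogous to the construction of $\mathcal U_{12}$ and Lemma~\ref{lem:transfer1} in the type~I case). Composing this $\pi_1$-injective inclusion with the deconing inclusion gives the three asserted $\pi_1$-injective maps into $\od_\cK$.

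The only real content therefore resides in \cite[Lem 6.8]{huang2024}, which we are using as a black box; if one wished to reprove it here, the main obstacle would be to set up a developable complex of groups whose fundamental group is $\pi_1(\wsX_2\cup\wsX_3\cup\wsX_4)$ and whose development is $\mathrm{CAT}(0)$, in analogy with $\bU_{12}$. This would require a careful analysis of the intersection pattern of the subcomplexes $\wsX_{ij}$ (which in the type~II picture consist of three $2$-cells stacked vertically through each $h_i$), together with verification that the pairwise and triple intersections are connected, so that the developability criterion and $\pi_1$-injectivity of edge and face inclusions are met.
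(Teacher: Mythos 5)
Your factorization through the deconing inclusion (Lemma~\ref{lem:injective}) followed by the $\pi_1$-injectivity of the subcomplexes inside $\wsX=\od_{\cK'}$ from \cite[Lem~6.8]{huang2024} is exactly how the paper justifies this lemma, which it states follows from those two ingredients without further argument. Your proposal is correct and takes essentially the same route.
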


\begin{figure}[h]
	\centering
	\includegraphics[scale=0.88]{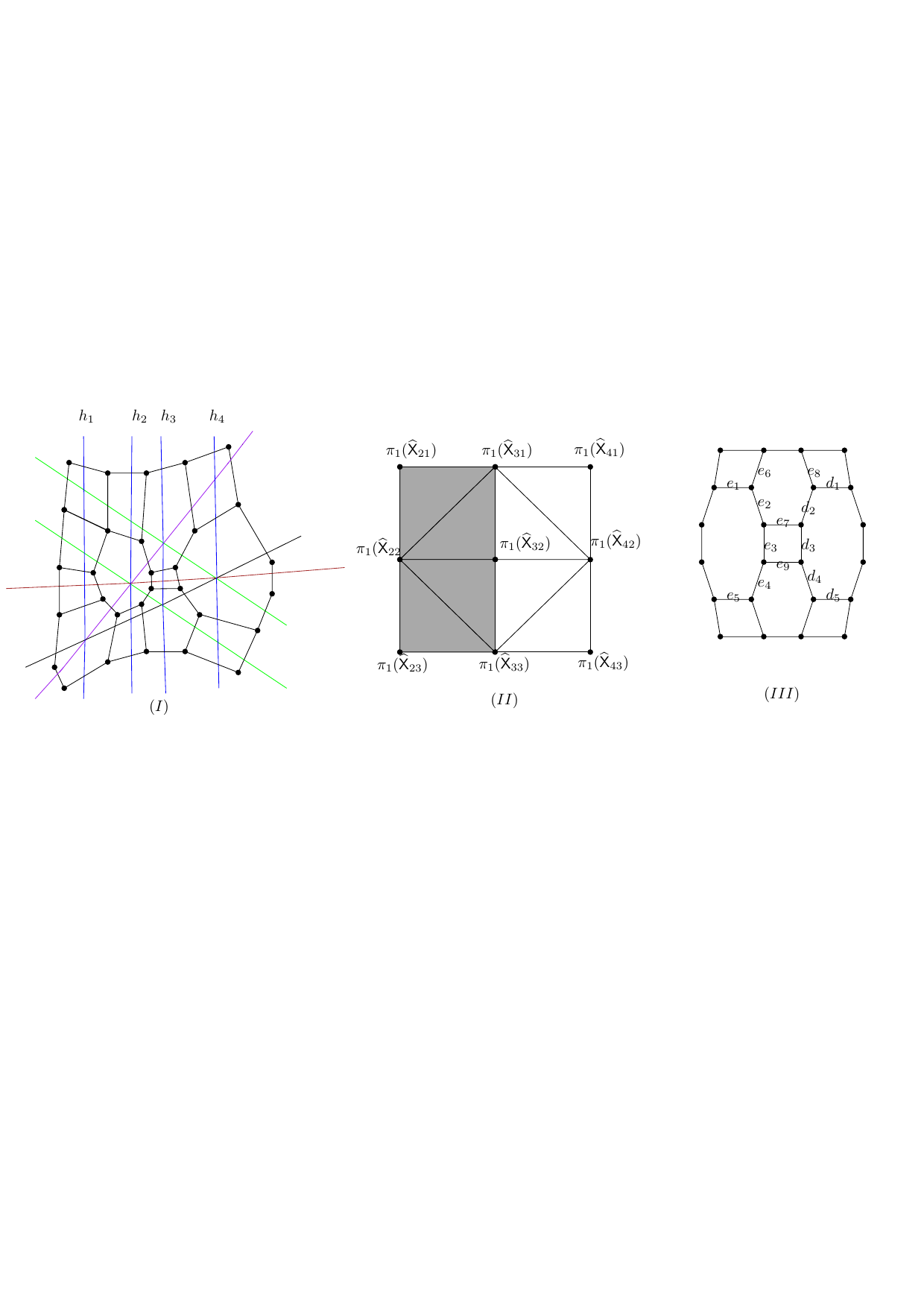}
	\caption{Dual complex}
	\label{fig:6}
\end{figure}

Let $W_{234}$ be the Coxeter group of type $B_3$, and let $\ca_{234}$ be its reflection arrangement. 
Namely, $\ca_{234}$ has the following hyperplanes: $x_i=0$ for $1\le i\le 3$, and $x_i\pm x_j=0$ for $1\le i\neq j\le 3$.
Let $\Si_{234}$ and $\od_{234}$ be the associated dual polyhedron and the Salvetti complex.
Let $\ca'_{234}$ be the deconing of $\ca_{234}$ with respect to $x_1=0$. Then we have isomorphisms of combinatorial complexes
$$
\Si_{\ca'_{234}}\cong \sX_2\cup\sX_3\cup\sX_4\ \mathrm{and}\ \od_{\ca'_{234}}\cong \wsX_2\cup\wsX_3\cup\wsX_4.
$$

Let $\mathcal V_{234}$ be the simple complex of groups with the underlying complex $V_{234}$ described in Figure~\ref{fig:6}(II). Note that $\pi_1\mathcal V_{234}=\pi_1\od_{\ca'_{234}}$. Hence the local groups embed in $\pi_1\mathcal V_{234}$, and so $\mathcal V_{234}$ is developable. Its development is called the \emph{Falk complex}~$\bV_{234}$.  
Let $V_{23}$ be the gray subcomplex of $V_{234}$ in Figure~\ref{fig:6}(II). Let $\mathcal V_{23}$ be the simple complex of groups with the underlying complex $V_{23}$ induced from $\mathcal V_{234}$. Then $\pi_1(\mathcal V_{23})= \pi_1(\wsX_2\cup\wsX_3)$. Let $\bV_{23}$ be the development of $\mathcal V_{23}$. We analogously define~$\bV_{34}$.

Let $\widetilde K_{234}$ (resp.\ $\widetilde K_{23}$) be the universal cover of $\wsX_2\cup\wsX_3\cup\wsX_4$ (resp.\ $\wsX_2\cup\wsX_3$). Vertices of $\bV_{234}$ are in bijective correspondence with the elevations of $\wsX_{ij}$ in $\widetilde K_{234}$, for $2\le i\le 4,1\le j\le 3$, called \emph{standard subcomplexes of face type $\wsX_{ij}$}. The \emph{face type} of the corresponding vertex of $\bV_{234}$ is also $\wsX_{ij}$. We can describe edges and triangles of~$\bV_{234}$ (and of~$\bV_{23}$ and $\bV_{34}$) using these standard subcomplexes in the same way as in Definition~\ref{def:bu}. We equip~$V_{23}$ with the piecewise Euclidean metric of a rectangle with sides of lengths $1$ and~$2$. It pulls back to a piecewise Euclidean metric on~$\bV_{23}$. We define an analogous piecewise Euclidean metric on $\bV_{34}$. 
 We will use the vocabulary from Definitions~\ref{def:loop} and~\ref{def:triangle} in the context of $\bV_{234}$ as well.

\begin{defin}
	 Consider the subcomplex $\Si^*\subset \Si$ (depending on $\cK)$ that is the union of $2$-cells $C$ with $\ka(C)$ a $2$-cell of $\sX_2\cup \sX_3\cup \sX_4$. Using $\widehat \kappa \colon \od \to \od_{\mathcal K}$, we can define a subcomplex $\Delta^*\subset\Delta$ arising from $\Si^*$, and a simplicial map $\ka^* \colon \Delta^*\to \bV_{234}$ in a similar way as in Definition~\ref{def:noncollapsed}. We say a that a $2$-cell $C$ of~$\Si^*$ and its image $\ka(C)$ in $\sX_2\cup \sX_3\cup \sX_4$ are \emph{non-collapsed} if $\ka(C)=\sX_{21},\sX_{31},\sX_{41},\sX_{32},$ or $\sX_{33}$ --- these are the faces for which $\ka|_{C}$ is a homeomorphism.
	A vertex of~$\Delta^*$ (resp.\ $\bV_{234}$) is \emph{non-collapsed} if its face type is non-collapsed.
	\end{defin}

The following has the same proof as Lemma~\ref{lem:lift}.

\begin{lem}
	\label{lem:lift2}
	
Let $x\in \Delta^*$ be non-collapsed of face type $C$.
If $\kappa(C)\neq \sX_{33}$,
then the map $\lk(x,\Delta^*)\to \lk(x^\cK, \bV_{234})$ induced by~$\kappa^*$ is an isomorphism. 
If $\ka(C)= \sX_{33}$, then this map is an isomorphism onto the subcomplex spanned by the vertices of face types $\wsX_{22},\wsX_{32},$ and $\wsX_{42}.$ 
\end{lem}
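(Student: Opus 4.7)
The plan is to adapt the proof of Lemma~\ref{lem:lift}, replacing $\mathcal{H}$ by $\mathcal{K}$ and $\bU_{12}$ by $\bV_{234}$. By Remark~\ref{rmk:alternative}, the vertices of $\Delta$ are in bijective correspondence with the standard subcomplexes of $\widetilde{\Sigma}$; neighbours correspond to intersecting pairs of such subcomplexes and triangles to triples having a common vertex. By Definition~\ref{def:bu} applied to $\mathcal{V}_{234}$, an entirely analogous description holds for $\bV_{234}$ in terms of the standard subcomplexes of $\widetilde{K}_{234}$.

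Let $\mathcal{C}\subset\widetilde{\Sigma}$ be the standard subcomplex corresponding to $x$, and let $\mathcal{C}'\subset\widetilde{K}_{234}$ be the standard subcomplex corresponding to $x^{\cK}$. Since $x$ is non-collapsed with face type $C$, the restriction $\widetilde{\kappa}|_{\mathcal{C}}$ is a homeomorphism onto $\mathcal{C}'$. A neighbour of $x$ in $\Delta^*$ is uniquely determined by the $1$-dimensional standard sub-subcomplex of $\mathcal{C}$ along which the neighbour's standard subcomplex meets $\mathcal{C}$, subject to the constraint that the dual $2$-dimensional standard subcomplex has face type in $\Si^*$; the analogous description holds for neighbours of $x^{\cK}$ in $\bV_{234}$. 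Because $\widetilde{\kappa}|_{\mathcal{C}}$ is a homeomorphism, it induces a bijection between such $1$-dimensional sub-subcomplexes of $\mathcal{C}$ and $\mathcal{C}'$ and preserves their pattern of intersections. Consequently, the induced map $\lk(x,\Delta^*)\to\lk(x^{\cK},\bV_{234})$ is injective and preserves edges and triangles.

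It remains to identify the image. When $\kappa(C)\in\{\sX_{21},\sX_{31},\sX_{41},\sX_{32}\}$, inspection of Figure~\ref{fig:6}(I) shows that every $2$-cell of $\sX_2\cup\sX_3\cup\sX_4$ adjacent to $\kappa(C)$ lies in $\kappa(\Si^*)$, so every $1$-dimensional sub-subcomplex of $\mathcal{C}'$ lifts to one of $\mathcal{C}$; this yields a full isomorphism onto $\lk(x^{\cK},\bV_{234})$. When $\kappa(C)=\sX_{33}$, the only $2$-cells of $\sX_2\cup\sX_3\cup\sX_4$ sharing an edge with $\sX_{33}$ are $\sX_{22},\sX_{32},\sX_{42}$; the remaining edges of $\sX_{33}$ lie on hyperplanes along which the adjacent $2$-cells leave $\sX_2\cup\sX_3\cup\sX_4$. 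Hence the image is exactly the subcomplex of $\lk(x^{\cK},\bV_{234})$ spanned by vertices of face types $\wsX_{22},\wsX_{32},\wsX_{42}$. The main obstacle is this last combinatorial verification, which requires careful reading of Figure~\ref{fig:6}(I) to enumerate the adjacencies of $\sX_{33}$ inside $\sX_2\cup\sX_3\cup\sX_4$.
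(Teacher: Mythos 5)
Your proposal is correct and takes essentially the same route as the paper: the paper proves Lemma~\ref{lem:lift2} by declaring it has the same proof as Lemma~\ref{lem:lift}, i.e.\ via Remark~\ref{rmk:alternative} and Definition~\ref{def:bu} the relevant neighbours of $x$ and $x^\cK$ correspond to lines in the isomorphic standard subcomplexes (isomorphic because $x$ is non-collapsed), and adjacency is detected by the intersection pattern of these lines, which the isomorphism preserves. Your explicit identification of the image in the $\ka(C)=\sX_{33}$ case by enumerating the $2$-cells of $\sX_2\cup\sX_3\cup\sX_4$ meeting $\sX_{33}$ along edges is just the combinatorial reading of Figure~\ref{fig:6}(I) that the paper leaves implicit.
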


\section{Filling cycles in $\bV_{234}$}
\label{sec:splitting system}

Let $\Lambda$ be the Coxeter diagram of type~$B_3$, which is the linear graph with consecutive vertices $s_1s_2s_3$ and $m_{s_1s_2}=3,m_{s_2s_3}=4$, and total order $s_1<s_2<s_3$. We shortly write $\Lambda=234$. Let $\bV_{234}, V_{234}$, $\sX_i$, $\wsX_i$ and $\wsX_{ij}$ be as in Section~\ref{subsec:AuII}.

The goal of this section is to establish the properties of certain 8-cycles and 10-cycles in $\bV_{234}$, namely Propositions~\ref{prop:8cycle diagram}, \ref{prop:10cycle diagram1}, and \ref{prop:10cycle diagram2}. We will start with lemmas on vertex links in Section~\ref{subsec:link}, which will be used to study the cycles in Section~\ref{subsec:cycles234}.

\subsection{Vertex links in $\bV_{234}$}
\label{subsec:link}

Let $\sY=\sX_2\cup\sX_3\cup\sX_4$.
We label the edges of $\sY$ (and~$\wsY$) as in Figure~\ref{fig:6}(III). 
Identifying $\od^1_{234}$ with the Cayley graph of $W_{234}$, for each edge $e$ of $\sY$, the two edges of $\widehat e\subset \wsY\subset \od_{234}$ are oriented in opposite directions. 

\begin{lem}
	\label{cor:deadends}
	Let $\omega$ be a locally embedded cycle in the link of a vertex of type $\hat s_3$ (resp.\ $\hat s_1$) in $\bV_{234}$. Then $\omega$ contains at least two vertices of face type $\wsX_{32}$ (resp.\ of type~$\hat s_2$ but not of face type $\wsX_{32}$).  
\end{lem}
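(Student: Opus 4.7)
The plan is to identify the link $L_v$ of a vertex $v$ in $\bV_{234}$ with a graph coming from a central line arrangement, and then apply Lemma~\ref{lem:falk} (Falk's lemma on $\ell$-segments).

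First, I would use the developable structure of $\mathcal V_{234}$ to describe $L_v$ explicitly. A vertex $v$ of type $\hat s_3$ (resp.\ $\hat s_1$) corresponds, via the identification of vertices of $\bV_{234}$ with standard subcomplexes of $\widetilde K_{234}$, to an elevation of some specific 2-cell $\wsX_{ij}$. The first task is to determine which $\wsX_{ij}$ matches each Coxeter-theoretic type, using the assignment of types to face types in $V_{234}$ which is encoded in Figure~\ref{fig:6}(II)--(III) and the edge labelling of $\wsY$.

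Next, I would describe the adjacency structure in $L_v$. Neighbours of $v$ in $\bV_{234}$ correspond to standard subcomplexes intersecting $v$'s, and two such neighbours span an edge in $L_v$ iff their standard subcomplexes, together with $v$'s, share a common vertex. I expect this combinatorial description to identify $L_v$ with (a subgraph of) $\od^1_{\mathcal B}$ for a central line arrangement $\mathcal B$ in $\mathbb R^2$, in such a way that vertices of face type $\wsX_{32}$ (resp.\ of type $\hat s_2$ but not of face type $\wsX_{32}$) correspond to the edges dual to a specific distinguished line $\ell\in\mathcal B$. Since the star of $v$ in $\bV_{234}$ is contractible, any locally embedded cycle $\omega$ in $L_v$ pushes forward to a locally embedded homotopically trivial edge-loop in $\od_{\mathcal B}$. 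Lemma~\ref{lem:falk} then yields at least two $\ell$-segments in $\omega$, and each $\ell$-segment contributes at least one vertex of the specified face type, giving the required count.

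The main obstacle is the identification $L_v\cong\od^1_{\mathcal B}$ together with the matching of face types to specific lines of $\mathcal B$. This requires tracking, via the local structure of the complex of groups $\mathcal V_{234}$ and the vertex group $\pi_1(\wsX_{ij})$, how the 2-cells of $\wsY$ meet along their shared edges and how these adjacencies encode the link combinatorics. In particular, the asymmetry between the $\hat s_3$ and $\hat s_1$ statements (only vertices of type $\hat s_2$ outside face type $\wsX_{32}$ are counted in the second case) must be read off from which 2-cells of $\wsY$ the adjacent standard subcomplexes can belong to; this asymmetry mirrors the asymmetric position of $\sX_{32}$ as the middle cell of the middle column $\sX_3$ in Figure~\ref{fig:6}(I). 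Once this bookkeeping is done, Lemma~\ref{lem:falk} delivers the conclusion.
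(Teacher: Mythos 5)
Your proposal is correct and follows essentially the paper's own route: the paper likewise converts the locally embedded cycle in the link into a locally embedded, homotopically trivial edge-loop in the standard subcomplex $\wsX_{ij}$ (this is exactly what Lemma~\ref{lem:link path} provides) and then applies Lemma~\ref{lem:falk} to the relevant line, with the resulting $\ell$-segments accounting for the required vertices of face type $\wsX_{32}$ (resp.\ $\wsX_{21},\wsX_{23}$). The only imprecision is that the link is not literally a subgraph of $\od^1_{\mathcal B}$ — its vertices correspond to the adjacent standard subcomplexes, i.e.\ lines in the elevation of $\wsX_{ij}$, with edges given by intersecting lines — but this does not affect your push-forward argument, which is the mechanism the paper uses.
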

\begin{proof} 
	Suppose that $\omega$ lies in the link of a vertex of face type $\whX_{22}$.
	We apply Lemma~\ref{lem:link path} to $\omega$ to produce a locally embedded edge-loop $P$ in $\wsX_{22}$. By Lemma~\ref{lem:falk} applied to $P$, the cycle $\omega$ has at least two vertices of face type in $\{\wsX_{21},\wsX_{23}\}$. Other cases are analogous.
\end{proof}

\begin{lem}
	\label{lem:n=8}
	Let $x$ be a vertex of $\bV_{234}$ of face type $\wsX_{22}$.
	Let $\omega$ be a locally embedded $n$-cycle in $\lk(x,\bV_{234})$. Then $n\ge 8$. Moreover, the equality holds if and only if $\omega$ corresponds, in the sense of Lemma~\ref{lem:link path}, up to a cyclic permutation of vertices, to an edge-loop in $\wsX_{22}$ of form $e^{2k}_1e_2e_3e_4e^{-2k}_5e^{-1}_4e^{-1}_3e^{-1}_2$ or $e^{2k}_1e^{-1}_2e^{-1}_3e^{-1}_4e^{-2k}_5e_4e_3e_2$. An analogous statement holds for $x$ of face type $\wsX_{42}$, with $e_i$ replaced by $d_i$.
\end{lem}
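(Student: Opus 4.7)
The plan is to translate $\omega$, via (the appropriate analog for $\bV_{234}$ of) Lemma~\ref{lem:link path}, into a null-homotopic edge-loop in the standard subcomplex of $x$, and then bound its segment count using Lemma~\ref{lem:falk}. After choosing a base-point lift, $\omega$ corresponds to a locally embedded edge-loop $P = P_1 \cdots P_n$ in the standard subcomplex $\wsX_{22}$ corresponding to $x$, with each $P_j$ lying in the intersection of $\wsX_{22}$ with the standard subcomplex of $y_j$. In particular, each $P_j$ traverses edges supported on a single line of $\mathcal K'$ bounding $\sX_{22}$. Because $P$ arises from a closed lift of a cycle in a link, it is null-homotopic in $\wsX_{22}$, using $\pi_1$-injectivity of $\wsX_{22} \subset \wsY$ analogous to Lemma~\ref{lem:injective}.

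Next I apply Lemma~\ref{lem:falk} to $P$: for each line $\ell$ of $\mathcal K'$ that bounds $\sX_{22}$, the locally embedded null-homotopic loop $P$ contains at least two $\ell$-segments, each corresponding to a distinct vertex of $\omega$. From the structure of $\sX_{22}$ as the central $2$-cell of $\sX_2$, exactly four lines of $\mathcal K'$ bound $\sX_{22}$: one through $h_2$ carrying both Salvetti edges $e_1$ and $e_5$ (dual to the fans separating $\sX_{22}$ from $\sX_{21}$ and $\sX_{23}$ respectively), and three further lines carrying $e_2$, $e_3$, and $e_4$. Summing yields $n \ge 2 \cdot 4 = 8$.

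For the equality case $n = 8$, each of the four bounding lines contributes exactly two segments of $P$, and the cyclic order of segment types in $P$ is then forced, up to orientation, by the cyclic order of the corresponding edges around $\partial\sX_{22}$. After a cyclic permutation of $\omega$ and possibly reversing orientation, this produces precisely the stated form $e^{2k}_1 e_2 e_3 e_4 e^{-2k}_5 e^{-1}_4 e^{-1}_3 e^{-1}_2$ or its mirror. The integer $k$ enters because Lemma~\ref{lem:relations}(1) allows $e^{2k}_1$ and $e^{-2k}_5$ to absorb $k$ copies of the central Deligne element $\Delta$ without creating additional segments. The assertion for $x$ of face type $\wsX_{42}$ follows from the left-right symmetry of $\sY$ interchanging $\sX_2 \leftrightarrow \sX_4$ and relabelling $e_i \leftrightarrow d_i$ in Figure~\ref{fig:6}(III).

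The main obstacle I anticipate is verifying null-homotopy of $P$ in $\wsX_{22}$ itself (rather than merely in $\wsY$) and correctly matching each possible face type of a neighbor of $\wsX_{22}$ in $V_{234}$ to its supporting line of $\mathcal K'$; both are routine but require careful bookkeeping with the complex-of-groups structure of $\mathcal V_{234}$.
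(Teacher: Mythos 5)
Your argument for the inequality follows the paper's route exactly: translate $\omega$ via Lemma~\ref{lem:link path} into a locally embedded null-homotopic edge-loop in $\wsX_{22}$, then apply Lemma~\ref{lem:falk} to the four lines through the vertex dual to $\sX_{22}$ (the line $h_2$ carrying both $e_1$ and $e_5$, plus the three lines carrying $e_2,e_3,e_4$), getting $n\ge 2\cdot 4=8$. That part is fine and matches the paper.

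The gap is in the equality case. Once $n=8$ forces two segments per line and (granting the adjacency analysis of face types, which you attribute a bit loosely to ``the cyclic order of edges around $\partial\sX_{22}$'' rather than to which standard subcomplexes actually intersect) the loop has the shape $e_1^{2k_1}e_2^{*}e_3^{*}e_4^{*}e_5^{2k_2}e_4^{*}e_3^{*}e_2^{*}$ with \emph{a priori} arbitrary non-zero integer exponents, you still have to show that null-homotopy in $\wsX_{22}$ forces the exponents of $e_2,e_3,e_4$ to be $\pm 1$ with coherent signs and $k_2=-k_1$. Your proposal only addresses the easy ``if'' direction: the remark that Lemma~\ref{lem:relations}(1) lets $e_1^{2k}$ and $e_5^{-2k}$ absorb powers of the central element $\Delta$ explains why loops of the stated form \emph{are} null-homotopic, but it does not rule out other exponent patterns (e.g.\ $e_2^{2}$ in place of $e_2$, or $k_2\neq -k_1$). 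This ``only if'' step is a genuine statement about which words of this shape are trivial in $\pi_1(\wsX_{22})$, the pure dihedral Artin group of type $B_2$; the paper settles it by citing \cite[Lem~39]{crisp2005automorphisms}, and without that (or an equivalent computation, say via the retractions $\Pi_{\widehat e_i}$ and the structure of $\ker$ as in Lemma~\ref{lem:relations}) the ``moreover'' part of the lemma — which is exactly what is used later, e.g.\ in Lemma~\ref{lem:8466} — is not proved.
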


\begin{proof}
	As before, we apply Lemma~\ref{lem:link path} to $\omega$ to produce a locally embedded edge-loop in~$\wsX_{22}$, and then we apply Lemma~\ref{lem:falk} to this edge-loop to deduce that $\omega$ has at least two vertices whose face types belong to $\{\wsX_{21},\wsX_{23}\}$, at least two vertices with face type $\wsX_{31}$, at least two vertices with face type $\wsX_{32}$, and at least two vertices with face type $\wsX_{33}$. Hence $n\ge 8$. When $n=8$, up to a cyclic permutation, the only possible face type of $\omega$ is $\wsX_{21}\wsX_{31}\wsX_{32}\wsX_{33}\wsX_{23}\wsX_{33}\wsX_{32}\wsX_{31}$. Thus the corresponding edge-loop in~$\wsX_{22}$ is of form $e^{2k_1}_1e^*_2e^*_3e^*_4e^{2k_2}_5e^*_4e^*_3e^*_2$, where $k_i$ and $*$ are non-zero integers. Since $\wsX_{22}$ is the cover of the presentation complex of a dihedral Artin group corresponding to the pure Artin group, it remains to apply \cite[Lem~39]{crisp2005automorphisms}.
\end{proof}

We record the following corollary, which will be used in the later sections.
\begin{cor}
	\label{lem:bv23 cat0}
	$\bV_{23}$ (and $\bV_{34}$) are $\mathrm{CAT}(0)$. 
\end{cor}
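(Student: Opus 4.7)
The plan is to apply the Cartan--Hadamard theorem, exactly as was done for $\bU_{12}$ in Lemma~\ref{lem:CAT0}. By construction $\bV_{23}$ is simply connected, being the development of the developable simple complex of groups $\mathcal V_{23}$, and it carries a piecewise Euclidean metric pulled back from the $1\times 2$ rectangle on $V_{23}$, in which every corner angle equals $\pi/2$. Hence Gromov's link condition reduces CAT$(0)$ to the purely combinatorial statement that every locally embedded cycle in every vertex link has at least $4$ edges, since each such edge has length $\pi/2$. By the symmetry swapping $\sX_2$ and $\sX_4$ the argument for $\bV_{34}$ is identical, so I focus on $\bV_{23}$.

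Next I observe that $\bV_{23}$ sits inside $\bV_{234}$ as the full subcomplex spanned by the vertices of face type $\wsX_{ij}$ with $i\in\{2,3\}$, and for any such vertex $x$ the link $\lk(x,\bV_{23})$ is a full subgraph of $\lk(x,\bV_{234})$. For $x$ of face type $\wsX_{22}$, Lemma~\ref{lem:n=8} directly gives girth $\geq 8$, so the link condition is more than satisfied at such vertices with room to spare.

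For the remaining vertex face types of $\bV_{23}$, namely $\wsX_{21}, \wsX_{23}, \wsX_{31}, \wsX_{32}, \wsX_{33}$, I would repeat the strategy already used in Corollary~\ref{cor:deadends} and in the proof of Lemma~\ref{lem:n=8}. Given a locally embedded cycle $\omega$ in $\lk(x,\bV_{23})$, Lemma~\ref{lem:link path} lifts $\omega$ to a locally embedded edge-loop $P$ in the Salvetti subcomplex $\wsX_{ij}$ corresponding to $x$; Lemma~\ref{lem:falk}, applied to each line of the relevant sub-arrangement crossed by $P$, then forces $P$ to contain at least two segments per line, and each such segment contributes an alternation of face type in $\omega$, yielding the required $\geq 4$ edges.

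The main obstacle is simply the bookkeeping of which face types may appear around each of the five remaining vertex types. The counting is, however, strictly easier than in the $\wsX_{22}$ case of Lemma~\ref{lem:n=8}, because $\wsX_2\cup\wsX_3$ involves fewer parallel hyperplane families than the full $\wsX_2\cup\wsX_3\cup\wsX_4$; in particular, for the face types $\wsX_{21}$ and $\wsX_{23}$ one expects the vertex link to be essentially a tree or a long cycle, so the link condition becomes almost vacuous there.
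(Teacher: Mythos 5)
Your overall strategy (simple connectedness from developability, Gromov's link condition, and the Lemma~\ref{lem:link path}/Lemma~\ref{lem:falk} counting in links) is indeed the paper's, but there is a genuine quantitative gap coming from a misreading of the metric. The piecewise Euclidean metric on $V_{23}$ is that of a $1\times 2$ rectangle subdivided into four right isosceles triangles, not a square complex: the triangle angles are $\frac{\pi}{2}$ only at the vertices of face type $\wsX_{21}$, $\wsX_{23}$, and $\wsX_{32}$, while they are $\frac{\pi}{4}$ at the vertices of face type $\wsX_{22}$, $\wsX_{31}$, and $\wsX_{33}$. Consequently the link condition is \emph{not} ``every embedded cycle has at least $4$ edges'': at the $\frac{\pi}{4}$-angle vertices every embedded cycle in the link must have at least $8$ edges to reach length $2\pi$. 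So Lemma~\ref{lem:n=8} at $\wsX_{22}$ is not ``room to spare''—it is exactly the required bound—and at $\wsX_{31}$ and $\wsX_{33}$ the check you defer to ``bookkeeping'' is precisely the step that carries content: one must show (as the paper does, via Lemma~\ref{lem:link path} and Lemma~\ref{lem:falk} applied to two different lines of the arrangement) that every embedded cycle in such a link contains at least two vertices of face type $\wsX_{21}$ and at least two of face type $\wsX_{32}$, hence has at least $8$ edges of length $\frac{\pi}{4}$.

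With your stated target of $\geq 4$ edges per link cycle, the verification you propose at $\wsX_{31}$, $\wsX_{33}$ (and implicitly at $\wsX_{22}$) would not establish that the links are $\mathrm{CAT}(1)$, so the argument as written does not prove the statement. The cases $\wsX_{21}$, $\wsX_{23}$, $\wsX_{32}$ are fine and are handled in the paper simply by noting that those links are bipartite graphs with edge length $\frac{\pi}{2}$; your description of them as ``almost vacuous'' is acceptable there, but not at the $\frac{\pi}{4}$-angle vertices.
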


\begin{proof}
	Since $\bV_{23}$ is the development of a complex of groups, it is simply connected. It remains to that show that each $\lk(x,\bV_{23})$ is $\mathrm{CAT}(1)$, i.e.\ each embedded cycle in $\lk(x,\bV_{23})$ has length $\ge 2\pi$. This is clear if $x$ has face type $\wsX_{21}$, $\wsX_{32}$, or $\wsX_{23}$, as its link is a bipartite graph with edge length $\frac{\pi}{2}$. The case where $x$ has face type $\wsX_{22}$ follows from Lemma~\ref{lem:n=8}. 
It remains to consider $x$ of face type $\wsX_{31}$ (or $\wsX_{33}$). 
By Lemma~\ref{lem:link path} and Lemma~\ref{lem:falk}, any embedded cycle in $\lk(x,\bV_{31})$ has at least two vertices of face type $\wsX_{21}$, 
and at least two vertices of face type $\wsX_{32}$. Any such cycle has $\ge 8$ edges (of length $\frac{\pi}{4}$), as desired. 
\end{proof}

The following lemma has the proof analogous to Lemma~\ref{lem:n=8}.

\begin{lem}
	\label{lem:n=6}
	Let $x$ be a vertex of $\bV_{234}$ of face type $\wsX_{31}$. Let $\omega$ be a locally embedded $n$-cycle in $\lk(x,\bV_{234})$. Then $n\ge 6$. If $n=6$, then $\omega$ corresponds, up to a cyclic permutation of vertices, to an edge-loop in $\wsX_{31}$ of form $e^{2k}_6e_2e_7d^{-2k}_2e^{-1}_7e^{-1}_2, e^{2k}_6e^{-1}_2e^{-1}_7d^{-2k}_2e_7e_2,$ $e^{2k}_2e_7d_2e^{-2k}_8d^{-1}_2e^{-1}_7,$ or $e^{2k}_2e^{-1}_7d^{-1}_2e^{-2k}_8d_2e_7$. 
	An analogous statement holds for $x$ of face type~$\wsX_{33}$.
\end{lem}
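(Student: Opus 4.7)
The plan mirrors the proof of Lemma~\ref{lem:n=8}. First, I would apply Lemma~\ref{lem:link path} to convert $\omega\subset\lk(x,\bV_{234})$ into a locally embedded edge-loop $P$ inside the elevation of $\wsX_{31}$ corresponding to $x$, where successive subpaths of $P$ lie in the faces of $\wsX_{31}$ dual to the face types of the consecutive vertices of $\omega$. The relevant 2-cell $\sX_{31}$ of $\sY$ arises from a central line arrangement in $\mathbb R^2$, so the edges of $P$ carry labels drawn from the subset of $\{e_2,e_6,e_7,e_8,d_2\}$ of $\wsY$ visible in Figure~\ref{fig:6}(III) that lie in $\sX_{31}$.

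Second, I would apply Falk's lemma (Lemma~\ref{lem:falk}) to $P$: for each line $\ell$ of the arrangement dual to $\wsX_{31}$, the loop $P$ contains at least two $\ell$-segments, each of which corresponds via Lemma~\ref{lem:link path} to a vertex of $\omega$ whose face type is compatible with the edges dual to $\ell$. Enumerating the three lines in the line arrangement of $\wsX_{31}$, and noting that the three pairs of face types forced on the vertices of $\omega$ are disjoint, yields $n\ge 6$. (The count $2\cdot 3=6$ parallels the count $2\cdot 4=8$ in Lemma~\ref{lem:n=8}.)

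Third, for the equality $n=6$, each Falk pair must be realised exactly once, so the cyclic sequence of face types of the vertices of $\omega$ is determined up to cyclic permutation and reversal. This in turn determines the cyclic sequence of edge labels of $P$, constraining $P$ to have the combinatorial shape of one of the four expressions listed in the statement, with each exponent a nonzero integer. I would then invoke \cite[Lem~39]{crisp2005automorphisms}, exactly as in the proof of Lemma~\ref{lem:n=8} for the dihedral Artin relations, to pin down the absolute values and signs of the exponents. The analogous statement for $\wsX_{33}$ follows by the symmetry of the arrangement, interchanging the roles of the edges $e_i$ and $d_i$.

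The main obstacle is the combinatorial bookkeeping: one must correctly identify which edges of $\wsY$ lie in $\wsX_{31}$, verify that the three lines of its dual arrangement force exactly three disjoint pairs of face types among $\wsX_{21},\wsX_{32},\wsX_{41}$, and check that in the equality case the unique cyclic face-type sequence really does yield the four listed words (rather than additional ones) before the application of the Crisp--Paris lemma fixes the exponents.
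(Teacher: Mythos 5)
Your proposal is essentially the paper's own argument: the paper simply states that the proof is analogous to Lemma~\ref{lem:n=8}, and your outline (Lemma~\ref{lem:link path} to get a locally embedded loop in $\wsX_{31}$, Lemma~\ref{lem:falk} applied to the three lines of the hexagonal cell to get $n\ge 6$, then the forced face-type sequence in the equality case together with \cite[Lem~39]{crisp2005automorphisms} to pin down the exponents, and symmetry for $\wsX_{33}$) is exactly that analogy. The only caveat is within the bookkeeping you already flagged: the link of a $\wsX_{31}$-vertex also contains vertices of face types $\wsX_{22}$ and $\wsX_{42}$ (the lines pair the edges roughly as $\{e_6,d_2\}$, $\{e_2,e_8\}$, $\{e_7,\cdot\}$, as the four listed words reflect), not just $\wsX_{21},\wsX_{32},\wsX_{41}$.
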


\begin{lem}
	\label{lem:12}
	Let $\omega$ be a locally embedded cycle in the link of a vertex of type $\hat s_1$. If $\omega$ contains a subpath of type $\hat s_2 \hat s_3 \hat s_2$, where none of the type $\hat s_2$ vertices are of face type $\wsX_{32}$, then $|\omega|\geq 12$.
\end{lem}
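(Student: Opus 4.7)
The approach is to convert $\omega$ into an edge-loop $P$ in the 2-cell corresponding to $x$ via Lemma~\ref{lem:link path}, and then combine Lemma~\ref{lem:n=8} with segment counting from Lemma~\ref{lem:falk} to exclude cycles of length $8$ and $10$. Since $x$ has type $\hat s_1$, up to the symmetry exchanging the roles of $e_i$ and $d_i$ we may assume $x$ has face type $\wsX_{22}$; the case $x$ of face type $\wsX_{42}$ follows by the analogous argument.

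Apply Lemma~\ref{lem:link path} to produce a locally embedded homotopically trivial edge-loop $P$ in $\wsX_{22}$ whose successive segments correspond to the vertices of $\omega$. Comparing the explicit form $e_1^{2k}e_2e_3e_4e_5^{-2k}e_4^{-1}e_3^{-1}e_2^{-1}$ of an $8$-cycle loop from Lemma~\ref{lem:n=8} with the cyclic face-type pattern $\wsX_{21}\wsX_{31}\wsX_{32}\wsX_{33}\wsX_{23}\wsX_{33}\wsX_{32}\wsX_{31}$ forced by the $8$-cycle, read off the bijection between the edges $e_1,\ldots,e_5$ of $\wsX_{22}$ and the face types $\wsX_{21},\wsX_{31},\wsX_{32},\wsX_{33},\wsX_{23}$ appearing in $\lk(x,\bV_{234})$; in particular, edge $e_3$ corresponds to face type $\wsX_{32}$. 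The hypothesis of the lemma then translates to the statement that in the bad subpath of $P$, the two $\hat s_2$-segments lie in edges of $\wsX_{22}$ distinct from $e_3$, separated by a single $\hat s_3$-segment (in $e_2$ or $e_4$).

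Next, exclude short cycles. The link of $x$ is bipartite (alternating between types $\hat s_2$ and $\hat s_3$), so $|\omega|$ is even. By Lemma~\ref{lem:n=8}, every $8$-cycle in $\lk(x,\bV_{234})$ has (up to cyclic permutation and orientation) the cyclic face-type pattern above, and one checks directly that each of the four $\hat s_2\hat s_3\hat s_2$ subpaths in this pattern contains a $\wsX_{32}$-vertex. Hence the hypothesis forces $|\omega|\neq 8$, so $|\omega|\geq 10$. To rule out $|\omega|=10$, apply Lemma~\ref{lem:falk} to $P$ with $\ell$ ranging over the lines dual to each $e_i$: each such application gives at least two $\ell$-segments. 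The constraint from the bad subpath is that the corresponding two $\hat s_2$-segments of $P$ cannot lie in $e_3$, which prevents the segment mergings that produce the 8-cycle in Lemma~\ref{lem:n=8}; tallying the segments forced by Lemma~\ref{lem:falk} across the different lines, together with those contributed by the bad subpath itself, we reach at least $12$ segments of $P$ and hence $|\omega|\geq 12$.

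The main obstacle is the case analysis excluding $|\omega|=10$. Unlike the $n=8$ case, which is rigidly classified by Lemma~\ref{lem:n=8}, $10$-cycles admit more flexibility, and one must enumerate the possible shapes of $P$ that realise a $10$-cycle containing the bad subpath and show that none are consistent with the segment lower bounds supplied by Lemma~\ref{lem:falk} across all lines of the dihedral arrangement carrying $\wsX_{22}$. This requires careful bookkeeping of which edges each segment of $P$ occupies and in which direction, and may also invoke the homotopy relations of Lemma~\ref{lem:relations} to manipulate candidate edge-loops.
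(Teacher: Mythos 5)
Your overall skeleton matches the paper's: pass to a locally embedded loop $P$ in $\wsX_{22}$ via Lemma~\ref{lem:link path}, use Lemma~\ref{lem:falk} and the classification in Lemma~\ref{lem:n=8} to force $|\omega|\geq 8$ and to rule out $|\omega|=8$ (your observation that every $\hat s_2\hat s_3\hat s_2$ subpath of the $8$-cycle pattern meets a $\wsX_{32}$-vertex is correct), and then try to kill $|\omega|=10$. The problem is that your mechanism for the last step does not work. Segment counting via Lemma~\ref{lem:falk} cannot "reach at least $12$ segments": the adjacency constraints (a $\wsX_{21}$-vertex is adjacent only to $\wsX_{31}$-vertices, etc.) reduce the $|\omega|=10$ case to exactly two face-type patterns, $\wsX_{31}\wsX_{21}\wsX_{31}\wsX_{21}\wsX_{31}\wsX_{32}\wsX_{33}\wsX_{32}\wsX_{33}\wsX_{32}$ and $\wsX_{31}\wsX_{21}\wsX_{31}\wsX_{21}\wsX_{31}\wsX_{32}\wsX_{33}\wsX_{23}\wsX_{33}\wsX_{32}$, and both of these satisfy every "at least two $\ell$-segments" bound of Lemma~\ref{lem:falk} (their segments distribute as $2,3,3,2$ over the four lines of the $B_2$ arrangement carrying $\wsX_{22}$). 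So no tally of Falk segments, however careful, can exclude them; the gap is precisely the content of the lemma.

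What is actually needed (and what the paper does) is an algebraic argument for these two residual patterns. One writes the corresponding loops explicitly as $e^{2k_1}_1e^{2m}_2e^{2k_2}_1e^*_2e^*_3e^*_4e^*_3e^*_4e^*_3e^*_2$, respectively $e^{2k_1}_1e^{2m}_2e^{2k_2}_1e^*_2e^*_3e^*_4e^{2k_3}_5e^*_4e^*_3e^*_2$, with all exponents nonzero; the retraction $\Pi_{\widehat e_1}$ (Definition~\ref{def:retraction}) forces $k_1+k_2=0$, respectively $k_1+k_2+k_3=0$; Lemma~\ref{lem:relations} is then used to replace the $e_1^{\pm2}$ (and $e_5^{\pm2}$) subwords by words in $e_2,e_3,e_4$, the central elements $\Delta$ cancelling because of the exponent identity; finally, $\pi_1$-injectivity of $\widehat e_2\cup\widehat e_3\cup\widehat e_4$ (Lemma~\ref{lem:injective}) and uniqueness of reduced representatives in that graph yield a contradiction with $m\neq 0$. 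Your proposal only gestures at "may also invoke the homotopy relations of Lemma~\ref{lem:relations}" without identifying this chain of steps, so the decisive part of the proof is missing, and the counting argument you put in its place would fail.
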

\begin{proof} 
	We can assume that $\omega$ lies in the link of a vertex of face type $\whX_{22}$, and contains a subpath of face type $\whX_{21}\whX_{31}\whX_{21}$. By Lemma~\ref{lem:link path} and Lemma~\ref{lem:falk}, $\omega$ has at least two vertices of face type $\whX_{33}$. So if $|\omega|<12$, then it is a 10-cycle of face type $\wsX_{31}\wsX_{21}\wsX_{31}\wsX_{21}\wsX_{31}\wsX_{32}\wsX_{33}\wsX_{32}\wsX_{33}\wsX_{32}$ or $\wsX_{31}\wsX_{21}\wsX_{31}\wsX_{21}\wsX_{31}\wsX_{32}\wsX_{33}\wsX_{23}\wsX_{33}\wsX_{32}$.
	
	In the first case, by Lemma~\ref{lem:link path} we obtain a locally embedded edge-loop in $\wsX_{22}$ of form $P_{\omega}=e^{2k_1}_1e^{2m}_2e^{2k_2}_1e^*_2e^*_3e^*_4e^*_3e^*_4e^*_3e^*_2$, where  $k_1,m,k_2$ and $*$ are non-zero integers.  
	By considering $\Pi_{\widehat e_1}\colon \wsX_{22}\to\widehat e_1$ (see Definition~\ref{def:retraction}), we obtain $k_1+k_2=0$. By Lemma~\ref{lem:relations}, $e^{2k_1}_1e^{2m}_2e^{2k_2}_1$ is homotopic in $\wsX_{22}$ to $(e^{-1}_2e^{-1}_3e^{-2}_4e^{-1}_3e^{-1}_2)^{k_1}e^{2m}_2(e_2e_3e^2_4e_3e_2)^{k_1}$. Indeed, since $k_1+k_2=0$, by Lemma~\ref{lem:relations}(1), the terms from Lemma~\ref{lem:relations}(2) involving $\Delta$ will cancel. Since $P_\omega$ is homotopically trivial in $\wsX_{22}$, and the inclusion $\widehat e_2\cup\widehat e_3\cup\widehat e_4\to \wsX_{22}$ is $\pi_1$-injective (Lemma~\ref{lem:injective}), we obtain that $P_1=(e^{-1}_2e^{-1}_3e^{-2}_4e^{-1}_3e^{-1}_2)^{k_1}e^{2m}_2(e_2e_3e^2_4e_3e_2)^{k_1}$ and $P_2=e^*_2e^*_3e^*_4e^*_3e^*_4e^*_3e^*_2$ are homotopic in the graph $\Gamma_{234}=\widehat e_2\cup\widehat e_3\cup\widehat e_4$. Given an edge-path $P$ in $\Gamma_{234}$, its \emph{reduced representative} is the unique locally embedded edge-path in $\Gamma_{234}$ homotopic to $P$ in $\Gamma_{234}$. However, the reduced representative of $P_1$ is distinct from $P_2$, which is already reduced, which is a contradiction.
	
	In the second case, by Lemma~\ref{lem:link path} we obtain a locally embedded edge-loop in~$\wsX_{22}$ of form $P_{\omega}=e^{2k_1}_1e^{2m}_2e^{2k_2}_1e^*_2e^*_3e^*_4e^{2k_3}_5e^*_4e^*_3e^*_2$, where  $k_1,m,k_2$ and $*$ are non-zero integers.  By considering $\Pi_{\widehat e_1}\colon\wsX_{22}\to\widehat e_1$, we obtain $k_1+k_2+k_3=0$. By Lemma~\ref{lem:relations}, $P_{\omega}$ is homotopic in $\whX_{22}$ to $$(e^{-1}_2e^{-1}_3e^{-2}_4e^{-1}_3e^{-1}_2)^{k_1}e^{2m}_2(e^{-1}_2e^{-1}_3e^{-2}_4e^{-1}_3e^{-1}_2)^{k_2}e^*_2e^*_3e^*_4(e^{-1}_4e^{-1}_3e^{-2}_2e^{-1}_3e^{-1}_4)^{k_3}e^*_4e^*_3e^*_2.$$
	This edge-loop is homotopically trivial in $\whX_{22}$, hence it is homotopically trivial in~$\Gamma_{234}$. Thus the reduced representative of $$e^*_4e^*_3e^*_2(e^{-1}_2e^{-1}_3e^{-2}_4e^{-1}_3e^{-1}_2)^{k_1}e^{2m}_2(e^{-1}_2e^{-1}_3e^{-2}_4e^{-1}_3e^{-1}_2)^{k_2}e^*_2e^*_3e^*_4$$ is $(e_4e_3e^{2}_2e_3e_4)^{k_3}$, which contradicts $m\neq 0$.
\end{proof}

\begin{lem}
	\label{lem:8466}
	Let $D\to \bV_{234}$ be a minimal disc diagram with an edge $xy$ of type $\hat s_1 \wsX_{32}$ lying in triangles $xyz,xyz'$ of $D$. Then $x,y,z,z'$ cannot be simultaneously interior vertices of $D$ with degrees $8,4,6,6$.
\end{lem}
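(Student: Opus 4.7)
I argue by contradiction, combining the classifications of short link-cycles from Lemmas~\ref{lem:n=8} and~\ref{lem:n=6} with a rigidity argument in the spirit of the proof of Lemma~\ref{lem:12}. The assumption that all four vertices are interior means that their links in $D$ are $n$-cycles of the prescribed lengths, and by reducedness of $D$ (Lemma~\ref{lem:VK}) these $n$-cycles embed as locally embedded cycles in the corresponding links in $\bV_{234}$.

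By the $\wsX_{22}\leftrightarrow\wsX_{42}$ symmetry noted in Lemma~\ref{lem:n=8}, I may assume $x$ has face type $\wsX_{22}$. Applying Lemma~\ref{lem:n=8} to the 8-cycle $\lk(x,D)$, its face-type sequence is, up to cyclic permutation,
\[
\wsX_{21}\;\wsX_{31}\;\wsX_{32}\;\wsX_{33}\;\wsX_{23}\;\wsX_{33}\;\wsX_{32}\;\wsX_{31}.
\]
The vertex $y$ occupies one of the two $\wsX_{32}$-positions, so its two neighbors $z,z'$ in this 8-cycle have face types $\wsX_{31}$ and $\wsX_{33}$; WLOG take $z$ of face type $\wsX_{31}$ and $z'$ of face type $\wsX_{33}$.

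Next I apply Lemma~\ref{lem:n=6} to $\lk(z,D)$. The subpath of the resulting edge-loop in $\wsX_{31}$ corresponding to the link-vertex $x$ lies in $\wsX_{31}\cap\wsX_{22}=e_2$ and must coincide with the $z$-subpath in $x$'s $\wsX_{22}$-loop, which is $e_2$ of length $1$; this rules out the last two forms of Lemma~\ref{lem:n=6}, whose $e_2$-subpath has length $2|k|\ge 2$. Hence $z$'s loop must be $e_6^{2k}e_2e_7d_2^{-2k}e_7^{-1}e_2^{-1}$ or $e_6^{2k}e_2^{-1}e_7^{-1}d_2^{-2k}e_7e_2$ with $k\ne 0$, and an analogous statement holds at $z'$ via the $\wsX_{33}$-analog of Lemma~\ref{lem:n=6}. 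Reading off the face type of the neighbor of $y$ opposite to $x$ in $\lk(z,D)$ (the $d_2^{-2k}$-subpath), the fourth vertex $w$ of the $4$-cycle $\lk(y,D)=xzwz'$ has face type $\wsX_{42}$, and $w\notin\{x,y,z,z'\}$ by minimality.

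The remaining task is to extract a contradiction from these rigid local data. The plan, in parallel with the argument in Lemma~\ref{lem:12}, is to assemble the loops around $x$ (in $\wsX_{22}$), $z$ (in $\wsX_{31}$), $z'$ (in $\wsX_{33}$), $y$ (a $4$-cycle $e_3^{\pm 1}e_7^{\pm 1}d_3^{c}e_8^{\pm 1}$ in $\wsX_{32}$), and $w$ (in $\wsX_{42}$) into a single word in the ambient $\wsX_2\cup\wsX_3\cup\wsX_4$ representing the boundary of the subdiagram of $D$ formed by the triangles incident to $\{x,y,z,z',w\}$. Using Lemma~\ref{lem:relations} to rewrite this word across the dihedral Artin relations and the $\pi_1$-injectivity of Lemmas~\ref{lem:injective} and~\ref{lem:embed1}, one reduces it to a loop in a standard subcomplex (the $\wsX_{32}$ cell, of dihedral type $I_2(4)$) whose reduced representative still carries the non-zero exponent $k$ from $z$'s loop, and hence cannot be null-homotopic, contradicting the existence of the filling. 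The main obstacle is exactly this last step: the combinatorial bookkeeping of five interacting loops, with all orientations and exponents, and verifying non-triviality of the reduced word in the dihedral Artin group, require the same delicate analysis used at the end of the proof of Lemma~\ref{lem:12}.
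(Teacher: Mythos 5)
Your setup matches the paper's: reduce WLOG to $x$ of face type $\wsX_{22}$, use Lemma~\ref{lem:n=8} to pin down the face types around $x$ (so $z,z'$ have face types $\wsX_{31},\wsX_{33}$), and use Lemma~\ref{lem:n=6} at $z$ and $z'$, with the length-compatibility of the shared segment over $e_2$ correctly ruling out the last two forms there. Up to this point your argument is sound and parallels the paper. But the proof stops exactly where the actual content begins: your final step is only a ``plan'' to assemble the five loops around $x,y,z,z',w$ into one word, push it through Lemma~\ref{lem:relations} and the $\pi_1$-injectivity statements, and hope that the reduced representative in the dihedral cell retains the exponent $k$. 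You do not carry this out, you do not specify which standard subcomplex the assembled boundary word lives in after the reductions, and you give no reason why the exponent $k$ would survive the rewriting; you even flag this as the main unresolved obstacle. As it stands, no contradiction has been derived, so this is a genuine gap, not a complete proof.

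The paper closes the argument by a much lighter observation that your proposal has no substitute for: the rigid forms in Lemmas~\ref{lem:n=8} and~\ref{lem:n=6} force specific \emph{orientations} on the transition paths across the edges around $y$ (in the paper's notation, $P(\delta_0,\delta_1)=e_4$, $P(\delta_1,\delta_2)=e_3$, $P(\delta_2,\delta_3)=e_2$ around $x$, and then $P(\delta_4,\delta_2)=e_7$, $P(\delta_1,\delta_5)=e_9$ from the $6$-cycles at $z$ and $z'$). Since $\wsX_{32}$ is a product of two oriented circles (a torus) and $y$ has degree $4$ in $D$, the two transitions across $y$ parallel to $e_7$ and $e_9$ must be coherently oriented, i.e.\ $P(\delta_4,\delta_2)=e_7$ forces $P(\delta_5,\delta_1)=e_{9}$, which directly contradicts $P(\delta_1,\delta_5)=e_{9}$. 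In other words, the contradiction is an orientation-coherence clash at the degree-$4$ torus vertex $y$, not a word-problem computation in a dihedral Artin group. If you want to salvage your route, you would have to actually perform the five-loop reduction and prove non-triviality of the resulting word, which is considerably more delicate than the Lemma~\ref{lem:12} computation you cite; the orientation argument is both shorter and what the degrees $8,4,6,6$ are really there for.
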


\begin{proof}
	 For triangles $\delta_1, \delta_2$ of $\bV_{234}$ sharing an edge $\tau$, and corresponding vertices $x_1,x_2$ of $\widetilde K_{234}$, let $P(\delta_1,\delta_2)$ denote the image in $\widehat\Si_{234}$ of the embedded edge-path from $x_1$ to~$x_2$ in the line of $\widetilde K_{234}$ corresponding to $\tau$. We assume without loss of generality that $x$ has face type $\wsX_{22}$. We argue by contradiction and refer to Figure~\ref{fig:3cycle1}(I). 
	Without loss of generality, we can assume that $z'$ has face type $\wsX_{33}$. Then Lemma~\ref{lem:n=8} implies that $z$ has face type $\wsX_{31}$. Moreover, either $P(\delta_0,\delta_1)=e_4$, $P(\delta_1,\delta_2)=e_3$, and $P(\delta_2,\delta_3)=e_2$; or $P(\delta_0,\delta_1)=e^{-1}_4$, $P(\delta_1,\delta_2)=e^{-1}_3$, and $P(\delta_2,\delta_3)=e^{-1}_2$. We only discuss the former case, since the latter is similar. Applying Lemma~\ref{lem:n=6} to the 6-cycles in the link of $z$ and~$z'$ implies that $P(\delta_4,\delta_2)=e_7$ and $P(\delta_1,\delta_5)=e_{9}$. On the other hand, since $\wsX_{32}$ is a product of two oriented circles, 
and the degree of $y$ in $D$ is $4$, $P(\delta_4,\delta_2)=e_7$ implies  $P(\delta_5,\delta_1)=e_{9}$, which is a contradiction.
\end{proof}

\subsection{Filling special cycles in $\bV_{234}$}
\label{subsec:cycles234}
We induce the partial order on the vertex set~$\bV^0_{234}$ from $\Delta^0_{234}$ via the inclusion $\bV_{234}\subset\Delta_{234}$. The map $\pi\colon \Delta_{234}\to \bC$ sends $\bV_{234}$ to~$V_{234}$.
\begin{lem}
	\label{lem:bowtie}
	$\bV^0_{234}$ is bowtie free.
\end{lem}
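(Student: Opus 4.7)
The plan is to pass to the ambient Artin complex $\Delta_{234}$, where bowtie freeness is already known by Theorem~\ref{thm:tripleBn}, and to check that the resulting filler lies in $\bV^0_{234}$. Since, by construction, the partial order on $\bV^0_{234}$ is the restriction under the inclusion $\bV_{234}\subset\Delta_{234}$, any bowtie $x_1y_1x_2y_2$ in $\bV^0_{234}$ is automatically a bowtie in $\Delta^0_{234}$. Theorem~\ref{thm:tripleBn} (together with Lemma~\ref{lem:posets}) then furnishes a vertex $z\in\Delta^0_{234}$ satisfying $x_i\le z\le y_j$ for all $i,j\in\{1,2\}$. The central remaining task is to promote $z$ to a vertex of $\bV^0_{234}$.

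For this promotion, I would represent vertices of $\bV_{234}$ via their standard subcomplexes. By Remark~\ref{rmk:alternative} and the $\pi_1$-injective inclusion $\widetilde K_{234}\hookrightarrow\widetilde\Sigma_{234}$ afforded by Lemma~\ref{lem:embed1}, each vertex of $\bV_{234}$ is an elevation of some $\wsX_{ij}$ inside $\widetilde K_{234}\subset\widetilde\Sigma_{234}$. In particular, the four bowtie vertices yield pairwise-intersecting standard subcomplexes contained in $\widetilde K_{234}$, while the filler $z$ corresponds to a standard subcomplex $\mathcal T_z\subset\widetilde\Sigma_{234}$ meeting each of them. I would then argue, case by case on the types of $x_i,y_j,z$ among $\hat s_1,\hat s_2,\hat s_3$, that $\mathcal T_z$ is actually confined to $\widetilde K_{234}$ and is an elevation of one of the nine face types $\wsX_{ij}$, giving $z\in\bV^0_{234}$.

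The hard part will be this case analysis. The configurations come in only a few shapes (for instance, all $x_i$ of type $\hat s_1$ and all $y_j$ of type $\hat s_3$, forcing $z$ of type $\hat s_2$, versus various mixed cases), and in each I expect to use the description of vertex links in $\bV_{234}$ recorded in Lemmas~\ref{cor:deadends}--\ref{lem:12} together with the combinatorics of the finite complex $V_{234}$ to pin down the face type of $z$. The most delicate point is ruling out face types that would place $z$ outside $\bV^0_{234}$; this likely requires a convexity argument inside $\Sigma_{234}$ exploiting the specific position of $\sY=\sX_2\cup\sX_3\cup\sX_4$, ensuring that a standard subcomplex of $\widetilde\Sigma_{234}$ that meets $\widetilde K_{234}$ in four appropriately placed pieces must itself lie in $\widetilde K_{234}$.
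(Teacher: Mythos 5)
Your first step coincides with the paper's: a bowtie in $\bV^0_{234}$ is a bowtie in $\Delta^0_{234}$ (the order being induced by the inclusion), and bowtie freeness of the $B_3$ Artin complex (Theorem~\ref{thm:tripleBn}; you do not even need Lemma~\ref{lem:posets} for four elements) produces a filler $z\in\Delta^0_{234}$ with $x_1,x_2\le z\le y_1,y_2$. The whole content of the lemma is then the promotion of $z$ into $\bV^0_{234}$, and this is where your proposal has a genuine gap: you state what must be shown (that the standard subcomplex $\mathcal T_z$ is an elevation of one of the $\wsX_{ij}$ contained in $\widetilde K_{234}$), but the argument you sketch for it is not carried out and is aimed at the wrong place. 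The link lemmas of Section~\ref{subsec:link} describe links of vertices that already lie in $\bV_{234}$, so they cannot constrain a vertex $z$ of $\Delta_{234}$ that might lie outside $\bV_{234}$; and the ``convexity argument inside $\Sigma_{234}$'' you expect to need (that a standard subcomplex meeting $\widetilde K_{234}$ in several places must lie in $\widetilde K_{234}$) is neither available in the paper nor what is actually required.

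The missing idea is to work in the quotient: the simplicial map $\pi\colon\Delta_{234}\to\bC_{234}$ sends $\bV_{234}$ onto $V_{234}$ (this is set up in the sentence preceding the lemma). Since $z$ is a neighbour of, or equal to, each of $x_1,x_2,y_1,y_2$, the vertex $\pi(z)$ is a neighbour of or equal to $\pi(x_1),\pi(x_2),\pi(y_1),\pi(y_2)\in V^0_{234}$, and an inspection of the finite complex $V_{234}\subset\bC_{234}$ (Figure~\ref{fig:6}) shows that such a vertex must itself lie in $V^0_{234}$; in other words, the face type of $z$ is one of the $\sX_{ij}$. Once that is known, the containment you worry about is automatic and needs no convexity: $\mathcal T_z$ is then a connected subcomplex of the preimage of $\wsY$ in $\widetilde\Sigma_{234}$ which meets the component $\widetilde K_{234}$ (because it meets the standard subcomplexes of the $x_i,y_j$), hence $\mathcal T_z\subset\widetilde K_{234}$ and $z\in\bV^0_{234}$. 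So the case analysis you defer is precisely the step your proposal leaves unproved, and the tools you list would not complete it; replacing it by the projection to the Coxeter complex gives the paper's short proof.
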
 

\begin{proof}
	Given distinct $x_1,x_2,y_1,y_2\in\bV^0_{234}$ with $x_i\le y_j$ for $1\le i,j\le 2$, there is $z\in \Delta^0_{234}$ such that $x_1,x_2\le z\le y_1,y_2,$ by Theorem~\ref{thm:flag}. Since $\pi(z)$ is a neighbour or equal to each of $\pi(x_1),\pi(x_2),\pi(y_1),\pi(y_2)\in V^0_{234}$, we have $\pi(z)\in V^0_{234}$. Hence $z\in \bV^0_{234},$ as desired.
\end{proof}

\begin{lem}
	\label{lem:restriction order}
	Let $\omega=(x_i)_{i=1}^6$ be a locally embedded cycle in $\bV_{234}$ of type $\hat s_1\hat s_3\hat s_1\hat s_3\hat s_2\hat s_3$ or  $\hat s_1\hat s_3\hat s_2\hat s_3\hat s_2\hat s_3$, angle $\pi$ at $x_5$, and angle $\ge \frac{3\pi}{4}$ at $x_6$. 
	Then $\omega$ is embedded and bounds a diagram in~$\bV_{234}$ as in Figure~\ref{fig:3cycle1}~(II). Furthermore, there is no locally embedded cycle in~$\bV_{234}$ of type 
	$\hat s_2\hat s_3\hat s_2\hat s_3\hat s_2\hat s_3$.
\end{lem}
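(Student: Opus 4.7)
I would prove all three assertions simultaneously by a minimal disc diagram argument. Given such a cycle $\omega$, Lemma~\ref{lem:VK} produces a minimal disc diagram $f\colon D\to \bV_{234}$ with boundary cycle $\omega$ (for the third statement the non-existence of $\omega$ will be exactly the content of the contradiction we derive). I would put on every triangle of $\bV_{234}$ the angles induced by the piecewise Euclidean metrics on $\bV_{23}$ and $\bV_{34}$ introduced after Lemma~\ref{lem:embed1}, so that $D$ becomes an angled $2$-complex eligible for the combinatorial Gauss--Bonnet theorem (Theorem~\ref{thm:GB}). The link lemmas of Section~\ref{subsec:link} --- namely Lemma~\ref{cor:deadends}, Lemma~\ref{lem:n=8}, Lemma~\ref{lem:n=6}, Lemma~\ref{lem:12}, and Lemma~\ref{lem:8466} --- give sufficiently strong girth bounds on the link of each interior vertex of $D$ (together with the CAT$(0)$-ness of $\bV_{23}$ and $\bV_{34}$ from Corollary~\ref{lem:bv23 cat0}) to conclude that every interior vertex of $D$ has non-positive curvature. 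Thus by Theorem~\ref{thm:GB} the boundary curvature of $D$ sums to at least $2\pi$.

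\textbf{The two admissible cases.} The angle $\pi$ at $x_5$ forces curvature $0$ at $x_5$, and the angle $\ge \frac{3\pi}{4}$ at $x_6$ forces curvature $\le \frac{\pi}{4}$ at $x_6$. Hence the four remaining vertices $x_1,x_2,x_3,x_4$ must together carry at least $\frac{7\pi}{4}$ of boundary curvature. Since each corner can carry at most $\pi$ of curvature, and the specific face types allowed along a cycle of the given type (determined by the link lemmas applied to the interior endpoints of the boundary edges incident to each $x_i$) cap each individual contribution by an amount read off from the piecewise Euclidean angles, I would enumerate the admissible assignments of face types to the boundary and check that the only way to realise this $\frac{7\pi}{4}$ lower bound is the combinatorial pattern of Figure~\ref{fig:3cycle1}(II). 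Embeddedness of $\omega$ drops out in the same analysis, because any identification $x_i=x_j$ would produce a vertex of $D$ whose link cycle violates one of Lemmas~\ref{lem:n=8}, \ref{lem:n=6}, or~\ref{lem:12}, contradicting minimality of $f$.

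\textbf{The forbidden $\hat s_2\hat s_3\hat s_2\hat s_3\hat s_2\hat s_3$ case.} Here no favourable angle hypothesis is given, so we need that, no matter how the angles distribute along the boundary, the Gauss--Bonnet balance $\ge 2\pi$ cannot be met. At each of the three $\hat s_3$-vertices, Lemma~\ref{cor:deadends} forces any embedded link cycle passing through it to contain at least two vertices of face type $\wsX_{32}$, and Lemma~\ref{lem:n=8} or Lemma~\ref{lem:n=6} constrains the shape of such a cycle. Combined with the fact that the three $\hat s_2$-vertices cannot all be of face type $\wsX_{32}$ (otherwise the boundary would factor through a link cycle forbidden by Lemma~\ref{cor:deadends}), a careful local bookkeeping at the boundary shows that the maximal total boundary curvature falls short of $2\pi$, contradicting Theorem~\ref{thm:GB}.

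\textbf{Main obstacle.} The hard part is the face-type bookkeeping: the girth lemmas bound what \emph{can} happen in a link, but give no direct information about which face type a given vertex of $D$ carries. One therefore has to go through the possible face-type labellings of the triangles incident to each $x_i$ and eliminate them via the appropriate combination of Lemma~\ref{lem:n=8}, Lemma~\ref{lem:n=6}, Lemma~\ref{lem:12}, and Lemma~\ref{lem:8466}. The most delicate configurations are those where an interior vertex of $D$ has face type $\wsX_{22}$ or $\wsX_{42}$ adjacent to a $\wsX_{32}$-vertex on $\omega$: in that regime Lemma~\ref{lem:8466} (the $(8,4,6,6)$-obstruction) is exactly what is needed to close the argument, and checking that its hypotheses are met is where I expect the main technical work to lie.
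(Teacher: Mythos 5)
Your plan does not match the paper's proof, and it has a genuine gap at its core. The paper proves this lemma with a short order-theoretic argument, not a curvature argument: since $x_1,x_3,x_5$ are pairwise upper bounded (by $x_2,x_4,x_6$), the upward flag property of Theorem~\ref{thm:tripleBn} gives a common upper bound $z$ of type $\hat s_3$; bowtie freeness applied to $x_1zx_5x_6$ forces $z=x_6$ (otherwise $x_1,x_5$ would be neighbours, contradicting the angle $\ge\frac{3\pi}{4}$ at $x_6$), and then Lemma~\ref{lem:bowtie} applied to $x_3x_4x_5x_6$ and $x_1x_2x_3x_6$ produces the edge $x_3x_5$ and the common neighbour $w$ of type $\hat s_2$, i.e.\ exactly the diagram of Figure~\ref{fig:3cycle1}(II); the furthermore assertion is analogous. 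Your Gauss--Bonnet scheme cannot reproduce this structural output (in particular the vertex $w$) by ``enumerating face types,'' because producing $w$ is a flagness/bowtie statement, not an angle count.

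The decisive problem is your claim that minimality plus the link lemmas force non-positive curvature at interior vertices of $D$. No assignment of Euclidean angles to the triangles of $V_{234}$ makes all vertex links $\mathrm{CAT}(1)$: links of type-$\hat s_2$ vertices contain $4$-cycles (e.g.\ for face type $\wsX_{32}$, a product of circles), forcing the $\hat s_2$-corner angle to be $\ge\frac{\pi}{2}$; links of type-$\hat s_1$ vertices have girth $8$ (Lemma~\ref{lem:n=8}), forcing $\ge\frac{\pi}{4}$; links of type-$\hat s_3$ vertices have girth exactly $6$ (Lemma~\ref{lem:n=6}), forcing $\ge\frac{\pi}{3}$; and $\frac{\pi}{2}+\frac{\pi}{4}+\frac{\pi}{3}>\pi$. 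With the angles actually used in the paper ($\frac{\pi}{4},\frac{\pi}{2},\frac{\pi}{4}$), an interior type-$\hat s_3$ vertex of degree $6$ — which Lemma~\ref{lem:n=6} shows does occur in minimal diagrams, and which appears in the paper's own Figure~\ref{fig:3cycle1}(III) — has curvature $+\frac{\pi}{2}$. This is precisely why Section~\ref{sec:splitting system} introduces splitting systems and Lemma~\ref{lem:8466} instead of a naive curvature count, and those tools are deployed for Propositions~\ref{prop:8cycle diagram}--\ref{prop:10cycle diagram2}, not for this lemma. A further, smaller error: in your treatment of the $(\hat s_2\hat s_3)^3$ case you invoke Lemma~\ref{cor:deadends} to constrain the boundary cycle $\omega$ itself, but that lemma is about cycles in vertex links, so the inference that the three $\hat s_2$-vertices cannot all have face type $\wsX_{32}$ does not follow as stated.
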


\begin{proof}
	For the first assertion, by the upward flag property in Theorem~\ref{thm:tripleBn}, there is $z\in\Delta^0_{234}$ of type~$\hat s_3$ that is a common upper bound for $x_1,x_3,$ and $x_5$. If $z\neq x_6$, then by the bowtie free property in Theorem~\ref{thm:tripleBn} applied to $x_1zx_5x_6$, we obtain that  $x_1$ is a neighbour of~$x_5$, contradicting the angle assumption at $x_6$. Thus $z=x_6$. By Lemma~\ref{lem:bowtie} applied to $x_3x_4x_5x_6$ and to $x_1x_2x_3x_6,$ we obtain that $x_3$ is a neighbour of $x_5$, and there is $w\in \bV^0_{234}$ of type $\hat s_2$ that is neighbour of each of $x_1,x_2,x_3,x_6$. Then the first assertion follows.
	The furthermore assertion is proved similarly. \end{proof}

\begin{lem}
	\label{lem:restriction order2}
	Let $\omega=(x_i)_{i=1}^6$ be a cycle in $\bV_{234}$ of type
	$\hat s_1\hat s_3\hat s_1\hat s_3\hat s_1\hat s_3$ such that the face type of $x_3$ is distinct from that of $x_1$ and $x_5$.
	Then 
    \begin{enumerate}
    \item $x_2=x_4$, or
    \item $x_2$ and $x_4$ are connected in $\lk(x_3,\bV_{234})$ by a locally embedded path of length~$2$ with middle vertex of face type $\wsX_{32}$, or
    \item There is a vertex $z$ of $\bV_{234}$ such that the cycle obtained from $\omega$ by replacing~$x_6$ with~$z$ bounds a reduced disc diagram in Figure~\ref{fig:6cycle} on the right, with the interior vertices of face type $\wsX_{32}$. 
    \end{enumerate}
\end{lem}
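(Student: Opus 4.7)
The plan is to exploit the partial order on $\bV^0_{234}$ inherited from $\Delta^0_{234}$ via Definition~\ref{def:order}, combined with the upward flag and bowtie-free properties of Theorem~\ref{thm:tripleBn} and Lemma~\ref{lem:bowtie}. Since only two type-$\hat s_1$ face types sit in $V_{234}$, namely $\wsX_{22}$ and $\wsX_{42}$, the hypothesis forces (up to swapping the two face types) $\pi(x_3)=\wsX_{22}$ and $\pi(x_1)=\pi(x_5)=\wsX_{42}$.

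First I would apply the upward flag property of $\Delta^0_{234}$ (Theorem~\ref{thm:tripleBn}) to the triple $x_1,x_3,x_5$, which is pairwise upper bounded by $x_2,x_4,x_6\in\bV^0_{234}$. Together with Lemma~\ref{lem:posets} in the bowtie-free weakly graded poset $\Delta^0_{234}$, this produces their join $y\in\Delta^0_{234}$. Since $\pi(y)$ is a common neighbour of $\wsX_{22}$ and $\wsX_{42}$ of type $\hat s_2$ or $\hat s_3$, inspection of $V_{234}$ (Figure~\ref{fig:6}(II)) shows $\pi(y)\in V_{234}$, hence $y\in\bV^0_{234}$.

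Next I would split on the type of $y$. In \emph{Case A} ($y$ of type $\hat s_2$), inspection of $V_{234}$ shows $\wsX_{32}$ is the unique common type-$\hat s_2$ neighbour of $\wsX_{22}$ and $\wsX_{42}$, so $\pi(y)=\wsX_{32}$. Applying bowtie-free to $x_1 x_2 x_3 y$ in $\bV^0_{234}$ produces a vertex $z_{12}$ with $x_1,x_3\le z_{12}\le x_2,y$; the type of $z_{12}$ must be $\hat s_2$ (the only type strictly between $\hat s_1$ and $\hat s_3$), and since two comparable same-type vertices coincide under Definition~\ref{def:order}, we force $z_{12}=y$, yielding $y\le x_2$. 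Symmetrically $y\le x_4$, so $y$ is a common neighbour of $x_2,x_3,x_4$ in $\bV_{234}$, and $x_2-y-x_4$ is a locally embedded length-$2$ path in $\lk(x_3,\bV_{234})$ with middle face type $\wsX_{32}$, giving conclusion~(2).

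In \emph{Case B} ($y$ of type $\hat s_3$), the same bowtie-free analysis of $x_1 x_2 x_3 y$ and $x_3 x_4 x_5 y$ produces either $y=x_2$ or a type-$\hat s_2$ intermediate $z_{12}$ (and similarly for $x_4$), where $\pi(z_{12})=\pi(z_{34})=\wsX_{32}$ by the Case A face-type argument. If $x_2=x_4$, this is conclusion~(1). If $x_2\ne x_4$ but $y\in\{x_2,x_4\}$, the surviving $z_{34}$ or $z_{12}$ realises conclusion~(2) exactly as in Case A. Otherwise, $x_2\ne x_4$ and $y\notin\{x_2,x_4\}$, so both $z_{12}$ and $z_{34}$ exist. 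Setting $z:=y$, the cycle $x_1 x_2 x_3 x_4 x_5 z$ bounds the reduced disc diagram of Figure~\ref{fig:6cycle} on the right: the chord $x_3-z$ splits the hexagon into two quadrilaterals $z x_1 x_2 x_3$ and $z x_3 x_4 x_5$, each subdivided into four triangles around its interior $\wsX_{32}$-vertex $z_{12}$ or $z_{34}$, giving conclusion~(3).

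The main obstacle is the bookkeeping required to keep the join $y$ and the intermediate vertices $z_{12},z_{34}$ inside $\bV^0_{234}$ (as opposed to just $\Delta^0_{234}$); this rests on the rigidity that two comparable same-type vertices must coincide, together with careful inspection of the face types available in $V_{234}$. Beyond that, the subcase disambiguation in Case~B between the three conclusions is routine once the relevant intermediate vertices are in hand.
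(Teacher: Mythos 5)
Your proposal follows essentially the same route as the paper: produce a common upper bound of $x_1,x_3,x_5$ via Theorem~\ref{thm:tripleBn}, force its face type into $V_{234}$ using the hypothesis that $x_1,x_3$ have distinct face types, and then resolve the bowties $x_1,x_3<x_2,\cdot$ and $x_3,x_5<x_4,\cdot$ with Lemma~\ref{lem:bowtie}. Your extra Case A (the join having type $\hat s_2$) is a harmless refinement of the paper's argument, which simply takes a type-$\hat s_3$ upper bound; your bowtie analysis there correctly collapses to $y\le x_2,x_4$, giving (2) (modulo the trivial remark that $x_2=x_4$ gives (1), and the slip that the relevant type gap is between $\hat s_1$ and $\hat s_2$, not $\hat s_1$ and $\hat s_3$).

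Two points need attention. First, in your final subcase you assert that the hexagon $x_1x_2x_3x_4x_5z$ bounds a \emph{reduced} diagram with interior vertices $z_{12},z_{34}$, but reducedness fails precisely when $z_{12}=z_{34}$ (the two triangles along the chord $x_3z$ then coincide); in that degenerate situation conclusion (3) as you state it is false, and one must instead observe that $x_2\,z_{12}\,x_4$ realises conclusion (2) — this is exactly the sub-case the paper's proof makes explicit ("if the subdiagram is not reduced, the two interior vertices are equal and we have (2)"). Second, the justification "inspection of $V_{234}$ shows $\pi(y)\in V_{234}$" is logically backwards: to place $y$ in $\bV^0_{234}$ you must check in the ambient Coxeter complex (equivalently in $\Sigma_{234}$) that \emph{every} cell of type $\hat s_2$ or $\hat s_3$ meeting both octagons $\wsX_{22}$ and $\wsX_{42}$ already lies in $V_{234}$ (only $\wsX_{32},\wsX_{31},\wsX_{33}$ do); this is true and is the same kind of verification the paper performs for the hexagon case, but inspecting $V_{234}$ alone cannot deliver it. With these two repairs your argument is complete and coincides in substance with the paper's proof.
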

\begin{proof} Suppose $x_2\neq x_4$. By the upward flag property in Theorem~\ref{thm:tripleBn}, there is a vertex~$z$ of $\Delta_{234}$ of type $\hat s_3$ that is a neighbour of all $x_1,x_3,x_5$. Since $\pi(z)$ is a neighbour of $\pi(x_1)$ and $\pi(x_3)$, it has face type $\wsX_{31}$ or $\wsX_{33}$, 
and so it belongs fo $V^0_{234}$. Consequently, we have $z\in \bV^0_{234}$.
If $z=x_2$ or $x_4$, then we have (2) by Lemma~\ref{lem:bowtie}. 
Otherwise, still by Lemma~\ref{lem:bowtie}, we have the disc diagram in Figure~\ref{fig:6cycle} on the left. If the subdiagram on the right is not reduced, then the two interior vertices are equal and so we have~(2). Otherwise, we have (3).
\end{proof}

\begin{figure}[h]
	\centering
	\includegraphics[scale=0.9]{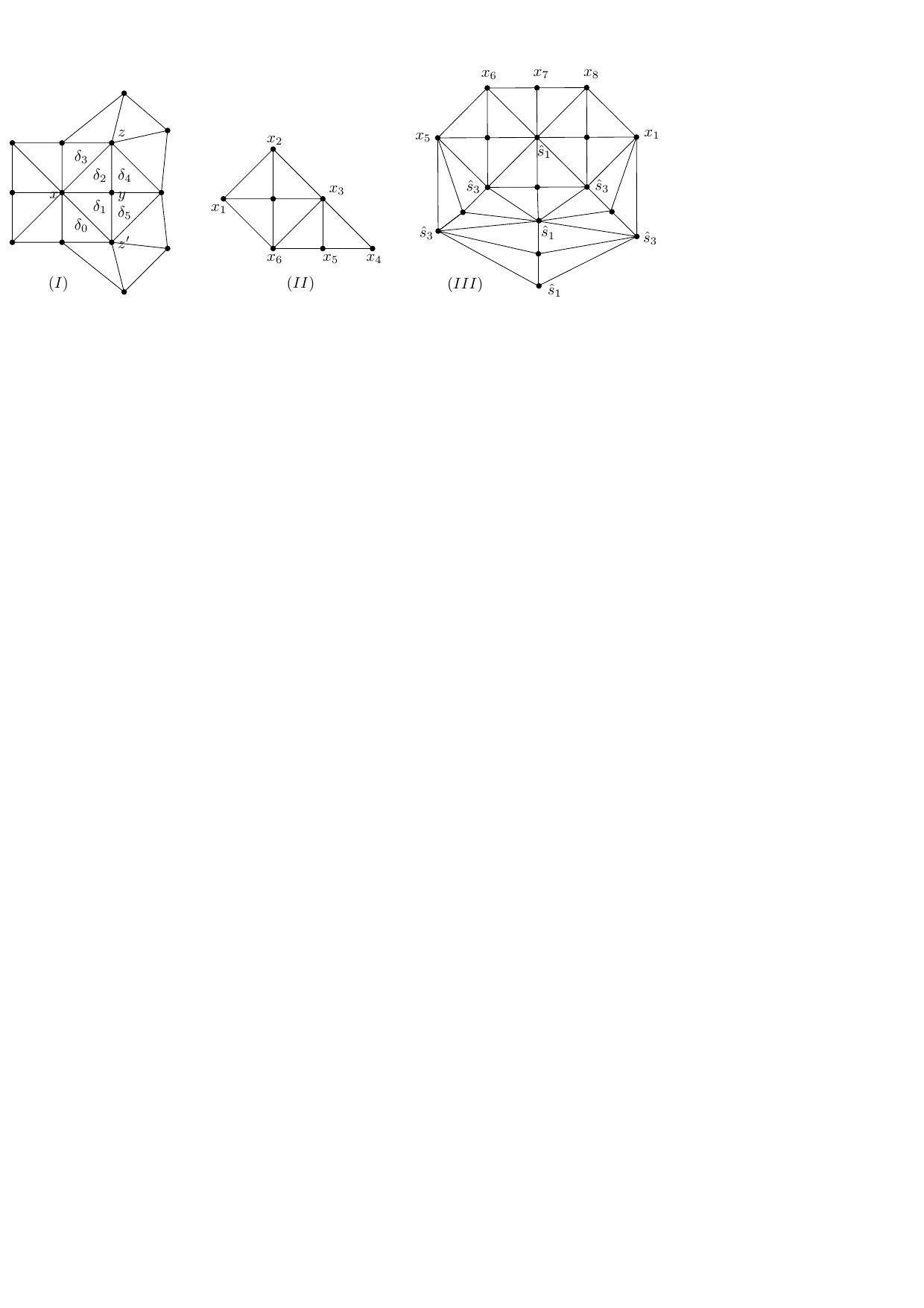}
	\caption{}
	\label{fig:3cycle1}
\end{figure}

\begin{figure}[h]
	\centering
	\includegraphics[scale=1.2]{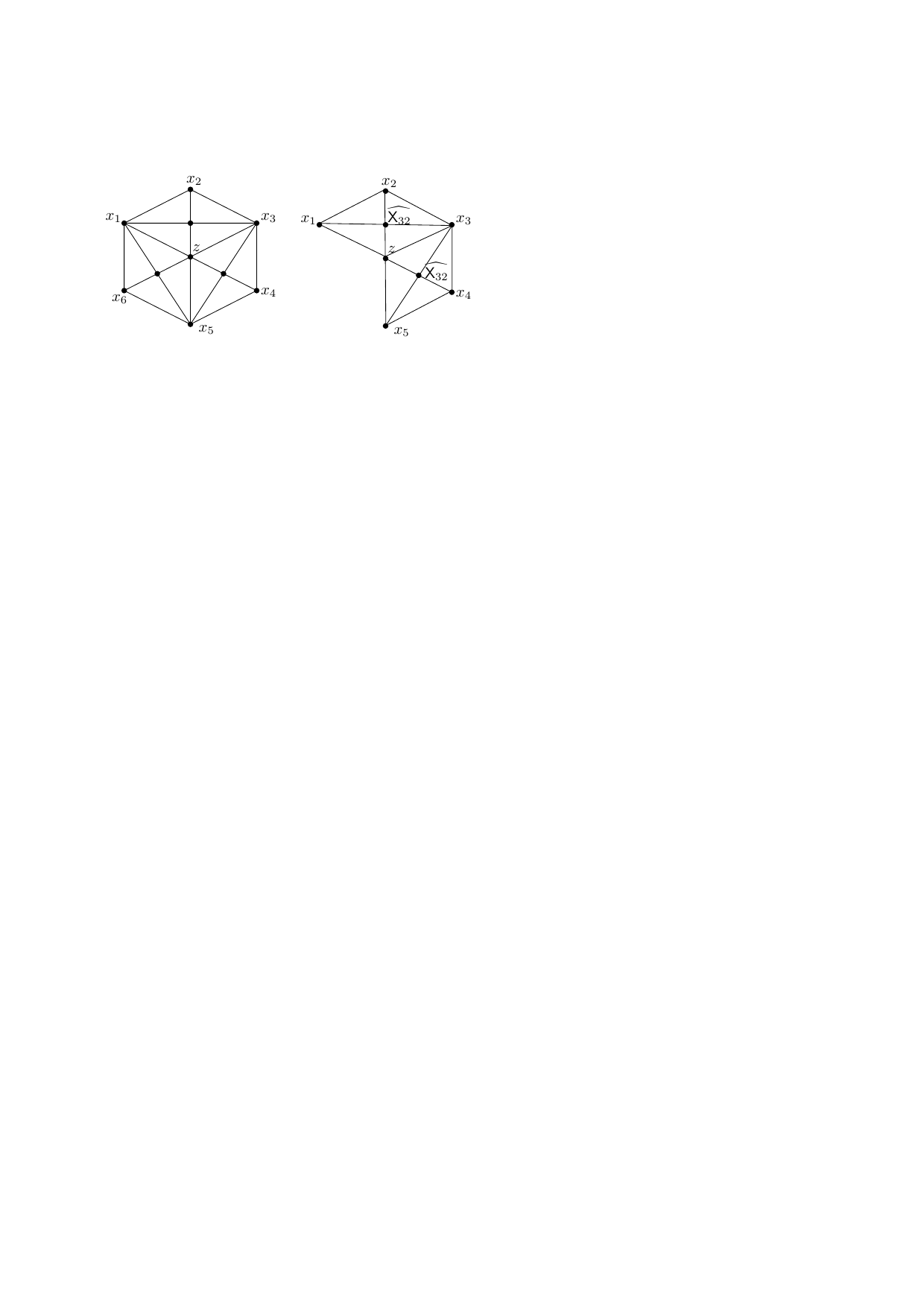}
	\caption{}
	\label{fig:6cycle}
\end{figure}

The following propositions will be proved simultaneously.

\begin{prop}
	\label{prop:8cycle diagram}
	Let $\omega=(x_i)_{i=1}^8$ be a cycle in $\bV_{234}$ of type $\hat s_1\hat s_3\hat s_1\hat s_3\hat s_1\hat s_3\hat s_2\hat s_3$.  
    Suppose that 
	\begin{enumerate}
		\item $\omega$ has angle $\ge \frac{3\pi}{4}$ at $x_6$ and $x_8$,
		\item $\omega$ has angle $\pi$ at $x_7$, and
		\item $\omega$ has angle $\ge \frac{\pi}{2}$ at $x_1$ and $x_5$.
	\end{enumerate}
	Then $\omega$ is embedded, and it bounds a minimal disc diagram $D\to\bV_{234}$ such that $D$ embeds as a subdiagram of Figure~\ref{fig:3cycle1}(III) with $x_5x_6x_7x_8x_1$ mapping to the indicated path.
\end{prop}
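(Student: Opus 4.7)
The plan is to reduce the $8$-cycle to cases handled by the upward flag and bowtie-free properties of $\bV^0_{234}$, using the flat angle at $x_7$ to produce a common upper bound for $\{x_1,x_3,x_5\}$ that can play the role of $v$ in the filling diagram.

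First I would verify that $\omega$ is embedded. Vertices of distinct types in $\omega$ are automatically distinct, so only coincidences among $x_2,x_4,x_6,x_8$ (all of type $\hat s_3$) must be ruled out. Each such coincidence forces two of $x_1,x_3,x_5,x_7$ to share a common neighbour, producing a short link cycle that contradicts one of the angle hypotheses (1)--(3) via Lemmas~\ref{lem:link path}, \ref{cor:deadends}, or \ref{lem:12}; for instance, $x_2=x_4$ would give a link cycle at $x_2$ of length less than allowed by the angle $\geq \frac{\pi}{2}$ at $x_1$.

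Second, the angles $\pi$ at $x_7$ and $\geq \frac{3\pi}{4}$ at $x_6, x_8$ severely restrict the structure of $\omega$ near $x_7$. Using Lemma~\ref{cor:deadends} together with the loop forms in Lemma~\ref{lem:n=6} applied to the links at $x_6$ and $x_8$, and the product structure of the face type $\wsX_{32}$ (oriented circles), I would extract a vertex $y\in\bV^0_{234}$ of type $\hat s_3$ that is a common neighbour of $x_1$, $x_5$, and $x_7$. This is the step I expect to be the main obstacle, because it requires a detailed case analysis of the edge-loops in $\wsX_{32}$ that correspond to the possible link paths at $x_7$, ensuring the flat angle hypothesis really yields such a $y$ rather than some more complicated configuration.

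Third, the triple $\{x_1,x_3,x_5\}$ of type $\hat s_1$ is pairwise upper bounded in the induced partial order on $\Delta^0_{234}$ (by $x_2$, $x_4$, and the just-produced $y$), so the upward flag property in Theorem~\ref{thm:tripleBn} yields a common upper bound $v\in\Delta^0_{234}$ of type $\hat s_3$; one has $v\in\bV^0_{234}$ because $\pi(v)\in V_{234}$ by the argument in Lemma~\ref{lem:bowtie}. Then repeated application of Lemma~\ref{lem:bowtie} to the bowties formed by $v$, the boundary $\hat s_3$-vertices $x_2,x_4,x_6,x_8$, and the $\hat s_1$-vertices $x_1,x_3,x_5$ produces interior vertices of type $\hat s_2$ that assemble into a disc diagram embedding as a subdiagram of Figure~\ref{fig:3cycle1}(III), with $x_5x_6x_7x_8x_1$ mapping to the indicated boundary path.

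Finally, minimality is verified by observing that any reduction of $D$ would yield a shorter link cycle at an interior vertex, contradicting one of Lemmas~\ref{lem:n=6}, \ref{lem:n=8}, or \ref{lem:8466}. The Gauss--Bonnet balance (Theorem~\ref{thm:GB}) on $D$ is consistent with the angle hypotheses on the boundary, which also confirms that the diagram produced is the one matching the figure rather than any smaller variant.
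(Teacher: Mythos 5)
Your sketch does not close the essential gap, and the step you yourself flag as ``the main obstacle'' is in fact most of the content of the proposition. The hypotheses only bound four angles along $x_5x_6x_7x_8x_1$, and $\bV_{234}$ is \emph{not} non-positively curved: an interior vertex of type $\hat s_3$ may have degree $6$ (Lemma~\ref{lem:n=6}) while each triangle contributes only $\frac{\pi}{4}$ there, so minimal disc diagrams can carry positive curvature in their interior. Consequently neither a $\mathrm{CAT}(0)$-style argument nor the Gauss--Bonnet balance of Theorem~\ref{thm:GB} converts the angle hypotheses into a type-$\hat s_3$ common neighbour $y$ of $x_1,x_5,x_7$, and your closing appeal to Gauss--Bonnet to ``confirm'' the shape of $D$ fails for the same reason. (Also, your embeddedness step overlooks possible coincidences among $x_1,x_3,x_5$, which are all of type $\hat s_1$; the paper gets embeddedness from Lemma~\ref{lem:restriction order}.)

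Even granting such a $y$, the third step does not deliver the conclusion. A common upper bound $v$ of $x_1,x_3,x_5$ does follow from Theorem~\ref{thm:tripleBn} and Lemma~\ref{lem:bowtie}, but bowtie-freeness alone does not force a wheel-like diagram: as Lemma~\ref{lem:restriction order2}(2),(3) shows, vertices such as $x_2$ and $x_4$ may instead be joined in $\lk(x_3,\bV_{234})$ through face-type $\wsX_{32}$ vertices, yielding diagrams that do not embed in Figure~\ref{fig:3cycle1}(III); ruling these out is precisely the hard part. The paper's proof is structured around this difficulty: it takes a minimal disc diagram $D\to\bV_{234}$, introduces the splitting system and core graph of Definition~\ref{def:splitting}, shows via Lemma~\ref{lem:splitting} that the splitting system is a non-crossing matching of boundary points, and then eliminates all but three of the resulting configurations (diagrams A6, F3, F4 among the $42$ pictured) through a case analysis combining Lemmas~\ref{lem:n=8}, \ref{lem:n=6}, \ref{lem:12}, \ref{lem:8466}, \ref{lem:bowtie}, and \ref{lem:restriction order}, together with explicit homotopy computations in the Salvetti complex; only the surviving cases produce the subdiagram of Figure~\ref{fig:3cycle1}(III). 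Your proposal contains no substitute for this elimination, so as it stands it assumes the conclusion rather than proving it.
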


\begin{prop}
	\label{prop:10cycle diagram1}
	Let $\omega=(x_i)_{i=1}^{10}$ be a cycle in $\bV_{234}$ of type $\hat s_1\hat s_3\hat s_2\hat s_3\hat s_2\hat s_3\hat s_1\hat s_3\hat s_2\hat s_3$. 
    Then the following properties cannot hold simultaneously: 
	\begin{enumerate}
		\item $\omega$ has angle $\geq \frac{\pi}{2}$ at $x_1$ and $x_7$,
		\item $\omega$ has angle $\pi$ at $x_3,x_4,x_5$, and $x_9$, and 
		\item $\omega$ has angle $\ge \frac{3\pi}{4}$ at $x_2,x_6,x_8$ and $x_{10}$.
	\end{enumerate}
\end{prop}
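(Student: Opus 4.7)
I would prove Propositions~\ref{prop:8cycle diagram} and~\ref{prop:10cycle diagram1} simultaneously by induction on the area of a minimal disc diagram in $\bV_{234}$ bounded by the relevant cycle. For Proposition~\ref{prop:10cycle diagram1}, suppose for contradiction that a cycle $\omega=(x_i)_{i=1}^{10}$ satisfying (1)--(3) exists, and fix a minimal disc diagram $f\colon D\to \bV_{234}$ with boundary $\omega$. The first step is Gauss--Bonnet (Theorem~\ref{thm:GB}) applied to $D$ with the piecewise Euclidean angles inherited from the rectangle metrics on $\bV_{23}$ and $\bV_{34}$. By Corollary~\ref{lem:bv23 cat0}, together with Lemma~\ref{cor:deadends}, Lemma~\ref{lem:n=8}, Lemma~\ref{lem:n=6}, and Lemma~\ref{lem:12}, which identify the vertex link systoles inside $\bV_{234}$, the link of each interior vertex of $D$ has systole at least $2\pi$, so each interior vertex has non-positive curvature. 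Hypotheses (1)--(3) now give $\sum_i \alpha(x_i) \ge \pi + 4\pi + 3\pi = 8\pi = (10-2)\pi$, which already saturates the Gauss--Bonnet bound. Consequently every inequality in (1)--(3) is an equality and every interior vertex of $D$ is flat.

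Second, I exploit the rigidity forced by this tightness. The $\pi$-angles at $x_3,x_4,x_5$ force the link of each of them in $D$ to be a half-systole embedded path in the corresponding vertex link in $\bV_{234}$, and Lemma~\ref{lem:n=8} and Lemma~\ref{lem:n=6} classify such paths via explicit edge-loops in the dihedral subcomplexes $\wsX_{22},\wsX_{31},\wsX_{33}$. Tracking these link paths across the straight subpath $x_2x_3x_4x_5x_6$, while imposing the tight $\tfrac{3\pi}{4}$-angles at $x_2$ and $x_6$ (which fix the number of triangles of $D$ incident there) and the $6$-cycle constraints of Lemma~\ref{lem:restriction order} and Lemma~\ref{lem:restriction order2}, forces a chord in $D$ of type $\hat s_3$ or $\hat s_1$ from the $x_2$--$x_6$ region to $x_7$, splitting off a subdisc bounded by an embedded $8$-cycle of type $\hat s_1\hat s_3\hat s_1\hat s_3\hat s_1\hat s_3\hat s_2\hat s_3$ satisfying the angle hypotheses of Proposition~\ref{prop:8cycle diagram}. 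Since this subdisc has strictly smaller area than $D$, the inductive hypothesis applies: the $8$-cycle must embed into the template of Figure~\ref{fig:3cycle1}(III); but then the forced flat picture around $x_7,x_8$ exhibits precisely the $\hat s_1 \wsX_{32}$ edge in degree pattern $8,4,6,6$ that is ruled out by Lemma~\ref{lem:8466}, the desired contradiction.

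\textbf{Main obstacle.} The delicate part is the second step: matching the restricted half-systole link paths of Lemma~\ref{lem:n=8} and Lemma~\ref{lem:n=6} at three consecutive flat boundary vertices $x_3,x_4,x_5$ of alternating types $\hat s_2,\hat s_3,\hat s_2$, while simultaneously respecting the $\tfrac{\pi}{2}$-angle constraint at the $\hat s_1$-endpoints $x_1,x_7$ via Lemma~\ref{lem:12}. This requires combining the dihedral commutation relations of Lemma~\ref{lem:relations} with the explicit list of link edge-loops in Lemma~\ref{lem:n=8} and Lemma~\ref{lem:n=6} to produce the chord; it is a direct extension of the case analysis already carried out for a single $\hat s_1\wsX_{32}$-edge in Lemma~\ref{lem:8466}.
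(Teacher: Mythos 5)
There is a genuine gap at the very first step, and it is fatal to the whole strategy. Your Gauss--Bonnet argument requires every interior vertex of the minimal disc diagram $D\to \bV_{234}$ to be non-positively curved for some fixed Euclidean metric on the $\hat s_1\hat s_2\hat s_3$ triangles, but no such metric exists, and the lemmas you cite show the opposite. By Lemma~\ref{lem:n=8} the link of a type $\hat s_1$ vertex has systole $8$, forcing the angle at the $\hat s_1$ corner to be at least $\frac{\pi}{4}$; by Lemma~\ref{lem:n=6} the link of a type $\hat s_3$ vertex has systole only $6$ (not $8$ as in $\bV_{23}$), forcing the $\hat s_3$ angle to be at least $\frac{\pi}{3}$; and since $\wsX_{32}$ is a product of two circles, interior vertices of face type $\wsX_{32}$ can have degree $4$ in a reduced diagram (this is exactly the situation of Lemma~\ref{lem:8466}), forcing the $\hat s_2$ angle to be at least $\frac{\pi}{2}$. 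These lower bounds sum to $\frac{\pi}{4}+\frac{\pi}{2}+\frac{\pi}{3}>\pi$, so interior vertices of positive curvature (e.g.\ degree $6$ vertices of type $\hat s_3$, which genuinely occur and are even exploited in the paper's own case analysis) cannot be avoided. Consequently the ``saturation'' $\sum_i\alpha(x_i)\ge 8\pi$ does not force equality or interior flatness, and the entire rigidity/chord-splitting argument of your second step, which presupposes a flat interior, has no foundation; Corollary~\ref{lem:bv23 cat0} only gives $\mathrm{CAT}(0)$ for the subcomplexes $\bV_{23}$ and $\bV_{34}$, not for $\bV_{234}$.

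This is precisely why the paper does not argue via curvature here: $\bV_{234}$ satisfies only the weak, combinatorial form of non-positive curvature encoded by the \emph{splitting system} (Definition~\ref{def:splitting}). The actual proof takes the minimal diagram $D$, shows via Lemma~\ref{lem:splitting} that the splitting arcs form a non-crossing perfect matching of their boundary intersection points and that the core graph is a forest with controlled leaves, enumerates the resulting finitely many configurations (Figures~\ref{fig:3cyclediagram} and~\ref{fig:3cyclediagram1}, distinguished by which boundary vertices have face type $\wsX_{32}$), and eliminates each one using the link lemmas (Lemmas~\ref{lem:n=8}, \ref{lem:n=6}, \ref{lem:12}, \ref{lem:8466}) together with the order-theoretic Lemmas~\ref{lem:bowtie} and~\ref{lem:restriction order}; there is no induction on area and no reduction of the $10$-cycle case to the $8$-cycle case inside this proof. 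Your second step, even granting flatness, is only a sketch (``tracking these link paths \dots forces a chord''), so it would also need substantial work, but the decisive problem is the false non-positive curvature claim.
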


\begin{prop}
	\label{prop:10cycle diagram2}
	Let $\omega=(x_i)_{i=1}^{10}$ be a locally embedded cycle in $\bV_{234}$ of type $\hat s_3\hat s_2\hat s_3\hat s_2\hat s_3\hat s_2\hat s_3\hat s_1\hat s_3\hat s_1$.  Assume that $x_2$ has face type $\wsX_{32}$, but $x_4$ and $x_6$ do not have face type~$\wsX_{32}$.
	Suppose that
	$\omega$ has angle $\pi$ at $x_2,x_3$, $x_4$ and $x_6$.  
	Then $\omega$ has angle $\frac{\pi}{4}$ at~$x_1$.
\end{prop}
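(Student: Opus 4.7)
The plan is to fix a minimal disc diagram $f \colon D \to \bV_{234}$ for $\omega$ (which exists by Lemma~\ref{lem:VK} since $\bV_{234}$ is simply connected as the development of a simple complex of groups) and to combine Gauss--Bonnet (Theorem~\ref{thm:GB}) with the link lemmas from Section~\ref{subsec:link}.

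First, I would exploit the four flat angles: at each of $x_2, x_3, x_4, x_6$ the boundary curvature $\pi - \alpha_i$ vanishes, so only the six vertices $x_1, x_5, x_7, x_8, x_9, x_{10}$ contribute to the boundary curvature of $D$. I would then track face type information across the flat segment $x_2 x_3 x_4$. Since $x_2$ has face type $\wsX_{32}$ but $x_4$ does not, and the flat angle at $x_3$ arranges the triangles of $D$ at $x_3$ along a straight path from $x_2$ to $x_4$, there must be a face type transition inside this strip. Applying Lemma~\ref{cor:deadends} in $\lk(x_3, \bV_{234})$ forces the link cycle at $x_3$ to contain at least two vertices of face type $\wsX_{32}$, so that at least one interior edge of face type $\wsX_{32}$ emanates from $x_3$ into $D$.

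Next, I would propagate this interior $\wsX_{32}$ information through $D$ using Lemma~\ref{lem:n=6} (to bound link cycles at interior $\wsX_{31}$ or $\wsX_{33}$ vertices from below by $6$), Lemma~\ref{lem:12} (to rule out short link configurations at interior $\hat s_1$ vertices containing a subpath of type $\hat s_2 \hat s_3 \hat s_2$ whose $\hat s_2$ ends are not of face type $\wsX_{32}$), and Lemma~\ref{lem:8466} (to rule out the bad degree quadruple $8,4,6,6$). Combined with the flat angle at $x_6$, which similarly constrains the structure near the far end of the strip, these lemmas force the strip to fold back and close off at the corner of $D$ near $x_1$, producing the triangle $x_{10} x_1 x_2$ as a $2$-cell of $D$.

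Finally, this triangle has $x_2$ at the large-angle corner (face type $\wsX_{32}$, angle $\pi/2$), so the angle at $x_1$ inside this triangle is $\pi/4$. The minimality of $D$, combined with the remaining Gauss--Bonnet budget computed in the first step, then prevents any further triangle from meeting $x_1$, yielding $\alpha_1 = \pi/4$. The main obstacle will be controlling the propagation of the $\wsX_{32}$ face type across the unspecified angle at $x_5$: without any a priori bound on $\alpha_5$, the interior $\wsX_{32}$ strip can in principle extend deeply into $D$ and branch, and separating the subcases where it closes up requires combining Lemmas~\ref{lem:n=6} and~\ref{lem:8466} with a sharp Gauss--Bonnet count.
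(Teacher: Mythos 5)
Your proposal correctly sets up a minimal disc diagram and identifies the relevant link lemmas, but it does not contain the key idea that actually makes the argument work, and the substitute you lean on would fail. The paper proves this proposition (simultaneously with Propositions~\ref{prop:8cycle diagram} and~\ref{prop:10cycle diagram1}) by contradiction, assuming the angle at $x_1$ is $\ge\frac{3\pi}{4}$, and then controlling the \emph{global} structure of $D$ via the splitting system and core graph of Definition~\ref{def:splitting} and Lemma~\ref{lem:splitting}: the splitting system is a union of arcs giving a non-crossing perfect matching of finitely many boundary points, so $D$ falls into finitely many shapes (the diagrams E1--E6 of Figure~\ref{fig:3cyclediagram1}), each of which is then excluded by a bespoke combination of Lemmas~\ref{lem:n=8}, \ref{lem:n=6}, \ref{lem:12}, and~\ref{lem:8466}. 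Your plan has nothing playing the role of this decomposition; the step where "the strip folds back and closes off near $x_1$" is exactly the hard part, and you acknowledge yourself that the propagation past the unconstrained angle at $x_5$ (where the interior $\wsX_{32}$ region can spread and branch) is unresolved. That is the gap: without the splitting-system bookkeeping (or an equivalent device), the local link lemmas do not assemble into a global conclusion.

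Moreover, the "sharp Gauss--Bonnet count" you hope to fall back on is not available, because $\bV_{234}$ is not non-positively curved in any metric used in the paper (only $\bV_{23}$ and $\bV_{34}$ are, Corollary~\ref{lem:bv23 cat0}). Interior vertices of face type $\wsX_{31}$ or $\wsX_{33}$ may have degree exactly $6$ (Lemma~\ref{lem:n=6}) and interior $\wsX_{32}$ vertices degree $4$, so with the natural angles ($\frac{\pi}{4}$ at $\hat s_1,\hat s_3$ corners, $\frac{\pi}{2}$ at $\hat s_2$ corners) many interior vertices of $D$ carry positive or zero curvature; this is precisely why the paper needs Lemma~\ref{lem:8466} and the splitting system rather than a curvature budget, so a Gauss--Bonnet count of the kind you describe cannot by itself prevent extra triangles at $x_1$. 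A smaller but genuine error: Lemma~\ref{cor:deadends} concerns locally embedded \emph{cycles} in vertex links, hence interior vertices of $D$ (cf.\ Remark~\ref{rem:deadends}); at the boundary vertex $x_3$ the link in $D$ is a path, so your claim that an interior edge of face type $\wsX_{32}$ must emanate from $x_3$ is unjustified. Finally, note the parity point the paper uses implicitly: the link of a type $\hat s_3$ vertex is bipartite, so the angle at $x_1$ between a type $\hat s_1$ and a type $\hat s_2$ neighbour is an odd multiple of $\frac{\pi}{4}$, which is why ruling out angle $\ge\frac{3\pi}{4}$ suffices; your direct approach would also need to exclude the value $\frac{\pi}{2}$ or observe this parity.
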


To prove Propositions~\ref{prop:8cycle diagram}, \ref{prop:10cycle diagram1}, and \ref{prop:10cycle diagram2}, we will consider minimal disc diagrams $D\to \bV_{234}$ with boundary $\omega$. First note that $\omega$ is embedded by Lemma~\ref{lem:restriction order}. Thus $D$ is homeomorphic to a disc.

\begin{defin} 
	\label{def:splitting}
	A \emph{splitting system} of a minimal disc diagram $D\to \bV_{234}$ is the preimage under $D\to \bV_{234}$ of all straight line segments in the triangles $xyz$ of $\bV_{234}$ of type $\hat s_1\hat s_2\hat s_3$ joining the midpoint of $xz$
	with the midpoint of $xy$, for $y$ of face type $\wsX_{32}$, or with the midpoint of $yz$, for $y$ not of face type $\wsX_{32}$. Equivalently, we can define the splitting system in the following way. Consider the complex~$V_{234}$ illustrated in Figure~\ref{fig:splitting}(I), where the vertices of type $\hat s_i$ are labelled $i$ and the vertices of face type~$\wsX_{32}$ are circled. Then the splitting system of $D\to \bV_{234}$ is the preimage of the dashed lines  
under the composition $D\to \bV_{234}\to V_{234}$. Note that the splitting system is a union of arcs, starting and ending on~$\partial D$, and (possibly) circles. 
	
	The union of all the edges of $D$ disjoint from the splitting is the \emph{core graph} of $D\to \bV_{234}$. In other words, the core graph of $D\to \bV_{234}$ is the preimage of the thickened lines in Figure~\ref{fig:splitting}(I) under the composition $D\to \bV_{234}\to V_{234}$.
\end{defin}

\begin{figure}[h]
	\centering
	\includegraphics[scale=1.2]{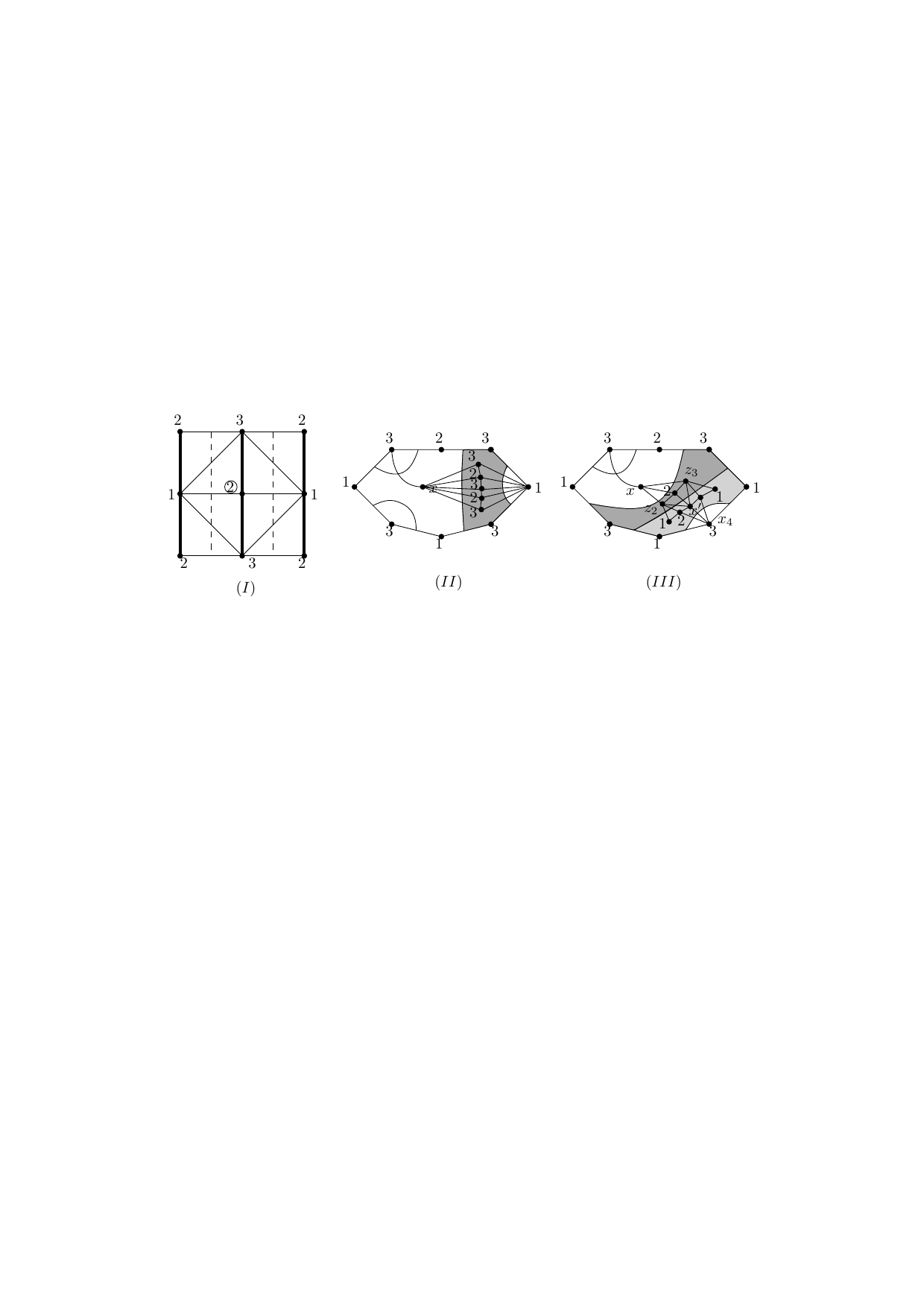}
	\caption{}
	\label{fig:splitting}
\end{figure}

\begin{rem}
	\label{rem:deadends}
	By Lemma~\ref{cor:deadends}, each vertex of the core graph lying in $\mathrm{int}D$ has degree $\geq 2$ in the core graph. In other words, all leaves of the core graph lie in $\partial D$.
\end{rem}

\begin{lem}
	\label{lem:splitting}
	\begin{enumerate}[(i)]
		\item The splitting system contains no circles.
		\item The core graph is a forest. 
		\item Let $x_i$ be a vertex of $\partial D$ of type $\hat s_2$ with distinct neighbours $x_{i-1},x_{i+1}$ both of type $\hat s_1$ or $\hat s_3$. Then there is no arc in the splitting system joining the midpoints of $x_{i-1}x_i$ and $x_ix_{i+1}$. 
		\item Let $x_i$ be a vertex of $\partial D$ of type $\hat s_1$ or $\hat s_3$. 
		If there is an arc $\beta$ in the splitting system joining the midpoints of $x_{i-1}x_i$ and $x_ix_{i+1}$, then the intersection of the core graph with the connected component $R$ of $D\setminus \beta$ containing $x_i$ consists only of $x_i$.
		\item Let $x_{i-1}x_ix_{i+1}$ be a path 
		of $\partial D$ of type $\hat s_3\wsX_{32}\hat s_3$. If there is an arc $\beta$ in the splitting system joining the midpoints of $x_{i-2}x_{i-1}$ and $x_{i+1}x_{i+2}$, then the intersection of the core graph with the connected component of $D\setminus \beta$ containing $x_i$ consists only of $x_{i-1}x_ix_{i+1}$. Similarly, if $x_{i-2}\cdots x_{i+2}$ is a path 
		of $\partial D$ of type $\hat s_3\wsX_{32}\hat s_3\wsX_{32}\hat s_3$, and there is an arc $\beta$ in the splitting system joining the midpoints of $x_{i-3}x_{i-2}$ and $x_{i+2}x_{i+3}$, then the intersection of the core graph with the connected component of $D\setminus \beta$ containing $x_i$ consists only of $x_{i-2}\cdots x_{i+2}$.
		\item If a connected component $Q$ of the complement in $D$ of the splitting system contains exactly two vertices of $\partial D$ and both of them are of type $\hat s_3$, then the intersection of the core graph with $Q$ is an arc ending at these vertices.
	\end{enumerate}
\end{lem}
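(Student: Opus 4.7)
The key structural observation is this: in each triangle $xyz$ of $\bV_{234}$ (with $x, y, z$ of types $\hat s_1, \hat s_2, \hat s_3$), the splitting segment crosses the $\hat s_1\hat s_3$-edge $xz$ together with either $xy$ (if $y$ has face type $\wsX_{32}$) or $yz$ (otherwise). Hence core edges come in two classes: $\hat s_2\hat s_3$-edges with $\hat s_2$-endpoint of face type $\wsX_{32}$, or $\hat s_1\hat s_2$-edges with $\hat s_2$-endpoint not of face type $\wsX_{32}$; and any core-graph cycle is ``monochromatic'' of one of these. Since cut-ness of an interior edge depends only on the type and face type of the shared $\hat s_2$-vertex, the splitting system is a 1-submanifold of $D$ (its components are disjoint arcs with endpoints on $\partial D$, and possibly circles).

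For (i), take an innermost splitting circle $C$ bounding a subdisc $D' \subset \mathrm{int}(D)$. Since the splitting is a 1-manifold and $C$ is innermost, no splitting lies inside $D'$, hence no full triangle of $D$ lies in $D'$. An Euler-characteristic computation on the induced cell structure of $D'$ (with $D$-vertices $V_{\mathrm{int}}$ inside $D'$ and midpoints on $C$, $D$-edges $E_{\mathrm{int}}$ fully inside together with half-edges and $C$-segments, and only border-triangle pieces as faces) yields $|V_{\mathrm{int}}|-E_{\mathrm{int}}=1$; note $E_{\mathrm{int}}$ consists of core edges. By Remark~\ref{rem:deadends} each non-isolated vertex of this subgraph has degree $\ge 2$, and the count then forces at least one isolated $v\in V_{\mathrm{int}}$. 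Were $v$ a non-corner in some surrounding triangle, the edge opposite the cut-off corner would be uncut at $v$, hence a core edge, contradicting isolation. So $v$ is the cut-off corner of every triangle containing it, which forces all type-$\hat s_2$ vertices of $\lk(v,D)$ to share a common face type and contradicts Corollary~\ref{cor:deadends} applied to the interior vertex $v$.

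For (ii), take an innermost core-graph cycle $\gamma$ bounding a subdisc $D''$; say $\gamma$ is of type $\hat s_2\hat s_3$ with all $\hat s_2$-vertices of face type $\wsX_{32}$. Then $\gamma$ contains no $\hat s_1$-vertex and no non-$\wsX_{32}$ $\hat s_2$-vertex, so the $\hat s_1$-vertex that must appear in $D''$ (since $D''$ carries triangles) lies in $\mathrm{int}(D'')$. Corollary~\ref{cor:deadends} provides two non-$\wsX_{32}$ $\hat s_2$-neighbors in its link, which must lie in $\mathrm{int}(D'')$; iterating Remark~\ref{rem:deadends} and Corollary~\ref{cor:deadends} along the emerging $\hat s_1\hat s_2$-core edges (all of which stay in $\mathrm{int}(D'')$, as $\gamma$ contains neither endpoint type) yields a finite subgraph of $\mathrm{int}(D'')$ with every vertex of degree $\ge 2$, which therefore contains a cycle. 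That cycle is a core-graph cycle strictly inside $\gamma$, contradicting innermost; the other monochromaticity is symmetric. Part (iii) is then immediate: such a $\beta$ would place the $\hat s_3$-vertex of its first triangle in $\mathrm{int}(R)$, but the analogous Euler count on $R$ gives $|V_R^{\mathrm{int}}|-E_R^{\mathrm{int}}=0$, and combined with (ii) and Remark~\ref{rem:deadends} the resulting subgraph on $V_R^{\mathrm{int}}\cup\{x_i\}$ is a tree with leaves only at $x_i$, forcing $V_R^{\mathrm{int}}=\emptyset$.

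Parts (iv)--(vi) follow the same Euler-based template applied to $R$ (resp.\ $Q$): no splitting enters $R$ (an entering arc would have to cross $\beta$, impossible for disjoint components of the 1-manifold), the Euler count is analogous, and (ii) together with Remark~\ref{rem:deadends} pin down the core-graph restriction to $\bar R$ as a tree whose leaves lie on $\partial D\cap\bar R$. This tree must then coincide with the indicated set of boundary vertices and edges, since any additional branch would either produce an interior leaf (forbidden) or close into a core-graph cycle (forbidden by (ii)); the boundary edges $x_{i-1}x_i$ and $x_ix_{i+1}$ in (v) are themselves core edges because $x_i$ has face type $\wsX_{32}$, so the path $x_{i-1}x_ix_{i+1}$ is automatically in the tree and spans it. The main obstacle will be part (vi), where the region $Q$ is bounded by several splitting arcs rather than a single $\beta$ together with a boundary segment; setting up the Euler count correctly, verifying that all the splitting boundary really excludes additional branches of the tree, and ensuring that the only tree connecting the two $\hat s_3$ boundary vertices is an arc may require invoking the refined link information in Lemmas~\ref{lem:n=8}, \ref{lem:n=6}, \ref{lem:12}, and~\ref{lem:8466} to rule out specific configurations.
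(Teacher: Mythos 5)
Your parts (i)--(v) are correct, though organised differently from the paper. The paper kills (i) and (ii) in one stroke: an innermost cycle of the splitting system \emph{or} the core graph bounds a region in which splitting components could only be circles (violating innermostness), so the cycle lies in the splitting system and the core components inside are trees all of whose vertices are interior, contradicting Remark~\ref{rem:deadends}. You instead treat (i) by an Euler-characteristic count plus the cut-off-corner analysis, and (ii) by monochromaticity of core components plus iterating Lemma~\ref{cor:deadends}; both are valid, just heavier, and the Euler counts you invoke for (iii)--(v) are not really needed --- the forest property (ii), the fact that core edges never meet the splitting (so core components of vertices in $R$ stay in $R$), and the observation that the only $\partial D$-vertices in $R$ are the indicated boundary vertices already force each core component in $R$ to be a tree with leaves only at those vertices, which is exactly the paper's ``analogous'' argument. (Minor slip in (iii): in the case where $x_{i\pm1}$ have type $\hat s_3$ and $x_i$ is not of face type $\wsX_{32}$, it is the $\hat s_1$-corner of the first triangle, not the $\hat s_3$-corner, that lands in $R$; this does not affect the argument.)

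The genuine gap is (vi), which you leave open and for which you point in the wrong direction: none of Lemmas~\ref{lem:n=8}, \ref{lem:n=6}, \ref{lem:12}, \ref{lem:8466} is needed. Since core edges are disjoint from the splitting, $Q$ (being a whole component of the complement of the splitting) contains entire components of the core graph; each is a tree by (ii), with all leaves on $\partial D$ by Remark~\ref{rem:deadends}, hence with leaves only among the two type $\hat s_3$ vertices $u,w$ of $Q$. So each such component is either a single $u$--$w$ path or would have an interior leaf or be an isolated interior vertex, both impossible since every interior vertex of $D$ carries at least two core edges (automatic for type $\hat s_2$, and by Lemma~\ref{cor:deadends} for types $\hat s_1,\hat s_3$), and these edges stay in $Q$. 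The only point left --- and the one your sketch does not address --- is why $u$ (and $w$) has a core edge in $Q$ at all, so that the arc is nonempty and ends there. This is where the hypothesis ``exactly two vertices of $\partial D$'' enters: if every triangle at $u$ had its $\hat s_2$-vertex not of face type $\wsX_{32}$, then $u$ would be the cut-off corner of all its triangles, the splitting segments around $u$ would concatenate into a single splitting arc whose endpoints are the midpoints of the two boundary edges at $u$, and the component of the complement containing $u$ would be the cut-off corner region, whose only $\partial D$-vertex is $u$ --- contradicting the hypothesis on $Q$. Hence $u$ has a core edge in $Q$, its core component is a $u$--$w$ arc, two distinct such arcs would create a core cycle, and any further component would have no admissible leaves; this completes (vi) by the same elementary template as (iii)--(v).
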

\begin{proof}
	To prove (i) and (ii), consider an innermost cycle $\beta$ in either the splitting system or the core graph. Note that the open region $R\subset D$ bounded $\beta$ contains a point of the core graph or the splitting system. Since all connected components of the splitting system in $R$ are circles, by the innermost assumption we have that $\beta$ lies in the splitting system, and each connected component of the core graph in $R$ is a tree. This contradicts Remark~\ref{rem:deadends}.
	
	For (iii), assume without loss of generality that $x_i$ has type $\hat s_2$ but not $\wsX_{32}$ and $x_{i-1},x_{i+1}$ are of type $\hat s_3$. If $\beta$ were such an arc, consider the connected component~$R$ of $D\setminus \beta$ containing $x_i$. By (ii), each connected component of the core graph in~$R$ is a tree. By Remark~\ref{rem:deadends}, this connected component equals $x_i$. Hence $x_i$ does not have a neighbour of type $\hat s_1$, which is impossible for $x_{i-1}\neq x_{i+1}$. The proofs of (iv),(v), and (vi) are analogous.
\end{proof}

Lemma~\ref{lem:splitting}(i) gives a bound on the number of the connected components of the splitting system, since each of them is an arc starting and ending in $\partial D$. In Propositions~\ref{prop:8cycle diagram}, \ref{prop:10cycle diagram1}, and \ref{prop:10cycle diagram2}, the number of points in the intersection of the splitting system with $\omega$ is $\leq 10$. Up to a homeomorphism of $D$, each splitting system corresponds to a perfect non-crossing matching of these points. We illustrate the ones satisfying Lemma~\ref{lem:splitting}(ii,iii) in Figure~\ref{fig:3cyclediagram} and Figure~\ref{fig:3cyclediagram1} below.  In Proposition~\ref{prop:8cycle diagram} we consider cases A and F, depending on whether the vertex $x_7$ has type $\wsX_{32}$ (then it is circled) or not. Similarly,
in Proposition~\ref{prop:10cycle diagram1} we distinguish cases B, C, D, G,H, and I, depending on which vertices of $\omega$ are of face type $\wsX_{32}$ (they are circled). 
We will now gradually analyse all these 42 diagrams, excluding most of them.

\begin{figure}
	\centering
	\includegraphics[scale=0.9]{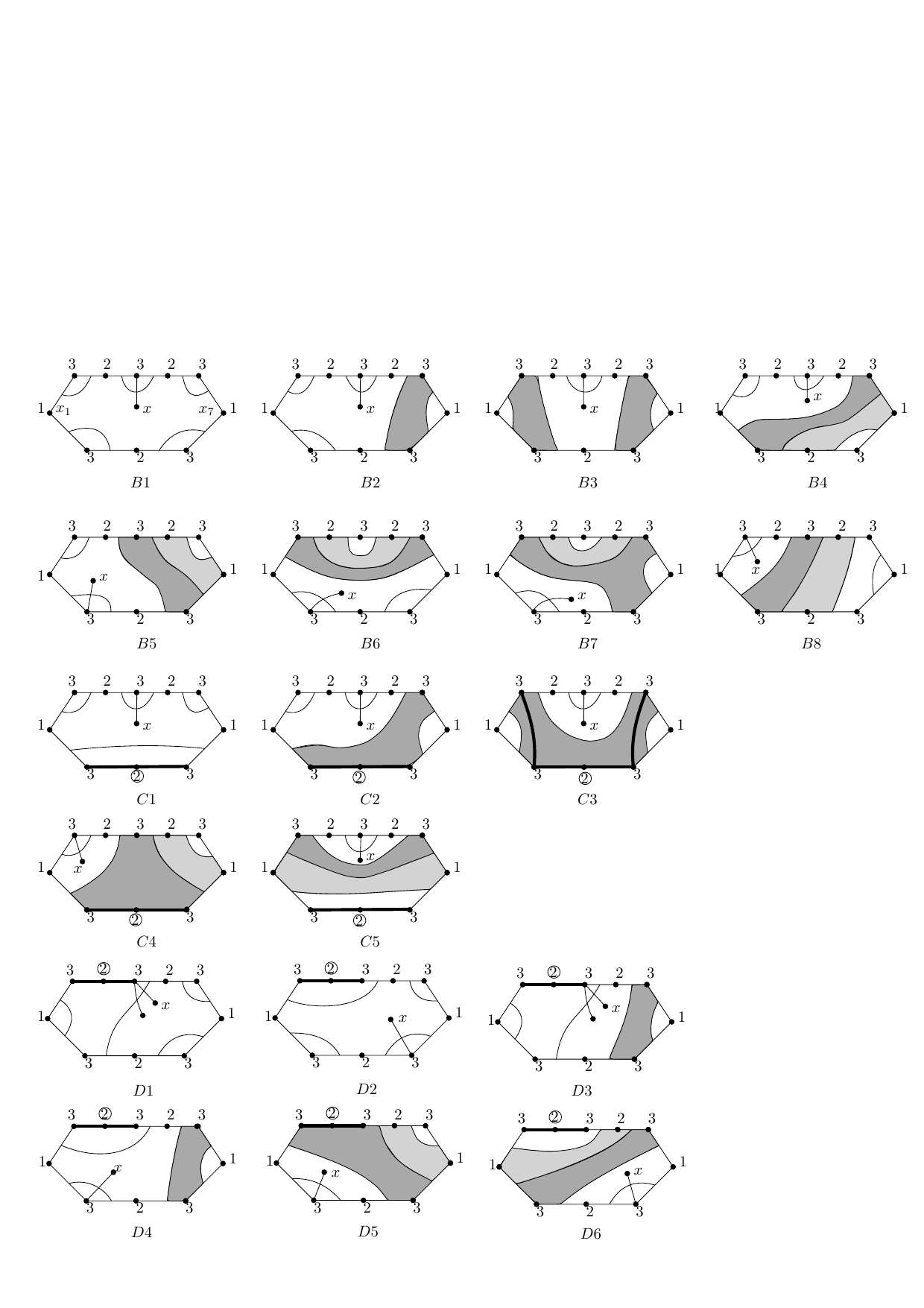}
	\caption{}
	\label{fig:3cyclediagram}
\end{figure}

\begin{figure}
	\centering
	\includegraphics[scale=0.9]{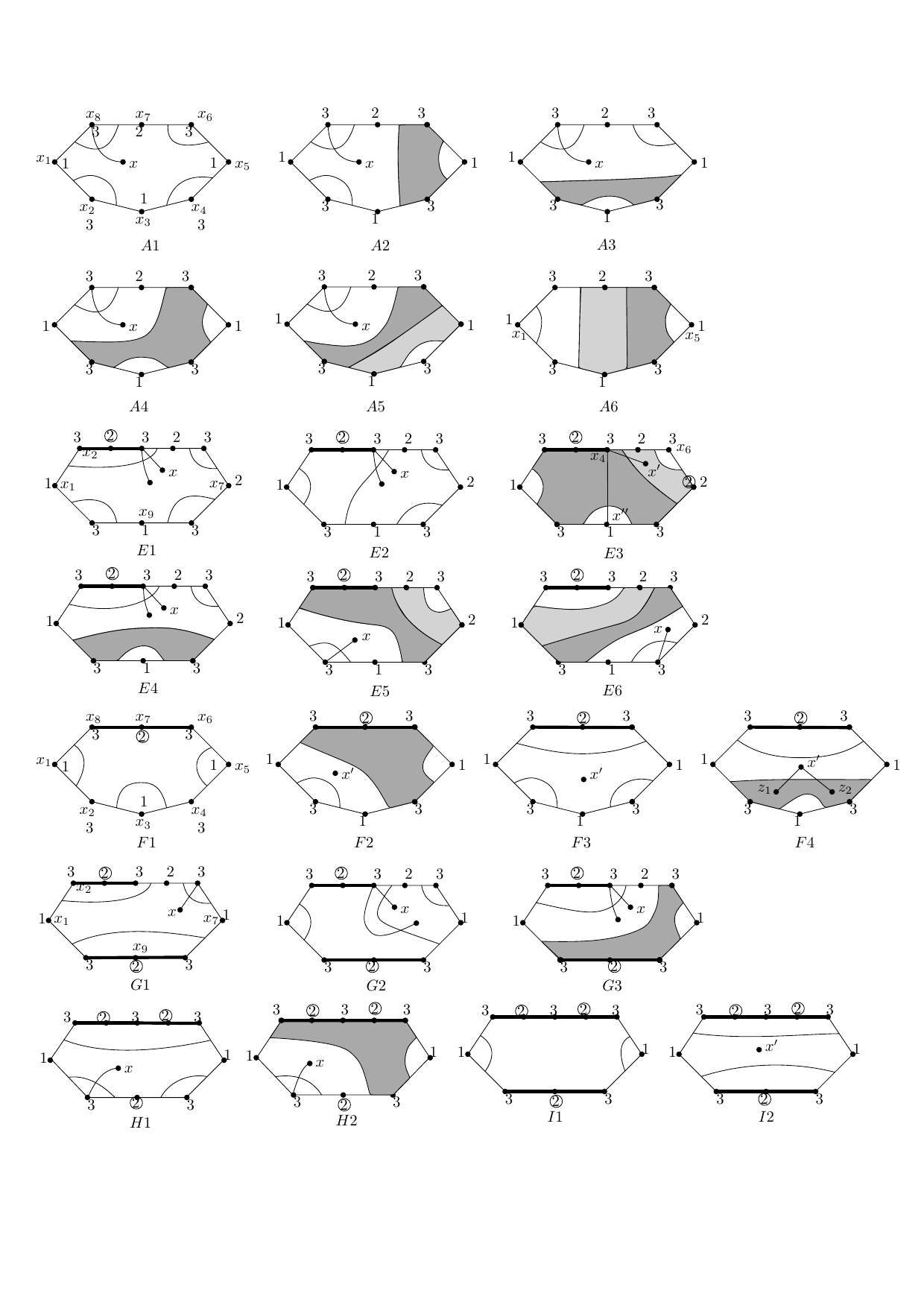}
	\caption{}
	\label{fig:3cyclediagram1}
\end{figure}

\begin{proof}[Proof of Propositions~\ref{prop:8cycle diagram}, \ref{prop:10cycle diagram1}, and \ref{prop:10cycle diagram2}]
In Proposition \ref{prop:10cycle diagram1}, assume by contradiction that all (1)-(3) hold. In Proposition \ref{prop:10cycle diagram2}, assume that $\omega$ has angle $\ge \frac{3\pi}{4}$ at~$x_1$. Consider a minimal disc diagram $D\to \bV_{234}$ with boundary $\omega$. We will reach a contradiction for all the diagrams illustrated in Figures~\ref{fig:3cyclediagram} and~\ref{fig:3cyclediagram1}, except for diagrams~A6, F3, and F4.
	
	In diagram C3, the core graph in the shaded region cannot have a leaf at $x_8$ (or at $x_{10}$). Otherwise, considering the triangle $yx_7x_8$ of~$D$, by assumption~(3) of Proposition~\ref{prop:10cycle diagram1}, the vertex $y$ would not be of face type $\wsX_{32}$. Consequently, the edge $x_8y$ would intersect a splitting curve that also intersects the edge $x_7y$, and so the edge $x_7y$ would intersect two splitting curves, which is a contradiction. Thus, by Lemma~\ref{lem:splitting}(ii) the core graph in the shaded region is of the form indicated by the thickened line in Figure~\ref{fig:3cyclediagram}.

	In most of the diagrams, we indicated an edge (or edges)  $x_ix$ with $x\neq x_{i-1},x_{i+1}$ of type $\hat s_1$, which exists by the assumption on the angles. In diagrams D1, D3, E1, E2, E4, G2, and G3, there are at least two such edges and we denote by $x_4x$ the first one in the order around~$x_4$ indicated in Figure~\ref{fig:3cyclediagram} and Figure~\ref{fig:3cyclediagram1}. Let $x_4y$ be the second such edge. Note that $x$ and $y$ lie on the same side of the arc of the splitting system intersecting the edge $x_4x_5$ (which is clear for diagrams E1, E4, and G3). Otherwise, for diagrams D1, D3, E2, and G2, considering the arc of the splitting system intersecting $x_4y$, we would obtain $y=x_1$, contradicting Lemma~\ref{lem:bowtie}. 
	
	If $x\in \partial D$, then we can appeal to Lemma~\ref{lem:bowtie} and Lemma~\ref{lem:restriction order} to reach the conclusion of Proposition~\ref{prop:8cycle diagram}, or a contradiction with one of the assumptions on the angles. Thus from now on we assume $x\in \mathrm{int} D$.  This excludes diagrams F1 and~I1, where $x_8$ cannot have an interior neighbour $x$ of type $\hat s_1$. By Lemma~\ref{lem:12}, the degree of $x$ is at least $12$. In other words, $x$ has at least $6$ neighbours of type $\hat s_3$. 
	
	By Lemma~\ref{lem:splitting}(iv,v), since any edge between $x$ and a vertex of type $\hat s_3$ intersects an arc of the splitting system, in diagrams A1, B1, C1, D1, D2, E1, E2, G1, G2, and~H1, the vertex~$x$ can have at most $5$ neighbours of type~$\hat s_3$, which is a contradiction. 
	
	In diagram E3, we consider the first two edges $x_3x',x_3x''$ of type $\hat s_3\hat s_1$ in the order around~$x_4$ indicated in Figure~\ref{fig:3cyclediagram1}.  We claim that the vertex $x''$ equals to $x_9$. Otherwise,~$x''$  lies in the light-shaded region. Since all the vertices in the light-shaded region are neighbours of $x_6$, the vertex $x$ has $4$ neighbours, which contradicts the $n\ge 8$ part of Lemma~\ref{lem:n=8}, and justifies the claim. Since $x'$ has at least $8$ neighbours, it has at least $4$ neighbours of type $\hat s_3$, all of which, except for $x_6$, lie in the shaded region. Consequently, $x'$ has a neighbour $z$ of type~$\hat s_3$ in the interior of the shaded region. Since the two neighbours of $z$ in the core graph are neighbours of both~$x''$ and~$x'$, the vertex $z$ has $4$ neighbours, contradicting the $n\ge 6$ part of Lemma~\ref{lem:n=6}. 
	
	In diagrams A2, A3, D3, E4 (resp.\ B2, B3, D4), the vertex $x$ has at least $4$ (resp.\ at least $3$) consecutive type $\hat s_3$ neighbours $z_j$ in one of the shaded regions, labelled according to their order around $x$. By Lemma~\ref{lem:splitting}(vi), only the first and the last of~$z_j$ might lie in $\partial D$. Thus, except for the first and the last one, any $z_j$ has at most two type~$\hat s_1$ neighbours (one of which is $x$), and so  $z_j$ has $4$ neighbours, contradicting Lemma~\ref{lem:n=6}.  See Figure~\ref{fig:splitting}(II) for the A2 case. Similarly, in diagrams~C2 (resp.\ A4, C3, G3, H2), the vertex $x$ has at least $4$ (resp.\ at least~$5$) consecutive type $\hat s_3$ neighbours in the shaded region, one of which contradicts Lemma~\ref{lem:n=6}. Note that in diagram C3 such a type $\hat s_3$ neighbour cannot be simultaneously a neighbour of both $x_1$ and $x_7$, by the shape of the connected component of the core graph we established earlier.
	
	In diagrams A5, B4, B7, C4, D5, E5, (resp.\ B5, B6) the vertex $x$ has at least~$5$ (resp.\ at least $4$) consecutive type $\hat s_3$ neighbours $z_j$ in the shaded region.  Except for the first and the last one, and the second or next-to-last one in diagrams C4, D5, E5, and B7, all $z_j$ lie in the interior of the shaded region, and so there are at least two such consecutive $z_j$. By Lemma~\ref{lem:n=6}, each such $z_j$ has at least $6$ neighbours, so it has at least two type~$\hat s_1$ neighbours in the light-shaded region.
Thus we can find~$x'$ of type $\hat s_1$ in the light-shaded region that is a common neighbour of two such~$z_j$. See Figure~\ref{fig:splitting}(III) for the A5 case. Then $x'$ has at most $3$ neighbours of type~$\hat s_3$ (two of which are among $z_j$), implying that~$x'$ has at most $6$ neighbours, which contradicts Lemma~\ref{lem:n=8}. 
	
	In diagrams C5, D6, and E6, the vertex $x$ has at least $5$ consecutive type $\hat s_3$ neighbours~$z_j$ in the shaded region. Except for the first and the last one, each $z_j$ is interior and by the $n\ge 6$ part of Lemma~\ref{lem:n=6} has at least two type~$\hat s_1$ neighbours in the light-shaded region. We can assume that they all have exactly two such neighbours. Indeed, if $z_j$ had $3$ consecutive type~$\hat s_1$ neighbours in the light-shaded region, then the middle one would have at most 6 neighbours (one in the shaded region, two in the light-shaded region, and $3$ in the thickened part of~$\partial D$), contradicting Lemma~\ref{lem:n=8}. Let $x'\neq x$ be a common neighbour of type $\hat s_1$ of two such~$z_j$. Then $x'$ has degree $8$, which contradicts Lemma~\ref{lem:8466}.
	
	Consider now diagram A6. We claim that $x_1$ has at most one interior neighbour of type $\hat s_3$. Otherwise, if we had such consecutive $z,z'$, by Lemma~\ref{lem:n=6} each of them would have at least two type $\hat s_1$ neighbours in the light-shaded region. If one of them, say $z$, had degree $>6$, then it would have at least $3$ type $\hat s_1$ neighbours in the light-shaded region. Except for the first and last one, any such neighbour~$x'$ would have degree $\geq 12$ by Lemma~\ref{lem:12}. Then one if its type $\hat s_3$ neighbours in the shaded region  would have degree $4$, contradicting Lemma~\ref{lem:n=6}. We can thus assume that the degrees of $z$ and $z'$ are equal to $6$.
	Let $x'$ be the common neighbour of type~$\hat s_1$ of $z,z'$ in the light-shaded region. Then $x'$ has no common neighbours of type~$\hat s_3$ with $x_1$ except for $z,z'$. By Lemma~\ref{lem:8466}, we have that $x'$ has at least $3$ common neighbours of type $\hat s_3$ with $x_5$. This contradicts Lemma~\ref{lem:n=6} for the middle one of these neighbours and justifies the claim. Analogously, $x_5$ has at most one interior neighbour of type~$\hat s_3$. This implies that the length of the core graph component in the light shaded region is $\leq 5$ and so it implies the conclusion of Proposition~\ref{prop:8cycle diagram}.
	
	In diagram B8, the vertex $x$ has at least $5$ consecutive type $\hat s_3$ neighbours $z_j$ in the shaded region. Except for the first and the last one, each $z_j$ is interior and by Lemma~\ref{lem:n=6} has at least two type $\hat s_1$ neighbours in the light-shaded region. We can assume that they all have exactly two such neighbours since otherwise by Lemma~\ref{lem:12} one of these neighbours would have degree $\geq 12$ and it would have a neighbour of type $\hat s_3$  outside the shaded region violating Lemma~\ref{lem:n=6}. Let $x'\neq x$ be a common neighbour of type $\hat s_1$ of two such $z_j$. By Lemma~\ref{lem:8466}, the vertex $x'$ has degree $>8$ and so it contains at least $3$ type $\hat s_3$ neighbours  outside the shaded region. This contradicts Lemma~\ref{lem:n=6} for the middle one of these neighbours.

In diagrams I2, F2, F3, and F4, since the angle at $x_8$ is $\geq \frac{3\pi}{4}$, there is neighbour~$x'$ of $x_8$, of type $\hat s_1$, in the indicated region. In diagram F2 we have that $x'$ is also a neighbour of $x_2$ and so we obtain a contradiction as in diagram H2. In diagrams I2, F3, and F4, by Lemma~\ref{lem:n=8}, the vertex $x'$ has $\geq 4$ neighbours of type $\hat s_3$. In diagram~F3, these can be only $x_2,x_4,x_6,$ and $x_8$, which, by Lemma~\ref{lem:bowtie}, implies the conclusion of Proposition~\ref{prop:8cycle diagram}. In diagram I2, by Lemma~\ref{lem:12}, both $x_8$ and $x_{10}$ are neighbours of~$x'$. Since the same holds for the interior neighbours of type $\hat s_1$ of $x_2$ and $x_6$, we have that both of these neighbours equal $x'$. But then $x_4$ has at most one interior vertex of type $\hat s_1$, contradiction. In diagram F4, $x'$ has at most two neighbours $z_j$ in the shaded region, since otherwise one of them would have only $4$ neighbours, contradicting Lemma~\ref{lem:n=6}. Thus, by Lemma~\ref{lem:n=8}, $x'$ must be also a neighbour of $x_6$ and $x_8$. In particular, $x'$ is the only interior vertex of type $\hat s_1$ in its region. Consequently, if we have $z_1\neq x_2,$ then, by Lemma~\ref{lem:n=6}, $z_1$ is a neighbour of $x_1$ and $x_3$. Analogously, if we have $z_2\neq x_4$, then $z_1$ is a neighour of $x_3$ and $x_5$. By Lemma~\ref{lem:bowtie}, this implies the conclusion of Proposition~\ref{prop:8cycle diagram}.
\end{proof}

\begin{cor} 
\label{cor:remove}
Proposition~\ref{prop:10cycle diagram1} remains valid without assumption (1). 
\end{cor}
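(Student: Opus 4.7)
The strategy is to use a link-theoretic argument to show that the failure of~(1) forces specific vertex identifications, and then to rule out the resulting degenerate configurations. Fix a minimal disc diagram $f\colon D\to \bV_{234}$ with boundary~$\omega$. Every triangle of~$\bV_{234}$ is a right isosceles triangle with angles $\tfrac{\pi}{4},\tfrac{\pi}{4},\tfrac{\pi}{2}$, so each triangle of~$D$ at~$x_1$ contributes an angle that is an integer multiple of~$\tfrac{\pi}{4}$. The link arc of~$D$ at the type-$\hat s_1$ vertex~$x_1$ runs between~$x_{10}$ and~$x_2$, both of type~$\hat s_3$, through link vertices whose types alternate; so its number of edges is even, and the angle at~$x_1$ is therefore a non-negative even multiple of~$\tfrac{\pi}{4}$. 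Consequently ``angle $<\tfrac{\pi}{2}$ at~$x_1$'' forces this angle to be~$0$, which means that the two boundary edges at~$x_1$ are identified in~$D$ and hence $x_{10}=x_2$ in~$\bV_{234}$. The analogous statement holds at~$x_7$.

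Now assume (2) and (3) hold but~(1) fails. If both angles at~$x_1$ and~$x_7$ are~$<\tfrac{\pi}{2}$, then $x_{10}=x_2$ and $x_6=x_8$. Deleting the two trivial back-and-forth edges at~$x_1$ and~$x_7$ from~$\omega$ yields the cycle $x_2 x_3 x_4 x_5 x_6 x_9$, which is a locally embedded $6$-cycle in~$\bV_{234}$ of type~$\hat s_3\hat s_2\hat s_3\hat s_2\hat s_3\hat s_2$. This contradicts the furthermore assertion of Lemma~\ref{lem:restriction order}.

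By symmetry it remains to handle the asymmetric case where $x_{10}=x_2$ while the angle at~$x_7$ is~$\geq\tfrac{\pi}{2}$. Cutting~$D$ along the slit~$x_1 x_2=x_{10}x_1$ produces a minimal disc diagram~$D''$ bounded by the $8$-cycle $\omega''=x_2 x_3\cdots x_9$ of type~$\hat s_3\hat s_2\hat s_3\hat s_2\hat s_3\hat s_1\hat s_3\hat s_2$, and the angle at~$x_2$ in~$D''$ equals $\alpha_{x_2}^{\omega}+\alpha_{x_{10}}^{\omega}\geq\tfrac{3\pi}{2}$ by assumption~(3). The sum of boundary angles of~$\omega''$ is therefore at least $\tfrac{15\pi}{2}$, and a Gauss--Bonnet computation on the $8$-gon forces the total interior curvature of~$D''$ to be at least~$\tfrac{3\pi}{2}$. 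By Lemmas~\ref{lem:n=8} and~\ref{lem:n=6}, the only interior vertex links of~$\bV_{234}$ that admit a locally embedded loop of length~$<2\pi$ are those of face types~$\wsX_{31}$ and~$\wsX_{33}$ (length-$6$ loops of total length~$\tfrac{3\pi}{2}$, contributing curvature at most~$\tfrac{\pi}{2}$ each). Hence~$D''$ must contain at least three interior vertices of these face types whose links in~$D''$ realise one of the explicit edge-loops classified in Lemma~\ref{lem:n=6}. I would conclude by running a splitting-system analysis on~$\omega''$ in the style of the proof of Proposition~\ref{prop:10cycle diagram1}: the concentrated angle~$\geq\tfrac{3\pi}{2}$ at~$x_2$ now plays the role jointly played by~$x_1$ and~$x_7$ in the original 10-cycle, and the link-based arguments of Lemmas~\ref{cor:deadends}--\ref{lem:8466} applied to the rigid flat subpath $x_3 x_4 x_5$ rule out each resulting splitting pattern.

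The main obstacle is this last step: the type pattern of~$\omega''$ is not directly covered by any of Propositions~\ref{prop:8cycle diagram}--\ref{prop:10cycle diagram2}, so the splitting-system enumeration has to be carried out afresh for the $\hat s_3\hat s_2\hat s_3\hat s_2\hat s_3\hat s_1\hat s_3\hat s_2$ pattern, extracting the required contradictions from the very large angle at~$x_2$ together with the flat $\pi$-angles along $x_3 x_4 x_5$ and $x_9$.
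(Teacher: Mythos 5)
Your opening reduction is sound: since consecutive neighbours of a type-$\hat s_1$ vertex both have type $\hat s_3$, an angle $<\frac{\pi}{2}$ there indeed forces $x_{10}=x_2$ (resp.\ $x_6=x_8$). But the core of the corollary is the case where exactly one of these identifications occurs, and there your argument stops at an acknowledged ``obstacle'': you propose to cut the diagram and redo a splitting-system enumeration for the new boundary pattern $\hat s_3\hat s_2\hat s_3\hat s_2\hat s_3\hat s_1\hat s_3\hat s_2$, which is not covered by Propositions~\ref{prop:8cycle diagram}--\ref{prop:10cycle diagram2} and which you do not carry out. This is a genuine gap, not a routine verification: it would amount to proving a new proposition of the same calibre as the ones in Section~\ref{sec:splitting system}. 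The paper avoids any fresh diagram analysis. Assuming (say) $x_6=x_8$, it first disposes of the degenerate subcases $x_5=x_9$ and $x_2=x_{10}$ using Lemma~\ref{lem:restriction order} together with the bowtie-freeness of Lemma~\ref{lem:bowtie} (each identification contradicts the angle assumptions (2) or (3)); then, with $x_5\neq x_9$ and $x_2\neq x_{10}$, it inserts type-$\hat s_1$ neighbours $x_3',x_9'$ of $x_3,x_9$ to form the $8$-cycle $x_9'x_{10}x_1x_2x_3'x_4x_5x_6$ of type $(\hat s_1\hat s_3)^3\hat s_2\hat s_3$, checks the angle hypotheses of the already-proved Proposition~\ref{prop:8cycle diagram}, and uses Lemma~\ref{lem:n=8} to see that $x_3$ lies in the resulting disc diagram, yielding a type-$\hat s_1$ common neighbour of $x_3$ and $x_5$ and hence a contradiction with assumption (2). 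That reduction is the missing idea in your plan.

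Two smaller points. In your ``both fail'' case, the reduced $6$-cycle $x_2x_3x_4x_5x_6x_9$ is only known to be locally embedded at $x_3,x_4,x_5,x_9$ (from the angle-$\pi$ assumptions); you still need $x_3\neq x_9$ and $x_5\neq x_9$ before invoking the furthermore assertion of Lemma~\ref{lem:restriction order}, and if either equality holds you must derive the contradiction separately (this is exactly what the combination of Lemma~\ref{lem:restriction order} and Lemma~\ref{lem:bowtie} does in the paper). Also, your Gauss--Bonnet step implicitly treats the angle of the cut diagram at $x_2=x_{10}$ as the sum of the two boundary angles of $\omega$; this needs the spur/slit structure of the non-reduced boundary to be spelled out, and in any case it only sets up, rather than replaces, the enumeration you left undone.
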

\begin{proof} Suppose $x_6=x_8$. If $x_5=x_9$, then, by Lemma~\ref{lem:restriction order}, we have $x_2=x_{10}$ or $x_4=x_{10}$. By Lemma~\ref{lem:bowtie}, this contradicts assumptions (2) or (3). Thus we can assume $x_5\neq x_9$. If $x_2=x_{10}$, then, by Lemmas~\ref{lem:restriction order} and~\ref{lem:bowtie}, this contradicts assumption (2). Thus we can assume $x_2\neq x_{10}$. Let $x_3',x_9'$ be type $\hat a$ neighbours of $x_3,x_9$. The cycle $\omega_8=x_9'x_{10}x_1x_2x'_3x_4x_5x_6$ satisfies assumptions (2) and (3) of Proposition~\ref{prop:8cycle diagram}. For assumption (1), if $\omega_8$ has angle $\frac{\pi}{4}$ at $x_4$, then this contradicts assumption (2) for $\omega$. If $\omega_8$ has angle $\frac{\pi}{4}$ at $x_6$, then by Lemma~\ref{lem:bowtie} applied to $x_9'x_{10}x_1x_2x_3x_4$, we obtain that $x_9'$ is a neighbour of $x_3$, contradicting (2) for $\omega$ as well. Thus  
by Proposition~\ref{prop:8cycle diagram} we have that $\omega_8$ bounds a minimal disc diagram that is a subdiagram of Figure~\ref{fig:3cycle1}(III). By Lemma~\ref{lem:n=8}, the vertex $x_3$ lies in the image of that disc diagram. Thus there is a neighbour of type $\hat a$ of $x_3$ and $x_5$, which again contradicts (2) for $\omega$.
\end{proof}

\section{Critical $8$-cycles}
\label{sec:typeI}
Let $\Lambda$ be the linear graph $abc$ with $m_{ab}=3$, $m_{bc}=5$, as in Section~\ref{sec:subarrangement}.
A \emph{critical $8$-cycle} in $\Delta$ has type $\hat a\hat c\hat a\hat c\hat a\hat c\hat b\hat c$ (or, shortly, $(\hat a\hat c)^3\hat b\hat c$).

\begin{defin}
	\label{def:adm I}
	An embedded critical $8$-cycle $(x_i)$ is \emph{admissible} if $x_7$ is a neighbour of
	\begin{enumerate}
		\item $x_1,x_5$, or
		\item $x_3$.
	\end{enumerate}
\end{defin}

Note that in Case (2), the vertex $x_3$ is a neighbour of both $x_6$ and $x_8$ by Remark~\ref{rem:adj}.

\begin{lem}
	\label{lem:adjacent adm}
	Let $\omega$ be an embedded critical $8$-cycle. Under any of the following conditions, $\omega$ is admissible.
		\begin{enumerate}
		\item \label{38_old1} The vertex $x_3$ is a neighbour of $x_8$ (or $x_6$). 
		\item \label{x247_old2} There is a vertex $x$ of type $\hat a$ that is a neighbour of $x_2,x_4,$ and $x_7$.
		\item \label{replace24_old5} Replacing in $\omega$ the vertex $x_2$ by $z_2$
		results in a critical cycle $\omega_0$ that is not embedded or is admissible.
        \item \label{4x8_old3} There is a vertex $x$ of type $\hat a$ that is a neighbour of $x_4$ and $x_8$ 
		(or of $x_2$ and~$x_6$). 
		\item \label{3yz7_old4} There is a vertex $z$ of type $\hat c$ and a vertex $x$ of type $\hat a$ such that $z$ a neighbour of $x_3$ and $x,$ and $x$ is a neighbour of $x_7$.
		\item \label{replace1_old6} Replacing in $\omega$ the vertex $x_1$ by $x$ 
		results in a critical cycle $\omega_0$ that is not embedded or is admissible.
		\item \label{replace3} Replacing in $\omega$ the vertex $x_3$ by $x$  
		results in a critical cycle $\omega_0$ that is not embedded or is admissible.
        \end{enumerate}
\end{lem}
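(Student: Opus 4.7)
The plan is to establish the seven items in a cascading fashion, proving the geometric conditions (1), (2), (4), (5) first using the local structure of $\Delta=\Delta_{H_3}$, then reducing (3), (6), (7) to them by analysing the replacement cycle $\omega_0$. Throughout, the main toolkit consists of Remark~\ref{rem:adj} (useful precisely when the middle vertex of a length-two path has type $\hat b$, so the endpoints of types $\hat a$ and $\hat c$ sit in distinct components of $\Lambda\setminus\{b\}$) together with Theorem~\ref{thm:flag}, which states that the poset on $\Delta^0_{H_3}$ with order $\hat a<\hat b<\hat c$ is bowtie free and upward flag. One consequence used freely is that two vertices of the same type are adjacent only if they coincide, since the corresponding left cosets of a common subgroup intersect iff they are equal.

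For (1) with $x_3\sim x_8$, the relations $x_3<x_8$ and $x_7<x_8$ let the upward-flag property produce a common upper bound of $\{x_3,x_7\}$; bowtie-free applied to a suitable quadruple involving $x_8$ then pins this bound down to a type $\hat b$ common neighbour of $x_3$ and $x_7$, which must equal $x_7$ itself, giving case (2) of Definition~\ref{def:adm I}. For (2), if the given type $\hat a$ vertex $x$ equals $x_3$ then $x_7\sim x_3$ already and we are done; otherwise $x_3,x_2,x,x_4$ are four distinct vertices with $x_3,x<x_2,x_4$, so bowtie-free produces a type $\hat b$ vertex $z$ adjacent to all four, and the adjacency $x\sim x_7$ combined with Remark~\ref{rem:adj} applied along $x$ forces $z=x_7$. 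Items (4) and (5) are handled by using the given adjacency data together with Remark~\ref{rem:adj} along $x_7$ to produce the hypothesis of (1) or (2), thereby reducing to the already-handled cases.

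The replacement items (3), (6), (7) are treated by a dichotomy on $\omega_0$. If $\omega_0$ is not embedded, two of its vertices coincide, and this coincidence yields either a new diagonal of $\omega$ of the form appearing in (1) or a common type $\hat a$ (respectively type $\hat c$) neighbour as in (4), so admissibility of $\omega$ follows from the previously-handled geometric cases. If $\omega_0$ is admissible through a witness $y$ with $x_7\sim y$, and $y$ also lies on $\omega$, admissibility of $\omega$ is immediate; the remaining case is when $y$ is the newly-inserted vertex, where Remark~\ref{rem:adj} along the type $\hat b$ vertex $x_7$ converts $x_7\sim y$ back into a configuration of type (4) or (5) for $\omega$.

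The main obstacle will be the bookkeeping in (7), where the replaced vertex $x_3$ is itself one of the three vertices distinguished in the admissibility definition: the admissibility witness for $\omega_0$ could be the new vertex replacing $x_3$, so one cannot simply transfer the conclusion. Here I would trace back via bowtie-free on the quadruple formed by the old and new type $\hat a$ vertices together with $x_2,x_4$ to produce a type $\hat b$ common neighbour of both, and then argue as in (2) that this common neighbour must coincide with $x_7$, closing the loop back to the original cycle $\omega$.
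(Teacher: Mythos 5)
Your reduction scheme (geometric items first, replacement items by a dichotomy on $\omega_0$) matches the paper's overall architecture, but the core steps of items (1) and (2) contain genuine gaps, and since (3)--(7) are reduced to these, the whole argument collapses there. In (2), after producing via bowtie-freeness a type $\hat b$ common neighbour $z$ of $x_2,x_3,x_4,x$, you assert that ``Remark~\ref{rem:adj} applied along $x$ forces $z=x_7$.'' This does not work: Remark~\ref{rem:adj} is vacuous at a vertex of type $\hat a$, because $\Lambda\setminus\{a\}=bc$ is connected (the remark only gives information when the middle vertex has type $\hat b$), and even applying it along $x_7$ only yields $x\sim x_6,x_8$, which in no way identifies $z$ with $x_7$: $z$ is tied to $x_2,x_3,x_4$ and $x_7$ to $x_6,x_8$, and nothing links them. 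In fact the correct conclusion of item (2) is not ``$x_7\sim x_3$'' in general but rather $x_7\sim x_1$ or $x_7\sim x_5$; the paper obtains it by assembling the common neighbours $y_1,y,y_2$ into an $8$-cycle $x_8y_1x_2yx_4y_2x_6x_7$ inside $\lk(x,\Delta)\cong\Delta_{bc}$ and invoking the girth bound $\ge 10$ of that link (the pentagon label $m_{bc}=5$ enters precisely here). Your proposal never uses this girth input, and without it a purely poset-theoretic argument cannot close item (2); the same unjustified ``must coincide with $x_7$'' step is what you lean on again in item (7).

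Item (1) has a parallel problem: from $x_3,x_7<x_8$ you cannot ``pin the bound down to a type $\hat b$ common neighbour of $x_3$ and $x_7$.'' The join of $x_3$ and $x_7$ may well have type $\hat c$ (it can be $x_8$ itself), in which case your argument yields nothing, and indeed the conclusion in that branch is not Definition~\ref{def:adm I}(2) but Definition~\ref{def:adm I}(1): the paper handles it by taking a common upper bound $z$ of the triple $x_3,x_5,x_7$ (upward flagness), applying bowtie-freeness to $x_3zx_7x_8$, and in the degenerate case $z=x_8$ applying bowtie-freeness again to $x_5x_6x_7x_8$ to get $x_5\sim x_7$. So your sketch both misses a branch and asserts a stronger conclusion than is available. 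Similarly, in (4) the hypothesis gives $x\sim x_4,x_8$ but not $x\sim x_7$, so ``Remark~\ref{rem:adj} along $x_7$'' is not applicable; the paper's route there needs the flag property to place $x$ adjacent to $x_6$ and $x_7$, and then a replacement of $x_2$ handled through items (2) and (3). To repair your write-up you would need to restate (1), (2), (4), (5) with the correct disjunctive conclusions and, crucially, bring in the girth-$\ge 10$ property of the type $\hat a$ vertex links for item (2).
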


\begin{proof}
	For (\ref{38_old1}), 
	note that $x_3,x_5,x_7$ are pairwise upper bounded. 
	By Theorem~\ref{thm:flag}, there is $z\in \Delta^0$ of type $\hat c$ that is their common upper bound. 
	Applying the bowtie freeness from Theorem~\ref{thm:flag} to $x_3zx_7x_8$, we obtain that $x_3$ is a neighbour of $x_7$ or $z=x_8$. In the latter case, $x_8$ is a neighbour of $x_5$. Applying the bowtie freeness to $x_5x_6x_7x_8$, we obtain that $x_5$ is a neighbour of  $x_7$, as desired.
		
	For (\ref{x247_old2}), we can assume $x\neq x_1,x_3,x_5$. By Remark~\ref{rem:adj}, $x$ is a neighbour of both $x_8$ and $x_6$. Applying the bowtie freeness 
	to $x_1x_2xx_8$, we obtain their common neighbour $y_1$. Analogously, we obtain a common neighbour $y_2$ of $x,x_4,x_5,x_6$, and a common neighbour $y$ of $x_2,x_3,x_4,x$. Then we have an $8$-cycle $x_8y_1x_2yx_4y_2x_6x_7$ in $\lk(x,\Delta)$. Since $\lk(x,\Delta)$ has girth $\ge 10$, this $8$-cycle is not locally embedded at one of $y_1,x_8,x_7,x_6,y_2$. Since $\omega$ is embedded, this $8$-cycle is not locally embedded at $x_8$ or $x_6$, which implies that $\omega$ is admissible.

  For (\ref{replace24_old5}), if $\omega_0$ is not embedded, then since $\omega$ is embedded, the only possibility is that $z_2$ equals $x_6,x_8,$ or $x_4$.
    In the first two cases, $\omega$ is admissible by (\ref{38_old1}). If $z_2=x_4$, then,
    since $x_1,x_5,x_7$ are pairwise upper bounded, they have a common upper bound~$z$ of type $\hat c$. If $z\neq x_6$, then by the bowtie freeness 
	applied to $x_5x_6x_7z$ we obtain that $x_5$ and $x_7$ are neighbours. If $z=x_6$, then $z\neq x_8$, and analogously $x_1$ and~$x_7$ are neighbours.
	If $\omega_0$ is admissible, then so is $\omega$ since they share the vertices $x_1,x_3,x_5,x_7$.

For (\ref{4x8_old3}), 
since $x,x_5,x_7$ are pairwise upper bounded, 
they have a common upper bound $z$ of type $\hat c$. 
We can assume that $x_5$ and $x_7$ are not neighbours, and so applying the bowtie freeness to $zx_5x_6x_7$, we obtain $z=x_6$, i.e.\ $x$ is a neighbour of~$x_6$. Applying the bowtie freeness to $x_6x_7x_8x$, we obtain that $x_7$ is a neighbour of~$x$. Since $x_1,x_3,x$ are pairwise upper bounded, 
they have a common upper bound~$z_2$ of type $\hat c$. Since $x$ is a neighbour of $z_2,x_4,x_7$, the critical cycle obtained from $\omega$ by replacing $x_2$ with~$z_2$ is either not embedded or is admissible by (\ref{x247_old2}). Thus we are done by~(\ref{replace24_old5}).
	
For (\ref{3yz7_old4}), 
	since $x_3,x_5,x$ are pairwise upper bounded, they
	have a common upper bound $z_4$ of type $\hat c$. The critical cycle obtained from $\omega$ by replacing $x_4$ with $z_4$ is either not embedded or is admissible by (\ref{4x8_old3}). Thus we are done by (\ref{replace24_old5}).

	For (\ref{replace1_old6}), if $\omega_0$ is not embedded, then either $x=x_3$, in which case $\omega$ is admissible by (\ref{38_old1}), or $x=x_5$, in which case $x_5$ is a neighbour of $x_7$ by applying the bowtie freeness to $x_5x_6x_7x_8$.  
	Now assume that $\omega_0$ is embedded. Then $x_7$ is a neighbour of one of $x_3,x_5,x$. In the last case, $x$ is a neighbour of $x_6$, and so $\omega$ is admissible by~(\ref{4x8_old3}). 

 For (\ref{replace3}), if $\omega_0$ is not embedded, then, say, $x=x_1$, and so $\omega$ is admissible by (\ref{4x8_old3}). If $\omega_0$ is admissible and satisfies Definition~\ref{def:adm I}(1), then so does $\omega$. If $\omega_0$ satisfies Definition~\ref{def:adm I}(2), then $\omega$ is admissible by (\ref{x247_old2}).
\end{proof}

In the remaining part of this section, let $\omega=x_1\cdots x_8$ be an embedded critical $8$-cycle. 
Let $w_i, C_i, P_i$ be as in Construction~\ref{def:ncycle}.

The goal of this section is to prove:
\begin{prop}
	\label{prop:Ia}
	Each embedded critical $8$-cycle is admissible.
\end{prop}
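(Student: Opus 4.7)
The plan is to argue by contradiction. Suppose that an embedded critical $8$-cycle $\omega = x_1\cdots x_8$ of type $(\hat a\hat c)^3\hat b\hat c$ is not admissible. I would first exploit Lemma~\ref{lem:adjacent adm} to put $\omega$ into a normal form. Parts~(\ref{38_old1}), (\ref{x247_old2}), (\ref{4x8_old3}), (\ref{3yz7_old4}) say that certain auxiliary adjacencies already force admissibility, so we may assume they all fail; parts~(\ref{replace24_old5}), (\ref{replace1_old6}), (\ref{replace3}) let us freely replace $x_1, x_2,$ or $x_3$ by any alternative representative and still obtain a non-admissible critical cycle. These reductions would allow us to fix convenient words $w_i \in A_{\hat s_i}$ via Construction~\ref{def:ncycle} together with a homotopically trivial loop $P = P_1\cdots P_8$ in $\widehat\Sigma$ whose pieces $P_i$ lie in prescribed hosts $\widehat C_i$.

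Next, I would choose a sub-arrangement $\cK$ of type II (Definition~\ref{def:arrangment1}) whose alignment of $\theta_1,\theta_2,\theta_3,\theta_4$ of types $\hat a,\hat c,\hat b,\hat c$ matches the type pattern of $\omega$, so that the collapsing map $\widehat\kappa_\cK$ sends $P$ into $\wsY = \wsX_2\cup\wsX_3\cup\wsX_4 \subset \od_\cK$. Using the isomorphism $\od_{\ca'_{234}}\cong \wsY$ together with Definition~\ref{def:loop}, this produces from $P$ a cycle $\omega^\cK = x_1^\cK\cdots x_8^\cK$ in $\bV_{234}$ of type $\hat s_1\hat s_3\hat s_1\hat s_3\hat s_1\hat s_3\hat s_2\hat s_3$.

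From the normalised form established in the first step I would verify the angle hypotheses of Proposition~\ref{prop:8cycle diagram} for $\omega^\cK$: angle $\pi$ at $x_7^\cK$, angle $\ge \tfrac{3\pi}{4}$ at $x_6^\cK$ and $x_8^\cK$, and angle $\ge \tfrac{\pi}{2}$ at $x_1^\cK$ and $x_5^\cK$. Proposition~\ref{prop:8cycle diagram} would then supply a minimal disc diagram for $\omega^\cK$ embedding into Figure~\ref{fig:3cycle1}(III), in which $x_7^\cK$ is a common neighbour of $x_1^\cK$ and $x_5^\cK$ of type $\hat s_2$. Applying Lemma~\ref{lem:lift2} at a non-collapsed vertex (for instance $x_3^\cK$), the $2$-path $x_1^\cK x_7^\cK x_5^\cK$ in $\lk(x_3^\cK,\bV_{234})$ lifts to a $2$-path $x_1 y x_5$ in $\lk(x_3,\Delta)$ with $y$ of type $\hat b$. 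Combined with Lemma~\ref{lem:adjacent adm}(\ref{x247_old2}) (after a suitable replacement via (\ref{replace24_old5})) this yields admissibility, contradicting the initial assumption.

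The main obstacle is the verification of the angle hypotheses for $\omega^\cK$. Most delicately, the angle $\pi$ at $x_7^\cK$ requires that the projected word representing $w_7$ does not force an extra triangle of $\bV_{234}$ incident to $x_7^\cK$; this amounts to a careful analysis of $w_7 \in A_{\hat b}$ after the collapsing, leveraging Lemma~\ref{lem:relations} and the $\pi_1$-injectivity result in Lemma~\ref{lem:embed1} to control the possible forms of the projected path, together with Lemma~\ref{lem:12} to preclude large link configurations. The angle bounds at the $\hat s_3$-type vertices $x_6^\cK, x_8^\cK$ and the $\hat s_1$-type vertices $x_1^\cK, x_5^\cK$ would follow similarly from the non-admissibility reductions combined with the link lemmas~\ref{cor:deadends}--\ref{lem:n=6} and additional dentings (Definition~\ref{def:dent}) of $\omega$ applied before the projection.
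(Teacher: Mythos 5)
Your high-level strategy---project $\omega$ into $\bV_{234}$ via a type II sub-arrangement, apply Proposition~\ref{prop:8cycle diagram}, and lift back---is indeed the engine behind part of the actual argument, but as written there is a genuine gap: you assume a single sub-arrangement $\cK$ can be chosen so that $\widehat\kappa_\cK(P)$ lands in $\wsX_2\cup\wsX_3\cup\wsX_4$ with every $P_i$ acquiring a well-defined host. This fails in general. The loop $P$ runs through the subcomplexes $\widehat C_i$, where $C_1,C_3,C_5$ are decagons of $\Sigma$ that may coincide or be spread apart, and whether $\kappa_\cK(C_i)$ is a (non-collapsed) $2$-cell of the chosen sub-arrangement depends entirely on this configuration. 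This is exactly why the paper splits into the cases of one, two, or three distinct decagons (Propositions~\ref{prop:Ia1}, \ref{prop:Ia2}, \ref{prop:Ia3}) and, within each, into many sub-cases according to the positions of the hexagons $C_2,C_4,C_6,C_8$ and the square $C_7$: different sub-cases require different sub-arrangements (type I giving $\bU_{12}$, type II giving $\bV_{34}$ or $\bV_{234}$), preliminary projections $\Pi_{\widehat C}$ to normalise the words $w_i$, dentings (Definition~\ref{def:dent}), and an induction on the area of the disc diagram run through Lemma~\ref{lem:adjacent adm}. No single choice of $\cK$ covers all of these; in the one-decagon case the projected cycle even degenerates (several $x_i^\cK$ coincide) and the natural target is $\bU_{12}$ or $\bV_{34}$ rather than $\bV_{234}$, so the type of $\omega^\cK$ is not the one required by Proposition~\ref{prop:8cycle diagram}.

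The lifting step is also not right as stated. Proposition~\ref{prop:8cycle diagram} does not conclude that $x_7^\cK$ is a common neighbour of $x_1^\cK$ and $x_5^\cK$; it only produces a minimal disc diagram embedding into Figure~\ref{fig:3cycle1}(III), and extracting admissibility from such a diagram is where most of the work lies: one must pin down the face types of interior vertices (Lemmas~\ref{lem:n=8}, \ref{lem:n=6}, \ref{lem:8466}), check which vertices are non-collapsed so that Lemma~\ref{lem:lift2} or Lemma~\ref{lem:collapse} applies, and in some configurations (e.g.\ Figure~\ref{fig:2atypeicase3}(II) in Case 1 of Proposition~\ref{prop:Ia2}) no lifting argument suffices and the paper falls back on an algebraic analysis of $w_1$ via its parabolic closure and normaliser. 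Likewise, your plan to verify the angle hypotheses ``after normalisation'' glosses over the situations where some $C_i$ collapse to edges or vertices, which is precisely where the host declarations, Lemma~\ref{lem:collapse}, and the local-embeddedness checks enter. So the proposal identifies the correct tools, but not the case structure and the back-and-forth between several sub-arrangements that make the proof actually close.
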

Proposition~\ref{prop:Ia} follows from Propositions~\ref{prop:Ia1}, \ref{prop:Ia2}, and~\ref{prop:Ia3}, which are proved in Subsections~\ref{subsec:type1a1}, \ref{subsec:type1a2}, and~\ref{subsec:type1a3}.

\subsection{Case of one decagon}
\label{subsec:type1a1}
\begin{prop}
	\label{prop:Ia1}
	Let $\omega$ be an embedded critical $8$-cycle with only one decagon among the $C_i$. Then $\omega$ is admissible.
\end{prop}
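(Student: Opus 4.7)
The key technical observation is that, since $\omega$ is embedded, each non-decagon host $C_i$ for $i \in \{1,3,5\}$ must be a $c$-edge of $\Sigma$ rather than a $b$-edge. Indeed, if $P_i$ lay in a $b$-edge, then $g_ig_{i-1}^{-1}$ would lie in $\langle b\rangle \subset A_{\hat c}$ (since $b\in \hat c$), forcing the elevations corresponding to $x_{i-1}$ and $x_{i+1}$ to be equal, contradicting embeddedness. From a $c$-edge host, since $c\in \hat b$, the two endpoints of the $c$-edge lie in a common elevation of type $\hat b$, producing a vertex $y_i$ of $\Delta$ of type $\hat b$ simultaneously adjacent to $x_{i-1}, x_i$, and $x_{i+1}$.

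Using the reflection symmetry of $\omega$ through $x_7$ exchanging $x_1\leftrightarrow x_5$, $x_2\leftrightarrow x_4$, and $x_6\leftrightarrow x_8$, I would reduce to two cases: Case I, where $C_3$ is the unique decagon; and Case II, where $C_1$ is the unique decagon. In Case I, the construction yields type-$\hat b$ vertices $y_1\sim x_8,x_1,x_2$ and $y_5\sim x_4,x_5,x_6$; together with $x_7\sim x_6,x_8$, these sit in the partial order of Theorem~\ref{thm:flag} as $y_1,x_7<x_8$ and $x_7,y_5<x_6$. Applying bowtie freeness to $y_1x_8x_7x_6$ and to $x_7x_8y_1x_2$, and then appealing to upward flag, I would derive type-$\hat c$ common upper bounds of $y_1,x_7$ and of $x_7,y_5$; from these, Remark~\ref{rem:adj} (with $b$ separating $a$ and $c$ in $\Lambda$) would yield an adjacency of $x_7$ with $x_1$ or $x_5$, giving admissibility via Definition~\ref{def:adm I}(1).

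In Case II the construction yields $y_3\sim x_2,x_3,x_4$ and $y_5\sim x_4,x_5,x_6$. Here the vertex $y_3\sim x_3$ provides immediate structure near $x_3$: combining $y_3<x_2,x_4$ with $x_7<x_6,x_8$ via bowtie freeness and upward flag among $y_3,y_5,x_7$, I would show either that the cycle obtained from $\omega$ by replacing $x_3$ with a suitable type-$\hat a$ vertex is non-embedded or admissible (so that Lemma~\ref{lem:adjacent adm}(7) applies), or that $x_7$ is itself adjacent to $x_3$, giving admissibility via Definition~\ref{def:adm I}(2).

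The main obstacle will be the poset manipulations, especially in Case II, where $y_3, y_5, x_7$ are not obviously pairwise upper bounded and one must carefully navigate potential degeneracies such as $y_3 = y_5$, $y_i = x_7$, or the produced upper bound coinciding with an existing cycle vertex. A secondary challenge is to verify that the applications of Lemma~\ref{lem:adjacent adm} actually deliver one of the forms of admissibility rather than shifting the problem to a configuration requiring further analysis; for this, the smaller type-$\hat b$ neighbors $y_i$ produced in the first step are essential, because they refine the available poset bounds beyond what the cycle $\omega$ itself gives.
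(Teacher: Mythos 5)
Your proposal does not work, and the problem begins with a misreading of both the hypothesis and Construction~\ref{def:ncycle}. The hosts $C_1,C_3,C_5$ are the face types of the type-$\hat a$ vertices $x_1,x_3,x_5$, so they are always decagons of $\Sigma$ (never $b$- or $c$-edges): the word $w_i$ lies in $A_{\hat a}=A_{bc}$ and $P_i$ is an edge-path in the lift of that decagon. The hypothesis ``only one decagon among the $C_i$'' therefore means $C_1=C_3=C_5$ (this is the first line of the paper's proof), not that exactly one index in $\{1,3,5\}$ ``uses'' a decagon; your Case~I/Case~II split does not exist. Consequently your key first step collapses: embeddedness only rules out $w_i\in A_{\hat c}$ (in particular $w_i\in\langle b\rangle$), but it does not force $w_i\in\langle c\rangle$, which is what you would need to produce a type-$\hat b$ vertex $y_i$ adjacent to $x_{i-1},x_i,x_{i+1}$. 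Generically $w_i$ is a long word in $b,c$, and the existence of such common neighbours is essentially the content of admissibility --- it must be proved, not extracted from the host being ``a $c$-edge''. (The situation where $P_i$ can be pushed into a single square or edge is exactly the denting mechanism of Definition~\ref{def:dent}, and arranging it is part of the work.)

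Beyond the misreading, the overall strategy cannot succeed: you attempt to derive admissibility purely from the poset properties of $\Delta_{H_3}$ (Theorem~\ref{thm:flag}) together with Remark~\ref{rem:adj}. Those properties are precisely condition~(\ref{35346}) of Definition~\ref{def:353simplicial1}, while the admissibility of critical $8$-cycles is recorded there as a separate condition~(\ref{3538}); it is not a formal consequence of the former, which is why the paper proves Proposition~\ref{prop:Ia1} by passing to the type I and II sub-arrangements, converting $\omega$ into cycles in the $\mathrm{CAT}(0)$ complexes $\bU_{12}$ and $\bV_{34}$, analysing disc diagrams and retractions there (Lemmas~\ref{lem:CAT0}, \ref{lem:bv23 cat0}, \ref{lem:falk}, \ref{lem:retraction property}, \ref{lem:collapse}), and lifting the conclusions back via Lemmas~\ref{lem:lift} and~\ref{lem:lift2}, combined with Lemma~\ref{lem:adjacent adm}. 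Even locally your poset steps fail: $y_1x_8x_7x_6$ is not a bowtie unless $y_1<x_6$, which you have not arranged; a mere pair such as $y_1,x_7$ need not be upper bounded, so upward flagness does not apply to it; and a common type-$\hat c$ upper bound of two type-$\hat b$ vertices yields nothing via Remark~\ref{rem:adj}, since $c$ does not separate $a$ from $b$ in $\Lambda$.
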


Since $C_1=C_3=C_5$, we have that $C_i$ intersects $C_1$ for all even $i$. We will also assume that $C_7$ intersects $C_1$. Indeed, otherwise
we have $C_6=C_8$, and there is a hyperplane dual to an edge of $C_7$ and disjoint from all the remaining $C_i$. Thus, by Lemma~\ref{lem:retraction property}, we have $\Pi_{\whC_7}(P_i)\subset \whC_7\cap\whC_6$ for $i\neq 7$. Since $\Pi_{\whC_7}(P)$ is homotopically trivial in $\whC_7$, this implies that $P_7$ is homotopic in $\whC_7$ to a path inside $\whC_7\cap \whC_6$. Thus $x_6=x_8$,
contradicting the assumption that $\omega$ is embedded. 

\medskip
\noindent
\underline{Case 1: $C_2=C_4=C_6=C_8$.}  Let $B\neq C_7$ be the other square intersecting $C_1$ and~$C_2$. Let $\ch$ be the type I sub-arrangement of $\ca$ with $\kappa_\ch(C_1)=X_{22}, \kappa_\ch(C_2)=X_{11}$, and $\kappa_\ch(C_7)=X_{21}$. 
Let $\omega^\cH=\ka^*_\ch(\omega)$.  
Since $\omega$ is locally embedded, and $C_i$ are non-collapsed, $\omega^\cH$ is locally embedded by Lemma~\ref{lem:lift}. In particular, the angle of $\omega^\cH$ at $x^\cH_7$ equals $\pi$.
If the angle of $\omega^\cH$ at $x^\cH_6$ or $x^\cH_8$ equals $\frac{\pi}{4}$, then $x^\cH_7$ is a neighbour of  $x^\cH_5$ or~$x^\cH_1$. By Lemma~\ref{lem:lift}, $x_7$ is a neighbour of $x_5$ or $x_1$, as desired. Thus we can assume that the angles of $\omega^\cH$ at $x^\cH_6$ or $x^\cH_8$ are $\geq \frac{3\pi}{4}$.

\begin{figure}[h]
	\centering
	\includegraphics[scale=0.9]{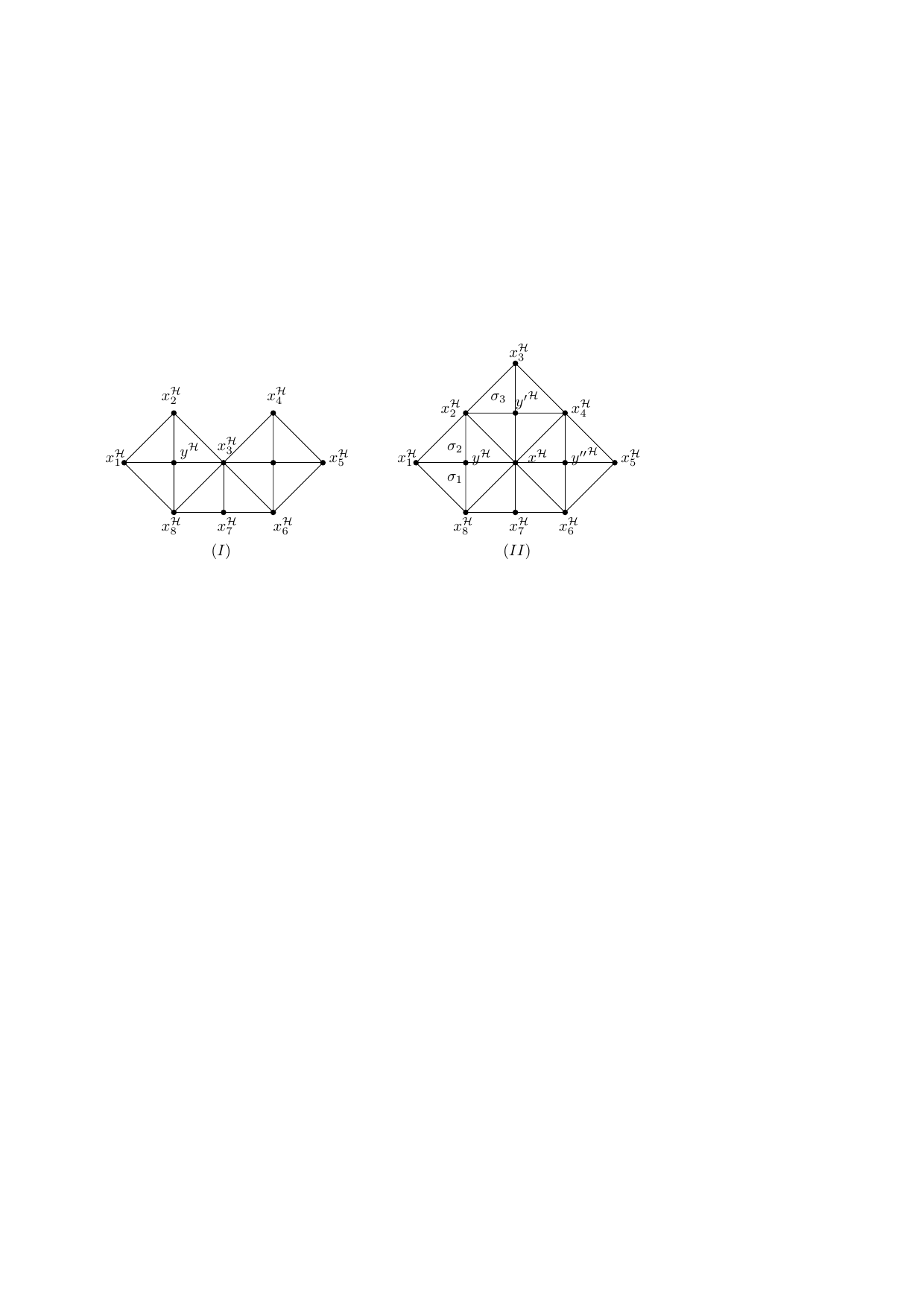}
	\caption{}
	\label{1atypeicase1}
\end{figure}

Since $\bU_{12}$ is $\mathrm{CAT}(0)$ (Lemma~\ref{lem:CAT0}),
$\omega^\cH$ bounds one of the diagrams in Figure~\ref{1atypeicase1}. 
Suppose first that 
$y^\cH$ has face type~$\whX_{21}$. Then, by Lemma~\ref{lem:lift}, we can lift $y^\cH$ to $y\in \lk(x_1,\Delta)^0$ that is a neighbour of $x_2,x_8$. Furthermore, we can lift $x_3^\cH$ (in case~(I)) or~$x^\cH$ (in case~(II)) to $x\in \lk(y,\Delta)^0$ that is a neighbour of $x_2$ and $x_8$. Thus we can replace in~$\omega$ the vertex $x_1$ by  $x$ to form another critical $8$-cycle $\omega_0$. Note that $\omega_0$ is not embedded or is admissible, since $x$ and $x_7$ are neighbours by Lemma~\ref{lem:lift}. Thus by Lemma~\ref{lem:adjacent adm}(\ref{replace1_old6}), $\omega$ is admissible. Hence we can assume that $y^\cH$ has face type~$\whX_{12}$. 
Thus, by Lemma~\ref{lem:lift}, the vertices $x_1$ and $x_7$ are connected in $\lk(x_8,\Delta^*_\cH)$ by a locally embedded path of length three with an interior vertex of face type $B$. 

Let $\mathcal J$ be the type I sub-arrangement of $\ca$ with $\ka_\cj(C_1)=X_{22}, \ka_\cj(C_2)=X_{11},$ and $\ka_\cj(B)=X_{21}$. 
Since $\widehat\ka_\cj$ maps $\widehat C_7 \ \pi_1$-injectively into $\widehat X_{12}$, we have $x^\cj_6\neq x^\cj_8$. Thus we can again assume that $\omega^\cj$ bounds a minimal disc diagram in Figure~\ref{1atypeicase1}, with $\cH$ replaced by~$\cj$. 
As before, we can assume that $y^\cj$ 
has face type $\whX_{12}$. Thus by Lemma~\ref{lem:lift}, the vertices $x_1$ and~$x_7$ are connected in $\lk(x_8,\Delta^*_\cj)$ by a locally embedded path of length three with an interior vertex of face type $C_7$. 
Since $B\neq C_7$, this contradicts the fact that the girth of $\lk(x_8,\Delta^*_\cj)=\lk(x_8,\Delta^*_\cH)$ is $8$. 

\medskip
\noindent
\underline{Case 2: There are exactly two distinct hexagons among the $C_i$.}  Denote these hexa- gons by $D_1$ and $D_2$. To start with, we consider the case where $D_1$ and $D_2$ do not intersect a common square. Then $C_6=C_8$. Assume without loss of generality $C_6=D_1$. Let $B$ be the square such that $B\cap C_1$ and $D_2\cap C_1$ are opposite edges of $C_1$. Note that $B$ intersects~$D_1$. Let $\ch$ be the type I sub-arrangement of $\ca$ with $\ka_\ch(C_1)=X_{22}, \ka_\ch(D_1)=X_{11},$ and $\ka_\ch(B)=X_{21}$. 
Note that the vertices of $\omega$ of face type $D_2$ do not belong to~$\Delta^*_\ch$. 
However, $\ka_\ch(D_2)$ is an edge of $X_{22}$, and thus 
$\hk_\ch(P)\subset \whX_1\cup\whX_2$. Thus declaring that $\widehat \ka_\ch(P_i)$ is hosted by $\whX_{22}$ whenever $i\in \{2,4\}$ satisfies $C_i=D_2$, we obtain $\omega^\cH$ in~$\bU_{12}$ as in Definition~\ref{def:loop}.
If $C_2=D_2$, then $x^\cH_1=x^\cH_3$, and 
if $C_4=D_2$, then $x^\cH_3=x^\cH_5$. Since at least one of $C_2,C_4$ equals $D_2$, we have $|x^\cH_1,x^\cH_5|\le 2\sqrt{2}$. On the other hand, if $\omega^\cH$ has angle $\ge \frac{3\pi}{4}$ at both $x^\cH_6$ and~$x^\cH_8$, since by Lemma~\ref{lem:lift} it also has angle $\pi$ at $x^\cH_7$,  and $\bU_{12}$ is $\mathrm{CAT}(0)$, the endpoints of the path $x^\cH_5x^\cH_6x^\cH_7x^\cH_8x^\cH_1$ are at distance $\ge 4$, which is a contradiction. 
Thus $\omega^\cH$ has angle $\frac{\pi}{4}$ at one of $x^\cH_6,x^\cH_8$. As before we can deduce that $x_7$ is a neighbour of $x_5$ or $x_1$, as desired. 

\begin{figure}[h]
	\centering
	\includegraphics[scale=0.85]{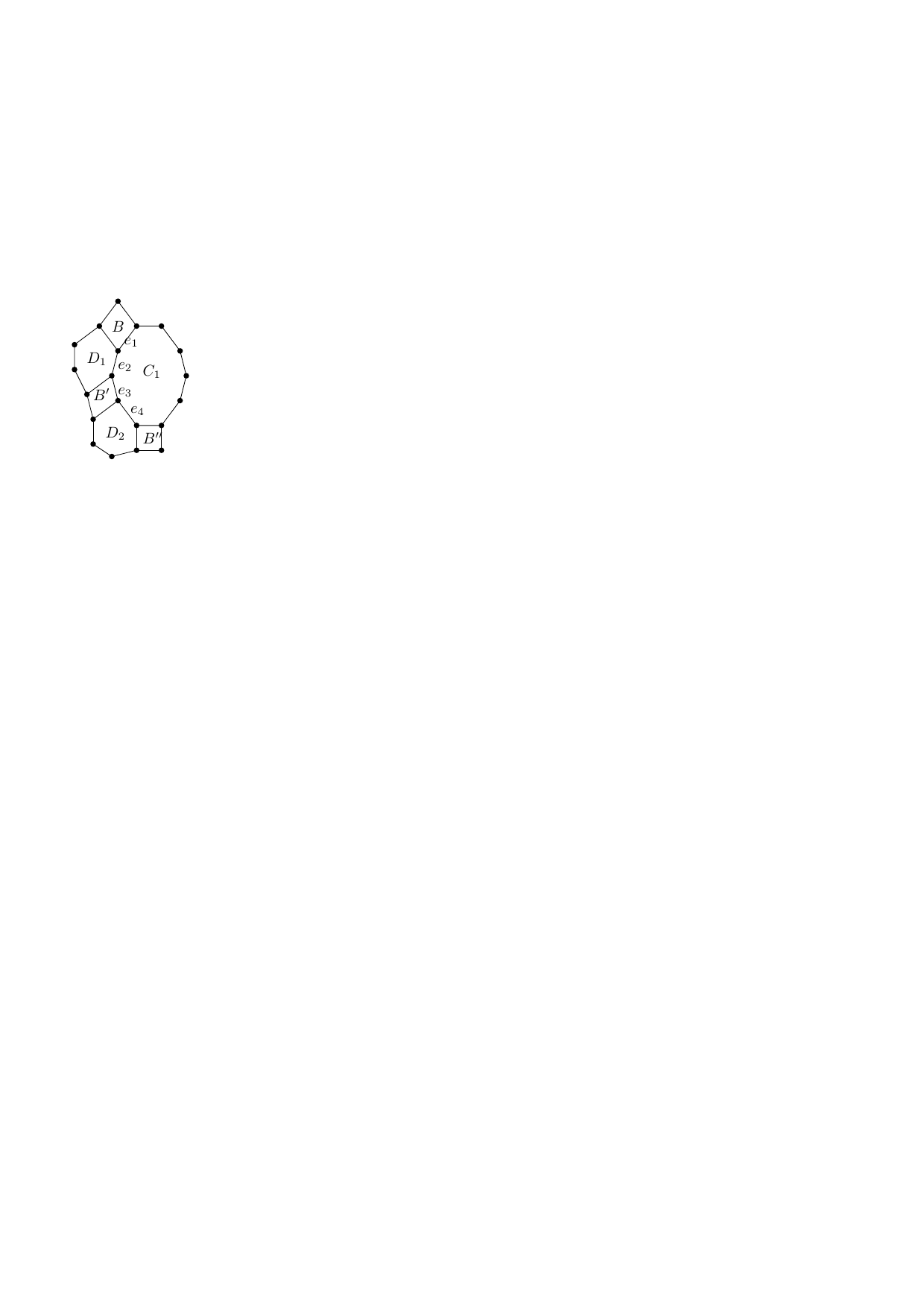}
	\caption{}
	\label{fig:1atypeicase22}
\end{figure}

It remains to assume that $D_1$ and $D_2$ intersect a common square $B'$, see 
Figure~\ref{fig:1atypeicase22}. Assume without loss of generality $C_7\in \{B',B\}$. Let $\mathcal K$ be the type II sub-arrangement of $\ca$ with $\ka_\cK(D_1)=\sX_{31}, \ka_\cK(B')=\sX_{32},$ and $\ka_\cK(C_1)=\sX_{42}$. Let $\omega^\cK=\ka^*_\cK(\omega)\subset \bV_{34}$.
As before, we can assume that the angles at $x^\cK_6,x^\cK_8$ are $\geq \frac{3\pi}{4}$. Since $\bV_{34}$ is $\mathrm{CAT}(0)$ (Corollary~\ref{lem:bv23 cat0}), we obtain that $\omega^\cK$ bounds, up to a symmetry, one of the minimal disc diagrams in Figure~\ref{1atypeicase1}, with $\cH$ replaced by $\cK$, or Figure~\ref{fig:2atypeicase3}. If $\omega$ is not admissible, consider such~$\omega$ with the smallest area of the disc diagram. 

\begin{figure}[h]
	\centering
	\includegraphics[scale=0.85]{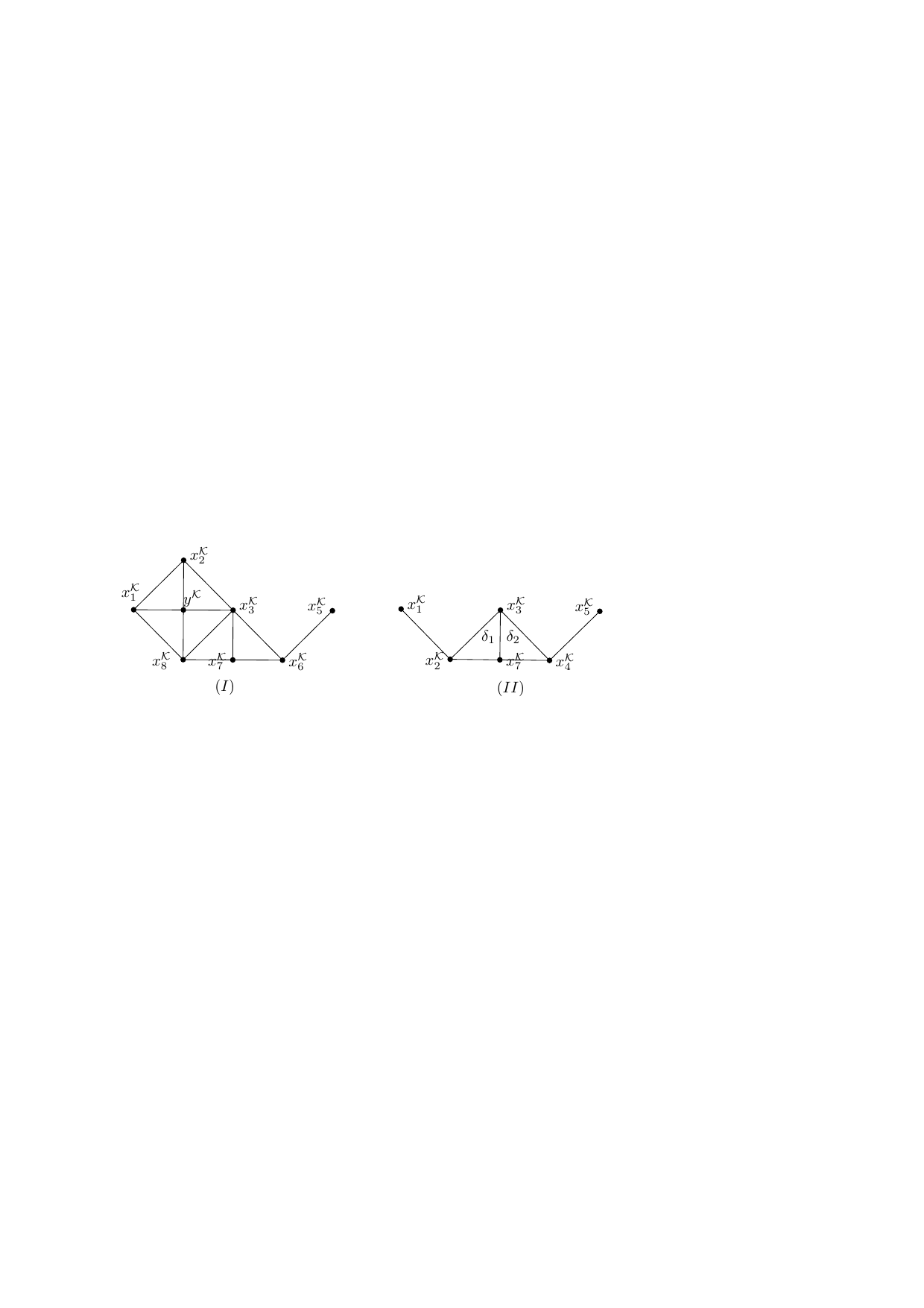}
	\caption{}
	\label{fig:2atypeicase3}
\end{figure}

We claim that $x^\cK_2$ does not have a single interior neighbour $y^\cK$. Indeed, otherwise by Lemma~\ref{lem:lift2} we can lift $y^\cK$ to $y\in \lk(x_2,\Delta)^0$, and we can lift the neighbour of $y^\cK$ opposite to $x_2^\cK$ to $z_2\in \lk(y,\Delta)^0$. The critical cycle obtained from $\omega$ by replacing $x_2$ with~$z_2$ has appropriate diagram with smaller area and so it is not embedded or it is admissible. Thus $\omega$ is admissible by 
Lemma~\ref{lem:adjacent adm}(\ref{replace24_old5}). This justifies the claim and excludes Figures~\ref{1atypeicase1}(I) and~\ref{fig:2atypeicase3}(I). In Figure~\ref{1atypeicase1}(II), by Lemma~\ref{lem:n=8}, we have, up to a symmetry, 
\begin{enumerate}[(i)]
\item
$C_2=C_8=D_1, C_7=B', C_4=C_6=D_2,$ or
\item
$C_6=C_8=D_1, C_7=B, C_2=C_4=D_2.$
\end{enumerate}
In Figure~\ref{fig:2atypeicase3}(II) we must have case (i).

Let $\mathcal H$ be the type I sub-arrangement of $\ca$ with $\ka_\ch(C_1)=X_{22}, \ka_\ch(D_2)=X_{11},$ and $\ka_\ch(B'')=X_{21}$. 
 Note that, to achieve that, we need to reflect Figure~\ref{fig:2} with respect to, say, the line $h_2$, and then apply an orientation-preserving isometry carrying it appropriately to Figure~\ref{fig:1atypeicase22}. In case~(i), let $\omega^\ch=\ka^*_\ch(\omega)\subset \bU_{12}$. Note that $x^\ch_7=x^\ch_8$ and so $x^\ch_1$ is a neighbour of $x^\ch_6$, 
and $x^\ch_2$ has type $\hat b$ (since it has face type $\whX_{12}$). 
 Furthermore, the $6$-cycle $x^\ch_1x^\ch_2x^\ch_3x^\ch_4x^\ch_5x^\ch_6$ is locally embedded at $x_4^\ch, x_5^\ch,$ and $x_6^\ch$ by Lemma~\ref{lem:lift}. Since $\bU_{12}$ is $\mathrm{CAT}(0)$, there is a common neighbour~$y^\cH$ of $x_4^\ch$ and~$x_6^\ch$ in $\lk(x_5^\ch,\bU_{12})$ (otherwise $x^\ch_4x^\ch_5x^\ch_6$ is a geodesic and this 6-cycle cannot `close up' in $\bU_{12}$). See Figure~\ref{fig:1atypeicase23} for all the possible minimal disc diagrams bounded by this 6-cycle.

\begin{figure}[h]
	\centering
	\includegraphics[scale=1.2]{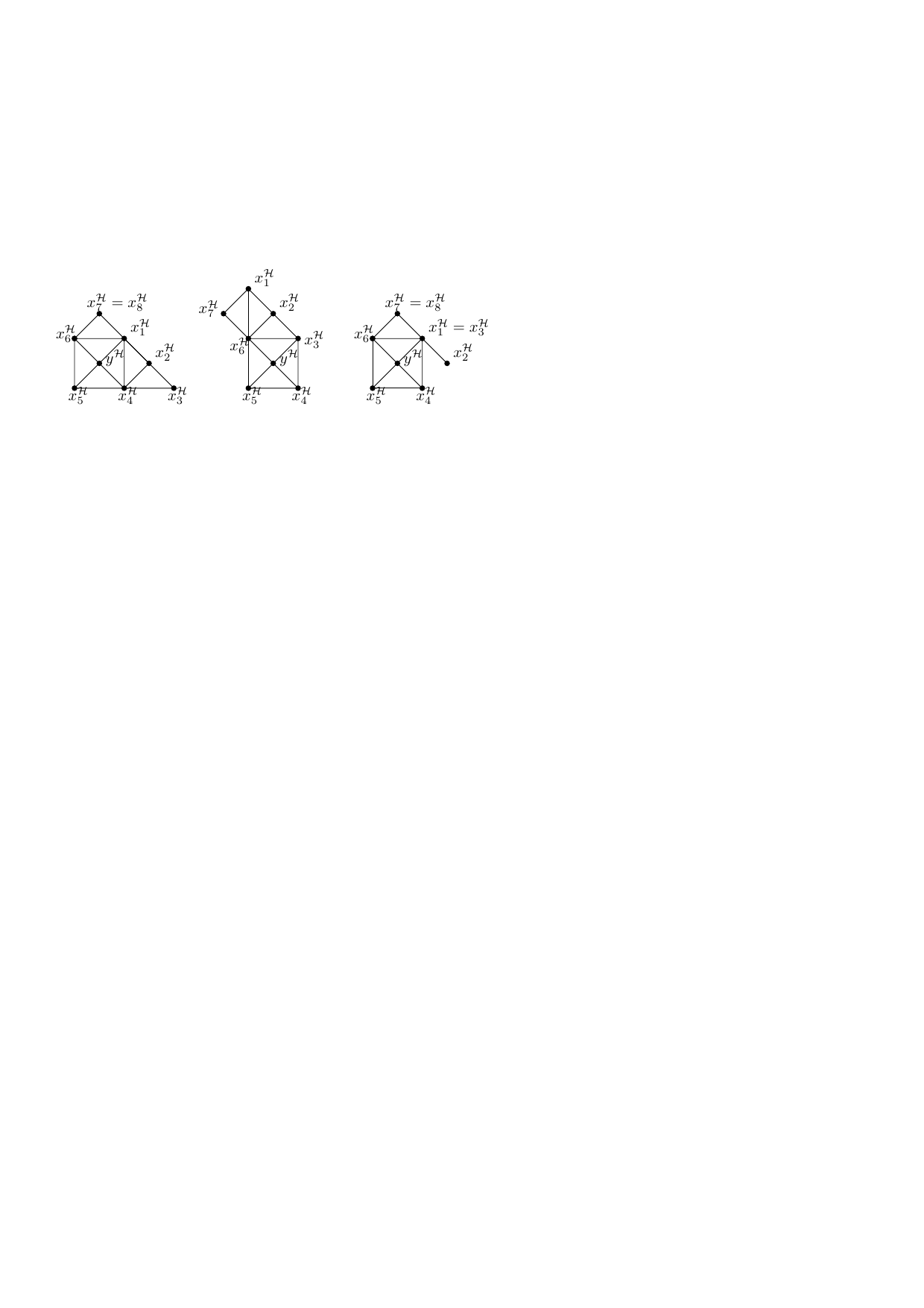}
	\caption{}
	\label{fig:1atypeicase23}
\end{figure}

If $y^\cH$ is of face type $\wX_{21}$, then by Lemma~\ref{lem:lift} we can lift it to a common neighbour $y\in \lk(x_5,\Delta)$ of $x_4$ and $x_6$. If $x^\cH_3$ is a neighbour of $x^\cH_6$, then by Lemma~\ref{lem:lift} $x_3$ is a neighbour of $x_6$, and so $\omega$ is admissible by Lemma~\ref{lem:adjacent adm}(\ref{38_old1}). Otherwise, $x^\cH_4$ is a neighbour of $x^\cH_1$, and so we can lift $x^\cH_1$ to a common neighbour $x\in\lk(y,\Delta)$ of $x_4$ and $x_6$.
Since $x^\cH_1$ is a neighbour of $x_7^\cH$, by Lemma~\ref{lem:lift} we have that $x$ is a neighbour of $x_7$. Hence the cycle obtained from $\omega$ by replacing $x_5$
with $x$ is not embedded or is admissible, and so $\omega$ is admissible by Lemma~\ref{lem:adjacent adm}(\ref{replace1_old6}).

If $y^\cH$ is of face type $\wX_{12}$, then, by Lemma~\ref{lem:lift}:
\begin{enumerate}
\item $x_3$ and $x_5$ are connected in $\lk(x_4,\Delta)$ by a locally embedded path of face type $C_1B'C_1$ or $C_1B'C_1B'C_1$, and
\item $x_4$ and $x_6$ are connected in $\lk(x_5,\Delta)$ by a locally embedded nontrivial path all of whose interior vertices have face types $B,D_1,$ or $B'$.\end{enumerate}

In Figure~\ref{1atypeicase1}(II), $x_3$ and $x_5$ are connected in $\lk(x_4,\Delta)$ by a locally embedded path of face type $C_1B'C_1B''C_1$, which contradicts (1) or Lemma~\ref{lem:falk}. 
To conclude discussing case (i), we consider Figure~\ref{fig:2atypeicase3}(II). Let $f_i=\ka_{\mathcal K}(e_i)$. Then $\hk_{\mathcal K}(P_5)$ is homotopic in $\wsX_{42}$ into $\widehat f_4$. On the other hand, by (2), after possibly replacing the~$w_i$ by equivalent words, we can choose $P_5$ to be an edge-loop in $\widehat e_1\cup \widehat e_2\cup\widehat e_3$ that is homotopically nontrivial in $\whC_1$. This contradicts Lemma~\ref{lem:collapse}. 

In case~(ii), note that $\ka_\ch(B)$ is an edge of $X_{12}$. Thus declaring that $\hk_\ch(P_7)$ is hosted by $\whX_{12}$, 
we obtain $\omega^\cH$ in $\bU_{12}$ as in Definition~\ref{def:loop}. Note that $x^\ch_6=x^\ch_7=x^\ch_8$. The $6$-cycle $x^\ch_1x^\ch_2x^\ch_3x^\ch_4x^\ch_5x^\ch_6$ is locally embedded at $x_2^\ch, x_3^\ch,$ and $x_4^\ch$. As before, since $\bU_{12}$ is $\mathrm{CAT}(0)$, there is a common neighbour $y^\cH$ of $x_2^\ch$ and $x_4^\ch$ in $\lk(x_3^\ch,\bU_{12})$. Moreover, this 6-cycle has angle $\frac{\pi}{2}$ at $x^\cH_4$, in which case $x_2^\ch,x_5^\ch$ are neighbours, or angle $\pi$ at $x^\cH_4$, in which case  $x_1^\ch,x_4^\ch$ are neighbours.

If $y^\cH$ is of face type $\wX_{21}$, then by Lemma~\ref{lem:lift} we can lift it to a common neighbour $y\in \lk(x_5,\Delta)$ of $x_4$ and $x_6$. 
The link of $y^\cH$ contains neighbours $x_2^\ch,x_5^\ch$ or $x_1^\ch,x_4^\ch$. By Lemma~\ref{lem:lift}, $x_2,x_5$ 
are neighbours or $x_1,x_4$ are neighbours, and so $\omega$ is admissible by Lemma~\ref{lem:adjacent adm}(\ref{4x8_old3}).

If $y^\cH$ is of face type $\wX_{12}$, then $x_3$ and $x_5$ are connected in $\lk(x_4,\Delta)$ by a locally embedded path of face type $C_1B'C_1$ or $C_1B'C_1B'C_1$. On the other hand, in Figure~\ref{1atypeicase1}(II), $x_3$ and $x_5$ are connected in $\lk(x_4,\Delta)$ by a locally embedded path of face type $C_1B''C_1B'C_1$, which contradicts Lemma~\ref{lem:falk} as before.

\medskip
\noindent
\underline{Case 3: There are at least three distinct hexagons among the $C_i$.} Hexagons intersecting $C_1$ are \emph{consecutive} if they intersect a common square. We claim that either
\begin{enumerate}[(i)]
\item
among $C_2,C_4,C_6,C_8$ there is $C$ that equals $C_i$ for a unique $i$, and such that for $C',C''$ consecutive with $C$, there is at most one $j$ with $C_j\in \{C',C''\}$ or there are two such $j$, and they equal $6$ and $8$, or 
\item
up to a symmetry, $C_6,C_8$ are as in Figure~\ref{fig:1atypeicase3}(I) and $\{C_2,C_4\}=\{C_6,C\}$.
\end{enumerate}

\begin{figure}[h]
	\centering
	\includegraphics[scale=1]{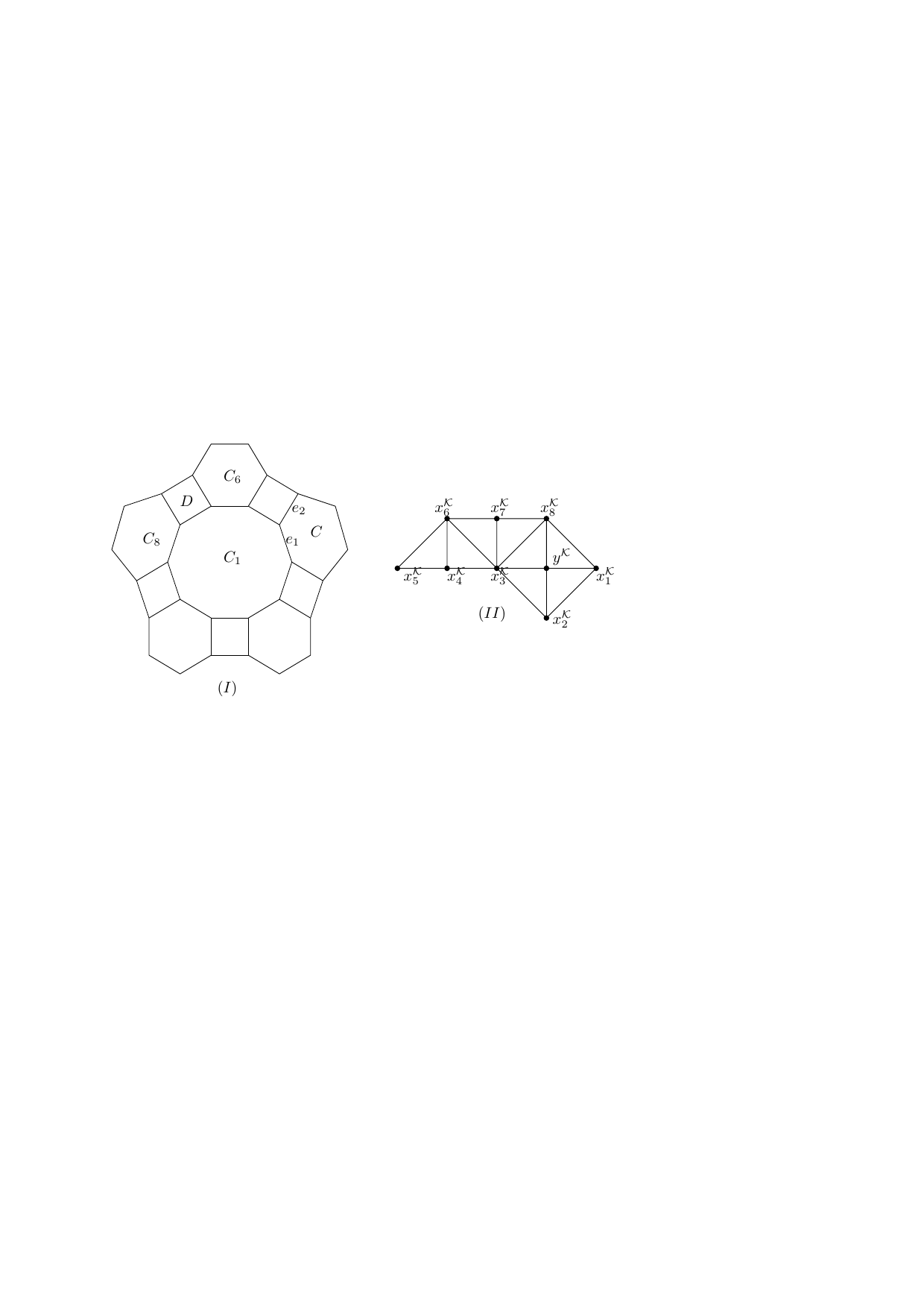}
	\caption{}
	\label{fig:1atypeicase3}
\end{figure}

To justify the claim, start with any $C$ that equals $C_i$ for a unique $i$. If both $C',C''$ equal to some $C_j$, then (i) is satisfied with $C$ replaced by $C'$ or $C''$. If, say, $C''$ is distinct from all $C_j$, but $C$ is not as required in (i), then without loss of generality $C'=C_6$ and $C'=C_2$ or $C_4$. If the remaining $C_k$ is consecutive with $C'$, this brings us to (ii). Otherwise, we have (i) with $C$ replaced by $C_k$. This justifies the claim.

If (i) holds, first note that if $\{C_6,C_8\}\subset\{C',C''\}$, then $C_6=C_8$ since these faces are equal or consecutive.
Furthermore, $P_i$ is not homotopic in $\whC=\whC_i$ to a path in~$\whC_1$, since $\omega$ is embedded. Consequently, by considering  $\Pi_{\whC}(P)=\Pi_{\whC}(P_1)\cdots \Pi_{\whC}(P_8)$, which is homotopically trivial in $\whC$, we deduce, via Lemma~\ref{lem:retraction property}, that there exists $C_j$ consecutive with~$C$. Moreover, we obtain $P_i=e^*$, where $e\subset C$ is the edge contained in the square $B$ intersecting $C$ and $C_j$. If $i=6$ or $8$, then $x_7$ is a neighbour of $x_5$ or $x_1$, as desired.  Otherwise, if, say, $i=2$, then the critical cycle obtained from $\omega$ by denting $x_2$ to $C_j$ (see Definition~\ref{def:dent}) is not embedded or is admissible by Case 2. Consequently, $\omega$ is admissible by Lemma~\ref{lem:adjacent adm}(\ref{replace24_old5}).

If (ii) holds, then by considering $\Pi_{\whC}(P)$, after possibly replacing the~$w_i$ by equivalent words, we can choose $P_i=e^*_2e^*_1e^*_2$ for $C_i=C$. Let $\cK$ be the type II sub-arrangement of $\ca$ with $\ka_\cK(C_6)=\sX_{31},\ka_\cK(D)=\sX_{32},\ka_\cK(C_1)=\sX_{42}$. Since $\ka_\cK(e_1)$ is a vertex, we have $\hk_\cK(P_i)\subset \wsX_{41}$. 
Declaring that $\hk_\cK(P_i)$ is hosted by~$\wsX_{41}$, we obtain $\omega^\cK$ in $\bV_{34}$ as in Definition~\ref{def:loop}. 
As before, we can assume that $\omega^\cK$ has angle $\ge \frac{3\pi}{4}$ at both $x^\cK_6$ and $x^\cK_8$. 
If $C_2=C$, then $x^\cK_2$ has face type~$\wsX_{41}$. Since $x^\cK_8$ has face type~$\wsX_{33}$, we have that $\omega^\cK$ has angle $\ge \frac{3\pi}{4}$ at $x^\cK_1$. Since $\bV_{34}$ is $\mathrm{CAT}(0)$, the endpoints of $x^\cK_5x^\cK_6x^\cK_7x^\cK_8x^\cK_1x^\cK_2$ are at distance $\geq 4$. This contradicts the fact that the path  $x^\cK_2x^\cK_3x^\cK_4x^\cK_5$ has length $2\sqrt{2}+1$.
Thus we have $C_2=C_6$. Since $x^\cK_2$ and~$x^\cK_8$ have distinct face types, $\omega^\cK$ is locally embedded at $x^\cK_1$. Since $\bV_{34}$ is $\mathrm{CAT}(0)$, we obtain that $\omega^\cK$ bounds the minimal disc diagram in Figure~\ref{fig:1atypeicase3}(II). Since $x^\cK_2$ and $x^\cK_8$ have face types $\wsX_{31}$ and $\wsX_{33}$, the vertex $y^\cK$ has face type $\wsX_{32}$. By Lemma~\ref{lem:lift2}, we can lift $y^\cK$ to $y\in \lk (x_2,\Delta)^0$ and $x_8^\cK$ to $z\in \lk(y,\Delta)^0$ of face type $C_8$. Thus the critical cycle obtained from $\omega$ by replacing $x_2$ by $z$ satisfies (i), and so it is not embedded or it is admissible. Consequently, $\omega$ is admissible by Lemma~\ref{lem:adjacent adm}(\ref{replace24_old5}).

\subsection{Case of two decagons}
\label{subsec:type1a2}
\begin{prop}
	\label{prop:Ia2}
	Let $\omega$ be an embedded critical $8$-cycle with exactly two decagons among the $C_i$. Then $\omega$ is admissible.
\end{prop}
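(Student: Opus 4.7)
The plan is to follow the pattern of Proposition~\ref{prop:Ia1}, adding one layer of case analysis for the two distinct decagons. Up to the reflection symmetry of the cycle around $x_7$ that swaps $x_1\leftrightarrow x_5$, $x_2\leftrightarrow x_4$, $x_6\leftrightarrow x_8$ and fixes $x_3,x_7$, there are two basic subcases: (A) $C_1=C_3$ while $C_5$ is a different decagon, and (C) $C_1=C_5$ while $C_3$ is a different decagon. In each subcase I would further subdivide based on how many distinct hexagons occur among $C_2,C_4,C_6,C_8$ and on the position of the square $C_7$, in parallel with Cases~1--3 of the proof of Proposition~\ref{prop:Ia1}.

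For each configuration, the strategy is to choose a sub-arrangement of type~I (Definition~\ref{def:arrangment}) or type~II (Definition~\ref{def:arrangment1}) tailored so that the two equal decagons collapse together while the remaining decagon and the relevant hexagons and square are non-collapsed. In Case~A the image then satisfies $x_1^\cH=x_3^\cH$, and in Case~C it satisfies $x_1^\cH=x_5^\cH$, shortening $\omega^\cH$ to a cycle in $\bU_{12}$ or $\bV_{234}$ accessible to the CAT(0) analysis of Lemmas~\ref{lem:CAT0} and~\ref{lem:bv23 cat0}, or, when necessary, to the refined filling statements Propositions~\ref{prop:8cycle diagram}--\ref{prop:10cycle diagram2}. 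As in Proposition~\ref{prop:Ia1}, I would first observe that if the angle at $x_6^\cH$ or $x_8^\cH$ is $\frac{\pi}{4}$, then $x_7$ is already a neighbour of $x_5$ or $x_1$ by Lemma~\ref{lem:lift} or~\ref{lem:lift2}; one may therefore assume these angles are $\ge\frac{3\pi}{4}$, extract the shape of a minimal disc diagram bounded by $\omega^\cH$, and lift the common neighbours it produces back to $\Delta$.

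When such a lift directly gives admissibility of $\omega$, we are done. Otherwise the lift provides an auxiliary vertex that is a neighbour of some of $x_1$, $x_3$, $x_2$, $x_4$, at which point I would apply Lemma~\ref{lem:adjacent adm}(\ref{replace24_old5}),~(\ref{replace1_old6}), or~(\ref{replace3}) to replace a vertex of $\omega$ and pass to an auxiliary critical $8$-cycle $\omega_0$. The target of the reduction is a cycle with all three decagons equal, so that Proposition~\ref{prop:Ia1} applies to $\omega_0$; the admissibility or non-embeddedness of $\omega_0$ then feeds back through Lemma~\ref{lem:adjacent adm} to yield admissibility of $\omega$. The dent operation of Definition~\ref{def:dent} is the standard tool for producing such a reduction, once the path $\hk(P_i)$ on the distinct decagon is seen to lie in a single edge of its collapsed image.

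The main obstacle is the combinatorial bookkeeping, as in Proposition~\ref{prop:Ia1}: enumerating how the hexagons and the square are incident to each of $C_1,C_3,C_5$, and choosing a sub-arrangement that collapses the right pair of decagons while leaving enough non-collapsed structure in $\bU_{12}$ or $\bV_{234}$ to apply the non-positive curvature machinery. A subtlety specific to Case~C is that the nontrivial `$C_5$-side' projection $\Pi_{\widehat{C_1}}$ applied to portions of $P$ forces constraints on $P_7$ via Lemmas~\ref{lem:relations},~\ref{lem:representative}, and~\ref{lem:collapse}; these constraints control which configurations of $C_7$ are realisable and are what cut the subcases down to a manageable list.
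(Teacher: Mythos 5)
Your outline reproduces the general methodology (choose a sub-arrangement, pass to $\bU_{12}$ or $\bV_{234}$, use $\mathrm{CAT}(0)$ or the filling propositions, lift, then feed back through Lemma~\ref{lem:adjacent adm}), but it has two genuine gaps. First, the proposed mechanism of ``collapsing the two equal decagons together'' so that $x_1^\cH=x_3^\cH$ (resp.\ $x_1^\cH=x_5^\cH$) does not work as stated: two vertices of $\omega$ with the same decagon face type do not merge in $\bU_{12}$ or $\bV_{234}$ merely because their face types coincide; identifications of the form $x_i^\cH=x_j^\cH$ arise only when the cells \emph{between} them along the cycle (hexagons or the square) are collapsed by $\kappa$ and the corresponding $P$-segments can be hosted in a common standard subcomplex. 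In the two-decagon situation the decagons $A_1,A_2$ share a square $B$, the cycle oscillates between them, and in the critical configurations no choice of sub-arrangement collapses the intervening cells; this is why the paper's Case~1 keeps $\omega^\cK$ as a full $8$-cycle of type $\hat s_1\hat s_3\hat s_1\hat s_3\hat s_1\hat s_3\hat s_2\hat s_3$ in $\bV_{234}$ and attacks it with Proposition~\ref{prop:8cycle diagram} together with Lemmas~\ref{lem:n=8}, \ref{lem:n=6} and \ref{lem:8466}, rather than shortening it.

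Second, your reduction target (``a cycle with all three decagons equal, so that Proposition~\ref{prop:Ia1} applies'') is not attainable in the hardest configurations, namely when all the $C_i$ lie in the small neighbourhood $\{A_1,A_2,D_1,D_2,B_0,B,B_3\}$ of the common square. There the paper sets up a minimal-area induction over a class $\Omega$ of two-decagon cycles subject to an extra condition ($*$), invariant under the involution $\mathcal I$, and the replacements produced by lifting interior vertices stay inside this class rather than reducing the number of decagons. Moreover, one terminal configuration (Figure~\ref{fig:2atypeicase3}(II) with $C_7=B_3$) resists all denting/replacement and lifting arguments: the paper disposes of it by showing $w_1$ commutes with $w_2w_3w_4$, computing the parabolic closure $\mathsf P=A_{bc}$ of $w_1$ via \cite{cumplido2019parabolic}, invoking the structure of the normaliser of a parabolic subgroup \cite[Thm~5.2(5)]{paris1997parabolic}, and deriving a contradiction by comparing projections ($n_1+n_2=0$ versus $n_1+n_2\neq 0$). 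Your plan contains no counterpart for this algebraic step, and the geometric toolkit you list (Lemmas~\ref{lem:relations}, \ref{lem:representative}, \ref{lem:collapse} and the filling propositions) does not by itself close that case; so the proposal, as it stands, is an incomplete strategy rather than a proof.
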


\begin{figure}[h]
	\centering
	\includegraphics[scale=0.91]{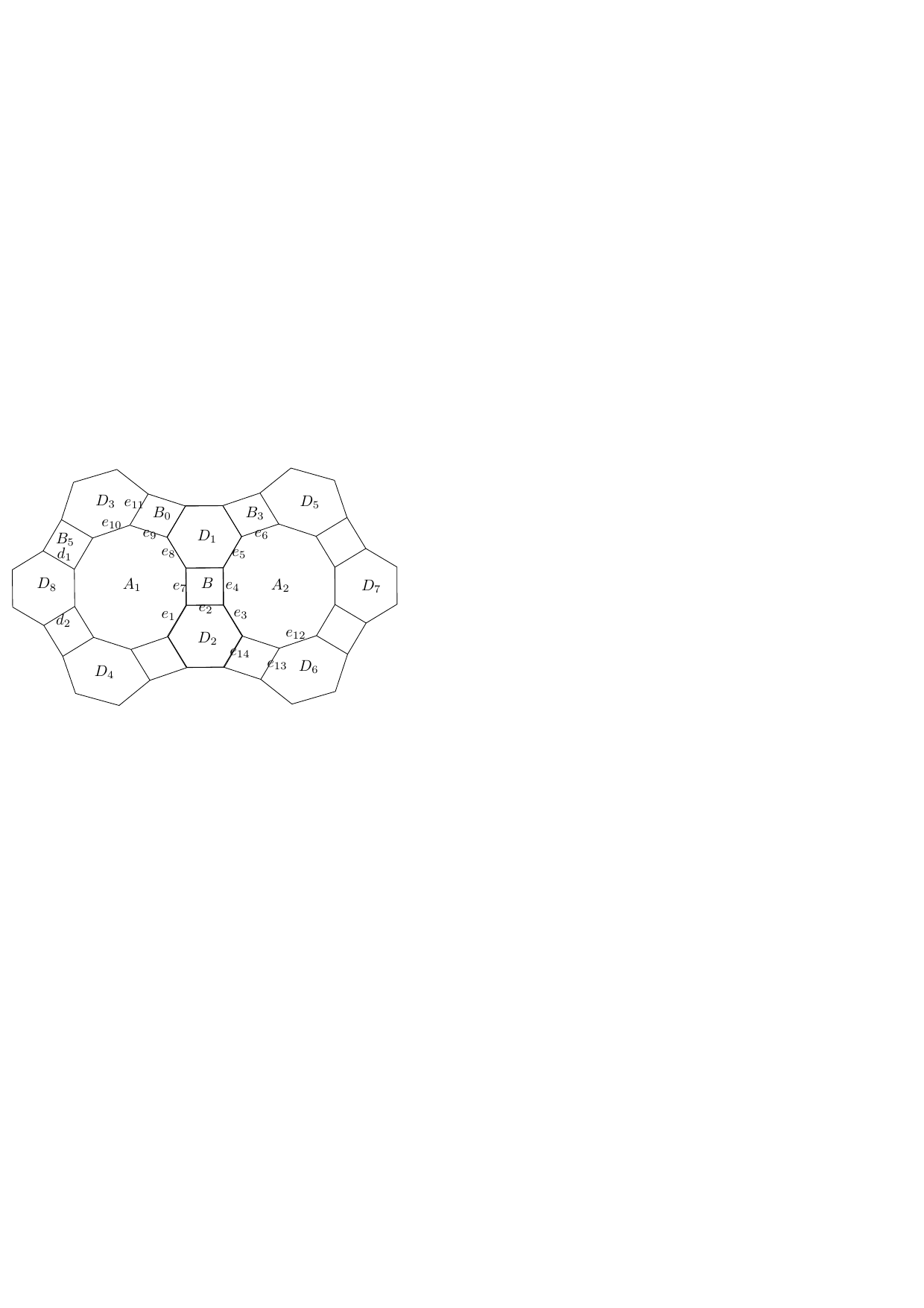}
	\caption{}
	\label{fig:2atypeicase1}
\end{figure}

Denote these decagons $A_1,A_2$. There is a square $B$ intersecting $A_1$ and $A_2$, see Figure~\ref{fig:2atypeicase1}. Note that any $C_i$ intersects $A_1$ or $A_2$. This is immediate for all $C_i$ except for $C_7$, where otherwise we can obtain $x_6=x_8$ by a similar argument as at the beginning of Section~\ref{subsec:type1a1}.

\underline{Case 1: All $C_i$ belong, up to a symmetry, to $\{A_1,A_2,D_1,D_2,B_0,B,B_3\}$}. Let $\Omega$ be the collection of all embedded critical $8$-cycles with all their $C_i$ belonging to the above set, and satisfying an extra condition ($*$):  
\begin{itemize}
 \item if $C_8=D_2$, then $C_1\neq C_3$, and 
 \item if $C_6=D_2$, then $C_3\neq C_5$.
\end{itemize} 
Note that condition ($*$), and hence the class $\Omega$, is invariant under the involution  $\mathcal I$ on the set of critical $8$-cycles sending $x_1\cdots x_8$ to $x_5x_4\cdots x_1x_8x_7x_6$, which still has type $\hat a \hat c \hat a \hat c \hat a \hat c \hat b \hat c$.
 
It suffices to show that the critical cycles in $\Omega$ are admissible. Indeed, if $\omega$ is a critical cycle with, say, $C_1=C_3$ (we cannot have simultaneously $C_3=C_5$) and $C_8=D_2$,  then $C_7=B$. Thus we can apply a symmetry of $\Si$ interchanging $D_1$ with~$D_2$,  which fixes $B$, to send $\omega$ to an element of $\Omega$.

Let $\cK$ be the type II sub-arrangement of $\ca$ with $\ka_\cK(D_1)=\sX_{31},\ka_\cK(B)=\sX_{32},$ $\ka_\cK(A_2)=\sX_{42}.$
Then $\omega^\cK=\ka^*_{\cK}(\omega)\subset \bV_{234}$. Denote $f_i=\ka_\cK(e_i)$. As before, we can assume that the angles at $x^
\cK_6,x^\cK_8$ are $\geq \frac{3\pi}{4}$. If there is $\omega\in \Omega$ that is not admissible, consider such~$\omega$ with $\omega^{\cK}$ bounding a minimal disc diagram in $\bV_{234}$ of the smallest possible area. Proposition~\ref{prop:8cycle diagram} and Lemmas~\ref{lem:restriction order} and~\ref{lem:bowtie} imply that, up to the involution $\mathcal I$, $\omega^\cK$
bounds one of the minimal disc diagrams in Figures~\ref{1atypeicase1} (with $\ch$ replaced by~$\cK$), \ref{fig:2atypeicase3}, or~\ref{fig:2atypeicase22}.

\begin{figure}[h]
	\centering
	\includegraphics[scale=0.9]{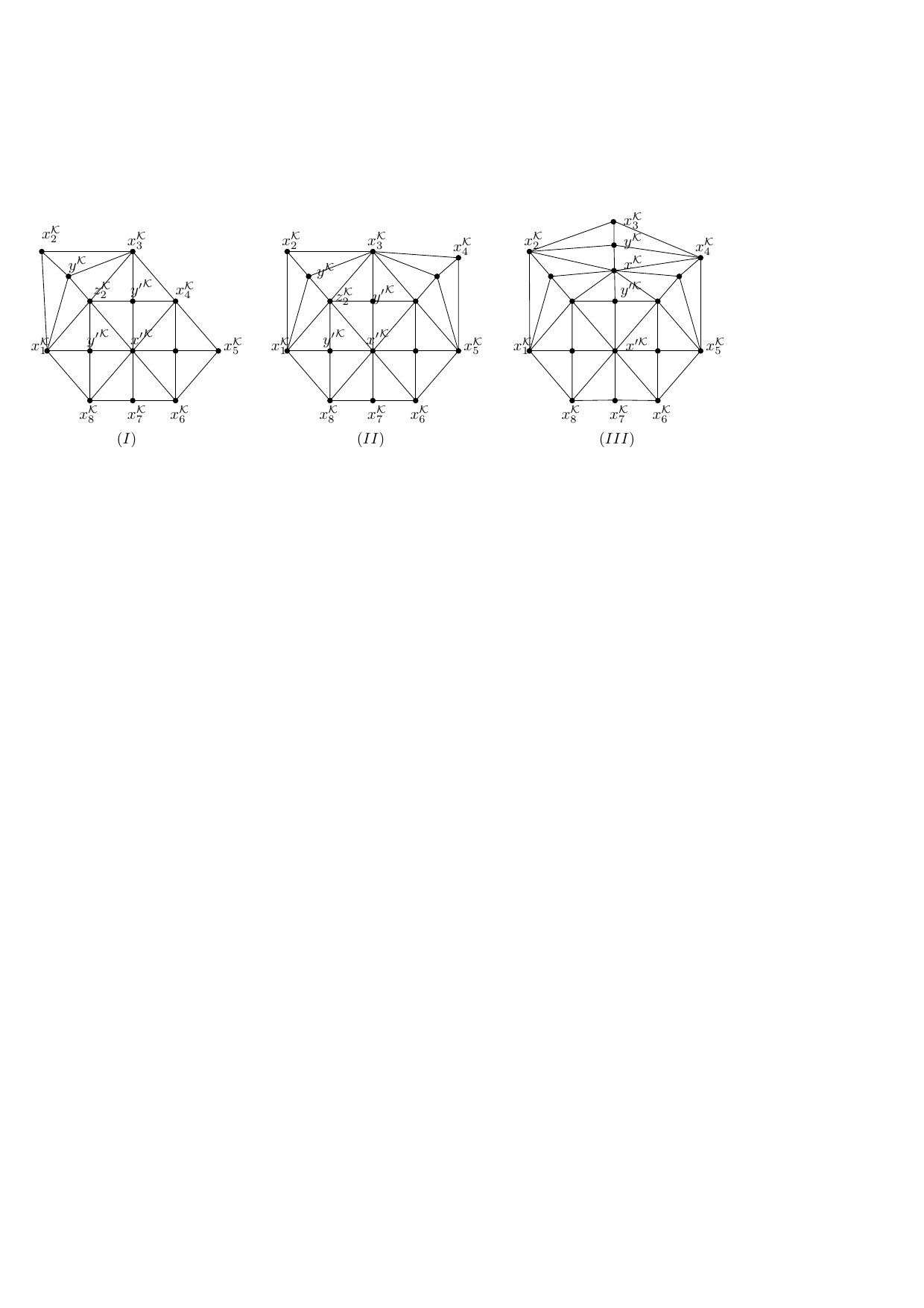}
	\caption{}
	\label{fig:2atypeicase22}
\end{figure}

We claim that $x^\cK_2$ does not have a single interior neighbour $y^\cK$.  To justify the claim, we first verify that such $y^\cK$ would have face type distinct from $\wsX_{23}$ and $\wsX_{43}$. 

For contradiction, suppose that such $y^\cK$ has face type $\wsX_{23}$ or $\wsX_{43}$. In Figure~\ref{fig:2atypeicase22}(I,II), by Lemma~\ref{lem:n=6} applied with $x=z_2^\cK$, the two vertices labelled ${y'}^\cK$ would have face type~$\wsX_{32}$, contradicting Lemma~\ref{lem:n=8} applied with $x={x'}^\cK$. 
In Figures~\ref{1atypeicase1}(I) and~\ref{fig:2atypeicase3}(I), the vertex ${y}^\cK$ is a neighbour of $x_8^\cK$, $x_1^{\cK}$ and $x_3^{\cK}$. Thus $C_1=C_3=A_1$, and $x_8$ has face type $D_2$.
This contradicts condition ($*$).

This confirms that the face type of $y^\cK$ is distinct from $\wsX_{23}$ and $\wsX_{43}$. By Lemma~\ref{lem:lift2}, we can lift $y^\cK$ to $y\in \lk(x_2,\Delta)^0$, and we can lift the neighbour of~$y^\cK$ opposite to~$x_2^\cK$ to $z_2\in \lk(y,\Delta)^0$. The critical cycle obtained from $\omega$ by replacing $x_2$ with~$z_2$ has appropriate diagram with smaller area, and still satisfies condition ($*$), and so it is not embedded or it is admissible. Thus $\omega$ is admissible by 
Lemma~\ref{lem:adjacent adm}(\ref{replace24_old5}). This justifies the claim and excludes Figures~\ref{1atypeicase1}(I), \ref{fig:2atypeicase3}(I), and~\ref{fig:2atypeicase22}(I,II). 

Up to a symmetry, we can assume $A_2=C_i$ for exactly one $i$. 
By considering $\Pi_{\widehat A_2}(P)$, for any choice of the $P_i$ we have $P_i\subset \widehat e_3\cup \widehat e_4\cup \widehat e_5$ or $P_i\subset \widehat e_4\cup \widehat e_5\cup \widehat e_6$, or we are in a special case with $C_7=B_3, C_6=C_8=D_1,$ and $D_2=C_2$ or $C_4$.

In this special case, we have that $x^\cK_2$ or $x^\cK_4$, say $x^\cK_2$, has face type distinct from that of $x^\cK_6,x^\cK_8$, which excludes Figure~\ref{fig:2atypeicase3}(II). In Figure~\ref{fig:2atypeicase22}(III), by Lemma~\ref{lem:n=8}, ${y'}^\cK$ (resp.\ ${y}^\cK$) has face type $\wsX_{43}$ (resp.\ $\wsX_{41}$). 
Thus $x^\cK_2$ and $x^\cK_4$ have the same face type as $x^\cK_6,x^\cK_8$, which is a contradiction. 
 Consequently, we have Figure~\ref{1atypeicase1}(II). Again, by Lemma~\ref{lem:n=8}, 
${y'}^\cK$ has face type~$\wsX_{43}$ and so $x_3^\cK$ has face type~$\wsX_{42}$, which implies $C_3=A_2$, and consequently $C_1=C_5=A_1$.
 After possibly replacing the $w_i$ by equivalent words, we can assume that $P^\cK_1$ starts at $\si_1$ and ends at $\si_2$, and $P^\cK_2$ starts at $\si_2$ and ends at $\si_3$ (see Definition~\ref{def:triangle}). Since $y^\cK$ has face type $\wsX_{32},$ by Lemma~\ref{lem:link path} we have $P^\cK_1=f^*_7$ and $P^\cK_2=f^*_2f^*_3f^*_{14}$ with $*$ non-zero, where $f_j=\kappa_\cK(e_j)$. Since $\Pi_{\widehat f_1}(P^\cK_1)$ is homotopically trivial, we obtain that $\Pi_{\widehat e_1}(P_1)$ is homotopically trivial. Since $x^\cK_2$ is non-collapsed, we have $P_2=e^*_2e^*_3e^*_{14}$ with $*$ non-zero. Thus $\Pi_{\widehat e_1}(P_1P_2)$ is homotopically nontrivial. Then $\Pi_{\widehat D_6}(P)=e^*_{13}e^*_{12}e^*_{13}e^*_{12}$, where the first $e^*_{13}$ and last~$e^*_{12}$ come from $\Pi_{\widehat D_6}(P_1P_2)$ and $\Pi_{\widehat D_6}(P_7)$, which are homotopically nontrivial. This contradicts Lemma~\ref{lem:injective} applied to $\widehat e_{13}\cup\widehat e_{12}\subset\widehat D_6$, 
and finishes the discussion of the special case.

In Figure~\ref{fig:2atypeicase22}(III), by Lemma~\ref{lem:8466} applied with the edge $x^\cK{y'}^{\cK}$ 
playing the role of~$xy$, 
we have that ${y'}^\cK$ is not of face type $\wsX_{32}$. Hence, by Lemma~\ref{lem:n=8}, the vertices $y^\cK, x_7^\cK$ have the same face types  (which are thus $\wsX_{41}$) and the vertices $x^\cK,{x'}^\cK$ have the same face types.
Thus, by Lemma~\ref{lem:n=6}, the vertices $x^\cK_1,x^\cK_5$ have the same face types (distinct from that of $x^\cK,{x'}^\cK$), implying $C_1=C_5$, and consequently $C_3=A_2$. Thus 
$x^\cK_i=x^\cK_3$ has a single interior neighbour~$y^\cK$, which has face type $\wsX_{41}$. 
Since we have covered already the special case, we have $P_3\subset \widehat e_3\cup \widehat e_4\cup \widehat e_5$ or $P_3\subset \widehat e_4\cup \widehat e_5\cup \widehat e_6$.
Furthermore, $P^\cK_3$ has the form $ f_j^*f_6^*f_l^*$, 
where the first and the third term can be removed after possibly replacing the $w_i$ by equivalent words.
By Lemma~\ref{lem:collapse}, we can lift $y^\cK$ to $y\in \lk(x_3,\Delta)^0$. We also lift $x^\cK$ to $x\in \lk(y,\Delta)^0$, which still has face type $A_2$. The critical cycle obtained from $\omega$ by replacing $x_3$ with~$x$ has appropriate diagram with smaller area, and still satisfies condition ($*$),
and so it is not embedded or it is admissible. Thus $\omega$ is admissible by 
Lemma~\ref{lem:adjacent adm}(\ref{replace3}).

In Figure~\ref{1atypeicase1}(II), suppose first $i=3$. If $x_7$ has face type $B_3$, then, by Lemma~\ref{lem:n=8}, ${y'}^\cK$ has face type $\wsX_{43}$. Thus $C_2$ has face type $D_2$, which is the special case that we have already covered. If $x_7$ has face type $B_0$, then ${y'}^\cK$ has face type $\wsX_{23}$, and so
${x}^\cK_3$ has face type $\wsX_{22}$, which is a contradiction. If $x_7$ has face type $B$, then ${y}^\cK$ has face type $\wsX_{21}$ or $\wsX_{23}$, and ${y'}^\cK,x^\cK$ have face types $\wsX_{32},\wsX_{22}$. 
By the same reasoning as in the previous paragraph, we can lift~${y'}^\cK$ to ${y'}\in \lk(x_3,\Delta)^0$. We also lift $x^\cK$ 
to $x\in \lk(y,\Delta)^0$, which has face type~$A_1$.
The critical cycle obtained from $\omega$ by replacing~$x_3$ with~$x$ is not embedded or is admissible by Proposition~\ref{prop:Ia}. Thus $\omega$ is admissible by 
Lemma~\ref{lem:adjacent adm}(\ref{replace3}).

Second, suppose in Figure~\ref{1atypeicase1}(II) that we have $i\neq 3$, say $i=1$. If $y^\cK$ has face type $\wsX_{32}$, then we lift $y^\cK, x^\cK,$ and we proceed as in the last case with~$y$ and~$y'$ interchanged. If $y^\cK$ has face type distinct from $\wsX_{32}$, then, by Lemma~\ref{lem:n=8}, ${y''}^\cK$ has face type $\wsX_{41}$ or $\wsX_{43}$, thus $x^\cK_1,x^\cK_5$ have the same face type, which is a contradiction.

It remains to consider Figure~\ref{fig:2atypeicase3}(II). Since $\omega$ is embedded, by Lemma~\ref{lem:collapse} we have $i=3$.
Furthermore, if $C_7=B$, then the critical cycle obtained from $\omega$ by denting $x_3$ to $A_1$ is not embedded or is admissible by Proposition~\ref{prop:Ia1}. Thus $\omega$ is admissible by Lemma~\ref{lem:adjacent adm}(\ref{replace3}). 

Thus we can assume $C_7=B_3$, where all even $C_i$ equal $D_1$. Moreover, we can choose $P_3=e_6^*$ with $P^\cK_3$ starting at $\delta_1$ and ending at $\delta_2$. Let $H$ be the hyperplane in~$\ca$ dual to $e_8$, and let $K\subset\Si$ be the union of the faces on the side of $H$ containing~$e_6$. We will justify that we can choose all $P_j$ inside $\widehat K$. Indeed, except $j=3$, all $C_j$ intersect $H$. Starting with $j=4$, and applying Lemma~\ref{lem:representative}, $P_j$ is homotopic in $\whC_j$, relative to the endpoints, to $P_{j1}P_{j2}$ with $P_{j1}\subset \widehat K$ and $P_{j2}\subset \whC_{j}\cap\whC_{j+1}$. We replace $P_j$ by $P_{j1}$ and $P_{j+1}$ by $P_{j2}P_{j+1}$. We repeat the same procedure for $j=5,6,7,8,1$.  
Since $\Pi_{\widehat e_8}(P)$ is homotopically trivial, ending this procedure with $j=1$ yields $P_2\subset \widehat K$.

Then $P^\cK_4$ starts at $\delta_2$ and ends at a triangle $\delta_3$ of $\bV_{234}$ containing the edge $x^\cK_4x^\cK_5$. Since $x^\cK_4=x^\cK_6$, we have that $P^\cK_5$ is homotopic in $\wsX_{22}$ to $f^*_8$. Since $P_5\subset \widehat K$, we conclude that $P^\cK_5$ is homotopically trivial in $\wsX_{22}$ and so it both starts and ends at~$\delta_3$. Then $P^\cK_6$ starts at $\delta_3$ and ends at a triangle $\delta_4$ containing the edge $x^\cK_6x^\cK_7$. Note that $\delta_4=\delta_2$, since otherwise $P^\cK_4P^\cK_6$ is a path in $\wsX_{31}$ with nontrivial image under $\Pi_{\widehat f_8}$ contradicting $P_4,P_6\subset \widehat K$. Thus we can assume $P^\cK_6={P^\cK_4}^{-1}$, and so $P_6={P_4}^{-1}$ by Lemma~\ref{lem:lift2}. Analogously, $P_8= P_2^{-1}$, and similarly $P_7=P_3^{-1}$.  
Considering $\Pi_{\widehat A_1}(P)$, and noticing that $\Pi_{\widehat A_1}(P_i)$ are homotopically trivial loops in $\widehat A_1$ for $i\neq 1,5$, we obtain $P_1=P^{-1}_5$.
It follows that $w_1$ commutes with $w_2w_3w_4$. 

Let $\mathsf P$ be the parabolic closure of $w_1$ (i.e.\ the smallest parabolic subgroup of $A_\Lambda$ containing $w_1$, which exists by \cite{cumplido2019parabolic}). Note that $\mathsf P=A_{bc}$, since otherwise we would have $w_1=gb^*g^{-1}$ or $gc^*g^{-1}$ for some $g\in A_{bc}$. Hence there would exist $j\in \{1,7,8,9,10\}$ with $\Pi_{\widehat e_j}(P_1)$ homotopically nontrivial. However, $j\neq 8$ since $P_1\subset \widehat K$. 
Furthermore, since $\Pi_{\widehat D}(P)=k^*_1k^*_2k^*_1k^*_2$ is homotopically trivial in $\widehat D$ (see Figure~\ref{fig:2atypeicase3copy}), hence 
in $\widehat k_1\cup\widehat k_2$ by Lemma~\ref{lem:injective}, and the $*$ over $k_2$ are non-zero, we have that the remaining $*$ are zero, and so $j\neq 10$. 
The remaining~$j$ are excluded since $\omega^\cK$ is not locally embedded at $x^\cK_1$.

\begin{figure}[h]
	\centering
	\includegraphics[scale=0.9]{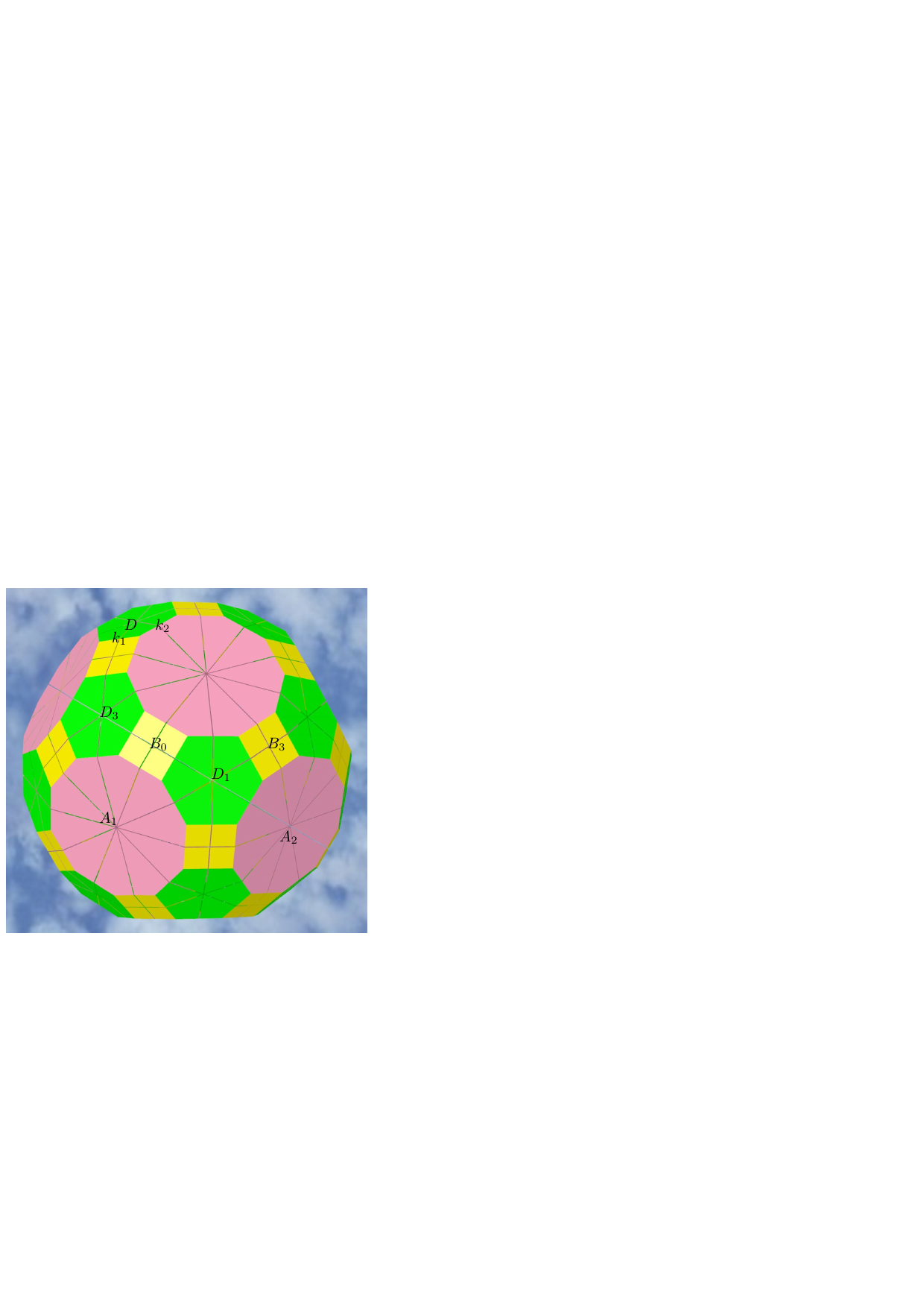}
	\caption{}
	\label{fig:2atypeicase3copy}
\end{figure}

Since 
$w_2w_3w_4$  commutes with $w_1$, it normalises $\mathsf P=A_{bc}$.
By \cite[Thm~5.2 (5)]{paris1997parabolic}, the edge-loop $P_2P_3P_4$ is homotopic in $\od$ to an edge-loop of the form $Q^{n_1}(Q_1Q_2)^{n_2}Q_3$ 
defined as follows. Let 
$p_2$ be the antipodal vertex to the basepoint $p_1$ of $P_1$ in $\Si$. Let $A_1'\subset \Si$ be the opposite face to $A_1$, and let $p_3$ be the projection of $p_1$ onto~$A_1'$. We define $Q_1$ to be the minimal positive path from $p_1$ to $p_3$, $Q_2$ to be the minimal positive path from $p_3$ to~$p_1$, and $Q$ to be the concatenation of a minimal positive path from $p_1$ to $p_2$ and a minimal positive path from $p_2$ to~$p_1$. We allow any $Q_3\subset \widehat A_1$. Since $\Pi_{\widehat e_6}(P_2P_3P_4)=\Pi_{\widehat e_6}(P_7^{-1})$ is homotopically nontrivial, we have $n_1+n_2\neq 0$. On the other hand, for an edge~$e$ whose dual hyperplane does not intersect $A_1,D_1$ or $B_3$, we have that $\Pi_{\widehat e}(P_2P_3P_4)$ is trivial, implying $n_1+n_2=0$, contradiction.

\medskip

\textbf{Suppose now that the condition of Case 1 is not satisfied. We assume without loss of generality $C_5=A_1$.}

\medskip
\noindent
\underline{Case 2: $C_1=C_5$.}
Then all of the $C_i$ intersect $A_1$. Suppose that one of the $C_i$, say~$C_6$, equals~$D_8$. If we also have $C_8=D_8$, then by considering $\Pi_{\widehat A_2}(P)$, we can choose $P_3=e^*_4$. By Proposition~\ref{prop:Ia1}, the critical cycle obtained from $\omega$ by denting~$x_3$ to~$A_1$ is not embedded or is admissible. Thus $\omega$ is admissible by Lemma~\ref{lem:adjacent adm}(\ref{replace3}). If $C_8\neq D_8$, then by considering $\Pi_{\widehat D_8}(P)$, we obtain that  $x_7$ is a neighbour of~$x_5$. Thus we can assume that none of the $C_i$ equals $D_8$.

Up to a symmetry, it remains to assume $C_6=D_3$. Suppose first $C_8=D_1$. By considering $\Pi_{\widehat D_3}(P)$, we can choose $P_6\subset \widehat e_{10}\cup \widehat e_{11}$. Let $\cK$ be the sub-arrangement of $\ca$ as in Case 1. All the vertices of $\omega$ lie in $\Delta^*_\cK$, except for $x_6$.
However, $\ka_\cK(e_{10})$ is a vertex, so we have $\hk_\cK(P_6)\subset \wsX_{21}$. Thus declaring that $\widehat \ka_\cK(P_6)$ is hosted by~$\wsX_{21}$, we obtain $\omega^\cK$ in~$\bV_{234}$ as in Definition~\ref{def:loop}. Note that $x^\cK_6=x^\cK_7$, and so $x^\cK_5$ and~$x^\cK_8$ are neighbours. 

By considering $\Pi_{\widehat A_2}(P)$, we obtain that $P_3$ is contained in $\widehat e_3\cup\widehat e_4\cup\widehat e_5$, which will allow us in a moment to apply Lemma~\ref{lem:collapse} to $P_3$. We apply Lemma~\ref{lem:restriction order2} to the $6$-cycle $x_1^\cK\cdots x_5^\cK x_8^\cK$. If we have Lemma~\ref{lem:restriction order2}(1), i.e.\ $x^\cK_2=x^\cK_4$, then, by Lemma~\ref{lem:collapse}, we obtain $x_2=x_4$, which is a contradiction. If we have Lemma~\ref{lem:restriction order2}(2), i.e.\ $x^\cK_2\neq x^\cK_4$ have a common neighbour $y^\cK$ in $\lk(x^\cK_3,\bV_{234})$, then, as in Case 1, by Lemma~\ref{lem:collapse}, the vertex $y^\cK$ can be lifted to a neighbour $y$ of $x_2,x_4$ in $\lk(x_3,\Delta)$ of face type $B$. Let $x$ be a neighbour of $y$ of face type $A_1$. The critical cycle obtained from $\omega$ by replacing $x_3$ with $x$ is not embedded or is admissible by Proposition~\ref{prop:Ia1}. Thus $\omega$ is admissible by Lemma~\ref{lem:adjacent adm}(\ref{replace3}).

If we have Lemma~\ref{lem:restriction order2}(3), then let ${y'}^\cK$ be the interior vertex of the disc diagram in (3) that is a neighbour of $x^\cK_2$. 
By Lemma~\ref{lem:lift2}, we can lift $y'^\cK$ to a neighbour $y'$ of $x_1,x_3$ in $\lk(x_2,\Delta)$ of face type $B$. Let $z$ be a lift of $z^\cK$ to a neighbour of $x_1,x_3$ in $\lk(y',\Delta)$. Then the critical cycle obtained from $\omega$ by replacing $x_2$ with $z$ is not embedded or is admissible, since it satisfies Lemma~\ref{lem:restriction order2}(2). Hence $\omega$ is admissible by Lemma~\ref{lem:adjacent adm}(\ref{replace24_old5}).

Second, suppose $C_8=D_3$. Let $\ch$ be the type I sub-arrangement of $\ca$ with $\ka_\ch(A_1)=X_{22}, \ka_\ch(D_3)=X_{11},$ and $\ka_\ch(B_5)=X_{21}$. Note that again we need to reflect Figure~\ref{fig:2} before comparing it with 	Figure~\ref{fig:2atypeicase1}.
Then all $x_i$ belong to $\Delta^*_\ch$ (note that $\kappa(A_2)$ is a square of $X_{12}$), except for the ones of face type $D_2$. However, declaring that such $\hk_\ch(P_i)$ are hosted by $X_{22}$, we obtain $\omega^\ch$ in~$\bU_{12}$ as in Definition~\ref{def:loop}. Note that $x_3^\ch$ is a neighbour of $x^\ch_1$ and $x_5^\ch$. Since $\bU_{12}$ is $\mathrm{CAT(0)}$, we have that $\omega^\ch$ has angle $\frac{\pi}{4}$ at $x^\ch_8$ or $x^\ch_6$, and so $\omega$ is admissible as before.

\medskip
\noindent
\underline{Case 3: $C_3=C_5$.} Then none of the $C_i$ equals $D_7$. If one of the $C_i$ equals $D_5$, then $i=8$ and, 
considering $\Pi_{\widehat D_5}(P)$, we deduce that $x_7$ is a neighbour of  $x_1$. Thus we can assume that none of the $C_i$ equals~$D_5$ or $D_6$. We can also assume $C_i\neq D_8$, since otherwise $i=4$, and, by considering $\Pi_{\widehat D_8}(P)$, we can choose $P_4=d_1^*$ or $d_2^*$, say $d^*_1$. Then, by Lemma~\ref{lem:adjacent adm}(\ref{replace24_old5}), we can replace~$\omega$ by the critical cycle obtained by denting $x_4$ to $D_3$.

Up to a symmetry, it remains to assume $D_3\in \{C_4,C_6\}$. If $C_4=D_3\neq C_6$, then $\Pi_{\widehat D_3}(P_6\cup P_7\cup P_8)=e_{10}^*$ or $e_{11}^*$. Thus, by considering $\Pi_{\widehat D_3}(P)$, we can choose $P_4=e_{11}^*$. By denting $x_4$ to $D_1$ 
we obtain a critical cycle that is admissible either by Case 1 or the case $C_6=D_3\neq C_4$, which will be discussed in a moment. 
Thus $\omega$ is admissible by Lemma~\ref{lem:adjacent adm}(\ref{replace24_old5}). If $C_6=D_3\neq C_4$, then, by considering $\Pi_{\widehat D_3}(P)$, we can choose $P_6\subset \widehat e_{10}\cup \widehat e_{11}$. Let $\cK$ be the sub-arrangement of $\ca$ as in Case 1. Declaring that $\widehat \ka_\cK(P_6)$ is hosted by $\wsX_{21}$, we obtain $\omega^\cK$ in~$\bV_{234}$ as in Definition~\ref{def:loop}. As before, $x^\cK_5$ and $x^\cK_8$ are neighbours. We apply Lemma~\ref{lem:restriction order2} to the $6$-cycle $x_5^\cK x_8^\cK x_1^\cK x_2^\cK x_3^\cK x_4^\cK$, with $x_1^\cK$ playing the role of $x_3$,  and we finish as in Case~2.

If $C_6=C_4=D_3$, then let $\ch$ be the type I sub-arrangement of $\ca$ as in Case 2. Then all $x_i$ belong to $\Delta^*_\ch$, except possibly for $x_2$ if it has face type $D_2$. However, declaring then that $\widehat \ka_\ch(P_2)$ is hosted by $X_{22}$, we obtain $\omega^\ch$ in~$\bU_{12}$ as in Definition~\ref{def:loop}. Note that $x^\ch_3$ is a neighbour of $x_1^\ch=x^\ch_8=x^\ch_7$, and hence of $x_6^\ch$. Since $\bU_{12}$ is $\mathrm{CAT(0)}$, we have that the $4$-cycle $x_3^\ch x_4^\ch x_5^\ch x_6^\ch$ has a common neighbour $y^\ch$. If $y^\ch$ has face type $X_{21}$, then by Lemma~\ref{lem:lift} we obtain that $x_3$ and $x_6$ are neighbours, and so $\omega$ is admissible by Lemma~\ref{lem:adjacent adm}(\ref{38_old1}). If $y^\ch$ has face type $X_{12}$, then by Lemma~\ref{lem:lift} $x_4$ can be dented to $D_1$, reducing to the case $C_4\neq D_3$ by Lemma~\ref{lem:adjacent adm}(\ref{replace24_old5}).

\subsection{Case of three decagons}
\label{subsec:type1a3}
\begin{prop}
	\label{prop:Ia3}
	Let $\omega$ be an embedded critical 8-cycle with three decagons
among the $C_i$. Then $\omega$ is admissible.
\end{prop}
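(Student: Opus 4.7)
The plan is to follow the strategy of Propositions~\ref{prop:Ia1} and~\ref{prop:Ia2}: use projections $\Pi_{\widehat C_i}$ together with an appropriate sub-arrangement reduction to produce a vertex $x$ so that replacing some $x_i$ by $x$ in $\omega$ yields a critical cycle $\omega_0$ with fewer distinct decagons, and then apply Lemma~\ref{lem:adjacent adm} to conclude.

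Denote the three distinct decagons by $A_1 = C_1$, $A_2 = C_3$, and $A_3 = C_5$. The target is to dent $x_3$ (of face type $A_2$) to a vertex $x$ of face type $A_1$ or $A_3$, yielding a critical cycle $\omega_0$ with at most two distinct decagons. By Proposition~\ref{prop:Ia2}, such $\omega_0$ is either non-embedded or admissible, and then by Lemma~\ref{lem:adjacent adm}(\ref{replace3}) so is $\omega$. The existence of the denting vertex amounts to finding a hexagon shared by $A_1$ (or $A_3$) with a face adjacent to both $C_2$ and $C_4$, which in turn is controlled by the shape of $\Pi_{\widehat A_2}(P)$. Since the three decagons are pairwise distinct, the hexagons $C_2$ and $C_4$ that bridge them, as well as the extensions $C_6, C_7, C_8$, are highly constrained by the combinatorics of the $H_3$ arrangement.

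First I would split into subcases according to the incidence relations among $A_1, A_2, A_3$ and the identities among $C_6, C_7, C_8$, similarly to Cases~1 and~2 in the proof of Proposition~\ref{prop:Ia2}. In the generic subcases, a direct projection argument using $\Pi_{\widehat A_2}(P)$ together with Lemma~\ref{lem:representative} should yield the denting vertex, possibly after first adjusting $x_2$ or $x_4$ via Lemma~\ref{lem:adjacent adm}(\ref{replace24_old5}). In the remaining subcases, I would apply the type~II sub-arrangement $\cK$ with $\ka_\cK(A_2) = \sX_{31}$ and either $\ka_\cK(A_1) = \sX_{42}$ or $\ka_\cK(A_3) = \sX_{42}$, obtain $\omega^\cK \subset \bV_{234}$, invoke Proposition~\ref{prop:8cycle diagram} to characterize a minimal disc diagram bounded by $\omega^\cK$, and then lift via Lemma~\ref{lem:lift2} (using Lemma~\ref{lem:collapse} to verify the required projection identities) to obtain $x$.

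The main obstacle will be the degenerate subcases in which neither the direct projection argument nor the sub-arrangement reduction immediately produces a lift — these are analogous to the special subcase in Case~1 of Proposition~\ref{prop:Ia2} where $C_7 = B_3$ and one ultimately has to argue that $w_1$ and $w_2 w_3 w_4$ would commute inside too small a parabolic subgroup. I expect that the pairwise distinctness of $A_1, A_2, A_3$ breaks much of the symmetry that had to be handled in Proposition~\ref{prop:Ia2}, so such degenerate configurations should be rarer here, but a careful enumeration of the possible positions of the hexagons $C_2, C_4, C_6, C_8$ and the square $C_7$ in the icosahedral geometry will still be required to handle them one by one.
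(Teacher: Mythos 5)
Your toolkit is the right one (projections, the two sub-arrangements, denting plus Lemma~\ref{lem:adjacent adm}), but the central strategy --- always reduce to at most two distinct decagons by denting $x_3$ --- has a genuine gap: you never establish that the dent is available, and in the main configuration it is not. Denting $x_3$ requires $P_3$ to be homotopic in $\widehat C_3$ into $\widehat B$ for a square $B$ shared by $C_3$ and $C_1$ (or $C_5$); this is exactly the kind of normalization ($P_i=e^*$ for a single edge of a square) that must be extracted from projection arguments, and in the configuration of Figure~\ref{fig:3atypeicase0}(I) --- where a single hexagon meets all three decagons, e.g.\ $C_2=C_4=D$ --- the projections only yield containments like $P_5=e^*_6e^*_5e^*_4$ or $P_3\subset \widehat e_1\cup\widehat e_2\cup\widehat e_3$, never containment in a single $\widehat B$ adjacent to another decagon of the cycle. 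The paper does use your reduction, but only in configuration (II), and there it dents $x_1$ (not $x_3$) after showing $P_1=e^*$; in configuration (I) the proof does not decrease the number of decagons at all, and instead shows directly that $x_7$ is a neighbour of $x_1$ or $x_5$, using the $\mathrm{CAT}(0)$ geometry of $\bU_{12}$ and $\bV_{234}$, $\bV_{34}$ together with the link lemmas; the only dents performed there (Case 4, and inside Cases 1--3) move hexagons, not decagons, and merely shuffle between subcases of (I).

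Your fallback for the ``remaining subcases'' also misses the tools they actually need. Case 2.2 requires replacing the subpath $x_8x_1x_2$ by a length-four path $\eta$ in $\lk(x_1,\Delta)$ and then invoking the $10$-cycle statement, Proposition~\ref{prop:10cycle diagram2}, not Proposition~\ref{prop:8cycle diagram}; Cases 1--3 need careful host declarations so that paths over collapsed faces still define cycles in $\bU_{12}$ or $\bV_{234}$, plus auxiliary projection identities (e.g.\ through $\widehat D_4$) to kill unwanted letters before any lift via Lemma~\ref{lem:lift2} is possible. Finally, your expectation that the pairwise distinctness of the three decagons makes degenerate configurations rarer is the opposite of what happens: this case is comparably long to the two-decagon case and is the only place where Proposition~\ref{prop:10cycle diagram2} is used. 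As written, the proposal is a plausible plan for configuration (II) only; the core of Proposition~\ref{prop:Ia3}, namely configuration (I), is not addressed by it.
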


\begin{figure}[h]
	\centering
	\includegraphics[scale=0.85]{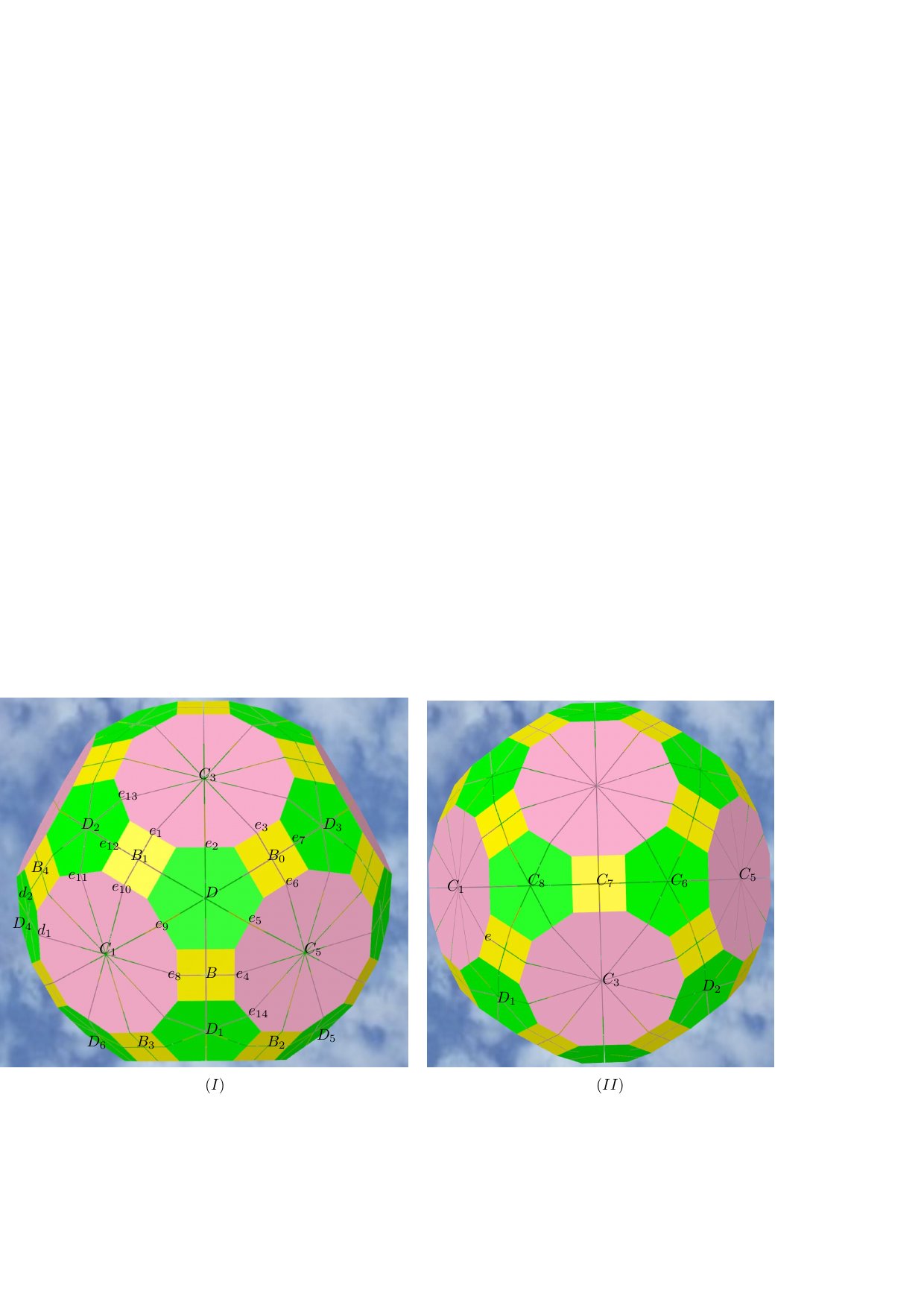}
	\caption{}
	\label{fig:3atypeicase0}
\end{figure}

Up to a symmetry, we have Figure~\ref{fig:3atypeicase0}(I) or (II). 
In (II), we have $C_2\in \{C_8,D_1\}$ and $C_4\in \{C_6,D_2\}$. 
By considering $\Pi_{\widehat C_1}(P)$, we can choose $P_1=e^*$. 
The critical cycle obtained from $\omega$ by denting $x_1$ to $C_3$ is not embedded or is admissible by Proposition~\ref{prop:Ia2}. Thus $\omega$ is admissible by  Lemma~\ref{lem:adjacent adm}(\ref{replace1_old6}). Hence in the remainder of the subsection, we assume~(I).

\medskip
\noindent
\underline{Case 1: One of $C_6,C_8$ belongs to $\{D_2,D_3\}$.} Then we have either $C_8=D_2$ and $C_6=D,$ or $C_6=D_3$ and $C_8=D$. 
 However, in the latter case, reflecting along the hyperplane of $\Si$ intersecting $C_3$ and $D$, and applying the involution $\mathcal I$ from Case~1, brings us to the former one. Thus we can assume
$C_8=D_2$ and $C_6=D$. First, we assume $C_4=D$. By considering $\Pi_{\widehat C_5}(P)$, we can choose $P_5=e^*_6e^*_5e^*_4$. We can assume that the last $*$ is non-zero, since otherwise we can choose $P_5=e^*_6$ and by Lemma~\ref{lem:adjacent adm}(\ref{replace1_old6}) reduce to Proposition~\ref{prop:Ia2} by denting $x_5$ to $C_3$. Then $\Pi_{\widehat D_4}(P_5)=d^*_1$ is nontrivial. Since $\Pi_{\widehat D_4}(P)=d^*_1d^*_2d^*_1d^*_2$ is homotopically trivial in $\widehat D_4$, by Lemma~\ref{lem:injective} we obtain that the $*$ over $d_2$ are zero. In other words, $\Pi_{\widehat e_{13}}(P_2P_3)$ and $\Pi_{\widehat e_{13}}(P_8)$ are homotopically trivial. By considering $\Pi_{\widehat C_3}(P)$, we can assume $P_3\subset \widehat e_1\cup\widehat e_2\cup\widehat e_3$.

Let $\ch$ be the type I sub-arrangement of $\ca$ with $\ka_\ch(C_1)=X_{22}, \ka_\ch(D_2)=X_{11},$ and $\ka_\ch(B_4)=X_{21}.$ 
Here again we reflect Figure~\ref{fig:2}.
Let $P^\ch=\hk_{\ch}(P)$. Note that $\ka_\ch(e_3)$ is a vertex. 
Since $C_4=D$, declaring that $P^\ch_i$ is hosted by $\whX_{12}$ for $i=3,\ldots, 7$, and for $i=2$ when $C_2=D$, we obtain a $4$- or $3$-cycle $\omega^\ch$ in~$\bU_{12}$ as in Definition~\ref{def:loop}. By Lemma~\ref{lem:lift}, $\omega^\ch$ is locally embedded at $x^\ch_1$. Since $\bU_{12}$ is $\mathrm{CAT}(0)$, we obtain that $\omega^\ch$ has angle $\frac{\pi}{4}$ at~$x^\ch_8$. Thus $x^
\ch_1$ and $x^\ch_7$ are neighbours and so $x_1$ and $x_7$ are neighbours. If $C_4=D_3$, then, by considering $\Pi_{\widehat D_3}(P)$, we can choose $P_4=e_7^*$. Thus, by Lemma~\ref{lem:adjacent adm}(\ref{replace24_old5}), we can reduce to the case $C_4=D$ by denting $x_4$ to $D$. 

\medskip
\textbf{In the remaining cases, we assume that none of $C_6,C_8$ belongs to $\{D_2,D_3\}$.}

\medskip
\noindent
\underline{Case 2: $C_2=C_4=D$, and at least one of $C_6,C_8$ equals $D$.} 

\medskip
\noindent
\underline{Case 2.1: $C_7=B$.} By considering $\Pi_{\whC_3}(P)$, we can choose $P_3=e^*_1e^*_2e^*_3$. 
Let $\cK$ be the type II sub-arrangement of $\ca$ with $\ka_\cK(D_1)=\sX_{31},\ka_\cK(B)=\sX_{32},\ka_\cK(C_1)=\sX_{42}$.
 Then $\ka_{\cK}(e_1)$ and $\ka_{\cK}(e_3)$ are vertices. 
Declaring that $P_3^\ch$ is hosted by~$\wsX_{33}$ we obtain a cycle $\omega^\ch$ in~$\bV_{234}$ as in Definition~\ref{def:loop} with $x^\ch_2=x^\ch_3=x^\ch_4$. Note that $\omega^\ch$ is locally embedded at $x^\ch_6,x^
\ch_7,$ and $x^\ch_8$. Thus $\omega^\ch$ is locally embedded at one of $x^\ch_1,x^\ch_5$, say~$x^\ch_1$. 
If $\omega^\ch$ is not locally embedded at $x^\ch_5$ or $x^\ch_4$, then by Lemma~\ref{lem:bowtie} applied to $x^\ch_1x^\ch_6x^\ch_7x^\ch_8$ we have that $x^\ch_1$ is a neighbour of $x^\ch_7$ and so $x_1$ is a neighbour of $x_7$. If $\omega^\ch$ is locally embedded at $x^\ch_5$ and $x^\ch_4$, then by Lemma~\ref{lem:restriction order} applied to  $x^\ch_1x^\ch_2x^\ch_5x^\ch_6x^\ch_7x^\ch_8$ we obtain that $x^\ch_7$ is a neighbour of $x^
\ch_1$ or~$x^\ch_5$ and we finish as before.

\medskip
\noindent
\underline{Case 2.2: $C_7=B_0$ or $B_1$.} By the boldface assumption at the end of Case 1, we have $C_6=C_8=D$. By the argument similar to the one at the beginning of Case~1, we can assume $C_7=B_0$. By considering $\Pi_{\whC_1}(P)$, we can choose $P_1=e^*_{8}e^*_{9}e^*_{10}$. 
We can assume that the $*$ are non-zero, since otherwise we can reduce to Proposition~\ref{prop:Ia2} by denting $x_1$ to $C_3$ or $C_5$. Consider the path $\eta=x_8x_{11}x_{12}x_{13}x_2$ in $\lk(x_1,\Delta)$ of face type $DBDB_1D$ corresponding to $e^*_{8}e^*_{9}e^*_{10}$.
Let $\cK$ be the type II sub-arrangement of~$\ca$ with $\ka_\cK(D)=\sX_{31},\ka_\cK(B_0)=\sX_{32},\ka_\cK(C_3)=\sX_{42}$.
Then the cycle $\omega_0$ obtained from~$\omega$ by replacing $x_8x_1x_2$ with $\eta$ lies in $\Delta^*_\cK$. Let $\omega_0^\cK=\ka^*_\cK(\omega_0)$. 

We first consider the case where $\omega_0^\cK$ has angle $\pi$ at $x^\cK_8$. By considering $\Pi_{\whC_3}(P)$, we obtain $P_3\subset \widehat e_1\cup\widehat e_2\cup\widehat e_3$. Thus by Lemma~\ref{lem:collapse} $\omega_0^\cK$ is locally embedded at~$x^\cK_3$. Analogously $\omega_0^\cK$ is locally embedded at $x^\cK_5$ and so it is locally embedded.
By Proposition~\ref{prop:10cycle diagram2}, with $x^\cK_6x^\cK_7x^\cK_8x^\cK_{11}x^\cK_{12}\cdots$ playing the role of $x_1x_2x_3x_4x_5\cdots$ (note that $x^\cK_7$ has face type $\wsX_{32}$, but $x^\cK_{11}$ and $x^\cK_{13}$ do not have face type $\wsX_{32}$), $\omega_0^\cK$ has angle~$\frac{\pi}{4}$ at $x^\cK_6$, and so $x_5$ and $x_7$ are neighbours.

Second, assume that $\omega_0^\cK$ has angle $\frac{\pi}{2}$ at $x^\cK_8$, and so there is a common neighbour~$x^\cK$ of $x_7^\cK$ and $x^\cK_{11}$ in $\lk(x^\cK_8, \bV_{234})$. By Lemma~\ref{lem:lift2}, we can lift $x^\cK$ to a a common neighbour $x$ of $x_7$ and $x_{11}$ in $\lk(x_8,\Delta)$. Since $x_7x_8x_{11}$ has face type $B_0DB$, we have that $x$ has face type $C_5$. Let $\omega'=x_3x_4x_5x_6xx_{12}x_{13}x_2$. 
If $\omega'$ is not embedded, then either $x=x_5$, in which case $x_5$ and $x_7$ are neighbours, 
or $x_{12}=x_6$, in which case $x_1$ is a neighbour of both $x_6$ and $x_2$ and $\omega$ is admissible by Lemma~\ref{lem:adjacent adm}(\ref{4x8_old3}), or $x_{12}=x_4$, in which case $x_{12}$ is a neighbour of $x_3$ and $\omega$ is admissible by Lemma~\ref{lem:adjacent adm}(\ref{3yz7_old4}). If $\omega'$ is embedded, then by Proposition~\ref{prop:Ia2} it is admissible, and so $x_{13}$ is a neighbour of $x_3$ (other possibilities are excluded since the faces $B_1$ and $C_5$ are disjoint). The critical cycle obtained from $\omega$ by replacing $x_2$ with $x_{12}$ is not embedded or is admissible since it fits the case of  $P_1=e^*_{8}e^*_{9}e^*_{10}$ with one of the $*$ zero.
Thus $\omega$ is admissible by Lemma~\ref{lem:adjacent adm}(\ref{replace24_old5}).

\medskip
\noindent
\underline{Case 3: $C_2=C_4=D$, and none of $C_6,C_8$ equals $D$.} 
If $C_6=D_5$ and $C_8=D_1$, then by considering $\Pi_{\widehat D_5}(P)$ 
we obtain that $x_7$ is is a neighbour of $x_5$. The case $C_6=D_1$ and $C_8=D_6$ is analogous. 
It remains to assume $C_6=C_8=D_1$. If $C_7=B_3$, then by considering $\Pi_{\whC_5}(P)$ 
we can choose $P_5=e^*_6e^*_5e^*_4$. 
 Let $\ch$ be the type I sub-arrangement of $\ca$ with $\ka_\ch(C_1)=X_{22},\ka_\ch(D_1)=X_{11},\ka_\ch(B_3)=X_{21}$. 
Declaring that $P_i^\ch$ are hosted by~$\wX_{12}$ for $i=2,\ldots, 5$, we obtain a cycle $\omega^\ch$ in~$\bU_{12}$ as in Definition~\ref{def:loop} with $x^\ch_2=x^\ch_3=x^\ch_4=x^\ch_5$.
By Lemma~\ref{lem:lift}, $x^\ch_6x^\ch_7x^\ch_8$ is a geodesic. Since $\bU_{12}$ is $\mathrm{CAT}(0)$, $\omega^\ch$ has angle $\frac{\pi}{4}$ at $x^\ch_8$ 
and so $x_7$ and $x_1$ are neighbours as usual. 
The case $C_7=B_2$ is analogous.

We now assume $C_7=B$. By considering $\Pi_{\whC_5}(P)$,  
we can assume $P_5=P_{51}P_{52}P_{53}$ with $P_{51}=e^*_6e^*_5e^*_4$, $P_{52}=e^*_{ 14}$ and $P_{53}=e^*_4$. We assume that $P_{52}$ and $P_{53}$ are nontrivial, since otherwise we can proceed as in the previous paragraph. 
Declaring that $P_i^\ch$ are hosted by~$\wX_{12}$ for $i=2,3,4, 51,53$, and $P^\ch_{52}$ is hosted by $\whX_{11}$, we obtain a cycle $\omega^\ch$ in~$\bU_{12}$ as in Definition~\ref{def:loop}, with $x^\ch_2=x^\ch_3=x^\ch_4=x^\ch_{51}$.
Since $P^\ch_{53}$ is nontrivial, $\omega^\ch$ has angle $\pi$ at $x^\ch_{53}$. Thus $x^\ch_6x^\ch_7x^\ch_8$ and $x^\ch_6x^\ch_{53}x^\ch_{52}$ are geodesics meeting at an angle $\ge \frac{\pi}{2}$. Since $\bU_{12}$ is $\mathrm{CAT}(0)$, 
it follows that the angle of $\omega^\ch$ at $x_8^\ch$ is $\frac{\pi}{4}$, and so $x_7$ and $x_1$ are neighbours as usual.

\medskip
\noindent
\underline{Case 4: $C_2=D_2$ or $C_4=D_3$.} If $C_2=D_2$, then $\Pi_{\widehat D_2}(P_6\cup P_7\cup P_8)=e^*_{12}$ or $e^*_{11}$. Thus,
by considering 
$\Pi_{\widehat D_2}(P)$, 
we can choose $P_2=e^*_{12}$. By Lemma~\ref{lem:adjacent adm}(\ref{replace24_old5}), by denting $x_2$ to $D$ we can reduce to the case where $C_2=D$. 
Analogously, if $C_4=D_3$, then by denting $x_4$ to~$D$ we can reduce to the case where $C_4=D$.

\section{Critical $10$-cycles}
\label{sec:typeII}

Let $\Lambda$ be the linear graph $abc$ with $m_{ab}=3$, $m_{bc}=5$, as in Section~\ref{sec:subarrangement}.
A \emph{critical $10$-cycle} in $\Delta$ has type $\hat a\hat c\hat b\hat c\hat b\hat c\hat a\hat c\hat b\hat c$ (or, shortly, $\hat a\hat c(\hat b\hat c)^2\hat a\hat c\hat b\hat c$).

\begin{defin}
	\label{def:adm II}
	An embedded critical $10$ cycle $(x_i)$ is \emph{admissible} if
	\begin{enumerate}
		\item $x_1$ is a neighbour of $x_3$ or $x_9$, or $x_7$ is a neighbour of $x_5$ or $x_9$, or
		\item there is a vertex of type $\hat a$ that is a neighbour of $x_3,x_5,$ and $x_9$.
	\end{enumerate}
\end{defin}

The goal of this section is to prove:
\begin{prop}
	\label{prop:IIa}
	Each embedded critical $10$-cycle is admissible.
\end{prop}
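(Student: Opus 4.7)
The plan is to mimic the structure of the proof of Proposition~\ref{prop:Ia}. First I would establish an analogue of Lemma~\ref{lem:adjacent adm} for critical $10$-cycles, listing combinatorial conditions that imply admissibility, most importantly the replacement principle: if substituting a single vertex $x_i$ of $\omega$ by another vertex $x$ produces a critical $10$-cycle that is either non-embedded or admissible, then $\omega$ is itself admissible. Verification of each such replacement condition will follow from bowtie freeness and the upward flag property of $\Delta_{H_3}$ (Theorem~\ref{thm:flag}), combined with Remark~\ref{rem:adj} to generate missing diagonal adjacencies. I would also record the analogue that replacing $x_3,x_5,$ or $x_9$ by a vertex of the same type sharing the two flanking $\hat c$-neighbours still produces a critical $10$-cycle whose admissibility equals admissibility of $\omega$.

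For a critical $10$-cycle of type $\hat a\hat c\hat b\hat c\hat b\hat c\hat a\hat c\hat b\hat c$ only $C_1$ and $C_7$ can be decagons, so I would split according to whether $C_1=C_7$ (single decagon) or $C_1\neq C_7$ (two decagons). In both cases every $C_i$ intersects the available decagon(s): for $C_i$ of type $\hat c$ (hexagon) this follows from consecutive adjacency in~$\omega$, and for $C_i$ of type $\hat b$ (square) it follows from a projection argument using Lemma~\ref{lem:retraction property}, in the style of the opening paragraph of Section~\ref{subsec:type1a1}.

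In the single-decagon case, I would choose a type II sub-arrangement $\cK$ so that the decagon $A=C_1=C_7$ maps to $\sX_{42}$ and, when possible, the squares $C_3,C_5,C_9$ map to $\sX_{32}$. Then $\omega^\cK=\ka^*_\cK(\omega)\subset\bV_{234}$ is of the exact type $\hat s_1\hat s_3\hat s_2\hat s_3\hat s_2\hat s_3\hat s_1\hat s_3\hat s_2\hat s_3$ to which Proposition~\ref{prop:10cycle diagram1} applies, and by Corollary~\ref{cor:remove} I may drop its angle assumption~(1). If assumptions (2) or (3) also hold, I obtain an outright contradiction; if one fails, Lemma~\ref{lem:lift2} lifts the witnessing common neighbour or short edge of $\omega^\cK$ back to $\Delta$, producing one of the conditions of Definition~\ref{def:adm II}. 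When some $C_i$ sits on a face type on which $\ka_\cK$ collapses (so $x_i\notin \Delta^*_\cK$), I would proceed as in Sections~\ref{subsec:type1a1}--\ref{subsec:type1a2}: use $\Pi_{\widehat C}(P)$ to control the image of $P_i$, declare an appropriate host cell for $\hk_\cK(P_i)$ inside $\bV_{234}$ via Definition~\ref{def:loop}, and appeal to Proposition~\ref{prop:10cycle diagram2} in the cyclically shifted configurations where a consecutive $\hat s_2\hat s_3\hat s_2$ piece includes one vertex of face type $\wsX_{32}$ and the others do not.

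The two-decagon case $C_1\neq C_7$ will require, as in Proposition~\ref{prop:Ia3}, a finer analysis of the relative positions of $C_1,C_7$ and of the surrounding hexagons and squares, alternating between a type~I sub-arrangement $\cH$ (mapping into $\bU_{12}$) and a type~II sub-arrangement $\cK$ (mapping into $\bV_{234}$), and invoking Proposition~\ref{prop:8cycle diagram} on the auxiliary $8$-cycles produced by contracting a triple $x_{2k-1}x_{2k}x_{2k+1}$ that has a common type-$\hat c$ neighbour. The hard part will be, as in Proposition~\ref{prop:Ia3}, managing configurations in which no single sub-arrangement captures all the information of $\omega$, so that one must perform denting (Definition~\ref{def:dent}) or replacement at a carefully chosen vertex to reduce to a previously settled subcase, and then iterate the replacement lemma without circularity; as in Case~1 of Section~\ref{subsec:type1a2} this bookkeeping will likely need an extra invariant, such as the minimal area of the associated disc diagram in $\bV_{234}$, to guarantee termination.
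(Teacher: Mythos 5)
Your proposal takes essentially the same route as the paper: the paper first establishes an admissibility/replacement lemma for critical $10$-cycles (Lemma~\ref{lem:admissible}, whose conditions (1)--(3) are verified by forming auxiliary critical $8$-cycles and invoking Proposition~\ref{prop:Ia}, on top of Remark~\ref{rem:adj} and Theorem~\ref{thm:flag}), and then splits into the one-decagon case $C_1=C_7$ and the two-decagon case $C_1\neq C_7$ (Propositions~\ref{prop:IIa1} and~\ref{prop:II2a}), handled exactly as you outline via type I and type II sub-arrangements, Corollary~\ref{cor:remove}, the lifting Lemmas~\ref{lem:lift} and~\ref{lem:lift2}, host declarations for collapsed faces, denting, and Proposition~\ref{prop:8cycle diagram} on auxiliary $8$-cycles. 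The one small correction is that the replacement conditions in the admissibility lemma do not follow from bowtie-freeness and flagness alone: the reduction to the already-proved $8$-cycle Proposition~\ref{prop:Ia} (replacing a $\hat b\hat c\hat b$ subpath by a common type-$\hat a$ neighbour, not a type-$\hat c$ one) is essential there, though this fits naturally into the framework you describe.
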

Proposition~\ref{prop:IIa} follows from Propositions~\ref{prop:IIa1} and~\ref{prop:II2a} below, which are proved in Subsections~\ref{subsec:type2a1} and~\ref{subsec:type2a2}.
In the remaining part of this section, let $\omega=x_1\cdots x_{10}$ be an embedded critical $10$-cycle. 
Let $w_i,C_i,P_i$ be as in Construction~\ref{def:ncycle}.

\begin{lem}
	\label{lem:admissible}
	Let $\omega$ be an embedded critical $10$-cycle. Under any of the following conditions $\omega$ is admissible.
    \begin{enumerate}[(1)]
    \item \label{3x5}
     There is a vertex $x$ of type $\hat a$ that is a neighbour of $x_3$ and $x_5$.
     \item \label{3x9}
     There is a vertex $x$ of type $\hat a$ that is a neighbour of $x_3$ and $x_9$.
     \item \label{replacing1}
     Replacing in $\omega$ the vertex $x_1$ by $x'_1$ results in a critical cycle $\omega_0$ that is not embedded or is admissible.
     \item \label{replacing2}
     Replacing in $\omega$ the vertex $x_2$ or $x_{10}$ results in a critical cycle $\omega_0$ that is not embedded or is admissible.
    \end{enumerate}
\end{lem}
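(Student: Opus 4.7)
The plan is to establish the four parts in the order (\ref{3x5}), (\ref{replacing1})--(\ref{replacing2}), (\ref{3x9}). The main tools will be Remark~\ref{rem:adj}, the bowtie free and upward flag properties from Theorem~\ref{thm:flag}, and the already-proved Proposition~\ref{prop:Ia} on critical $8$-cycles.

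For part~(\ref{3x5}), given $x$ of type $\hat a$ close to $x_3$ and $x_5$, I would first apply Remark~\ref{rem:adj} to the triples $(x,x_3,x_2), (x,x_3,x_4), (x,x_5,x_4), (x,x_5,x_6)$, using that $a$ and $c$ lie in distinct components of $\Lambda\setminus\{b\}$, to conclude that $x$ is a neighbour of $x_2, x_4, x_6$. Then I would form $\omega'=x_1\, x_2\, x\, x_6\, x_7\, x_8\, x_9\, x_{10}$, an $8$-cycle of type $(\hat a\hat c)^3\hat b\hat c$, that is, a critical $8$-cycle. By Proposition~\ref{prop:Ia}, $\omega'$ is either not embedded or admissible. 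If admissible via Definition~\ref{def:adm I}(1), then $x_9\sim x_1$, giving Definition~\ref{def:adm II}(1); if admissible via Definition~\ref{def:adm I}(2), then $x_9\sim x$, so $x$ itself is a common type-$\hat a$ neighbour of $x_3, x_5, x_9$, giving Definition~\ref{def:adm II}(2). If $\omega'$ is not embedded, then since $\omega$ is embedded and the internal type-$\hat c$ vertices of $\omega'$ come from $\omega$, the only possibility is $x\in\{x_1,x_7\}$, each of which yields Definition~\ref{def:adm II}(1) directly.

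For parts~(\ref{replacing1}) and~(\ref{replacing2}) I plan a case analysis based on whether $\omega_0$ is not embedded or is admissible. If $\omega_0$ is not embedded, the type constraints of a critical $10$-cycle restrict the replaced vertex to coincide with one of a small list of vertices of $\omega$ of the matching type (at most $x_7$ for part~(\ref{replacing1}), and at most three type-$\hat c$ vertices for part~(\ref{replacing2})), and each coincidence produces specific extra adjacencies that, combined with Remark~\ref{rem:adj} and bowtie freeness, can be shown to give admissibility of $\omega$. If $\omega_0$ is admissible, the admissibility conditions transfer directly to $\omega$ whenever the witness does not involve the replaced vertex; in the remaining subcase, I would apply bowtie freeness to the pair consisting of the old and new vertex (both close to the same two neighbouring vertices, of the next type up) to produce an intermediate vertex of the next type up still, and split the argument according to whether this intermediate vertex coincides with $x_3$, $x_9$, or is new.

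Part~(\ref{3x9}) then reduces to part~(\ref{replacing1}) as follows. Given $x$ of type $\hat a$ close to $x_3$ and $x_9$, first dispose of $x\in\{x_1,x_7\}$ directly (each yields Definition~\ref{def:adm II}(1)); otherwise, by Remark~\ref{rem:adj} the vertex $x$ is a neighbour of $x_2$ and $x_{10}$, so replacing $x_1$ by $x$ in $\omega$ produces a critical $10$-cycle $\omega_0$. Since the disposed cases cover the only type-consistent coincidences, $\omega_0$ is embedded; and $\omega_0$ is admissible via Definition~\ref{def:adm II}(1) because the vertex at position $1$ of $\omega_0$ is $x$ and $x\sim x_3$. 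Then part~(\ref{replacing1}) yields that $\omega$ is admissible. The hard part will be the admissibility-transfer subcase of parts~(\ref{replacing1}) and~(\ref{replacing2}), because Remark~\ref{rem:adj} is not available in the relevant configuration (since $a$ and $b$ lie in the same component of $\Lambda\setminus\{c\}$), so one is forced into a bowtie freeness argument whose case split (whether the intermediate vertex lies on $\omega$ or not) must be handled carefully, likely via a secondary invocation of Proposition~\ref{prop:Ia}.
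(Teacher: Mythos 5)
Your part~(\ref{3x5}) is essentially the paper's argument (via Remark~\ref{rem:adj} and the critical $8$-cycle $x_1x_2xx_6x_7x_8x_9x_{10}$ plus Proposition~\ref{prop:Ia}), and your treatment of the non-embedded cases in (\ref{replacing1})--(\ref{replacing2}) is workable; for (\ref{replacing2}) note that the transfer is in fact always direct, since $x_2$ and $x_{10}$ never occur in Definition~\ref{def:adm II}. The problem is your proposed ordering: you plan to deduce (\ref{3x9}) from (\ref{replacing1}), but you have not actually closed the hard subcase of (\ref{replacing1}), namely when $\omega_0$ is embedded and its admissibility witness is $x_1'\sim x_3$ or $x_1'\sim x_9$. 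Your bowtie argument on $x_1,x_1'$ over $x_2,x_{10}$ produces a type-$\hat b$ vertex $z$ adjacent to $x_1,x_1',x_2,x_{10}$, but the case split ``$z=x_3$, $z=x_9$, or new'' settles nothing in the generic case: a new $z$ (or $z=x_5$, which you omit) gives no adjacency appearing in Definition~\ref{def:adm II}. Your fallback, ``a secondary invocation of Proposition~\ref{prop:Ia}'', is exactly what the paper does: with $x_1'\sim x_3$ one takes a type-$\hat a$ neighbour $x_9'$ of $x_9$ and applies Proposition~\ref{prop:Ia} to $x_7x_8x_9'x_{10}x_1'x_4x_5x_6$; its conclusion has three branches, $x_5\sim x_7$, $x_5\sim x_1'$, and $x_5\sim x_9'$, and the last branch is precisely the configuration of a type-$\hat a$ vertex adjacent to $x_5$ and $x_9$ --- statement (\ref{3x9}) for the reversed cycle $x_7x_6\cdots x_1x_{10}x_9x_8$. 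So the natural completion of (\ref{replacing1}) consumes (\ref{3x9}) (up to the symmetry interchanging $x_3$ and $x_5$), and deriving (\ref{3x9}) from (\ref{replacing1}) is circular as it stands.

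The fix, and the paper's route, is to prove (\ref{3x9}) directly and independently, which is short: Remark~\ref{rem:adj} gives $x\sim x_4,x_8$, so $x,x_5,x_7$ are pairwise upper bounded (by $x_4,x_6,x_8$); the upward flag property of Theorem~\ref{thm:flag} yields a common upper bound $z$ of type $\hat c$, and bowtie freeness applied to $xx_4x_5x_6$ (if $z=x_6$) or to $zx_5x_6x_7$ (if $z\neq x_6$) gives $x\sim x_5$ (hence Definition~\ref{def:adm II}(2)) or $x_5\sim x_7$ (hence Definition~\ref{def:adm II}(1)). With (\ref{3x5}) and (\ref{3x9}) in hand, the hard subcase of (\ref{replacing1}) is then handled exactly by the auxiliary $8$-cycle argument sketched above (and its analogue with $x_3'$ in the case $x_1'\sim x_9$). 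Your reduction of (\ref{3x9}) to (\ref{replacing1}) is logically fine in itself, but it only buys something if (\ref{replacing1}) can be proved without (\ref{3x9}), and you have given no such proof.
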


\begin{proof}
	In (1), by Remark~\ref{rem:adj}, $x$ is a neighbour of $x_2$ and $x_6$.
	Let $\omega_8$ be the critical $8$-cycle $x_1x_2xx_6x_7x_8x_9x_{10}.$  By Proposition~\ref{prop:Ia}, $\omega_8$ is not embedded, or is admissible. If $\omega_8$ is not embedded, then, since $\omega$ is embedded, we have $x=x_1$ or $x=x_7$, which implies that $\omega$ is admissible. If $\omega_8$ is embedded and satisfies Definition~\ref{def:adm I}(1), then $\omega$ satisfies Definition~\ref{def:adm II}(1). If $\omega_8$ satisfies Definition~\ref{def:adm I}(2), then $\omega$ satisfies Definition~\ref{def:adm II}(2).
    
    In (2), by Remark~\ref{rem:adj}, $x$ is a neighbour of $x_4$ and $x_8$.
	By Theorem~\ref{thm:flag}, there is a common upper bound $z\in \Delta^0$ of type $\hat c$ of $x,x_5,x_7$. If $z=x_6$, then applying the bowtie freeness from Theorem~\ref{thm:flag} to $xx_4x_5x_6$, we obtain that $x$ is a neighbour of $x_5$, and so $\omega$ satisfies Definition~\ref{def:adm II}(2). If $z\neq x_6$, then applying the bowtie freeness to $zx_5x_6x_7$, we obtain that $x_7$ is a neighbour of $x_5$, and so $\omega$ satisfies Definition~\ref{def:adm II}(1).
    
	In (3), if $\omega_0$ is not embedded, then $x'_1=x_7$. Applying the bowtie freeness from Theorem~\ref{thm:flag} to $x_7x_8x_9x_{10}$, we obtain that $x_7$ is a neighbour of $x_9$.
	Thus we can assume that $\omega_0$ is embedded. The admissibility of  $\omega$ follows immediately from the admissibility of $\omega_0$ unless $x_1'$ is a neighbour of (i) $x_3$ or (ii) $x_9$. 
	
	In (i), let $x_9'$ be a neighbour of type $\hat a$ of $x_9$ (and hence of $x_8$ and $x_{10}$ by Remark~\ref{rem:adj}). Let $\omega_8$ be the critical $8$-cycle $x_7x_8x'_9x_{10}x'_1x_4x_5x_6$. Note that $\omega_8$ is embedded, since otherwise $x_7=x_9'$, which is a neighbour of $x_9$, or $x'_9=x'_1$, and so $\omega$ is admissible by (2) applied with $x=x_1'$. By Proposition~\ref{prop:Ia}, $\omega_8$ is admissible, and so $x_5$ is a neighbour of $x_7,x'_1,$ or $x_9'.$ 
In the second case, $\omega$ is admissible by (1) applied with $x=x_1'$. In the third case,  $\omega$ is admissible by (2) applied with $x=x_9'$.

In (ii), let $x_3'$ be a neighbour of type $\hat a$ of $x_3$. We consider the critical $8$-cycle $\omega_8=x_7x_8x'_1x_2x_3'x_4x_5x_6$, and we proceed analogously as in (i). 

In (4), if $\omega_0$ is not embedded, then, by Lemma~\ref{lem:bowtie}, $\omega$ satisfies Definition~\ref{def:adm II}(1). If $\omega_0$ is admissible and satisfies Definition~\ref{def:adm II}(1) (resp.\ (2)), then $\omega$ satisfies Definition~\ref{def:adm II}(1) (resp.\ (2)).
\end{proof}
\subsection{Case of one decagon}
\label{subsec:type2a1}
\begin{prop}
	\label{prop:IIa1}
	Let $\omega$ be an embedded critical $10$-cycle with $C_1=C_7$. Then $\omega$ is admissible.
\end{prop}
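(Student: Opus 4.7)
The plan is to mirror the strategy of Proposition~\ref{prop:Ia1}, combining projection arguments on $\widehat C_1$ with sub-arrangement reductions, and then invoking the filling results for $\bV_{234}$ from Section~\ref{sec:splitting system}. Since $C_1=C_7$ is the unique decagon face type, every $C_i$ for $i\neq 1,7$ intersects $C_1$ (otherwise a squeeze argument via $\Pi_{\widehat{C_1}}$ would force $x_2=x_{10}$ or $x_6=x_8$, contradicting embeddedness of $\omega$). Consequently, $\Pi_{\widehat{C_1}}(P)$ is a null-homotopic loop in $\widehat{C_1}$, and for $i\neq 1,7$ each $\Pi_{\widehat{C_1}}(P_i)$ lies in a single edge or vertex of $\widehat{C_1}$. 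This gives tight constraints on $P_1$ and $P_7$.

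First I would dispose of easy reductions. If $C_3=C_5$, $C_5=C_9$, or $C_3=C_9$, a projection computation together with a denting move (Definition~\ref{def:dent}) reduces to Proposition~\ref{prop:Ia} via Lemma~\ref{lem:admissible}(\ref{replacing2}), or directly forces a common type $\hat a$ neighbour of $x_3,x_5$ (resp.~$x_3,x_9$), triggering Lemma~\ref{lem:admissible}(\ref{3x5}) (resp.~(\ref{3x9})). With these cases out of the way, the hexagons $C_3,C_5,C_9$ sit at three distinct positions around the decagon boundary, while the squares $C_2,C_4,C_6,C_8,C_{10}$ interleave them.

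The main argument proceeds by case analysis on how many of the five squares $C_2,C_4,C_6,C_8,C_{10}$ coincide, paralleling Cases~1--3 of Proposition~\ref{prop:Ia1}. In each case I would choose a type II sub-arrangement $\mathcal K$ with $\kappa_\mathcal K(C_1)=\wsX_{42}$, one of the hexagons mapping to $\wsX_{32}$, and an adjacent square mapping to $\wsX_{31}$, and declare any $P_i$ whose face collapses to an edge under $\kappa_\mathcal K$ to be hosted in the appropriate neighbouring piece. This produces a locally embedded cycle $\omega^\mathcal K\subset \bV_{234}$ of the shape $\hat s_1\hat s_3\hat s_2\hat s_3\hat s_2\hat s_3\hat s_1\hat s_3\hat s_2\hat s_3$ or the rotated variant, with angles matching the hypotheses of Proposition~\ref{prop:10cycle diagram1} or~\ref{prop:10cycle diagram2} (using Lemma~\ref{lem:lift2} to detect the angles from the local face type structure). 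Those propositions then either rule out the configuration outright or produce a locally embedded path in $\bV_{234}$ that, via Lemmas~\ref{lem:lift2} and~\ref{lem:collapse}, lifts to the Artin complex, yielding either the common type $\hat a$ neighbour required in Definition~\ref{def:adm II}(2) or an adjacency as in Definition~\ref{def:adm II}(1).

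The main obstacle will be the case where the hexagon configuration around $C_1$ forces the projected angles in $\bV_{234}$ to fall short of $\tfrac{3\pi}{4}$ at one of the vertices adjacent to the image of $x_7$; there we will need to switch to a type I sub-arrangement (analogous to the $\mathcal J$/$\mathcal H$ interplay in Case~1 of Proposition~\ref{prop:Ia1}) and use the $\mathrm{CAT}(0)$ geometry of $\bU_{12}$ (Lemma~\ref{lem:CAT0}) together with Lemma~\ref{lem:lift} to produce the missing adjacency. In cases where the minimal disc diagram in $\bV_{234}$ is nontrivial in its interior, the usual fallback will be to lift an interior vertex back via Lemma~\ref{lem:lift2}, replace a boundary vertex $x_1$, $x_2$, or $x_{10}$ accordingly, and apply Lemma~\ref{lem:admissible}(\ref{replacing1})(\ref{replacing2}) to reduce to an already-settled subcase (possibly the critical 8-cycle Proposition~\ref{prop:Ia} after denting to remove an intermediate $\hat b$ vertex).
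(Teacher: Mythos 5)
Your opening reduction is where the argument breaks. It is not true that every $C_i$ with $i\neq 1,7$ must intersect $C_1$: the squares $C_2,C_6,C_8,C_{10}$ do so automatically (each shares a vertex with $C_1=C_7$ by Construction~\ref{def:ncycle}), but the hexagons $C_3,C_5,C_9$ and the square $C_4$ need not, and no projection to $\widehat C_1$ forces $x_2=x_{10}$ or $x_6=x_8$ in that situation -- that squeeze argument belongs to the $8$-cycle setting (the lone square $C_7$ there) and does not transplant. The paper spends a large part of the proof on exactly these configurations: if precisely one of $C_3,C_5,C_9$ equals a hexagon $B$ disjoint from $C_1$, then $\Pi_{\widehat B}$ does contradict local embeddedness, but when two of them equal $B$ the cycle can perfectly well be embedded, and the paper must argue case by case -- a type~II sub-arrangement together with Corollary~\ref{cor:remove} (the strengthening of Proposition~\ref{prop:10cycle diagram1}) in one sub-case, a hosting trick plus the $\mathrm{CAT}(0)$ geometry of $\bV_{34}$ (Corollary~\ref{lem:bv23 cat0}) in another, and a type~I sub-arrangement with $\bU_{12}$ in a third. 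Moreover, in several of these sub-cases the conclusion is that $\omega$ \emph{is} admissible (angle $\frac{\pi}{4}$ at $x_8$ or $x_{10}$), not that the configuration is impossible, so they cannot be waved away as contradictions with embeddedness.

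Your main engine is also misaligned with what the geometry allows. Once all the $C_i$ do meet $C_1$, the decagon is surrounded by alternating squares and hexagons, and a single type~II sub-arrangement keeps only a few consecutive ones non-collapsed; you cannot in general produce a locally embedded image $10$-cycle in $\bV_{234}$ satisfying the angle-$\pi$ and angle-$\geq\frac{3\pi}{4}$ hypotheses of Proposition~\ref{prop:10cycle diagram1} or~\ref{prop:10cycle diagram2} at all ten positions. The paper instead first makes a combinatorial reduction (a projection argument as in Case~3 of Proposition~\ref{prop:Ia1}, with Lemma~\ref{lem:admissible}(\ref{3x5}) absorbing the $i=4$ case) showing that the hexagons among the $C_i$ may be assumed to lie in a chain $F_1,F_3,F_5$ of consecutive hexagons; it then declares hosts so that several image vertices coincide ($x_5^\cK=x_6^\cK$, $x_9^\cK=x_{10}^\cK$, etc.) and derives a contradiction from $\mathrm{CAT}(0)$ \emph{distance} estimates in $\bV_{34}$ (a geodesic forcing $|x_1^\cK,x_7^\cK|\geq 4$, resp.\ $\geq 6$, against a complementary path of length $\leq 2+\sqrt2$, resp.\ $\leq 2+2\sqrt2$). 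Proposition~\ref{prop:10cycle diagram1} enters this proposition only once, through Corollary~\ref{cor:remove}, in the early disjoint-hexagon case. Your proposed split on how many of the squares $C_2,C_4,C_6,C_8,C_{10}$ coincide, with the $10$-cycle filling propositions doing the work throughout, supplies neither the disjoint-face analysis nor the consecutive-hexagon reduction, and these are the substantive content of the proof.
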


\begin{figure}[h]
	\centering
	\includegraphics[scale=0.7]{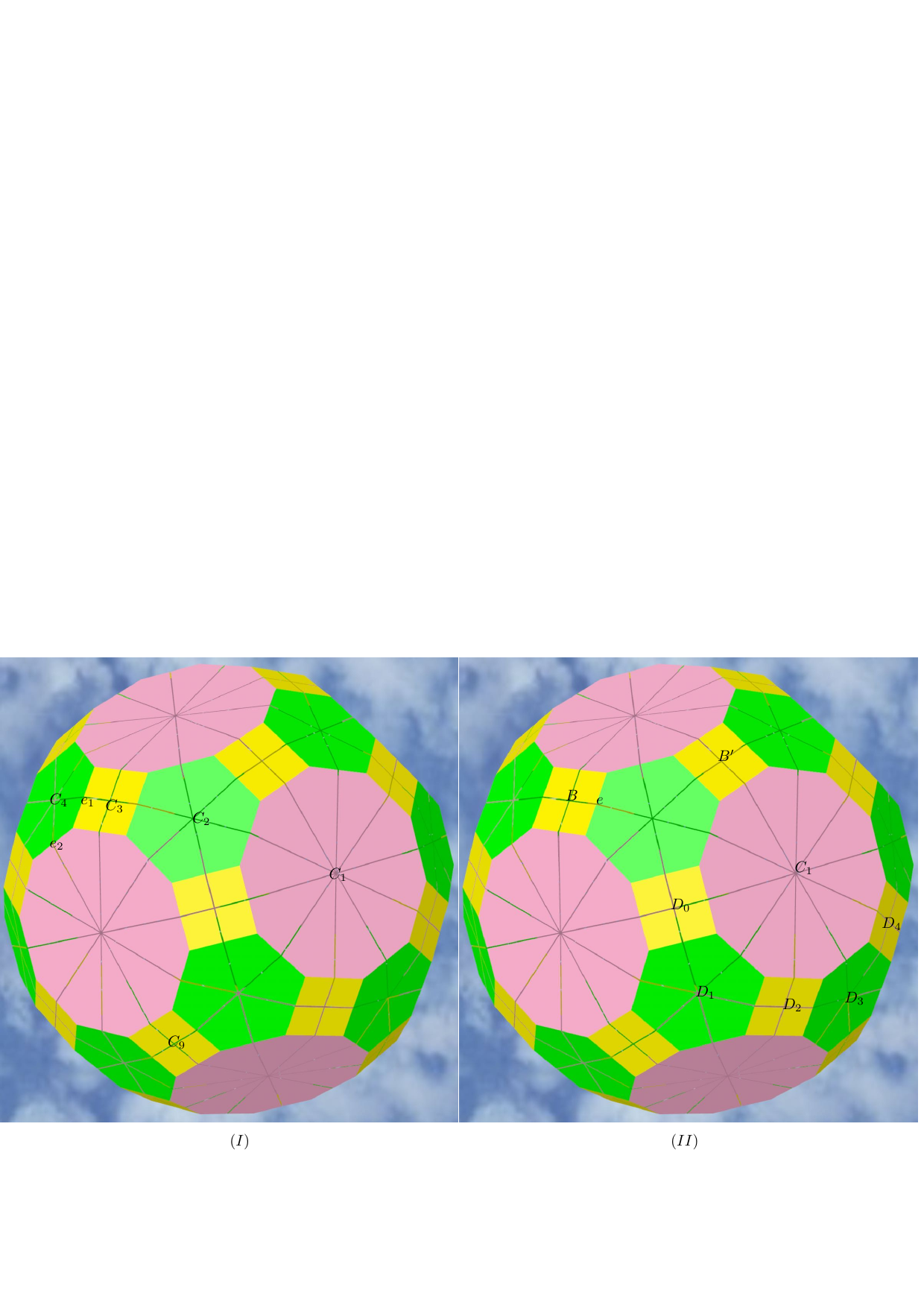}
	\caption{}
	\label{fig:1atypeiicase01}
\end{figure}

In the discussion below, we consider two kinds of symmetries. One kind are the symmetries of $\Sigma$. The second is the involution $\mathcal I'$ on the set of critical $10$-cycles sending $x_1\cdots x_{10}$ to $x_7x_6\cdots x_1x_{10}x_9x_8$, which still has type $\hat a \hat c \hat b \hat c \hat b \hat c \hat a \hat c \hat b \hat c$.

Note that $C_4$ intersects $C_1$. 
Otherwise, up to a symmetry, $C_1$ and $C_4$ are as in Figure~\ref{fig:1atypeiicase01}(I), and we have $\Pi_{C_4}(C_i)\subset e_1$ for $i\neq 4,9$. If $\Pi_{C_4}(C_9)$ is not contained in~$e_1$, then, up to a symmetry, $C_9$ is as in Figure~\ref{fig:1atypeiicase01}(I). 
Since $\Pi_{\whC_4}(P)$ is homotopically trivial in~$\whC_4$,  by Lemma~\ref{lem:retraction property} we obtain that $P_4$ is homotopic in $\whC_4$ to $e^*_1$ or $e^*_1e^*_2e^*_1$. The former is impossible, since it implies $x_3=x_5$. The latter implies that $x_3$ and~$x_5$ have a common neighbour of type $\hat a$, and so $\omega$ is admissible by Lemma~\ref{lem:admissible}(\ref{3x5}).

 We now show that $C_3,C_5,C_9$ intersect $C_1$. Otherwise, up to a symmetry, we can assume that one of them equals $B$ in Figure~\ref{fig:1atypeiicase01}(II). If exactly one of them equals~$B$, then, by Lemma~\ref{lem:retraction property}, we have $\Pi_{\widehat B}(P)=e^*$, implying that $\omega$ is not locally embedded at one of $x_3,x_5,x_9$. Thus we can assume that at least two of them equal $B$. Then, up to a symmetry, we have $C_3=B$ and at least one of $C_5,C_9$ equals $B$. 
 Up to a symmetry of $\Sigma$ interchanging $B'$ with $D_0$, we can also assume $\{C_5,C_9\}\subset\{B,D_0,D_2,D_4\}$.

First assume that $C_9$ equals $B$ or $D_0$. Let $\cK$ be the type~II sub-arrangement of $\ca$ with $\ka_\cK(C_2)=\sX_{31},\ka_\cK(D_0)=\sX_{32},\ka_\cK(C_1)=\sX_{42}$. Since $C_5\neq B'$, we have $\ka_\cK(C_6)=\sX_{31}$ or $\sX_{33}$. By Lemma~\ref{lem:admissible}(\ref{3x5}), we can suppose that $x_3$ and $x_5$ do not have a common neighbour of type $\hat a$. If $\omega$ is not admissible, then, by Lemma~\ref{lem:lift2}, $\omega^\cK=\ka^*_\cK(\omega)$ satisfies the assumptions (2) and (3) of Proposition~\ref{prop:10cycle diagram1}. This contradicts Corollary~\ref{cor:remove}. 

Second, assume $C_9=D_2$, and $C_8=C_{10}=D_1$. Let $\cL$ be the type II sub-arrangement of $\ca$ with $\ka_\cL(D_1)=\sX_{31},\ka_\cL(D_0)=\sX_{32},\ka_\cL(C_1)=\sX_{42}$. 
Note that, to achieve that, we need to reflect Figure~\ref{fig:6} with respect to, say, the line $h_3$, and then apply an orientation-preserving isometry carrying it appropriately to Figure~\ref{fig:1atypeiicase01}(II).
Then $\ka_{\cL}(C_3)$ is an edge.
Declaring that $P_i^\cL$ are hosted by~$\wsX_{33}$ for $i=2,\ldots, 6,$ we obtain a 6-cycle $x^\cL_1x^\cL_2x^\cL_7x^\cL_8x^\cL_9x^\cL_{10}$ in~$\bV_{34}$ as in Definition~\ref{def:loop}. Since $\bV_{34}$ is $\mathrm{CAT}(0)$, the angle at $x^\cL_8$ or~$x^\cL_{10}$ is $\frac{\pi}{4}$, and so, by Lemma~\ref{lem:lift2}, the angle at $x_8$ or $x_{10}$ is $\frac{\pi}{4}$. Thus $\omega$ is admissible.

Third, assume $C_8=D_3$. If $C_{10}\neq D_3$, then, by considering $\Pi_{\widehat C_8}(P)$, we obtain that the angle at $x_8$ is $\frac{\pi}{4}$, and so $\omega$ is admissible. If $C_{10}=D_3$, then let $\cj$ be the type~I sub-arrangement of $\ca$ with $\ka_\cj(C_1)=X_{22},\ka_\cj(D_3)=X_{11},\ka_\cj(D_4)=X_{21}$.  
Note that again we reflect Figure~\ref{fig:2}.
Since $\ka_\cj(C_3)=\ka_\cj(C_2)$ is an edge of $X_{11}$, declaring that $P_i^\cj$ are hosted by~$\wX_{11}$ for $i=1,\ldots, 7$, we obtain a $4$-cycle $x^\cj_7x^\cj_8x^\cj_9x^\cj_{10}$ in~$\bU_{12}$ as in Definition~\ref{def:loop}. Since $\bU_{12}$ is $\mathrm{CAT}(0)$, the angle at $x^\ch_8$ is $\frac{\pi}{4}$, and so, by Lemma~\ref{lem:lift}, the angle at $x_8$ is $\frac{\pi}{4}$ and $\omega$ is again admissible. 

Up to a symmetry, this exhausts all the possibilities, since in particular the case $C_9=D_2$, $C_8=D_1,$ and $C_{10}=D_3$ is sent to the case $C_9=D_2,C_8=D_3,$ and $C_{10}=D_1$ under the involution $\mathcal I'$. Thus we can assume that all the~$C_i$ intersect $C_1$. 
 
Let $\mathcal C$ be the family of hexagons appearing among the $C_i$. Recall that two hexagons intersecting $C_1$ are \emph{consecutive} if they intersect a common square.
We can assume that there is no hexagon $C$ that equals $C_i$ for a unique $i$, and such that for $C',C''$ consecutive with $C$, there is at most one $j$ with $C_j\in \{C',C''\}$ or there are two such $j_1,j_2$, and $\{i,j_1,j_2\}=\{2,4,6\}$. Otherwise, we consider $\Pi_{\whC}(P)$, 
and the argument is similar to Case 3(i) of Proposition~\ref{prop:Ia1}, except no denting is necessary here and we possibly need Lemma~\ref{lem:admissible}(\ref{3x5}) when $i=4$.

We claim that then $\mathcal C$ is contained in a sequence $F_1,F_3,F_5$ of consecutive hexagons, see Figure~\ref{fig:1atypeiicase3}, left. Indeed, if $|\mathcal C|=5$, then $C_2,C_4,C_6$ are distinct and consecutive, and we can take $i=4$ above. If $|\mathcal C|=4$, then we can take $C_i$ to be one of the two hexagons that have only one consecutive hexagon among the $C_j$. If $\mathcal C$ consists of three hexagons that are not consecutive, then two of them are consecutive. If this consecutive pair equals $\{C_8,C_{10}\}$, then we can take $i=8$ or $10$. If this pair equals $\{C_2,C_4, C_6\}$, then we can take $i=2,4,$ or $6$. This justifies the claim.

\medskip
\noindent
\underline{Case 1: $C_2,C_4,C_6$ are distinct.} Since $\{C_8,C_{10}\}\neq\{F_1,F_5\}$, 
 using a symmetry of $\Sigma$ we can assume $F_1\neq C_8,C_{10}$ and $F_2\neq C_9$. Using the involution $\mathcal I'$, we can assume $F_1=C_2$. Furthermore, one of 
$C_8,C_{10}$ equals $F_3$, since otherwise we could take $C_i=C_2$ above. In particular, we have $C_9\neq F_6$.

Let~$\cK$ be the type II sub-arrangement of $\ca$ with $$\ka_\cK(F_1)=\sX_{31},\ka_\cK(F_2)=\sX_{32},\ka_\cK(C_1)=\sX_{42}.$$
Here we reflect Figure~\ref{fig:6} before comparing it with Figure~\ref{fig:1atypeiicase3}, left.
We construct the following cycle~$\omega^\cK$ in $\bV_{34}$, which is $\mathrm{CAT}(0)$.
We declare that $P_i^\cK$ is hosted by~$\wX_{43}$ 
for $x_i$ of face type $F_5$, and that $P_5^\cK$ (resp.\ $P_{9}^\cK$) has the same host as $P_6^\cK$ (resp.\ $P_{10}^\cK$). Then $x^\cK_5=x^\cK_6$ and $x^\cK_9=x^\cK_{10}$, see Figure~\ref{fig:1atypeiicase3}, right. By Lemma~\ref{lem:admissible}(\ref{3x5}) and Lemma~\ref{lem:lift2}, we can assume that the path $x_2^\cK x_3^\cK x_4^\cK x^\cK_6$ is a geodesic with angle $\geq\frac{3\pi}{4}$ at $x^\cK_2$, and so $|x^\cK_1,x^\cK_7|\geq 4$. However, the length of the path $x^\cK_7x^\cK_8x^\cK_{10}x^\cK_1$ is $\leq 2+\sqrt 2$, which is a contradiction.

\medskip
\noindent
\underline{Case 2: $C_2,C_4,C_6$ are not distinct.}  Then we can assume that none of them equals~$F_5$. 
Let $\cK$ be the type II sub-arrangement of $\ca$ as in Case 1. 
Declaring that $P_i^\cK$ are hosted by~$\wX_{43}$ for $C_i=F_5$, by~$\wX_{42}$ for $C_i=F_6$, and by~$\wX_{43}$ or $\wX_{33}$ for $C_i=F_4$, we obtain a cycle~$\omega^\cK$ in $\bV_{34}$. 
Suppose first $C_5\neq F_4$. Then by Lemma~\ref{lem:admissible}(\ref{3x5}) and Lemma~\ref{lem:lift2}, we can assume that the path $x_2^\cK x_3^\cK x_4^\cK x_5^\cK x_6^\cK$ is a geodesic with angles $\geq\frac{3}{4}$ at $x^\cK_2,x^\cK_6$, which implies $|x^\cK_1,x^\cK_7|\geq 6$. However, the length of the path $x^\cK_7x^\cK_8\cdots x^\cK_1$ is $\leq 2+2\sqrt 2$, which is a contradiction. If $C_5=F_4$, then we obtain a contradiction exactly as in Case 1.

\begin{figure}[h]
	\centering
	\includegraphics[scale=0.9]{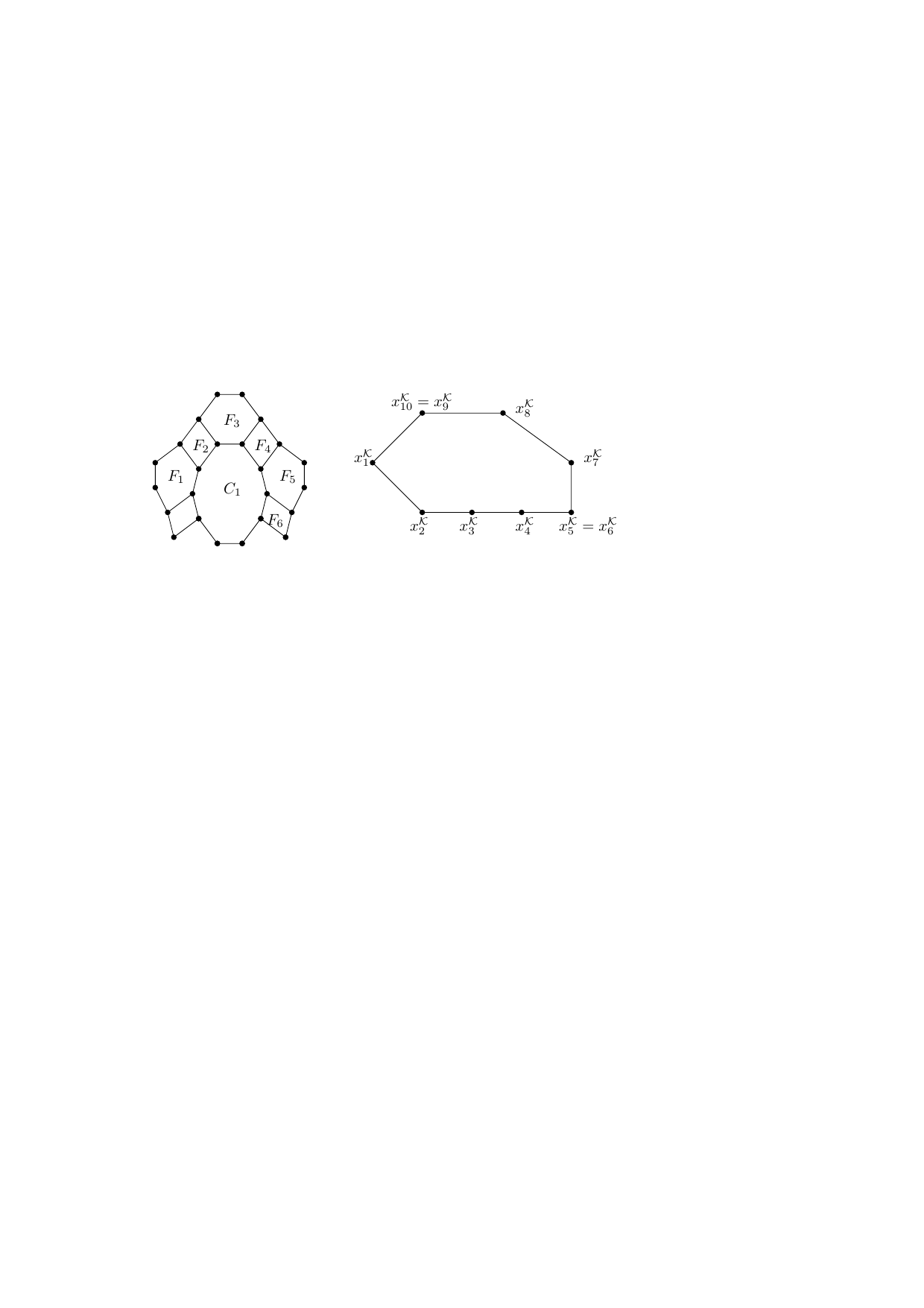}
	\caption{}
	\label{fig:1atypeiicase3}
\end{figure}

\subsection{Case of two decagons}
\label{subsec:type2a2}
\begin{prop}
	\label{prop:II2a}
	Let $\omega$ be an embedded critical $10$-cycle with $C_1\neq C_7$. Then $\omega$ is admissible.
\end{prop}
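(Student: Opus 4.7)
The plan is to proceed by case analysis on the relative position of the two distinct decagons $C_1$ and $C_7$ in $\Sigma$, paralleling the strategy of Propositions~\ref{prop:IIa1}, \ref{prop:Ia2}, and~\ref{prop:Ia3}. Configurations will be enumerated modulo the symmetries of $\Sigma$ and the involution $\mathcal I'$ introduced in the single-decagon case. As a first reduction, I would argue, as at the beginning of the proof of Proposition~\ref{prop:IIa1}, that after considering projections $\Pi_{\widehat C_i}(P)$ and appealing to Lemmas~\ref{lem:retraction property} and~\ref{lem:injective}, each of $C_2,\ldots,C_6$ must intersect $C_1$ and each of $C_8,C_9,C_{10}$ must intersect $C_7$; failure of these intersection constraints forces $\omega$ to be non-embedded or immediately admissible.

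The next step is to use denting (Definition~\ref{def:dent}) together with Lemma~\ref{lem:admissible}(\ref{replacing2}) to eliminate configurations in which some $C_i$ with $i$ even is a hexagon appearing only once and having a suitable consecutive face among the remaining $C_j$; such reductions land either in Proposition~\ref{prop:IIa1} or in simpler configurations of the current proposition. When available, a replacement of $x_1$ via Lemma~\ref{lem:admissible}(\ref{replacing1}) reduces similarly to the one-decagon case.

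For the surviving configurations I would select an appropriate sub-arrangement so that both $C_1$ and $C_7$ are non-collapsed. When $C_1$ and $C_7$ lie sufficiently close in $\Sigma$, a single type~I sub-arrangement $\cH$ with $\ka_{\cH}(C_1)=X_{22}$ and $\ka_{\cH}(C_7)$ an adjacent face, or a single type~II sub-arrangement $\cK$ placing one decagon at $\sX_{42}$ and the other on a neighbouring cell, suffices. Declaring hosts for collapsed $P_i$ as in Definition~\ref{def:loop}, the projected cycle $\omega^{\cH}\subset\bU_{12}$ or $\omega^{\cK}\subset\bV_{234}$ lives in a $\mathrm{CAT}(0)$ complex (Lemmas~\ref{lem:CAT0}, \ref{lem:bv23 cat0}). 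Applying Propositions~\ref{prop:8cycle diagram}, \ref{prop:10cycle diagram1}, \ref{prop:10cycle diagram2}, and Corollary~\ref{cor:remove}, the possible minimal disc diagrams become tightly constrained, and the $\mathrm{CAT}(0)$ geometry forces either a $\frac{\pi}{4}$ angle at one of the vertices $x_3^{\cH}, x_5^{\cH}, x_9^{\cH}$ (and analogously for $\cK$), or the existence of a suitable interior common neighbour. Lifting via Lemmas~\ref{lem:lift} or~\ref{lem:lift2} (and, where paths are collapsed, Lemma~\ref{lem:collapse}) then yields one of the conclusions of Definition~\ref{def:adm II}, or reduces to an earlier case via Lemma~\ref{lem:admissible}.

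The main obstacle will be configurations in which $C_1$ and $C_7$ are sufficiently far apart in $\Sigma$ that no single sub-arrangement of type~I or~II contains both as non-collapsed cells. Here I would follow the strategy of Case~1 of Proposition~\ref{prop:Ia3} and use two different sub-arrangements, each adapted to one of the two decagons, and cross-reference the resulting pictures; a careful application of the splitting system machinery of Section~\ref{sec:splitting system} together with Lemma~\ref{lem:collapse} will be needed to transfer $\pi_1$-information between $\omega$ and its two projections. A secondary difficulty, relative to the one-decagon case, is that the lack of a symmetry identifying $C_1$ and $C_7$ substantially increases the number of surviving subcases.
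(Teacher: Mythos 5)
Your proposal is an outline of the right toolkit (denting plus Lemma~\ref{lem:admissible}, projections to type~I/II sub-arrangements, the $\mathrm{CAT}(0)$ geometry of $\bU_{12}$ and $\bV_{234}$, Propositions~\ref{prop:8cycle diagram}--\ref{prop:10cycle diagram2} and Corollary~\ref{cor:remove}, and the lifting lemmas), and in that sense it matches the paper's strategy. But it is not a proof: the entire content of this proposition lies in enumerating the admissible face-type configurations for $C_2,\ldots,C_{10}$ relative to the two decagons and carrying out a bespoke argument for each (the paper's Cases~1.1--1.3 and~2, each with a specific choice of sub-arrangement, specific hosting declarations for collapsed paths, and specific appeals to Lemmas~\ref{lem:n=8}, \ref{lem:n=6}, \ref{lem:restriction order}, etc.). Saying that ``the possible minimal disc diagrams become tightly constrained'' and that lifting ``yields one of the conclusions'' defers exactly the work that has to be done; nothing in your write-up verifies any single configuration.

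Moreover, your first reduction is stronger than what is true and would break down. You claim that each of $C_2,\ldots,C_6$ must intersect $C_1$ and each of $C_8,C_9,C_{10}$ must intersect $C_7$, with failure forcing non-embeddedness or immediate admissibility. What the paper can establish is only that every $C_i$ intersects $C_1$ \emph{or} $C_7$, and even this is not automatic: there is a genuine locally embedded configuration, $C_1\cdots C_{10}=C_1D_3B_6D_3B_0D_1C_7D_5B_3D_1$, in which $C_3$ is disjoint from both decagons. Ruling it in requires a dedicated argument (projecting to $\widehat D_7$, using Lemma~\ref{lem:injective} to kill the $d_2$-exponents, then choosing $P_7=e_1^*$ and denting $x_7$), not a one-line appeal to Lemmas~\ref{lem:retraction property} and~\ref{lem:injective}. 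Finally, your anticipated ``main obstacle'' of the two decagons being too far apart for any single sub-arrangement does not actually occur: since $x_1$ and $x_7$ are joined by short arcs of $\omega$, there are only two configurations for the pair $C_1,C_7$, and in each subcase one sub-arrangement (with suitable host declarations for collapsed faces) suffices; the difficulty is not distance but the sheer number of face-type patterns, which your proposal leaves untreated.
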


\begin{figure}[h]
	\centering
	\includegraphics[scale=0.8]{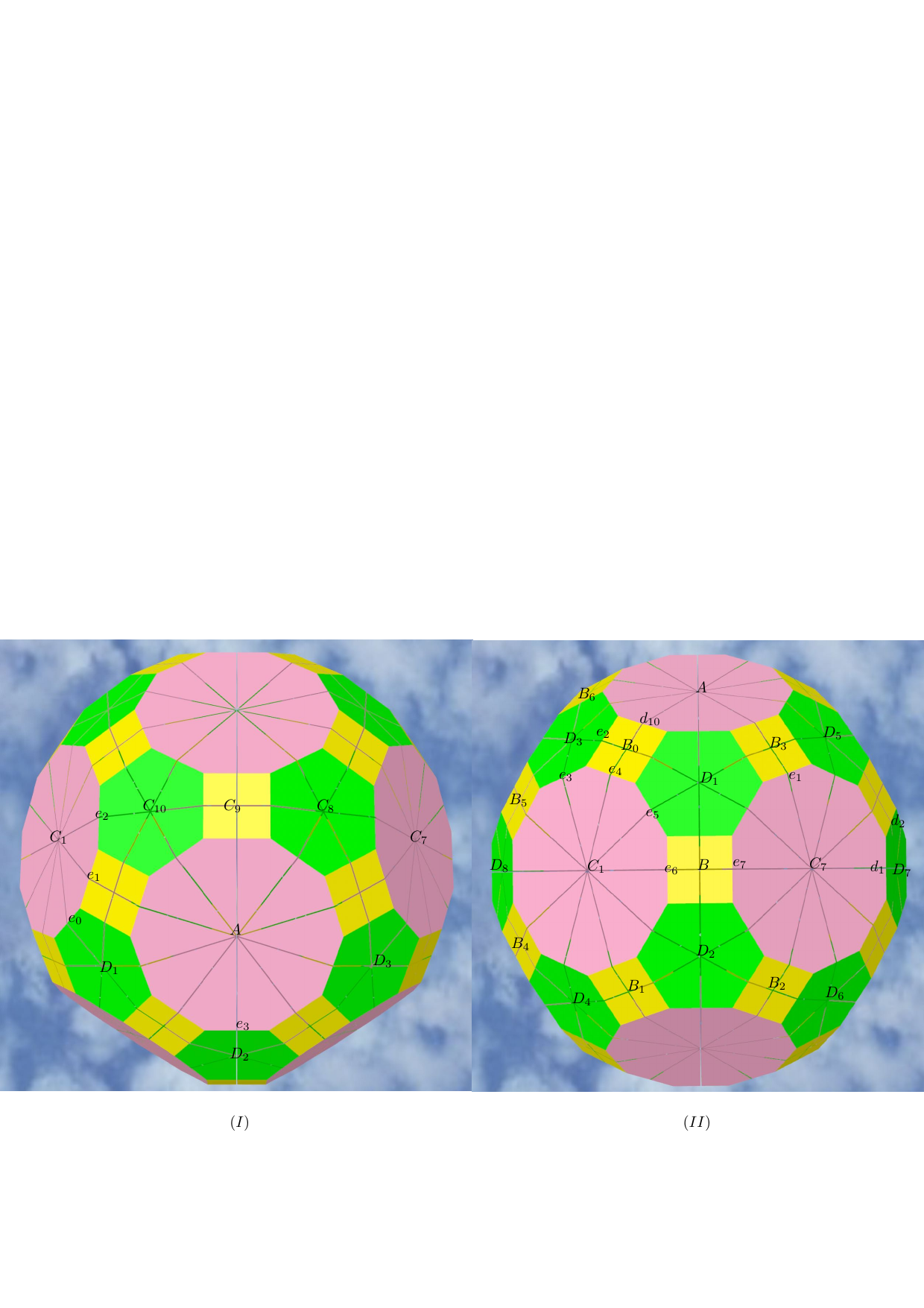}
	\caption{}
	\label{fig:2atypeiicase0}
\end{figure}

 There are two possible configurations for the pair $C_1,C_7$, illustrated in Figure~\ref{fig:2atypeiicase0}(I,II).  
Consider first Figure~\ref{fig:2atypeiicase0}(I). 
If $C_4\neq D_2$,  and all $C_2,C_4,C_6$ belong to $\{C_8,C_{10}\}$, then, by considering $\Pi_{\whC_1}(P)$, we obtain that $P_1=e_2^*$, and so $\omega$ is not locally embedded at $x_1$, which is a contradiction. Thus, if $C_4\neq D_2$, then one of $C_2,C_4,C_6$ is distinct from $C_8,C_{10}$. 
Using a symmetry, we can assume without loss of generality $C_2=D_1, C_4=C_{10},$ and $C_6=C_8$. By considering $\Pi_{\whC_1}(P)$, we obtain that $P_1$ is homotopic in $\whC_1$, relative to the endpoints,  to $e^*_2e^*_1e^*_0$. After possibly replacing the $w_i$ by equivalent words, we can assume $P_1=e^*_1$, and so we can dent $x_1$ to $A$. 
By considering $\Pi_{\whC_7}(P)$, and arguing similarly, we can also dent $x_7$ to $A$.
By Lemma~\ref{lem:admissible}(\ref{replacing1}) and Proposition~\ref{prop:IIa1} it follows that $\omega$ is admissible.
If $C_4=D_2$, then $C_2=D_1$ and $C_6=D_3$.
From considering~$\Pi_{\whC_4}(P)$ it follows that we can choose $P_4=e_3^*$. Thus there is a neighbour of type~$\hat a$ of $x_3$ and $x_5$ and so $\omega$ is admissible by Lemma~\ref{lem:admissible}(\ref{3x5}).

In the remaining part of the subsection, we will consider Figure~\ref{fig:2atypeiicase0}(II). Then~$C_9$ and all even $C_i$ intersect $C_1$ or $C_7$. 
If $C_3$ is disjoint from both $C_1,C_7$, then, by considering $\Pi_{\whC_3}(P)$, we obtain that $\omega$ is not locally embedded at $x_3$, except in one case, where, up to a symmetry, we have $C_1 \cdots C_{10}=C_1D_3B_6D_3B_0D_1C_7D_5B_3D_1$. Note that $\Pi_{\widehat D_7}(P)=d^*_1d^*_2d^*_1d^*_2$ is homotopically trivial in $\widehat D_7$. Since $\omega$ is locally embedded at $x_3$, the $*$ over $d_1$ are nonzero. By Lemma~\ref{lem:injective}, we obtain that the $*$ over $d_2$ are zero. It follows that $\Pi_{\widehat C_7}(P_1)$ is a homotopically trivial loop in $\widehat C_7$. Thus, by considering $\Pi_{\widehat C_7}(P)$, we can choose $P_7=e^*_1$. This allows us to dent $x_7$ to $A$, which, by Lemma~\ref{lem:admissible}(\ref{replacing1}), brings us to the case where all the~$C_i$ intersect $C_1$ or $C_7$.  An analogous discussion applies to $C_5$.  Thus we can assume that all the $C_i$ intersect $C_1$ or $C_7$.

\medskip
\noindent
\underline{Case 1: One of the $C_i$ equals $D_3,D_4,D_5,$ or $D_6$.} This includes the case where $C_2$ or~$C_{6}$ equals $D_7$ or $D_8$. 
We say that $D_j\in \{D_3,D_4,D_5,D_6\}$ is \emph{good}, if there is a unique $i$ with $C_i=D_j$.

\medskip
\noindent
\underline{Case 1.1: One of the $D_j$, say $D_3=C_i$, is good.} 
If $i=4$, then $C_6=D_1$ and $C_2=D_1$ or~$D_8$. Furthermore, $\Pi_{\widehat D_3}(P_8\cup P_9\cup P_{10})$ equals $e_3^*$ or $e_2^*$. Considering~$\Pi_{\widehat D_3}(P)$, it follows that we can choose $P_4=e_3^*$. Thus there is a neighbour of type $\hat a$ of $x_3$ and~$x_5$ and so $\omega$ is admissible by Lemma~\ref{lem:admissible}(\ref{3x5}).
If $i=2$, then $C_4=D_1$ and $C_6=D_1,D_2$ or $D_5$, and again $\Pi_{\widehat D_3}(P_8\cup P_9\cup P_{10})$ equals $e_3^*$ or $e_2^*$. Considering~$\Pi_{\widehat D_3}(P)$, we can choose $P_2$ to be trivial. Thus $\omega$ has angle $\frac{\pi}{4}$ at $x_2$, and so $\omega$ is admissible.

If $i=10$, then $C_9=B_0$, and $C_8=D_1$. 
Considering $\Pi_{\widehat D_{3}}(P)$, we can choose~$P_{10}$ to be trivial, which leads to angle $\frac{\pi}{4}$ at $x_{10}$, except the special cases where $C_2C_3C_4C_5C_6=D_1BD_2B_1D_2$ or $D_8B_4D_4B_1D_2$. In the second special case, by considering $\Pi_{\whC_7}(P)$, we can choose $P_7=e^*_7$, and denting $x_7$ to $C_1$, by Lemma~\ref{lem:admissible}(\ref{replacing1}), reduce to Proposition~\ref{prop:IIa1}.

In the first special case, let $\cK$ be the type II sub-arrangement of $\ca$ with $\ka_\cK(D_2)=\sX_{31},\ka_\cK(B)=\sX_{32},\ka_\cK(C_1)=\sX_{42}$. Declaring that $P_9^\cK$ is hosted by~$\wX_{43}$, we obtain a cycle $\omega^\cK$ in $\bV_{234}$, see Figure~\ref{fig:2atypeiicase2}(I). Since $x^\cK_9=x^\cK_{10}$, we have that $x^\cK_8$ is a neighbour of $x_1^\cK$. By Lemmas~\ref{lem:admissible}(\ref{3x5}) and~\ref{lem:lift2}, we can assume that $x^\cK_3,x^\cK_4,x^\cK_5$ do not have a common neighbour and the angles at $x^\cK_2,x^\cK_6$ are $\geq \frac{3\pi}{4}$. By Lemma~\ref{lem:restriction order}, we have $x^\cK_6\neq x^\cK_8$.
Let~${x'_3}^\cK$ be a type $\hat a$ neighbour of $x^\cK_3$, and let $\omega^\cK_8=x^\cK_7x^\cK_8x^\cK_1x^\cK_2{x'}^\cK_3x^\cK_4x^\cK_5x^\cK_6$. Then $\omega^{\cK}_8$ satisfies the hypotheses of Proposition~\ref{prop:8cycle diagram}. 
Thus $\omega_8$ bounds a minimal disc diagram~$\mathcal D$ that is a subdiagram of Figure~\ref{fig:3cycle1}(III). By Lemma~\ref{lem:n=8}, the link $\lk({x'_3}^{\cK},\bV_{234})$ has girth $\ge 8$. Thus, since ${x'_3}^{\cK}$ in Figure~\ref{fig:2atypeiicase2}(I) corresponds to $x_5$ in Figure~\ref{fig:3cycle1}(III),
the vertex~$x^\cK_3$ lies in the image of $\mathcal D$. Considering the simplicial structure of $\mathcal D$, we obtain a neighbour of type $\hat a$ of $x^\cK_3,x^\cK_4,$ and $x^\cK_5$, which is a contradiction.

\begin{figure}[h]
	\centering
	\includegraphics[scale=1]{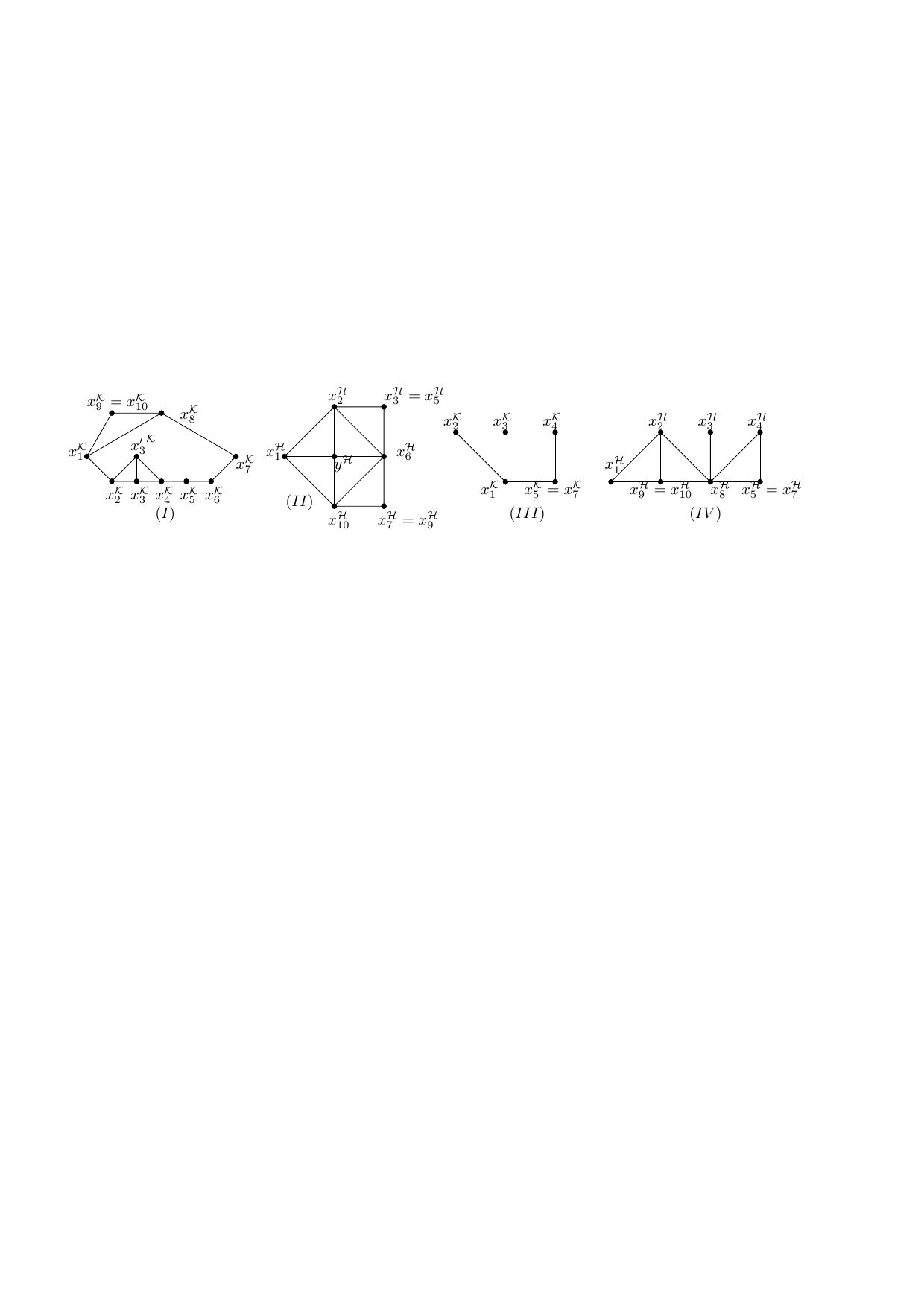}
	\caption{}
	\label{fig:2atypeiicase2}
\end{figure}

Now we assume that none of $D_3,D_4,D_5,D_6$ are good, and in particular $C_i=D_3$ is not good. Consider sequences $\Theta_1=(C_2,C_3,C_4,C_5,C_6)$, and $\Theta_2=(C_8,C_9,C_{10})$. 
Note that $D_3$ cannot occur twice in $\Theta_2$. 
Hence 
$D_3$ occurs at least once in both $\Theta_1$ and $\Theta_2$, or $D_3$ occurs twice in $\Theta_1$ but does not occur in $\Theta_2$.

\medskip
\noindent
\underline{Case 1.2: $D_3$ occurs at least once in both $\Theta_1$ and $\Theta_2$.} Then $C_{8}=D_1,$ and $C_{10}=D_3$. If $C_5=B_0$, 
then, by considering $\Pi_{\whC_7}(P)$, we can choose $P_7=e_7^*$. Denting $x_7$ to~$C_1$, we 
reduce to Proposition~\ref{prop:IIa1} by Lemma~\ref{lem:admissible}(\ref{replacing1}). 

Otherwise, since $D_5$ is not good, we have 
$C_2=D_3,C_4=D_1,C_5=B$ or $B_3,$ and $C_6=D_1$ or $D_2$.
Let $\ch$ be the type I sub-arrangement of $\ca$ with $\ka_\ch(C_1)=X_{22},\ka_\ch(D_3)=X_{11},\ka_\ch(B_5)=X_{21}$ (we reflect Figure~\ref{fig:2}). We declare that $P_6$ is hosted by $X_{22}$ if $C_6=D_2$, and $P_i$ is hosted by $X_{12}$ if $C_i\in \{B,B_3,C_7\}$. 

If $C_6\neq D_2$, then we have $x^\ch_3=x^\ch_4=x^\ch_5=x^\ch_6=x^\ch_7=x^\ch_8=x^\ch_9$.
By Lemma~\ref{lem:bowtie}, the cycle $x_{10}^\ch x^\ch_1 x^\ch_2 x^\ch_3$ has angle $\frac{\pi}{4}$ at $x^\ch_2$, and so $\omega$ is admissible by Lemma~\ref{lem:lift}.

If $C_6=D_2$, then we have $x^\ch_3=x^\ch_4=x^\ch_5$, and $x^\ch_7=x^\ch_8=x^\ch_9$. We can assume that $\omega^\ch$ has angle $\frac{3\pi}{4}$ at $x^\ch_2$ and $x^\ch_{10}$, as before. 
Since $\bU_{12}$ is $\mathrm{CAT}(0)$, we obtain that $\omega^\ch$ bounds a minimal disc diagram in Figure~\ref{fig:2atypeiicase2}(II). If $y^\ch$ has type $\whX_{12}$, then  
we can choose $P^\ch_1$ homotopic in $\whX_{22}$, relative to the endpoints, to a path in $\whX_{22}\cap \whX_{12}$. 
Thus we can assume $P_1\subset \widehat e_6\cup\widehat e_5\cup\widehat e_4$. Consequently, $\Pi_{\widehat D_2}(P_1)$ is trivial. By considering $\Pi_{\widehat D_2}(P)$, it follows that we can choose $P_6$ to be trivial, which implies that the angle at~$x_6$ is $\frac{\pi}{4}$ and so $\omega$ is admissible as before. If $y^\ch$ has face type $\whX_{21}$, then
we can dent~$x_1$ to the decagon distinct from $C_1$ intersecting $B_5$ and reduce to the configuration from Figure~\ref{fig:2atypeiicase0}(I).

\medskip
\noindent
\underline{Case 1.3: $D_3$ occurs at least twice in $\Theta_1$ but does not occur in $\Theta_2$.} Then $C_2=C_4=D_3$ and $C_6=D_1$. We have $\{C_8,C_{10}\}\subset\{D_1,D_2\}$, since $D_4,D_5,D_6$ are not good. Let $\ch$ be the sub-arrangement as in Case 1.2. 
Then $P_5,P_6,P_7$ are hosted by $X_{12}$. We declare that $P_i$ with $C_i=D_1$ (resp.\ $D_2$) are hosted by $X_{12}$ (resp.\ $X_{22}$), and $P_9$ has the same host as $P_{10}$. If $C_8\neq D_2$ or $C_{10}\neq D_1$, then $\omega^\ch$ is a $5$-cycle $x_1^\ch x_2^\ch x_3^\ch x_4^\ch
x_5^\ch$ as in Figure~\ref{fig:2atypeiicase2}(III) with angle $\pi$ at  $x_3^\ch$ and angle  $\frac{\pi}{2}$ at $x_5^\ch$. Since $\bU_{12}$ is $\mathrm{CAT}(0)$, $\omega^\ch$ has angle $\frac{\pi}{4}$ at $x^\ch_2$, and we finish as before. If $C_8=D_2$ and $C_{10}=D_1$, then $x^\ch_5=x_6^\ch=x_7^\ch$ and so $\omega^\ch$ is a $7$-cycle $x_1^\ch x_2^\ch x_3^\ch x_4^\ch
x_5^\ch x_8^\ch x_{10}^\ch$, where $x^\ch_4$ and $x^\ch_8$ are neighbours. If $x^\ch_1, x^\ch_3$ are not neighbours, then, 
since $\bU_{12}$ is $\mathrm{CAT}(0)$, the path 
$\omega^\ch$ bounds the reduced disc diagram in Figure~\ref{fig:2atypeiicase2}(IV). 
Thus $x^\ch_3,x^\ch_4,x^\ch_5$ have a common neighbour, and by Lemma~\ref{lem:lift} so do $x_3,x_5$. Hence, by
Lemma~\ref{lem:admissible}(\ref{3x5}), $\omega$~is admissible.

\medskip
\noindent
\underline{Case 2: All even $C_i$ belong to $\{D_1,D_2\}$.}
Without loss of generality, we can assume that neither $C_3$ nor $C_5$ equals $B_1$ or $B_2$. Let $\cL$ be the type II sub-arrangement of~$\ca$ with $\ka_\cK(D_1)=\sX_{31},\ka_\cK(B)=\sX_{32},\ka_\cK(C_7)=\sX_{42}$. If $C_9\notin \{B_1,B_2\}$, and $\omega$ is not admissible, then $\hk_\cL(\omega)$ contradicts Corollary~\ref{cor:remove}. 

If $C_9\in \{B_1,B_2\}$, then $C_8=C_{10}$, and we consider the $8$-cycle $\omega^\cL_8$ obtained from $x_1^\cL\cdots x_8^\cL$ by replacing $x_3^\cL$ with its neighbour of type $\hat a$. Using Proposition~\ref{prop:8cycle diagram}, we obtain that $\omega$ is admissible by the same argument as in the special case of Case~1.1.

\section{353 square complexes}
\label{sec:353square}
We refer to Definition~\ref{def:353square} for the notion of a $353$-square complex. 

\begin{defin}\label{def:stable}
	A $353$-square complex is \emph{stable} if for any set $S\subset \mathcal A$ or $\mathcal D$ of pairwise close vertices, there is a finite subset $S'\subset S$, such that if a vertex $v$ is close to or a neighbour of the entire $S'$, then $v$ is close (or equal) to or a neighbour of the entire~$S$.
\end{defin}

\begin{defin}\label{def:wide}
	A $353$-square complex is \emph{wide} if any simplex of $X^\boxtimes$ is contained in a simplex $\sigma$ with $|\sigma^0\cap \mathcal A|\geq 2$, and $|\sigma^0\cap \mathcal D|\geq 2$.
\end{defin}

The goal of this section is to prove Theorem~~\ref{thm:contractible intro}.

\subsection{Disc diagrams}
\label{subsec:diagram}

\begin{rem}
	\label{rem:squares}
	From Definition~\ref{def:axioms}(\ref{def:axioms1}) it follows that if $ad_1a_1d_2$, $ad_1a_2d_2$, and $da_1d_1a_2$  are squares, then 
	$d_1a_1d_2a_2$ and $da_1d_2a_2$ are squares (see Figure~\ref{fig:border}(1)). Consequently, if a minimal disc diagram $D$ in $X$ contains a cube corner, then it cannot contain the additional two squares in Definition~\ref{def:axioms}(\ref{def:axioms3}), since otherwise we could replace the five squares by three squares.
\end{rem}

\begin{figure}[h]
	\centering
	\includegraphics[scale=0.9]{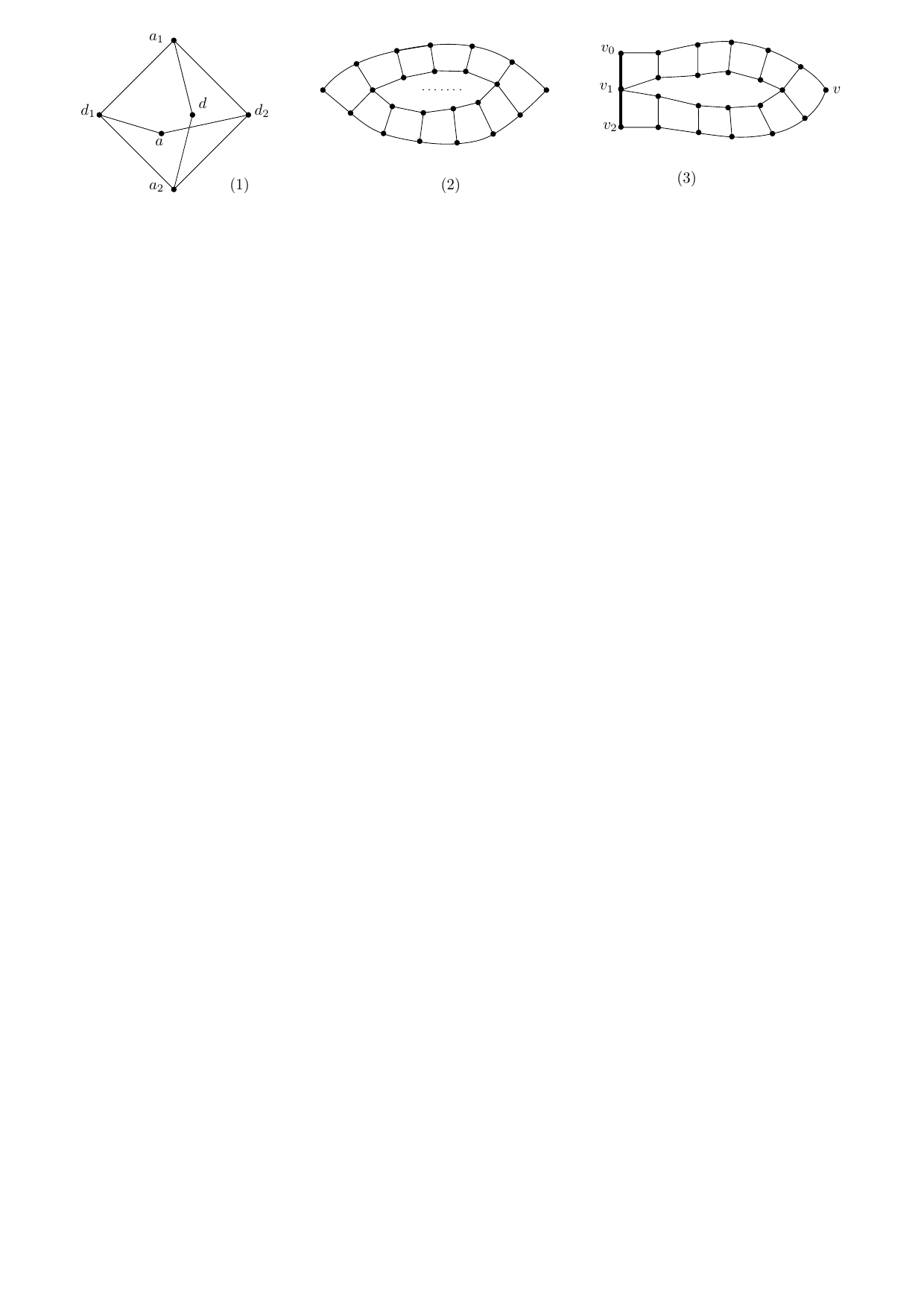}
	\caption{}
	\label{fig:border}
\end{figure}

\begin{defin}
	\label{def:inter}
	Let $D$ be a square disc diagram homeomorphic to a disc. We say that $D$ is \emph{$n$-bordered}, for $n\geq 0$, if 
	\begin{itemize}
		\item
		there are exactly $n$ vertices of $\partial D$ not contained in any interior edge, and 
		\item
		each of the remaining vertices of $\partial D$ is contained in exactly one interior edge.
	\end{itemize}
	See Figure~\ref{fig:border}(2) for an example of a $2$-bordered disc diagram.

	We say that $D$ is an \emph{inter-osculation} (see Figure~\ref{fig:border}(3)) if there are consecutive vertices $v_0,v_1,v_2\in \partial D$, and another vertex $v\in \partial D$, such that 
	\begin{itemize}
		\item
		$v_0,v_2,v$ are not contained in any interior edge, and 
		\item
		$v_1$ is contained in at least one interior edge, and
		\item
		each of the vertices of $\partial D\setminus \{v_0,v_1,v_2,v\}$ is contained in exactly one interior edge.
	\end{itemize}
\end{defin}

A  \emph{hyperplane} in a square disc diagram is a maximal immersed $1$-manifold obtained by connecting the midpoints of opposite edges (called \emph{dual edges}) in consecutive squares, see for example \cite[\S2.4]{sageev}. The \emph{carrier} of an embedded hyperplane $h$ is the union of squares intersecting~$h$. 

\begin{lem} 
	\label{lem:diagram}
	Let $f\colon D\to X$ be a minimal disc diagram in a $353$-square complex. We equip $D^1$ with the path metric such that each edge has length $1$.
	\begin{enumerate}[(i)]
		\item 
		\label{diagram:ii}
		$D$ is not $1$- or $2$-bordered. If $D$ is an inter-osculation, then it is a cube corner.
		\item 
		\label{diagram:i}
		Hyperplanes in $D$ are embedded, not homeomorphic to circles, and pairwise intersect at most once.
		\item 
		\label{diagram:iii}
		A geodesic in $D^1$ intersects each hyperplane at most once.
		\item 
		\label{diagram:iv}
		If $\partial D=\alpha\beta^{-1}$, where $\alpha,\beta$ are geodesics in $D^1$ with first edges $da_1,da_2$, then either $d,a_1,a_2$ lie in a square of $D$, or not in a square but in a  cube corner of~$D$. In the latter case $f(a_1),f(a_2)$ are not close.
	\item
		\label{diagram:v}
		In (iv), the vertex $u$ opposite to $d$ in the top square (resp.\ cube corner) lies on a geodesic in $D^1$ with the same endpoints as $\alpha$.
	\end{enumerate}
\end{lem}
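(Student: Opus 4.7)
I would prove (i)–(v) simultaneously by strong induction on $\mathrm{area}(D)$. In every case the strategy is the same: assume the conclusion fails, extract a local configuration in $D$, and apply one of the axioms of Definition~\ref{def:axioms} (or its $\mathcal A/\mathcal D$-swap from (\ref{def:axioms0})) to produce a disc diagram of strictly smaller area with the same boundary, contradicting minimality.

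\textbf{Steps (ii) and (iii).} These are the standard cube-complex-style properties, made accessible via axiom~(\ref{def:axioms1}). For (ii), if a hyperplane self-intersects or two hyperplanes cross twice, then taking an innermost monogon/bigon produces near each crossing two squares of the form $dad_1a'$ and $dad_2a'$ sharing the opposite edges $da$ and $d_1a'$; axiom~(\ref{def:axioms1}) supplies a square $d_1ad_2a'$ so that the bigon can be filled with one fewer square, contradicting minimality. Hyperplane circles are excluded by applying the same argument to the innermost one, running the induction on the area enclosed. Given (ii), item (iii) is immediate: a geodesic crossing a hyperplane twice could be shortcut through the hyperplane's carrier, which separates $D$.

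\textbf{Steps (iv) and (v).} Let $h_1,h_2$ be the hyperplanes dual to $da_1,da_2$. By (iii), $\alpha$ crosses $h_1$ exactly once and $\beta$ crosses $h_2$ exactly once; by (ii), $h_1$ and $h_2$ meet in at most one square. If they meet, then $d,a_1,a_2$ lie in a square $da_1d'a_2$ of $D$ (the square case of (iv)), and (v) holds with $u=d'$. Otherwise, chasing squares out of $d$ along $\alpha$ and $\beta$ with repeated uses of axiom~(\ref{def:axioms1}) and the reducedness of $D$ produces two squares $dad_1a_1$ and $dad_2a_2$ sharing the vertex $a$ opposite to $d$; these together with a third square (forced by minimality) form a cube corner at $d$. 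Axiom~(\ref{def:axioms5}) rules out the existence of a further square $d'a_1da_2$, ensuring that this cube corner is minimal in~$X$, and axiom~(\ref{def:axioms3}) combined with Remark~\ref{rem:squares} yields the conclusion that $f(a_1),f(a_2)$ are not close. For (v), the top vertex $u=a$ of the cube corner, followed by the remainders of $\alpha$ past the first edge, gives an edge-path of the same length as $\alpha$; (iii) shows it is still a geodesic.

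\textbf{Step (i).} This is where axiom~(\ref{def:axioms4}) enters. A $1$- or $2$-bordered minimal disc diagram, or an inter-osculation that is not already a cube corner, has a combinatorial shape that, after applying (ii)–(iii) to control its hyperplanes, matches the template $E$ of Figure~\ref{fig:353def}(4a)/(4b). The inductive hypotheses for (ii)–(v) on its sub-diagrams guarantee that each of its constituent cube corners is minimal, so axiom~(\ref{def:axioms4}) produces a replacement of shape $E'$ with the same boundary. An elementary count of squares shows $\mathrm{area}(E') < \mathrm{area}(E)$, contradicting minimality of $D$.

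\textbf{Main obstacle.} The genuine difficulty lies in the coupling between (iv)–(v) and (i): the local analysis in (iv)–(v) needs (ii)–(iii) on subdiagrams, while (i) uses the output of (iv)–(v) to identify exactly where axiom~(\ref{def:axioms4}) can be triggered. The delicate combinatorial work is matching the local pictures in $D$ to the templates in Figure~\ref{fig:353def}, verifying in each case that the cube corners encountered are minimal (so that axioms~(\ref{def:axioms5}), (\ref{def:axioms3}), (\ref{def:axioms4}) become available), and ensuring the induction hypothesis is strong enough to sustain the simultaneous argument.
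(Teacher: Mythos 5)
Your plan inverts the logical order that the axioms actually support, and the inversion hides the real gap. You propose to prove (ii)–(iii) first, using only axiom (1) of Definition~\ref{def:axioms} via an innermost monogon/bigon reduction, and to leave (i) for last. But axiom (1) concerns two squares sharing the \emph{corner} path $a'da$ (not a pair of opposite edges, as you write), and the reduction it permits is the replacement of two squares glued along such a corner, with the corner vertex interior of degree $2$, by a single square. An innermost bigon between two hyperplanes gives you nothing of this shape: the region enclosed by the two hyperplane segments can contain many squares (other hyperplanes may cross it without creating smaller bigons), and even when the two crossing squares are adjacent, the local picture is not the axiom (1) configuration. The standard bigon-removal argument for square complexes uses a flag/no-double-square link condition, which is precisely what $353$-square complexes do not assume. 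Indeed, the paper goes the other way: part (i) is proved first, by a minimal-area counterexample analysis run simultaneously over $1$-bordered, $2$-bordered and inter-osculation diagrams, using axioms (2), (3), (4) and Remark~\ref{rem:squares} to pin down vertex degrees until Definition~\ref{def:axioms}(4) is violated; and only then is (ii) deduced, because self-crossing, circular, or doubly-crossing hyperplanes produce exactly the bordered subdiagrams that (i) forbids. Inside the proof of (i) the exclusion of a self-crossing hyperplane is itself part of the area induction, not a consequence of axiom (1).

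This means your step (i) — "the shape matches the template $E$ of Figure~\ref{fig:353def}(4a)/(4b)" — is not a routine matching but is the entire technical content of the lemma: one must build up the configuration square by square (hyperplane $h$ dual to $e$, the forced cube corner, the second hyperplane $h'$, the degree restrictions at the two interior vertices coming from axioms (2), (3) and Remark~\ref{rem:squares}, and the minimality of the cube corners as subdiagrams of a minimal diagram) before axiom (4) can be invoked. Deferring this as "delicate combinatorial work" leaves the proof without its core. Two smaller inaccuracies: in (iv), axiom (2) is what forbids a square on the path $a_1da_2$ (you assign this role to minimality and invoke axiom (3) for non-closeness); and ruling out a square containing $a_1,a_2$ but not $d$ additionally needs Corollary~\ref{cor:squares} (every $4$-cycle is a square), which itself rests on part (ii) — so the dependence on the hyperplane statements must be set up carefully, which your ordering does not do.
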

\begin{figure}[h]
	\centering
	\includegraphics[scale=1]{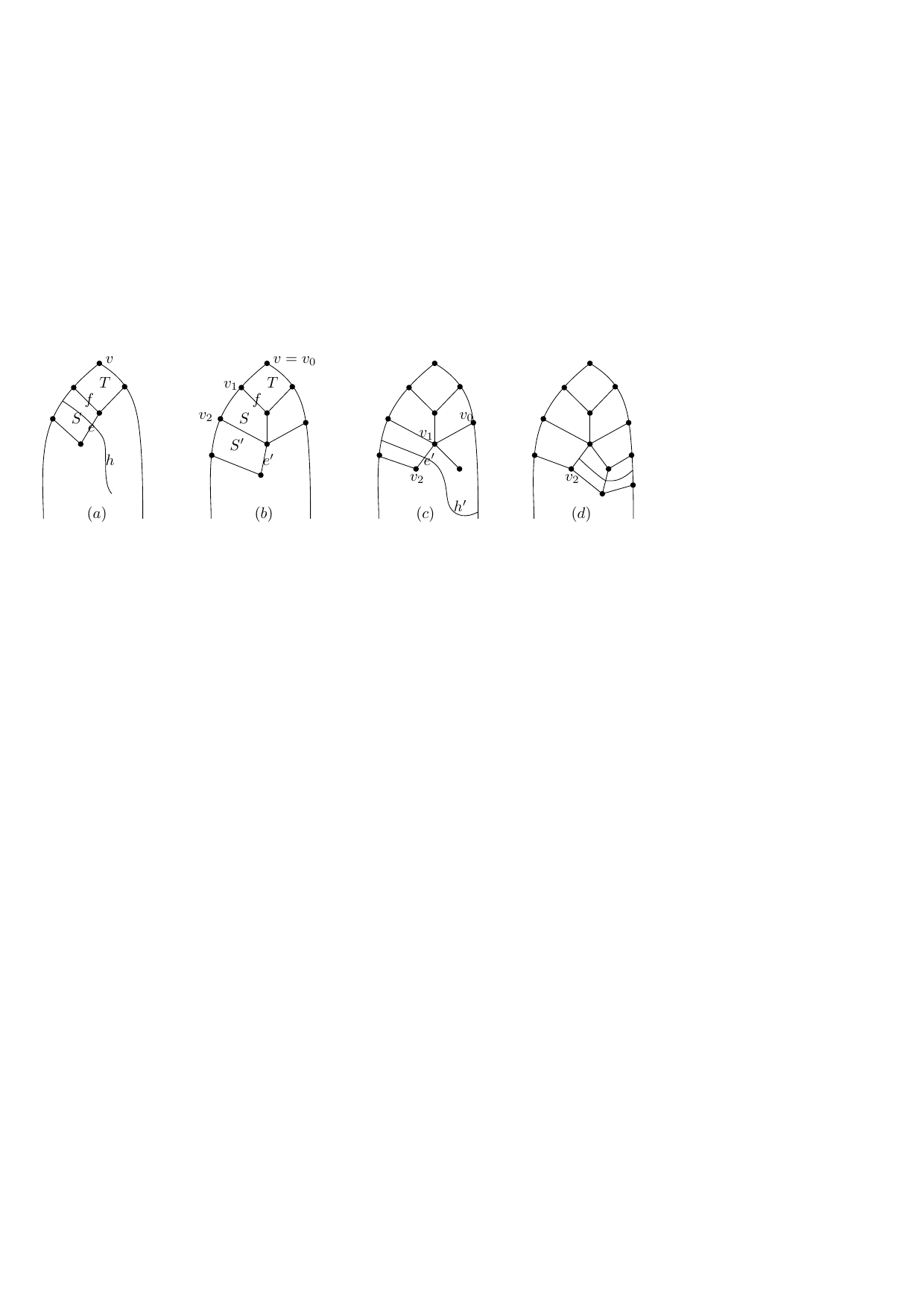}
	\caption{}
	\label{fig:353diagram}
\end{figure}
\begin{proof}
	For part (\ref{diagram:ii}), to reach a contradiction, let $D\to X$ be a minimal disc diagram with the smallest area that is 
	\begin{itemize}
		\item $1$-bordered, or
		\item $2$-bordered, or 
		\item is an inter-osculation but not a cube corner. 
	\end{itemize}
	Let $v$ be the vertex of $\partial D$ from Definition~\ref{def:inter} in the third case, or a vertex not contained in an interior edge, in the first two cases. Let $T\subset D$ be the square containing $v$, and let $f\subset T$ be an edge not containing $v$. Let $S\subset D$ be the second square containing $f$, and let $e$ be the second edge in $S$ containing the vertex of~$T$ opposite to $v$ (see Figure~\ref{fig:353diagram}(a)). By Definition~\ref{def:axioms}(\ref{def:axioms1}), we have that $e$ is not contained in~$T$. Consider the hyperplane $h$ of $D$ dual to $e$. Note that $h$ cannot self-intersect, since we would obtain a smaller area $1$-bordered diagram. Let $p$ be the intersection point of $h$ and $\partial D$ outside $S$. Note that when $D$ is an inter-osculation, $p$ is not the midpoint of a thickened edge in Figure~\ref{fig:border}(3). If there would be a path in $\partial D$ from $p$ to $f\cap \partial D$ whose all vertices are contained in the interior edges of $D$, then we would obtain a smaller area $2$-bordered diagram. Thus there is a path in $\partial D$ from $p$ to $f\cap \partial D$ whose all vertices except for $v$ are contained in the interior edges of $D$. This gives us an inter-osculation with $v_0=v, v_1=f\cap \partial D,$ and $v_2$ the other vertex of $S\cap \partial D$. By the minimality assumption on $D$, this inter-osculation is a cube corner and so $e$ lies in a square sharing an edge with $T$. See Figure ~\ref{fig:353diagram}(b).  Note that this cube corner is minimal, since it is a subdiagram of a minimal diagram.
	
	Let $S'$ be the remaining square of $D$ sharing an edge with $S$, and let $e'$ be its edge intersecting~$e$ but not contained in $S$. By Definition~\ref{def:axioms}(\ref{def:axioms5},\ref{def:axioms3}), and Remark~\ref{rem:squares}, the degree of the vertex $v_1=e\cap e'$ is distinct from $3$ and $4$. 
	Let $h'$ be the hyperplane of~$D$ dual to $e'$. As before, $h'$ does not self-intersect, and we obtain an inter-osculation with $v_0,v_1,v_2$ as in Figure~\ref{fig:353diagram}(c). By the minimality assumption on $D$ this inter-osculation is a cube corner and so $D$ contains the subdiagram in Figure~\ref{fig:353diagram}(d). Analogically, interchanging the left and the right side of the diagram, we obtain that the degree at $v_2$ equals $3$. This contradicts Definition~\ref{def:axioms}(\ref{def:axioms4}).
	
	Hyperplanes not satisfying part (\ref{diagram:i}) give rise to disc diagrams excluded by part~(\ref{diagram:ii}).
	
	For part (\ref{diagram:iii}), note that the $1$-skeleta of hyperplane carriers are isometrically embedded in $D^1$. Indeed, if vertices of the carrier were closer in $D^1$ than in the $1$-skeleton of the carrier, then they would be separated in the carrier by two hyperplanes contained in the same hyperpane of $D$, which would contradict part (\ref{diagram:i}). 
	Thus a path intersecting twice a hyperplane can be shortened by replacing its subpath by a path in the carrier.
	
	In part (\ref{diagram:iv}), if $d,a_1,a_2$ do not lie in a square, then, by part (\ref{diagram:iii}), the hyperplanes dual to $da_1,da_2$ need to intersect elsewhere. Thus they form an inter-osculation, which by part (\ref{diagram:ii}) is a cube corner. Consequently, $a_1$ and $a_2$ are not close by Definition~\ref{def:axioms}(\ref{def:axioms5}) and Corollary~\ref{cor:squares} below. 
	
	Part (v) follows from part (iii), since if the geodesic from $u$ to the last vertex of $\alpha$ and $\beta$ 
    was intersected by any hyperplane $h$ of the top square or cube corner, then $h$ would intersect twice $\alpha$ or $\beta$.
\end{proof}

\begin{cor} 
	\label{cor:squares}
	Let $X$ be a $353$-square complex. Then all $4$-cycles in $X$ are squares.
\end{cor}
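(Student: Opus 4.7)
The plan is to assume a $4$-cycle $\omega$ in $X$ fails to be a square, take a minimal disc diagram bounded by $\omega$, and derive a contradiction from the two alternatives provided by Lemma~\ref{lem:diagram}(iv),(v). Non-embedded edge-loops of length $4$ in the bipartite graph $X^1$ are nullhomotopic in $X^1$ (they traverse an edge forward and back) and need not be considered, so write $\omega = d a_1 d' a_2$ with all vertices distinct, $d,d'\in\mathcal{D}$, $a_1,a_2\in\mathcal{A}$. Let $f\colon D\to X$ be a minimal disc diagram with $\partial D=\omega$, and split $\partial D=\alpha\beta^{-1}$ with $\alpha=da_1d'$ and $\beta=da_2d'$. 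Because $D^1$ is bipartite and $d,d'$ lie in the same part, any path in $D^1$ between them has length $\ge 2$, so both $\alpha$ and $\beta$ are geodesics with first edges $da_1$ and $da_2$.

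Lemma~\ref{lem:diagram}(iv) gives two cases. In the square case, $d,a_1,a_2$ lie in a square $T$ of $D$ with fourth vertex $d''\in\mathcal{D}$ opposite $d$. Applying Lemma~\ref{lem:diagram}(v), $d''$ lies on some length-$2$ geodesic $d\,{-}\,?\,{-}\,d'$ in $D^1$; the middle vertex of such a geodesic is of type $a$, so $d''=d'$, whence $T$ maps to a square of $X$ with boundary $\omega$, and we are done.

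The real work is ruling out the cube corner case. There $d,a_1,a_2$ lie in a minimal cube corner $C'\subset D$ but in no square of $D$. Since $d\in\mathcal{D}$ is a neighbour of the centre, the centre has type $a$; call it $a$. Checking the neighbours of $d$ in $C'$, the two squares of $C'$ at $d$ must be $ada_1 d_{1'}$ and $ada_2 d_{2'}$, and the third square of $C'$ is $ad_{1'}u d_{2'}$, where $u\in\mathcal{A}$ is the vertex of $C'$ opposite $d$ (distinct from $a,a_1,a_2$). By Lemma~\ref{lem:diagram}(v), $u$ lies on some length-$2$ geodesic from $d$ to $d'$ in $D^1$; as $u\in\mathcal{A}$ and $u\ne d$, $u$ must be the middle, hence a neighbour of $d$ in $D^1$.

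I would then reach a contradiction by inspecting the link $\lk(d,D)$. Since $d\in\partial D$ and $D$ is a disc, $\lk(d,D)$ is a combinatorial arc whose endpoint vertices correspond to the two boundary edges $da_1,da_2$, and whose edges correspond to squares of $D$ at $d$. The two squares of $C'$ at $d$ already contribute the link edges $(a_1,a)$ and $(a,a_2)$, producing the full arc $a_1\,{-}\,a\,{-}\,a_2$. Any further square $T$ of $D$ at $d$ would (i) contain $da_1$ or $da_2$, impossible as each boundary edge is in exactly one square; (ii) contain $da$, impossible as the interior edge $da$ is already in two squares of $C'$; or (iii) give a link edge with at least one endpoint outside $\{a_1,a,a_2\}$, disconnecting the graph or forcing a vertex of degree $\ge 3$, in either case destroying the arc structure with endpoints $a_1,a_2$. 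Hence $d$'s only $D^1$-neighbours are $a_1,a_2,a$, and $u$ is not among them, contradicting the adjacency forced by Lemma~\ref{lem:diagram}(v). The hard part is precisely articulating this planar/link obstruction cleanly, since the axioms (\ref{def:axioms5}),(\ref{def:axioms3}) of Definition~\ref{def:353square} are not used directly here; instead we exploit the disc topology of $D$ combined with the rigidity of the cube corner at a boundary vertex.
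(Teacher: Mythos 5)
Your argument is correct, but it takes a genuinely different (and heavier) route than the paper's. The paper's proof is a two-line hyperplane count: by the part of Lemma~\ref{lem:diagram} asserting that hyperplanes of a minimal diagram are embedded arcs that pairwise meet at most once (part~(\ref{diagram:i})), a minimal diagram $D$ bounded by an embedded $4$-cycle has exactly two hyperplanes and hence exactly one square, whose image is the desired square of $X$; parts (\ref{diagram:iv}) and (\ref{diagram:v}) are never touched. You instead run the square/cube-corner dichotomy of part~(\ref{diagram:iv}) together with part~(\ref{diagram:v}): the square case is settled by bipartite parity (the vertex opposite $d$ must be $d'$), and the cube-corner case is excluded by your link computation at the boundary vertex $d$ --- since $\lk(d,D)$ is an arc whose endpoints correspond to the two boundary edges and the two cube-corner squares already fill it as $a_1$--$a$--$a_2$, the vertex $u$ cannot be adjacent to $d$, contradicting the adjacency forced by part~(\ref{diagram:v}). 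That planarity argument is sound (it uses only that $D$ is a disc with embedded boundary, which holds here). Two remarks on the comparison: the paper proves the final clause of part~(\ref{diagram:iv}) (that $f(a_1),f(a_2)$ are not close) by citing Corollary~\ref{cor:squares} itself, so your proof avoids circularity only because you use nothing beyond the square/cube-corner dichotomy in (iv), which rests solely on parts~(\ref{diagram:ii}) and~(\ref{diagram:iii}); this is worth stating explicitly. And since the paper's hyperplane count is shorter and needs strictly less of Lemma~\ref{lem:diagram}, your route buys no extra generality, though it does give a nice self-contained picture of how (iv) and (v) pin down the local structure of $D$ at a boundary vertex.
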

\begin{proof} Let $\alpha$ be a $4$-cycle in $X$ and $D\to X$ a minimal disc diagram bounded by~$\alpha$. By Lemma~\ref{lem:diagram}(\ref{diagram:i}), $D$ has only $2$ hyperplanes, which moreover intersect at most once. Thus $D$ consists of a single square.
\end{proof}

\begin{cor}
	\label{cor:exists_a_easy}
	Let $X$ be a $353$-square complex and let $a_1,a_2,a_3$ be pairwise close. Then there exists $d$ that is a neighbour of all $a_i$.
\end{cor}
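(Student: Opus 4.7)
The plan is to apply Lemma~\ref{lem:diagram}(\ref{diagram:iv},\ref{diagram:v}) to a minimal disc diagram bounded by a suitable hexagon, using the closeness of $a_1,a_2$ to rule out the cube-corner alternative of the lemma.

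First, I would choose for each pair $(a_i,a_j)$ a $\mathcal{D}$-vertex $d_{ij}$ in a square witnessing closeness, so that $d_{ij}$ is adjacent in $X^1$ to both $a_i$ and $a_j$. Several easy cases are immediate: if two of the chosen $d_{ij}$ coincide, that vertex is already a common neighbour of all three $a_i$; and if $d_{12}\sim a_3$, $d_{13}\sim a_2$, or $d_{23}\sim a_1$ in $X^1$ (where $\sim$ denotes adjacency), then the relevant $d_{ij}$ is a common neighbour. Assuming none of these shortcuts occurs, $\omega:=a_1 d_{12} a_2 d_{23} a_3 d_{13}$ is an embedded $6$-cycle in $X^1$, and by simple connectedness of $X$ it bounds a minimal disc diagram $f\colon D\to X$.

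Next I would verify $|d_{12},a_3|_{D^1}=3$: since every edge of $D$ maps to an edge of $X^1$ and $d_{12}\not\sim a_3$ in $X^1$, these vertices are non-adjacent in $D^1$, bipartiteness then forces their $D^1$-distance to be at least $3$, and $\omega$ realises equality through its two length-$3$ boundary paths. Consequently $\alpha:=d_{12}a_1d_{13}a_3$ and $\beta:=d_{12}a_2d_{23}a_3$ are geodesics in $D^1$ with $\partial D=\alpha\beta^{-1}$, and Lemma~\ref{lem:diagram}(\ref{diagram:iv}) applies at the vertex $d_{12}$ with first-edge endpoints $a_1,a_2$.

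I would then observe that the cube-corner alternative of that lemma is impossible here: it would force $f(a_1)=a_1$ and $f(a_2)=a_2$ not to be close in $X$, contradicting the hypothesis on $a_1,a_2$. Hence $d_{12},a_1,a_2$ lie in a single square of $D$, which by type considerations must be of the form $d_{12}a_1 d^{**} a_2$ with $d^{**}\in\mathcal{D}$; its image in $X$ is a square, so $d^{**}\sim a_1,a_2$ in $X^1$. Finally, Lemma~\ref{lem:diagram}(\ref{diagram:v}) places $d^{**}$, opposite to $d_{12}$ in this top square, on a length-$3$ geodesic in $D^1$ from $d_{12}$ to $a_3$; bipartiteness (together with $d^{**}\neq d_{12}$) forces $d^{**}$ to lie at distance $1$ from $a_3$, yielding $d^{**}\sim a_3$ in $X^1$. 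Thus $d^{**}$ is adjacent in $X^1$ to all of $a_1,a_2,a_3$, as required. The only substantive step is the exclusion of the cube-corner case, which is immediate from the closeness of $a_1,a_2$; everything else is bipartite distance bookkeeping between $D^1$ and $X^1$.
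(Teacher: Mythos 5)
Your proof is correct and is essentially the paper's argument: the paper also forms the hexagon $a_1d_{12}a_2d_{23}a_3d_{13}$ from witnesses of closeness, takes a minimal disc diagram, and applies Lemma~\ref{lem:diagram} — though it argues by contradiction, first using the hyperplane count of Lemma~\ref{lem:diagram}(\ref{diagram:i}) to identify the whole diagram as a cube corner and then invoking part~(\ref{diagram:iv}) to contradict the closeness of $a_1,a_2$. Your version runs the same dichotomy of part~(\ref{diagram:iv}) directly (closeness of $a_1,a_2$ kills the cube-corner branch) and then extracts the common neighbour as the opposite corner of the resulting square via part~(\ref{diagram:v}) and bipartite parity, so the only difference is direct versus by contradiction, with~(\ref{diagram:v}) replacing the hyperplane-count step.
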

\begin{proof} 
	Assume by contradiction that $a_1,a_2,a_3$ do not have a common neighbour. Then we have an embedded $6$-cycle in $X$ passing through $a_1,a_2,a_3$. By Lemma~\ref{lem:diagram}(\ref{diagram:i}), its minimal disc diagram $D\to X$ has 3 hyperplanes and 3 squares, and so it is a cube corner. Moreover, $a_i$ are not contained in the interior edges of $D$. By Lemma~\ref{lem:diagram}(\ref{diagram:iv}), $a_1,a_2$ are not close, which is a contradiction.
\end{proof}

\subsection{Structure of downward links}
\label{subsec:353contractibility}
We fix from now on a `base' vertex $w$ of a $353$-square complex $X$. Let $S^k=S^k(w)$ denote the set of vertices of $X$ at distance~$k$ from $w$ in $X^1$, and let $B^k$ be the subgraph of~$X^1$ induced on the union of $S^l$ over $0\leq l\leq k$. We also suppose that $X$ is stable.
Given $d\in S^k$, we define $p(d)$ as the set of all neighbours of $d$ in $S^{k-1}$.

\begin{lem}
	\label{lem:easy_projection}
	Let $A\subset p(d)$ be a set of pairwise close vertices.
    Then there exists $d'\in S^{k-2}$ that is a neighbour of all the elements of $A$. 
\end{lem}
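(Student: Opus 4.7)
The plan is to reduce to a finite subproblem via stability and then treat it by an argument centred on the disc-diagram Lemma~\ref{lem:diagram}, inducting on the size.

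Applying Definition~\ref{def:stable} to $A$ produces a finite $A_0\subset A$ such that any vertex is a neighbour of every element of $A_0$ iff it is a neighbour of every element of $A$; the ``close'' clause is vacuous here since $A\subset\mathcal A$ whereas the sought $d'\in S^{k-2}$ lies in $\mathcal D$, and closeness is only defined within $\mathcal A$ or within $\mathcal D$. So it suffices to produce $d'\in S^{k-2}$ adjacent to every element of a finite $A_0=\{a_1,\dots,a_n\}$.

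For the pair case $n=2$, the plan is to prepend $da_i$ to a geodesic from $a_i\in S^{k-1}$ to $w$ to obtain geodesics $\alpha_i\colon d\to w$ in $X^1$, and to consider a minimal disc diagram $D\to X$ bounded by $\alpha_1\alpha_2^{-1}$. Closeness of $a_1,a_2$ rules out the cube-corner alternative of Lemma~\ref{lem:diagram}(iv), so $d,a_1,a_2$ span a square in $D$ with fourth vertex $u$; Lemma~\ref{lem:diagram}(v) places $u$ on a $D^1$-geodesic from $w$ to $d$ of length $k$, so that $u$ lies at $D^1$-distance $k-2$ from $w$. The bipartite parity, together with $u\in\mathcal D$ and $u\sim a_1\in S^{k-1}$, then pins down $u\in S^{k-2}$.

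For $n\ge 3$ I would induct on $n$: the pair case produces $u_{ij}\in S^{k-2}$ common to each pair $a_i,a_j$, and the squares $d a_i u_{ij}a_j$ assemble around $d$ into cube corners (three squares at a time) with boundary $6$-cycles through $\{a_i\}\cup\{u_{ij}\}$. A minimal disc diagram of such a $6$-cycle, analysed via Lemma~\ref{lem:diagram}(i,ii) and Corollary~\ref{cor:squares}, either has area below $3$---supplying a direct edge between some $u_{ij}$ and a new $a_k$ and hence a triple common neighbour in $S^{k-2}$---or forces the cube corner around $d$ to be itself the minimal filling, in which case Definition~\ref{def:353square}(\ref{def:axioms4}) supplies an alternative filling whose distinguished interior vertex descends to $S^{k-2}$. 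Combining these triple common neighbours with Corollary~\ref{cor:exists_a_easy}-style reasoning then yields the inductive step to larger subsets of $A_0$. The hardest part is precisely the extremal subcase above, where the cube corner around $d$ is the minimal filling of its boundary $6$-cycle: no smaller diagram is available, the naive argument only recovers the known $d\in S^k$, and invoking Definition~\ref{def:353square}(\ref{def:axioms4}) together with careful parity and $D^1$-distance bookkeeping is essential to ensure the new interior vertex lands in $S^{k-2}$ and not back in $S^k$.
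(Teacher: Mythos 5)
Your stability reduction and the $n=2$ case are correct and coincide with the paper's argument (geodesics through $a_1,a_2$ from $d$ to $w$, Lemma~\ref{lem:diagram}(iv) with closeness excluding the cube-corner alternative, and Lemma~\ref{lem:diagram}(v) placing the fourth vertex in $S^{k-2}$). The gap is exactly in the case you flag as hardest: when the cube corner at $d$ built from the squares $da_iu_{ij}a_j$ is minimal. Invoking Definition~\ref{def:353square}(\ref{def:axioms4}) there does not work: that axiom is a statement about disc diagrams modelled on the specific complexes $E,E'$ of Figure~\ref{fig:353def}(4a),(4b), which are assembled from \emph{five} minimal cube corners and have a decagonal boundary (this is how it is used in Lemma~\ref{lem:maxcliques}, to give a common neighbour for an induced $5$-cycle in the closeness graph on $p(d)$); it says nothing about a single minimal cube corner with hexagonal boundary, and it does not supply any ``alternative filling'' of that hexagon. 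The paper resolves the minimal case by descending one more level: take a minimal disc diagram bounded by two geodesics from $a_2$ to $w$ whose first edges are $a_2u_{12}$ and $a_2u_{23}$, and apply Lemma~\ref{lem:diagram}(iv,v) again. If $u_{12},a_2,u_{23}$ lie in a square (fourth vertex in $S^{k-3}$), then Definition~\ref{def:353square}(\ref{def:axioms5}) contradicts the minimality of the cube corner at $d$, so a chord $a_ku_{ij}$ exists after all; if instead they lie in a cube corner of that diagram, its centre $d_0$ is shown via (v) to lie in $S^{k-2}$, and Definition~\ref{def:353square}(\ref{def:axioms3}) then makes $d_0$ a neighbour of all three $a_i$ (or again yields a chord). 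Axioms (\ref{def:axioms5}) and (\ref{def:axioms3}) appear nowhere in your plan, and without them, or without this second disc diagram around $a_2$, the minimal case is simply unresolved.

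The passage from $n=3$ to general $n$ is also too thin. ``Corollary~\ref{cor:exists_a_easy}-style reasoning'' can only produce a common neighbour somewhere in $X$, whereas the entire content of the lemma is that it lies in $S^{k-2}$; assembling triple common neighbours into an $n$-fold one in $S^{k-2}$ needs an actual mechanism. The paper's $n\ge 4$ step is again specific: induction gives $d_1,d_2,d_3\in S^{k-2}$ each adjacent to all $a_j$ but one; the square $d_1a_2d_3a_4$ shows $d_1,d_3$ are close, the $n=2$ case applied one level down (to $p(a_2)$) yields $a\in S^{k-3}$ adjacent to both, and the resulting square $ad_1a_2d_3$ together with Definition~\ref{def:353square}(\ref{def:axioms5}) forces the cube corner at $d$ to be non-minimal, producing a chord $a_id_i$ and hence the desired common neighbour. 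You would need to supply this (or an equivalent) argument rather than the sketch given.
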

\begin{proof}
	Assume first that $A$ is finite, that is, $A=\{a_i\}_{i=1}^n$.
    For $n=1$, there is nothing prove. For $n=2$,
	choose geodesics $\alpha, \beta$ from $d$ to the base vertex $w$ passing through $a_1,a_2$, and let $D\to X$ be a minimal disc diagram with boundary $\alpha\beta^{-1}$. By Lemma~\ref{lem:diagram}(iv), we have that $d,a_1,a_2$ lie in a square of $D$, whose remaining vertex belongs to $S^{k-2}$ by Lemma~\ref{lem:diagram}(v).

	Suppose now $n=3$. 
	By $n=2$ case, there are $d_1,d_2,d_3 \in S^{k-2}$ that are neighbours of both~$a_i$ distinct from $a_1$ (resp.\ $a_2,a_3$). Suppose without loss of generality that all~$d_i$ are distinct.
	Consider a minimal disc diagram $D$ bounded by geodesics from~$a_2$ to~$w$ via $d_1$ and $d_3$. By Lemma~\ref{lem:diagram}(iv,v), we have one of the following options. Suppose first that there is a square containing $d_1,a_2,d_3$ and a vertex $a\in S^{k-3}$. Then, by Definition~\ref{def:axioms}(\ref{def:axioms5}),
	the cube corner with vertices $d,a_i,d_i,$ is not minimal, and so for some $i$ there is an edge $a_id_i$, as desired. Second, suppose that there are two squares containing $a_2$ (and some $d_0$) in $D$. By Lemma~\ref{lem:diagram}(v), we have $d_0\in S^{k-2}$.
	By Definition~\ref{def:axioms}(\ref{def:axioms3}), we have either again an edge $a_id_i$, or $d_0$ is a neighbour of both $a_1$ and $a_3$ and so $d_0$ is the required~$d'$.
	
	If $n\geq 4$, arguing by induction, there are again distinct $d_1,d_2,d_3 \in S^{k-2}$ that are neighbours of all $a_i$ except for $a_1$ (resp.\ $a_2,a_3$). In particular, we have a square $d_1a_2d_3a_4$ and so $d_1,d_3$ are close. By $n=2$ case, there is a vertex $a\in S^{k-3}$ that is a neighbour of both $d_1,d_3$. As in the first case of the previous paragraph, applying Definition~\ref{def:axioms}(\ref{def:axioms5}), we deduce that for some $i$ there is an edge $a_id_i$.
	
	The case of infinite $A$ 
    follows from the stability of $X$.
\end{proof}

Note that the following result would be trivial if we had assumed that $X$ is wide.

\begin{cor}
	\label{cor:exists_a}
	Let $A\subset \mathcal A$ be a set of pairwise close vertices.
    Then there exists $d$ that is a neighbour of all 
    the elements of $A$.
\end{cor}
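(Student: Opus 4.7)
The plan is to bootstrap from Corollary~\ref{cor:exists_a_easy} (the three-vertex case) to arbitrary $A$, by inducting on $|A|$ to handle finite sets and then invoking the stability hypothesis to pass to the infinite case.

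For infinite $A$, Definition~\ref{def:stable} supplies a finite $A'\subset A$ such that every vertex close to or a neighbour of all of $A'$ is close to or a neighbour of all of $A$. A candidate $d\in\mathcal D$ is never close or equal to a member of $A\subset\mathcal A$, so the conclusion reduces to the statement that a common neighbour of $A'$ in $\mathcal D$ is automatically a common neighbour of $A$; thus it suffices to handle finite $A$.

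For finite $A$ of size $n$, I would induct on $n$. The cases $n\leq 2$ are immediate from the definition of closeness, and $n=3$ is Corollary~\ref{cor:exists_a_easy}. Assume $n\geq 4$ and that the result holds for smaller pairwise close sets in either $\mathcal A$ or $\mathcal D$ (the latter being legitimate by Definition~\ref{def:axioms}(\ref{def:axioms0})). The inductive hypothesis applied to each $A\setminus\{a_i\}$ yields $d_i\in\mathcal D$ adjacent to every $a_j$ with $j\neq i$. If some $d_i=d_j$ for $i\neq j$, that vertex is a common neighbour of $A$ and we are done. Otherwise, the $d_i$ are pairwise distinct, and for any $i\neq j$ we can pick two further indices $k,l\notin\{i,j\}$ (possible since $n\geq 4$) to form a $4$-cycle $d_ia_kd_ja_l$, which is a square by Corollary~\ref{cor:squares}; hence $d_i$ and $d_j$ are pairwise close. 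Applying Corollary~\ref{cor:exists_a_easy} with the roles of $\mathcal A$ and $\mathcal D$ swapped to $\{d_1,d_2,d_3\}$ produces $a\in\mathcal A$ adjacent to $d_1,d_2,d_3$.

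The final step is a cube corner argument, which is the main obstacle. I would assemble three squares of the form $a\,d_i\,a_l\,d_j$ (choosing the outer vertices $a_l$ with $l\notin\{i,j\}$ so that they are pairwise distinct, which is possible because we have $n\geq 4$ candidates) into a cube corner at $a$, and then mimic the concluding move of the $n\geq 4$ argument in Lemma~\ref{lem:easy_projection}: invoking Definition~\ref{def:axioms}(\ref{def:axioms5}) (or (\ref{def:axioms3})) together with the minimality of this cube corner forces the cube corner to be non-minimal unless one of the edges $a_id_i$ actually exists, contradicting the assumption that all $d_i$ are distinct. The delicate part is verifying that the cube corner can indeed be arranged to be minimal and that the correct axiom applies in this configuration; this is the direct analogue of the cube corner manipulation already carried out inside the proof of Lemma~\ref{lem:easy_projection}.
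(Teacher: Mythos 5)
Your reduction to finite $A$ via stability, and your setup for $n\geq 4$ (the vertices $d_i$ from the inductive hypothesis, their pairwise closeness via Corollary~\ref{cor:squares}, and the vertex $a$ from the $\mathcal D$-side version of Corollary~\ref{cor:exists_a_easy}) are all sound, but the concluding cube-corner step --- which you yourself flag as unverified --- is where the argument currently fails, for two reasons. First, the logic is inverted: non-minimality of your cube corner $C$ (centre $a$, squares $ad_1a_3d_2$, $ad_2a_1d_3$, $ad_3a_2d_1$) does not ``contradict the distinctness of the $d_i$''. Since the boundary $6$-cycle $d_1a_3d_2a_1d_3a_2$ is embedded, a filling with fewer than three squares must consist of two squares glued along a diagonal joining antipodal boundary vertices, i.e.\ an edge $a_id_i$ for some $i\in\{1,2,3\}$; such an edge is precisely the desired conclusion (that $d_i$ is then a neighbour of all of $A$), not a contradiction. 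Second, what actually has to be excluded is the case that $C$ is minimal, and for that you must exhibit a concrete square forbidden by Definition~\ref{def:axioms}(\ref{def:axioms5}); your write-up never identifies one, and nothing in the data you have assembled so far ($a$, $d_1,d_2,d_3$, and the three squares of $C$) produces it.

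The gap can be closed with one extra ingredient that your own setup already provides: since $n\geq 4$, the inductive hypothesis also gives $d_4$, a common neighbour of $A\setminus\{a_4\}$, and you may assume $d_4\neq d_1$ (otherwise that vertex is adjacent to every element of $A$ and you are done; similarly assume $a\notin\{a_1,a_2,a_3\}$ so that the three $4$-cycles of $C$ are embedded, hence squares). Then $d_4a_3d_1a_2$ is an embedded $4$-cycle, hence a square by Corollary~\ref{cor:squares}, containing the path $a_3d_1a_2$; applying Definition~\ref{def:axioms}(\ref{def:axioms5}) to the vertex $d_1$, which lies in exactly the two squares $ad_1a_3d_2$ and $ad_1a_2d_3$ of $C$, shows that $C$ cannot be minimal, and the dichotomy above then yields the edge $a_id_i$ and finishes the proof. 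With this repair your route is correct but genuinely different from the paper's, which is much shorter: the paper inducts on $|A|$, takes a common neighbour $d$ of $a_2,\ldots,a_n$, notes that if $d$ is not adjacent to $a_1$ then $d\in S^3(a_1)$, and applies Lemma~\ref{lem:easy_projection} with base vertex $w=a_1$ and $k=3$ to get a common neighbour in $S^1(a_1)$, thereby reusing the sphere machinery instead of re-running the cube-corner analysis in the absolute setting.
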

\begin{proof} By the stability of $X$, we can assume $A=\{a_i\}_{i=1}^n$, and we proceed by the induction on $n$. For the induction step, assume that we have already a common neighbour $d$ of all $a_i$ distinct from $a_1$. If $d$ is not a neighbour of $a_1$, then it belongs to $S^3(a_1)$. We then apply Lemma~\ref{lem:easy_projection} with $w=a_1$ and $k=3$.
\end{proof}

In view of Corollary~\ref{cor:exists_a}, in Lemma~\ref{lem:easy_projection} instead of assuming that there exists $d$ with $a_i\in p(d)$, we can just assume $a_i\in S^{k-1}$. Indeed, if $d$ is a common neighbour of $a_i$ from Corollary~\ref{cor:exists_a}, then $d$ either belongs to $S_{k-2}$, as required in Lemma~\ref{lem:easy_projection} or $d$ belongs to $S^k$, implying $a_i\in p(d)$.

\begin{lem} 
	\label{lem:2convex}
	Suppose that $a_1,a_2\in p(d)$ are not close, but are both close to a neighbour $a$ of $d$. 
	\begin{enumerate}[(i)]
		\item
		Then $a\in p(d)$ (i.e.\ $a\in S^{k-1}$). 
		\item
		If $a_1,a_2$ are both close to another neighbour $a'$ of $d$, then $a$ and $a'$ are close.
		\item
		If $d_1,d_2\in S^{k-2}$ are neighbours of $a'$ and $a_1,a_2$, respectively, then $d_1$ and $d_2$ are close.
	\end{enumerate}
\end{lem}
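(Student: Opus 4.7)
The plan is to prove the three parts in sequence, using the cube-corner structure forced by Lemma~\ref{lem:diagram}(iv) together with the axioms of Definition~\ref{def:axioms}.

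For part (i), I would take a minimal disc diagram $D\to X$ whose boundary is the concatenation of two geodesics from $d$ to $w$ with initial edges $da_1$ and $da_2$. Since $a_1,a_2$ are not close, Lemma~\ref{lem:diagram}(iv) forces $d,a_1,a_2$ to sit inside a minimal cube corner of $D$; writing $\tilde a\in\mathcal A$ for the third ``neighbour of $d$'' in this cube corner, Lemma~\ref{lem:diagram}(v) pins down $\tilde a\in S^{k-1}$. If $a=\tilde a$, the claim is immediate. Otherwise I would glue to this cube corner the two squares in $X$ witnessing $a$ close to $a_1$ and $a$ close to $a_2$, producing a disc diagram $E\to X$ matching Figure~\ref{fig:353def}(4a). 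Definition~\ref{def:axioms}(\ref{def:axioms4}) then provides a replacement $E'\to X$ with the same boundary and with a designated vertex $a''$ adjacent to $d$, so $a''\in S^{k-1}$; tracking the boundary data identifies $a''$ with $a$ and hence places $a\in S^{k-1}$.

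For (ii), I would first invoke (i) to place both $a$ and $a'$ in $S^{k-1}$. The triple $a,a',a_1$ is then pairwise close (with $d$ itself a common $\mathcal D$-neighbour of $a$ and $a'$), so Corollary~\ref{cor:exists_a_easy} supplies a common $\mathcal D$-neighbour $\delta_1$ of $a,a',a_1$, and analogously $\delta_2$ for $a,a',a_2$. Either $\delta_1=\delta_2$, in which case $a$ and $a'$ share the two $\mathcal D$-neighbours $d$ and $\delta_1$, or $\delta_1\neq\delta_2$; in either case the resulting $4$-cycle $a\delta_1 a'\delta_2$ (respectively $a d a' \delta_1$) is a square by Corollary~\ref{cor:squares}, so $a$ and $a'$ are close.

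For (iii), with (ii) in hand and Lemma~\ref{lem:easy_projection} applied to $\{a,a'\}\subset p(d)$, I would pick a common $\mathcal D$-neighbour $d^\star\in S^{k-2}$ of $a$ and $a'$. Exchanging the roles of $\mathcal A$ and $\mathcal D$ via Definition~\ref{def:axioms}(\ref{def:axioms0}), the same cube-corner analysis applied at the vertex $a'$ (with $d_1$ and $d^\star$ in $p(a')$) shows $d_1$ close to $d^\star$, and an analogous analysis through $a_1$ and $a_2$ yields $d^\star$ close to $d_2$. A final application of the $\mathcal A$-$\mathcal D$ interchanged form of (ii) then forces $d_1$ close to $d_2$.

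The main obstacle will be part (i): matching the configuration of the cube corner in $D$ together with the two squares at $a$ against the precise picture $E$ of Axiom~(\ref{def:axioms4}), and then identifying the vertex $a''$ produced by $E'$ with $a$. Verifying that no unwanted vertex identifications occur in $E\to X$, so that the glued cube corner remains minimal and the conclusion of the axiom truly delivers a neighbour of $d$ at level $k-1$, will likely rely on the minimality of $D$ (hence of the cube corner) and on the stability of $X$.
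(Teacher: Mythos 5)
Your part (i) is where the argument breaks. The configuration you build — one minimal cube corner at $d$ together with two extra squares recording that $a$ is close to $a_1$ and to $a_2$ — is precisely the configuration of Definition~\ref{def:axioms}(\ref{def:axioms3}) (Figure~\ref{fig:353def}(3)), not of Definition~\ref{def:axioms}(\ref{def:axioms4}): the complex $E$ in axiom (\ref{def:axioms4}) is a union of several cube corners sharing the vertex $d$, with a $10$-cycle boundary (this is how it is used in Lemma~\ref{lem:maxcliques} and verified in the proof of Theorem~\ref{thm:353}), so your five-square complex cannot be fed into it. Even if it could, the conclusion of axiom (\ref{def:axioms4}) is purely existential: it produces a new diagram $E'$ with the same boundary and \emph{some} vertex $f'(a')$ adjacent to $f(d)$; nothing in the axiom identifies that vertex with your given $a$, so "tracking the boundary data" cannot place $a$ itself in $S^{k-1}$. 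There is also a smaller omission: the squares witnessing closeness of $a$ with $a_1,a_2$ need not contain $d$, so they do not glue onto the cube corner along edges; one must first upgrade them, via Corollary~\ref{cor:squares}, to squares containing $a_1,d,a$ and $a_2,d,a$. The paper's proof does exactly this and then applies axiom (\ref{def:axioms3}) with the cube-corner centre in the role of the axiom's $a$ and your $a$ in the role of its $a'$; the conclusion is that $a$ is a neighbour of the cube-corner vertices $d_1,d_2\in S^{k-2}$, which gives (i) directly and, re-run for $a'$, the square $ad_1a'd_2$, which gives (ii). (Also note Lemma~\ref{lem:diagram}(v) controls the vertex opposite $d$, which lies in $S^{k-3}$; the centre is only seen to lie in $S^{k-1}$ after observing $d_1,d_2\in S^{k-2}$.)

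Parts (ii) and (iii) as written do not close either. In (ii) you assert that $a,a',a_1$ are pairwise close, but "close" means lying in a common square, and the closeness of $a$ and $a'$ is exactly the conclusion of (ii); a common neighbour $d$ is not enough, so Corollary~\ref{cor:exists_a_easy} cannot be invoked and the argument is circular. In (iii), the assertions "$d_1$ close to $d^\star$" and "$d^\star$ close to $d_2$" are not backed by any mechanism: running Lemma~\ref{lem:diagram}(iv) at $a'$ only tells you that in the bad case there is a cube corner with $d_1,d^\star$ (or $d_2,d^\star$) not close, and the contradiction must come from the vertex $d\in S^k$ — via the squares $da_1d_1a'$, $dad^\star a'$ (Corollary~\ref{cor:squares}) and axiom (\ref{def:axioms3}) in its $\mathcal A$--$\mathcal D$-interchanged form, which would force $d$ to be a neighbour of a vertex of $S^{k-3}$ — and you never produce this. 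Your final appeal to the interchanged form of (ii) also lacks its hypotheses: you would need two \emph{non-close} vertices of $p(a')$ both close to $d_1$ and to $d_2$, which you have not exhibited ($d$ and $d^\star$ are close, and $d\notin p(a')$). In fact, once part (i) is proved correctly, the interchanged form of (i) applied at $a'$ with $d$ in the role of the common close neighbour (it is close to $d_1$ and $d_2$ by the squares $da_1d_1a'$, $da_2d_2a'$) would put $d$ in $S^{k-2}$, a contradiction; this is essentially the paper's one-line proof of (iii), and your detour through $d^\star$ is both unnecessary and, as written, incomplete.
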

\begin{proof} By Lemma~\ref{lem:diagram}(iv,v), there is a minimal cube corner $C$ with boundary vertices $d_1a_1da_2d_2a_3$, where $d_1\in S^{k-2}$. Since $a_1$ and $a$ are close and $a$ and $a_2$ are close, they belong to the remaining squares needed to apply Definition~\ref{def:axioms}(\ref{def:axioms3}). This shows that $a$ is a neighbour of $d_1$ (and $d_2$) and so $a\in S^{k-1}$, proving (i).
	For (ii), we analogously obtain that $a'$ is a neighbour of $d_1$ and $d_2$. Thus $ad_1a'd_2$ is a square and so $a$ and $a'$ are close.
	
	For (iii), If $d_1,d_2$ are not close, then, by Lemma~\ref{lem:diagram}(iv,v), there is a minimal cube corner $C$ with boundary vertices $a^-d_1a'd_2a^+d'$ with $a^-\in S^{k-3}$. By Definition~\ref{def:axioms}(\ref{def:axioms3}), we have that $d\in S^k$ is a neighbour of $a^-$, which is a contradiction.
\end{proof}

	\begin{lem}
		\label{lem:graph}
		Let $G$ be a simplicial graph
		\begin{enumerate}
			\item of diameter $2$,
			\item without induced embedded  $4$-cycles, and
			\item whose all induced embedded $5$-cycles have a common neighbour.
		\end{enumerate}
		Let $A_1,A_2$ be the vertex sets of finite complete subgraphs of $G$. Then there is a vertex~$a$ of $G$ that is a neighbour of or equal to all of the elements of $A_1\cup A_2$.
	\end{lem}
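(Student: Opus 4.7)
The plan is to proceed by induction on $|A_1|+|A_2|$, with the cases $|A_1|+|A_2|\le 2$ handled directly by hypothesis~(1). For the inductive step, I first reduce to $A_1\cap A_2=\emptyset$ and neither clique contains a vertex adjacent-to-or-equal-to every element of the other (such a vertex dominates $A_1\cup A_2$). Setting $T(a)=\{b\in A_2:a\ne b,\,a\not\sim b\}$ for $a\in A_1$, hypothesis~(2) forces $\{T(a)\}_{a\in A_1}$ to be linearly ordered by inclusion, since choices $b_j\in T(a_j)\setminus T(a_{3-j})$ together with the clique edges $a_1a_2,b_1b_2$ would realise an induced $C_4$ on $\{a_1,a_2,b_1,b_2\}$. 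Choosing $a^*\in A_1$ with $T(a^*)$ maximal, either some $b\in A_2\setminus T(a^*)$ is adjacent to all of $A_1$ (done), or $T(a^*)=A_2$, placing $a^*$ in $A_1^{*}=\{a\in A_1:a\not\sim b\text{ for all }b\in A_2\}$; if $A_1^{*}\subsetneq A_1$, the inductive hypothesis on $(A_1^{*},A_2)$ produces a cover $w$, and for any $a\in A_1\setminus A_1^{*}$ with $a\sim b\in A_2$ the 4-cycle $w,b,a,a^*$ (diagonal $ba^*$ a non-edge by $a^*\in A_1^{*}$) forces $w\sim a$ via hypothesis~(2).

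This leaves the \emph{fully separated} case where $a\ne b$ and $a\not\sim b$ for all $a\in A_1, b\in A_2$. For $|A_1|=1$, $A_1=\{a\}$, I induct on $|A_2|$. The base $|A_2|=2$ with $A_2=\{b_1,b_2\}$ is the core: assuming no common cover exists, common neighbours $v_i$ of $\{a,b_i\}$ (provided by diameter~$2$) satisfy $v_i\not\sim b_{3-i}$; the 4-cycle $v_1,v_2,b_2,b_1$ with both diagonals $v_1b_2,v_2b_1$ non-edges forces $v_1\not\sim v_2$ by hypothesis~(2), whereupon the pentagon $v_1,b_1,b_2,v_2,a$ has all five diagonals non-edges and hypothesis~(3) produces the desired common neighbour. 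The inductive step on $|A_2|=n\ge 3$ combines a partial cover $u$ of $\{a,b_1,\ldots,b_{n-1}\}$ with a cover $u'$ of $\{a,b_{n-1},b_n\}$: if $u\sim u'$, the 4-cycle $u,u',b_n,b_k$ ($k<n-1$) forces $u'\sim b_k$; if $u\not\sim u'$ and $u'\not\sim b_k$, the pentagon $u,b_k,b_n,u',a$ is induced, and its common neighbour supplied by~(3) covers all of $A_2\cup\{a\}$ by a further 4-cycle argument $w,v,b_l,b_n$.

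For $|A_1|,|A_2|\ge 2$ fully separated, the symmetric $|A_2|=1$ case supplies for each $b\in A_2$ a cover $v_b$ of $A_1\cup\{b\}$, and a parallel nested induction on $|A_2|$ applies. In the base $|A_2|=2$: if no $v_b$ already covers $b'$ or vice versa, the 4-cycle $v_b,v_{b'},b',b$ with both diagonals non-edges forces $v_b\not\sim v_{b'}$, the induced pentagon $v_b,b,b',v_{b'},a$ then yields a common neighbour $w$ by hypothesis~(3), and for each $a'\in A_1$ the 4-cycle $w,v_b,a',v_{b'}$ (diagonal $v_bv_{b'}$ a non-edge) forces $w\sim a'$ via hypothesis~(2), so $w$ covers $A_1\cup A_2$. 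The inductive step on $|A_2|$ proceeds analogously, combining an inductive cover of $A_1\cup(A_2\setminus\{b_n\})$ with the $|A_2|=2$ cover of $A_1\cup\{b_{n-1},b_n\}$ via the same 4-cycle/5-cycle machinery. The main obstacle throughout is verifying that each 4-cycle or 5-cycle invoked is genuinely induced; this requires careful bookkeeping of non-edges arising from the full-separation hypothesis, from failure-of-cover assumptions, and from non-adjacencies among candidate covers, against the many edges supplied by the clique structures of $A_1,A_2$ and by inductive partial covers that might otherwise create unwanted chords.
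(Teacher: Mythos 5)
Your proposal is correct, and its core coincides with the paper's: both reduce to the ``fully separated'' situation (no vertex of $A_1$ adjacent or equal to a vertex of $A_2$), both handle the crucial case $|A_1|=1$, $|A_2|=2$ by assembling the induced pentagon $v_1b_1b_2v_2a$ from two diameter-$2$ common neighbours and invoking hypothesis~(3), and both propagate adjacency everywhere else by forcing chords of $4$-cycles via hypothesis~(2). The differences are organisational rather than conceptual. Your reduction to full separation goes through the nested non-neighbour sets $T(a)$ and the subclique $A_1^{*}$, whereas the paper removes one offending vertex of $A_2$ at a time and restores it with a single $4$-cycle $a_1a_2a_2'a$; both are fine. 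In the inductive steps your scheme combines a cover of a ``prefix'' $A_1\cup\{b_1,\dots,b_{n-1}\}$ with a cover of a small set $\{a\}\cup\{b_{n-1},b_n\}$ (resp.\ $A_1\cup\{b_{n-1},b_n\}$), and when the two covers are non-adjacent you build a \emph{new} induced pentagon and invoke hypothesis~(3) again; the paper instead takes partial covers each omitting a single vertex (two of them when $|A_1|=1$, four when $|A_1|,|A_2|\ge 2$) and closes the argument using only hypothesis~(2), so that (3) is used exactly once, in the base case. Your route therefore uses the $5$-cycle hypothesis more heavily but keeps the sub-instances of the induction simpler. The one place you leave implicit --- the ``analogous'' step for $|A_1|,|A_2|\ge 2$ and $|A_2|=n\ge 3$ --- does go through: with $u$ covering $A_1\cup\{b_1,\dots,b_{n-1}\}$ and $u'$ covering $A_1\cup\{b_{n-1},b_n\}$, either a $4$-cycle $uu'b_nb_k$ finishes, or the pentagon $u\,b_k\,b_n\,u'\,a$ (any $a\in A_1$) is induced, and its common neighbour $w$ is spread to the remaining $b_l$ via $w\,u\,b_l\,b_n$ and to the remaining $a'\in A_1$ via $w\,u\,a'\,u'$, using the non-edges $ub_n$ and $uu'$; these distinctness and non-edge checks should be recorded, but they all follow from full separation and the failure-of-cover assumptions, exactly as in the cases you did write out.
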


\begin{proof}
	First note that we can assume that each element $a_1\in A_1$ is not a neighbour of some element $a_2'\in A_2$, since otherwise we can take $a=a_1$. Second, note that we can assume that $a_1$ is not a neighbour of {\bf each} element $a_2\in A_2$. Indeed, otherwise we replace $A_2$ by $A_2\setminus \{a_2\}$, and we find by induction a neighbour $a$ of all the elements of $A_1\cup A_2\setminus \{a_2\}$. Applying assumption (2) to $a_1a_2a_2'a$ with $a_2'$ as above, we obtain that $a$ is a neighbour of $a_2$, as desired.
	
	If $|A_1|=|A_2|=1$, then the lemma follows from assumption (1). Assume now $A_1=\{a_1\}$ and $A_2=\{a_3,a_4\}$. 
	By assumption (1), $a_1$ and $a_3$ have a neighbour~$a_2$, and $a_1$ and $a_4$ have a neighbour $a_5$. We can assume that $a_2$ and $a_4$ are not neighbours, since otherwise we can take $a=a_2$. Analogously, we can assume that $a_3$ and~$a_5$ are not neighbours, since otherwise we can take $a=a_5$. Then $a_2$ and $a_5$ are not neighbours by assumption (2). Thus $a_1a_2a_3a_4a_5$ is an induced embedded $5$-cycle and it remains to apply assumption (3). 
	
	We now suppose $A_1=\{a_1\}$ and $3\leq m=|A_2|$, and we argue by induction on $m$.
	By induction, there is a vertex $a$ (resp.\ $a'$) that is a neighbour of or equal to all the elements of $A_1\cup A_2$ except possibly for $a_2$ (resp.~$a_2'$) in~$A_2$.  Applying assumption~(2) to $aa_1a'a_2''$ for some $a_2''\in A_2\setminus \{a_2,a_2'\}$, we obtain that $a$ and $a'$ are neighbours. Applying assumption (2) to $aa'a_2a_2'$, we obtain that $a$ is a neighbour of $a_2$ or $a'$ is a neighbour of $a_2'$.
	
	Finally, assume $2\leq |A_1|,|A_2|$. Choose $a_1\neq a_1'\in A_1,a_2\neq a_2'\in A_2$. By induction, there is a vertex $a^1$ (resp.\ $a'^1,a^2,a'^2$) that is a neighbour of or equal to all the elements of $A_1\cup A_2$ except possibly for $a_1$ (resp.\ $a_1',a_2,a_2'$).
	By assumption (2), both~$a^1$ and~$a'^1$ are neighbours of both $a^2,a'^2$. Applying assumption (2) to $a^1a^2a'^1a'^2$ we obtain that, say, $a^1$ is a neighbour of $a'^1$. Applying again assumption (2) to $a^1a'^1a_1a'_1$, we obtain that  $a=a^1$ or $a=a'^1$ satisfies the lemma.
\end{proof}

\begin{lem}
	\label{lem:maxcliques}
	Let $A_1,A_2\subset p(d)$ be sets of pairwise close vertices. Then there is $a\in p(d)$ that is a close (or equal) to all the elements of $A_1\cup A_2$. In particular, two maximal sets of pairwise close vertices in $p(d)$ have non-empty intersection.
\end{lem}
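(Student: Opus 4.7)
The plan is to apply Lemma~\ref{lem:graph} to the simplicial graph $G$ with vertex set $p(d)$ and edges given by the close relation on $p(d)\subset\mathcal A$. Then $A_1,A_2$ are vertex sets of complete subgraphs of $G$, and the desired vertex $a$ is exactly the neighbour--or--equal vertex produced by Lemma~\ref{lem:graph}. Using the stability of $X$ together with Lemma~\ref{lem:easy_projection}, I will first reduce to the case where $A_1,A_2$ are finite, so what remains is to verify the three hypotheses of Lemma~\ref{lem:graph} for $G$.

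For the diameter-$2$ hypothesis, I take $a_1,a_2\in p(d)$ that are not close, choose geodesics $\alpha,\beta$ in $X^1$ from $d$ to $w$ passing through $a_1$ and $a_2$, and consider a minimal disc diagram $f\colon D\to X$ with boundary $\alpha\beta^{-1}$. By Lemma~\ref{lem:diagram}(\ref{diagram:iv}), the vertices $d,a_1,a_2$ lie in a minimal cube corner of $D$ that is not contained in a single square. Denoting by $b$ the centre of this cube corner, $b$ is a neighbour of $d$ in $D$, and the two squares of the cube corner containing $\{b,a_1\}$ and $\{b,a_2\}$ map under $f$ to genuine squares of $X$ (because $D$ is reduced by Lemma~\ref{lem:VK}), so that $f(b)\in\mathcal A$ is close in $X$ to both $a_1$ and $a_2$. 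Lemma~\ref{lem:2convex}(i) then places $f(b)\in p(d)$, giving the required common close vertex.

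For the remaining two hypotheses, I will assume we have an induced $4$-cycle $a_1a_2a_3a_4$, respectively an induced $5$-cycle $a_1\cdots a_5$ in $G$ that has no common neighbour in $p(d)$; each close-edge $a_ia_{i+1}$ is witnessed by a square $da_ib_{i,i+1}a_{i+1}$. I then plan to study a minimal disc diagram bounded by the $8$- or $10$-cycle $a_1b_{12}a_2b_{23}\cdots$ around $d$ in $X$, and combine Definition~\ref{def:axioms}(\ref{def:axioms1}), (\ref{def:axioms5}), (\ref{def:axioms3}), (\ref{def:axioms4}) with Lemma~\ref{lem:2convex} either to produce a diagonal square $da_iba_{i+2}$, contradicting the non-closeness of $a_i,a_{i+2}$, or in the $5$-cycle case to produce a common close vertex in $p(d)$, contradicting the assumption. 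I expect this case analysis to be the main obstacle: minimal diagrams bounded by such link cycles can mix squares and cube corners in complicated ways, and controlling which $353$ axiom to apply at each vertex requires care analogous to Section~\ref{subsec:cycles234}.

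Once all three hypotheses are verified, Lemma~\ref{lem:graph} yields a vertex $a\in p(d)$ close or equal to every element of $A_1\cup A_2$, which is the first assertion. The ``in particular'' statement is then immediate: if $A_1,A_2$ are maximal pairwise close subsets of $p(d)$, then $A_1\cup\{a\}$ and $A_2\cup\{a\}$ remain pairwise close, so by maximality $a\in A_1\cap A_2$, and in particular $A_1\cap A_2\neq\emptyset$.
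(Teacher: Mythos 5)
Your overall skeleton matches the paper's proof: reduce to finite $A_1,A_2$ via stability, form the graph $G$ on $p(d)$ with closeness as adjacency, and invoke Lemma~\ref{lem:graph}. Your verification of the diameter-$2$ hypothesis is correct and essentially the paper's, except that you conclude $f(b)\in p(d)$ from Lemma~\ref{lem:2convex}(i) rather than directly from Lemma~\ref{lem:diagram}(iv,v); that is a legitimate, non-circular shortcut.

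However, the verifications of hypotheses (2) and (3) of Lemma~\ref{lem:graph} are only announced as a plan, and this is precisely where the content of the lemma lies, so the proposal has a genuine gap. For hypothesis (2) you overlook that it is immediate from Lemma~\ref{lem:2convex}(ii): if $a,a'\in p(d)$ are both close to a non-close pair $a_1,a_2\in p(d)$, then $a$ and $a'$ are close, so no induced $4$-cycle exists. For hypothesis (3), your proposed analysis of a minimal disc diagram bounded by the $10$-cycle $a_1b_{12}a_2b_{23}\cdots$ around $d$ is unlikely to succeed as stated: that cycle already bounds the wheel of the five squares $da_ib_{i,i+1}a_{i+1}$, so a minimal diagram has area at most $5$ and extracts no new vertex; Definition~\ref{def:axioms}(\ref{def:axioms4}) does not apply to such diagrams but to maps of the specific complex $E$ built from five \emph{minimal cube corners}. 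The paper's argument instead descends in the ball structure: it uses Lemma~\ref{lem:easy_projection} to find $d_i\in S^{k-2}$ adjacent to $a_{i-1},a_i$, Lemma~\ref{lem:2convex}(iii) to see that consecutive $d_i$ are close, Lemma~\ref{lem:easy_projection} again to obtain squares $d_ia_id_{i+1}a_i'$ with $a_i'\in S^{k-3}$, checks that the resulting cube corners centred at the $a_i$ with boundaries $a_{i-1}da_{i+1}d_{i+1}a_i'd_i$ are minimal, and only then applies Definition~\ref{def:axioms}(\ref{def:axioms4}) to produce the common close vertex in $p(d)$. Without this construction (or an equivalent one), the key $5$-cycle hypothesis remains unproved.
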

\begin{proof} 
	Assume to start with that both $A_1$ and $A_2$ are finite. Let $G$ be the simplicial graph with vertex set $p(d)$, and edges between close elements. It suffices to verify that $G$ satisfies the assumptions of Lemma~\ref{lem:graph}. Assumption (1) follows from Lemma~\ref{lem:diagram}(iv,v). Assumption (2) follows from Lemma~\ref{lem:2convex}(ii). To verify assumption~(3), let $a_1\cdots a_5$ be an induced embedded $5$-cycle. 
By Lemma~\ref{lem:easy_projection}, for $i=1,\ldots,5$ there are $d_i\in S^{k-2}$ that are neighbours of $a_i$ and $a_{i-1}$ (mod $5$). By Lemma~\ref{lem:2convex}(iii), each $d_i$ is close to $d_{i+1}$. Again by Lemma~\ref{lem:easy_projection}, this leads to the existence of squares $d_ia_id_{i+1}a_i'$ with $a'_i\in S^{k-3}$. Note that the cube corners with centres $a_i$ and boundaries $a_{i-1}da_{i+1}d_{i+1}a'_id_i$ are minimal. By Definition~\ref{def:axioms}(\ref{def:axioms4}), there is $a\in p(d)$ that is close to all $a_i$, as desired.
	
	The cases of infinite $|A_1|$ or $|A_2|$ follow from the stability of $X$.
\end{proof}

\subsection{Contractibility}

\begin{lem}
	\label{lem:contract}
	Let $L$ be a simplicial complex containing a simplex $M$ satisfying the following properties.
	\begin{enumerate}[(i)]
		\item
		Every maximal simplex $\sigma$ in $L$ intersects $M$.
		\item 
		For every set $V$ of vertices in $L$ pairwise connected by edges and such that $V\setminus M$ spans a simplex, we have that $V$ spans a simplex.
	\end{enumerate}
	Then $L$ is contractible.
\end{lem}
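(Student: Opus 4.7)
The plan is to proceed by induction on $n := |L^0 \setminus M|$. For $n = 0$ we would have $L^0 = M$, and since $M$ is a simplex of $L$, this forces $L = M$, which is contractible.

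For the inductive step, I would fix $v \in L^0 \setminus M$ and write $L = L' \cup \mathrm{St}(v)$, where $L'$ is the full subcomplex of $L$ on $L^0 \setminus \{v\}$, $\mathrm{St}(v) = v * \mathrm{Lk}(v)$, and $L' \cap \mathrm{St}(v) = \mathrm{Lk}(v)$. Setting $M_v = \{m \in M : vm \in L^1\}$, I would apply (ii) to $V = \{v\} \cup M_v$ (its pairs are all edges, and $V \setminus M = \{v\}$ trivially spans a simplex) to conclude that $\{v\} \cup M_v$ is a simplex of $L$, so $M_v$ is a simplex of $\mathrm{Lk}(v)$; moreover $M_v \ne \emptyset$ by (i), since a maximal simplex of $L$ containing $v$ has an $M$-vertex, which is necessarily a neighbour of $v$.

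The key step would be verifying that $L'$ satisfies (i), (ii) with $M$, and $\mathrm{Lk}(v)$ satisfies (i), (ii) with $M_v$ in place of $M$. For $L'$ this is straightforward: a maximal simplex of $L'$ is either maximal in $L$ (and meets $M$ by (i)) or is $\sigma \setminus \{v\}$ for some maximal $\sigma \ni v$ in $L$ (and the $M$-vertex of $\sigma$ remains since $v \notin M$); and condition (ii) restricts verbatim. For $\mathrm{Lk}(v)$: a maximal $\tau$ of $\mathrm{Lk}(v)$ makes $\tau \cup \{v\}$ maximal in $L$, so the latter meets $M$ in a vertex $m \in \tau$ adjacent to $v$, whence $m \in M_v$. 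For (ii) on $\mathrm{Lk}(v)$: given $V \subset \mathrm{Lk}(v)^0$ pairwise adjacent in $\mathrm{Lk}(v)$ with $V \setminus M_v$ a simplex of $\mathrm{Lk}(v)$, the observation $V \cap M \subseteq M_v$ (since every $w \in V$ is a neighbour of $v$) gives $V \setminus M_v = V \setminus M$, so $V \cup \{v\}$ has pairwise edges in $L$ and $(V \cup \{v\}) \setminus M = (V \setminus M_v) \cup \{v\}$ is a simplex of $L$; applying (ii) on $L$ yields $V \cup \{v\} \in L$, i.e.\ $V$ is a simplex of $\mathrm{Lk}(v)$.

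Since $\mathrm{Lk}(v)^0 \setminus M_v \subseteq (L^0 \setminus M) \setminus \{v\}$, induction yields that both $L'$ and $\mathrm{Lk}(v)$ are contractible. Then $\mathrm{St}(v) = v * \mathrm{Lk}(v)$ is a cone on a contractible space, hence contractible; the inclusion $\mathrm{Lk}(v) \hookrightarrow \mathrm{St}(v)$ is a cofibration of contractible CW complexes, so a strong deformation retraction, and extending it by the identity on $L'$ gives a deformation retraction $L \to L'$. Thus $L$ is contractible. The main subtlety is the verification of (ii) for $\mathrm{Lk}(v)$: it hinges on the identity $V \cap M = V \cap M_v$ (which uses $V \subset \mathrm{Lk}(v)^0$) to rewrite the hypothesis as "$(V \setminus M) \cup \{v\}$ is a simplex of $L$", precisely what is needed to feed $V \cup \{v\}$ into (ii) on $L$. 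If $L^0 \setminus M$ is infinite, I would instead invoke Leray's nerve theorem on the closed cover $\{\mathrm{St}(m)\}_{m \in M}$: (i) gives coverage, and (ii) implies $\bigcap_{m \in M_0} \mathrm{St}(m) = \mathrm{St}(M_0)$, a contractible closed star, so $L$ is homotopy equivalent to the full simplex on $M$.
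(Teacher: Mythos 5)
Your argument is correct for the case it actually covers and takes a genuinely different route from the paper. You peel off one vertex $v\notin M$ at a time, decomposing $L=L'\cup\st(v,L)$ glued along $\lk(v,L)$, and the key verifications are right: $\{v\}\cup M_v$ is a simplex by (ii), $M_v\neq\emptyset$ by (i), hypothesis (ii) for $\lk(v,L)$ follows from the identity $V\setminus M_v=V\setminus M$ for $V\subset\lk(v,L)^0$, and hypothesis (i) for $L'$ and $\lk(v,L)$ follows from extending to maximal simplices (legitimate here, since in your induction $L^0$ is finite, $M$ being a simplex). The paper argues quite differently: it passes to barycentric subdivisions and constructs explicit poset maps ($F$, $F_1$, $F_2$) pushing everything into the subcomplex of simplices meeting $M$, using the fact that pointwise-comparable poset maps are homotopic, and then treats infinite $L$ by a Whitehead-theorem reduction to finite subcomplexes. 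Your induction is more elementary (no subdivisions, no order-complex machinery), and your closing observation is in fact stronger than you present it: granted coverage, the identity $\bigcap_{m\in M_0}\st(m)=\st(\text{simplex spanned by }M_0)$, which uses only (ii), shows all finite intersections are nonempty and contractible, so the nerve is a full simplex and the nerve theorem (the same version the paper invokes in Proposition~\ref{prop:he}, \cite{bjorner2003nerves}) finishes the proof in one stroke, finite or infinite — the induction becomes redundant.

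The one genuine soft spot is the coverage step in your infinite case. Hypothesis (i) speaks only of maximal simplices, and in an infinite complex a simplex need not be contained in a maximal one (unions of chains of simplices need not be simplices), so ``(i) gives coverage'' needs either an extra argument or the additional statement that every simplex disjoint from $M$ has some vertex of $M$ adjacent to all of its vertices (which, with (ii), yields coverage). For finite $L$ this is automatic, so your inductive argument is complete there; for infinite $L$ you should state this as an explicit hypothesis or check it in the application. To be fair, the paper's own infinite-case reduction — ``each finite subcomplex is contained in a finite subcomplex satisfying the assumptions'' — is asserted without proof and faces essentially the same issue when one tries to restore (i) after passing to a full finite subcomplex, so your proposal is not worse off; it just makes the missing point visible.
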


	\begin{proof}
Assume to start with that $L$ is finite. Let $L'$ be the barycentric subdivision of~$L$. We equip the vertex set of $L'$ with the poset structure coming from the inclusion between the simplices of $L$. Let $K$ be the subcomplex of $L'$ spanned on the barycentres of all the simplices of $L$ intersecting~$M$. Let $M'\subset K$ be the barycentric subdivision of $M$.
		
		First, we justify that $K$ is contractible. Indeed, assign to each vertex $x$ of $K$ corresponding to a simplex $\sigma$ of $L$, the barycentre $F(x)$ of $\sigma\cap M$. For $x\leq x'$, we have $F(x)\leq F(x')$. In particular, we have that $F$ extends to a simplicial map from~$K$ to $M'$, which is homotopically trivial. 
		Finally, since $F(x)\leq x$, by  \cite[Prop 2.1]{Segal1083}, we have that $F$ is homotopic to the identity map on $K$. Consequently, the identity map on $K$ is homotopically trivial.
		
		Second, we justify that $L$ is contractible. For that, let $L''$ be the barycentric subdivision of $L'$. Let $v$ be a vertex of $L''$, which is a chain $x_0\leq x_1\leq \cdots \leq x_n$ of vertices of $L'$. We consider two maps $F_1,F_2$ assigning to each such $v$ a vertex of $L'$. Let $F_1(v)=x_0$. For $F_2$, let $\tau_j$ be the simplex of $L$ corresponding to $x_j$. Let $\pi_n$ be the set of all the vertices of $M$ that are neighbours of, or equal to, all the vertices of $\tau_n$. By (i), we have that $\pi_n$ is non-empty and by (ii) we have that $\tau_n\cup \pi_n$ spans a simplex. We consider its subsimplex spanned on $\tau_0\cup \pi_n$, the barycentre of which we denote $F_2(v).$ Note that for $v\subseteq v'$, we have $F_1(v)\geq F_1(v')$ and $F_2(v)\geq F_2(v')$, and so $F_i$ extend to simplicial maps from $L''$ to $L'$. In fact, we have that $F_1$ is homotopic to the identity map on $L''$ (see \cite[Prop~4.2]{przytycki2009g}). Furthermore, since $F_1(v)\subseteq F_2(v)$, by \cite[Prop 2.1]{Segal1083}, we have that $F_1$ and $F_2$ are homotopic. Finally, the image of~$F_2$ is contained in the subcomplex corresponding to $K$. Since $K$ is contractible, we obtain that the identity map on $L''$ is homotopically trivial.
		
	If $L$ is infinite, then each finite subcomplex of $L$ is contained in a finite subcomplex $L'\subset L$ satisfying the assumptions of the lemma. This implies that all the homotopy groups of $L$ vanish, and so $L$ is contractible by Whitehead theorem. 
\end{proof}

\begin{rem}
	\label{rem:LM}
	Suppose that $L$ and $M$ satisfy conditions (i) and (ii) from Lemma~\ref{lem:contract}. Then any induced subcomplex of $L$ containing $M$ also satisfies (i) and (ii) with the same $M$.  
\end{rem}

\begin{proof}[Proof of Theorem~\ref{thm:contractible intro}]
	Inside the thickening $X^\boxtimes$, we consider the full subcomplexes $\mathrm{Span}\,B^k$ spanned on the graphs $B^k$ (see the beginning of Section~\ref{subsec:353contractibility}). Suppose without loss of generality $S^k\subset \mathcal D$. 
To start with, suppose that $S^k$ is finite. We call a simplex $\tau$ of $\mathrm{Span}\,B^k$ \emph{peelable} if $|\mathcal A\cap \tau|=1$ and $\tau$ is not contained in a simplex of $\mathrm{Span}\,B^k$ with another element of~$\mathcal A$. Note that if $\tau$ is peelable, and $\tau$ is contained in a simplex $\rho$ of $\mathrm{Span}\,B^k$, then $\rho$ is peelable.
	
	Since $X$ is wide, there are no peelable simplices in $X^\boxtimes$. This implies that peelable~$\tau$ in $\mathrm{Span}\,B^k$ satisfies $\tau\cap \mathcal D \subset S^k$ (since a common neighbour of $\tau\cap \mathcal D$ must be missing from $B^k$, hence belongs to  $S^{k+1}$).
	
	For $\tau$ peelable, 
	let $\tau'=\tau\cap \mathcal D$. We claim that if $\tau_1'=\tau_2'$, then $\tau_1=\tau_2$. Indeed, suppose $\tau_1=\tau_1'\cup \{a_1\}, \tau_2=\tau_1'\cup \{a_2\}$. If $|\tau_1'|\geq 2$, then $a_1,a_2$ are close, and so $\tau_1$ is contained in $\mathrm{Span}\,B^k$ in the simplex  $\tau_1\cup \{a_2\}$, contradicting peelability. If $\tau'_1=\{d\}$, then we have $d\in S^k, a_1,a_2\in S^{k-1}$. Thus $|p(d)|\geq 2$, and so by Lemma~\ref{lem:diagram}(iv,v) there is a square in $\mathrm{Span}\,B^k$ containing $a_1d$, which is a contradiction.

	We denote by $P^k$ the subcomplex of $\mathrm{Span}\,B^k$ obtained from removing the peelable open simplices $\tau$ and their `free' open faces $\tau'$. Note that by the claim above, we have that $P^k$ is obtained from performing successive collapses on $\mathrm{Span}\,B^k$ (starting with $\tau$ of the maximal dimension), so they are homotopy equivalent.

	Let now $d\in P^k\cap S^k$. We will describe the link $L$ of $d$ in $P^k$. Let $A_\lambda$ be the maximal subsets of $p(d)$ of pairwise close elements, over $\lambda\in \Lambda$. Since $d\in P^k$, we have $|A_\lambda|\geq 2$ for each $\lambda\in \Lambda$.
	For each $\lambda\in \Lambda$, let $D_\lambda\subset S^{k-2}$ be the set of common neighbours of $A_\lambda$, which is non-empty by Lemma~\ref{lem:easy_projection}. Since $|A_\lambda|\geq 2$, any $d_\lambda,d'_\lambda\in D_\lambda$ are close. Furthermore, by Lemma~\ref{lem:maxcliques} and Lemma~\ref{lem:2convex}(iii), any $d_\lambda\in D_\lambda,d_\mu\in D_\mu$ are close. Since $|A_\lambda|\geq 2$, we have that all $d_\lambda\in D_\lambda$ are also close to $d$. Thus the union of $D_\lambda$ over $\lambda\in \Lambda$ spans a simplex $M$ in the link $L$ of $d$ in $P^k$. We will now show that $L$ and $M$ satisfy the conditions of Lemma~\ref{lem:contract}.
	
	By Lemma~\ref{lem:contract}, this will imply that $P^k$ deformation retracts to its subcomplex obtained from removing the open star of $d$.
	Repeating this procedure with $d$ replaced by other elements of $P^k\cap S^k$, which can be done by Remark~\ref{rem:LM}, shows that $P^k$ deformation retracts to $\mathrm{Span}\,B^{k-1}$.	
	
	Condition (ii) of Lemma~\ref{lem:contract} follows from the fact that $\mathrm{Span}\,B^k$ is flag, and no peelable simplices have a vertex in $M$, since $M$ is contained in $S^{k-2}$. 
	For condition~(i), since $d\cup \sigma$ is a maximal simplex of $P^k$, it contains at least one $a\in A$ by Lemma~\ref{lem:easy_projection} and Corollary~\ref{cor:exists_a}. 
	Since $P^k$ does not contain peelable simplices, in fact we have $\sigma^0=A\cup D$, where $|A|\geq 2$. We have $A\subset p(d)$, and so we can pick $\lambda$ satisfying $A\subseteq A_\lambda$. Then any $d_\lambda\in D_\lambda$ is close to all the elements of $D$, and so by the maximality of $\sigma$ we have $d_\lambda\in \sigma$. Thus $M\cap \sigma$ contains~$d_\lambda$, as desired.
	
	 If $S^k$ is infinite, then each finite subcomplex of $\mathrm{Span}\,B^k$ is contained in the span of $\mathrm{Span}\,B^{k-1}$ and a finite subset of $S^k$. Hence, by the above discussion, we can homotope this subcomplex into 
     $\mathrm{Span}\,B^{k-1}$.
     This implies that all the homotopy groups of $\mathrm{Span}\,B^k$ vanish, and so $\mathrm{Span}\,B^k$ is contractible by the Whitehead theorem. 
\end{proof}

\section{353 Simplicial complexes}
\label{sec:353simplicial}
Let $\Delta$ be a simplicial complex of type $S=\{\hat a,\hat b,\hat c,\hat d\}$, see Section~\ref{subsec:chamber complex}. We equip~$S$ with the total order $\hat a<\hat b<\hat c<\hat d$, which induces a relation $<$ on the vertex set of~$\Delta$ as in Definition~\ref{def:order}. We denote by $\mathcal A$ the set of vertices of type $\hat a$ etc.

\begin{defin}
	\label{def:353simplicial1}
	$\Delta$ is a \emph{$353$-simplicial complex} if it is simply connected and satisfies the following properties.
	\begin{enumerate}
		\item \label{353order} The relation $<$ on $\Delta^0$ is a partial order.
		\item \label{35346} Each $\lk(d,\Delta)^0$ (resp.\ $\lk(a,\Delta)^0$) is bowtie free and upward (resp.\ downward) flag.		
		\item 
		\label{3538} Each cycle $a_1c_1a_2c_2a_3c_3bc_4$ (resp.\ $d_1b_1d_2b_2d_3b_3cb_4$) in some $\lk(d,\Delta)$ (resp.\ $\lk(a,\Delta)$) is not embedded or not induced.
		\item \label{35310} If $\gamma=c_1b_1c_2a_2c_3b_3c_4b_4c_5a_5$ is an induced embedded cycle in some $\lk(d,\Delta)$, then there is a neighbour $a\in \lk(d,\Delta)$ of all the vertices of $\gamma$ in $\mathcal B$. An analogous condition holds for $\mathcal A, \mathcal B$ interchanged with $\mathcal D,\mathcal C$.
	\end{enumerate}
$\Delta$ is \emph{wide} if each vertex in $\mathcal B\cup \mathcal C$ has at least two neighbours in $\mathcal A$ and two neighbours in $\mathcal D$.
\end{defin}

We will provide the main example of a $353$-simplicial complex in Theorem~\ref{lem:main example}.
Our goal for the moment is to prove the following. 

\begin{thm}
	\label{thm:353}
	 Let $\Delta$ be a  
     $353$-simplicial complex. 
		Let $X^1\subset \Delta^1$ be the subgraph induced on $\mathcal A\cup \mathcal D$. Let $X$ be the square complex with $1$-skeleton $X^1$ and squares that are the $4$-cycles $\gamma$ having a common neighbour in $\Delta$, called an \emph{apex} of $\gamma$. 
	Then $X$ is a $353$-square complex.
\end{thm}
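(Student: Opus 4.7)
The plan is to verify every clause of Definition~\ref{def:353square}. That $X^1$ is bipartite with parts $\mathcal A, \mathcal D$ is immediate since $\Delta$ is of type $S$, and distinctness and embeddedness of the attaching $4$-cycles are built into the definition of $X$. For simple connectivity, any edge-loop $\gamma \subset X^1 \subset \Delta^1$ bounds a simplicial disc diagram $D \to \Delta$ because $\Delta$ is simply connected. I would then inductively modify $D$ into a disc diagram in $X$ by replacing the stars of interior $\mathcal B \cup \mathcal C$-vertices of $D$ by squares of $X$: each such vertex $v$ has, by wideness, at least two neighbours in each of $\mathcal A, \mathcal D$, which together with the $\mathcal A \cup \mathcal D$-vertices adjacent to $v$ in $D$ supply the $4$-cycles and apices needed for the replacement.

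\textbf{Axioms (\ref{def:axioms1}) and (\ref{def:axioms0}).} Let $p_1, p_2 \in \mathcal B \cup \mathcal C$ be apices of the two given squares $dad_1a'$ and $dad_2a'$. The $4$-cycle $d_1ad_2a'$ already lies in $X^1$ from the edges present in those two squares. To produce an apex, I apply bowtie-freeness of $\lk(d,\Delta)$ from Definition~\ref{def:353simplicial1}(\ref{35346}) to the bowtie $a\,p_1\,a'\,p_2$, valid because $a, a' \le p_1, p_2$ in the partial order on $\Delta^0$ of Definition~\ref{def:353simplicial1}(\ref{353order}). The resulting $z$ with $a, a' \le z \le p_1, p_2$ is analysed by cases on the types of $p_1, p_2$: in each case either the bowtie degenerates to $p_1 = p_2$, which is then the apex directly, or $z$ is a vertex of type $\hat b$ or $\hat c$ with $z \le p_1, p_2$; transitivity of $\le$ combined with $p_i \le d_?$ then gives $z \sim d_1$ and $z \sim d_2$, exhibiting $z$ as a common neighbour of $d_1, a, d_2, a'$. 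Axiom (\ref{def:axioms0}) includes the analogous statement with $\mathcal A$ and $\mathcal D$ swapped; its proof is the symmetric argument using bowtie-freeness and downward flag of $\lk(a,\Delta)$.

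\textbf{Axioms (\ref{def:axioms5})--(\ref{def:axioms4}) on minimal cube corners.} A minimal cube corner $C \to X$ with centre $a \in \mathcal A$ provides three apices $q_{12}, q_{13}, q_{23} \in \lk(a,\Delta) \cap (\mathcal B \cup \mathcal C)$, one per square of $C$, together with three axis vertices $d, d_1, d_2 \in \mathcal D$. For each axiom the plan is to translate its hypothesis --- a further square $d' a_1 d a_2$ for (\ref{def:axioms5}), the extra squares of (\ref{def:axioms3}), or a given disc diagram $f\colon E \to X$ for (\ref{def:axioms4}) --- into the existence of an induced embedded cycle in $\lk(d,\Delta)$ or $\lk(a,\Delta)$ whose vertex types match the hypothesis of Definition~\ref{def:353simplicial1}(\ref{3538}) or~(\ref{35310}). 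Invoking those axioms then either forces an extra vertex that yields a disc diagram of the cube corner's boundary $6$-cycle with fewer than three squares, contradicting minimality (for (\ref{def:axioms5}) and (\ref{def:axioms3})), or produces the vertex and modified diagram $f'$ required by (\ref{def:axioms4}). As in axiom (\ref{def:axioms1}), the case analysis splits on the types ($\hat b$ vs.\ $\hat c$) of the apices and is controlled by bowtie-freeness, flag properties, and transitivity.

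\textbf{Main obstacle.} The main obstacle will be executing these cube corner arguments: matching each combinatorial assertion about $X$ precisely to an induced embedded cycle in a link of $\Delta$ of the exact type handled by (\ref{3538}) or (\ref{35310}), and carefully tracking the apex-type subcases. This is likely to be as delicate as the critical cycle analysis of Sections~\ref{sec:typeI}--\ref{sec:typeII}. The simple connectivity argument will also require care, since the inductive replacement of stars of $\mathcal B \cup \mathcal C$-vertices must be performed compatibly along shared edges of $D$.
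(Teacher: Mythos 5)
Your plan follows the same route as the paper (convert a disc diagram in $\Delta$ into one in $X$ by replacing stars of $\mathcal B\cup\mathcal C$-vertices with squares; get axiom~(\ref{def:axioms1}) from bowtie-freeness of links; translate the cube-corner axioms into cycles in vertex links and feed them to Definition~\ref{def:353simplicial1}(\ref{3538}),(\ref{35310})), and your argument for axiom~(\ref{def:axioms1}) is essentially complete — it is a streamlined form of the paper's use of Remark~\ref{rem:53}(i). But there are genuine gaps. In the simple-connectivity step you appeal to wideness, which is not a hypothesis of Theorem~\ref{thm:353} and is not the relevant property: what makes the star-replacement work is transitivity of the partial order from Definition~\ref{def:353simplicial1}(\ref{353order}), which gives $a<x<d$ for every $\hat a$-neighbour $a$ and $\hat d$-neighbour $d$ of an interior vertex $x\in\mathcal B\cup\mathcal C$, so that all these neighbours are pairwise joined in $X^1$ and the link cycle of $x$ can be filled by squares with apex $x$. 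More importantly, before the $2$-cells of $D$ can be partitioned into stars of $\mathcal B$- and $\mathcal C$-vertices you must first remove all edges of type $\hat b\hat c$ from $D$ (a triangle containing such an edge lies in two of your stars, so the replacement is not even well defined); the paper does this first, using that the link of a $\hat b\hat c$-edge is complete bipartite, hence connected, and your sketch does not address it.

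For axioms~(\ref{def:axioms5})--(\ref{def:axioms4}) what you give is a plan, and the missing steps are the bulk of the proof. To produce a cycle in a link of the exact shape required by Definition~\ref{def:353simplicial1}(\ref{3538}),(\ref{35310}) one needs: (i) a common neighbour $b$ of the three $\mathcal D$-vertices of the cube corner, obtained from downward flagness of $\lk(a,\Delta)$; (ii) the claim — proved from minimality of the cube corner together with the already-established axiom~(\ref{def:axioms1}) — that each apex is adjacent to neither the opposite $\mathcal D$-vertex nor the opposite $\mathcal A$-vertices; this is what forces the apices to be (replaceable by) vertices of type $\hat c$ and makes the path $a_1c_1bc_2a_2$ embedded and induced in $\lk(d,\Delta)$; and (iii) the degenerate cases: conditions (\ref{3538}) and (\ref{35310}) only constrain induced embedded cycles, so every non-embedded or non-induced configuration must be dealt with separately — this is exactly the content of Corollaries~\ref{cor:6}, \ref{cor:8} and~\ref{cor:10}, whose proofs (especially that of Corollary~\ref{cor:10}) are substantial case analyses using Definition~\ref{def:353simplicial1}(\ref{353order}),(\ref{35346}). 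Finally, for (\ref{def:axioms3}) and (\ref{def:axioms4}) the new vertex produced in the link must still be promoted to the configuration of squares demanded in $X$, which again uses transitivity and Remark~\ref{rem:53}(i). None of these steps appear in your proposal, so as written it establishes simple connectivity only modulo the $\hat b\hat c$-edge issue and verifies axiom~(\ref{def:axioms1}), but not axioms~(\ref{def:axioms5})--(\ref{def:axioms0}).
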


\subsection{Links}
\label{subsec:353simplicial}
We need a series of preparatory observations on the link $\Gamma=\lk(d,\Delta)$. By reversing the order $<$ we have the obvious analogues of all the results in this subsection for $\lk(a,\Delta)$.
From Definition~\ref{def:353simplicial1}(\ref{353order},\ref{35346}), we obtain:

\begin{rem} 
		\label{rem:53}
		Let $\gamma$ be an induced embedded $n$-cycle in $\Gamma$.
		\begin{enumerate}[(i)]
			\item If $n=4$, then $\gamma=a_1c_1a_2c_2$ and there is a common neighbour $b$ of all $a_1,c_1,a_2,c_2$.
			\item If $n=6$, then $\gamma$ has three vertices in $\mathcal A$ and they have a common neighbour.
		\end{enumerate}
	\end{rem}

\begin{cor}
	\label{cor:6} 
	There is no cycle $\gamma=u_1c_1bc_2u_2c$ in $\Gamma$ with $u_1c_1bc_2u_2$ embedded and induced. 
\end{cor}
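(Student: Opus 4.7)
The plan is to argue by contradiction using Remark~\ref{rem:53}, which constrains induced embedded $4$- and $6$-cycles in $\Gamma$. Suppose such a cycle $\gamma=u_1c_1bc_2u_2c$ exists. Since $u_1,u_2$ are neighbours in $\Gamma$ of vertices of type $\hat c$, both lie in $\mathcal A\cup\mathcal B$.

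First I would verify that $\gamma$ is an embedded $6$-cycle. The vertices $u_1,c_1,b,c_2,u_2$ are pairwise distinct by hypothesis. Type considerations force $c\notin\{u_1,u_2,b\}$ (since $c\in\mathcal C$ while the others are not). If $c=c_1$, then the cycle edge $cu_2=c_1u_2$ would be a chord of the induced path $u_1c_1bc_2u_2$, contradicting inducedness; the same argument rules out $c=c_2$.

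Next I would analyse the possible chords of $\gamma$. Edges between two vertices of type $\hat c$ are forbidden in a simplicial complex of type $S$, so $cc_1$ and $cc_2$ are non-edges; and the inducedness of $u_1c_1bc_2u_2$ rules out every chord internal to this subpath. Thus the only potential chord of $\gamma$ is $bc$, and the argument splits into two cases.

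If $bc$ is not an edge, then $\gamma$ is induced and embedded; Remark~\ref{rem:53}(ii) forces $\gamma$ to contain three vertices in $\mathcal A$, but the only candidates are $u_1$ and $u_2$ (the remaining vertices being of type $\hat b$ or $\hat c$), a contradiction. If $bc$ is an edge, then $u_1c_1bc$ is a $4$-cycle whose diagonals $u_1b$ and $c_1c$ are non-edges (respectively by path-inducedness and by the shared type $\hat c$), so this $4$-cycle is induced embedded; Remark~\ref{rem:53}(i) then forces both $u_1$ and $b$ to be of type $\hat a$, contradicting $b\in\mathcal B$. The whole proof is short given Remark~\ref{rem:53}, and the only mildly delicate step is the chord analysis, which follows directly from the type constraints and the inducedness hypothesis on the given subpath.
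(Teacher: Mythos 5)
Your proof is correct and follows essentially the same route as the paper: show $\gamma$ is embedded, note that Remark~\ref{rem:53}(ii) rules out $\gamma$ being induced (not enough $\mathcal A$-vertices), so the only possible chord $bc$ must be present, and then Remark~\ref{rem:53}(i) applied to the resulting induced embedded $4$-cycle containing $b\in\mathcal B$ gives the contradiction. You merely spell out the chord analysis and the type bookkeeping that the paper leaves implicit.
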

\begin{proof} Since $u_1c_1bc_2u_2$ is embedded and induced, we have that $\gamma$ is embedded. By Remark~\ref{rem:53}(ii), we have that $\gamma$ is not induced. 
	Thus $c$ is a neighbour of $b$, which contradicts Remark~\ref{rem:53}(i).
\end{proof}

	\begin{cor}
		\label{cor:8}
		Let $\gamma=a_1c_1bc_2a_2c_3ac_4$ be a cycle in $\Gamma$ with $a_1c_1bc_2a_2$ embedded and induced. Then $b$ is a neighbour of $a$.
	\end{cor}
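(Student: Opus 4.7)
The strategy is to apply Definition~\ref{def:353simplicial1}(\ref{3538}) to $\gamma$: after cyclically rotating so that $(a_2,c_3,a,c_4,a_1,c_1,b,c_2)$ plays the role of $(a_1,c_1,a_2,c_2,a_3,c_3,b,c_4)$, the cycle $\gamma$ matches the forbidden pattern, so $\gamma$ is not embedded or not induced. A ``forcing rule'' used throughout, coming from Remark~\ref{rem:53}(i), states that every embedded $4$-cycle in $\Gamma$ of type $\hat b\hat c\hat a\hat c$ must carry a diagonal, and by type considerations this diagonal is the $\hat a$-$\hat b$ edge; in every configuration encountered below this diagonal is precisely the desired edge~$ab$.

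If $\gamma$ is embedded but not induced, the induced hypothesis on $a_1c_1bc_2a_2$ restricts the chord to $\{ab, a_1c_3, a_2c_4, ac_1, ac_2, bc_3, bc_4\}$. The chord $ab$ is the conclusion. A chord $bc_3$ would close the embedded $4$-cycle $bc_3a_2c_2$ of type $\hat b\hat c\hat a\hat c$, whose only type-compatible diagonal $ba_2$ is forbidden by the induced hypothesis, so $bc_3$ (and symmetrically $bc_4$) is impossible. A chord $a_1c_3$ creates the embedded $6$-cycle $a_1c_1bc_2a_2c_3$ with only two $\hat a$-vertices; Remark~\ref{rem:53}(ii) forces a further chord, and enumeration of type-compatible chords leaves only $bc_3$, which has just been excluded; symmetrically $a_2c_4$ is excluded. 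A chord $ac_1$ (symmetrically $ac_2$) creates the embedded $6$-cycle $ac_1bc_2a_2c_3$ with only two $\hat a$-vertices; Remark~\ref{rem:53}(ii) forces a further chord among $\{ab, ac_2, bc_3\}$. The first is the conclusion; the third yields $ab$ by the forcing rule applied to the embedded $4$-cycle $bc_3ac_1$ of type $\hat b\hat c\hat a\hat c$; and the second makes $c_1,c_2$ two distinct common upper bounds of $\{a,b\}$, so $\{a,b,c_1,c_2\}$ is a bowtie and Definition~\ref{def:353simplicial1}(\ref{35346}) provides $z$ with $a,b\le z\le c_1,c_2$. Since $\type(z)=\hat c$ would force $z=c_1=c_2$, we must have $\type(z)=\hat b$, hence $z=b$, giving the edge~$ab$.

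If $\gamma$ is not embedded, then by the hypothesis on $a_1c_1bc_2a_2$ the coincidence involves one of $\{a,c_3,c_4\}$. The coincidences $a=a_1$, $a=a_2$, and $c_3=c_4$ each produce, after discarding a spike, a $6$-cycle of the form $u_1c_1bc_2u_2c$ forbidden by Corollary~\ref{cor:6}. The coincidences $c_3=c_1$ and $c_4=c_2$ each create an edge forbidden by the induced hypothesis on $a_1c_1bc_2a_2$ (for instance, $c_3=c_1$ would make $a_2c_1$ an edge). The remaining coincidences $c_4=c_1$ and $c_3=c_2$ leave, after contracting a degenerate loop at $a_1$ or $a_2$, an embedded $6$-cycle of the shape $ac_1bc_2a_2c_3$ (respectively $a_1c_1bc_2ac_4$) with only two $\hat a$-vertices, to which the chord analysis of the preceding paragraph via Remark~\ref{rem:53}(ii), the forcing rule, and the bowtie-free branch applies verbatim to produce~$ab$.

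The main obstacle is the bookkeeping across collision and chord cases; what keeps it tractable is the uniform reduction to the two exclusions in Remark~\ref{rem:53}, namely that induced embedded $4$-cycles in $\Gamma$ must be of type $\hat a\hat c\hat a\hat c$ and induced embedded $6$-cycles must contain three type-$\hat a$ vertices. These two facts repeatedly extract either the desired edge $ab$ directly or a further chord that propagates the analysis to a strictly smaller configuration, with the bowtie-free hypothesis in Definition~\ref{def:353simplicial1}(\ref{35346}) reserved for the single branch in which two distinct type-$\hat c$ upper bounds of $\{a,b\}$ appear simultaneously.
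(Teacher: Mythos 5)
Your argument follows the paper's strategy: apply Definition~\ref{def:353simplicial1}(\ref{3538}) to the rotated cycle, then split into the non-embedded and the embedded-but-not-induced cases, excluding most coincidences/chords via Remark~\ref{rem:53}(i), Corollary~\ref{cor:6} and the induced hypothesis. Where you genuinely diverge is in the residual hexagon configurations (chord $ac_1$ or $ac_2$, resp.\ the coincidence $c_3=c_2$ or $c_4=c_1$): the paper disposes of these by repeating the upward-flag-plus-bowtie argument of Lemma~\ref{lem:restriction order}, whereas you force a second chord of the hexagon via Remark~\ref{rem:53}(ii) and then only need bowtie-freeness once, in the configuration where $a$ is adjacent to both $c_1$ and $c_2$ (the bowtie $a,b<c_1,c_2$ yielding $z=b$). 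This makes your treatment self-contained within Remark~\ref{rem:53} and Definition~\ref{def:353simplicial1}(\ref{35346}), avoiding the appeal to the flag property, and all the enumerations of chords and coincidences you give are complete and correct.

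There is one small hole in the non-embedded case: you assert that each of the coincidences $c_4=c_1$ and $c_3=c_2$ leaves an \emph{embedded} $6$-cycle after contracting one spike, but these two coincidences can occur simultaneously (none of your earlier exclusions rules this out), and then the $6$-cycle $ac_1bc_2a_2c_3$ has $c_3=c_2$, so it is not embedded and Remark~\ref{rem:53}(ii) does not apply as written. This is exactly the sub-case the paper treats separately (``if $c_1=c_4$\,\dots\ Remark~\ref{rem:53}(i) applied to the cycle $c_1bc_2a$''), and it is fixed in one line by your own forcing rule: after contracting both spikes one is left with the embedded $4$-cycle $ac_1bc_2$ of type $\hat a\hat c\hat b\hat c$, whose only type-compatible diagonal is $ab$. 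With that sub-case added, the proof is complete.
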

	\begin{proof} By Definition~\ref{def:353simplicial1}(\ref{3538}), we have that $\gamma$ is not embedded or not induced. 
		
		Suppose first that $\gamma$ is not embedded. Since $a_1c_1bc_2a_2$ was embedded and induced, we have $c_3\neq c_1$, and by Corollary~\ref{cor:6}, we have $a\neq a_1,a_2$ and $c_3\neq c_4$. Thus without loss of generality we can assume $c_3=c_2$. If $c_1=c_4$, then the corollary follows from Remark~\ref{rem:53}(i) applied to the cycle $c_1bc_2a$. If $c_1\neq c_4$, then we argue in the same way as in the proof of Lemma~\ref{lem:restriction order}, using Definition~\ref{def:353simplicial1}(\ref{353order},\ref{35346}).

		Suppose now that $\gamma$ is embedded but not induced. If $a$ is a neighbour of $c_2$ (or~$c_1$), then we argue as above. Note that $c_3$ is not a neighbour of $b$, since this would contradict Remark~\ref{rem:53}(i) applied to the cycle $bc_2a_2c_3$.
		Finally, if $c_3$ is a neighbour of~$a_1$, then this contradicts Corollary~\ref{cor:6}. Up to replacing $c_3$ by~$c_4$, this exhausts all the possibilities.
	\end{proof}

\begin{cor}
	\label{cor:10}
	Let $\gamma=
	c_1b_1c_2a_2c_3b_3c_4b_4c_5a_5$ 
	be a cycle in $\Gamma$ with the paths of length~$4$ centred at each $b_i$ embedded and induced. Then there is $a$ that is a neighbour of all~$b_i$. 
\end{cor}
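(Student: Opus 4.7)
The plan is to reduce to Definition~\ref{def:353simplicial1}(\ref{35310}), which for an induced embedded $10$-cycle of the stated form directly provides the required common $\hat a$-neighbour of $b_1,b_3,b_4$. Hence I would first observe that when $\gamma$ is induced and embedded the conclusion is immediate, so the remaining work is a case analysis of the possible degenerations, patterned on the proof of Corollary~\ref{cor:8}.

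Next I would enumerate the allowable degenerations under the hypothesis that the length-$4$ paths $P_1=a_5c_1b_1c_2a_2$, $P_3=a_2c_3b_3c_4b_4$, and $P_4=b_3c_4b_4c_5a_5$ centred at the $b_i$ are embedded and induced. Together the $P_i$ cover all ten vertices, with overlaps $P_1\cap P_3=\{a_2\}$, $P_1\cap P_4=\{a_5\}$, and $P_3\cap P_4=\{b_3,c_4,b_4\}$. Because each $P_i$ is induced, the only possible coincidences are between same-type vertices in different paths: namely the $c$-coincidences $c_i=c_j$ with $i\in\{1,2\}$, $j\in\{3,4,5\}$, and the $b$-coincidences $b_1=b_3$ or $b_1=b_4$ ($b_3=b_4$ is forbidden since both lie in $P_4$). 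The only possible chords, between different-type vertices in different paths, lie in the explicit list $\{c_1b_3,\,c_1b_4,\,b_1c_3,\,b_1c_4,\,b_1c_5,\,c_2b_3,\,c_2b_4,\,a_2c_5,\,c_3a_5\}$.

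For each such degeneration I would reduce $\gamma$ to a shorter cycle on which Corollary~\ref{cor:6}, Corollary~\ref{cor:8}, Remark~\ref{rem:53}, or Definition~\ref{def:353simplicial1}(\ref{35310}) applies. As a prototypical example, if $b_1=b_3$ then the $6$-cycle $a_5c_1b_1c_4b_4c_5$ sits in $\Gamma$; Corollary~\ref{cor:6}, applied using that $a_5c_1b_1c_4b_4$ is embedded, forces the additional chord $c_1\sim b_4$, which combined with the residual structure of $\gamma$ yields an $8$-cycle to which Corollary~\ref{cor:8} applies, producing an $\hat a$-neighbour of both $b_1=b_3$ and $b_4$. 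Each chord in the list above similarly splits $\gamma$ into subcycles, one of which matches the hypothesis of a previously established result; for instance a chord $b_1c_3$ yields the $8$-cycle $c_1b_1c_3b_3c_4b_4c_5a_5$ directly of the form treated in Corollary~\ref{cor:8}. The $c$-coincidences admit analogous reductions, sometimes by re-assembling the partial data into a new induced embedded $10$-cycle and reapplying Definition~\ref{def:353simplicial1}(\ref{35310}).

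The main obstacle is producing a single vertex $a$ adjacent to \emph{all three} $b_i$ rather than merely pairwise adjacencies. Corollary~\ref{cor:8} yields only an edge between two specified vertices of an $8$-cycle, and since $\lk(d,\Delta)^0$ is only upward flag and not downward flag (by Definition~\ref{def:353simplicial1}(\ref{35346})), there is no automatic mechanism for combining two pairwise common $\hat a$-neighbours of pairs among $\{b_1,b_3,b_4\}$ into a triple common neighbour. I expect this step to proceed by iterating Corollary~\ref{cor:8} together with the bowtie-free property from Definition~\ref{def:353simplicial1}(\ref{35346}), or, in the residual cases, by engineering an auxiliary induced embedded $10$-cycle from the partial data to which Definition~\ref{def:353simplicial1}(\ref{35310}) can be applied directly.
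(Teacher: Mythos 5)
Your overall skeleton matches the paper's: if $\gamma$ is embedded and induced, Definition~\ref{def:353simplicial1}(\ref{35310}) finishes, and otherwise one runs a case analysis on coincidences and chords using Corollary~\ref{cor:6}, Corollary~\ref{cor:8} and Remark~\ref{rem:53}; your list of the nine possible chords is in fact complete and correct. But there is a genuine gap exactly where you say you expect one: the surviving coincidences $c_2=c_3$ and (symmetrically) $c_1=c_5$ are the heart of the proof, and you do not treat them. These are precisely the cases where one must manufacture a \emph{single} vertex adjacent to $b_1,b_3,b_4$, and "iterating Corollary~\ref{cor:8} with bowtie-freeness or engineering an auxiliary $10$-cycle" is not an argument. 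The paper's resolution is concrete: pick neighbours $a_1$ of $b_1$ and $a_3$ of $b_3$, split on whether $a_3c_4b_4c_5a_5$ is embedded and induced; in the induced case apply Corollary~\ref{cor:8} to $a_3c_4b_4c_5a_5c_1a_1c_2$ to get $a_1\sim b_4$, then upgrade via Definition~\ref{def:353simplicial1}(\ref{353order}) to $a_1\sim c_4$ and via the $4$-cycle $a_1c_4b_3c_2$ (Remark~\ref{rem:53}(i)/bowtie-freeness) to $a_1\sim b_3$, so $a=a_1$ works; in the non-induced case a similar two-step upgrade, ending with Corollary~\ref{cor:6} applied to $a_3c_2b_1c_1a_5c_5$, shows $a=a_3$ works. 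Without some such mechanism your proof does not close.

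Two of the sample reductions you do give are also flawed. For $b_1=b_3$, Corollary~\ref{cor:6} does not "force the chord $c_1\sim b_4$": applied to the $6$-cycle $b_3c_4b_4c_5a_5c_1$, whose length-$4$ subpath $b_3c_4b_4c_5a_5$ is embedded and induced by hypothesis, it rules the configuration out outright, so no chord is produced and no subsequent $8$-cycle argument is needed (this is how the paper handles it). And the chord $b_1c_3$ does not yield a cycle of the form treated in Corollary~\ref{cor:8}: the cycle $c_1b_1c_3b_3c_4b_4c_5a_5$ has three vertices in $\mathcal B$ and one in $\mathcal A$, whereas Corollary~\ref{cor:8} requires the pattern $a_1c_1bc_2a_2c_3ac_4$ with a single $\mathcal B$-vertex. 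The correct (and much shorter) treatment is the paper's: the chord closes the $4$-cycle $b_1c_2a_2c_3$, which is induced (the chords $b_1a_2$ and $c_2c_3$ are excluded), so Remark~\ref{rem:53}(i) forces $c_2=c_3$, contradicting embeddedness. So while your enumeration is sound, the case-by-case reductions need to be redone, and the triple-adjacency step must be supplied before this can count as a proof.
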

\begin{proof}  Suppose by contradiction that such $a$ does not exist. Then by Definition~\ref{def:353simplicial1}(\ref{35310}), we have that $\gamma$ is not embedded or not induced.

	Suppose first that $\gamma$ is not embedded. If $c_1=c_3,c_2=c_5,$  or $b_1=b_3$ (or~$b_4$), then this contradicts Corollary~\ref{cor:6}. If $c_4=c_1$ or $c_3=c_5$, then this contradicts the assumption that $b_3c_4b_4c_5a_5$ is embedded and induced. We obtain an analogous contradiction for $c_2=c_4$. If $c_2=c_3$, then let $a_i$ be a neighbour of $b_i$ for $i=1,3$. If $a_3c_4b_4c_5a_5$ is embedded and induced, then, applying Corollary~\ref{cor:8} to the cycle $a_3c_4b_4c_5a_5c_1a_1c_2$, we obtain that $a_1$ is a neighbour of $b_4$. Hence $a_1$ is a neighbour of $c_4$ by Definition~\ref{def:353simplicial1}(1). By Definition~\ref{def:353simplicial1}(2) applied to $a_1c_4b_3c_2$, we obtain that $a_1$ is a neighbour of $b_3$, and so we can take $a=a_1$. If $a_3c_4b_4c_5a_5$ is not embedded, then $a_3=a_5$,
    contradicting the hypothesis that $b_3c_4b_4c_5a_5$ is induced.
   If $a_3c_4b_4c_5a_5$ is embedded but not induced, then $a_3$ is a neighbour of $b_4$ or $c_5$, which implies that $a_3$ is a neighbour of both $b_4$ and $c_5$ by Definition~\ref{def:353simplicial1}(1,2). By Corollary~\ref{cor:6} applied to the 6-cycle $a_3c_2b_1c_1a_5c_5$, we obtain that $a_3c_2b_1c_1a_5$ is not induced. Since $c_2b_1c_1a_5$ is embedded and induced, the only possibility is that $a_3$ is a neighbour of $b_1$ or $c_1$, which implies that $a_3$ is a neighbour of both $b_1$ and $c_1$ as before. Hence we can take $a=a_3$. 
The case of $c_1=c_5$ is analogous. 
	
	Second,  suppose that $\gamma$ is embedded but not induced. By Corollary~\ref{cor:6}, we have that $a_2$ is not a neighbour of $c_5$ and $b_3$ is not a neighbour of $c_1$. If $b_3$ is a neighbour of~$c_2$ or $b_1$ is a neighbour of~$c_3$, then Remark~\ref{rem:53}(i) implies $c_2=c_3$, which is a contradiction.
    If $b_1$ is a neighbour of $c_4$, but not of $c_3$, then the path $a_2c_3b_3c_4b_1$ is embedded and induced, and so this contradicts Corollary~\ref{cor:6} applied to $a_2c_3b_3c_4b_1c_2$. Up to symmetries, this exhausts all the possibilities.
\end{proof}

\subsection{Proof of Theorem~\ref{thm:353}}

\begin{rem}
	\label{rem:centre} Let $ada'd'$ be an embedded cycle in $\Delta$ with common neighbour $v$.
	Suppose that $a,d$ and $a'$ have a common neighbour~$b$. Then $b$ is a neighbour of $d'$. 
	Indeed, we apply Remark~\ref{rem:53}(i) to the link of $d$, and the $4$-cycle $aba'v$.  If $v\in \mathcal B$, then $b=v$, which is a neighbour of $d'$. If $v\in \mathcal C$, then $b$ is a neighbour of $v$. Since $v$ is a neighbour of $d'$, we have $b<v<d'$, and so $b<d'$ by Definition~\ref{def:353simplicial1}(\ref{353order}).
\end{rem}

\begin{proof}  First note that since $\Delta$ is a simplicial complex of type $\{a,b,c,d\}$, it follows from Definition~\ref{def:353simplicial1}(\ref{353order}) that $X$ is connected. We now justify that $X$ is simply connected. Let $\alpha$ be a cycle in $X^1$, and view it as a cycle in $\Delta^1$. Let $D\to \Delta$ be a minimal disc diagram bounded by $\alpha$. We first justify that we can assume that there is no edge $bc$ in $D$. Suppose that there were such an edge, in triangles $v_0bc, v_nbc$. By Definition~\ref{def:353simplicial1}(\ref{353order}), the link of $bc$ is complete bipartite, which is connected, and so it contains a path $v_0v_1\cdots v_n$. We can then replace in $D$ the above two triangles by the union of the triangles $v_iv_{i+1}b,v_iv_{i+1}c$ over $0\leq i<n$. Repeating this procedure removes each edge $bc$ from~$D$. Then the set of the $2$-cells of $D$ can be partitioned into subsets consisting of the $2$-cells belonging to each of the stars around the vertices in $\mathcal B$ and~$\mathcal C$. Each link of such a vertex $x$ in $D$ has vertices in $\mathcal A\cup \mathcal D$. Since in $X^1$ there is an edge between any such $a$ and $d$, we can replace the open star of $x$ in $D$ by a square or a union of squares with apex $x$. Consequently, $D\to \Delta$ can be replaced by a disc diagram in~$X$.

	It remains to verify parts (1)-(4) of Definition~\ref{def:axioms}. For part~(\ref{def:axioms1}), consider squares $ada'd_1, ada'd_2$. First assume that they have apices $b_1,b_2$. Then applying Remark~\ref{rem:53}(i) to the link of $d$ and the $4$-cycle $ab_1a'b_2$, we see that $b_1=b_2$, as desired. Second, suppose that they have apices $c_1,c_2$. Then, by Remark~\ref{rem:53}(i), there is a neighbour $b$ of all $a,c_1,a',c_2$, which then is a neighbour of all $a,d_1,a',d_2$. Third, suppose that they have apices $c_1,b_2$. Then, by Remark~\ref{rem:53}(i), we have that $c_1$ and $b_2$ are neighbours and so $b_2$ is a neighbour of all $a,d_1,a',d_2$.
	
	For the remaining parts, consider a minimal cube corner $C$ with centre $a$, boundary $d_1a_3d_2a_1d_3a_2$ and square apices $v_1,v_2,v_3$. Then $d_1v_3d_2v_1d_3v_2$ is a cycle in the link of $a$. Thus, by Definition~\ref{def:353simplicial1}(\ref{35346}), there is $b$ that is a common neighbour of all $d_i$ and~$a$. 
	
We claim that for each $j\neq i$ the apex $v_i$ is neither a neighbour of $d_i$ nor of $a_j$. Indeed, if, say, $v_2$ is a neighbour of $d_2$, then by Definition~\ref{def:353simplicial1}(\ref{353order}) $a_2$ is a neighbour of $d_2$. Hence $a_2d_2ad_1$ and $a_2d_2ad_3$ are squares of $X$ (with apex $v_2$). By part (1), $a_2d_2a_3d_1$ and $a_2d_2a_1d_3$ are squares of $X$, contradicting the minimality of $C$. If, say, $v_2$ is a neighbour of $a_3$, then $a_2d_3a_3d_1, ad_3a_3d_1$ are squares of $X$ (with apex $v_2$). By considering the sequence $ad_3a_3d_1, ad_2a_3d_1,ad_2a_3d_3,ad_2a_1d_3,a_3d_2a_1d_3$ and repeatedly applying part (1), we obtain that all these $4$-cycles are squares.
This contradicts again the minimality of $C$, and justifies the claim. In particular, all $v_i$ are distinct.

	Then we can assume that $v_i$ belong to $\mathcal C$, since if, say, $v_1=b'$, then it would have to be distinct from $b$, and so, by Remark~\ref{rem:53}(i) applied to the link of $a$, there would be~$c$ that is a neighbour of $b,d_2,b',d_3,$ and $a$. Then $c$ would be also an apex of $ad_2a_1d_3$ by Remark~\ref{rem:centre} with the roles of $\mathcal A,\mathcal D$ interchanged. We will thus write $c_i$ instead of~$v_i$. By Remark~\ref{rem:53}(i), we have that $c_i$ is a neighbour of $b$. Note that since $c_i$ is not a neighbour of $a_j$, for $j\neq i$, we have by Definition~\ref{def:353simplicial1}(\ref{353order}) that $b$ is not a neighbour of~$a_j$.
Consequently, the path $a_1c_1bc_2a_2$ is an induced embedded path in the link of~$d_3$.
	
	\begin{figure}[h]
		\centering
		\includegraphics[scale=0.86]{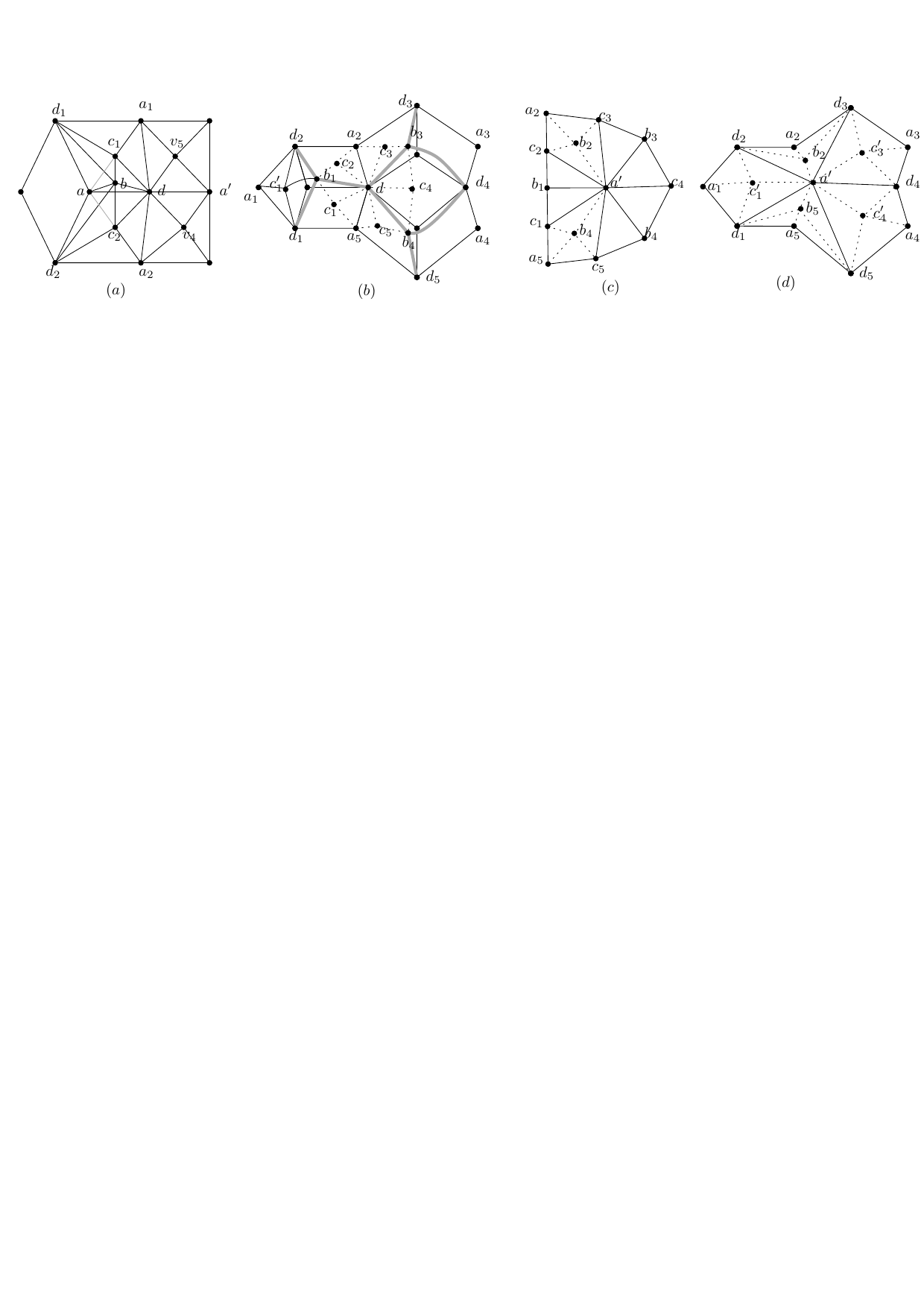}
		\caption{}
		\label{fig:353simplicial}
	\end{figure}
	
	For part~(\ref{def:axioms5}), let $v$ be an apex of $a_1da_2d'$. Let $c_1,c_2$ be apices of $ada_1d_1, ada_2d_2$ guaranteed by the above paragraph (note that the labelling of the $\mathcal D$ vertices of~$C$ changed). Since $a_1c_1bc_2a_2$ is an induced embedded path in the link of $d$, this contradicts Corollary~\ref{cor:6} applied to $a_1c_1bc_2a_2v$.
	
	For part~(\ref{def:axioms3}), and $v_4,v_5$ the apices of the two last squares, we similarly obtain in the link of $d$ a cycle $\gamma=a_1c_1bc_2a_2v_4a'v_5$, see Figure~\ref{fig:353simplicial}(a). After possibly replacing the~$v_i$ by the elements of~$\mathcal C$, by Corollary~\ref{cor:8} we obtain that $b$ is a neighbour of~$a'$. Thus $d_1,d_2$ are neighbours of $a'$  by Definition~\ref{def:353simplicial1}(\ref{353order}), as desired. The required $4$-cycles are squares with apices~$b$.
	
	In part~(\ref{def:axioms4}), we label the boundary $10$-cycle of $E$ by $d_1a_1\cdots d_5a_5$. We similarly obtain the indicated $10$-cycle 
	$c_1b_1c_2a_2c_3b_3c_4b_4c_5a_5$, see Figure~\ref{fig:353simplicial}(b). By Corollary~\ref{cor:10}, 
$b_1,b_3,b_4$ have a common neighbour $a'$ in the link of $d$. By Definition~\ref{def:353simplicial1}(\ref{353order}) $a'$ is a neighbour of all the $c_i$, see Figure~\ref{fig:353simplicial}(c). Again by Definition~\ref{def:353simplicial1}(\ref{353order}), $a'$ is a neighbour of all the $d_i$, see Figure~\ref{fig:353simplicial}(d). For $i=1,3,4$, the $4$-cycles $a'd_ia_id_{i+1}$ are squares, since they have common neighbours 
$c'_i$, where $c'_i$ is a common neighbour of $a_i,b_i,d_i,d_{i+1}$ in Figure~\ref{fig:353simplicial}(b).
As for the $4$-cycle $a'd_2a_2d_3$, it has either a common neighbour~$c_2$, in the case where $c_2=c_3$, or, in the case where $c_2\neq c_3$, a common neighbour~$b_2$, where $b_2$ is a common neighbour of $a',c_2,a_2,c_3$ in the link of $d$, guaranteed by Remark~\ref{rem:53}(i). Analogously, the $4$-cycle $a'd_5a_5d_1$ is a square.
	 Part~(5) is proved analogously.
\end{proof}

\subsection{Stability and contractibility}

\label{subsec:353stability}

\begin{lem}
	\label{lem:upward flag}
	Let $\Delta$ be a wide  
    $353$-simplicial complex. Then the relation $<$ on $\Delta^0$ is a partial order that is bowtie free.
		Furthermore, $\mathcal P=\mathcal A\cup\mathcal B\cup\mathcal C$  
	 is bowtie free and upward flag. Moreover, if in $K\subset \mathcal P$ each pair has an upper bound in~$\mathcal P$, then $K$ has the join in $\mathcal P$.
\end{lem}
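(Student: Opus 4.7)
The relation $<$ is a partial order on $\Delta^0$ by Definition~\ref{def:353simplicial1}(\ref{353order}), and it is weakly graded by the type function, which takes values in $\{1,2,3,4\}$ on $\mathcal A, \mathcal B, \mathcal C, \mathcal D$. Transitivity of $<$ encodes an abstract analogue of Remark~\ref{rem:adj}: if three vertices have strictly increasing types and are pairwise joined by edges, they span a simplex. Together with wideness, this transitivity will be the workhorse of the argument.

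For bowtie-freeness of $\Delta^0$, I consider a bowtie $x_1y_1x_2y_2$. The first plan is to exhibit a vertex $v \in \mathcal A \cup \mathcal D$ adjacent to all four, so that Definition~\ref{def:353simplicial1}(\ref{35346}) supplies a witness inside $\lk(v,\Delta)^0$. Such a $v \in \mathcal D$ exists whenever no $y_j$ has type $\hat d$, via wideness applied to a top vertex of the bowtie together with transitivity; dually, a $v \in \mathcal A$ exists whenever no $x_i$ has type $\hat a$. The remaining case is when some $x_i \in \mathcal A$ and some $y_j \in \mathcal D$, and by trimming intermediate-type vertices via transitivity I reduce to the pure case $x_i \in \mathcal A$ and $y_j \in \mathcal D$. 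Then $x_1y_1x_2y_2$ is a 4-cycle in $X^1$, where $X$ is the 353-square complex of Theorem~\ref{thm:353}, and Corollary~\ref{cor:squares} implies it is a square; its apex lies in $\mathcal P$ and is the desired witness. Bowtie-freeness of $\mathcal P$ is then immediate, since for a bowtie inside $\mathcal P$ the witness $z$ satisfies $\text{type}(z) \leq \hat c$ and hence lies in $\mathcal P$.

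For upward flag of $\mathcal P$, given $x_1, x_2, x_3 \in \mathcal P$ pairwise upper bounded in $\mathcal P$, I reduce via transitivity to the case $x_i \in \mathcal A$. Each pairwise upper bound $y_{ij} \in \mathcal B \cup \mathcal C$ has, by wideness, two distinct $\mathcal D$-neighbours, and transitivity ensures these are common $\mathcal D$-neighbours of $x_i$ and $x_j$; the apex $y_{ij}$ then exhibits $x_i$ and $x_j$ as close in $X$. Hence $x_1, x_2, x_3$ are pairwise close in $X$, and Corollary~\ref{cor:exists_a_easy} yields a common $\mathcal D$-neighbour $d$. Inside $\lk(d, \Delta)^0$, which is bowtie-free and upward flag, I then locate pairwise upper bounds of the $x_i$ using apices of squares of $X$ that contain $d$ as a vertex, and the upward flag of the link produces the desired common upper bound, lying in $\lk(d,\Delta)^0 \subset \mathcal P$.

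The moreover claim about infinite $K \subset \mathcal P$ follows from bowtie-freeness and weak gradedness of $\mathcal P$ via Lemma~\ref{lem:posets}, combined with upward flag extended inductively to all finite subsets, together with the fact that the rank in $\mathcal P$ is bounded by $3$: a finite subset $T \subset K$ whose join has maximal rank yields the join of $K$ as in the proof of Lemma~\ref{lem:posets}. The main obstacle I anticipate is in the upward-flag argument: the initial upper bounds $y_{ij}$ need not be adjacent to the chosen common $\mathcal D$-neighbour $d$, so bridging to pairwise upper bounds that actually lie in $\lk(d,\Delta)^0$ requires careful use of the square complex structure and Definition~\ref{def:axioms}(\ref{def:axioms1}) to ensure the relevant 4-cycles really are squares.
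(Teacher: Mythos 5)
There are genuine gaps in both of your main reductions. For bowtie-freeness of $\Delta^0$, your two reduction moves do not work as stated. First, if a bowtie $x_1y_1x_2y_2$ has tops $y_1,y_2\in\mathcal B\cup\mathcal C$, wideness plus transitivity gives a vertex $d\in\mathcal D$ above $y_1$, hence adjacent to $x_1,x_2,y_1$ — but there is no reason for $d$ to be adjacent to $y_2$, so the bowtie need not lie in $\lk(d,\Delta)$ and Definition~\ref{def:353simplicial1}(\ref{35346}) cannot be applied. Second, "trimming" a mixed-type bowtie to a pure $\mathcal A\mathcal D\mathcal A\mathcal D$ one (replacing a middle-type bottom $x_i$ by an $\mathcal A$-minorant and a middle-type top $y_j$ by a $\mathcal D$-majorant) and then invoking Corollary~\ref{cor:squares} produces an apex sandwiched between the \emph{replacements}, not between the original middle-type vertices, so the original bowtie is not resolved. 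The only case your argument genuinely settles is the pure $adad$ case, which is exactly the $4$-cycle apex property; the paper derives precisely that property from Theorem~\ref{thm:353} and Corollary~\ref{cor:squares} and then quotes an external lemma (\cite[Lem~8.1]{huang2024}) for the nontrivial passage from the $4$-cycle property to bowtie-freeness of the whole poset — this passage is the missing content in your proposal.

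The same kind of problem undermines your upward-flag argument: "reduce via transitivity to the case $x_i\in\mathcal A$" is not a reduction, because a common upper bound of $\mathcal A$-minorants $a_i<x_i$ need not be an upper bound of the original $x_i\in\mathcal B\cup\mathcal C$. The paper keeps the original elements $u_i$, uses the $a_i$ and Corollary~\ref{cor:exists_a_easy} only to produce a vertex $d\in\mathcal D$, and then proves the genuinely nontrivial claim that each $u_i$ is itself a neighbour of $d$ (via bowtie-freeness of $\Delta^0$ applied to $da_1c_ia_i$ together with downward flagness of $\lk(a_1,\Delta)^0$), after which bowtie-freeness puts pairwise upper bounds of the $u_i$ inside $\lk(d,\Delta)^0$ and upward flagness of that link finishes. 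Neither this claim nor any substitute for it appears in your sketch, and the obstacle you flag (that the $y_{ij}$ need not be adjacent to $d$) is only fixable by an argument of this sort, which in turn already requires the bowtie-freeness of $\Delta^0$ whose proof is also incomplete above. Your treatment of the final "moreover" assertion (joins of pairwise upper bounded subsets via Lemma~\ref{lem:posets}, a finite Helly-type induction, and boundedness of the rank) is in line with the paper's citation of Lemma~\ref{lem:posets} and \cite[Lem~6.2]{haettel2021lattices} and is essentially fine, but it rests on the two unproven assertions.
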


\begin{proof}
	By Corollary~\ref{cor:squares} and Theorem~\ref{thm:353}, each embedded 4-cycle $ada'd'$ in $\Delta$ has a common neighbour in $\mathcal B\cup\mathcal C$. Thus the first assertion of the lemma follows from \cite[Lem~8.1]{huang2024}. In particular, $\mathcal P$ is also bowtie free. 
	
Now we show that $\mathcal P$ is upward flag. Let $u_1,u_2,u_3\in \mathcal P$ be pairwise upper bounded. We can assume that none of them are neighbours. For each $i$, let $a_i$ be a neighbour of or equal to $u_i$. Since $\Delta$ is wide, we have that $a_i$ are pairwise close in $X$, and so, by Corollary~\ref{cor:exists_a_easy}, there is a common neighbour $d$ of $a_1,a_2,a_3$. 
	
	We claim that each $u_i$ is also a neighbour of $d$. To justify the claim for, say, $u_1=b_1$, let $b_1,u_2\leq c_2$ and $b_1,u_3\leq c_3$. Applying the bowtie freeness of $\Delta^0$ to 
$da_1c_ia_i$, for $i=2,3$, we obtain that either $c_i$ is a neighbour of $d$ or $da_1c_ia_i$ have a common neighbour $b_i$. In each case, $d,c_2,c_3$ are pairwise lower bounded in $\lk(a_1,\Delta)^0$, hence they have a common lower bound $b'$ by Definition~\ref{def:353simplicial1}(\ref{35346}). Considering the cycle $c_2b_1c_3b'$, we obtain $b_1=b'$, justifying the claim.

Since $\Delta^0$ is bowtie free, $u_1,u_2,u_3$ are pairwise upper bounded in $\lk(d,\Delta)^0$. By Definition~\ref{def:353simplicial1}(\ref{35346}), they have a common upper bound in $\lk(d,\Delta)^0$, as desired.
 
The last assertion of the lemma follows from Lemma~\ref{lem:posets} and \cite[Lem~6.2]{haettel2021lattices}.
\end{proof}

\begin{lem} 
	\label{lem:stable}
	Let $\Delta$ be a wide $353$-simplicial complex and let $X$ be its $353$-square complex constructed in Theorem~\ref{thm:353}. 
	\begin{enumerate}
		\item $X$ is stable.
		\item The vertices of each simplex of $X^\boxtimes$ have a common neighbour $b$ or $c$ in $\Delta$. In particular, $X$ is wide.
	\end{enumerate}
	
\end{lem}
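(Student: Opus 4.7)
The plan is to prove part (2) first and derive part (1) from the same machinery. Let $\sigma$ be a simplex of $X^\boxtimes$ with vertex set $K = A \sqcup D$, where $A \subset \mathcal{A}$ and $D \subset \mathcal{D}$. By definition of $X^\boxtimes$, elements within $A$ (resp.\ within $D$) are pairwise close in $X$, and each $a \in A$ is a neighbour in $\Delta$ of each $d \in D$. The trivial cases with $|A|\leq 1$ and $|D|\leq 1$ follow from $\Delta$ being of type $\{\hat a,\hat b,\hat c,\hat d\}$, since every vertex or edge extends to a top-dimensional simplex containing both a $\hat b$- and a $\hat c$-vertex. The case $|A|\geq 2$ will be handled by the join $u := \bigvee A$ in $\mathcal{P}$, which exists by Lemma~\ref{lem:upward flag} because each pair in $A$ has an apex in $\mathcal{B}\cup\mathcal{C}$, and which lies in $\mathcal{B}\cup\mathcal{C}$ since it strictly dominates two distinct $\mathcal{A}$-vertices. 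The case $|A|=1$ and $|D|\geq 2$ will be handled dually by the meet $u' := \bigwedge D \in \mathcal{B}\cup\mathcal{C}$.

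The key technical step is to show $u \leq d$ for every $d \in D$ when $|A|\geq 2$, which I will prove by induction on $|A|$. In the base case $|A|=2$ with $A=\{a_1,a_2\}$, the quadruple $a_1, u, a_2, d$ forms a bowtie in $\Delta^0$, so bowtie freeness (Lemma~\ref{lem:upward flag}) supplies $z$ with $a_1,a_2 \leq z \leq u,d$. Type restrictions force $z \in \mathcal{B}\cup\mathcal{C}$, making $z$ an upper bound of $A$ in $\mathcal{P}$ and hence $z \geq u$; combined with $z \leq u$ this yields $z = u \leq d$. For the inductive step with $A = A' \sqcup \{a_n\}$ and $w := \bigvee A' \leq d$ by the hypothesis, the pair $\{w, a_n\}$ has $d$ as an upper bound in $\Delta^0$; Lemma~\ref{lem:posets} yields a join $\bar w \leq d$ in $\Delta^0$, and a type argument (excluding $\bar w \in \mathcal{A}\cup\mathcal{D}$) together with the defining property of $u = w \vee a_n$ in $\mathcal{P}$ forces $\bar w = u$, giving $u \leq d$. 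The dual case $|A|=1$, $|D|\geq 2$ proceeds symmetrically: to show $a \leq u'$, I use an auxiliary $\mathcal{A}$-neighbour of $u'$ (supplied by wideness of $\Delta$) in place of the second $\mathcal{A}$-vertex of the bowtie.

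For part (1), let $S \subset \mathcal{A}$ be pairwise close with $|S|\geq 2$ (the case $|S|\leq 1$ is trivial). Lemma~\ref{lem:upward flag} produces $u := \bigvee S \in \mathcal{B}\cup\mathcal{C}$, and the proof of Lemma~\ref{lem:posets} identifies $u = \bigvee T$ for some finite $T \subset S$ of maximal rank; I will take $S' := T$. For $v$ close to or a neighbour of all of $T$, considering $T\cup\{v\}$ yields $v \leq u$ when $v\in\mathcal{A}$ (by the rank-maximality of $T$ applied to the join of $T\cup\{v\}$ in $\mathcal{P}$) and $u \leq v$ when $v\in\mathcal{D}$ (by the core step of part~(2) applied to $T$ and $v$). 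In either case, for any $a \in S$ the wideness of $\Delta$ supplies two $\mathcal{D}$-neighbours of $u$, which together with $a$ and $v$ form a $4$-cycle with apex $u$, showing that $v$ is close to (resp.\ a neighbour of) $a$ in $X$. The ``in particular'' part is then immediate: for any simplex $\sigma$ of $X^\boxtimes$, part~(2) produces a common $\mathcal{B}\cup\mathcal{C}$-neighbour $v$, and the wideness of $\Delta$ furnishes two $\mathcal{A}$- and two $\mathcal{D}$-neighbours of $v$ that, together with $\sigma^0$, span a larger simplex of $X^\boxtimes$. The main obstacle I anticipate is the inductive propagation of $u \leq d$ to arbitrary $|A|$, which requires carefully balancing joins computed in three different posets ($\mathcal{P}$, $\Delta^0$, and $\mathcal{Q}$); the remaining steps are relatively direct applications of Lemma~\ref{lem:upward flag} and the wideness of $\Delta$.
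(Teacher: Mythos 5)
Your overall strategy is sound and genuinely different in organisation from the paper's: you compute the join $u=\bigvee_{\mathcal P} A$ (resp.\ the meet in $\mathcal Q=\mathcal B\cup\mathcal C\cup\mathcal D$) and propagate $u\le d$ by induction using bowtie freeness of $\Delta^0$ and Lemma~\ref{lem:posets}, whereas the paper proves (1) by a direct case analysis on whether the join of $S$ lies in $\mathcal B$ or in $\mathcal C$, choosing $S'$ of size $2$ or $3$ accordingly and applying bowtie freeness of $\mathcal P$. Your core step for part (2) (the base-case bowtie $a_1\,u\,a_2\,d$ and the inductive identification $\bar w=u$ via type restrictions and the join in $\mathcal P$) checks out; note only that the dual of Lemma~\ref{lem:upward flag} for $\mathcal Q$, which you invoke for the case $|A|=1$, $|D|\ge 2$, is not stated in the paper and needs the (easy, but worth recording) remark that Definition~\ref{def:353simplicial1} and wideness are invariant under swapping $\mathcal A\leftrightarrow\mathcal D$, $\mathcal B\leftrightarrow\mathcal C$ and reversing the order.

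There is, however, one step in part (1) that fails as written: for $v\in\mathcal A$ close to every element of $T$ you claim $v\le u$ "by the rank-maximality of $T$ applied to the join of $T\cup\{v\}$". The rank-maximality of $T$ was established only among finite subsets of $S$, and $v$ need not belong to $S$, so it gives no control on $\bigvee_{\mathcal P}(T\cup\{v\})$; indeed $v\le u$ is false in general. When $u=b\in\mathcal B$, the vertex $v$ may only lie below some $c\in\mathcal C$ with $b<c$ — this is exactly the configuration the paper's proof handles separately (its subcase "$u'=c'\in\mathcal C$"), and there the paper does not conclude $a\le b$ but instead works with $c'$. The fix stays inside your framework: set $u'':=\bigvee_{\mathcal P}(T\cup\{v\})$, observe $u''\ge \bigvee_{\mathcal P}T=u\ge s$ for all $s\in S$ and $u''\ge v$, and then run your wideness argument with two $\mathcal D$-neighbours of $u''$ (rather than of $u$) to get squares with apex $u''$ showing $v$ is close to every $s\in S$. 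With that replacement (the $v\in\mathcal D$ case is fine as you argued, via $u\le v$ and transitivity), part (1) goes through.
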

\begin{proof} 
	For (1), assume without loss of generality $S=A\subset \mathcal A$ and $|A|=\infty$. Then $A$ is pairwise upper bounded in $\mathcal P$. By Lemma~\ref{lem:upward flag}, $A$ has a join $u\in \mathcal B\cup\mathcal C$ in $\mathcal P$. 
	Hence $u$ is also the join of $A$ in $\Delta^0$.
	
	Suppose first $u=b\in \mathcal B$. Choose $a_1\neq a_2$ from $A$ and let $S'=\{a_1,a_2\}$. If $a$ is close to each element of $S'$, then $a,a_1,a_2$ are pairwise upper bounded in $\mathcal P$, hence they have a join $u'\in \mathcal P$. If $u'\in \mathcal B$, then applying the bowtie freeness of Lemma~\ref{lem:upward flag} to $a_1ba_2u'$ we obtain $u'=b$. Then $a$ is a neighbour of $b$, and so $a$ is close to each element of $A$ since $b$ has at least two neighbours in $\mathcal D$. If $u'=c'\in\mathcal C$, then applying the bowtie freeness to $a_1ba_2c'$ we obtain that $b$ is a neighbour of $c'$. Then, by Definition~\ref{def:353simplicial1}(\ref{353order}), each element of $A$ is a neighbour of $c'$. Thus $a$ is close to each element of $A$. If $d$ is a neighbour of each element of $S'$, then applying the bowtie freeness to $a_1ba_2d$, we obtain that $b$ is a neighbour of $d$, and by Definition~\ref{def:353simplicial1}(\ref{353order}) $d$ is a neighbour of each element of $A$.
	
	Second, suppose $u=c\in \mathcal C$. Assume first that the join of each three element subset of $A$ belongs to $\mathcal B$. Then for each $A_1,A_2\subset A$ with $|A_1|=|A_2|=3$ and $|A_1\cap A_2|= 2$, the joins of $A_1,A_2$ are equal.
Consequently, $u\in\mathcal B$, which is a contradiction. Finally, suppose that there is $S'\subset A$ with $|S'|=3$ and join $c'\in \mathcal C$. Since $c'\le c$, we have $c'=c$. If $d$ is a neighbour of each element of $S'$, then $c<d$, and so $d$ is a neighbour of each element of $A$. If $a$ is close to each element of $S'$, then Lemma~\ref{lem:upward flag} implies that $S'\cup\{a\}$ has a join $u'\in \mathcal P$. Since $c\le u'$, we have $u'=c$ and so $a$ is a neighbour of $c$. Hence $a$ is close to each element of $A$.
	
	Part (2) is proved similarly.
\end{proof}

A $353$-simplicial complex $\Delta$ is \emph{non-degenerate} if for each edge $bc$ there is $d\in \lk(b,\Delta)^0$ that is not a neighbour of $c$, and there is $a\in \lk(c,\Delta)^0$ that is not a neighbour of $b$.
		\begin{prop}
			\label{prop:he}
			Let $\Delta$ be a wide non-degenerate $353$-simplicial complex. Then
			$X^\boxtimes$ is homotopy equivalent to $\Delta$. 
		\end{prop}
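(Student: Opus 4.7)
I propose to prove Proposition~\ref{prop:he} by constructing an auxiliary simplicial complex $Y$ that contains both $\Delta$ and $X^\boxtimes$ as full subcomplexes, and showing that each inclusion is a homotopy equivalence. Concretely, let $Y$ have vertex set $\Delta^0=\mathcal A\cup\mathcal B\cup\mathcal C\cup\mathcal D$; declare $\rho\subset\Delta^0$ to span a simplex of $Y$ iff $\rho\cap(\mathcal A\cup\mathcal D)$ spans a simplex of $X^\boxtimes$ and every vertex of $\rho\cap(\mathcal B\cup\mathcal C)$ is a neighbour in $\Delta$ of every other vertex of $\rho$. Since each simplex of $\Delta$ has at most one vertex of each type, $\Delta\subset Y$ as a full subcomplex, and $X^\boxtimes\subset Y$ is the full subcomplex on $\mathcal A\cup\mathcal D$.

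To show $X^\boxtimes\hookrightarrow Y$ is a homotopy equivalence, I would remove the open stars of the vertices $u\in\mathcal B\cup\mathcal C$ one at a time and verify that at each step $\lk(u,Y)$ is contractible. Fixing $u=b\in\mathcal B$ (the case $u\in\mathcal C$ being dual), the link $\lk(b,Y)$ has vertex set $\lk(b,\Delta)^0=\mathcal A_b\cup\mathcal C_b\cup\mathcal D_b$ and combines the $\Delta$-adjacency on $\mathcal C_b$ with the $X^\boxtimes$-structure inherited on $\mathcal A_b\cup\mathcal D_b$. Contractibility would be established via Lemma~\ref{lem:contract} applied to $\lk(b,Y)$ with $M$ taken to be a simplex spanned by a maximal set of pairwise $\Delta$-adjacent vertices in $\mathcal C_b$ sitting above a fixed $a\in\mathcal A_b$ (which exists by wideness), using the downward flagness from Definition~\ref{def:353simplicial1}(\ref{35346}) to produce such a common lower bound, and the properties in Definition~\ref{def:353simplicial1}(\ref{3538},\ref{35310}) to verify the flag-style condition (ii).

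To show $\Delta\hookrightarrow Y$ is a homotopy equivalence, observe that the simplices of $Y\setminus\Delta$ are precisely those $\rho$ whose $\mathcal A\cup\mathcal D$-part $\tau\in X^\boxtimes$ satisfies $|\tau\cap\mathcal A|\ge 2$ or $|\tau\cap\mathcal D|\ge 2$. Ordering such $\tau$ by decreasing cardinality, I would collapse $\tau$ out of $Y$ by pairing each simplex of the form $\tau\cup\nu$ with $\tau\cup\nu\cup\{u_\tau\}$ for a canonical common neighbour $u_\tau\in\mathcal B\cup\mathcal C$ of $\tau$ (which exists by Lemma~\ref{lem:stable}(2)). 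Equivalently, the subcomplex of $Y$ consisting of simplices containing $\tau$ is a join $\tau\ast L_\tau$, and I would prove $L_\tau$ is contractible (in fact a simplex) by using Lemma~\ref{lem:upward flag} to take joins of the common neighbours of $\tau$ inside the poset $\mathcal P=\mathcal A\cup\mathcal B\cup\mathcal C$, thereby producing the required pairwise $\Delta$-adjacency.

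The hard part will be establishing the contractibility of $\lk(b,Y)$: although its combinatorial structure parallels that of the ambient $Y$, it mixes two notions of adjacency and need not itself be a $353$-simplicial complex, so Lemma~\ref{lem:contract} must be applied with a carefully chosen apex simplex $M$ whose flag closure condition captures all of Definition~\ref{def:353simplicial1}(\ref{3538},\ref{35310}). Non-degeneracy enters crucially here: for each edge $bc\in\Delta$ it guarantees vertices in $\lk(b,\Delta)^0$ that separate $b$ from $c$, which prevents $\lk(b,Y)$ from degenerating in a way that would obstruct the collapse. Wideness provides both the non-emptiness of $\mathcal A_b,\mathcal D_b$ needed to construct $M$ and the hypotheses of Lemma~\ref{lem:upward flag}.
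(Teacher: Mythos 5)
The crux of your plan --- that $X^\boxtimes\hookrightarrow Y$ is a homotopy equivalence because each $\lk(u,Y)$, $u\in\mathcal B\cup\mathcal C$, is contractible --- is exactly the step you leave unproved, and it is not a routine verification. The link $\lk(b,Y)$ mixes two adjacency relations: on $\mathcal A_b\cup\mathcal D_b$ it carries the \emph{global} thickening adjacency (closeness is witnessed by squares anywhere in $X$, not only squares through $b$), while the $\mathcal C_b$-vertices are attached by $\Delta$-adjacency. To apply Lemma~\ref{lem:contract} you must check condition (i): every maximal simplex of $\lk(b,Y)$ meets your $M$. With $M$ a clique in $\mathcal C_b$ lying above one fixed $a\in\mathcal A_b$, a maximal simplex of $\lk(b,Y)$ far from $a$ has no visible reason to contain a vertex of $M$; producing, for an arbitrary simplex of the thickening inside $\lk(b,\Delta)$ together with its $\mathcal C$-neighbours, an element of $M$ adjacent to all of it is a statement of the same depth as the proof of Theorem~\ref{thm:contractible intro} (where $M$ has to be assembled carefully from the sets $D_\lambda$ of common neighbours of maximal cliques), and none of Definition~\ref{def:353simplicial1}(\ref{3538},\ref{35310}) obviously supplies it. In addition, removing stars ``one at a time'' requires contractibility of the link in the \emph{partially removed} complex, not in $Y$, and an exhaustion/Whitehead argument when $\mathcal B\cup\mathcal C$ is infinite; the paper has to confront precisely these issues (cf.\ Remark~\ref{rem:LM}) in its own contractibility proof, and your sketch does not.

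Several of your auxiliary claims are also wrong as stated, though partly repairable. $\Delta$ is not a full subcomplex of $Y$: they have the same vertex set, so fullness would force $Y=\Delta$. The link $L_\tau$ is not a simplex in general: a vertex $d\in\mathcal D$ close or adjacent to all of $\tau$ need not be $\Delta$-adjacent to a chosen common neighbour $u_\tau\in\mathcal B\cup\mathcal C$ of $\tau$; however, if you process the $\tau$ by decreasing cardinality, the link at removal time contains only $\mathcal B\cup\mathcal C$-vertices and is a cone on the join provided by Lemma~\ref{lem:upward flag} and Lemma~\ref{lem:posets}, so that half of the argument can be salvaged. Finally, identifying the fully collapsed complex with $\Delta$ tacitly assumes $\Delta$ is flag, which is not part of Definition~\ref{def:353simplicial1}. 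For contrast, the paper's proof sidesteps all of this: it covers $\Delta$ by the closed stars of the $\mathcal B\cup\mathcal C$-vertices and $X^\boxtimes$ by the maximal simplices $\phi(u)$ (non-degeneracy makes $u\mapsto\phi(u)$ a bijection identifying the two covers), shows every non-empty intersection of stars is contractible using the join from Lemmas~\ref{lem:posets} and~\ref{lem:upward flag}, and concludes by the Nerve Theorem that both spaces are homotopy equivalent to the common nerve.
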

		
		\begin{proof}
			For each $u\in \mathcal B\cup \mathcal C$, let $\phi(u)$ be the simplex of $X^\boxtimes$ spanned by all the neighbours of~$u$ in $\mathcal A\cup\mathcal D$. Note that $\phi(u)$ is a maximal simplex in $X^\boxtimes$. Indeed, otherwise by Lemma~\ref{lem:stable}(2) we would have $\phi(u)\subsetneq \phi(u')$. Then $u$ and $u'$ would be neighbours contradicting the non-degeneracy for the edge $uu'$.
Similarly, the function $u\to \phi(u)$ is a bijection 
from $\mathcal B\cup\mathcal C$ to the family of the maximal simplices of $\Delta$.

			We claim that for any subset $U\subset \mathcal B\cup \mathcal C$, the intersection $\bigcap_{u\in U}\st(u,\Delta)$ is empty or contractible. Indeed, if $v\in \bigcap_{u\in U}\st(u,\Delta)^0$, then $v$ is a lower bound or an upper bound for $U$, say the latter. By Lemma~\ref{lem:posets} and Lemma~\ref{lem:upward flag}, $U$ has a join $M$, which belongs to $\bigcap_{u\in U}\st(u,\Delta)$. Furthermore, any vertex of $\bigcap_{u\in U}\st(u,\Delta)$ is $\le M$, justifying the claim. 
			
		Thus for any $U\subset \mathcal B\cup \mathcal C$, $\bigcap_{u\in U}\st(u,\Delta)\neq \emptyset$ if and only if $\bigcap_{u\in U}\phi(u)\neq\emptyset$. Since~$\Delta$ is covered by the closed stars of the vertices in $\mathcal B\cup \mathcal C$, and $X^\boxtimes$ is covered by its maximal simplices, it remains to invoke the Nerve Theorem \cite{borsuk1948imbedding}, see also the version in \cite[Thm~6]{bjorner2003nerves}.
		\end{proof}

		By Lemma~\ref{lem:stable}, Theorems~\ref{thm:353} and~\ref{thm:contractible intro}, and Proposition~\ref{prop:he}, we have the following.
		
		\begin{cor} 
			\label{cor:contr}
			Let $\Delta$ be a non-degenerate wide $353$-simplicial complex. Then $\Delta$ is contractible.
		\end{cor}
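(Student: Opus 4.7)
The proof is essentially an assembly of the machinery developed in Sections \ref{sec:353square} and \ref{sec:353simplicial}, so the plan is to chain together four ingredients rather than do new work. First, I would apply Theorem~\ref{thm:353} to the given non-degenerate wide $353$-simplicial complex $\Delta$ to produce the associated square complex $X$ with $X^1 = \Delta^1[\mathcal{A}\cup\mathcal{D}]$ and squares given by $4$-cycles having a common neighbour in $\Delta$. Theorem~\ref{thm:353} guarantees that $X$ is a $353$-square complex in the sense of Definition~\ref{def:353square}, so the local axioms (1)--(5) hold automatically.

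Next, I would verify the hypotheses needed to apply Theorem~\ref{thm:contractible intro} to $X$. Lemma~\ref{lem:stable}(1) ensures that $X$ is stable, and Lemma~\ref{lem:stable}(2) tells us that every simplex of $X^\boxtimes$ lies in the neighbourhood of some vertex $b \in \mathcal{B}$ or $c \in \mathcal{C}$ of $\Delta$; combined with the wideness of $\Delta$ (each such $b$ or $c$ has at least two neighbours in each of $\mathcal{A}$ and $\mathcal{D}$), this shows that $X$ is wide in the sense of Definition~\ref{def:wide}. With stability and wideness in hand, Theorem~\ref{thm:contractible intro} applies and gives that the thickening $X^\boxtimes$ is contractible.

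Finally, I would invoke Proposition~\ref{prop:he}, which requires precisely the non-degeneracy and wideness hypotheses on $\Delta$ and asserts that $X^\boxtimes$ is homotopy equivalent to $\Delta$. Combining this with the contractibility of $X^\boxtimes$ from the previous step yields that $\Delta$ is contractible.

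There is no real obstacle here since every ingredient has already been proved; the only thing to be careful about is checking that the non-degeneracy and wideness hypotheses of $\Delta$ correctly feed into (a) the wideness of $X$ via Lemma~\ref{lem:stable}(2), and (b) the bijection $u \mapsto \phi(u)$ in Proposition~\ref{prop:he}. Both of these verifications are transparent once one unwraps the definitions, so the corollary follows immediately.
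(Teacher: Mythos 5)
Your proposal is correct and follows exactly the paper's own argument: the corollary is deduced by combining Theorem~\ref{thm:353}, Lemma~\ref{lem:stable}, Theorem~\ref{thm:contractible intro}, and Proposition~\ref{prop:he} in precisely the order you describe.
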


\subsection{353 Artin complex}
\begin{theorem}
		\label{lem:main example}
		Let $\Lambda=abcd$ be the Coxeter diagram that is the linear graph with $m_{ab}=m_{cd}=3$ and $m_{bc}=5$. 
        Then the Artin complex $\Delta=\Delta_\Lambda$ is a wide non-degenerate $353$-simplicial complex.
	\end{theorem}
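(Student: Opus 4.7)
The plan is to verify the four conditions of Definition~\ref{def:353simplicial1}, plus wideness and non-degeneracy, for $\Delta=\Delta_\Lambda$. Most of the substance is already contained in Sections~\ref{sec:typeI}--\ref{sec:typeII}; the present theorem is mainly a translation. I would first dispatch simple connectivity of $\Delta_\Lambda$ via the Nerve theorem applied to the cover of the universal cover $\widetilde\Sigma_\Lambda$ by its standard subcomplexes of types $\hat s$. For every non-empty $T\subseteq S$, the induced subdiagram $\Lambda\setminus T$ is spherical: the rank-$3$ cases $bcd,abc$ are of type $H_3$, while $abd,acd$ are of type $A_2\times A_1$. By Deligne's classical $K(\pi,1)$ theorem, each such standard subcomplex in $\widetilde\Sigma_\Lambda$, as well as every non-empty finite intersection, is contractible (the Lek property from Remark~\ref{rmk:alternative} ensures such intersections are again standard subcomplexes). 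The nerve of this good cover is exactly $\Delta_\Lambda$, so $\Delta_\Lambda\simeq\widetilde\Sigma_\Lambda$ is simply connected.

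For Definition~\ref{def:353simplicial1}(1), reflexivity and antisymmetry are automatic, while transitivity follows from Remark~\ref{rem:adj}: for each triple $\hat s_1<\hat s_2<\hat s_3$ in the order $\hat a<\hat b<\hat c<\hat d$, the vertices $s_1,s_3$ lie in distinct components of the linear diagram $\Lambda\setminus\{s_2\}$. For Definition~\ref{def:353simplicial1}(2), the link $\lk(d,\Delta)\cong\Delta_{abc}=\Delta_{H_3}$ matches Theorem~\ref{thm:flag} directly with $s_1=a,s_2=b,s_3=c$, giving bowtie-freeness and upward flag in the order $\hat a<\hat b<\hat c$. The link $\lk(a,\Delta)\cong\Delta_{bcd}=\Delta_{H_3}$ matches Theorem~\ref{thm:flag} under the identification $s_1=d,s_2=c,s_3=b$ that reverses our overall order (so that the label-$5$ edge is $s_2s_3$); bowtie-freeness is invariant under order reversal, and upward flag in the reversed order is downward flag in the original.

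Conditions (3) and (4) are the heart of the matter and are repackagings of Propositions~\ref{prop:Ia} and~\ref{prop:IIa}. Any embedded cycle $a_1c_1a_2c_2a_3c_3bc_4$ in $\lk(d,\Delta)$ is, under the $H_3$ identification, a critical $8$-cycle, hence admissible by Proposition~\ref{prop:Ia}; admissibility means $b$ has a neighbour among $\{a_1,a_2,a_3\}$ that is not its cyclic neighbour, producing a chord and ruling out inducedness. Similarly, an embedded induced cycle $c_1b_1c_2a_2c_3b_3c_4b_4c_5a_5$ in $\lk(d,\Delta)$ has the type of a critical $10$-cycle when read in reverse. Proposition~\ref{prop:IIa} yields admissibility; case~(1) of Definition~\ref{def:adm II} is precluded by inducedness, so case~(2) supplies the common type-$\hat a$ neighbour of $b_1,b_3,b_4$ demanded by the axiom. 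The dual statements in $\lk(a,\Delta)$ follow by the same arguments under the identification $s_1=d,s_2=c,s_3=b$.

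Finally, wideness and non-degeneracy are elementary computations in $A_{\hat b}=A_{acd}\cong\mathbb Z\times B_3$ and $A_{\hat c}=A_{abd}\cong B_3\times\mathbb Z$. For wideness, the $\hat a$- and $\hat d$-neighbours of a type-$\hat b$ vertex are in bijection with $A_{acd}/A_{cd}\cong\mathbb Z$ and $A_{acd}/A_{ac}\cong B_3/\langle c\rangle$ respectively, both infinite; the $\hat c$ case is symmetric. For non-degeneracy on an edge $bc$, it suffices to find $g\in A_{acd}\setminus(A_{ad}A_{ac})=A_{acd}\setminus(\mathbb Z\times\{d^jc^l\})$; taking $g=cdc$ works because the quotient $B_3\twoheadrightarrow S_3$ sends $cdc$ to $(1\,3)$ while every $d^jc^l$ maps into $\{e,(1\,2),(2\,3),(1\,3\,2)\}$. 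The only non-routine step in the entire proof is the reduction of conditions (3) and (4) to Propositions~\ref{prop:Ia}--\ref{prop:IIa}, and that reduction is immediate once the $H_3$ identifications are recognized.
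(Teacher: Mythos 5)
Your proof is correct, and its core is the same as the paper's: Definition~\ref{def:353simplicial1}(1) via Remark~\ref{rem:adj}, (2) via the identification of $\lk(d,\Delta),\lk(a,\Delta)$ with $\Delta_{H_3}$ and Theorem~\ref{thm:flag} (your explicit order-reversal discussion for the downward-flag case, and your careful translation of (3) and (4) into critical $8$- and $10$-cycles with ``induced kills Definition~\ref{def:adm II}(1), so (2) gives the common $\hat a$-neighbour of $b_1,b_3,b_4$'', are exactly the intended, if unwritten, reductions to Propositions~\ref{prop:Ia} and~\ref{prop:IIa}). You diverge on the two peripheral items. For simple connectivity the paper simply cites \cite[Lem~4]{cumplido2020parabolic} (cf.\ Lemma~\ref{lem:sc}), whereas you run a nerve-theorem argument on the cover of $\widetilde\Sigma_\Lambda$ by elevations of the spherical-type standard subcomplexes; this is valid (and in fact yields the stronger statement $\Delta_\Lambda\simeq\widetilde\Sigma_\Lambda$), but it leans on Deligne's theorem and on the Salvetti complex of the non-spherical ambient diagram, which Section~\ref{sec:prelim1} of the paper only sets up in the finite-Coxeter case, so strictly you are importing standard material beyond the paper's framework where a one-line citation suffices. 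For non-degeneracy the paper works inside $\lk(b,\Delta)$, using that the $\hat c\hat d$-part is $\Delta_{cd}$ and projecting to the hexagonal Coxeter complex $\bC_{cd}$; you instead reduce to finding $g\in A_{acd}\setminus A_{ad}A_{ac}$ and exhibit $g=cdc$ via the quotient of the $3$-strand braid group onto $S_3$. This works, but note two small points: the equivalence ``$gA_{abc}$ adjacent to $A_{abd}$ iff $g\in A_{ad}A_{ac}$ (for $g\in A_{acd}$)'' uses the flagness of $\Delta_\Lambda$ together with the parabolic intersection property and should be said explicitly, and your symbol $B_3$ (the braid group, i.e.\ the $A_2$ Artin group) clashes with the paper's use of $B_3$ for the Coxeter type with a label $4$. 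With those cosmetic fixes, your argument is a complete and slightly more self-contained alternative to the paper's proof.
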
 

  As usual, we denote by $\mathcal A$ the set of vertices of type $\hat a$, etc.
	\begin{proof}
		The simple connectedness of $\Delta$ follows from \cite[Lem 4]{cumplido2020parabolic}.
		By Remark~\ref{rem:adj}, we have Definition~\ref{def:353simplicial1}(1). 
  By \cite[Lem 6]{cumplido2020parabolic}, we can identify each $\lk(a,\Delta), \lk(d,\Delta)$, with the Artin complex for the Coxeter subdiagram $bcd$ or $abc$. Hence Definition~\ref{def:353simplicial1}(2) follows from 
    Theorem~\ref{thm:flag}, Definition~\ref{def:353simplicial1}(3) follows
    from Proposition~\ref{prop:Ia}, and 
    Definition~\ref{def:353simplicial1}(4) follows from 
    Proposition~\ref{prop:IIa}. Thus $\Delta$ is a $353$-simplicial complex. 
    
    Each vertex in $\mathcal B\cup\mathcal C$ has infinitely many neighbours in $\mathcal A$ and in $\mathcal D$, and so $\Delta$ is wide.

Let $x\in \Delta^0$ be a vertex of type $\hat b$. Then, by Remark~\ref{rem:adj}, $\lk(x,\Delta)$ is a join $K_1*K_2$ where $K_1$ is the full subcomplex spanned by the vertices of type $\hat a$, and $K_2$ is the full subcomplex spanned by the vertices of type $\hat c$ and $\hat d$. By \cite[Lem 6]{cumplido2020parabolic}, we have $K_2\cong \Delta_{\Lambda'}$, where $\Lambda'\subset \Lambda$ is the edge $cd$. 
By considering the simplicial map $\pi$ from the Artin complex~$\Delta_{\Lambda'}$ to the Coxeter complex $\bC_{\Lambda'}$, which is a circle formed of 6 edges, we obtain that for each vertex $z$ of type $\hat c$ in $K_2$, there is a vertex of type $\hat d$ in $K_2$ that is not a neighbour of $z$. This confirms the first part of the definition of the non-degeneracy of $\Delta$. The second part is analogous.
	\end{proof}

\section{Relative Artin complexes and related background}
\label{sec:relative}
\subsection{Relative Artin complexes}
\begin{defin}[\cite{huang2023labeled}]
\label{def:rel}
	Let $\Lambda'\subset \Lambda$ be an induced subdiagram.
	The \emph{$(\Lambda,\Lambda')$-relative Artin complex $\Delta_{\Lambda,\, \Lambda'}$} is 
    the induced subcomplex of the Artin complex $\Delta_\Lambda$ spanned by vertices of type $\hat s$ with $s$ a vertex of $\Lambda'$. 
\end{defin}

\begin{lem}[{\cite[Lem 6.2]{huang2023labeled} and \cite[Lem 4]{cumplido2020parabolic}}]
	\label{lem:sc}
	If $|\Lambda'|\ge 3$, then $\Delta_{\Lambda,\, \Lambda'}$ is 
    simply connected (in particular, it is connected).
\end{lem}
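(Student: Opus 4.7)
The plan is to induct on $|\Lambda\setminus\Lambda'|$, keeping $|\Lambda'|\ge 3$ throughout. The base case $\Lambda=\Lambda'$ reduces to the simple connectedness of $\Delta_\Lambda$ itself when $|\Lambda|\ge 3$, which is \cite[Lem~4]{cumplido2020parabolic}. For the inductive step, assume $\Lambda'\subsetneq\Lambda$, pick $s\in\Lambda\setminus\Lambda'$, and set $\Lambda''=\Lambda'\cup\{s\}$. Since $|\Lambda\setminus\Lambda''|<|\Lambda\setminus\Lambda'|$ and $|\Lambda''|\ge 3$, the inductive hypothesis applied to the pair $(\Lambda,\Lambda'')$ yields that $\Delta_{\Lambda,\Lambda''}$ is simply connected. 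The aim is then to show that the inclusion $\Delta_{\Lambda,\Lambda'}\hookrightarrow\Delta_{\Lambda,\Lambda''}$ is a $\pi_1$-isomorphism.

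By \cite[Lem~6]{cumplido2020parabolic}, for any vertex $v$ of $\Delta_\Lambda$ of type $\hat s$ one has $\lk(v,\Delta_\Lambda)\cong\Delta_{\Lambda\setminus\{s\}}$, and hence
\[
\lk(v,\Delta_{\Lambda,\Lambda''})=\Delta_{\Lambda\setminus\{s\},\Lambda'}.
\]
Since $|(\Lambda\setminus\{s\})\setminus\Lambda'|=|\Lambda\setminus\Lambda'|-1$ and $|\Lambda'|\ge 3$, the inductive hypothesis applied to the pair $(\Lambda\setminus\{s\},\Lambda')$ shows that each such link is simply connected. Observe that $\Delta_{\Lambda,\Lambda''}$ is obtained from $\Delta_{\Lambda,\Lambda'}$ by adjoining, for each type-$\hat s$ vertex $v$, the closed star of $v$, which is contractible and meets $\Delta_{\Lambda,\Lambda'}$ precisely in the simply connected subcomplex $\lk(v,\Delta_{\Lambda,\Lambda''})$.

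Given a based loop $\gamma$ in $\Delta_{\Lambda,\Lambda'}$, I would fix a disc diagram $f\colon D\to\Delta_{\Lambda,\Lambda''}$ with boundary $\gamma$, guaranteed by $\pi_1(\Delta_{\Lambda,\Lambda''})=1$. Since $\gamma$ avoids type-$\hat s$ vertices, any preimage in $D$ of a type-$\hat s$ vertex $v$ lies in the interior of $D$, and its open star is sent to the open star of $v$ with boundary circle mapping into the simply connected link $\Delta_{\Lambda\setminus\{s\},\Lambda'}$. I would refill that boundary circle by a disc diagram inside the link and substitute it for the original star, removing a type-$\hat s$ preimage from $D$ without introducing any new ones. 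Iterating over the finitely many such preimages produces a disc diagram entirely inside $\Delta_{\Lambda,\Lambda'}$ bounding $\gamma$, so $\gamma$ is null-homotopic in $\Delta_{\Lambda,\Lambda'}$.

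The crux is really just setting up the two-step induction so that both the links of type-$\hat s$ vertices and the enlarged complex $\Delta_{\Lambda,\Lambda''}$ are handled by the same inductive hypothesis; with that in place, the van Kampen-style surgery is routine because each replacement disc lies entirely in $\Delta_{\Lambda\setminus\{s\},\Lambda'}$ and so carries no vertex of type $\hat s$, making the number of type-$\hat s$ preimages in $D$ a strictly decreasing quantity that forces termination.
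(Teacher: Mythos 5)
The paper contains no internal proof of this lemma: it is imported wholesale from \cite{huang2023labeled}*{Lem 6.2} and \cite{cumplido2020parabolic}*{Lem 4}, so your argument is compared against a citation rather than an argument, and it stands as a legitimate self-contained re-derivation of the relative statement from the absolute one. Your double induction is consistent (both auxiliary pairs $(\Lambda,\Lambda'')$ and $(\Lambda\setminus\{s\},\Lambda')$ strictly decrease $|\Lambda\setminus\Lambda'|$), the identification $\lk(v,\Delta_{\Lambda,\,\Lambda''})\cong\Delta_{\Lambda\setminus\{s\},\,\Lambda'}$ is exactly Lemma~\ref{lem:link}, and the key structural fact making the surgery work is that two distinct vertices of type $\hat s$ are never adjacent (distinct cosets of $A_{\hat s}$ are disjoint), so every simplex of $\Delta_{\Lambda,\,\Lambda''}$ meets at most one type-$\hat s$ vertex, closed stars of type-$\hat s$ preimages in $D$ overlap only in their links, and the replacement discs, lying in the links, introduce no new type-$\hat s$ preimages. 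By contrast, results of this kind are usually established in the cited sources by algebraic means (coset-complex/colimit criteria for the family $\{A_{\hat s}\}_{s\in\Lambda'}$, exploiting that every Artin relation involves only two generators and $|\Lambda'|\ge 3$ provides a third vertex $r$ with that relation inside $A_{\hat r}$); your route trades that algebra for a purely topological reduction to the absolute case, at the cost of the diagram surgery. Two points to tighten: first, literally excising $\overline{\st}(x,D)$ and gluing in $D_x$ can break planarity when the boundary cycle of $D_x$ is not embedded, so it is cleaner either to redefine the filling map $f$ on the closed stars of the type-$\hat s$ preimages (possible, rel their links, because $f(\lk(x,D))$ is null-homotopic in the simply connected link, and these stars pairwise overlap only where $f$ is unchanged), or to invoke van Kampen for the cover of $\Delta_{\Lambda,\,\Lambda''}$ by the disjoint open stars of the type-$\hat s$ vertices and the complement of those vertices, which deformation retracts onto $\Delta_{\Lambda,\,\Lambda'}$; second, the statement also asserts connectedness, which your loop-filling argument does not address but which follows from the same pushing argument one dimension down (replace a type-$\hat s$ vertex on a path by a path in its connected link), so it should be folded into the induction alongside Lemma~\ref{lem:VK} for the loops.
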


Note that $\Delta_{\Lambda,\, \Lambda'}$ is a simplicial complex of type $S$ (see Section~\ref{subsec:chamber complex}) with $S=\{\hat s\}_{s\in \Lambda'}$.

\begin{defin}
	\label{def:admissible}
	An induced  subdiagram $\Lambda'$ of $\Lambda$ is \emph{admissible} if for any vertex $x$ of $\Lambda'$, if the vertices $x_1,x_2$ of $\Lambda'$ are in distinct connected components of $\Lambda'\setminus\{x\}$, then they are in distinct connected components of $\Lambda\setminus\{x\}$.
\end{defin}

\begin{lem}[{\cite[Lem 6.6]{huang2023labeled}}]
	\label{lem:poset structure}

Suppose that $\Lambda'=s_1\cdots s_n$ is an admissible linear subgraph of a Coxeter diagram $\Lambda$. Let $\Delta'$ be the $(\Lambda,\Lambda')$-relative Artin complex, with the relation~$<$ on its vertex set induced from $s_1<\cdots <s_n$ or $s_n<\cdots <s_1$. Then $({\Delta'}^0,<)$ is a weakly graded poset.
\end{lem}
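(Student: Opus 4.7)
The plan is to decompose the claim into the three pieces: (a) antisymmetry of $<$, (b) transitivity of $<$, and (c) the existence of a rank function. Parts (a) and (c) are immediate from Definition~\ref{def:order}: if $x<y$ and $y<x$ simultaneously, then in the induced total order on $\{\hat s_1,\dots,\hat s_n\}$ we would have $\type(x)<\type(y)<\type(x)$, which is impossible. For the rank function, define $r(x)=i$ whenever $\type(x)=\hat s_i$ (or $r(x)=n+1-i$ in the reversed order case); then $x<y$ directly forces $r(x)<r(y)$.

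The substantive content lies in transitivity, and this is where admissibility is used. Suppose $x_1<x_2<x_3$ with $\type(x_l)=\hat s_{i_l}$ and $i_1<i_2<i_3$ with respect to the linear order on $\Lambda'$. I would view the $x_l$ as vertices of the barycentric subdivision $\Delta'_\Lambda$ of types $\widehat{S_l}$ where $S_l=\{s_{i_l}\}$, and apply Lemma~\ref{lem:transitive}. The hypothesis to verify is that every element of $S_1\setminus S_2=\{s_{i_1}\}$ and every element of $S_3\setminus S_2=\{s_{i_3}\}$ lies in distinct connected components of $\Lambda\setminus S_2=\Lambda\setminus\{s_{i_2}\}$. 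Since $\Lambda'$ is the linear graph $s_1\cdots s_n$, the vertices $s_{i_1}$ and $s_{i_3}$ sit on opposite sides of $s_{i_2}$ and are therefore in distinct components of $\Lambda'\setminus\{s_{i_2}\}$; by the admissibility of $\Lambda'\subset\Lambda$ (Definition~\ref{def:admissible}), they also lie in distinct components of $\Lambda\setminus\{s_{i_2}\}$. Lemma~\ref{lem:transitive} then gives $x_1\sim x_3$, i.e.\ the two vertices lie in a common simplex of $\Delta_\Lambda$, which for two vertices of $\Delta_\Lambda^0$ means they are neighbours. Because $\Delta_{\Lambda,\Lambda'}$ is the induced subcomplex spanned by vertices whose types belong to $\Lambda'$, this edge lies in $\Delta'$, establishing $x_1<x_3$.

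The only mild obstacle is keeping the two bookkeeping conventions straight: a vertex of $\Delta_\Lambda$ of type $\hat s$ corresponds, in the barycentric-subdivision language of Lemma~\ref{lem:transitive}, to a vertex of type $\widehat{\{s\}}=\hat s$, so $S_l$ in that lemma is the singleton $\{s_{i_l}\}$, not the complement. Once that identification is fixed, the application is mechanical, and admissibility does exactly the job it was designed for, namely upgrading the obvious separation property inside the linear diagram $\Lambda'$ to the corresponding separation property inside the ambient diagram $\Lambda$, which is what Lemma~\ref{lem:transitive} requires.
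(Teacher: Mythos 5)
Your argument is correct. Note that the paper does not prove this lemma at all --- it is imported verbatim as \cite[Lem 6.6]{huang2023labeled} --- so there is no internal proof to compare against; what you have written is a valid self-contained derivation from the material this paper does contain. Irreflexivity, antisymmetry and the rank function $r(x)=i$ for $\type(x)=\hat s_i$ are indeed immediate from Definition~\ref{def:order}, and your reduction of transitivity to Lemma~\ref{lem:transitive} is exactly right: for $x_1<x_2<x_3$ of types $\hat s_{i_1},\hat s_{i_2},\hat s_{i_3}$ with $i_1<i_2<i_3$, the sets in that lemma are the singletons $S_l=\{s_{i_l}\}$, the linearity of $\Lambda'$ puts $s_{i_1}$ and $s_{i_3}$ in distinct components of $\Lambda'\setminus\{s_{i_2}\}$, and admissibility (Definition~\ref{def:admissible}) upgrades this to separation in $\Lambda\setminus\{s_{i_2}\}$, so $x_1\sim x_3$; since the two vertices have distinct types in $\Lambda'$ and $\Delta'$ is the full subcomplex on such vertices, the resulting edge lies in $\Delta'$ and $x_1<x_3$ follows. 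Your bookkeeping remark about $\widehat{S_l}$ versus $S_l$ in the barycentric-subdivision convention is the one place where confusion could arise, and you have resolved it correctly. The only broader observation is that Lemma~\ref{lem:transitive} is itself reproduced from \cite[Lem 10.4]{huang2023labeled}, so your route in effect reconstructs the original proof in that reference rather than genuinely diverging from it.
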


\begin{defin}
	\label{def:bowtie free}
	Let $\Delta'$ be as in Lemma~\ref{lem:poset structure}. 
    We say that 
    $\Delta'$ is \emph{bowtie free} (resp., \emph{flag}, or \emph{weakly flag}) if $({\Delta'}^0,<)$ is bowtie free, (resp.\ flag, or weakly flag). 
    Note that these definitions do not depend on the choice of  one or the other total order on $\Lambda'$.
\end{defin}

\begin{lem}[{\cite[Lem 6.4(1)]{huang2023labeled}}]
	\label{lem:link}
	Let $v\in \Delta'=\Delta_{\Lambda,\, \Lambda'}$ be a vertex of type~$\hat s$.
	Then there is a type-preserving isomorphism between $\lk(v,\Delta')$ and $\Delta_{\Lambda \setminus \{s\},\, \Lambda' 
    \setminus \{s\}}$.
\end{lem}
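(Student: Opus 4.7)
The plan is to describe both sides vertex-by-vertex via left cosets and then upgrade the resulting bijection to a simplicial isomorphism. Translating on the left by an element representing $v$, I would first reduce to the case where $v$ corresponds to the identity coset $A_{\widehat s}$. Then by Definition~\ref{def:Artin0}, a vertex $w$ of $\Delta'$ of type $\widehat t$ with $t\in\Lambda'\setminus\{s\}$ is a neighbour of $v$ exactly when a representative coset $gA_{\widehat t}$ meets $A_{\widehat s}$, i.e.\ when $g$ can be chosen in $A_{\widehat s}=A_{\Lambda\setminus\{s\}}$.

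Second, I would invoke the van der Lek intersection theorem for standard parabolic subgroups, $A_{S_1}\cap A_{S_2}=A_{S_1\cap S_2}$, so that inside $A_{\Lambda\setminus\{s\}}$ two neighbours $g_1A_{\widehat t},g_2A_{\widehat t}$ (with $g_i\in A_{\widehat s}$) give the same vertex of $\Delta'$ iff $g_1^{-1}g_2\in A_{\widehat s}\cap A_{\widehat t}=A_{S\setminus\{s,t\}}=A_{(\Lambda\setminus\{s\})\setminus\{t\}}$. Consequently the neighbours of $v$ in $\Delta'$ of type $\widehat t$ are in natural type-preserving bijection $\Phi$ with the left cosets of $A_{(\Lambda\setminus\{s\})\setminus\{t\}}$ in $A_{\Lambda\setminus\{s\}}$, i.e.\ with the vertices of type $\widehat t$ in $\Delta_{\Lambda\setminus\{s\},\,\Lambda'\setminus\{s\}}$.

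Third, for the higher simplicial structure, let $w_1,\dots,w_k$ be neighbours of $v$ in $\Delta'$, represented by $g_iA_{\widehat{t_i}}$ with $g_i\in A_{\widehat s}$ and $t_i\in\Lambda'\setminus\{s\}$. By the definition of $\Delta_\Lambda$, they span a simplex of $\lk(v,\Delta')$ iff
\[
A_{\widehat s}\cap\bigcap_{i=1}^{k}g_iA_{\widehat{t_i}}\neq\emptyset.
\]
Applying the intersection theorem a second time, $g_iA_{\widehat{t_i}}\cap A_{\widehat s}=g_i(A_{\widehat{t_i}}\cap A_{\widehat s})=g_iA_{(\Lambda\setminus\{s\})\setminus\{t_i\}}$, so the above non-emptiness is equivalent to $\bigcap_{i}g_iA_{(\Lambda\setminus\{s\})\setminus\{t_i\}}\neq\emptyset$ in $A_{\Lambda\setminus\{s\}}$, which is precisely the condition that $\Phi(w_1),\dots,\Phi(w_k)$ span a simplex in $\Delta_{\Lambda\setminus\{s\},\,\Lambda'\setminus\{s\}}$. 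Thus $\Phi$ is a type-preserving simplicial isomorphism.

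The only non-bookkeeping ingredient is the parabolic intersection identity $A_{S_1}\cap A_{S_2}=A_{S_1\cap S_2}$, which is van der Lek's theorem; once this is in hand, the rest is an unwinding of the coset description of $\Delta_\Lambda$ and the definition of the relative Artin complex, so I do not anticipate any further obstacle.
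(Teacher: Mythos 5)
Your argument is correct: the reduction to the identity coset, the identification of neighbours of $v$ of type $\hat t$ with cosets $gA_{\hat t}$, $g\in A_{\hat s}$, and the two applications of van der Lek's parabolic intersection theorem (for well-definedness of $\Phi$ and for the simplex condition, using $A_{\hat s}\cap g_iA_{\hat t_i}=g_iA_{S\setminus\{s,t_i\}}$) give exactly the type-preserving simplicial isomorphism claimed; the fullness of the relative complexes ensures the simplex criteria transfer, and the only unstated (but trivial) point is that $v$ has no neighbours of its own type since distinct cosets of $A_{\hat s}$ are disjoint. The paper itself offers no proof here — it cites \cite[Lem 6.4(1)]{huang2023labeled} — and your coset-plus-intersection argument is precisely the standard proof underlying that reference, so there is nothing to add.
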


We have the following consequence. 
\begin{lem}
	\label{lem:dr}
	Let $\Lambda'\subset \Lambda$ be an induced subdiagram.
	\begin{enumerate}
	\item
	Let $s$ be a vertex of $\Lambda'$. 
	If $\Delta_{\Lambda \setminus \{s\},\,\Lambda'\setminus \{s\}}$ is contractible, then $\Delta_{\Lambda,\, \Lambda'}$ deformation retracts onto  $\Delta_{\Lambda,\, \Lambda'\setminus \{s\}}$.
	\item 
	More generally, let $T$ be a subset of the vertex set of $\Lambda'$. If $\Delta_{\Lambda\setminus R,\, \Lambda' \setminus R}$ is contractible for each non-empty subset $R$ of $T$, then $\Delta_{\Lambda,\, \Lambda'}$ deformation retracts onto $\Delta_{\Lambda,\, \Lambda'\setminus T}$.
	\end{enumerate}
\end{lem}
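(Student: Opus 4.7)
For Part (1), the plan is to observe that $\Delta_{\Lambda,\, \Lambda'\setminus\{s\}}$ is precisely the complement in $\Delta_{\Lambda,\, \Lambda'}$ of the union of the open stars of the vertices of type $\hat s$, and then to deformation retract each of these open stars away. Since $\Delta_{\Lambda,\, \Lambda'}$ is a simplicial complex of type $\{\hat t\}_{t\in\Lambda'}$, two vertices of the same type $\hat s$ share no edge, so the open stars of type-$\hat s$ vertices are pairwise disjoint. By Lemma~\ref{lem:link}, the link of each vertex $v$ of type $\hat s$ is isomorphic to $\Delta_{\Lambda\setminus\{s\},\, \Lambda'\setminus\{s\}}$, which is contractible by hypothesis. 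Since the closed star $\overline{\st}(v,\Delta_{\Lambda,\,\Lambda'})=v*\lk(v)$ is a cone (hence contractible) and its link is also contractible, the inclusion $\lk(v)\hookrightarrow\overline{\st}(v)$ is a cofibration between contractible CW complexes and therefore a deformation retract. Performing these deformation retractions simultaneously, which is possible because the open stars are pairwise disjoint, produces a deformation retraction of $\Delta_{\Lambda,\,\Lambda'}$ onto $\Delta_{\Lambda,\,\Lambda'\setminus\{s\}}$.

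For Part (2), I would argue by induction on $|T|$, the case $|T|\le 1$ being Part (1) or trivial. For the inductive step, pick any $s\in T$. Using Part (1) together with the $R=\{s\}$ case of the hypothesis, $\Delta_{\Lambda,\,\Lambda'}$ deformation retracts onto $\Delta_{\Lambda,\,\Lambda'\setminus\{s\}}$. To apply the inductive hypothesis to the pair $(\Lambda,\Lambda'\setminus\{s\})$ with $T\setminus\{s\}$ in place of $T$, one has to verify that for every non-empty $R'\subseteq T\setminus\{s\}$ the complex $\Delta_{\Lambda\setminus R',\,(\Lambda'\setminus\{s\})\setminus R'}=\Delta_{\Lambda\setminus R',\,\Lambda'\setminus(R'\cup\{s\})}$ is contractible. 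For this, I would invoke Part (1) once more, this time applied to the pair $(\Lambda\setminus R',\,\Lambda'\setminus R')$ and the vertex $s$: its hypothesis is the contractibility of $\Delta_{\Lambda\setminus(R'\cup\{s\}),\,\Lambda'\setminus(R'\cup\{s\})}$, which is the $R=R'\cup\{s\}$ case of the original assumption. Part (1) then shows that $\Delta_{\Lambda\setminus R',\,\Lambda'\setminus R'}$, which is contractible by the $R=R'$ case of the hypothesis, deformation retracts onto $\Delta_{\Lambda\setminus R',\,\Lambda'\setminus(R'\cup\{s\})}$, which is thus contractible.

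Having verified the hypotheses, the inductive hypothesis supplies a deformation retraction of $\Delta_{\Lambda,\,\Lambda'\setminus\{s\}}$ onto $\Delta_{\Lambda,\,\Lambda'\setminus T}$, and composing with the first deformation retraction completes the proof. The only real subtlety is the asymmetric bookkeeping in Part (2): the hypothesis is symmetric, with $R$ removed from both $\Lambda$ and $\Lambda'$, but the inductive step requires contractibility of complexes in which $s$ has been removed only from $\Lambda'$. This forces the auxiliary second invocation of Part (1) inside the induction, after which everything is routine.
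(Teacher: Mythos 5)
Your proof is correct and follows essentially the same route as the paper: part (1) is exactly the star/link collapse that the paper obtains by citing \cite[Lem 7.1]{huang2023labeled} together with Lemma~\ref{lem:link} (your direct argument works, with the implicit point that the retractions of the closed stars are strong, fixing the links pointwise, so the simultaneous collapses glue even though the closed stars may overlap along links), and part (2) is the same induction on $|T|$. The only difference is bookkeeping: you peel off $s$ first and verify the hypotheses for the pair $(\Lambda,\Lambda'\setminus\{s\})$ via extra applications of part (1), whereas the paper peels off $T\setminus\{s\}$ first and verifies the contractibility of $\Delta_{\Lambda\setminus\{s\},\,\Lambda'\setminus T}$ via the inductive hypothesis applied to $(\Lambda\setminus\{s\},\Lambda'\setminus\{s\})$.
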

\begin{proof}
Part (1) is \cite[Lem 7.1]{huang2023labeled} in view of Lemma~\ref{lem:link}. We prove part (2) by induction on $|T|$. For $s\in T$, by the inductive assumption, $\Delta_{\Lambda,\, \Lambda'}$ deformation retracts onto  $\Delta_{\Lambda,\, \Lambda'\setminus T\cup \{s\}}$. 
It remains to prove that $\Delta_{\Lambda,\, \Lambda'\setminus T\cup \{s\}}$ deformation retracts onto $\Delta_{\Lambda,\, \Lambda'\setminus T}$. This will follow from part (1) once we verify that $\Delta_{\Lambda\setminus \{s\},\, \Lambda'\setminus T}$ is contractible. 
This follows from the assumption that $\Delta_{\Lambda\setminus \{s\},\,\Lambda'\setminus \{s\}}$ is contractible, since by the inductive assumption it deformation retracts to $\Delta_{\Lambda\setminus \{s\},\,\Lambda'\setminus T}$. 
\end{proof}

\subsection{Properties of some relative Artin complexes}

\begin{lem}[{\cite[Lem~6.16]{huang2023labeled}}]
	\label{lem:4cycle}
	Suppose that $\Lambda$ is an arbitrary Coxeter diagram.
	Let $\omega=x_1\cdots x_4$ be an embedded $4$-cycle in $\Delta_\Lambda$ of type $\hat s_1\cdots \hat s_4$. Suppose $\hat s_1\neq \hat s_3$. Then there exists a vertex $x'_3\in\Delta_\Lambda$ of type $\hat s_1$ that is a common neighbour of $x_2,x_3,$ and $x_4$. 
\end{lem}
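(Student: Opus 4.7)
The plan is to use the coset description of $\Delta_\Lambda$: vertices correspond to left cosets of standard parabolic subgroups $A_{\hat s}$, and adjacency means non-empty coset intersection. I would split on whether the $4$-cycle has a chord. If $x_1 \sim x_3$, simply take $x'_3 := x_1$, which is of type $\hat s_1$ and adjacent to $x_2, x_3, x_4$. If $x_2 \sim x_4$ (but $x_1 \not\sim x_3$), then by flagness of $\Delta_\Lambda$ (\cite{godelle2012k}), the set $\{x_2,x_3,x_4\}$ spans a $2$-simplex. The cycle edges together with the hypotheses $s_1 \neq s_3$ and $s_2 \neq s_4$ (forced by $x_2 \sim x_4$) make all four $s_i$ pairwise distinct, so this $2$-simplex extends to a chamber of $\Delta_\Lambda$, and its unique vertex of type $\hat s_1$ is the required $x'_3$.

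For the induced-$4$-cycle case $x_1 \not\sim x_3$ and $x_2 \not\sim x_4$, I would translate by an element of $A_\Lambda$ so that $x_3 = A_{\hat s_3}$, and pick $h_2 \in x_3 \cap x_2 \subseteq A_{\hat s_3}$ and $h_4 \in x_3 \cap x_4 \subseteq A_{\hat s_3}$, so that $x_2 = h_2 A_{\hat s_2}$ and $x_4 = h_4 A_{\hat s_4}$. Writing $x_1 = g A_{\hat s_1}$ and using $x_1 \sim x_2$ and $x_1 \sim x_4$, one deduces
$$
h_2^{-1} h_4 \in A_{\hat s_3} \cap A_{\hat s_2} A_{\hat s_1} A_{\hat s_4}.
$$

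The key claim is that such an element admits a factorisation $h_2^{-1} h_4 = p q r$ with $p \in A_{\hat s_3 \cap \hat s_2}$, $q \in A_{\hat s_3 \cap \hat s_1}$, $r \in A_{\hat s_3 \cap \hat s_4}$. The hypothesis $s_1 \neq s_3$ is used precisely here: it guarantees that $A_{\hat s_3 \cap \hat s_1} = A_{S \setminus \{s_1, s_3\}}$ is a genuine standard parabolic subgroup of $A_{\hat s_3}$, large enough to absorb the $A_{\hat s_1}$-part of the product. Granting this decomposition, I would set $x'_3 := h_2 p A_{\hat s_1}$. Then $h_2 p \in A_{\hat s_3}$ gives $x'_3 \cap x_3 \neq \emptyset$; $h_2^{-1}(h_2 p) = p \in A_{\hat s_3 \cap \hat s_2} \subseteq A_{\hat s_2}$ gives $x'_3 \cap x_2 \neq \emptyset$; and $h_4^{-1}(h_2 p) = (pqr)^{-1} p = r^{-1} q^{-1} \in A_{\hat s_4} A_{\hat s_1}$ gives $x'_3 \cap x_4 \neq \emptyset$.

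The main obstacle is establishing the product decomposition of elements of $A_{\hat s_3} \cap A_{\hat s_2} A_{\hat s_1} A_{\hat s_4}$. One route is to invoke the parabolic subgroup intersection theorem of van der Lek \cite{lek} together with a refinement showing that the intersection of $A_{\hat s_3}$ with a product of standard parabolic subgroups decomposes compatibly into products of the corresponding parabolics of $A_{\hat s_3}$. A cleaner alternative is to work in the universal cover $\widetilde{\Sigma}_\Lambda$ of the Salvetti complex and exhibit the standard subcomplex corresponding to $x'_3$ directly via intersections of standard subcomplexes, using the intersection description in Remark~\ref{rmk:alternative}.
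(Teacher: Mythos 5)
Your chord cases are fine, and so is the set-up of the induced case: translating so that $x_3$ is the identity coset of $A_{\hat s_3}$, writing $x_2=h_2A_{\hat s_2}$, $x_4=h_4A_{\hat s_4}$ with $h_2,h_4\in A_{\hat s_3}$, and deducing $h_2^{-1}h_4\in A_{\hat s_3}\cap A_{\hat s_2}A_{\hat s_1}A_{\hat s_4}$ is correct; and, granting your factorisation claim, the verification that $x_3'=h_2pA_{\hat s_1}$ meets $x_2,x_3,x_4$ is also correct. But the argument has a genuine gap exactly at what you call the main obstacle, and that obstacle is the entire content of the lemma: you never establish that every element of $A_{\hat s_3}\cap A_{\hat s_2}A_{\hat s_1}A_{\hat s_4}$ factors as $pqr$ with $p\in A_{\hat s_3}\cap A_{\hat s_2}$, $q\in A_{\hat s_3}\cap A_{\hat s_1}$, $r\in A_{\hat s_3}\cap A_{\hat s_4}$. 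Van der Lek's theorem (and likewise the intersection statement in Remark~\ref{rmk:alternative}) concerns intersections of parabolic subgroups, i.e.\ $A_I\cap A_J=A_{I\cap J}$ and intersections of cosets/standard subcomplexes; a product $A_JA_KA_L$ is merely a subset, not a subgroup, and no ``refinement'' of the kind you invoke is available for arbitrary Artin groups. The Coxeter-group analogue of your claim is provable because one can choose a factorisation $w=uv$ with additive lengths, so reduced prefixes of an element of $W_I$ stay in $W_I$; Artin groups have no length additivity of this sort, and even convexity of standard parabolics gives no control over such factorisations. Note also that your claim is strictly stronger than the lemma itself: the lemma only needs some $k\in A_{\hat s_3}$ with $h_2^{-1}k\in A_{\hat s_2}A_{\hat s_1}$ and $h_4^{-1}k\in A_{\hat s_4}A_{\hat s_1}$, whereas you demand a fully compatible triple factorisation with middle factor in $A_{\hat s_3}\cap A_{\hat s_1}$. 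So the reduction goes in the wrong direction: you have replaced the statement by an unproven, harder one.

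The alternative you sketch in the last sentence does not repair this: passing to $\widetilde\Sigma_\Lambda$ and intersecting standard subcomplexes again only encodes intersections of cosets of parabolics (via \cite{lek}), not the product sets $A_{\hat s_2}A_{\hat s_1}A_{\hat s_4}$ that your argument produces, so it is not clear what it buys. Keep in mind that the present paper does not prove this lemma at all; it quotes it from \cite{huang2023labeled}, and reproving it requires genuinely new input beyond the coset bookkeeping you give --- the bookkeeping is the easy half, and the factorisation claim is where all the work lies.
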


\begin{cor}
	\label{cor:4cycle}
	Suppose that $\Lambda$ is an arbitrary Coxeter diagram with an edge $s_1s_2$ such that $\Delta_{\Lambda,\, s_1s_2}$ has girth $\geq 6$.
Let $\omega$ be an embedded $4$-cycle in $\Delta_\Lambda$ with an edge of type $\hat s_1\hat s_2$. Then $\omega$ not induced. 
\end{cor}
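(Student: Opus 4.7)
The plan is to argue by cases on the types of $x_3,x_4$, using Lemma~\ref{lem:4cycle} to successively replace $x_3,x_4$ with vertices of types $\hat s_1,\hat s_2$, at which point we will have produced an embedded 4-cycle in $\Delta_{\Lambda,\,s_1s_2}$ and can invoke the girth hypothesis to reach a contradiction (unless at some intermediate step a vertex coincides with an existing one, directly producing a diagonal of $\omega$).

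Write $\omega=x_1x_2x_3x_4$ with $x_i$ of type $\hat t_i$, and assume without loss of generality $\hat t_1=\hat s_1,\hat t_2=\hat s_2$. If $\hat t_3=\hat s_1$ and $\hat t_4=\hat s_2$, then $\omega$ is already an embedded 4-cycle in $\Delta_{\Lambda,\,s_1s_2}$, contradicting the girth hypothesis. So at least one of $\hat t_3\neq \hat s_1$ or $\hat t_4\neq \hat s_2$ holds. By the obvious symmetry (cyclically relabelling and reversing $\omega$), we may assume $\hat t_3\neq \hat s_1$.

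By Lemma~\ref{lem:4cycle} applied to $x_1x_2x_3x_4$, there is a vertex $x_3'$ of type $\hat s_1$ adjacent to $x_2,x_3,x_4$. If $x_3'=x_1$, then $x_1$ is adjacent to $x_3$ and $\omega$ is not induced. Otherwise $x_1,x_2,x_3',x_4$ are pairwise distinct (the only potential collision $x_2=x_4$ would force $\hat t_4=\hat s_2$ and hence contradict the embeddedness of $\omega$), so they form an embedded 4-cycle $\omega'$ of type $\hat s_1\hat s_2\hat s_1\hat t_4$. If $\hat t_4=\hat s_2$, then $\omega'\subset\Delta_{\Lambda,\,s_1s_2}$, again contradicting girth. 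Otherwise $\hat t_4\neq\hat s_2$, and we apply Lemma~\ref{lem:4cycle} to the relabelled cycle $x_2x_3'x_4x_1$ to produce $x_4'$ of type $\hat s_2$ adjacent to $x_3',x_4,x_1$. If $x_4'=x_2$, then $x_2$ is adjacent to $x_4$ and $\omega$ is not induced; otherwise $x_1x_2x_3'x_4'$ is an embedded 4-cycle in $\Delta_{\Lambda,\,s_1s_2}$, contradicting the girth hypothesis.

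The argument is essentially a routine case analysis, and the only subtlety—indeed the only place one has to be slightly careful—is checking that the intermediate 4-cycles $x_1x_2x_3'x_4$ and $x_1x_2x_3'x_4'$ are genuinely embedded, which is handled by observing that the only possible coincidences (namely $x_3'=x_1$, $x_2=x_4$, or $x_4'=x_2$) either contradict the embeddedness of $\omega$ or directly furnish a diagonal of $\omega$, as required.
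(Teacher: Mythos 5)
Your proof is correct and follows essentially the same route as the paper: both apply Lemma~\ref{lem:4cycle} (once or twice) to replace $x_3$ and then $x_4$ by vertices of types $\hat s_1,\hat s_2$, and then use the girth hypothesis on $\Delta_{\Lambda,\,s_1s_2}$, with any coincidence of the new vertex with $x_1$ or $x_2$ yielding the diagonal. The only differences are organisational (your initial symmetry reduction and an explicit second application of the lemma in place of the paper's reduction to the earlier case), and the parenthetical about $x_2=x_4$ is superfluous since embeddedness of $\omega$ already gives $x_2\neq x_4$.
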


\begin{proof}
Let $\omega=x_1\cdots x_4$ with $x_i$ of type $\hat s_i$. Since $\omega$ is embedded, the girth hypothesis implies that we cannot have simultaneously $s_1=s_3$ and $s_2=s_4$. Assume first $s_1\neq s_3$ and $s_2=s_4$. By Lemma~\ref{lem:4cycle}, there is a vertex $x'_3$ of $\Delta_\Lambda$ of type $\hat s_1$ that is a common neighbour of $x_2,x_3$ and $x_4$. Then $x_1x_2x'_3x_4$ is a 4-cycle in $\Delta_{\Lambda,\, s_1s_2}$. Since $x_2\neq x_4$, we must have $x_1=x'_3$, implying that $x_1$ is a neighbour of $x_3$ and so $\omega$ is not induced. The case  $s_1= s_3$ and $s_2\neq s_4$ is analogous. 

Now assume $s_1\neq s_3$ and $s_2\neq s_4$. Let $x'_3$ of type $\hat s_1$ be chosen as before. If $x_1x_2x'_3x_4$ is not embedded, then we deduce that $x_1$ is a neighbour of $x_3$ as before. If $x_1x_2x'_3x_4$ is embedded, then $x_2$ is a neighbour of $x_4$ by the case of $s_1= s_3$ and $s_2\neq s_4$.
\end{proof}

\begin{theorem}[{\cite[Thm 8.1]{huang2023labeled}}]
	\label{thm:4cycle}
	Let $\Lambda$ be irreducible spherical, and let $\Lambda'\subset\Lambda$ be a linear subdiagram. Then $\Delta_{\Lambda,\, \Lambda'}$ is bowtie free.
\end{theorem}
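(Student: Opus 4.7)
I would prove this by strong induction on $|\Lambda|$. The base cases are the irreducible spherical linear Coxeter diagrams ($A_n$, $B_n$, $F_4$, $H_3$, $H_4$, $I_2(p)$) with $\Lambda' = \Lambda$. Two of these are already available in the paper: Theorem~\ref{thm:tripleBn} for $B_n$ and Theorem~\ref{thm:flag} for $H_3$, each of which gives the stronger upward flag property, and bowtie freeness is part of the statement. The remaining linear spherical types would be handled by the same kind of Garside-theoretic argument (the Artin group is spherical, so its Cayley complex is covered by a convex Garside structure on parabolic subgroups).

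For the inductive step, let $(x_1, y_1, x_2, y_2)$ be a bowtie in $\Delta_{\Lambda, \Lambda'}$. The four vertices span an embedded $4$-cycle in $\Delta_\Lambda$ because $x_i < y_j$ implies $x_i$ and $y_j$ are neighbours. My first reduction is to align the types on each side: if $\type(y_1) \neq \type(y_2)$, I would apply Lemma~\ref{lem:4cycle} to the embedded $4$-cycle $x_1 y_1 x_2 y_2$, producing a common neighbour $y_2'$ of $x_1, y_1, x_2$ of type $\type(y_2)$. Since $y_2'$ is a neighbour of each $x_i$ and lies in $\Delta_{\Lambda, \Lambda'}$, either $y_2' = y_2$ (done) or the new configuration $(x_1, y_1, x_2, y_2')$ has strictly smaller complexity in a suitable measure, and an induction or repeat reduces to the aligned case. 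The same manipulation on the $x$ side yields $\type(x_1) = \type(x_2) = \hat s_a$, $\type(y_1) = \type(y_2) = \hat s_b$ with $a < b$.

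In this aligned case I would pass to $L = \lk(y_1, \Delta_{\Lambda, \Lambda'})$, which by Lemma~\ref{lem:link} is itself a relative Artin complex $\Delta_{\Lambda \setminus \{s_b\}, \Lambda' \setminus \{s_b\}}$ on a strictly smaller Coxeter diagram. The vertices $x_1$ and $x_2$ lie in $L$, and so does any type-$\hat s_c$ witness with $a \leq c < b$. Using the connectivity of $L$ (Lemma~\ref{lem:sc}) together with the inductive hypothesis applied to $L$, one finds a vertex $z$ in $L$ that is a neighbour of both $x_1$ and $x_2$ and has type $\le \hat s_b$; a parallel look at $\lk(y_2, \Delta_{\Lambda, \Lambda'})$ and a transitivity argument built from Lemma~\ref{lem:transitive} promotes $z$ to also be a neighbour of $y_2$, finishing the bowtie.

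\textbf{Main obstacle.} The hard part will be the non-linear spherical cases $D_n, E_6, E_7, E_8$, where $\Lambda$ has a branching vertex outside $\Lambda'$. Removing a vertex of $\Lambda$ to apply the inductive hypothesis may break irreducibility or sphericity of the resulting Coxeter diagram, so the link $L$ above is no longer covered by the induction. To bypass this, I would use Lemma~\ref{lem:dr} to deformation-retract between $\Delta_{\Lambda, \Lambda' \cup \{s\}}$ and $\Delta_{\Lambda, \Lambda'}$ for carefully chosen vertices $s$ of $\Lambda \setminus \Lambda'$, thereby transferring the bowtie problem into a larger admissible linear subdiagram where the inductive setup applies. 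This requires knowing contractibility of several auxiliary complexes $\Delta_{\Lambda \setminus R, (\Lambda'\cup\{s\})\setminus R}$, which in turn must be handled by a parallel induction. The bookkeeping for this interleaved induction (bowtie freeness, upward flag-ness, and contractibility proved simultaneously over diagrams of the various spherical types) is where I expect the real work to be.
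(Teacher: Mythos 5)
The paper does not actually prove this statement --- it is imported verbatim from \cite{huang2023labeled}*{Thm 8.1} and used as a black box --- so the only thing to assess is your sketch, and it has a structural gap that I do not see how to patch along the lines you describe. The induction through vertex links cannot get off the ground: by Lemma~\ref{lem:link}, the link of $y_1$ in $\Delta_{\Lambda,\,\Lambda'}$ is $\Delta_{\Lambda\setminus\{s_b\},\,\Lambda'\setminus\{s_b\}}$, and $\Lambda\setminus\{s_b\}$ is almost never irreducible --- already for linear $\Lambda$, deleting an interior vertex disconnects the diagram. Irreducibility is not a technicality here but the heart of the matter: for a reducible diagram, say $\Lambda=\{s\}\sqcup\{t\}$ with $m_{st}=2$, the $1$-skeleton of $\Delta_\Lambda$ is complete bipartite (in $A_\Lambda\cong\mathbb Z^2$ every coset of $\langle s\rangle$ meets every coset of $\langle t\rangle$), so any two vertices of type $\hat s$ together with any two of type $\hat t$ form a bowtie with no resolving vertex. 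Hence ``the inductive hypothesis applied to $L$'' is unavailable in essentially every case, not only for $D_n$ and $E_n$ as your obstacle paragraph suggests. Your proposed repair via Lemma~\ref{lem:dr} cannot work even in principle: a deformation retraction controls homotopy type, whereas bowtie freeness is an order-theoretic statement demanding an explicit vertex $z$ with $x_i\le z\le y_j$, and no retraction between $\Delta_{\Lambda,\,\Lambda'\cup\{s\}}$ and $\Delta_{\Lambda,\,\Lambda'}$ produces such a $z$ for a given $4$-tuple.

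Two further gaps. First, the alignment step is not a reduction: applying Lemma~\ref{lem:4cycle} replaces $y_2$ by a vertex $y_2'$ of the \emph{same} type that is now adjacent to $y_1$; the new configuration $(x_1,y_1,x_2,y_2')$ is then trivially resolved (adjacent vertices of distinct types are comparable, so the smaller of $y_1,y_2'$ works), but a vertex below $y_2'$ need not lie below the original $y_2$, and you specify no complexity measure under which anything has decreased. Second, the base cases are exactly where the content lies: the paper supplies bowtie freeness only for $B_n$ (Theorem~\ref{thm:tripleBn}) and $H_3$ (Theorem~\ref{thm:flag}); for $A_n$, $F_4$ and especially $H_4$ --- and for proper linear subdiagrams of the branched types --- the phrase ``the same kind of Garside-theoretic argument'' is a placeholder, not a proof, and it is precisely these cases that make \cite{huang2023labeled}*{Thm 8.1} a substantial theorem. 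Any correct argument must use the irreducibility of $\Lambda$ globally (as the reducible counterexample shows), which a link-by-link induction structurally cannot do; if you want to prove the statement rather than cite it, you should work directly with the cosets of the standard parabolic subgroups in the spherical group $A_\Lambda$ (Garside normal forms, intersections of parabolic subgroups) as in the cited source, rather than by induction on the Coxeter diagram.
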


\subsection{Haettel contractibility criteria}
\label{subsec:contractible}
Let $S=\{s_1,\ldots, s_n\}$ be a totally ordered set. Let $X$ be a simplicial complex of type $S$ with the induced relation on its vertex set, as defined in Section~\ref{subsec:chamber complex}:
for vertices $x,x'\in X^0$ we write $x<x'$ if $x,x'$ are neighbours and $\type(x)<\type(x')$. The following is a consequence of \cite[\S4.3, Thm~B]{haettel2022link} and \cite[Thm~1.15]{haettel2021lattices}.

\begin{thm}
	\label{thm:contractibleII}
	Let $X$ be a simplicial complex of type $S$.  Assume that
	\begin{enumerate}
		\item $X$ is simply connected,
		\item the relation $<$ on $X^0$ is a partial order,
		\item for each $x\in X^0$, the collection of vertices $\ge x$ is bowtie free and upward flag, and
		\item for each $x\in X^0$, the collection of vertices $\le x$ is bowtie free and downward flag.
	\end{enumerate}
	Then $X$ is contractible. 	
\end{thm}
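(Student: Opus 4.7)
The plan is to verify the hypotheses of Haettel's framework as cited. Since the theorem is advertised as a direct consequence of \cite[\S4.3, Thm~B]{haettel2022link} and \cite[Thm~1.15]{haettel2021lattices}, the real work is translating the assumptions (2)--(4) into the language in which those results are phrased, namely the language of Garside-type categories / lattice-like posets associated to $X$.

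First, I would unpack the local structure. For each $x \in X^0$, hypothesis (3) says the up-set $U_x = \{y : y \geq x\}$ is bowtie free and upward flag. Combined with the weakly graded property coming from $\type \colon X^0 \to S$ (with $S$ totally ordered and the rank on $U_x$ being the image in $S$), Lemma~\ref{lem:posets} tells us that every finite subset of $U_x$ with a common upper bound admits a join. The upward flag condition promotes this to all finite pairwise-upper-bounded subsets, so $U_x$ becomes a lattice on upper-bounded subsets. The analogous statement holds for the down-sets $L_x$ via hypothesis (4). Via Lemma~\ref{lem:link}-style arguments, these up-sets and down-sets correspond precisely to the two halves of $\lk(x, X)$ determined by the type order. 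In particular, each vertex link decomposes as a join of two lattice-like posets glued along $\{x\}$.

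Next, I would feed this into the link-based contractibility criterion. Haettel's \cite[\S4.3, Thm~B]{haettel2022link} gives a link condition ensuring contractibility of simply connected complexes when all links are themselves ``good'' (e.g.\ flag / Helly / CAT(1) in the appropriate combinatorial sense). The bowtie-free and flag conditions on $U_x$ and $L_x$ are exactly the hypotheses that make the link of each vertex a bounded lattice (or a join of two such), which in turn, via \cite[Thm~1.15]{haettel2021lattices}, implies that the order complex of each link is contractible and that the Helly-type intersection axiom for stars holds. One then applies the Nerve Lemma, using that any finite family of closed stars $\st(x_i, X)$ with pairwise nonempty intersection has a common refinement obtained by iteratively taking joins in the up-sets and meets in the down-sets (permitted by bowtie-freeness and flagness).

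The main obstacle will be the bookkeeping that translates the pointwise conditions (3) and (4) into the precise global hypotheses of Haettel's theorems, which are stated for certain Garside-type categories. Specifically, one must check: (a) that the partial order on $X^0$ extends to a compatible partial order on simplices using the type function, (b) that every simplex $\sigma$ has a maximal and minimal vertex that are related by $\leq$ in $X^0$, and (c) that the local lattice structures on up- and down-sets glue into a globally coherent ``normal form''. Once these bookkeeping items are established, the simple connectedness assumption (1) upgrades the local contractibility of stars (via the Nerve Lemma) to global contractibility of $X$, completing the proof.
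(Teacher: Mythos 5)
The paper does not actually prove this statement: it is quoted as Haettel's criterion, with hypotheses (1)--(4) being a direct rephrasing of the hypotheses of \cite[\S4.3, Thm~B]{haettel2022link} together with \cite[Thm~1.15]{haettel2021lattices}, so the only content is matching the two statements. Your proposal instead tries to reconstruct a derivation, and the mechanism you supply contains genuine gaps.

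Concretely: first, the claim that each vertex link ``decomposes as a join of two lattice-like posets glued along $\{x\}$'' is false in general --- nothing in (2)--(4) forces an up-neighbour of $x$ to be adjacent to a down-neighbour of $x$, and in the intended examples (links in Artin complexes) this adjacency genuinely fails, so the link is not a join of $U_x$ and $L_x$. Second, the Nerve-Lemma step over closed vertex stars is unsupported: hypotheses (3)--(4), via Lemma~\ref{lem:posets}, give joins and meets only for bounded subsets inside a single up-set or down-set, whereas an intersection of several closed stars involves vertices whose comparabilities with the chosen centres point in different directions, and $X$ is not even assumed flag, so you can neither identify such an intersection with a full subcomplex nor contract it by ``iteratively taking joins and meets''. (Compare Proposition~\ref{prop:he}, where a nerve argument works only because the cover consists of stars of vertices of the two middle types and each intersection of stars is an order interval.) Finally, your item (c) --- that the local lattice structures glue into a globally coherent normal form --- is not bookkeeping; it is the entire content of the cited results, established there by Garside/Bestvina normal-form or injective-metric (Helly) methods, essentially the machinery recalled in Section~\ref{sec:convex} (cf.\ Theorem~\ref{thm:normal form}). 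As written, the proposal assumes what the citation provides and supplements it with an argument that would not go through.
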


\begin{prop}[{\cite[Lem 5.1]{huang2024}}]
\label{prop:434} 
Suppose that $X$ satisfies the assumptions of Theorem~\ref{thm:contractibleII}. Then $(X^0,<)$ 
is bowtie free and flag. 
\end{prop}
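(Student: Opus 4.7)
The plan is to first observe that $X^0$ is weakly graded with rank function $r(x) = i$ where $\type(x) = s_i$ (this is immediate from Definition~\ref{def:order} since $x < x'$ implies $\type(x) < \type(x')$), and then to establish bowtie freeness and flagness by disc-diagram arguments that leverage the simple connectedness of $X$ together with the local bowtie free and flag properties of $\{y \ge x\}$ and $\{y \le x\}$ guaranteed by hypotheses (3) and (4) of Theorem~\ref{thm:contractibleII}.

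For bowtie freeness, I would take a bowtie $x_1 y_1 x_2 y_2$ in $X^0$ and consider the $4$-cycle $\omega = x_1 y_1 x_2 y_2$ in $X^1$. By simple connectedness, $\omega$ bounds a disc diagram $f : D \to X$; pick one of minimal area. The goal is to show that the minimal $D$ must contain a diagonal vertex or edge yielding the required $z$ with $x_i \le z \le y_j$. The argument would proceed by local analysis at interior vertices: at an interior vertex $v$ with image $f(v)$, the incident triangles of $D$ translate into configurations of neighbours in $\lk(f(v), X)$, which as a poset contains the bowtie free and flag subposets $\{y \ge f(v)\}$ and $\{y \le f(v)\}$. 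One uses these properties to produce local modifications of $D$ that strictly reduce area. By minimality no such reduction is possible, forcing $D$ to collapse to two triangles sharing a diagonal edge whose middle vertex is the desired $z$.

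For upward flag, given three pairwise upper bounded elements $u_1, u_2, u_3$, bowtie freeness combined with weak grading and Lemma~\ref{lem:posets} provides a join $v_{ij} = u_i \vee u_j$ for each pair. The goal becomes showing that $v_{12}$ and $v_{13}$ admit a common upper bound, which then automatically dominates all of $u_1, u_2, u_3$. To obtain this, I would consider the $6$-cycle $u_1 v_{12} u_2 v_{23} u_3 v_{13}$ in $X^1$ and a minimal disc diagram bounded by it, then use the local upward flag property of $\{y \ge u_i\}$ at interior vertices to reduce the diagram to a configuration exhibiting a common upper bound. Downward flag follows by the symmetric argument after reversing the order.

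The main obstacle will be the detailed combinatorial analysis of minimal disc diagrams in the presence of both the type function and the partial order: one must verify that the abstract local bowtie free and flag hypotheses at interior vertices really do enable area-reducing moves, without any pathological local configuration blocking the reduction. Careful type-bookkeeping is needed so that each local move respects the directional nature of the poset, and this is what distinguishes the argument from standard disc-diagram reductions in nonpositively curved complexes.
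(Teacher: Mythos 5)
There is a genuine gap, and it sits exactly where you park it as ``the main obstacle''. Note first that the paper does not prove this statement at all: it imports it wholesale from \cite[Lem 5.1]{huang2024}, so the only thing to assess is whether your sketch would actually close the argument, and as written it would not. The hypotheses (3) and (4) of Theorem~\ref{thm:contractibleII} constrain, for each vertex $x$, only the two halves $\{y>x\}$ and $\{y<x\}$ of $\lk(x,X)^0$ \emph{separately}. An interior vertex $v$ of a minimal disc diagram contributes a full cycle in $\lk(f(v),X)$, and such a cycle typically alternates between the upper and the lower half; for a configuration like $y_1>f(v)<y_2$ with $y_1,y_2$ joined through vertices below $f(v)$, the hypotheses give no filling vertex and hence no area-reducing move. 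Nor is any girth or angle condition on links assumed, so there is no Gauss--Bonnet/systolic mechanism to force the diagram to be thin. Your plan simply asserts that ``local modifications strictly reduce area''; producing such moves from purely order-theoretic conditions on up-sets and down-sets is precisely the content of the cited lemma, and nothing in your sketch supplies it. (A symptom of the endgame not being worked out: a two-triangle diagram has a diagonal \emph{edge} joining $x_1$ to $x_2$ or $y_1$ to $y_2$, with no ``middle vertex''; the correct conclusion there is that two of the four bowtie vertices become comparable, which does yield $z$.)

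The flagness half inherits the same problem. The reduction is fine up to a point: once global bowtie freeness and weak grading are in hand, Lemma~\ref{lem:posets} gives the pairwise joins $v_{ij}=u_i\vee u_j$, and an upper bound of $v_{12},v_{13}$ would indeed dominate all three $u_i$. But $v_{12},v_{13}$ are merely two elements of $\{y\ge u_1\}$, and the upward flag hypothesis there needs three \emph{pairwise upper bounded} elements, so it cannot be invoked; your fallback is again an unspecified disc-diagram reduction over the $6$-cycle, which runs into the identical mixed-link issue. To make this proposition work one needs a genuinely different global mechanism (in this paper, analogous global statements are extracted from Garside/normal-form or Helly-type structure rather than raw disc-diagram curvature arguments; see Section~\ref{sec:convex} for the flavour), so the proposal as it stands does not constitute a proof.
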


Now we discuss a variation of Theorem~\ref{thm:contractibleII}.

\begin{defin}
	\label{def:ha}
Let $S$ be cyclically ordered with $s_1<s_2<\cdots< s_n<s_1$. 
For each vertex $x$ of $X$ of type $s_i$, we consider the relation $<_x$ on $\lk(x,X)^0$ as follows. 
The cyclic order induces an order on $S\setminus\{s_i\}$ by declaring $s_{i+1}<\cdots<s_n<s_1<\cdots < s_{i-1}$. 
For vertices $y,z\in \lk(x,X)^0$, define $y<_x z$ if $y,z$ are neighbours and $\type(y)<\type(z)$ in $S\setminus \{s_i\}$. 

We say that $X$ is an \emph{$\widetilde A_n$-like complex} if 
	\begin{enumerate}
		\item $X$ is simply connected,
		\item for each $x\in X^0$, the relation $<_x$ on $\lk(x,X)^0$ is a partial order, and
		\item for each $x\in X^0$, the relation $<_x$ on $\lk(x,X)^0$ is bowtie free.
	\end{enumerate}

\end{defin} 

For example, the Coxeter complex of type $\widetilde A_n$ is an $\widetilde A_n$-like complex.

The following is a consequence of \cite[\S4.2, Thm~A]{haettel2022link}. It also follows from \cite[Thm 3.3 and \S8]{bessis2006garside} and \cite{Bestvina1999}, since $\widetilde A_n$-like complexes are Bestvina complexes for a certain Garside groupoid.

\begin{thm}
\label{thm:contractibleI}
	Each $\widetilde A_n$-like complex is contractible. 
\end{thm}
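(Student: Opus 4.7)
The plan is to verify that $X$ is the Bestvina complex of a Garside-style groupoid determined by the cyclic order on $S$, and then invoke the general contractibility results \cite[Thm~3.3]{bessis2006garside} and \cite{Bestvina1999}; equivalently, to check the hypotheses of Haettel's link-based criterion \cite[\S4.2, Thm~A]{haettel2022link}.

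First I would exploit the cyclic order to set up a Garside-style local structure at each vertex. For a vertex $x$ of type $s_i$, breaking the cyclic order at $s_i$ induces a linear order on $S\setminus\{s_i\}$, and the relation $<_x$ on $\lk(x,X)^0$ is, by Definition~\ref{def:ha}(2),(3), a bowtie-free partial order. Since the maximal simplices of $X$ (and hence of each link) have uniform dimension determined by $|S|$, each $(\lk(x,X)^0,<_x)$ is a weakly graded poset in the sense of Section~\ref{subsec:chamber complex}. Lemma~\ref{lem:posets} then upgrades bowtie-freeness to the statement that every pairwise upper-bounded (resp.\ lower-bounded) subset of $\lk(x,X)^0$ admits a join (resp.\ meet). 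This is exactly the lattice-of-simples condition required for the Bestvina/Garside framework.

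Next I would assemble these local lattice structures into a Garside groupoid whose objects are the vertices of $X$, whose positive arrows are edges $x\to y$ with $\type(y)$ the cyclic successor of $\type(x)$, and whose composition is prescribed by the top-dimensional simplices of $X$ (analogous to chambers). The lattice property of the vertex links plays the role of left- and right-cancellativity plus the existence of lcm/gcd of simples at each object. The hypothesis that $X$ is simply connected (Definition~\ref{def:ha}(1)) then guarantees that $X$ coincides with the Bestvina complex of this groupoid rather than merely covering it, and contractibility follows from \cite[Thm~3.3 and \S8]{bessis2006garside} together with \cite{Bestvina1999}; the same conclusion is available from \cite[\S4.2, Thm~A]{haettel2022link}, whose hypotheses translate, via the discussion above, into precisely Definition~\ref{def:ha}.

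The main obstacle, compared with the linear situation treated in Theorem~\ref{thm:contractibleII}, is that there is no single global partial order on $X^0$: one can only order the links, because the cyclic order has no global fundamental domain. Consequently, a direct Cartan--Hadamard style retraction onto a base vertex, as used for Theorem~\ref{thm:contractibleII}, is unavailable. The substitute is the Garside-groupoid machinery of Bessis and Bestvina, in which the cyclic rotation on $S$ is absorbed into conjugation by the Garside element; verifying that the local bowtie-free partial orders glue coherently under this cyclic action is the delicate technical step, and it is precisely where the assumptions on links (rather than on a global order) become essential.
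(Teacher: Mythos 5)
Your proposal follows essentially the same route as the paper, which proves this theorem simply by citing \cite[\S4.2, Thm~A]{haettel2022link} and \cite[Thm 3.3 and \S8]{bessis2006garside} together with \cite{Bestvina1999}, noting that $\widetilde A_n$-like complexes are Bestvina complexes of a Garside groupoid; your verification of the lattice condition on links via Lemma~\ref{lem:posets} matches how the paper later justifies this in Section~\ref{sec:convex} (cf.\ Lemma~\ref{lem:lattice}). So the proposal is correct and is essentially the paper's argument, spelled out in slightly more detail.
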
 

\begin{lem}
	\label{lem:4-cycle}
	Let $X$ be an $\widetilde A_n$-like complex. Then any induced $4$-cycle in $X^1$ has a common neighbour. In particular, $X$ has no embedded cycles of type $\hat s_1 \hat s_2 \hat s_1 \hat s_2$.
\end{lem}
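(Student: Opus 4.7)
The plan is to combine a minimal disc diagram argument with the bowtie-free condition on vertex links. Let $\omega = x_1 x_2 x_3 x_4$ be an induced $4$-cycle in $X^1$. Since $X$ is simply connected by Definition~\ref{def:ha}, an analogue of Lemma~\ref{lem:VK} furnishes a reduced, minimal-area disc diagram $f \colon D \to X$ bounded by $\omega$, with $D$ a triangulated topological disc.

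First I would observe that $D$ has at least one interior vertex: the only triangulation of a quadrilateral with no interior vertex consists of two triangles meeting along a diagonal, and the image of that diagonal would be a chord of $\omega$ in $X$, contradicting that $\omega$ is induced. The crux is then to produce an interior vertex $y_0$ of $D$ whose image under $f$ is adjacent in $X$ to all four boundary vertices, thereby yielding the desired common neighbour. To locate such $y_0$, for each interior vertex $y$ of $D$, the link $\lk(y, D) \subset \lk(y, X)$ is a cycle in the bowtie-free poset $(\lk(y, X)^0, <_y)$, in which ascents and descents must balance. If this link cycle has length $\geq 4$, one identifies a sub-configuration $u_1 v_1 u_2 v_2$ forming a bowtie ($u_i <_y v_j$); bowtie-freeness then supplies a middle element $z \in \lk(y, X)$ filling in new triangles at $y$. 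This $z$ should permit a local reduction of $D$ near $y$, contradicting minimality. Iterating forces the link cycle at each interior vertex to be short enough that some $y_0$ winds up in triangles containing each of the four boundary edges.

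The main obstacle will be formalizing the reduction step: verifying that the element $z$ produced by bowtie-freeness actually converts $D$ into a strictly smaller disc diagram with the same boundary. A cleaner alternative would be a Gauss--Bonnet analysis via Theorem~\ref{thm:GB}, with corner angles weighted according to the $<_y$-structure so that the bowtie-free condition at interior vertices forces the minimum-area diagram to be the fan of four triangles at a single $y_0$.

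For the \emph{in particular} statement, let $\omega = x_1 y_1 x_2 y_2$ be an embedded cycle of type $\hat s_1 \hat s_2 \hat s_1 \hat s_2$. Distinct same-type vertices of $X$ are never adjacent, so $\omega$ is automatically induced, and the main part yields a common neighbour $y \in X^0$ of all four vertices, with $\type(y) \notin \{s_1, s_2\}$. In the bowtie-free poset $(\lk(y, X)^0, <_y)$, after possibly relabeling so that $\hat s_1 <_y \hat s_2$, the vertices $x_1, x_2$ lie strictly below $y_1, y_2$, while $x_1, x_2$ and $y_1, y_2$ are mutually incomparable within each same-type pair. This is a bowtie, so bowtie-freeness provides $z \in \lk(y, X)$ with $x_i \leq_y z \leq_y y_j$ for all $i, j$, and distinctness of the same-type pairs forces $\type(z)$ to lie strictly between $\hat s_1$ and $\hat s_2$ in $<_y$. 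But $s_1$ and $s_2$ are cyclically adjacent in $S$, hence consecutive in every linearization $<_y$ with $\type(y) \notin \{s_1, s_2\}$---the required contradiction.
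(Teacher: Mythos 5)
Your treatment of the \emph{in particular} statement is correct and is essentially the paper's own (very terse) argument: the alternating cycle is automatically induced, the first assertion gives a common neighbour $y$ whose type is not $s_1$ or $s_2$, the four cycle vertices form a bowtie in $(\lk(y,X)^0,<_y)$, and the filler $z$ supplied by bowtie-freeness would have to have type strictly between two types that are consecutive in the linearization $<_y$ — a contradiction. The checks that $z$ cannot coincide with one of the four cycle vertices are needed and you supply them.

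The gap is in the first assertion. The paper does not prove it at all — it quotes \cite[Lem 4.8]{huang2023labeled} — whereas you attempt a disc-diagram proof and leave its core step unverified, as you yourself flag ("the main obstacle will be formalizing the reduction step"). Two specific problems: (a) the claim that a link cycle of length $\geq 4$ at an interior vertex "contains a bowtie" is not justified and is false as stated for bowtie-free posets in general — e.g.\ a $6$-cycle $u_1v_1u_2v_2u_3v_3$ in a height-two poset in which each minimum $u_i$ lies below exactly the two maxima adjacent to it on the cycle contains no four distinct elements with all four cross-relations, so bowtie-freeness gives you nothing there; some additional input (the type structure, or the lattice/normal-form machinery behind Section~\ref{sec:convex}) is needed to control long interior links. (b) Even when a bowtie with filler $z$ is present, it is not explained why inserting $z$ yields a disc diagram with the \emph{same boundary and strictly smaller area}; replacing two triangles at $y$ by triangles through $z$ is not an area reduction, so "contradicting minimality" does not follow, and the proposed weighted Gauss--Bonnet alternative is only gestured at, with no choice of angles or curvature computation. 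As it stands the main assertion is an unproved sketch, and since your proof of the second assertion relies on it, the proposal does not constitute a complete proof of the lemma.
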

\begin{proof}
The first assertion is \cite[Lem 4.8]{huang2023labeled}. For the second assertion, if a cycle had type $\hat s_1 \hat s_2 \hat s_1 \hat s_2$ and common neighbour $x$, then it would be a bowtie in the link of $x$.  
\end{proof}

\section{Bestvina convexity in $\widetilde A_n$-like complexes}
\label{sec:convex}

\subsection{Garside complexes}
\begin{defin}[{\cite[Def~4.6]{haettel2024lattices}}]
Let $\widehat X$ be a simply connected flag simplicial complex. Suppose that we have a binary relation $<$ on $\widehat X^0$ (not necessarily a partial order) such that vertices $x,y$ are neighbours exactly when $x<y$ or $y<x$. Furthermore, suppose that the transitive closure of $<$ is a partial order that is weakly graded with rank function $r$. We write $x\leq y$ when $x<y$ or $x=y$.

Assume that we have an automorphism $\varphi$ of $(\widehat X^0,<)$ such that 
\begin{itemize}
\item 
$r\circ \varphi=t\circ r$, for a translation $t\colon \mathbb Z\to \mathbb Z$, and
\item
$x\leq y$ if and only if $y\leq \varphi(x)$, for all $x,y\in \widehat X^0,$ and 
\item 
the interval $[x,\varphi(x)]=\{z\in \widehat X^0\mid x\le z\ \mathrm{\and}\ z\le \varphi(x)\}$ is a lattice for all $x\in \widehat X^0$ (in particular, the relation $<$ restricted to $[x,\varphi(x)]$ is transitive).
\end{itemize}
We then call $\widehat X$ a \emph{Garside flag complex.}
\end{defin}

Let $X$ be an $\widetilde A_n$-like complex of type $S$, with cyclic order $s_1<s_2<\cdots<s_n<s_1$. Consider the type function $\tau:X^{0}\to \mathbb Z/n\mathbb Z$ defined by $\tau(x)=i$ for $x$ of type~$s_i$. 
We define a following simplicial complex structure on $\widehat X=X\times \mathbb R$.
The vertex set~$\widehat X^0$ of~$\widehat X$ is 
$$\{(x, i) \in X^0 \times \mathbb Z \mid \tau(x) = i\},$$ 
 The vertices $(x,i)$ and $(x',j)$ are neighbours if $x$ and $x'$ are equal or neighbours in~$X$, and $|i-j|  \leq  n$. Let $\widehat X$ be the flag simplicial complex with that $1$-skeleton. Note that any maximal simplex of $\widehat X$ has vertices $$(x_i,kn+i),(x_{i+1},kn+i+1),\ldots,(x_n,kn+n),(x_1,kn+n+1),
 \ldots,(x_{i},kn+n+i),$$ where $k\in \mathbb Z$, $1\le i\le n,$ and $x_1,x_2,\ldots,x_n$ are vertices of a maximal simplex of $X$ with $\tau(x_i)=i$. 

Note that the map $\widehat X^{0}\to X^{0}$ sending $(x,i)$ to $x$ extends to a simplicial map, denoted by $\pi: \widehat X\to X$. 
Define $r\colon \widehat X^0\to \mathbb Z$ by $r(x,i)=i$, and $\varphi\colon \widehat X^0\to \widehat X^0$ by $\varphi (x,i)=(x,i+n)$.

We define a binary relation $<$ on 
$\widehat X^0$ by requiring $(x,i)< (y,j)$ exactly when 
these two vertices are neighbours in $\widehat X$ and $i< j$. Note that the transitive closure~$\le_t$ 
of~$\le$ on~$\widehat X^0$ is a partial order. 
The following shows that $\widehat X$ is a Garside flag complex with the automorphism $\varphi$.

\begin{lem}
	\label{lem:lattice}
For each $(x,i)\in \widehat X^0$, the interval $[(x,i),(x,i+n)]$ is a lattice.
\end{lem}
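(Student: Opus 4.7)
The plan is to describe the interval $I:=[(x,i),(x,i+n)]$ explicitly, identify its interior with $\lk(x,X)^0$ equipped with the partial order $<_x$ from Definition~\ref{def:ha}, and deduce the lattice property from Lemma~\ref{lem:posets}.

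First I will analyze the elements of $I$. Since $\le$ on $\widehat X^0$ is the direct relation ``$<$ or $=$'' rather than the transitive closure, a vertex $(y,j)\in\widehat X^0$ belongs to $I$ if and only if it is one of the two endpoints or both $(x,i)<(y,j)$ and $(y,j)<(x,i+n)$ hold in $\widehat X^0$. By the construction of $\widehat X$, this unwinds to: $y$ is equal or adjacent to $x$ in $X$, and $i<j<i+n$. The case $y=x$ is impossible in the interior, since then $j\equiv i\pmod n$ contradicts $i<j<i+n$. Hence $(y,j)\mapsto y$ is a bijection between $I\setminus\{(x,i),(x,i+n)\}$ and $\lk(x,X)^0$, with $j$ uniquely determined as the representative of $\tau(y)-\tau(x)$ modulo $n$ in $\{1,\ldots,n-1\}$ added to $i$. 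Translating the order, $(y,j)<(y',j')$ holds iff $y,y'$ are neighbours in $X$ and $j<j'$, which is precisely $y<_x y'$. So the interior of $I$ is order-isomorphic to $(\lk(x,X)^0,<_x)$, while $(x,i)$ and $(x,i+n)$ play the roles of a global minimum and maximum. In particular $<$ is transitive on $I$, and the rank function $r$ restricts to make $I$ weakly graded.

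Next I will verify that $I$ is bowtie free. Any bowtie lying entirely in the interior of $I$ corresponds under the bijection above to a bowtie in $(\lk(x,X)^0,<_x)$, which is bowtie free by Definition~\ref{def:ha}(3); a resolving vertex in $\lk(x,X)^0$ lifts back to a resolving element of $I$. A bowtie $u_1v_1u_2v_2$ involving the minimum or the maximum of $I$ is resolved by an appropriate element of the bowtie itself: for instance, if $u_1=(x,i)$, then $z:=u_2$ satisfies $u_i\le z\le v_j$ for all $i,j=1,2$, since $u_2\le u_2$, $(x,i)\le u_2$, and $u_2<v_j$ by hypothesis. With bowtie-freeness of $I$ established, every subset of $I$ is bounded above by $(x,i+n)$ and below by $(x,i)$, and Lemma~\ref{lem:posets} produces the join and the meet of every subset, making $I$ a lattice.

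I do not foresee a serious obstacle. The main point of care is the first paragraph, where one must be sure to use the direct relation $<$ rather than its transitive closure when unwinding the definition of $I$, since the two differ in general on $\widehat X^0$ but coincide once restricted to~$I$.
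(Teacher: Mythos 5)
Your proof is correct and follows essentially the same route as the paper: identify the interval with $(\lk(x,X)^0,<_x)$ augmented by a smallest and a greatest element, and invoke Lemma~\ref{lem:posets}. The paper leaves the verification of the hypotheses (transitivity via Definition~\ref{def:ha}(2), bowtie freeness of the augmented poset, weak gradedness via $r$) implicit, and your write-up simply supplies those details.
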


Below, the partial order $<_{x}$ on $\lk(x,X)^0$ was introduced in Definition~\ref{def:ha}.

\begin{proof}
    Note that the poset $[(x,i),(x,i+n)]$ is isomorphic with the poset obtained from $(\lk(x,X)^0,<_x)$ by adding the smallest and the greatest element.
    This poset is a lattice by Lemma~\ref{lem:posets}. 
\end{proof}

By \cite[Thm~1.3]{haettel2024lattices}, given $x\in \widehat X^0$, the poset $\{w\in \widehat X^0\mid w\ge_t x\}$ 
is a lattice, and so we can discuss the meet $\wedge$ in that poset.
By \cite[Thm 4.7]{haettel2024lattices}, a Garside flag complex $\widehat X$ is an instance of a
\emph{homogeneous categorical Garside structure}. We decided not to give here the definition, since we will be only using \cite[Prop~4.2]{haettel2024lattices} on the \emph{Deligne normal form} (term introduced in \cite{charney2004bestvina}), which is more convenient for us to state directly in the terms of $\widehat X$:

\begin{thm} 
\label{thm:normal form}
For each $x,y\in \widehat X^0$, there is a unique edge-path $x_1\cdots x_l\cdots x_n$ from $x_1=x$ to $x_n=y$ such that
\begin{itemize}
    \item $x_i<x_{i+1} \neq \varphi(x_i)$ for $1\leq i<l$, and
    \item $x_{i}=x_{i+1}\wedge \varphi(x_{i-1})$ for $1< i<l$ in $[x_i,\varphi(x_i)]$, and
    \item $x_i=\varphi^{\pm (i-l)} (x_l)$ for $l\leq i\leq n$, with all signs positive or all signs negative.
\end{itemize}
\end{thm}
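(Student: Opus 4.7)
The plan is to obtain this as a direct translation of the abstract Deligne normal form for homogeneous categorical Garside structures, namely \cite[Prop 4.2]{haettel2024lattices}. The hypotheses are already in place: the preceding discussion, together with Lemma~\ref{lem:lattice}, shows that $\widehat X$ is a Garside flag complex, and \cite[Thm 4.7]{haettel2024lattices} then produces the homogeneous categorical Garside structure. Under this correspondence, a morphism from $x$ to $y$ in the Garside category is exactly the data of an edge-path in $\widehat X$ from $x$ to $y$ with rank-monotone steps, and the three bullet conditions in the statement are the standard translation of the Deligne normal form (greedy on the left, with the $\varphi^{\pm}$ tail absorbing the \emph{$\Delta$-power} of the decomposition).

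For readers who prefer a geometric sketch, the existence part would be built greedily. First, reduce to the case $y \geq_t x$: if instead $y <_t x$, run the construction below with $x$ and $y$ interchanged and then reverse the resulting path, which converts the positive-sign tail into the negative-sign tail via the identity $z \leq \varphi(w) \Leftrightarrow w \leq z$. Now set $x_1 = x$, and inductively define $x_{i+1}$ to be the join, taken in the finite lattice $[x_i,\varphi(x_i)]$ from Lemma~\ref{lem:lattice}, of the set $\{z : x_i < z \leq_t y,\ z \in [x_i,\varphi(x_i)]\}$. Since $r$ is strictly increasing along $(x_i)$ and bounded above by $r(y)$, the process terminates at some $x_l$ for which $y=\varphi^{k}(x_l)$ with $k\ge 0$; these $\varphi$-steps then form the tail with positive signs.

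For uniqueness, one verifies by induction on path length that the conditions force $x_{i+1}$. The key identity $x_i = x_{i+1}\wedge \varphi(x_{i-1})$, evaluated inside $[x_i,\varphi(x_i)]$, says that $x_{i+1}$ is \emph{left-greedy}: no strictly larger element of $[x_i,\varphi(x_i)]$ that still meets $\varphi(x_{i-1})$ at $x_i$ is available, which together with the constraint $x_{i+1} \leq_t y$ (forced by the remainder of the path) pins $x_{i+1}$ down. If two normal forms shared the prefix $x_1,\ldots,x_i$ but diverged at $x_{i+1}\ne x'_{i+1}$, then their join $x_{i+1} \vee x'_{i+1}$ in $[x_i,\varphi(x_i)]$ would still satisfy the required constraints, contradicting maximality.

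The main obstacle in a direct proof is precisely the well-definedness of the greedy step: one must know that the relevant subset of $[x_i,\varphi(x_i)]$ admits a maximum, and one must verify the compatibility $x_i = x_{i+1}\wedge \varphi(x_{i-1})$ is preserved at each stage. Both are consequences of the lattice structure on the intervals $[x_i,\varphi(x_i)]$ (Lemma~\ref{lem:lattice}) combined with the $\varphi$-axioms relating $\leq_t$ to $\leq$. Since these are exactly the ingredients packaged into \cite[Prop 4.2]{haettel2024lattices}, the cleanest route is to cite that proposition and note the geometric dictionary; I would only write the above sketch in case a reader unfamiliar with \cite{haettel2024lattices} wishes to see the content unpacked.
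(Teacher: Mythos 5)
Your primary route—verifying via Lemma~\ref{lem:lattice} that $\widehat X$ is a Garside flag complex, invoking \cite[Thm 4.7]{haettel2024lattices} to get a homogeneous categorical Garside structure, and then reading the statement off as the Deligne normal form of \cite[Prop 4.2]{haettel2024lattices}—is exactly what the paper does; it offers no independent proof beyond this translation. Your supplementary greedy sketch is optional extra content, but the core argument matches the paper's.
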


Note that, as all the notions in this section, the Deligne normal form depends on the cyclic order on the set of the types of $X^0$.

\subsection{Bestvina-convexity}

\begin{defin}
	Given an edge-path $P=x_1\cdots x_n$ in $X$, 
    an \emph{admissible lift} of $P$ is an edge-path $\widehat P=\hat x_1\cdots \hat x_n$ in $\widehat X$ 
    such that $\pi(\hat x_i)=x_i$, for $1\le i\le n$, and  $\hat x_i<\hat x_{i+1}$, for $1\le i\le n-1$.
	Note that for each edge-path $P$ in $X$, once a lift $\hat x_1$ of $x_1$ has been chosen, there is a unique admissible lift of $P$ starting at $\hat x_1$. Different admissible lifts of $P$ differ by the translation by $\varphi^k$ for some $k\in \mathbb Z$.
	
	Let $a,b\in X^0$. Following \cite{Bestvina1999,charney2004bestvina}, we say that an edge-path $P$ from $a$ to $b$ is a \emph{geodesic}, (or \emph{B-geodesic}) if some (hence all) admissible lift of $P$ to $\widehat X$ has Deligne normal form with $n=l$. 
\end{defin}

\begin{lem}
	\label{lem:geodesic0}
	For any $a,b\in X^0$, there is a unique $B$-geodesic in $X$ from $a$ to $b$.
\end{lem}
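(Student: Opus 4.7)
The plan is to reduce both existence and uniqueness to the uniqueness of the Deligne normal form in $\widehat X$ guaranteed by Theorem~\ref{thm:normal form}. I will fix a lift $\hat a \in \widehat X^0$ of $a$; every edge-path from $a$ to $b$ admits a unique admissible lift starting at $\hat a$, whose endpoint is one of the lifts $\varphi^k(\hat b_0)$ of $b$, and being a B-geodesic means that this admissible lift is itself a Deligne normal form with $n = l$ (i.e.\ with empty $\varphi$-tail).

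For existence, I will start with any edge-path from $a$ to $b$ (available by connectedness of $X$), admissibly lift it to an edge-path in $\widehat X$ ending at some lift $\hat b$ of $b$, and apply Theorem~\ref{thm:normal form} to produce the Deligne normal form $y_1 \cdots y_n$ from $\hat a$ to $\hat b$. Since the tail $y_l, \ldots, y_n$ consists of $\varphi^{\pm 1}$-shifts of $y_l$, we have $\pi(y_l) = \pi(y_n) = b$, so the truncation $y_1 \cdots y_l$ is an admissible lift of an edge-path from $a$ to $b$ and is by construction a Deligne normal form with $n = l$; projecting back yields the desired B-geodesic.

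Uniqueness is the main step. Suppose $P$ and $P'$ are both B-geodesics from $a$ to $b$, with admissible lifts $\widehat P$ and $\widehat P'$ starting at $\hat a$ and ending at $\hat b$ and $\hat b' = \varphi^k(\hat b)$ respectively; I may assume $k \geq 0$. If $k > 0$, I will append the $k$ consecutive $\varphi$-shifts $\varphi(\hat b), \varphi^2(\hat b), \ldots, \varphi^k(\hat b)$ to $\widehat P$ to obtain an edge-path from $\hat a$ to $\hat b'$: the strictly increasing part and the meet conditions are inherited from $\widehat P$ unchanged (with the same $l$), and the appended positive-sign $\varphi$-tail of length $k$ satisfies the third bullet of Theorem~\ref{thm:normal form}. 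By uniqueness of the Deligne normal form from $\hat a$ to $\hat b'$, this concatenation must coincide with $\widehat P'$; but the concatenation has length $l + k$ while $\widehat P'$ has length $l$, forcing $k = 0$. Therefore $\hat b = \hat b'$, and a final application of Deligne normal form uniqueness yields $\widehat P = \widehat P'$ and hence $P = P'$. The only subtlety is verifying that the concatenated path genuinely satisfies all three conditions of the Deligne normal form, but this is immediate since the $\pm$ sign in the tail is free to be chosen positive, so no real obstacle arises.
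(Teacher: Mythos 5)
Your proof is correct and takes essentially the same route as the paper: existence by truncating the $\varphi$-tail of the Deligne normal form between lifts of $a$ and $b$, and uniqueness by appending a $\varphi$-tail to one admissible lift and invoking the uniqueness in Theorem~\ref{thm:normal form}. The only slight imprecision is your final length comparison ($\widehat P'$ has some length $l'$, not a priori $l$); the cleaner way to finish is to note that $\widehat P'$, being the admissible lift of a B-geodesic, contains no $\varphi$-step, so it cannot coincide with a path ending in a $\varphi$-tail of length $k>0$, forcing $k=0$.
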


\begin{proof}
	Let $\hat a$ and $\hat b$ be lifts of $a$ and $b$, respectively, i.e.\ $\pi(\hat a)=a$ and $\pi(\hat b)=b$. Let $P=\hat a \cdots x_l\cdots x_n$ be the path in $\widehat X$ from $\hat a$ to $\hat b$ that has Deligne normal form. Then $\pi(x_l)=\pi(x_n)=b$,
    and so $\pi(x_1)\cdots \pi(x_l)$ is a B-geodesic from $a$ to $b$, which proves the existence. 
    
    Suppose that there are two B-geodesics $P_1$ and $P_2$ from $a$ to $b$. Let $\widehat P_1$ and $\widehat P_2$ be admissible lifts of $P_1$ and $P_2$ starting at the same point. Then the endpoints $\hat b_i$ of~$\widehat P_i$ differ by $\varphi^k$ for some $k\in \mathbb Z$. Since $\widehat P_1$ has Deligne normal form, we have that the concatention of $\widehat P_1$ with $\hat b_1 \varphi (\hat b_1)\cdots \varphi^k(\hat b_1)$ also has Deligne normal form. But since the Deligne normal form is unique (Theorem~\ref{thm:normal form}), the latter path equals $\widehat P_2$, and so $k=0$ and $\widehat P_1=\widehat P_2$, hence $P_1=P_2$.
\end{proof}

\begin{lem}
	\label{lem:geodesic}
	Let $P=x_1\cdots x_n$ be an edge-path in $X$. Then $P$ is a B-geodesic if and only if for each $2\le i\le n-1$, the vertices $x_{i-1}$ and $x_{i+1}$ do not have a common lower bound in $(\lk(x_i,X)^0,<_{x_i})$.
\end{lem}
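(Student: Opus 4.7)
The plan is to translate the Deligne normal form characterization (Theorem~\ref{thm:normal form}) directly into the link setting via the interval $[\hat x_i,\varphi(\hat x_i)]$. Fix an admissible lift $\widehat P=\hat x_1\cdots \hat x_n$ of $P$. The first step is to establish a canonical order isomorphism between the open interval $(\hat x_i,\varphi(\hat x_i))$ inside $(\widehat X^0,<)$ and $(\lk(x_i,X)^0,<_{x_i})$. A vertex $(z,j)$ lies in this open interval exactly when $z$ is a neighbour of $x_i$ in $X$ and $r(\hat x_i)<j<r(\hat x_i)+n$; since the residues $j \bmod n$ in this range cover every type other than $\tau(x_i)$ exactly once, the rank is determined by the type, and the projection $(z,j)\mapsto z$ is a bijection onto $\lk(x_i,X)^0$. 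Moreover, any two vertices in the open interval have ranks differing by strictly less than $n$, so they are neighbours in $\widehat X$ iff their $X$-projections are neighbours; this shows the relation $<$ on the open interval matches $<_{x_i}$ on the link.

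The second step is to identify the two distinguished elements in the meet condition. The successor $\hat x_{i+1}$ clearly projects to $x_{i+1}$. For $\varphi(\hat x_{i-1})=(x_{i-1},r(\hat x_{i-1})+n)$, I would check that it lies in the open interval and projects to $x_{i-1}$: admissibility gives $r(\hat x_{i-1})<r(\hat x_i)$, and since $\hat x_{i-1}$ and $\hat x_i$ are neighbours we have $r(\hat x_i)\le r(\hat x_{i-1})+n$, with equality ruled out because $x_{i-1}\ne x_i$. Consequently $r(\hat x_i)<r(\hat x_{i-1})+n<r(\hat x_i)+n$, placing $\varphi(\hat x_{i-1})$ strictly inside $[\hat x_i,\varphi(\hat x_i)]$ and identifying it with $x_{i-1}\in\lk(x_i,X)^0$ under the bijection above.

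The third step applies Theorem~\ref{thm:normal form} with $l=n$. The first bullet of the Deligne normal form is automatic for an admissible lift: $\hat x_i<\hat x_{i+1}$ holds by admissibility, and $\hat x_{i+1}=\varphi(\hat x_i)$ would force $x_{i+1}=x_i$, contradicting that they are neighbours in $X$. The third bullet is vacuous when $l=n$. Hence only the meet condition in the second bullet carries content, and via the identification of steps one and two it reads: in the lattice $[\hat x_i,\varphi(\hat x_i)]$ (Lemma~\ref{lem:lattice}), the meet of the images of $x_{i+1}$ and $x_{i-1}$ equals the bottom element $\hat x_i$. Equivalently, no element $y\in\lk(x_i,X)^0$ satisfies $y\le_{x_i} x_{i+1}$ and $y\le_{x_i} x_{i-1}$ simultaneously, which is exactly the claim that $x_{i-1}$ and $x_{i+1}$ have no common lower bound in $(\lk(x_i,X)^0,<_{x_i})$.

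Finally, the uniqueness statements in Lemma~\ref{lem:geodesic0} and Theorem~\ref{thm:normal form}, together with the fact that changing the starting lift $\hat x_1$ by $\varphi^k$ changes the whole admissible lift by $\varphi^k$ and preserves the normal form property, show that ``$\widehat P$ is itself the Deligne normal form with $l=n$'' is independent of the chosen lift and is equivalent to $P$ being a B-geodesic. Combining this with the previous paragraph gives the lemma. I do not anticipate any serious obstacle beyond the careful bookkeeping in step two, verifying that the order-isomorphism is strict at the endpoints so that the meet condition captures common lower bounds including the cases $y=x_{i+1}$ or $y=x_{i-1}$.
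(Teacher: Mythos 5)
Your proposal is correct and follows essentially the same route as the paper: both arguments lift $P$ admissibly, invoke the isomorphism between the interval $[\hat x_i,\varphi(\hat x_i)]$ and the augmented poset $(\lk(x_i,X)^0,<_{x_i})$ from the proof of Lemma~\ref{lem:lattice}, and read the second bullet of Theorem~\ref{thm:normal form} (with $l=n$, the other bullets being automatic or vacuous) as the no-common-lower-bound condition. The only quibble is in your step two: equality $r(\hat x_i)=r(\hat x_{i-1})+n$ is excluded not because $x_{i-1}\neq x_i$ but because it would force $x_{i-1}$ and $x_i$ to have the same type while being neighbours in a complex of type $S$ --- a one-line fix.
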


\begin{proof}
	Let $\widehat P=\hat x_1 \cdots \hat x_n$ be an admissible lift of $P$. 
    By the definition of the Deligne normal form, $P$ is a B-geodesic if and only if 
    $\hat x_{i}=\hat x_{i+1}\wedge \varphi(\hat x_{i-1})$ for $1<i<n$. This means exactly that $\hat x_{i+1}$ and $\varphi(\hat x_{i-1})$ have meet $\hat x_i$ in the interval $[\hat x_{i},\varphi(\hat x_{i})]$. Under the isomorphism of $[\hat x_{i},\varphi(\hat x_{i})]$ with the augmented $(\lk(x_i,X)^0,<_{x_i})$ from the proof of Lemma~\ref{lem:lattice}, this means that $x_{i-1}$ and $x_{i+1}$ have only a trivial lower bound, as desired.
\end{proof}

\begin{defin}
	\label{def:Bconvex}
	Let $X$ be an $\widetilde A_n$-like complex as before, of type $S$ with given cyclic order.  
    Let $Y\subset X$ be a full subcomplex that is also a simplicial complex of type $S$ with the induced type function from $X$. 
    We say that $Y$ is \emph{locally B-convex} if for each vertex $y\in Y^0$ and any vertices $y_1,y_2$ of $\lk(y,Y)$, if the meet $y_1\wedge y_2$ in the poset $(\lk(y,X)^0,<_y)$ exists, then $y_1\wedge y_2\in \lk(y,Y)^0$.
\end{defin}

The property of being locally B-convex depends on the choice of the cyclic order on the set of types of $\widehat X^0$. Reversing the cyclic order gives a different $\widetilde A_n$-like complex structure on $X$, with simplicially isomorphic $\widehat X$, but with different collection of locally B-convex subcomplexes.

\begin{prop}
	\label{prop:convex}
	Let $X$ 
    be an $\widetilde A_n$-like complex, 
    and let $Y\subset X$ be a connected locally B-convex subcomplex.  
    Then $Y$ is simply connected, and for any pair of vertices $y_1,y_2\in Y^0$, the B-geodesic in $X$ from $y_1$ to $y_2$ is contained in $Y$.
\end{prop}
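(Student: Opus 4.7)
The plan is to establish the second assertion first---that B-geodesics in $X$ between vertices of $Y$ stay in $Y$---and then deduce simple connectedness by a combing argument built on this.

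\medskip

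For the B-geodesic containment, fix $y_1, y_2 \in Y^0$ and pick an edge-path $P = z_1 \cdots z_m$ in $Y$ from $y_1$ to $y_2$ (possible since $Y$ is connected), together with an admissible lift $\widehat P = \hat z_1 \cdots \hat z_m$ in $\widehat X$. I transform $P$ iteratively into the unique B-geodesic of Lemma~\ref{lem:geodesic0} via local moves preserving $Y$. If $P$ is not a B-geodesic, Lemma~\ref{lem:geodesic} yields an interior index $i$ at which $z_{i-1}$ and $z_{i+1}$ share a common lower bound in the link poset $(\lk(z_i, X)^0, <_{z_i})$. This poset is bowtie free by Definition~\ref{def:ha}(3) and weakly graded by the cyclic type rank, so Lemma~\ref{lem:posets} supplies the meet $w = z_{i-1} \wedge z_{i+1}$; local B-convexity forces $w \in \lk(z_i, Y)^0$. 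If $w \in \{z_{i-1}, z_{i+1}\}$, the outer vertices are adjacent in $Y$ and I delete $z_i$ (shortening $P$). Otherwise I substitute $z_i$ by $w$; tracking the admissible lift, $\hat z_i$ is replaced by the unique lift $\hat w$ with $\hat z_{i-1} < \hat w < \hat z_{i+1}$, and a direct computation using the type-rank identification of $\lk(z_i, X)^0$ with the interval $(\hat z_i, \varphi(\hat z_i))$ gives $r(\hat w) > r(\hat z_i)$. The lexicographic pair $(m,\, -\sum_j r(\hat z_j))$ strictly decreases under either move and is well-founded, since $m \geq 1$ and each $r(\hat z_j)$ is squeezed between the fixed $r(\hat z_1)$ and $r(\hat z_m)$. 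The procedure therefore terminates at a path satisfying Lemma~\ref{lem:geodesic}, which by uniqueness is the B-geodesic from $y_1$ to $y_2$ in $X$, and so it lies in $Y$.

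\medskip

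For simple connectedness, fix a basepoint $y_0 \in Y^0$ and, for each $y \in Y^0$, let $\gamma_y \subset Y$ denote the B-geodesic from $y_0$ to $y$. Any simplicial loop based at $y_0$ in $Y^1$, written $L = y_0 y_1 \cdots y_k y_0$, is homotopic in $Y$ relative to $y_0$ to the concatenation of the geodesic bigons $\gamma_{y_i} \cdot (y_i y_{i+1}) \cdot \gamma_{y_{i+1}}^{-1}$, so it suffices to null-homotope each such bigon $\gamma_y \cdot (yy') \cdot \gamma_{y'}^{-1}$ in $Y$. I proceed by induction on $|\gamma_y| + |\gamma_{y'}|$. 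The base cases are small geodesic triangles whose three vertices are pairwise adjacent in $Y$ and therefore span a $2$-simplex by the flag property inherited from $X$. In the inductive step, let $z$ be the last common vertex of $\gamma_y$ and $\gamma_{y'}$; by prefix-closure of the Deligne normal form (Theorem~\ref{thm:normal form}) combined with uniqueness of B-geodesics, the initial segments both equal $\gamma_z$. Cancelling $\gamma_z$ reduces the problem to null-homotoping the smaller bigon at $z$ formed by the tails $\alpha, \beta$ of $\gamma_y, \gamma_{y'}$ together with the edge $yy'$. This smaller bigon is split via the meet in $\lk(z, X)$ of the two vertices immediately following $z$ in $\gamma_y, \gamma_{y'}$, which lies in $\lk(z, Y)$ by local B-convexity, producing sub-bigons of strictly smaller combined geodesic length.

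\medskip

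The B-geodesic containment is the mechanical half: the meet-lattice structure inside $\widehat X$, the rank computation, and the lexicographic termination are all routine once set up. The main obstacle lies in the inductive filling of the geodesic bigons: one must verify that the decomposition at the branching vertex $z$ actually yields sub-bigons of strictly smaller combined geodesic length (which holds when $z \neq y_0$ by prefix-closure, while the case $z = y_0$ requires a separate initial-split argument using the meet of the first vertices of $\gamma_y$ and $\gamma_{y'}$ in $\lk(y_0, X)$), and that every auxiliary meet, simplex, and path used in the decomposition lies in $Y$. The latter is exactly what the local B-convexity hypothesis and the flag property of $X$ are set up to guarantee.
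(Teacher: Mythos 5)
Your first half (geodesic containment) is correct, and it is a genuinely different route from the paper's. You straighten an arbitrary path in $Y$ by local moves: at a vertex where the criterion of Lemma~\ref{lem:geodesic} fails, the meet exists by Definition~\ref{def:ha}(3) and Lemma~\ref{lem:posets}, lies in $\lk(z_i,Y)$ by local B-convexity, and in the admissible lift the replacement strictly raises the rank of the modified vertex (since the meet corresponds to an element of the open interval $(\hat z_i,\varphi(\hat z_i))$, and $\hat w\le\varphi(\hat z_{i-1})$ gives $\hat z_{i-1}<\hat w$ by the Garside duality), so your lexicographic complexity terminates at the unique B-geodesic of Lemma~\ref{lem:geodesic0}. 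The only loose ends are trivial: the case $z_{i-1}=z_{i+1}$ (a backtrack) is not covered by your deletion clause and should just be erased, and the moves implicitly use that pairwise adjacent vertices span triangles (harmless in all the paper's applications). The paper instead obtains containment as a by-product of its universal-cover argument: it shows $\widetilde Y$ is itself an $\widetilde A_n$-like complex (local B-convexity gives bowtie freeness of links of $Y$), proves that B-geodesics of $\widetilde Y$ project to B-geodesics of $X$ via Lemma~\ref{lem:geodesic}, and then invokes uniqueness in $X$. Your version is more hands-on and does not need anything about $\widetilde Y$.

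The second half, however, has a genuine gap. In the inductive bigon-filling you split the bigon at the branch vertex $z$ "via the meet in $\lk(z,X)$ of the two vertices immediately following $z$": that meet need not exist, since nothing forces the first vertices of the two tails to have a common lower bound in $(\lk(z,X)^0,<_z)$ (the B-geodesic condition constrains each tail against the incoming edge of $\gamma_z$, not the two tails against each other); and even when it exists, you do not show that inserting it produces sub-loops that are again of the form geodesic--edge--geodesic with strictly smaller total length — the new paths through the meet are not B-geodesics in general, so the induction does not close. This step is the actual content of simple connectedness, and it is exactly what the paper sidesteps by working in $\widetilde Y$ and ruling out nontrivial deck transformations using uniqueness of the Deligne normal form (Theorem~\ref{thm:normal form}). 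There is, in fact, a repair entirely within your own framework: each of your straightening moves replaces $z_{i-1}z_iz_{i+1}$ by $z_{i-1}wz_{i+1}$ (or erases a vertex/backtrack) across triangles of $Y$ with pairwise adjacent vertices, hence is a homotopy rel endpoints inside $Y$. Running your procedure on an arbitrary edge-loop of $Y$ based at $y_0$ therefore homotopes it, within $Y$, to a closed path satisfying the criterion of Lemma~\ref{lem:geodesic} at every interior vertex, i.e.\ to a B-geodesic from $y_0$ to $y_0$, which by uniqueness (Lemma~\ref{lem:geodesic0}) is the constant path. That yields simple connectedness directly and lets you discard the bigon induction altogether; alternatively, adopt the paper's universal-cover argument.
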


\begin{proof}
	Let $\widetilde Y$ be the universal cover of $Y$. We first show that $\widetilde Y$ is an $\widetilde A_n$-like complex. We induce the type function and the cyclic order on the types from $X$.
    It suffices to show for each $y\in Y^0$, the restriction of the relation $<_y$ to the vertex set of $\lk(y,Y)$ satisfies conditions (2) and (3) of Definition~\ref{def:ha}. Condition (2) holds since $Y$ is a full subcomplex of $X$. To check Condition (3), let $x_1y_1x_2y_2$ be a bowtie in $\lk(y,Y)$.
    Since $y_1,y_2$ have a lower bound in $\lk(y,X)^0$, they have a meet $z\in\lk(y,X)^0$  
    by Lemma~\ref{lem:posets}. Then $x_i\le z\le y_j$ for $i,j\in\{1,2\}$.  
    By the local B-convexity, we have $z\in \lk(y,Y)^0$, 
    as desired.
	
	Let $\theta\colon \widetilde Y\to Y$ be the covering map. We claim that if $\widetilde P$ is a B-geodesic in $\widetilde Y$ from~$\tilde y_1$ to~$\tilde y_2$, then $P=\theta(\widetilde P)$ is the B-geodesic in $X$ from $y_1=\theta(\tilde y_1)$ to $y_2=\theta(\tilde y_2)$. 
    
    Let us assume the claim for the moment and finish the proof of the proposition. We first justify that $Y$ is simply connected and $\widetilde Y=Y$. Otherwise, we have distinct lifts $\tilde y,\tilde y'\in \widetilde Y$ of a vertex $y\in Y$. By Lemma~\ref{lem:geodesic0}, there is a B-geodesic $\widetilde P$ from $\tilde y$ to $\tilde y'$ in~$\widetilde Y$. By the claim, $\theta(\widetilde P)$ is a nontrivial B-geodesic in $X$ from $y$ to $y$. This contradicts the uniqueness of the B-geodesic in Lemma~\ref{lem:geodesic0}. Thus $Y$ is simply connected. The remaining assertion of the proposition follows from the claim and the uniqueness of B-geodesics in $X$.

	It remains to prove the claim.
	Let $\tilde y_{i-1},\tilde y_i,\tilde y_{i+1}$ be three consecutive vertices in $\widetilde P$ with $y_{i-1},y_i,y_{i+1}$ their images under $\theta$. Since $\widetilde P$ is a B-geodesic in $\widetilde Y$, Lemma~\ref{lem:geodesic} implies that $\tilde y_{i-1}$ and $\tilde y_{i+1}$ do not have a common lower bound in $\lk(\tilde y_i,\widetilde Y)^0$. Since $\lk(\tilde y_i,\widetilde Y)\cong\lk(y_i,Y)$, we have that $y_{i-1}$ and $y_{i+1}$ do not have a common lower bound in $\lk(y_i,Y)^0$.
    By the local B-convexity of $Y$, $y_{i-1}$ and $y_{i+1}$ do not have a common lower bound in $\lk(y_i,X)^0$. 
    Hence $P$ is a B-geodesic in $X$ by Lemma~\ref{lem:geodesic}, and the claim follows.
\end{proof}

\begin{cor}
	\label{cor:convex}
	Let 
    $X$ be an $\widetilde A_n$-like complex. 
    Let $Y_1$ and $Y_2$ be connected locally B-convex subcomplexes of $X$. 
    If $Y_1\cap Y_2\neq\emptyset$, then $Y_1\cap Y_2$ is connected.
\end{cor}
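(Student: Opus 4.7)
The plan is to reduce connectedness of $Y_1\cap Y_2$ to the uniqueness of B-geodesics in $X$ together with the path-lifting property from Proposition~\ref{prop:convex}. Pick two vertices $y,y'\in (Y_1\cap Y_2)^0$. I will exhibit an explicit edge-path from $y$ to $y'$ contained in $Y_1\cap Y_2$, namely the unique B-geodesic $P$ from $y$ to $y'$ in $X$, whose existence and uniqueness are given by Lemma~\ref{lem:geodesic0}.

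First I would apply Proposition~\ref{prop:convex} to $Y_1$, which is connected and locally B-convex: since $y,y'\in Y_1^0$, the B-geodesic in $X$ from $y$ to $y'$ lies in $Y_1$. The same statement applied to $Y_2$ yields that the B-geodesic lies in $Y_2$. By the uniqueness part of Lemma~\ref{lem:geodesic0}, the two B-geodesics obtained this way are the same path $P$, so all vertices of $P$ lie in $Y_1^0\cap Y_2^0$.

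Next I would argue that $P$ in fact lies in $Y_1\cap Y_2$ as a subcomplex. Since $Y_1$ and $Y_2$ are each full subcomplexes of $X$ by hypothesis (cf.\ Definition~\ref{def:Bconvex}), any edge of $X$ whose endpoints lie in $Y_i^0$ is an edge of $Y_i$; applying this to each consecutive pair of vertices along $P$ gives $P\subset Y_1$ and $P\subset Y_2$, hence $P\subset Y_1\cap Y_2$. Thus $y$ and $y'$ are joined by an edge-path in $Y_1\cap Y_2$, proving that this intersection is connected.

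I do not anticipate a serious obstacle here: the proof is essentially an application of Proposition~\ref{prop:convex} together with the uniqueness in Lemma~\ref{lem:geodesic0}, and the fullness of the $Y_i$ ensures that there is no delicate point about whether a vertex-level path promotes to a genuine subcomplex path. The one subtlety worth spelling out is that uniqueness of B-geodesics is what allows me to conclude that the path produced inside $Y_1$ and the path produced inside $Y_2$ are literally the same path, rather than two a priori distinct paths with the same endpoints.
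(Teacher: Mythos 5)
Your proposal is correct and follows essentially the same route as the paper: take two vertices of $Y_1\cap Y_2$ and observe that the (unique) B-geodesic between them in $X$ lies in each $Y_i$ by Proposition~\ref{prop:convex}, hence in the intersection. The extra remarks about uniqueness (Lemma~\ref{lem:geodesic0}) and fullness are fine but already implicit in the statement of Proposition~\ref{prop:convex}.
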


\begin{proof}
	Let $y$ and $y'$ be vertices in $Y_1\cap Y_2$, and let $P$ be a B-geodesic in $X$ from $y$ to~$y'$. By Proposition~\ref{prop:convex}, we have that $P$ is contained in $Y_1\cap Y_2$.
\end{proof}

\section{$3$-dimensional Artin groups}
\label{sec:3dim}

\begin{defin} We say that a Coxeter diagram $\Lambda$ satisfies the \emph{girth condition} if for each edge $st$ of $\Lambda$, the graph $\Delta_{\Lambda, \, st}$ has girth $\geq 6$.
\end{defin}

The goal of this section is to prove the following.

\begin{thm}
	\label{thm:3d}
	Let $A_\Lambda$ be an Artin group of dimension $\leq 3$. 
    Then $\Lambda$ satisfies the girth condition. If $\Lambda$ is non-spherical, then its Artin complex~$\Delta$ is contractible. \end{thm}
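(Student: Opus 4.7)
The plan is to prove both assertions by simultaneous induction on $|S|$, where $S$ is the vertex set of $\Lambda$. For the base cases $|S|\le 3$ with $\Lambda$ spherical (necessarily among $A_1,A_2,A_3,B_3,H_3$), the girth condition for an edge $st\subset\Lambda$ follows from Theorem~\ref{thm:4cycle}: bowtie freeness of $\Delta_{\Lambda,st}$ as a poset is exactly the non-existence of an embedded $4$-cycle. When $|S|\le 3$ with $\Lambda$ non-spherical, contractibility of $\Delta_\Lambda$ follows from Theorem~\ref{thm:contractibleII} applied to any total ordering of $\Lambda$, since the link of each vertex is a $2$-generator Artin complex (a tree), trivially satisfying the bowtie-free flag conditions.

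For the induction step, I would first establish the girth condition at each edge $st\subset\Lambda$. Suppose, for contradiction, that $\omega=x_1y_1x_2y_2$ is an embedded $4$-cycle in $\Delta_{\Lambda,st}$. If there exists $r\in\Lambda\setminus\{s,t\}$ with $\{s,t,r\}$ generating a spherical parabolic subgroup (necessarily $3$-generator, since $\dim A_\Lambda\le 3$), then via Lemma~\ref{lem:transitive} and the passage to the link of an appropriate type-$\hat r$ vertex, $\omega$ corresponds to a $4$-cycle in the Artin complex of that $3$-generator spherical subdiagram, where the base case already proven yields a contradiction. Otherwise, I would use Lemma~\ref{lem:dr} together with contractibility inputs provided by the induction hypothesis to realise $\omega$ inside a relative Artin complex of a diagram with strictly fewer vertices, and invoke the induction hypothesis there.

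For contractibility of $\Delta_\Lambda$ when $\Lambda$ is non-spherical, the plan is to follow the strategy outlined in Section~\ref{sec:relative}: choose an admissible linear subdiagram $\Lambda'\subset\Lambda$ (Definition~\ref{def:admissible}) such that Lemma~\ref{lem:dr}(2) applies, its hypotheses being the contractibility of $\Delta_{\Lambda\setminus R,\Lambda'\setminus R}$ for all non-empty $R\subset\Lambda'$, all of which are supplied by the induction hypothesis. This deformation retracts $\Delta_\Lambda$ onto $\Delta_{\Lambda,\Lambda'}$, whose contractibility I would then establish via one of the criteria of Section~\ref{subsec:contractible}. Theorem~\ref{thm:contractibleII} applies whenever the links of $\Delta_{\Lambda,\Lambda'}$ carry compatible flag bowtie-free posets -- a structure produced by combining the girth condition just proved with Theorems~\ref{thm:tripleBn} and~\ref{thm:flag} for the spherical $3$-generator subgroups. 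In the critical situation where $\Lambda$ contains the $353$ subdiagram, I would instead identify $\Delta_{\Lambda,\Lambda'}$ (or $\Delta_\Lambda$ itself) as a wide non-degenerate $353$-simplicial complex via Theorem~\ref{lem:main example}, and conclude contractibility from Corollary~\ref{cor:contr}.

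The hard part will be the $353$ subdiagram case: verifying conditions~(\ref{3538}) and~(\ref{35310}) of Definition~\ref{def:353simplicial1} at a vertex link isomorphic to $\Delta_{H_3}$ is exactly the admissibility of critical $8$-cycles (Proposition~\ref{prop:Ia}) and critical $10$-cycles (Proposition~\ref{prop:IIa}). These occupy the whole of Part~I of the paper and depend on the sub-arrangements of types~I and~II, on the splitting-system analysis of minimal disc diagrams in $\bV_{234}$, and on the filling results for the special $8$- and $10$-cycles developed in Sections~\ref{sec:splitting system}--\ref{sec:typeII}. A secondary technical difficulty is the uniform choice of the admissible subdiagram $\Lambda'$: it must be compatible both with the deformation-retract step (so that Lemma~\ref{lem:dr}(2) applies) and with whichever contractibility criterion one applies at the end.
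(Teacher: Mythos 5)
Your overall frame (induction on the diagram, Theorem~\ref{thm:Kpi1} in the background, Lemma~\ref{lem:dr} to retract onto a relative Artin complex, and the identification of the $353$ case --- Theorem~\ref{lem:main example}, Corollary~\ref{cor:contr}, Propositions~\ref{prop:Ia} and~\ref{prop:IIa} --- as the hard core) overlaps with pieces of the paper, but your induction step has a genuine gap, and it sits exactly where most of Part~II's work lies. An embedded $4$-cycle $\omega$ of type $\hat s\hat t\hat s\hat t$ in $\Delta_\Lambda$ is a global object: its four vertices need not lie in the link of any vertex, so you cannot in general ``pass to the link of an appropriate type-$\hat r$ vertex'' via Lemma~\ref{lem:transitive}; and a deformation retraction supplied by Lemma~\ref{lem:dr} says nothing about where a particular embedded $4$-cycle sits, so it does not ``realise $\omega$ inside a relative Artin complex of a diagram with strictly fewer vertices''. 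The paper's mechanism is different and substantially heavier: it first replaces dimension $\le 3$ by the condition that $\Lambda^c$ has no embedded $4$-cycles (Theorem~\ref{thm:4}); it reduces to triangle-free diagrams by Proposition~\ref{prop:triangle}, whose proof equips $\Delta_{\Lambda,\,stp}$ (for a triangle $stp\subset\Lambda$) with a $\mathrm{CAT}(0)$ equilateral metric, proves that each $C_v=\lk(v,\Delta)\cap\Delta_{\Lambda,\,stp}$ is convex, and kills global $4$-cycles by a shortest-path argument in the connected intersection $C_{v_1}\cap C_{v_2}$ together with a Gauss--Bonnet disc-diagram argument (Theorem~\ref{thm:GB}) and Corollary~\ref{cor:4cycle} in links; and it then classifies the triangle-free diagrams (Lemma~\ref{lem:graphs}: the $5$-cycle, $K_{2,3}$, $K^-_{2,3}$, $K_{3,3}$, $K^-_{3,3}$) and treats each by hand (Remark~\ref{rem:easygirth}, Lemmas~\ref{lem:stpr}, \ref{lem:star}, \ref{lem:Kmn}, Corollary~\ref{cor:5}, Propositions~\ref{prop:23} and~\ref{prop:33}).

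These cases require tools your proposal does not invoke: $\widetilde A_n$-like complexes and Theorem~\ref{thm:contractibleI} (your Theorem~\ref{thm:contractibleII} with a linear $\Lambda'$ does not apply to, say, the $5$-cycle or $K_{3,3}$), the subdivisions of Definition~\ref{def:subdivision}, and above all the Bestvina convexity of Section~\ref{sec:convex} (Proposition~\ref{prop:convex}, Corollary~\ref{cor:convex}), which replaces metric convexity when no $\mathrm{CAT}(0)$ structure is available and is what makes $C_{v_1}\cap C_{v_2}$ connected in Propositions~\ref{prop:23}, \ref{prop:33} and Lemma~\ref{lem:Kmn2}. Without some substitute for this convexity-plus-connected-intersection machinery, your girth-condition step fails, and your contractibility step for the non-linear triangle-free diagrams is likewise unsupported. (A minor point: in your base case the link of a vertex for a rank-$3$ diagram is a rank-$2$ Artin complex, which is a bipartite graph of girth $\ge 2m$, not a tree; this is harmless, since girth $\ge 6$ is what the bowtie-free condition needs, and the paper handles rank $\le 3$ via Remark~\ref{rem:easygirth} instead.)
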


A vertex of a graph is \emph{isolated} if it has no neighbours. An \emph{$n$-cycle} is the graph with vertices $s_1,\ldots, s_n$ and edges $s_1s_2,\ldots, s_{n-1}s_n,s_ns_1$. Thus what we called an `embedded $n$-cycle' in a graph $\Lambda$ is a subgraph isomorphic to an $n$-cycle.
Given a simplicial graph $\Lambda$, let $\Lambda^c$ denote the \emph{complement graph}, i.e.\ the graph with the same vertex set as $\Lambda$ and $st$ an edge exactly when there is no edge $st$ in $\Lambda$. Note that if $A_\Lambda$ is $3$-dimensional with Coxeter diagram $\Lambda$, then $\Lambda^c$ has no embedded $4$-cycles (though the converse might not be true).	So Theorem~\ref{thm:3d} follows from the following.

\begin{thm}
	\label{thm:4}
	Let $A_\Lambda$ be an Artin group such 
    that $\Lambda^c$ has no embedded $4$-cycles. Then $\Lambda$ satisfies the girth condition. If $\Lambda$ is non-spherical, then its Artin complex~$\Delta$ is contractible. \end{thm}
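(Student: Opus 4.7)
The plan is to prove both assertions simultaneously by induction on $|\Lambda|$. The hypothesis that $\Lambda^c$ has no induced 4-cycle is inherited by every induced subdiagram, so the inductive hypothesis applies to all proper induced subdiagrams. The base case $|\Lambda|\le 3$ is handled separately: the girth condition for every edge $st$ follows from Theorem~\ref{thm:4cycle} applied to the spherical subdiagrams containing $st$, and contractibility of $\Delta_\Lambda$ in the non-spherical case is the Charney--Davis result~\cite{CharneyDavis}.

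For the girth assertion in the inductive step, fix an edge $st\in\Lambda$ and suppose for contradiction that $\Delta_{\Lambda,\, st}$ carries an embedded 4-cycle $\omega=x_1x_2x_3x_4$ of type $\hat s\hat t\hat s\hat t$. First I would apply Lemma~\ref{lem:4cycle} to an auxiliary 4-cycle in $\Delta_\Lambda$ containing $\omega$, obtained by inserting a vertex of a different type, to extract a common neighbour $y$ of some three consecutive $x_i$. The $C_4$-freeness of $\Lambda^c$ is then invoked to promote $y$ to a common neighbour of the whole 4-cycle, since otherwise the types of the alternative partial neighbours around $\omega$ would assemble into a forbidden induced 4-cycle of $\Lambda^c$. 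With $y$ a common neighbour of $\omega$, the 4-cycle sits in the link $\lk(y,\Delta_\Lambda)\cong \Delta_{\Lambda\setminus\{u\}}$ by Lemma~\ref{lem:link}, where $u$ is the type of $y$; the inductive hypothesis applied to $\Lambda\setminus\{u\}$, whose complement is still $C_4$-free, supplies the contradiction.

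For the contractibility assertion, assume $\Lambda$ is non-spherical. I would select a subset $T$ of vertices of $\Lambda$ such that (i) for every non-empty $R\subseteq T$ the diagram $\Lambda\setminus R$ is either spherical or has contractible Artin complex by induction, and (ii) $\Lambda'=\Lambda\setminus T$ is admissible (Definition~\ref{def:admissible}) and carries either a total order placing $\Delta_{\Lambda,\,\Lambda'}$ in the setting of Theorem~\ref{thm:contractibleII}, or a cyclic order making it an $\widetilde A_n$-like complex so that Theorem~\ref{thm:contractibleI} applies, assisted by the Bestvina convexity of Section~\ref{sec:convex}. By Lemma~\ref{lem:dr}(2) this deformation retracts $\Delta_\Lambda$ onto $\Delta_{\Lambda,\,\Lambda'}$. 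The bowtie-free and flag hypotheses of the relevant contractibility criterion at each vertex link of $\Delta_{\Lambda,\,\Lambda'}$ would then be verified by induction via Lemma~\ref{lem:link}, using the girth condition just established together with Theorems~\ref{thm:tripleBn} and~\ref{thm:flag} for the rank-3 link bases.

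The hardest step will be the selection of $T$, together with verifying the bowtie-free and flag hypotheses for the induced order on $\Delta^0_{\Lambda,\,\Lambda'}$ at every vertex link. Via Lemma~\ref{lem:link} this reduces to the same assertion for strictly smaller relative Artin complexes, where the $C_4$-freeness of $\Lambda^c$ is the lever eliminating the bowtie configurations not already excluded by the girth condition. Some degenerate configurations (for instance, $\Lambda$ spherical with a pendant generator attached) will require separate treatment, combining Theorem~\ref{thm:Kpi1} with the two-dimensional $K(\pi,1)$ machinery.
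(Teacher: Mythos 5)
Your proposal defers, rather than supplies, the steps that constitute the actual content of the theorem. For the girth assertion, Lemma~\ref{lem:4cycle} does not apply to an embedded $4$-cycle of type $\hat s\hat t\hat s\hat t$: its hypothesis is precisely $\hat s_1\neq\hat s_3$, and no ``auxiliary $4$-cycle obtained by inserting a vertex'' repairs this. More seriously, your claimed promotion of a partial common neighbour to a common neighbour of the whole cycle ``via the $C_4$-freeness of $\Lambda^c$'' has no mechanism: the obstruction is not a pattern of types that could assemble into a forbidden square of $\Lambda^c$, but the combinatorics of the Artin complex itself, and in the minimal triangle-free cases the $4$-cycle does not a priori lie in any vertex link, so there is nothing smaller to induct on. For instance, for $\Lambda=abcd$ with labels $3,5,3$, the statement that every embedded cycle of type $\hat b\hat c\hat b\hat c$ admits a common neighbour is exactly the bowtie-freeness of $\Delta_\Lambda$ (Lemma~\ref{lem:stpr} via Theorem~\ref{lem:main example}), whose proof occupies Sections~\ref{sec:prelim1}--\ref{sec:353simplicial} (sub-arrangements, splitting systems, critical $8$- and $10$-cycles, $353$-square complexes). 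A formal induction on vertex links cannot reproduce this, and the same applies to the other minimal diagrams ($5$-cycle, $K^-_{2,3}$, $K^-_{3,3}$, \dots).

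For contractibility you propose to choose $T$ so that $\Delta_{\Lambda,\,\Lambda\setminus T}$ falls under Theorem~\ref{thm:contractibleII} or is $\widetilde A_n$-like, but you never exhibit $T$ nor verify the link hypotheses, and these are exactly the hard points. The paper's route is structurally different and is what makes the problem finite: Lemma~\ref{lem:graphs} classifies the triangle-free diagrams on at least five vertices with $C_4$-free complement as the $5$-cycle, $K_{2,3}$, $K^-_{2,3}$, $K_{3,3}$, $K^-_{3,3}$; each of these (together with the rank $\le 4$ diagrams) is treated individually --- $K_{k,l}$ via subdivisions, $K^-_{2,3}$ and $K^-_{3,3}$ via $\widetilde A_3$-like complexes and the Bestvina convexity of Section~\ref{sec:convex}, the $5$-cycle via an $\widetilde A_4$-like structure --- and only then does Proposition~\ref{prop:triangle} handle every diagram containing a triangle $stp$, by making $\Delta_{\Lambda,\,stp}$ systolic/$\mathrm{CAT}(0)$, proving convexity of the links $C_v$, and running a Gauss--Bonnet disc-diagram argument for the girth condition, with contractibility obtained from Lemma~\ref{lem:dr}. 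Your sketch contains neither the classification, nor the triangle reduction, nor the treatment of any minimal case, so as it stands it is not a proof.
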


\begin{cor}
Let $A_\Lambda$ be an Artin group such that $\Lambda^c$ has no embedded $4$-cycles. Then $A_\Lambda$ satisfies the $K(\pi,1)$ conjecture. In particular, each Artin group of dimension $\le 3$ satisfies the $K(\pi,1)$ conjecture.
\end{cor}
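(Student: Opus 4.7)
The plan is to prove the first assertion by induction on the cardinality $|S|$ of the generating set, combining Theorem~\ref{thm:4} with the Godelle--Paris reduction (Theorem~\ref{thm:Kpi1}); the ``in particular'' clause will then follow from the observation preceding Theorem~\ref{thm:4} that $\dim A_\Lambda \le 3$ forces $\Lambda^c$ to have no embedded $4$-cycle. The base of the induction is the spherical case: whenever $\Lambda$ is spherical, $A_\Lambda$ satisfies the $K(\pi,1)$ conjecture by Deligne's theorem, which in particular handles every $\Lambda$ with $|S|\le 2$.

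For the inductive step, assume the statement holds for all Artin groups whose defining diagram has fewer than $|S|$ vertices and whose complement contains no embedded $4$-cycle, and let $\Lambda$ be non-spherical with $\Lambda^c$ having no embedded $4$-cycle. By Theorem~\ref{thm:4}, the Artin complex $\Delta_\Lambda$ is contractible. For every $s \in S$, the subdiagram $\Lambda' := \Lambda \setminus \{s\}$ induced on $S \setminus \{s\}$ has complement $(\Lambda')^c$ equal to the subgraph of $\Lambda^c$ induced on $S \setminus \{s\}$; hence $(\Lambda')^c$ still has no embedded $4$-cycle. The inductive hypothesis therefore yields the $K(\pi,1)$ conjecture for each such $A_{\Lambda'}$, and a single application of Theorem~\ref{thm:Kpi1} completes the step.

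The ``in particular'' assertion is then immediate: if $\dim A_\Lambda \le 3$, the remark cited above gives that $\Lambda^c$ has no embedded $4$-cycle, so the first part of the corollary applies.

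The substantive work is concentrated in Theorem~\ref{thm:4}, which in turn rests on the $353$-simplicial-complex machinery of Sections~\ref{sec:353square}--\ref{sec:353simplicial} together with the admissibility of critical cycles established in Sections~\ref{sec:typeI}--\ref{sec:typeII}. Once Theorem~\ref{thm:4} is granted, the induction is essentially formal; the only verification needed is that ``no embedded $4$-cycle in the complement'' is hereditary under passing to vertex-induced subdiagrams, which is just the combinatorial fact that complementation commutes with restriction to a vertex subset. Consequently no serious obstacle remains at this final stage of the paper.
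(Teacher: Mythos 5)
Your proposal is correct and follows essentially the same route as the paper: the paper's proof is exactly the induction on the number of vertices of $\Lambda$, combining Theorem~\ref{thm:4} (contractibility of $\Delta_\Lambda$ in the non-spherical case) with Theorem~\ref{thm:Kpi1}, with Deligne's theorem handling the spherical cases and the heredity of the no-$4$-cycle condition in $\Lambda^c$ under vertex-induced subdiagrams being the (trivial) remaining check. Nothing further is needed.
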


\begin{proof}
This follows from 
Theorems~\ref{thm:4} and~\ref{thm:Kpi1}, by induction on the number of the vertices of $\Lambda$ (recall that all spherical Artin groups satisfy the $K(\pi,1)$ conjecture \cite{deligne}).
\end{proof}

It remains to prove Theorem~\ref{thm:4}. As a preparation, we establish the following graph-theoretic result. 
Below $K_{k,l}$ denotes the complete bipartite graph with the parts of size $k$ and $l$, and $K^-_{k,l}$ denotes $K_{k,l}$ with one edge removed. 
\begin{lem} 
\label{lem:graphs}
Let $\Lambda$ be a simplicial graph with 
at least $5$ vertices, no isolated vertices, no embedded $3$-cycles, and such that $\Lambda^c$ has no embedded $4$-cycles. Then $\Lambda$ equals
the $5$-cycle, 
$K_{2,3}$, 
$K^-_{2,3}$,
 $K_{3,3}$, or 
 $K^-_{3,3}$.
\end{lem}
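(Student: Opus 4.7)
The plan is to reformulate the condition ``$\Lambda^c$ has no embedded $4$-cycle'' as the statement that any two distinct vertices of $\Lambda$ have at most one common non-neighbour, and to combine this with triangle-freeness to bound $n := |V(\Lambda)|$ before enumerating the small cases.

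First I would extract two quantitative consequences. Since $\Lambda$ is triangle-free, every neighbourhood is an independent set; if some vertex had four neighbours, these four would form a $K_4$ (and hence a $C_4$) in $\Lambda^c$, so every vertex has degree at most $3$. Next, for distinct $u, v$, the common non-neighbours of $u, v$ in $\Lambda$ other than $u, v$ themselves are exactly the common neighbours of $u, v$ in $\Lambda^c$. The hypothesis therefore yields $|V \setminus (\{u,v\} \cup N_\Lambda(u) \cup N_\Lambda(v))| \leq 1$, which after a short count gives $\deg(u) + \deg(v) \geq n-1$ when $uv$ is an edge and $\deg(u) + \deg(v) \geq n-3$ when it is not.

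Second, I would combine these bounds to force $n \leq 6$. For $n \geq 7$ the edge bound forces both endpoints of every edge to have degree exactly $3$, and since no vertex is isolated, $\Lambda$ is cubic. This fails for $n = 7$ by parity of the handshake sum and for $n \geq 8$ because $3 + 3 < n - 1$.

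Third, I would finish by enumeration. For $n = 5$, I split by minimum degree: if the minimum degree is at least $2$, the triangle-free graphs on five vertices with all degrees in $\{2, 3\}$ and even degree sum reduce to $C_5$ and $K_{2,3}$; if some vertex has degree $1$, the edge bound pins its unique neighbour to have degree $3$, and chasing the remaining constraints forces $K^-_{2,3}$. For $n = 6$, the edge bound first gives minimum degree $\geq 2$. Letting $k$ be the number of degree-$3$ vertices (even by parity), the cases $k = 0$ and $k = 2$ are quickly ruled out ($C_6$'s complement contains a $C_4$; $k = 2$ would force each degree-$2$ vertex to be adjacent to both degree-$3$ vertices, hence a degree-$3$ vertex to have degree $\geq 4$); the case $k = 4$ forces the four degree-$3$ vertices to span a $C_4$ with the two degree-$2$ vertices attached to diagonally opposite pairs, yielding $K^-_{3,3}$; and the case $k = 6$ asks for a triangle-free cubic graph on six vertices, which is uniquely $K_{3,3}$.

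The argument is essentially routine once the two degree-sum bounds of the first step are in hand. The only place requiring some care is the $n = 6$, $k = 4$ subcase, where one must verify that the subgraph induced on the four degree-$3$ vertices is actually $C_4$ rather than some other triangle-free graph on four vertices with the right edge count; this reduces to noting that the alternative distributions of degree-$2$ attachments would create a triangle among the degree-$3$ vertices.
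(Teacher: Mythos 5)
Your proposal is correct, but it follows a genuinely different route from the paper. The paper splits on whether $\Lambda$ is bipartite: in the non-bipartite case it takes a shortest odd embedded cycle, observes that length $\geq 7$ would force an embedded $4$-cycle in $\Lambda^c$, so the cycle is an induced $5$-cycle, and then shows no further vertex can exist (a vertex outside meets the $5$-cycle in at most two non-adjacent vertices by triangle-freeness, and its three non-neighbours on the cycle produce a $4$-cycle in $\Lambda^c$); in the bipartite case it bounds each part by $3$ (four vertices in a part would give $K_4\subset\Lambda^c$) and shows at most one cross-edge can be missing, which yields $K_{2,3}$, $K^-_{2,3}$, $K_{3,3}$, $K^-_{3,3}$ directly. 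You instead translate the hypothesis into the statement that any two vertices have at most one common non-neighbour, deduce maximum degree $3$ and the degree-sum bounds $\deg(u)+\deg(v)\geq n-1$ over edges (using triangle-freeness to get $N(u)\cap N(v)=\emptyset$) and $\geq n-3$ over non-edges, force $n\in\{5,6\}$ via the handshake parity at $n=7$ and the degree cap at $n\geq 8$, and then enumerate by degree sequence; I checked the individual subcases you compress (the degree-$1$ case at $n=5$ does chase to $K^-_{2,3}$ via the common-non-neighbour condition applied to the pairs formed by the degree-$1$ vertex and each neighbour of its unique neighbour, the $k=4$ case at $n=6$ does give exactly four edges among the degree-$3$ vertices and hence a $C_4$ with antipodal attachments, and $K_{3,3}$ is indeed the unique triangle-free cubic graph on six vertices), and they all go through. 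The trade-off: the paper's bipartite/non-bipartite dichotomy is shorter and avoids any degree-sequence enumeration, while your quantitative argument is more mechanical, never invokes odd girth, and makes the bound $n\leq 6$ explicit as an intermediate statement, at the cost of a longer case analysis that leans on routine verifications (e.g.\ uniqueness of the triangle-free extremal graphs) left implicit in your write-up.
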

\begin{proof} Assume first that $\Lambda$ is not bipartite. Let $\gamma$ be the shortest odd embedded cycle in $\Lambda$. If $\gamma$ has length $\geq 7$, then $\gamma^c$ contains an embedded $4$-cycle, which is a contradiction. Consequently, $\gamma$ is an induced $5$-cycle. We will prove $\Lambda=\gamma$. Assume for contradiction that $\Lambda$ has a vertex $s$ outside $\gamma$. Since $\Lambda$ has no embedded $3$-cycles, $s$ is a neighbour of at most two (non-adjacent) vertices of $\gamma$. Then the remaining vertices of $\gamma$ together with $s$ form an embedded $4$-cycle in $\Lambda^c$, which is a contradiction.

Second, assume that $\Lambda$ is bipartite with parts $V,W$. Since $\Lambda^c$ has no embedded $4$-cycles, we have $|V|,|W|\leq 3$. 
It remains to prove that there is at most one edge in $\Lambda^c$ from $V$ to $W$. Suppose that there are two such edges $v_1w_1,v_2w_2$. Then they must intersect, since otherwise $v_1w_1w_2v_2$ would be an embedded $4$-cycle in $\Lambda^c$. Suppose without loss of generality $w_1=w_2$. Then $V=\{v_1,v_2,v_3\}$, since otherwise $w_1$ would be isolated in $\Lambda$. But then $v_1w_1v_2v_3$ is an embedded $4$-cycle in $\Lambda^c$, a contradiction. 
\end{proof}

We will verify Theorem~\ref{thm:4} gradually, starting from the simplest $\Lambda$. We set $\Delta=\Delta_\Lambda$.

\begin{rem} 
\label{rem:easygirth}
By \cite[Lem~6]{AS}, if $\Lambda$ is an edge labelled by $m$, then $\Delta_{\Lambda}$ has girth $\ge 2m$. In particular, $\Lambda$ satisfies the girth condition. 

Thus if $\Lambda$ is a $3$-cycle, then by Lemma~\ref{lem:link} all vertex links of $\Delta$ have girth $\geq 6$. Since $\Delta$ is simply connected (Lemma~\ref{lem:sc}), it satisfies the definition of a systolic complex \cite[page~9]{JS}. In particular, $\Delta$ is contractible \cite[Thm 4.1(1)]{JS} and all $\Delta_{\Lambda, \, st}$ have girth $\geq 6$ \cite[Prop~1.4]{JS}. 

If $\Lambda$ is a length $2$ linear graph, then $\Delta$ is bowtie free by \cite[Lem~4.1]{charney2004deligne} (or Theorem~\ref{thm:4cycle}), when both labels are equal to $3$,  and  \cite[Lem 11.5]{huang2024} otherwise. 
In particular, $\Lambda$ satisfies the girth condition. The contractibility of~$\Delta$ for non-spherical~$\Lambda$ follows from \cite[Thm~B]{CharneyDavis}.

If $\Lambda$ is a $4$-cycle, then by Remark~\ref{rem:adj} the relation $<_x$ on each vertex link of $\Delta$ described in Definition~\ref{def:ha} is a partial order. By the previous paragraph, $<_x$ is bowtie free. Thus, by Lemma~\ref{lem:sc}, $\Delta$ is an $\widetilde A_3$-like complex. By Theorem~\ref{thm:contractibleI}, $\Delta$ is contractible. By Lemma~\ref{lem:4-cycle}, we have that $\Lambda$ satisfies the girth condition.
\end{rem}

\begin{rem} 
\label{rem:rank3CAT0}
Let $\Lambda$ be a length $2$ linear graph with labels $m,n$ such that $m,n\geq 4$ or $m\geq 6$. Then equipping each triangle of $\Delta$ with the Euclidean metric of angles $\frac{\pi}{4},\frac{\pi}{2},\frac{\pi}{4}$, or $\frac{\pi}{6},\frac{\pi}{2},\frac{\pi}{3}$, respectively, $\Delta$ is a $\mathrm{CAT}(0)$ metric space.  Indeed, by Remark~\ref{rem:easygirth} the vertex links of $\Delta$ have girth $\geq 2\pi$. Thus by the Cartan--Hadamard theorem \cite[Thm~4.1(2)]{BridsonHaefliger1999}, we obtain that $\Delta$ is $\mathrm{CAT}(0)$.
\end{rem}

\begin{cor} 
\label{cor:girth8}
Let $\Lambda=stp$ be a length $2$ linear diagram with $m_{st}\geq 4$. Then $\Delta_{stp,\, st}$ has girth $\geq 8$.
\end{cor}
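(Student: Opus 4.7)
The goal is to rule out embedded 6-cycles in $\Delta_{stp,\,st}$; combined with the girth condition (Remark~\ref{rem:easygirth}) and bipartiteness, this yields girth $\geq 8$. Suppose for contradiction $\omega=x_1y_1x_2y_2x_3y_3$ is such a 6-cycle, with $x_i$ of type $\hat s$ and $y_i$ of type $\hat t$. The first observation is that $\omega$ has no chord: any chord would be of type $\hat s\hat t$, yielding an embedded $4$-cycle in $\Delta_{stp,\,st}$ contradicting the girth condition. I then split on $(m,n)=(m_{st},m_{tp})$.

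In the spherical subcases $(m,n)\in\{(4,3),(5,3)\}$ (types $B_3$ and $H_3$), I would use the partial order with $\hat p<\hat t<\hat s$, under which $\Delta^0$ is bowtie free and upward flag by Theorems~\ref{thm:tripleBn} and~\ref{thm:flag}. The vertices $y_1,y_2,y_3$ are pairwise upper bounded (each pair by some $x_i$), so the upward flag property yields a common upper bound $v$ of type $\hat s$. If $v=x_j$ for some $j$, then $v$ becomes a neighbour of a $y_i$ not adjacent to $x_j$ in $\omega$, producing a chord and contradicting the first paragraph. So $v\notin\{x_1,x_2,x_3\}$, and applying bowtie freeness to the bowtie $(y_1,v,y_2,x_2)$ yields $w$ satisfying $y_1,y_2\le w\le v,x_2$; since $w$ must be of type $\hat t$ or $\hat s$, same-type comparability forces $w$ to coincide with a distinct vertex of the bowtie, a contradiction.

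In the remaining cases ($m,n\ge4$ or $m\ge6$), Remark~\ref{rem:rank3CAT0} (with its Cartan--Hadamard proof extending to piecewise hyperbolic metrics) gives $\Delta_{stp}$ a $\mathrm{CAT}(0)$ metric with triangle angles $\pi/n,\pi/2,\pi/m$ at vertices of types $\hat s,\hat t,\hat p$. I take a minimal disc diagram $D\to\Delta_{stp}$ bounded by $\omega$; since $\hat p$ does not appear on $\partial D$, each boundary triangle has its $\hat p$ vertex in $\mathrm{int}\, D$, and two consecutive boundary edges at a boundary vertex must lie in distinct triangles (otherwise a triangle would carry two vertices of the same type). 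Hence each boundary vertex is in at least two triangles, so the sum of $D$-angles at $\partial D$ is at least $6\pi/n+3\pi$. By Theorem~\ref{thm:GB}, the Euler relation $F=2V_i+4$, and the $\mathrm{CAT}(0)$ link condition at interior vertices, this same sum is at most $4\pi-(2V_i+4)\delta$ where $\delta=\pi(1/2-1/m-1/n)\ge 0$. A direct computation shows these bounds are incompatible whenever $m\ge 4$: in the Euclidean subcase $(m,n)\in\{(4,4),(6,3)\}$ one has $n\le 4$ and $6\pi/n+3\pi>4\pi$; in the hyperbolic subcase, $V_i=0$ already gives $4\delta>\pi(n-6)/n$, which rearranges to $m>4n/(n+2)<4$, contradicting $m\ge 4$.

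The main difficulty lies in the $\mathrm{CAT}(0)$ case: establishing the lower bound on the boundary angle sum via the no-chord observation and the absence of $\hat p$ vertices on $\partial D$, and checking that the Gauss--Bonnet inequality fails for every $m\ge 4$ across both the Euclidean and hyperbolic subcases.
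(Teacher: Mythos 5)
Your proof is correct and takes essentially the same route as the paper: the spherical cases $(m_{st},m_{tp})=(4,3),(5,3)$ are settled by the bowtie-free and upward-flag properties of Theorems~\ref{thm:tripleBn} and~\ref{thm:flag} (you merely spell out the deduction the paper leaves implicit), and the remaining cases by an angle count on a minimal disc diagram, which is exactly the content of Lemma~\ref{lem:CAT(0)}(ii),(iii) that the paper cites after invoking Remark~\ref{rem:rank3CAT0}. The only deviation is cosmetic---you use the natural, possibly hyperbolic, angles $\pi/m_{st},\pi/2,\pi/m_{tp}$ and run Theorem~\ref{thm:GB} directly instead of the paper's fixed Euclidean angles, and your computation is sound (just note the phrase ``contradicting $m\ge 4$'' is misplaced: it is $m\ge 4>4n/(n+2)$ that forces $4\delta>\pi(n-6)/n$, which contradicts the disc-diagram bound).
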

\begin{proof} If $m_{tp}=3$ and $m_{st}=4$ or $5$, then the lemma follows from Theorems~\ref{thm:tripleBn} or~\ref{thm:flag}. If $m_{tp}\geq 4$ or $m_{st}\geq 6$, then, by Remark~\ref{rem:rank3CAT0}, we have that $\Delta_{stp}$ is $\mathrm{CAT}(0)$, with triangles of angles $\frac{\pi}{4},\frac{\pi}{2},\frac{\pi}{4}$ or $\frac{\pi}{6},\frac{\pi}{2},\frac{\pi}{3}$. Then the lemma follows from Lemma~\ref{lem:CAT(0)}(iii) or (ii).
\end{proof}

\begin{lem}
\label{lem:stpr}
Let $\Lambda$ be a length $3$ linear graph. Then $\Delta$ is bowtie free. In particular, $\Lambda$ satisfies the girth condition. If $\Lambda$ is not spherical, then $\Delta$ is contractible.
\end{lem}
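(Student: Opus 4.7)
My plan is to split the proof into the spherical and non-spherical cases of $\Lambda$. If $\Lambda$ is spherical (hence of type $A_4$, $B_4$, $F_4$, or $H_4$), then $\Delta$ is bowtie free directly by Theorem~\ref{thm:4cycle} applied to the linear subdiagram $\Lambda' = \Lambda$. If $\Lambda = s_1 s_2 s_3 s_4$ is non-spherical, I will verify the hypotheses of Theorem~\ref{thm:contractibleII} applied to $\Delta$ with the induced total order $\hat s_1 < \hat s_2 < \hat s_3 < \hat s_4$. This yields contractibility of $\Delta$ (the second assertion of the lemma) and, via Proposition~\ref{prop:434}, simultaneously gives bowtie freeness in the non-spherical case, completing the first assertion.

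The hypotheses of Theorem~\ref{thm:contractibleII} are: that $\Delta$ is simply connected, which follows from Lemma~\ref{lem:sc} since $|\Lambda| = 4 \geq 3$; that $<$ is a partial order on $\Delta^0$, which follows from Lemma~\ref{lem:poset structure} since $\Lambda$ is trivially admissible as a subdiagram of itself; and, for each $x \in \Delta^0$, that the upper set $\{y : y \geq x\}$ is bowtie free and upward flag, with the analogous statement for the lower set. By Lemma~\ref{lem:link}, $\lk(x, \Delta) \cong \Delta_{\Lambda \setminus s_i}$ for $x$ of type $\hat s_i$, so each upper/lower set reduces to data from a proper subdiagram. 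For $i \in \{1,4\}$ this subdiagram is length-2 linear; bowtie freeness of the associated link follows from Remark~\ref{rem:easygirth}, and the upward/downward flag property follows from Theorem~\ref{thm:tripleBn} or Theorem~\ref{thm:flag} in the spherical subcases (of type $A_3$, $B_3$, or $H_3$) and from a direct argument using the tree structure of rank-$2$ vertex links in the remaining subcases. For $i \in \{2,3\}$, the subdiagram $\Lambda \setminus s_i$ is disconnected, so $\lk(x,\Delta)$ decomposes as a join of a discrete set with the rank-$2$ Artin complex of the other component; the relevant upper/lower set then consists of $\{x\}$ together with vertices in the rank-$2$ factor, which is a bipartite graph of girth $\geq 2m \geq 6$, and bowtie freeness and flag follow from this structure.

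Finally, the girth condition follows from bowtie freeness: a $4$-cycle of type $\hat s \hat t \hat s \hat t$ in $\Delta_{\Lambda, st}$ for an edge $st$ of $\Lambda$ would form a bowtie requiring some $z$ with $\hat s \leq \type(z) \leq \hat t$; the only available types force $z$ to coincide with one of the four existing distinct vertices, a contradiction. The main obstacle in this plan is the verification of the upward and downward flag properties for the upper/lower sets when $\Lambda$ contains a non-spherical length-$2$ linear subdiagram, since the cited flag theorems apply only to the spherical rank-$3$ cases; in those situations one must reduce further to rank-$2$ Artin complexes (which, being simply connected and $1$-dimensional in the relevant sense, behave like trees) and carry out a direct combinatorial argument using join and bipartite structure, which I expect to be the most delicate step.
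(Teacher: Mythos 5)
Your spherical case and your treatment of the labels $434,435,535$ match the paper, but there are two genuine gaps in the non-spherical cases, and they are exactly where the difficulty of the lemma lives. First, the $353$ case: for $\Lambda=s_1s_2s_3s_4$ with $m_{12}=m_{34}=3$, $m_{23}=5$, condition (3) of Theorem~\ref{thm:contractibleII} for a vertex $x$ of type $\hat s_1$ requires the full link $\Delta_{s_2s_3s_4}$ (an $H_3$ Artin complex) to be upward flag for the order $\hat s_2<\hat s_3<\hat s_4$, in which the $5$-labelled edge sits between the two \emph{lowest} types. Theorem~\ref{thm:flag} only provides upward flagness when the $5$-edge joins the two highest types, and the same problem occurs, after reversing the order, for the downward set of a type $\hat s_4$ vertex. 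This missing flag property is not a routine verification: it is precisely the statement the paper could not obtain directly, and replacing it is the purpose of the entire critical-cycle analysis (Propositions~\ref{prop:Ia} and~\ref{prop:IIa}), Definition~\ref{def:353simplicial1}, and the $353$-square-complex machinery; the paper proves the $353$ case of this lemma via Theorem~\ref{lem:main example}, Lemma~\ref{lem:upward flag}, and Corollary~\ref{cor:contr}, not via Theorem~\ref{thm:contractibleII}.

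Second, when $\Lambda$ contains a non-spherical length-$2$ subdiagram (say $m_{st},m_{tp}\ge 4$, or a label $\ge 6$), your plan needs the rank-$3$ link $\Delta_{stp}$ to be upward/downward flag, and your proposed ``direct argument using the tree structure of rank-$2$ vertex links'' does not deliver this: flagness is a statement about three pairwise upper-bounded vertices in a $2$-dimensional complex and does not reduce to girth or bipartiteness of rank-$2$ links (only bowtie-freeness of these links is known, via Remark~\ref{rem:easygirth}). The paper avoids the issue entirely in this case: it deformation retracts $\Delta$ onto the relative Artin complex $\Delta'=\Delta_{\Lambda,\,stp}$ (Lemma~\ref{lem:dr}), shows $\Delta'$ is $\mathrm{CAT}(0)$ using the girth bounds of Corollary~\ref{cor:girth8} and Remark~\ref{rem:easygirth}, and then establishes bowtie-freeness of $\Delta$ by hand through a convexity argument for the links $C_v$ of type-$\hat r$ vertices (plus a citation to \cite{huang2023labeled} in the $m_{st}=3$, $m_{tp}\ge 6$ subcase). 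So contractibility in this case does not come from Theorem~\ref{thm:contractibleII} at all. Until you can either prove the flag properties you are assuming or reroute these two cases as the paper does, the proposal does not close.
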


\begin{proof}
If $\Lambda$ is spherical, then the lemma follows from Theorem~\ref{thm:4cycle}, so we can assume that $\Lambda$ is not spherical. If all proper induced subdiagrams of $\Lambda$ are spherical, then its consecutive edges have labels $353,434,435,$ or $535$. In the $353$ case, the lemma follows from Theorem~\ref{lem:main example}, Lemma~\ref{lem:upward flag}, and Corollary~\ref{cor:contr}.
In the remaining cases, the lemma follows from Proposition~\ref{prop:434} and Theorem~\ref{thm:contractibleII},  whose hypotheses are satisfied by Theorems~\ref{thm:tripleBn} and~\ref{thm:flag}.

Otherwise, $\Lambda=stpr$ contains a non-spherical subdiagram, say $\Lambda'=stp$.  
We have either $m_{st},m_{tp}\geq 4$, or $m_{st}\geq 6, m_{tp}=3$, or $m_{st}=3, m_{tp}\geq 6$. 
Let $\Delta'=\Delta_{\Lambda,\, \Lambda'}$. By Lemma~\ref{lem:dr} and Remark~\ref{rem:easygirth}, we have that $\Delta$ deformation retracts onto $\Delta'$, and so $\Delta'$ is simply connected.

Assume first $m_{st},m_{tp}\geq 4$. By Corollary~\ref{cor:girth8}, we have that $\Delta_{tpr,\, tp}$ has girth $\geq 8$.
By Lemma~\ref{lem:link}, the complex $\Delta_{tpr,\, tp}$ is the link of a vertex of type~$\hat s$ in $\Delta'=\Delta_{stpr,\, stp}$. The vertex of type $\hat p$ in $\Delta'$ has link $\Delta_{str, \, st}=\Delta_{st}$, which has girth $\geq 8$ by Remark~\ref{rem:easygirth}. Consequently, equipping each triangle of $\Delta'$ with the Euclidean metric of angles $\frac{\pi}{4},\frac{\pi}{2},\frac{\pi}{4}$, the complex $\Delta'$ is a locally $\mathrm{CAT}(0)$ metric space. By the Cartan--Hadamard theorem, 
we obtain that $\Delta'$ is $\mathrm{CAT}(0)$, in particular $\Delta$ is contractible.

By Lemma~\ref{lem:CAT(0)}(i), each induced $4$-cycle in $\Delta'$ has type $\hat s\hat p\hat s\hat p$ and has a common neighbour of type $\hat t$. This shows that there are no bowties without vertices of type~$\hat r$. 

Let $v$ be a vertex of type $\hat r$ and let $C_v=\lk (v,\Delta)\subset \Delta'$. 
We claim that $C_v$ is convex in $\Delta'$. By Lemma~\ref{lem:link}, we have that $C_v$ is isomorphic to $\Delta_{\Lambda'}$, which is connected. Thus to justify the claim we only need to prove that $C_v$ is locally convex \cite[Prop~4.14]{BridsonHaefliger1999}. Suppose first that we have $w\in C_v$ of type $\hat s$ with neighbours $u_1,u_3\in C_v$ connected in $\lk(w,\Delta')\cong \Delta_{tpr,\, tp}$ by a path $\gamma$ of length $d<\pi$, that is, $d\leq \frac{3\pi}{4}$. Note that if one of $u_1,u_3$ has type $\hat p$, then its neighbour on $\gamma$ is also a neighbour of $v$, so we can assume that both $u_1,u_3$ are of type $\hat t$ and $\gamma$ has only one interior vertex $u_2$, which has type~$\hat p$. By Remark~\ref{rem:easygirth} and Corollary~\ref{cor:4cycle} applied to $\Delta_{tpr}$ we have that $u_2\in C_v$, as desired. If $w$ has type~$\hat t$, then the local convexity condition is empty. If $w$ has type $\hat p$, then $C_v$ contains all the neighbours of $w$ by Remark~\ref{rem:adj}, and again there is nothing to prove. This justifies the claim.

Consider now a possible bowtie $vu_1u_2u_3$ with only $v$ of type $\hat r$ and $u_2>u_1,u_3$, where $\hat r>\hat p>\hat t>\hat s$. If $u_2$ lies in the simplicial span of the $\mathrm{CAT}(0)$ convex hull of $u_1,u_3$ in $\Delta'$, then by the convexity of $C_v$ we obtain that $v$ and $u_2$ are neighbours, as desired. If $u_2$ lies outside the span of the convex hull of $u_1,u_3$, then since $\st(u_2,\Delta')$ is convex in $\Delta'$, we obtain that the $\mathrm{CAT}(0)$ geodesic $\alpha=u_1u_3$ lies in the boundary of that star. Thus $\alpha$ consists of edges $u_1w$ and $wu_3$ with $w$ a neighbour of $u_2$ of type~$\hat t$.  Then $w\in C_v$ and so $v,u_2>w>u_1,u_3$, as desired.

Finally, consider a possible bowtie $v_1u_1v_2u_k$ with both $v_i$ of type $\hat r$. Since $C_{v_i}$ are convex, we have that $C=C_{v_1}\cap C_{v_2}$ is convex as well. In particular, $C$ is connected, and we denote by $u_1u_2\cdots u_k$ an edge-path from $u_1$ to $u_k$ in $C$ with the least number of edges. 
If some $u_i$ with $2\leq i\leq k$ has type $\hat p$, then $i=2$, since otherwise the $4$-cycle $v_1u_{i-2}v_2u_i$ in $\lk(u_{i-1},\Delta)$ would contradict Corollary~\ref{cor:4cycle} (because $\lk(u_{i-1},\Delta)$ satisfies the girth condition by Remark~\ref{rem:easygirth}). Analogously we have $i=k-1$ and so $k=3$. Thus $v_1,v_2>u_2>u_1,u_k$ are as desired. If there is no $u_i$ of type $\hat p$, there must be $u_i$ of type~$\hat t$. Then we have $i\leq 2$ since otherwise the $4$-cycle $v_1u_{i-2}v_2u_i$ in $\lk(u_{i-1},\Delta)\cong \Delta_{tpr}$, which is not a bowtie by Remark~\ref{rem:easygirth}, would allow us to replace $u_{i-1}$ by a vertex of type $\hat p$ and to proceed as before. Analogously we have $i\geq k-1$ and so $k=3$ and again $v_1,v_2>u_2>u_1,u_k$.

Consider now the second case, where $m_{st}\geq 6$ and $m_{tp}=3$.  
By Remark~\ref{rem:easygirth}, the vertex links in $\Delta'$ of the vertices of types $\hat s$ and $\hat p$ have girth $6$ and~$2m_{st}\geq 12$, respectively. Consequently, equipping each triangle with the Euclidean metric of angles $\frac{\pi}{3},\frac{\pi}{2},\frac{\pi}{6}$, the complex $\Delta'$ is a $\mathrm{CAT}(0)$ metric space as before. Again, by Lemma~\ref{lem:CAT(0)}(i), there are no bowties without vertices of type~$\hat r$. The proof of the convexity of $C_v$ and that there are  no bowties with vertices of type~$\hat r$ is the same as before.

Finally, the case $m_{st}=3$ and $m_{tp}\geq 6$ 
follows from \cite[Cor~9.14 and Lem~6.14]{huang2023labeled}.
\end{proof}

\begin{cor}
\label{cor:5}
If $\Lambda$ is a $5$-cycle, then $\Delta$ is contractible, and $\Lambda$ satisfies the girth condition.
\end{cor}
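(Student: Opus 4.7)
The plan is to show that $\Delta=\Delta_\Lambda$ is an $\widetilde A_4$-like complex in the sense of Definition~\ref{def:ha}. Once this is done, contractibility is immediate from Theorem~\ref{thm:contractibleI}, and the girth condition falls out of Lemma~\ref{lem:4-cycle}. Label the vertices of $\Lambda$ cyclically as $s_1,\dots,s_5$, so that the edges of $\Lambda$ are exactly $s_is_{i+1}$ (indices mod $5$), and endow the set of types $S=\{\hat s_1,\dots,\hat s_5\}$ with the induced cyclic order.

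To verify the $\widetilde A_4$-like structure, simple connectedness of $\Delta$ follows from Lemma~\ref{lem:sc} because $|\Lambda|=5\ge 3$. Next, fix a vertex $x\in\Delta^0$ of type $\hat s_i$. By Lemma~\ref{lem:link}, $\lk(x,\Delta)$ is type-preservingly isomorphic to $\Delta_{\Lambda\setminus\{s_i\}}$. Removing one vertex from a $5$-cycle always yields a length $3$ linear subdiagram, with consecutive vertices $s_{i+1},s_{i+2},s_{i+3},s_{i+4}$ (cyclic indices). The key observation is that the order on $S\setminus\{\hat s_i\}$ induced from the cyclic order on $S$ is precisely $\hat s_{i+1}<\hat s_{i+2}<\hat s_{i+3}<\hat s_{i+4}$, and this matches the natural linear order along the path $\Lambda\setminus\{s_i\}$. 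By Lemma~\ref{lem:stpr} applied to $\Lambda\setminus\{s_i\}$ as its own Coxeter diagram (so admissibility is trivial), the induced relation on $\Delta_{\Lambda\setminus\{s_i\}}^0$ is a partial order that is bowtie free. Transporting via the isomorphism $\lk(x,\Delta)\cong \Delta_{\Lambda\setminus\{s_i\}}$, the relation $<_x$ on $\lk(x,\Delta)^0$ is a partial order that is bowtie free. This verifies conditions (2) and (3) of Definition~\ref{def:ha}, so $\Delta$ is $\widetilde A_4$-like.

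Theorem~\ref{thm:contractibleI} now yields that $\Delta$ is contractible. For the girth condition, fix any edge $s_is_{i+1}$ of $\Lambda$. The relative Artin complex $\Delta_{\Lambda,\, s_is_{i+1}}$ is bipartite with parts of type $\hat s_i$ and $\hat s_{i+1}$, so its girth is at least $4$. A $4$-cycle in $\Delta_{\Lambda,\, s_is_{i+1}}$ has type $\hat s_i\hat s_{i+1}\hat s_i\hat s_{i+1}$ in $\Delta$, and it is automatically induced in $\Delta$ since two distinct vertices of the same type are never neighbours in a type-$S$ simplicial complex. By Lemma~\ref{lem:4-cycle}, $\Delta$ contains no embedded $4$-cycle of type $\hat s_i\hat s_{i+1}\hat s_i\hat s_{i+1}$. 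Hence $\Delta_{\Lambda,\, s_is_{i+1}}$ contains no $4$-cycle, and its girth is $\ge 6$.

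There is no serious obstacle: the substance is absorbed in Lemma~\ref{lem:stpr} (which packages the full $353$-machinery for non-spherical length $3$ linear diagrams) and in Theorem~\ref{thm:contractibleI} (the Garside/Bestvina contractibility criterion for $\widetilde A_n$-like complexes). The only point that deserves care is ensuring that the cyclic order on $S$, restricted to $S\setminus\{\hat s_i\}$, agrees with the natural linear order along the path $\Lambda\setminus\{s_i\}$, which is immediate from the cyclic labeling chosen at the start.
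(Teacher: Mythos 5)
Your proof is correct and follows essentially the same route as the paper: verify that $\Delta$ is an $\widetilde A_4$-like complex via Lemma~\ref{lem:sc}, Lemma~\ref{lem:link}, and Lemma~\ref{lem:stpr}, then conclude with Theorem~\ref{thm:contractibleI} and Lemma~\ref{lem:4-cycle}. The only cosmetic difference is that the paper cites Remark~\ref{rem:adj} for the partial order condition in the links, while you obtain it implicitly from Lemma~\ref{lem:stpr} (via Lemma~\ref{lem:poset structure}), which is equally valid.
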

\begin{proof}
By Theorem~\ref{thm:contractibleI} and Lemma~\ref{lem:4-cycle}, it is enough to show that $\Delta$ is an $\widetilde A_4$-like complex. By Lemma~\ref{lem:sc}, it suffices to prove
that the vertex links of $\Delta$ satisfy the partial order condition and are bowtie free. By Lemma~\ref{lem:link}, each such link is isomorphic to $\Delta_{\Lambda'}$, where $\Lambda'$ is a linear diagram of length $3$. Thus the partial order condition follows from Remark~\ref{rem:adj}. Bowtie freeness follows from Lemma~\ref{lem:stpr}.
\end{proof}

\begin{defin}
\label{def:subdivision}
Consider a decomposition of the vertex set of $\Lambda$ into a disjoint union $\bigsqcup_i S_i$. Let $\Delta^*$ be the subdivision of $\Delta$ obtained by subdividing each simplex $\sigma$ of type $\widehat S_i$ into a cone over $\partial \sigma$ with apex at the barycentre of $\sigma$, and by subdividing each join $\ast_i\sigma_i$ of simplices of type $\widehat S_i$ into the join of the subdivisions of $\widehat S_i$. We call~$\Delta^*$ the \emph{$\mathcal S$-subdivision}, where the collection $\mathcal S$ is obtained from $\{S_i\}_i$ be removing all the elements of size $1$. For example, if $\mathcal S=\{\{s,t\}\}$, then $\Delta^*$ is obtained from $\Delta$ by subdividing each edge of type $\hat s \hat t$, and each simplex containing it, into two. We denote this, shortly, \emph{$\{s,t\}$-subdivision}.
\end{defin}

\begin{lem}
\label{lem:star}
Let $\Lambda=K_{1,3}$ and let $L\subset \Lambda$ be a length~$2$ linear subdiagram. Then $\Delta_{\Lambda,\, L}$ is bowtie free. In particular, $\Lambda$ satisfies the girth condition. If $\Lambda$ is not spherical, then $\Delta$ is contractible.
\end{lem}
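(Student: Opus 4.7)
Denote the central vertex of $K_{1,3}$ by $r$ and the leaves by $s,t,p$. Any length-$2$ linear subdiagram of $\Lambda = K_{1,3}$ contains $r$; up to relabeling the leaves we may take $L = srt$, which is admissible in the sense of Definition~\ref{def:admissible}. The girth condition on each edge $e$ of $\Lambda$ is a consequence of bowtie-freeness of $\Delta_{\Lambda,L}$ for some length-$2$ linear subdiagram $L$ containing $e$: an embedded $4$-cycle in the bipartite graph $\Delta_{\Lambda,e}$ is exactly a bowtie in $\Delta_{\Lambda,L}$ whose four vertices have only the two types of the endpoints of $e$, and a direct type analysis shows that no vertex of any type in $L$ can play the role of the resolving intermediate. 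So the content is (a) bowtie-freeness of $\Delta_{\Lambda,L}$ and, when $\Lambda$ is non-spherical, (b) contractibility of $\Delta_\Lambda$. I proceed by case analysis on the sphericity of $\Lambda$ and of its three length-$2$ sub-triangles $srt,srp,trp$.

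If $\Lambda$ is spherical, then $\Lambda = D_4$ (labels all equal to $3$), and (a) is immediate from Theorem~\ref{thm:4cycle}; (b) is not claimed. Suppose now $\Lambda$ is non-spherical, so at least one of the three labels is $\geq 4$. In the first subcase some length-$2$ sub-triangle, say $srt$, is non-spherical. Then by Remark~\ref{rem:rank3CAT0} the Artin complex $\Delta_{srt}$ is $\mathrm{CAT}(0)$, hence contractible, and by Lemma~\ref{lem:dr} with $T=\{p\}$ the complex $\Delta_\Lambda$ deformation retracts onto $\Delta_{\Lambda,srt}$. I would equip this $2$-dimensional complex with a piecewise Euclidean metric, with angle $\tfrac{\pi}{2}$ at every type-$\hat r$ corner, so that the complete bipartite type-$\hat r$ link contributes length exactly $2\pi$; the remaining angles at $\hat s$ and $\hat t$ are tuned so that the girth bounds on the type-$\hat s$ and type-$\hat t$ links — derived from Corollary~\ref{cor:girth8}, the bowtie-freeness of $3$-vertex Artin complexes via Theorems~\ref{thm:tripleBn} and~\ref{thm:flag}, and/or Remark~\ref{rem:rank3CAT0} in the non-spherical sub-triangle cases — suffice to make $\Delta_{\Lambda,srt}$ $\mathrm{CAT}(0)$. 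Cartan--Hadamard then gives contractibility, and Lemma~\ref{lem:CAT(0)}(i) gives bowtie-freeness.

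The main obstacle is the remaining \emph{tricky case}: $\Lambda$ is non-spherical but all three length-$2$ sub-triangles are spherical, which forces the label set $\{3,3,m\}$ with $m\in\{4,5\}$. Here the Lemma~\ref{lem:dr} reduction fails since removing any leaf yields a spherical (hence non-contractible) length-$2$ Artin complex. My plan is to apply the $\mathcal S$-subdivision of Definition~\ref{def:subdivision} with $\mathcal S = \{\{s,t\}\}$, subdividing each edge of type $\hat r\hat p$ of $\Delta_\Lambda$ (and the simplices containing it) to introduce a new vertex type, and then exhibit the resulting subdivided complex $\Delta^*$ as an $\widetilde A_n$-like complex in the sense of Definition~\ref{def:ha} under a suitable cyclic order on its five vertex types. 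The partial-order property in each vertex link of $\Delta^*$ would follow from Remark~\ref{rem:adj} and Lemma~\ref{lem:transitive}, while the required bowtie-freeness in each link would reduce, via Lemma~\ref{lem:link}, to bowtie-freeness of the Artin complex of a spherical three-vertex sub-diagram — which is Theorem~\ref{thm:4cycle} applied to $A_3,B_3,H_3$. Theorem~\ref{thm:contractibleI} would then give contractibility of $\Delta^*$, hence of $\Delta_\Lambda$, and bowtie-freeness of $\Delta_{\Lambda,L}$ would follow by restricting to an appropriate slice of $\Delta^*$. The hardest step will be identifying a cyclic order on the five vertex types of $\Delta^*$ compatible with the partial-order requirement in every link, since the naïve cyclic orderings on the four original vertex types of $\Delta_\Lambda$ all fail: for any cyclic order on $\{\hat s,\hat r,\hat t,\hat p\}$, removing some leaf leaves $\hat r$ at the end of the induced linear order, while the corresponding link is a length-$2$ path with $r$ in the middle.
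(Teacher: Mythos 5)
Your proposal diverges substantially from the paper's proof, and the decisive case is not actually done. The paper does not argue internally at all: contractibility of $\Delta$ is quoted from \cite[Thm~10.3]{huang2024} together with \cite[Thm~3.1]{godelle2012k}, and bowtie-freeness of $\Delta_{\Lambda,\,L}$ is obtained by passing to the $\{s,s'\}$-subdivision of $\Delta$ (where $s\in L$ and $s'$ is the leaf outside $L$), identifying the types $\hat s$ and $\hat s'$, ordering the types linearly as $\hat s=\hat s'<m<\hat t<\hat p$, quoting \cite[Prop~11.34]{huang2024} for bowtie-freeness of the subdivided complex, and then checking by a short type analysis that the resolving vertex cannot have type $m$, hence lies in $\Delta_{\Lambda,\,L}$. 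In particular no cyclic order and no $\widetilde A_n$-like structure is needed; the hard content is outsourced to \cite{huang2024}, which is exactly the content your plan tries to reconstruct.

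Concretely, your case where some length-$2$ sub-path is non-spherical is salvageable but under-specified: with angle $\pi/2$ forced at the central type, the two leaf angles sum to $\pi/2$, so you must land on one of the two patterns of Lemma~\ref{lem:CAT(0)} (otherwise part~(i) does not apply), and when the non-sphericity comes from a label $\geq 6$ adjacent to a label $3$ you need girth $\geq 12$ (not just the $\geq 8$ of Corollary~\ref{cor:girth8}) for the corresponding rank-$3$ relative complex; this does follow from Remark~\ref{rem:rank3CAT0} plus Lemma~\ref{lem:CAT(0)}(ii), but you neither state nor prove it, and "tuning angles" alone does not close it. The genuine gap is your "tricky case" of labels $\{3,3,4\}$ or $\{3,3,5\}$, where both conclusions (contractibility and bowtie-freeness) remain unproven: you explicitly leave open the choice of cyclic order on the five types, your description of the subdivision contradicts Definition~\ref{def:subdivision} (the $\{s,t\}$-subdivision cuts edges of type $\hat s\hat t$, not $\hat r\hat p$), and the claimed reduction of the link conditions via Lemma~\ref{lem:link} to Theorem~\ref{thm:4cycle} for $A_3,B_3,H_3$ is not correct: the relevant links in the subdivided complex are subdivided spherical rank-$3$ Artin complexes with two types identified (and the link of a centre-type vertex is not an irreducible Artin complex at all), so their bowtie-freeness with respect to the induced order is precisely the kind of statement the paper imports from \cite[Prop~11.34]{huang2024}, not something Theorem~\ref{thm:4cycle} gives. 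Until that case is handled, the lemma is not proved.
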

\begin{proof}
The contractibility of $\Delta$ follows from \cite[Thm~10.3]{huang2024} and \cite[Thm~3.1]{godelle2012k}. 
Suppose that $\alpha$ is a bowtie in $\Delta_{\Lambda,\, L}$. Let $L=stp$ and let $s'$ be the remaining vertex of $\Lambda$. We define $\Delta^*$ to be the $\{s,s'\}$-subdivision of $\Delta$. Let $m$ be the type of the new vertices, and identify the types $\hat s$ and $\hat s'$. We order the types so that $\hat s= \hat s'<m<\hat t<\hat p$, which gives rise to a relation on the vertex set of $\Delta^*$ by Definition~\ref{def:order}. By \cite[Prop~11.34]{huang2024}, we have that $\Delta^*$ is bowtie free. Thus there is a vertex $x$ of $\Delta^*$ that is a common neighbour of $\alpha$. But $x$ cannot have type $m$ since then $\alpha$ would have two equal vertices of type $\hat s$. Thus $x$ belongs to $\Delta_{\Lambda,\, L}$, as desired.
\end{proof}

By Lemma~\ref{lem:star} and \cite[Lem 6.14 and Prop 6.17]{huang2023labeled}, we have the following.
\begin{cor}
\label{cor:star}
Let $\Lambda=K_{1,3}$ with parts $\{s_1,s_2,s_3\}$ and $\{t\}$. Then each induced cycle of type $\hat s_1\hat s_2\hat s_1\hat s_2 $ or $\hat s_1\hat s_2 \hat s_1\hat s_3$ in $\Delta$ has a common neighbour of type $\hat t$.
\end{cor}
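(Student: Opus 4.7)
The plan is to handle the two cycle types separately, reducing the mixed case to the uniform one via Lemma~\ref{lem:4cycle}.

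For an induced embedded $4$-cycle $\omega=x_1x_2x_3x_4$ of type $\hat s_1\hat s_2\hat s_1\hat s_2$, I would work in the relative Artin complex $\Delta_{\Lambda,L}$ for the admissible length-$2$ linear subdiagram $L=s_1ts_2\subset K_{1,3}$. With the total order $\hat s_1<\hat t<\hat s_2$, Lemma~\ref{lem:poset structure} equips $\Delta_{\Lambda,L}^0$ with a weakly graded partial order, and $\omega$ is a bowtie with $\{x_1,x_3\}$ below and $\{x_2,x_4\}$ above. Lemma~\ref{lem:star} furnishes a resolving vertex $z$; a short type analysis (using that same-type vertices in an Artin complex are never neighbours, and that $z\in\{x_i\}$ would force a chord in $\omega$) pins down $\type(z)=\hat t$.

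For an induced embedded $4$-cycle $\omega=x_1x_2x_3x_4$ of type $\hat s_1\hat s_2\hat s_1\hat s_3$, I would first apply Lemma~\ref{lem:4cycle} to the cyclic relabelling $x_2x_3x_4x_1$ of types $\hat s_2,\hat s_1,\hat s_3,\hat s_1$ (whose first and third types are distinct) to obtain a vertex $y$ of type $\hat s_2$ that is a common neighbour of $x_1,x_3,x_4$. Then $x_1x_2x_3y$ is an embedded $4$-cycle of type $\hat s_1\hat s_2\hat s_1\hat s_2$; it is induced because $x_1\not\sim x_3$ (from inducedness of $\omega$) and $x_2\not\sim y$ (same-type vertices are non-adjacent), while $y\neq x_2$ since otherwise $x_2\sim x_4$ would chord $\omega$. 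The first case then supplies a common neighbour $z$ of type $\hat t$ for $x_1,x_2,x_3,y$.

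The main obstacle will be the final step: upgrading $z$ from a neighbour of $y$ to a neighbour of the original $x_4$. My plan is to invoke Lemma~\ref{lem:star} a second time, now for the other length-$2$ linear subdiagram $L'=s_1ts_3\subset K_{1,3}$. Assuming $z\not\sim x_4$, the $4$-cycle $x_1x_4x_3z$ is embedded and induced in $\Delta_{\Lambda,L'}$, of type $\hat s_1\hat s_3\hat s_1\hat t$; with the order $\hat s_1<\hat t<\hat s_3$ it is a bowtie with $\{x_1,x_3\}$ below and $\{z,x_4\}$ above. Bowtie-freeness of $\Delta_{\Lambda,L'}$ yields a resolving $w$ whose type, forced by $w\le z$ and $w\le x_4$, is either $\hat s_1$ (ruled out as before) or $\hat t$; in the latter case $w=z$ by the same-type argument, so $z\le x_4$, contradicting $z\not\sim x_4$.
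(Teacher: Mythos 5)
Your proof is correct, and it takes a route that the paper does not spell out: the paper's ``proof'' of this corollary is a one-line citation of Lemma~\ref{lem:star} together with \cite[Lem 6.14 and Prop 6.17]{huang2023labeled}, so both cases are delegated to external results, whereas you derive everything from ingredients internal to the paper. For the $\hat s_1\hat s_2\hat s_1\hat s_2$ case your mechanism is the natural one (and presumably the content of the cited Lem~6.14): the subdiagram $s_1ts_2$ is an admissible linear subdiagram of $K_{1,3}$, Lemma~\ref{lem:poset structure} gives the partial order $\hat s_1<\hat t<\hat s_2$ on $\Delta_{\Lambda,\,s_1ts_2}^0$, the cycle is a bowtie, Lemma~\ref{lem:star} resolves it, and the type analysis forces the resolving vertex to have type $\hat t$ and to be strictly comparable, hence adjacent, to all four vertices (one cosmetic slip: if $z$ equalled some $x_i$, the contradiction is that two same-type vertices of the cycle coincide, violating embeddedness, rather than a literal chord — the argument is unaffected). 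For the mixed case $\hat s_1\hat s_2\hat s_1\hat s_3$, your reduction via Lemma~\ref{lem:4cycle} to the first case, followed by a second bowtie argument in $\Delta_{\Lambda,\,s_1ts_3}$ to upgrade $z$ to a neighbour of $x_4$, replaces the citation of Prop~6.17; all the needed checks go through (admissibility of both length-$2$ subdiagrams, embeddedness and inducedness of the auxiliary cycles $x_1x_2x_3y$ and $x_1x_4x_3z$, and the exclusion of the types $\hat s_1$ and $\hat s_3$ for the resolving vertex $w$, so that $w=z$ and $z<x_4$). What your version buys is a self-contained proof of the corollary modulo Lemmas~\ref{lem:star}, \ref{lem:poset structure} and \ref{lem:4cycle}, at the cost of being longer than the paper's citation.
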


\begin{lem}
\label{lem:Kmn}
If $\Lambda=K_{k,l}$ with $k,l\geq 2$, then $\Delta$ is contractible and $\Lambda$ satisfies the girth condition. 
\end{lem}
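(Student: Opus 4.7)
The plan is to argue by strong induction on $k+l$. The base case $k=l=2$ is immediate: the diagram $\Lambda$ is a Coxeter $4$-cycle, so by Remark~\ref{rem:easygirth} the complex $\Delta_\Lambda$ is an $\widetilde A_3$-like complex, hence contractible by Theorem~\ref{thm:contractibleI}, and Lemma~\ref{lem:4-cycle} supplies the girth condition.

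For the inductive step, assume the lemma for all $K_{k',l'}$ with $k',l'\geq 2$ and $k'+l'<k+l$, and by symmetry take $l\geq 3$. Let $\Lambda_0=\{s_1,s_2,t_1,t_2\}\subset\Lambda$ be a $4$-cycle subdiagram. First I would apply Lemma~\ref{lem:dr}(2) with $\Lambda'=\Lambda$ and $T=\Lambda\setminus\Lambda_0$ to deformation retract $\Delta_\Lambda$ onto $\Delta'=\Delta_{\Lambda,\Lambda_0}$: for each non-empty $R\subseteq T$ the induced subdiagram $\Lambda\setminus R$ is $K_{k-a,l-b}$ with $k-a,l-b\geq 2$ and $(k-a)+(l-b)<k+l$, so $\Delta_{\Lambda\setminus R}$ is contractible by the inductive hypothesis. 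It then suffices to show that $\Delta'$ is contractible and to deduce the girth condition from the resulting structure.

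Next I would verify that $\Delta'$ is an $\widetilde A_3$-like complex (Definition~\ref{def:ha}) with the cyclic order $\hat s_1<\hat t_1<\hat s_2<\hat t_2$ on its type set. Simple connectedness is Lemma~\ref{lem:sc}. For each vertex $x$ of type $\hat v\in\Lambda_0$, Lemma~\ref{lem:link} identifies $\lk(x,\Delta')$ with the relative Artin complex $\Delta_{\Lambda\setminus\{v\},\Lambda_0\setminus\{v\}}$, where $\Lambda_0\setminus\{v\}$ is a length-$2$ linear subdiagram of the smaller bipartite diagram $\Lambda\setminus\{v\}\in\{K_{k-1,l},K_{k,l-1}\}$. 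I would then check that $<_x$ on this link is a bowtie-free partial order by invoking the inductive girth condition on $\Lambda\setminus\{v\}$ together with Corollary~\ref{cor:4cycle} and Lemma~\ref{lem:4cycle}: a potential failure of transitivity, or a bowtie, can be chased through these lemmas to produce an induced $4$-cycle of type $\hat s\hat t\hat s\hat t$ in $\Delta_{\Lambda\setminus\{v\},st}$ for a suitable edge $st$ of $\Lambda\setminus\{v\}$, contradicting the inductive girth condition. Theorem~\ref{thm:contractibleI} then gives contractibility of $\Delta'$, and hence of $\Delta_\Lambda$.

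The main obstacle will be establishing transitivity of $<_x$ on the vertex links when $k,l\geq 3$: in that regime the complement $\Lambda\setminus\{v_1,v_2\}$ for two vertices of $\Lambda_0$ is still connected, so Remark~\ref{rem:adj} and Lemma~\ref{lem:transitive} do not apply directly, and the inductive girth condition has to be invoked carefully to exclude the configurations witnessing non-transitivity. Once $\Delta'$ has been identified as $\widetilde A_3$-like, the girth condition for $\Lambda$ follows: any embedded $4$-cycle in $\Delta_{\Lambda,st}$ for an edge $st\in\Lambda$ has alternating types $\hat s,\hat t$, and since $k,l\geq 2$ it sits inside $\Delta_{\Lambda,\Lambda_0}$ for some $4$-cycle subdiagram $\Lambda_0\supset\{s,t\}$; Lemma~\ref{lem:4-cycle} applied to this $\widetilde A_3$-like complex then rules out such an induced cycle.
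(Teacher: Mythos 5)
Your overall skeleton (retract $\Delta$ onto $\Delta'=\Delta_{\Lambda,\Lambda_0}$ via Lemma~\ref{lem:dr}(2) and the inductive hypothesis, then show $\Delta'$ is $\widetilde A_3$-like and invoke Theorem~\ref{thm:contractibleI} and Lemma~\ref{lem:4-cycle}) is coherent, but the step you defer — verifying Definition~\ref{def:ha}(2),(3) for the links of $\Delta'$ — is exactly the hard content, and it does not follow from your inductive hypothesis. The link of a vertex of type $\hat s_1$ in $\Delta'$ is $\Delta_{\Lambda\setminus\{s_1\},\,t_1s_2t_2}$ with $\Lambda\setminus\{s_1\}=K_{k-1,l}$. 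For transitivity of $<_x$ you need: a type-$\hat t_1$ vertex adjacent to a type-$\hat s_2$ vertex adjacent to a type-$\hat t_2$ vertex forces the outer two to be adjacent. Since $K_{k-1,l}\setminus\{s_2\}=K_{k-2,l}$ is connected when $k\geq 3$, Remark~\ref{rem:adj} and Lemma~\ref{lem:transitive} do not apply, and on the group level the statement amounts to a nontrivial double-coset inclusion ($A_{\hat s_2}\subset A_{\hat t_1}A_{\hat t_2}$ inside $A_{K_{k-1,l}}$) for which you give no argument. Worse, bowtie-freeness of this link includes bowties whose extreme vertices have types $\hat t_1$ and $\hat t_2$; these correspond to embedded $4$-cycles of type $\hat t_1\hat t_2\hat t_1\hat t_2$ needing a common neighbour of type $\hat s_2$. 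Since $t_1t_2$ is \emph{not} an edge of $\Lambda$, the inductive girth condition says nothing about such cycles (they do exist), so your claim that a bowtie can be "chased" to a violation of the girth condition is not available. Statements of exactly this strength are what the paper proves separately and laboriously in small cases (Corollary~\ref{cor:star} for $K_{1,3}$, Lemma~\ref{lem:Kmn2} for $K_{2,3}$) — and Lemma~\ref{lem:Kmn2} is deduced \emph{using} Lemma~\ref{lem:Kmn}, so trying to feed such facts into an induction proving Lemma~\ref{lem:Kmn} is circular in spirit.

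This is also why the paper takes a different route: it does not work with the relative complex over a $4$-cycle subdiagram, but with the $\{\{s_2,\ldots,s_k\},\{t_2,\ldots,t_l\}\}$-subdivision $\Delta^*$ and the subcomplex on the types $\hat s_1,\hat t_1,m,m'$, where $m,m'$ are barycentric types corresponding to the much smaller parabolics $A_{\{s_1,t_1,\ldots,t_l\}}$ and $A_{\{s_1,\ldots,s_k,t_1\}}$. That this subcomplex is $\widetilde A_3$-like is not proved internally either: it is imported from the companion paper (\cite[Lem~11.10]{huang2024}, resting on results about $D_n$-type Artin groups), and contractibility of $\Delta$ is likewise cited from \cite[Lem~11.11]{huang2024}. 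Your proposal has no substitute for this external input, so as it stands the key step — the partial order and bowtie-freeness of the links of $\Delta_{\Lambda,\Lambda_0}$ — is a genuine gap rather than a routine verification.
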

\begin{proof}
Let $\{s_1,\ldots, s_k\},\{t_1,\ldots, t_l\}$ be the parts of $\Lambda$. To prove the girth condition for the edge $s_1t_1$, consider the $\left\{\left\{s_2,\ldots s_k\right\},\left\{t_2,\ldots,t_l\right\}\right\}$-subdivision $\Delta^*$ of $\Delta$. Let $m,m'$ be the types of the barycentres of the simplices of types $s_2\cdots s_k,t_2\cdots t_l$. By \cite[Lem~11.10]{huang2024} (which relies on \cite[Theorem 1.4]{huang2024Dn}), the subcomplex $\Delta^*_{s_1t_1mm'}$ of $\Delta^*$ spanned on the vertices of types $\hat s_1, \hat t_1, m,$ and $m'$ is an $\widetilde A_3$-like complex. Thus, by Lemma~\ref{lem:4-cycle}, the complex~$\Delta^*$ has no cycles of type $\hat s_1\hat t_1\hat s_1\hat t_1$.
The contractibility of~$\Delta$ can be deduced from \cite[Lem 11.11]{huang2024}.
\end{proof}

\begin{lem}
\label{lem:Kmn2} Let $\Lambda=K_{2,3}$ with parts $\{s_1,s_2,s_3\}$ and $\{t_1,t_2\}$. Then each induced cycle of type $\hat s_1\hat s_2\hat s_1\hat s_2$ in $\Delta$ has common neighbours of type $\hat t_1$ and $\hat t_2$.
\end{lem}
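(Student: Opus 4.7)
My plan is to follow the strategy of Corollary~\ref{cor:star}, combining a reduction argument with a subdivision-based construction of a common neighbour.

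First, I would reduce the problem to finding a common neighbour of just one of the two types $\hat t_1,\hat t_2$. Indeed, if $y$ of type $\hat t_i$ is a common neighbour of $\omega$, then by Lemma~\ref{lem:link}, $\lk(y,\Delta)\cong\Delta_{K_{2,3}\setminus\{t_i\}}=\Delta_{K_{1,3}}$, whose singleton part is $\{t_{3-i}\}$. The cycle $\omega$ lies in $\lk(y,\Delta)$ and remains induced there, so Corollary~\ref{cor:star} applied to this $\Delta_{K_{1,3}}$ produces a common neighbour of type $\hat t_{3-i}$. Hence it suffices to establish existence of a common neighbour of one of the two types.

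Second, to produce such a common neighbour, I would consider a subdivision of $\Delta$ and invoke a bowtie-free statement from \cite{huang2024}. Following the template of Lemma~\ref{lem:star}'s proof, I would take the $\{\{s_2,s_3\},\{t_2\}\}$-subdivision $\Delta^*$ of $\Delta$ (which, after dropping the singleton $\{t_2\}$, coincides with the $\{\{s_2,s_3\}\}$-subdivision), introducing a barycentre type $m$ for the subdivided $\hat s_2\hat s_3$-edges. I would then apply a bowtie-free statement analogous to \cite[Prop~11.34]{huang2024}, used for $K_{1,3}$ in the proof of Lemma~\ref{lem:star}, adapted to $K_{2,3}$ with an appropriate total ordering of types, so as to produce a common neighbour $x$ of $\omega$ in $\Delta^*$.

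Finally, I would analyze the possible types of $x$. Types $\hat s_1$ and $\hat s_2$ are ruled out immediately, since $\omega$ contains two distinct vertices of each. For $x$ of type $\hat s_3$, the cycle $\omega$ would sit inside $\lk(x,\Delta)\cong\Delta_{K_{2,2}}$; but this link is an $\widetilde A_3$-like complex by Remark~\ref{rem:easygirth}, and by Lemma~\ref{lem:4-cycle} it contains no embedded cycle of type $\hat s_1\hat s_2\hat s_1\hat s_2$, a contradiction. For $x$ of barycentre type $m$, corresponding to the midpoint of some $\hat s_2\hat s_3$-edge $ab$ with $a$ of type $\hat s_2$, the only neighbour of $x$ in $\Delta^*$ of type $\hat s_2$ is $a$ itself, which forces $x_2=x_4=a$ and contradicts $\omega$ being embedded. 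Hence $x$ has type $\hat t_1$ or $\hat t_2$, completing the proof by the reduction in the first step. The main obstacle is the second step: establishing and applying the correct bowtie-free result from \cite{huang2024} for the subdivided $\Delta_{K_{2,3}}$, extending the $K_{1,3}$ techniques of \cite[Prop~11.34]{huang2024} to accommodate the richer combinatorial structure coming from the second vertex in the smaller part of the bipartition of $K_{2,3}$.
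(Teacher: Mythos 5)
Your overall skeleton (subdivide along $\{s_2,s_3\}$, produce one common neighbour, then bootstrap to the other type via Corollary~\ref{cor:star} applied in its link) points in the right direction, and your reduction step is sound and is in fact how the paper finishes. But the proposal has a genuine gap exactly where you flag "the main obstacle": there is no bowtie-free result for (a subdivision of) $\Delta_{K_{2,3}}$ to invoke, and one should not expect a straightforward analogue of \cite[Prop~11.34]{huang2024}. That proposition works for $K_{1,3}$ because the $\{s,s'\}$-subdivision effectively collapses the diagram to a linear-type situation ($\hat s=\hat s'<m<\hat t<\hat p$) carrying a global partial order in which bowtie-freeness can be proved. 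For $K_{2,3}$ no such global order is available: what \cite[Lem~11.10]{huang2024} gives is only that the subcomplex $\Delta^*_{s_1t_1mt_2}$ — which omits the $\hat s_2$-vertices of your cycle altogether — is an $\widetilde A_3$-like complex, i.e.\ it has a cyclic order and link-level (not global) bowtie-freeness. This is precisely why the paper does not argue as you propose; instead it develops the Bestvina-convexity machinery of Section~\ref{sec:convex}, proves that for each vertex $v$ of type $\hat s_2$ the subcomplex $C_v=\lk(v,\Delta^*)\cap\Delta^*_{s_1t_1mt_2}$ is locally B-convex (using Corollaries~\ref{cor:4cycle} and~\ref{cor:star} and the girth conditions of Lemmas~\ref{lem:star} and~\ref{lem:Kmn}), deduces from Corollary~\ref{cor:convex} that $C_{v_1}\cap C_{v_2}$ is connected, and extracts from a shortest path in this intersection a vertex of type $\hat t_1$ or $\hat t_2$ adjacent to all of $u_1,u_k,v_1,v_2$, before concluding with Corollary~\ref{cor:star}. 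Your step 2 therefore defers the entire substance of the lemma to an unestablished statement, and the statement as you envisage it (global bowtie-freeness of the subdivided complex with $\hat t_1,\hat t_2,m,\hat s_3$ sandwiched between $\hat s_1$ and $\hat s_2$) is not something the available tools deliver.

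A secondary error: in your case analysis, to rule out a common neighbour $x$ of type $\hat s_3$ you claim that $\lk(x,\Delta)\cong\Delta_{K_{2,2}}$ contains no embedded cycle of type $\hat s_1\hat s_2\hat s_1\hat s_2$ by Lemma~\ref{lem:4-cycle}. That misreads the lemma: its "no embedded cycles of type $\hat s_1\hat s_2\hat s_1\hat s_2$" concerns two types that are \emph{consecutive} in the cyclic order of the $\widetilde A_n$-like structure (so that a bowtie in a link admits no intermediate type), whereas in the $4$-cycle diagram $s_1t_1s_2t_2$ the types $\hat s_1$ and $\hat s_2$ are opposite in the cyclic order. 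Such cycles can perfectly well exist in $\Delta_{K_{2,2}}$ — indeed the very lemma you are proving asserts for $K_{2,3}$ that they exist and have common neighbours, not that they are absent. The $\hat s_3$ case is salvageable (the first assertion of Lemma~\ref{lem:4-cycle} would give a common neighbour of type $\hat t_1$ or $\hat t_2$ inside $\lk(x,\Delta)$, after which your reduction applies), but the contradiction you state does not follow from the cited results.
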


\begin{proof} 
Let $\Delta^*$ be the $\{s_2,s_3\}$-subdivision of $\Delta$. Let $m$ be the type of the new vertices. By \cite[Lem 11.10]{huang2024}, the subcomplex $\Delta^*_{s_1t_1mt_2}$ of $\Delta^*$ spanned on the vertices of types $\hat s_1, \hat t_1, m,\hat t_2,$ is an $\widetilde A_3$-like complex. We fix any of the two cyclic orders on $s_1t_1mt_2$ to be able to discuss meets and joins in the links.

Let $v$ be a vertex of type $\hat s_2$ and let $C_v=\lk(v,\Delta^*)\subset \Delta^*_{s_1t_1mt_2}$. We claim that $C_v$ is B-convex in $\Delta^*_{s_1t_1mt_2}$. 
Note that $C_v$ is connected, since it is isomorphic to $\lk(v,\Delta)$, which, by Lemma~\ref{lem:link}, is in turn isomorphic to $\Delta_{s_1t_1s_3t_2}$. 
For the local B-convexity, let $w\in C_v$, and let $u_1,u_2\in C_v$ be neighbours of $w$. Assume that the meet $u$ of $u_1$ and $u_2$ in $\lk(w,\Delta^*_{s_1t_1mt_2})^0$ exists and is distinct from $u_1,u_2$.
We need to show $u\in C_v$, so we can assume that none of $w,u_1,u_2$ has type $m$. If $u_1$ or $u_2$ has type $\hat t_i$, and $u$ is not of type $m$, then this follows immediately from applying Corollary~\ref{cor:4cycle} to the 4-cycle $vu_1uu_2$ in $\lk(w,\Delta)$, which satisfies the girth condition by Lemmas~\ref{lem:star} and~\ref{lem:Kmn}. If $u_1$ or $u_2$ has type $\hat t_i$, and $u$ is the midpoint of an edge $u^+u^-$ of $\Delta$ of type $\hat s_2\hat s_3$, then applying as before Corollary~\ref{cor:4cycle} to the 4-cycle $vu_1u^+u_2$, we obtain $u^+=v$, and so $u$ is a neighbour of $v$.
 It remains to assume that $u_1$ and $u_2$ are of type $s_1$. Again by Corollary~\ref{cor:4cycle}, we can assume that $u$ is not of type~$t_i$, so it is the midpoint of an edge $u^+u^-$ in $\Delta$ of type $\hat s_2\hat s_3$. Applying Corollary~\ref{cor:star} to $vu_1u^+u_2$ in $\lk(w,\Delta)$, we obtain that $u^+=v$ as before  
 or there is a common neighbour of type $\hat t_i$ of $u_1,u_2$ in $\lk(w,\Delta)$, contradicting the assumption that $u=u_1\wedge u_2$ in $\lk(w,\Delta^*_{s_1t_1mt_2})^0$.

Let $u_1v_1u_kv_2$ be a cycle of type $\hat s_1\hat s_2\hat s_1\hat s_2$ in $\Delta$. By Corollary~\ref{cor:convex}, we have that $C=C_{v_1}\cap C_{v_2}$ is connected. Let $u_1\cdots u_k$ be an edge-path in $C$ from $u_1$ to~$u_k$ with the least number of edges. None of the $u_i$ can have type $m$, since $v_1\neq v_2$. Thus one of the $u_i$ has type $t_1$ or $t_2$, and then by the girth condition in the links of $u_{i-1},u_{i+1},$ (see Lemmas~\ref{lem:star} and~\ref{lem:Kmn}) 
and Corollary~\ref{cor:4cycle}, we have that $u_i$ is a neighbour of $u_1$ and $u_k$. Then the lemma follows from Corollary~\ref{cor:star} applied to the link of~$u_i$.
\end{proof}

\begin{prop}
\label{prop:23}
If $\Lambda=K^-_{2,3}$, then $\Delta$ is contractible and $\Lambda$ satisfies the girth condition.
\end{prop}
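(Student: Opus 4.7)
The overall strategy is to retract $\Delta$ to a 4-type relative Artin complex equipped with an $\widetilde A_3$-like structure, use that structure to prove contractibility together with four of the five girth statements, and then handle the remaining edge by a separate argument. Write $\Lambda=K^-_{2,3}$ with parts $\{s_1,s_2,s_3\}$ and $\{t_1,t_2\}$ and missing edge $s_3t_2$, and let $\Lambda'=\{s_1,s_2,t_1,t_2\}$ denote the induced $K_{2,2}$ subdiagram.

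Since $K_{2,2}$ is a 4-cycle Coxeter diagram, $\Delta_{K_{2,2}}$ is contractible by Remark~\ref{rem:easygirth}. Lemma~\ref{lem:dr}(1) with $s=s_3$ therefore shows that $\Delta$ deformation retracts onto $\Delta':=\Delta_{\Lambda,\Lambda'}$. I equip $\Delta'$ with the cyclic order $\hat s_1<\hat t_1<\hat s_2<\hat t_2<\hat s_1$ on its types and verify Definition~\ref{def:ha}. Simple connectedness is Lemma~\ref{lem:sc}. For each vertex $x$, Lemma~\ref{lem:link} identifies $\lk(x,\Delta')$ with a relative Artin complex on a length-2 linear subpath of $\Lambda'$, which is admissible in $\Lambda\setminus\{\type(x)\}$, so Lemma~\ref{lem:poset structure} delivers the partial order $<_x$. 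Bowtie-freeness of each link reduces to already-established results: at an $\hat s_i$-vertex ($i=1,2$) the link embeds in $\Delta_{\Lambda\setminus\{s_i\}}$, the Artin complex of a length-3 linear diagram, bowtie-free by Lemma~\ref{lem:stpr} or Theorem~\ref{thm:4cycle}; at the $\hat t_1$-vertex the commuting isolated generator $s_3\in\Lambda\setminus\{t_1\}$ is absorbed, so the link is isomorphic to $\Delta_{s_1t_2s_2}$, bowtie-free by Remark~\ref{rem:easygirth}; at the $\hat t_2$-vertex the link is $\Delta_{K_{1,3},s_1t_1s_2}$, bowtie-free by Lemma~\ref{lem:star}. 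Theorem~\ref{thm:contractibleI} then yields contractibility of $\Delta'$, and hence of $\Delta$. Lemma~\ref{lem:4-cycle} applied to $\Delta'$ furnishes the girth condition for each of the four edges of $\Lambda$ lying in $\Lambda'$.

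The remaining edge $s_3t_1$ is the main obstacle. I plan to work inside $\Delta^\circ:=\Delta_{\Lambda,\{s_1,t_1,s_3\}}$, observing that $\{s_1,t_1,s_3\}$ is a length-2 linear subdiagram admissible in $\Lambda$ because $s_1$ and $s_3$ lie in distinct components of $\Lambda\setminus\{t_1\}$. With the order $\hat s_1<\hat t_1<\hat s_3$, any embedded 4-cycle of type $\hat s_3\hat t_1\hat s_3\hat t_1$ in $\Delta$ lies in $\Delta^\circ$ and, since no type lies strictly between $\hat t_1$ and $\hat s_3$, becomes a bowtie whose putative mediator would have to coincide with two vertices of the cycle. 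So it suffices to prove that $\Delta^\circ$ is bowtie-free, which I intend to extract by verifying the hypotheses of Theorem~\ref{thm:contractibleII} and invoking Proposition~\ref{prop:434}. Simple connectedness is Lemma~\ref{lem:sc} and the partial order is Lemma~\ref{lem:poset structure}. The bowtie-free condition at each vertex link reduces to no-4-cycle statements in $\Delta_{\Lambda\setminus\{s_1\},\{t_1,s_3\}}$, $\Delta_{\Lambda\setminus\{t_1\},\{s_1,s_3\}}$, and $\Delta_{K_{2,2},\{s_1,t_1\}}$, which I can read off from Lemma~\ref{lem:stpr}, a direct coset calculation exploiting the isolated $s_3$-factor of $A_{\Lambda\setminus\{t_1\}}$ together with Remark~\ref{rem:adj}, and Remark~\ref{rem:easygirth} respectively; the links at vertices of type $\hat t_1$ contain only a single such type and are therefore bowtie-free for trivial reasons. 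The hardest step will be verifying the upward- and downward-flag properties at each of these links, for which I plan to combine the flag structure coming from Theorems~\ref{thm:tripleBn} and~\ref{thm:flag} in the spherical sub-cases with Remark~\ref{rem:adj} and the $\mathrm{CAT}(0)$ or $\widetilde A_n$-like framework behind Lemma~\ref{lem:stpr} in the remaining non-spherical sub-cases, so as to control how pairwise-bounded triples in each link assemble into common bounds.
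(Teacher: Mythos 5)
Your first half coincides with the paper's argument up to relabelling: the retraction of $\Delta$ onto $\Delta'=\Delta_{\Lambda,\,K_{2,2}}$ via Lemma~\ref{lem:dr} and Remark~\ref{rem:easygirth}, the verification that $\Delta'$ is an $\widetilde A_3$-like complex with the link posets handled by Remark~\ref{rem:easygirth}, Lemma~\ref{lem:star} and Lemma~\ref{lem:stpr}, contractibility via Theorem~\ref{thm:contractibleI}, and the girth condition for the four edges of the $K_{2,2}$ via Lemma~\ref{lem:4-cycle} are exactly the paper's steps, and they are correct.

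The gap is in your treatment of the pendant edge $s_3t_1$. Reducing its girth condition to bowtie-freeness of $\Delta^\circ=\Delta_{\Lambda,\,s_1t_1s_3}$ is fine, but you propose to get bowtie-freeness by checking the hypotheses of Theorem~\ref{thm:contractibleII} and invoking Proposition~\ref{prop:434}, and those hypotheses include upward and downward flag conditions that none of your cited inputs provides. Unwinding them through Lemma~\ref{lem:link}, they amount to two Helly-type statements: in $\lk(x,\Delta^\circ)\cong\Delta_{s_3t_1s_2t_2,\,s_3t_1}$ (for $x$ of type $\hat s_1$), any three vertices of type $\hat t_1$ that pairwise admit a common $\hat s_3$-neighbour admit a simultaneous one; and in $\lk(x,\Delta^\circ)\cong\Delta_{K_{2,2},\,s_1t_1}$ (for $x$ of type $\hat s_3$), the analogous statement for common $\hat s_1$-neighbours. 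Lemma~\ref{lem:stpr} gives only bowtie-freeness, i.e.\ control of pairs/$4$-cycles, never of triples; Theorems~\ref{thm:tripleBn} and~\ref{thm:flag} concern the full Artin complexes of the spherical diagrams $B_n$ and $H_3$ with those specific labels, whereas here $s_3t_1s_2t_2$ carries arbitrary labels (the edges of $K^-_{2,3}$ are unconstrained in Theorem~\ref{thm:4}) and the statement you need lives in a two-type relative complex of a four-vertex diagram; Remark~\ref{rem:adj} and the $\mathrm{CAT}(0)$ arguments behind Lemma~\ref{lem:stpr} again only handle $4$-cycles. So the flag hypotheses remain unverified, Proposition~\ref{prop:434} cannot be invoked, and the girth condition for $s_3t_1$ is not established. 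This is precisely the point where the paper switches machinery: it proves that $C_v=\lk(v,\Delta)\cap\Delta'$ is locally B-convex (Definition~\ref{def:Bconvex}) for every vertex $v$ of the pendant type, applies Proposition~\ref{prop:convex} and Corollary~\ref{cor:convex} to get connectivity of $C_{v_1}\cap C_{v_2}$, and then excludes an embedded $4$-cycle of type $\hat s_3\hat t_1\hat s_3\hat t_1$ by a shortest-path argument using Corollary~\ref{cor:4cycle}; that route is designed exactly to avoid any flagness input. To repair your plan you would either have to prove the two flag statements above (which is not easier than the original problem) or replace the Theorem~\ref{thm:contractibleII}/Proposition~\ref{prop:434} step by a convexity argument of this kind inside the $\widetilde A_3$-like complex $\Delta'$.
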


\begin{proof}
Denote the parts to be $\{s,p\},\{t_1,t_2,r\}$ with missing edge $sr$. Let $\Delta'=\Delta_{\Lambda,\, \Lambda'}$ with $\Lambda'=st_1pt_2$. By Lemma~\ref{lem:dr} and Remark~\ref{rem:easygirth}, we have that $\Delta$ deformation retracts onto $\Delta'$, and so the latter is simply connected.  
Note that $\Delta'$ is an $\widetilde A_3$-like complex.
In other words, the links $\Delta_{t_1st_2r,\, t_1st_2}=\Delta_{t_1st_2}, \Delta_{t_1pt_2r,\, t_1pt_2},$ and $\Delta_{st_1pr,\, st_1p}$ satisfy the partial order condition and are bowtie free. The partial order condition follows from Remark~\ref{rem:adj}. The bowtie freeness of the first link follows from Remark~\ref{rem:easygirth}. For the middle one, this is Lemma~\ref{lem:star}. For the last one, this follows from Lemma~\ref{lem:stpr}. By Theorem~\ref{thm:contractibleI}, we have that $\Delta'$ is contractible, and so is $\Delta$. We fix any of the two cyclic orders on $st_1pt_2$ to be able to discuss meets and joins in the links.

By Lemma~\ref{lem:4-cycle}, each induced $4$-cycle in $\Delta'$ is contained in the link of a vertex, and so the girth condition for the edges in $\Lambda'$ follows from the girth condition in Lemmas~\ref{lem:stpr} and~\ref{lem:star}. 

It remains to consider a $4$-cycle with vertices of types $\hat p$ and $\hat r$. First we justify the following.

\smallskip
\noindent \textbf{Claim.} \emph{For $v$ of type $\hat r$, the subcomplex $C_v=\lk(v,\Delta)\subset \Delta'$ is B-convex.}
\smallskip

Note that $C_v$ is isomorphic to $\Delta_{\Lambda'}$ and so it is connected. For the local B-convexity, let $w\in C_v$, and let $u_1,u_2\in C_v$ be neighbours of $w$. Assume that $u=u_1\wedge u_2$ exists in $\lk(w,\Delta')^0$ and is distinct from $u_1,u_2$. We need to show $u\in C_v$.
If $w$ has type $\hat p$, then this is immediate, so without loss of generality we only need to consider the cases where $w$ has type $\hat t_1$ and $\hat s$. 

In the first case, the link of $w$ is isomorphic to $\Delta_{st_2pr}$. If the $4$-cycle $u_1uu_2v$ contains an edge whose type lies on the path $\hat s\hat t_2 \hat p \hat r$, then $uv$ is an edge by the girth condition in Lemma~\ref{lem:stpr} and Corollary~\ref{cor:4cycle}. Otherwise, the type of the cycle is $\hat s \hat p\hat s \hat r$. Since the link of $w$ is bowtie free by Lemma~\ref{lem:stpr}, we have that $u$ is a neighbour of~$v$, or there is a common neighbour of type~$\hat t_2$ of all $w,u_1,u,u_2,v$. But then it is this vertex and not $u$ that is the meet of $u_1,u_2$, contradiction. 

Now consider the case, where $w$ has type $\hat s$. If the $4$-cycle $u_1uu_2v$ contains an edge whose type lies in the Coxeter diagram of  $t_1t_2pr$, then $uv$ is an edge by the girth condition in Lemma~\ref{lem:star} and Corollary~\ref{cor:4cycle}. Otherwise, without loss of generality, the type of the cycle is $\hat t_1 \hat t_2\hat t_1 \hat r$. By Corollary~\ref{cor:star},
we have that $u$ is a neighbour of~$v$, or
there is a common neighbour of type $\hat p$ of all $w,u_1,v,u_2,u$, which contradicts the assumption that $u$ is the meet of $u_1,u_2$. This ends the proof of the claim. 

\smallskip

Let $v_1u_1v_2u_k$ be a cycle with both $v_i$ of type $\hat r$ and $u_1,u_k$ of type $\hat p$. By the B-convexity of $C_{v_i}$ and Corollary~\ref{cor:convex}, we have that $C=C_{v_1}\cap C_{v_2}$ is connected. Let $u_1u_2\cdots u_k$ be an edge-path in $C$ from $u_1$ to $u_k$ with the least number of edges. By the girth condition in the link of $u_2$ (see Lemmas~\ref{lem:stpr} and~\ref{lem:star}), and Corollary~\ref{cor:4cycle} applied to the $4$-cycle $u_1v_1u_3v_2$, we obtain that $u_1$ is a neighbour of $u_3$, contradiction.
\end{proof}

\begin{prop}
\label{prop:33}
If $\Lambda=K^-_{3,3}$, then $\Delta$ is contractible and $\Lambda$ satisfies the girth condition.
\end{prop}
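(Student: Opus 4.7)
The plan is to mirror the proof of Proposition~\ref{prop:23} (the $K^-_{2,3}$ case), with additional care required because $K^-_{3,3}$ minus any single vertex remains connected. Denote the parts of $K^-_{3,3}$ as $\{s_1, s_2, s_3\}$ and $\{t_1, t_2, t_3\}$ with missing edge $s_3 t_3$, and let $\Lambda' = s_1 t_1 s_2 t_2$ be the induced $4$-cycle subdiagram of $\Lambda$. Applying Lemma~\ref{lem:dr}(2) with $T = \{s_3, t_3\}$: for $R = T$ we have $\Lambda \setminus R = K_{2,2}$ and $\Delta_{K_{2,2}}$ is contractible by Remark~\ref{rem:easygirth}; for $R = \{s_3\}$ or $\{t_3\}$ we have $\Lambda \setminus R = K_{2,3}$ and $\Delta_{K_{2,3}}$ is contractible by Lemma~\ref{lem:Kmn}. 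Hence $\Delta$ deformation retracts onto $\Delta' = \Delta_{\Lambda, \Lambda'}$, which is simply connected by Lemma~\ref{lem:sc}.

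The next step is to show that $\Delta'$ is an $\widetilde A_3$-like complex with the natural cyclic order $\hat{s_1} < \hat{t_1} < \hat{s_2} < \hat{t_2}$ inherited from $\Lambda'$. By Lemma~\ref{lem:link}, each vertex link of $\Delta'$ is a relative Artin complex of the form $\Delta_{K^-_{2,3}, L}$ or $\Delta_{K^-_{3,2}, L}$ with $L$ a length-$2$ linear subdiagram. Bowtie-freeness of each such link follows from the bowtie-freeness of $\Delta_{K^-_{2,3}}$ established in Proposition~\ref{prop:23}, by restriction. For the partial-order condition on each link, suppose that $y_1 <_x y_2 <_x y_3$ and $y_1 \not\sim y_3$; one produces a second common neighbour $y_2'$ of $y_1, y_3$ (of the same type as $y_2$) from the structural results on $\Delta_{K^-_{2,3}}$, giving an embedded $4$-cycle $y_1 y_2 y_3 y_2'$ in $\Delta_{K^-_{2,3}}$. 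This $4$-cycle has an edge of type $\hat{s_2}\hat{t_i}$ whose associated relative Artin complex has girth $\ge 6$ by Proposition~\ref{prop:23}, and Corollary~\ref{cor:4cycle} then forces $y_1 \sim y_3$, a contradiction. Theorem~\ref{thm:contractibleI} then yields contractibility of $\Delta'$, hence of $\Delta$.

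For the girth condition, Lemma~\ref{lem:4-cycle} applied to $\Delta'$ excludes embedded $4$-cycles of type $\hat{s_i}\hat{t_j}\hat{s_i}\hat{t_j}$ with $i, j \in \{1, 2\}$. For the remaining edges of $K^-_{3,3}$, namely those involving $s_3$ or $t_3$, I follow the B-convexity strategy ending Proposition~\ref{prop:23}: for each vertex $v$ of type $\hat{s_3}$ (or $\hat{t_3}$), show that $C_v = \lk(v, \Delta) \cap \Delta'$ is connected and locally B-convex in $\Delta'$ by a link-by-link check that combines Corollary~\ref{cor:4cycle} with the girth conditions of Lemma~\ref{lem:star}, Lemma~\ref{lem:Kmn}, and Proposition~\ref{prop:23}. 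Corollary~\ref{cor:convex} then guarantees that $C_{v_1} \cap C_{v_2}$ is connected for any two such vertices, from which a short combinatorial argument rules out the remaining $4$-cycles.

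The hardest step will be the partial-order verification in the second paragraph. Since admissibility of $L$ inside $K^-_{2,3}$ fails — removing the middle vertex of $L$ leaves $K^-_{2,3}$ connected through the vertex $s_3$ — transitivity does not follow directly from Lemma~\ref{lem:poset structure}, and must instead be bridged from the $2$-path configuration $y_1 \sim y_2 \sim y_3$ to a genuine embedded $4$-cycle configuration before Corollary~\ref{cor:4cycle} (fed by Proposition~\ref{prop:23}) can be invoked. Producing the auxiliary second common neighbour $y_2'$ cleanly, using the full force of the structural results on $\Delta_{K^-_{2,3}}$, is the key technical point, and an analogous subtlety reappears in the local B-convexity verification of the third paragraph.
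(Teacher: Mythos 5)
Your overall architecture (deformation retract onto a relative complex over a four-element type set, verify it is $\widetilde A_3$-like, apply Theorem~\ref{thm:contractibleI}, then handle the remaining edges of the girth condition via B-convexity and Corollary~\ref{cor:convex}) is the same in spirit as the paper's, and your first step — retracting $\Delta$ onto $\Delta_{\Lambda,\,s_1t_1s_2t_2}$ via Lemma~\ref{lem:dr}(2), Remark~\ref{rem:easygirth} and Lemma~\ref{lem:Kmn} — is fine. But your choice of subcomplex creates a gap at exactly the point you flag. For the link of a vertex of type $\hat s_1$ in $\Delta_{\Lambda,\,s_1t_1s_2t_2}$, which is $\Delta_{K^-_{2,3},\,t_1s_2t_2}$, removing $s_2$ from $K^-_{2,3}$ leaves $t_1$ and $t_2$ connected through $s_3$, and the same failure occurs in the links of the $\hat t_i$-vertices ($s_1$ and $s_2$ stay connected through $t_3$). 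So Lemma~\ref{lem:transitive}/Remark~\ref{rem:adj} never applies, and the transitivity of $<_x$ becomes a genuine coset statement of the form $A_{\hat s_2}\subset A_{\hat t_1}\cdot A_{\hat t_2}$ in $A_{K^-_{2,3}}$ — precisely the kind of equation the paper is structured to avoid. Your proposed bridge, producing a second common neighbour $y_2'$ of the same type as $y_2$, is unsupported: nothing in the paper (and nothing in Proposition~\ref{prop:23}, which only gives contractibility, the girth condition, and B-convexity of the $C_v$) manufactures a second common neighbour of prescribed type for two non-adjacent vertices, so the step "from the structural results on $\Delta_{K^-_{2,3}}$" is exactly the missing proof. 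The bowtie-freeness claim has the same problem: Proposition~\ref{prop:23} does not establish bowtie-freeness of $\Delta_{K^-_{2,3}}$, and what you actually need is that every embedded $4$-cycle of type $\hat t_1\hat t_2\hat t_1\hat t_2$ (non-adjacent types) in $\Delta_{K^-_{2,3}}$ has a common neighbour of type $\hat s_2$ — an analogue of Lemma~\ref{lem:Kmn2}/Corollary~\ref{cor:star} that does not follow "by restriction" and would require its own argument of comparable difficulty.

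The paper sidesteps all of this by a different choice: it takes the $\{t_1,t_2\}$-subdivision $\Delta^*$ and the subcomplex $\Delta^*_{s_1t_3s_2m}$ on types $\hat s_1,\hat t_3,\hat s_2,m$ (with $m$ the barycentres of $\hat t_1\hat t_2$-edges). The point of including $t_3$ and $m$, rather than the naive $4$-cycle $s_1t_1s_2t_2$, is that the missing edge $s_3t_3$ then supplies exactly the separations needed for Lemma~\ref{lem:transitive} (e.g.\ after removing $s_2$ from $K^-_{2,3}$, the vertex $t_3$ is isolated from $\{t_1,t_2,s_3\}$), so the partial-order condition is automatic; the only hard bowties are those of type $\hat t_3 m\hat t_3 m$, and these are resolved using the B-convexity Claim from the proof of Proposition~\ref{prop:23} together with Corollary~\ref{cor:convex} and the girth inputs from Lemmas~\ref{lem:stpr} and~\ref{lem:star}. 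It also has to prove separately that $\Delta^*_{s_1t_3s_2m}$ is a deformation retract of $\Delta$ (removing stars of $\hat t_1$- and $\hat t_2$-vertices), a step your set-up avoids but which is routine there. Unless you can actually prove the transitivity and the $\hat t_1\hat t_2\hat t_1\hat t_2$ common-neighbour statement in $\Delta_{K^-_{2,3}}$ — neither of which is currently available and both of which may be as hard as the proposition itself — your route does not go through, and you should switch to a subcomplex whose links satisfy the separation hypothesis, as the paper does.
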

\begin{proof}
Denote the parts of $\Lambda$ by $\{s_1,s_2,s_3\},\{t_1,t_2,t_3\}$ with the missing edge $s_3t_3$. Consider the $\{t_1,t_2\}$-subdivision $\Delta^*$ of $\Delta$. Let $m$ be the type of the new vertices. We claim that the subcomplex $\Delta^*_{s_1t_3s_2m}$ of $\Delta^*$ spanned on the vertices of types $\hat s_1, \hat t_3, \hat s_2,$ and $m$ is an $\widetilde A_3$-like complex. The partial order condition follows from Lemma~\ref{lem:transitive}. 
The bowtie freeness of the link of a vertex of type $m$ in $\Delta^*_{s_1t_3s_2m}$ follows from Remark~\ref{rem:easygirth}, since such link is isomorphic to $\Delta_{s_1t_3s_2}$. To obtain the bowtie freeness of the link $\lk(z,\Delta^*_{s_1t_3s_2m})$ of a vertex $z$ of type $\hat s_1$ (or $\hat s_2$), for all the possible bowties except for the type $\hat t_3 m \hat t_3 m$, it suffices to use Proposition~\ref{prop:23} and Corollary~\ref{cor:4cycle}. 

Consider now a possible bowtie $vev'e'$ of type $\hat t_3 m \hat t_3 m$, where $e,e'$ are midpoints of edges $uw,u'w'$ of type $\hat t_1\hat t_2$. Our goal is to find a vertex of type $\hat s_2$ in  $L=\lk(z,\Delta)$ that is a neighbour of all $z,u,w,u',w',v,v'$.  Let $$\Delta'=\lk(z, \Delta_{\Lambda, \, s_1s_2t_1s_3t_2})\cong \Delta_{t_3s_2t_1s_3t_2,\, s_2t_1s_3t_2},$$
which was studied in Proposition~\ref{prop:23}. Let $C_v=\lk(v, \Delta)\cap \Delta'$.
By the Claim in the proof of Proposition~\ref{prop:23} and Corollary~\ref{cor:convex}, we have that $C_v\cap C_{v'}\subset \Delta'$ is connected. Let $\alpha$ be an edge-path in $C_v\cap C_{v'}$ from $uw$ to $u'w'$ with the least number of edges. 

If $\alpha$ is a single vertex, say $u=u'$, then by Lemma~\ref{lem:stpr} applied to $\lk(u,L)$, the cycle $vwv'w'$ has a common neighbour of type $\hat s_2$ that is also a neighbour of $z$ and $u=u'$, as desired. 
If $\alpha$ is a single edge, say $uw'$, then analogously the cycle $vwv'w'$ has a common neighbour $y$ of type $\hat s_2$ that is also a neighbour of $z$ and $u$. By Corollary~\ref{cor:4cycle}, applied to the cycle $vyv'u'$ in $\lk(w',L)$,
we obtain that $y$ is also a neighbour of $u'$, as desired.
We can now assume that $\alpha$ has at least two edges.

If $\alpha$ contains a vertex of type $\hat s_2$, then by Corollary~\ref{cor:4cycle} it is a neighbour of $u,u',w,w'$, as desired. Otherwise, if 
$\alpha$ contains a subpath $xyx'$ with $y$ of type $\hat s_3$, then applying Corollary~\ref{cor:star} to $xvx'v'$ in $\lk(y,L)$, we can replace~$y$ by a vertex of type $\hat s_2$ and we conclude as before. Otherwise, let $x_0x_1x_2$ be any subpath of $\alpha$.
By Lemma~\ref{lem:stpr} applied to  $\lk(x_1,L),$ which is isomorphic to $\Delta_{t_3s_2t_is_3}$, for~$i=1$ or~$2$, we have that there is a common neighbour of type $\hat s_2$ of $x_0vx_2v'$ and we conclude as before. 

To obtain the bowtie freeness of  the link of a vertex of type $\hat t_3$, we apply Lemmas~\ref{lem:Kmn}, \ref{lem:Kmn2}, and Corollaries~\ref{cor:star} and~\ref{cor:4cycle}. This finishes the proof of the claim.

 Note that $\Delta^*_{s_1t_3s_2m}$ is obtained from $\Delta_{\Lambda, \, s_1t_3s_2t_1t_2}$ by removing the disjoint stars in~$\Delta^*$ of the vertices of type $\hat t_1$ and $\hat t_2$. These stars are isomorphic to the stars of the vertices of type $\hat t_1$ and $\hat t_2$ in $\Delta_{\Lambda,\, s_1t_3s_2t_1t_2}$, and hence they are cones over links isomorphic to $\Delta'$ whose contracibility we established in the proof of Proposition~\ref{prop:23}. 
Consequently, $\Delta^*_{s_1t_3s_2m}$ is a deformation retract of $\Delta_{\Lambda, \, s_1t_3s_2t_1t_2}$, and thus of $\Delta$ by Lemma~\ref{lem:dr}. 
In particular, since $\Delta$ is simply connected, this completes the proof of the claim that $\Delta^*_{s_1t_3s_2m}$ is an $\widetilde A_3$-like complex. 

By Theorem~\ref{thm:contractibleI}, we have that $\Delta^*_{s_1t_3s_2m}$ is contractible, and so is $\Delta$.

The girth condition for the edges $s_1t_3$ and $s_2t_3$ follows from Lemma~\ref{lem:4-cycle} applied to $\Delta^*_{s_1t_3s_2m}$. 
Using a symmetry of $\Lambda$, we also obtain the girth condition for the edges $t_1s_3$ and $t_2s_3$. 

Using a symmetry again, it remains to verify the girth condition for  $s_1t_2$. Let $v$ be a vertex of type $\hat t_2$ and let $D_v=\lk(v,\Delta^*)\cap \Delta^*_{s_1t_3s_2m}$. We claim that $D_v$ is B-convex.

Note that $D_v$ is isomorphic to $\Delta_{t_1s_1t_3s_2s_3,\,t_1s_1t_3s_2}$ 
and so it is connected. 
For the local B-convexity, all the cases follow from Corollary~\ref{cor:4cycle} and the girth conditions for smaller Coxeter diagrams, except for the case where $w$ has type $\hat s_1$ or~$\hat s_2$, say~$\hat s_1$, with $u_1uu_2$ of type $\hat t_3m\hat t_3$, where $u=u_1\wedge u_2$ in $\lk(w,\Delta^*_{s_1t_3s_2m})^0$. Then, as in the verification of the bowtie freeness in the  second, third and fourth paragraph of the proof, there is a vertex of type~$\hat s_2$ in $\lk(w,\Delta)$ that is a neighbour of $u_1$ and $u_2,$ which contradicts the assumption that $u$ is the meet of $u_1,u_2$. This justifies the claim.

Consider an induced $4$-cycle $v_1u_1v_2u_k$ with $v_i$ of type $\hat t_2$ and $u_1,u_k$ of type $\hat s_1$. Since $D_{v_i}$ are B-convex, by Corollary~\ref{cor:convex} we have that $D=D_{v_1}\cap D_{v_2}$ is connected. Let $\alpha=u_1u_2\cdots u_k$ be an edge-path from $u_1$ to $u_k$ in $D$ with the least number of edges. None of the $u_i$ has type $m$, since otherwise $v_1=v_2$. Since $\alpha$ is not a single vertex, we obtain a contradiction by applying Proposition~\ref{prop:23} or 
Lemma~\ref{lem:Kmn} 
to $\lk(u_2,\Delta)$, and Corollary~\ref{cor:4cycle}.

\end{proof}

\begin{prop}
\label{prop:triangle}
Let $\mathcal C$ be a class of Coxeter diagrams closed under taking induced subdiagrams. 
Suppose that we have $\mathcal C_1\subset \mathcal C$ such that each diagram in $\mathcal C-\mathcal C_1$ contains a triangle.
Then
\begin{enumerate}
	\item if each diagram in $\mathcal C_1$ satisfies the girth condition, then each diagram in $\mathcal C$ satisfies the girth condition, and
	\item if in addition for each nonspherical $\Lambda_1\in \mathcal C_1$ the Artin complex $\Delta_{\Lambda_1}$ is contractible, 
    then for each nonspherical $\Lambda\in \mathcal C$ the Artin complex $\Delta_\Lambda$ is contractible. 
    In particular, each diagram in $\mathcal C$ satisfies the $K(\pi,1)$ conjecture.
\end{enumerate}
\end{prop}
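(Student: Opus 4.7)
The plan is a joint strong induction on $|\Lambda|$. The base case is $\Lambda\in\mathcal C_1$, for which both conclusions follow directly from the hypotheses. For the inductive step, I fix $\Lambda\in\mathcal C\setminus\mathcal C_1$ together with a triangle $stp$ of $\Lambda$. The elementary but crucial observation is that every Coxeter triangle is non-spherical: since all edge labels are $\ge 3$, we have $1/m_{st}+1/m_{tp}+1/m_{sp}\le 1$, so the associated Coxeter group is infinite. Hence every induced subdiagram containing $\{s,t,p\}$ is non-spherical; in particular so is $\Lambda\setminus R$ for every $R\subset\Lambda\setminus\{s,t,p\}$.

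Part (2) is the main engine. Setting $T:=\Lambda\setminus\{s,t,p\}$, for every non-empty $R\subset T$ the diagram $\Lambda\setminus R$ is non-spherical, and by the inductive hypothesis for (2) the complex $\Delta_{\Lambda\setminus R}$ is contractible. Lemma~\ref{lem:dr}(2) applied with $\Lambda'=\Lambda$ and this $T$ then deformation retracts $\Delta_\Lambda$ onto $\Delta_{\Lambda,\{s,t,p\}}$, a $2$-dimensional simplicial complex of type $\{\hat s,\hat t,\hat p\}$ that is simply connected by Lemma~\ref{lem:sc}. I equip each of its $2$-simplices with the Euclidean metric of an equilateral triangle. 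By Lemma~\ref{lem:link}, the link of a vertex of type $\hat s$ is $\Delta_{\Lambda\setminus\{s\},\{t,p\}}$, which by the inductive hypothesis for (1) has girth $\ge 6$; hence its metric girth is $\ge 2\pi$, so the link is $\mathrm{CAT}(1)$, and the same holds at vertices of types $\hat t$ and $\hat p$. By the Cartan--Hadamard theorem, $\Delta_{\Lambda,\{s,t,p\}}$ is $\mathrm{CAT}(0)$, hence contractible, and so is $\Delta_\Lambda$. The $K(\pi,1)$ conclusion follows by a further induction on $|\Lambda|$ via Theorem~\ref{thm:Kpi1}.

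For part (1), let $e=\{u,v\}$ be an edge of $\Lambda$. The straightforward case is when $\{u,v\}$ is contained in some triangle $\{s',t',p'\}$ of $\Lambda$: the construction of (2) applied to this triangle makes $\Delta_{\Lambda,\{s',t',p'\}}$ into a $\mathrm{CAT}(0)$ space with the equilateral metric, so any embedded $4$-cycle of type $\hat u\hat v\hat u\hat v$ in $\Delta_{\Lambda,uv}$ bounds a minimal disc diagram there of perimeter $4$. Gromov's $\mathrm{CAT}(0)$ isoperimetric inequality forces the metric area of the diagram to satisfy $n\sqrt{3}/4\le 16/(4\pi)$, so the number $n$ of $2$-cells is at most $2$; but $n=1$ only bounds a $3$-cycle, and $n=2$ (two equilateral triangles sharing an edge) yields a $4$-cycle with three distinct type labels rather than the alternating pattern $\hat u\hat v\hat u\hat v$. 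Hence no such $4$-cycle exists.

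The remaining and hardest part of (1) is when $\{u,v\}$ lies in no triangle of $\Lambda$. A direct attempt to deformation retract $\Delta_\Lambda$ onto $\Delta_{\Lambda,\{u,v\}}$ via Lemma~\ref{lem:dr}(2) is blocked, since $\Lambda\setminus R$ can become spherical once $R$ absorbs the whole of $\{s,t,p\}$, and spherical Artin complexes are not in general contractible. My plan is to retract only partially, onto the relative Artin complex $\Delta_{\Lambda,\{s,t,p\}\cup\{u,v\}}$, which is reachable by Lemma~\ref{lem:dr}(2) exactly as in the proof of (2), and then to extract the girth of $\Delta_{\Lambda,uv}$ from the combinatorial geometry of this simply connected relative Artin complex, combined with the inductive hypotheses on the strictly smaller subdiagrams $\Lambda\setminus\{w\}$ for $w\notin\{u,v\}$. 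Arranging these ingredients into a clean contradiction with a hypothetical $4$-cycle is where the main technical work of (1) lies; this is the principal obstacle of the argument.
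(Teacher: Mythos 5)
Your part (2) is essentially the paper's argument (retract onto $\Delta_{\Lambda,\,stp}$ via Lemma~\ref{lem:dr}, show it is $\mathrm{CAT}(0)$ with the equilateral metric using the inductive girth hypothesis in the links, conclude by Cartan--Hadamard and Theorem~\ref{thm:Kpi1}), and your observation that any triangle with labels $\ge 3$ is non-spherical, so that $\Lambda\setminus R$ stays non-spherical, is exactly the point needed to invoke the inductive hypothesis. But part (1) has a genuine gap: you only treat edges $\{u,v\}$ that lie in some triangle, and you explicitly leave the remaining cases as an unexecuted ``plan.'' Those remaining cases are precisely where the paper's technical work lies. The paper proves, for a vertex $v$ whose type is outside the triangle, that $C_v=\lk(v,\Delta)\cap\Delta_{\Lambda,\,stp}$ is connected and locally convex (hence convex) in the $\mathrm{CAT}(0)$ complex, using Corollary~\ref{cor:4cycle} together with the inductive girth hypothesis in the links; then, for an edge meeting the triangle in one vertex, a shortest path in the connected set $C_{v_1}\cap C_{v_2}$ yields a forbidden $4$-cycle in a link, and for an edge disjoint from the triangle it runs a minimal-area/minimal-perimeter disc diagram argument bounded by arcs in four convex subcomplexes, analysed via Gauss--Bonnet (Theorem~\ref{thm:GB}) at the four corner points. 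None of this is in your proposal, and it is not routine.

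There is also a structural problem with your fallback plan for the hard case: you propose to retract onto $\Delta_{\Lambda,\,\{s,t,p,u,v\}}$ via Lemma~\ref{lem:dr}(2), but that lemma requires contractibility of the Artin complexes $\Delta_{\Lambda\setminus R}$, i.e.\ the inductive hypothesis of part (2). Part (1), as stated, assumes only the girth condition for $\mathcal C_1$, and the paper's proof of (1) indeed uses no contractibility input; a joint induction that feeds (2)'s hypotheses into (1)'s proof would only establish (1) under the stronger hypotheses, which is not what is claimed. (Note also that simple connectivity of the relative complex you want is already free from Lemma~\ref{lem:sc}, so the retraction buys nothing for that purpose.) Finally, a smaller point: in the in-triangle case your appeal to Gromov's isoperimetric inequality to bound the number of $2$-cells of a minimal \emph{simplicial} disc diagram by the metric filling area is not rigorous as stated; the clean argument is the combinatorial Gauss--Bonnet count as in Lemma~\ref{lem:CAT(0)} (or the systolic fact the paper cites), which directly rules out an alternating-type $4$-cycle because neither diagonal can exist between vertices of equal type.
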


\begin{proof}
We prove assertion (1) by induction on the number of the vertices of $\Lambda\in \mathcal C$. We can assume that $\Lambda$ contains a triangle $stp$. 
By 
the inductive hypothesis, 
the vertex links of $\Delta'=\Delta_{\Lambda, \, stp}$ have girth $\geq 6$. We have that $\Delta'$ is simply connected by Lemma~\ref{lem:sc}, and so it is systolic. In particular, $\Delta_{\Lambda, \, st}, \Delta_{\Lambda,\,  tp}, \Delta_{\Lambda, \, sp}$ have girth $\geq 6$ \cite[Prop~1.4]{JS}, which verifies part of assertion (1) for $\Lambda$. Furthermore, equipping each triangle with the Euclidean metric of an equilateral triangle, $\Delta'$ is $\mathrm{CAT}(0)$.

Consider now an edge $pq$ of $\Lambda$ with $q\neq s,t$. We will justify that $\Delta_{\Lambda, \, pq}$ has girth $\geq 6$. Let $v$ be a vertex of $\Delta$ of type $\hat q$. 

We claim that $C_v=\lk(v,\Delta)\cap \Delta'$ is convex in $\Delta'$ with respect to the $\mathrm{CAT}(0)$ metric. 
We have that $C_v$ is isomorphic to  $\Delta_{\Lambda \setminus \{q\},\, stp}$, which is connected. Thus to justify the claim we only need to prove that $C_v$ is locally convex \cite[Prop~4.14]{BridsonHaefliger1999}. Suppose that we have $w\in C_v$ with neighbours $u_1,u_2\in C_v$ and $u\in \Delta'$ such that $u_1u,uu_2$ are edges but $u_1$ and $u_2$ are not neighbours. Then $vu_1uu_2$ is a $4$-cycle in the link of $w$, to which we can apply Corollary~\ref{cor:4cycle} 
by the inductive hypothesis. Thus $u\in C_v$, which justifies the claim.

Suppose for contradiction that $v_1u_1v_2u_k$ is a $4$-cycle in $\Delta_{\Lambda,\, pq}$ with both $v_i$ of type~$\hat q$. Then $u_1,u_k\in C_{v_1}\cap C_{v_2}$, which is convex in $\Delta'$. In particular, $C_{v_1}\cap C_{v_2}$ is connected. Consider an edge-path $u_1u_2\cdots u_k$ from $u_1$ to $u_k$ in $C_{v_1}\cap C_{v_2}$ with the least number of edges. Note that we have $k\geq 3$. The $4$-cycle $u_1v_1u_3v_2$ in the link of $u_2$ violates Corollary~\ref{cor:4cycle} by 
the inductive hypothesis. 

Consider now an edge $qr$ of $\Lambda$ with $q,r\notin \{s,t,p\}$. We will justify that $\Delta_{\Lambda,\, qr}$ has girth $\geq 6$. Suppose for contradiction that $v_1u_1v_2u_2$ is a $4$-cycle in $\Delta_{\Lambda,\, qr}$ with both~$v_i$ of type $\hat q$ and both $u_i$ of type $\hat r$. Consider disc diagrams $D\to \Delta'$ with boundary cycle $\alpha_0\alpha_1\alpha_2\alpha_3$ such that $\alpha_0\subset C_{v_1},\alpha_1\subset C_{u_1},\alpha_2\subset C_{v_2},\alpha_3\subset C_{u_2}$. Choose $D$ of minimal area, and among such $D$, choose $D$ with minimal perimeter.  Then the boundary cycle of $D$ is a concatenation of paths $I_i$ embedded in $D$ that are the domains of~$\alpha_i$, and the intersections $x_i=I_i\cap I_{i+1}$ (mod $4$) are single vertices by the minimality assumption. We can assume that $D$ is not a single vertex, since then we would obtain a contradiction with 
the inductive hypothesis. In particular, two consecutive~$I_i$ cannot be trivial. Thus, up to a symmetry, we have one of the following:
\begin{itemize}
\item all $x_i$ are distinct, and so all $I_i$ are nontrivial,
\item $x_0=x_1,x_2,x_3$ are distinct, and so only $I_1$ is trivial,
\item $x_0=x_1\neq x_2=x_3$, and so only $I_1,I_3$ are trivial.
\end{itemize}
The $x_i$ equal to $x_j$ for $i\neq j$ are called \emph{singular}.
We apply Theorem~\ref{thm:GB} to $D$. Since $\Delta'$ is $\mathrm{CAT}(0)$, the curvature at each interior vertex of $D$ is non-positive. Consider now an interior vertex $y$ of one of the $I_i$, with $\alpha_i\subset C_v$. If the curvature at $y$ was positive, then $y$ would be contained in exactly one or two triangles of $D$. By the convexity of $C_v$, the images in $\Delta'$ of these triangles would be contained in $C_v$. Consequently, we could alter $\alpha_i$ be removing these triangles from $D$, which would contradict the minimality of the area of $D$. 
Thus the curvature is non-positive also 
at each interior vertex of $I_i$. Consequently, the curvature can be positive only at the~$x_i$, 
and it then equals $\frac{\pi}{3},\frac{2\pi}{3},$ or $\pi$. Since their sum equals~$2\pi$, there must be
\begin{enumerate}[(i)]
\item a non-sigular $x_i$ with curvature $\geq \frac{2\pi}{3}$, or 
\item a singular $x_i$ with curvature $\pi$. 
\end{enumerate}
In case (i), $x_i$ is contained in only one triangle od $D$. Thus if, say, $x_i=x_2$, then we have a $4$-cycle in the link of $x_2$ containing $v_2u_2$. By the inductive hypothesis and Corollary~\ref{cor:4cycle}, this $4$-cycle has a diagonal, which contradicts the minimality of $D$. In case (ii), $x_i$ is not contained in any trangle od $D$. Thus if, say, $x_i=x_0=x_1$, then we have a $4$-cycle in the link of $x_0$ containing $v_1u_1v_2$ and contradicting the minimal perimeter assumption on $D$ in view of Corollary~\ref{cor:4cycle} and 
the inductive hypothesis. This finishes the proof of assertion~(1).

For assertion (2), we first show by induction on the number of the vertices of $\Lambda\in \mathcal C$ that $\Delta_\Lambda$ is contractible whenever $\Lambda$ is not spherical. Indeed, we can assume that $\Lambda$ contains a triangle $stp$.  Let $\Delta'$ be as in the proof of assertion (1). We proved that $\Delta'$ is $\mathrm{CAT}(0)$, and so it is contractible. 
By Lemma~\ref{lem:dr}, and the  
inductive hypothesis, we have that $\Delta$ deformation retracts onto $\Delta'$, and so $\Delta$ is contractible. The last part of  assertion (2) follows from 
Theorem~\ref{thm:Kpi1}.
\end{proof}

\begin{proof}[Proof of Theorem~\ref{thm:4}]
If $\Lambda$ has multiple connected components, then the associated Artin complex is a join of several smaller Artin complexes, one for each connected component of $\Lambda$. Thus it suffices to consider the case where $\Lambda$ is connected.
If $|S|\leq 4$, then the theorem follows from Remark~\ref{rem:easygirth}, and Lemmas~\ref{lem:stpr} and \ref{lem:star}. Otherwise, if $\Lambda$ is complete bipartite, then the theorem follows from Lemma~\ref{lem:Kmn}.
Consequently, the theorem follows from Lemma~\ref{lem:graphs}, Corollary~\ref{cor:5}, Propositions~\ref{prop:23}, \ref{prop:33}, and~\ref{prop:triangle}.
\end{proof}

By Theorem~\ref{thm:Kpi1}, we have the following consequences.

\begin{cor} 
\label{cor:reduction}
Suppose that all non-spherical $\Lambda$ without triangles satisfy the girth condition and have contractible $\Delta_\Lambda$. Then all Artin groups satisfy the $K(\pi,1)$ conjecture.

\end{cor}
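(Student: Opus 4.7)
The plan is to apply Proposition~\ref{prop:triangle} to the class $\mathcal C$ of all Coxeter diagrams together with the subclass $\mathcal C_1 \subset \mathcal C$ of triangle-free diagrams. Both classes are closed under induced subdiagrams, and by definition every $\Lambda \in \mathcal C \setminus \mathcal C_1$ contains a triangle. The hypothesis supplies the girth condition and contractibility of $\Delta_{\Lambda_1}$ for every \emph{non-spherical} $\Lambda_1 \in \mathcal C_1$, so the only remaining task before invoking Proposition~\ref{prop:triangle}(2) is to verify the girth condition for the spherical members of $\mathcal C_1$.

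Every spherical Coxeter diagram is triangle-free, since each irreducible finite Coxeter diagram on the classical list is a tree, so the remaining case is an arbitrary spherical $\Lambda$ with an edge $st$. First I would reduce to the irreducible case: decomposing $\Lambda = \Lambda_1 \sqcup \cdots \sqcup \Lambda_k$ into irreducible components with $s,t \in \Lambda_1$ and using $A_\Lambda = \prod_i A_{\Lambda_i}$ together with the fact that $A_{\Lambda_j} \subset A_{\hat s} \cap A_{\hat t}$ for $j \neq 1$, a direct check shows that the vertices of type $\hat s$ or $\hat t$ in $\Delta_\Lambda$ and the edges of type $\hat s\hat t$ correspond bijectively to the analogous vertices and edges of $\Delta_{\Lambda_1}$. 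Hence $\Delta_{\Lambda, st} \cong \Delta_{\Lambda_1, st}$, which is bowtie free by Theorem~\ref{thm:4cycle}.

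To pass from bowtie-freeness to girth $\geq 6$, I would use that $\Delta_{\Lambda_1, st}$ is bipartite on the two types, ordered (say) $\hat s < \hat t$. Any embedded $4$-cycle $x_1 y_1 x_2 y_2$ with the $x_i$ of type $\hat s$ and the $y_j$ of type $\hat t$ would be a bowtie, while any witness $z$ with $x_i \leq z \leq y_j$ for $i,j \in \{1,2\}$ must have type $\hat s$ or $\hat t$, forcing $z \in \{x_1, x_2\}$ or $z \in \{y_1, y_2\}$ and contradicting embeddedness. This completes the verification of the hypotheses of Proposition~\ref{prop:triangle}, and part~(2) then yields the $K(\pi,1)$ conjecture for every Artin group. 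There is no serious obstacle: the corollary is essentially a packaging of Proposition~\ref{prop:triangle} with Theorem~\ref{thm:4cycle}, the only mildly nontrivial step being the reduction of the reducible spherical case to its irreducible factor containing the chosen edge.
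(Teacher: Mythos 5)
Your proof is correct and follows the paper's intended route: Corollary~\ref{cor:reduction} is exactly Proposition~\ref{prop:triangle} applied with $\mathcal C$ the class of all Coxeter diagrams and $\mathcal C_1$ the triangle-free ones, with Theorem~\ref{thm:Kpi1} built into that proposition. Your explicit verification of the girth condition for spherical diagrams---reducing to the irreducible factor containing the chosen edge and combining Theorem~\ref{thm:4cycle} with the observation that a two-type bowtie admits no witness---correctly supplies the one step the paper leaves implicit.
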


\begin{theorem}
	\label{thm:general}
Let $\mathcal C$  be a class of Coxeter diagrams closed under taking induced subdiagrams. Suppose that each $\Lambda\in \mathcal C$ not containing a triangle satisfies at least one of the following conditions:
\begin{enumerate}
	\item $A_\Lambda$ is spherical, or more generally $\Lambda$ satisfies the assumption of \cite[Thm~1.1]{huang2023labeled},
	\item $\Lambda^c$ does not contain embedded $4$-cycles,
	\item $\Lambda$ is locally reducible.
\end{enumerate} 
Then each $A_\Lambda$ with $\Lambda\in \mathcal C$ satisfies the $K(\pi,1)$ conjecture.
\end{theorem}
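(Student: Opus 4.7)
The strategy is to reduce to Proposition~\ref{prop:triangle} by taking $\mathcal C_1\subset \mathcal C$ to be the subclass consisting of the triangle-free diagrams in $\mathcal C$. Since $\mathcal C$ is closed under induced subdiagrams so is $\mathcal C_1$, and by construction every diagram in $\mathcal C\setminus \mathcal C_1$ contains a triangle, so the structural hypothesis of Proposition~\ref{prop:triangle} is met. It then suffices to verify, for every $\Lambda\in \mathcal C_1$, that $\Lambda$ satisfies the girth condition and (when $\Lambda$ is non-spherical) that $\Delta_\Lambda$ is contractible. Granted these two properties, Proposition~\ref{prop:triangle}(2) delivers the $K(\pi,1)$ conjecture for every $\Lambda\in \mathcal C$.

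Fix a triangle-free $\Lambda\in \mathcal C_1$; by hypothesis $\Lambda$ satisfies at least one of (1), (2), (3), and I would treat these cases separately. Case (2) is immediate: since $\Lambda^c$ has no embedded $4$-cycle, Theorem~\ref{thm:4} applies directly and produces both the girth condition and the contractibility of $\Delta_\Lambda$ for non-spherical $\Lambda$. For case (1), I would invoke \cite[Thm~1.1]{huang2023labeled}, which by design establishes the girth condition together with the required contractibility in the non-spherical case; when $\Lambda$ is spherical, Proposition~\ref{prop:triangle} requires no contractibility statement about $\Delta_\Lambda$, and the girth condition for spherical triangle-free diagrams is also an output of the same reference. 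In case (3), with $\Lambda$ locally reducible, the same two conclusions follow from the existing treatment of locally reducible Artin groups in the literature.

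Combining the three cases, the hypotheses of Proposition~\ref{prop:triangle} are verified for the pair $(\mathcal C,\mathcal C_1)$. Applying Proposition~\ref{prop:triangle}(1), I would conclude the girth condition for every $\Lambda\in \mathcal C$ by induction on $|\Lambda|$ (using the closure of $\mathcal C$ under induced subdiagrams to feed smaller diagrams into the inductive hypothesis). Proposition~\ref{prop:triangle}(2) then yields contractibility of $\Delta_\Lambda$ for every non-spherical $\Lambda\in \mathcal C$ together with the $K(\pi,1)$ conjecture for the whole class. The main obstacle is therefore not the reduction itself, which is formal, but the correct identification of the inputs for case (1) (where spherical subcases must be separated from non-spherical ones) and case (3); once these references are in place, the proof is essentially a bookkeeping exercise around Theorem~\ref{thm:4} and Proposition~\ref{prop:triangle}.
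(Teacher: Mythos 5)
Your proposal matches the paper's proof: both reduce to Proposition~\ref{prop:triangle} with $\mathcal C_1$ the triangle-free diagrams in $\mathcal C$, handle case (2) via Theorem~\ref{thm:4}, and handle cases (1) and (3) by citing the relevant results of \cite{huang2023labeled}. The only difference is in the precision of the citations: the paper invokes \cite[Prop~9.11 and~9.12]{huang2023labeled} for case (1) and \cite[Cor~9.14]{huang2023labeled} for case (3), noting there that the stated result covers locally reducible trees but the argument extends to all locally reducible diagrams and also yields contractibility of $\Delta_\Lambda$.
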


\begin{proof}
By Theorem~\ref{thm:Kpi1}, and Proposition~\ref{prop:triangle}, it suffices to show that each $\Lambda$ in one of the above classes satisfies the girth condition and, if it is not spherical, then $\Delta_\Lambda$ is contractible. For class (2), this is Theorem~\ref{thm:4}. For class (1), this follows from \cite[Prop~9.11 and~9.12]{huang2023labeled}. For class (3), this follows from \cite[Cor~9.14]{huang2023labeled} (as stated, this result only treats the case where $\Lambda$ is a locally reducible tree, but the same argument works for any locally reducible diagram, and it also gives the contractibility of $\Delta_\Lambda$).
\end{proof}

\bibliographystyle{alpha}
\bibliography{mybib}

\end{document}